\documentclass{amsart}

\usepackage{amsmath}
\usepackage{amssymb}
\usepackage{amsthm}
\usepackage{arydshln}
\usepackage{bbm}
\usepackage{bm}
\usepackage{colonequals}
\usepackage{datetime2}
\usepackage{mathrsfs}
\usepackage{mathtools}
\usepackage{stmaryrd}
\usepackage[all]{xy}
\usepackage{yhmath}

\usepackage{imakeidx}
\makeindex[name=symbols,title=List of Symbols,columns=2]
\newcommand{\symdef}[2]{\index[symbols]{#1@#2}}

\usepackage[dvipsnames]{xcolor}
\usepackage[
  bookmarks=true,
  bookmarksnumbered=true,
  colorlinks=true,
  linkcolor=RoyalBlue,  
  citecolor=ForestGreen,
  urlcolor=Magenta,     
  linktoc=all
]{hyperref}

\theoremstyle{plain}
\newtheorem{thm}{Theorem}[section]
\newtheorem*{thm*}{Theorem}
\newtheorem{prop}[thm]{Proposition}
\newtheorem{lem}[thm]{Lemma}
\newtheorem{cor}[thm]{Corollary}

\newtheorem{expect}[thm]{Expectation}

\theoremstyle{definition}
\newtheorem{defn}[thm]{Definition}

\theoremstyle{remark}
\newtheorem{rem}[thm]{Remark}

\newtheorem*{claim*}{Claim}

\newcommand{\ad}{\mathrm{ad}}
\newcommand{\asym}{\mathrm{asym}}

\newcommand{\desc}{\mathrm{desc}}
\newcommand{\new}{\mathrm{new}}
\newcommand{\pr}{\mathrm{pr}}
\newcommand{\ram}{\mathrm{ram}}
\newcommand{\red}{\mathrm{red}}
\newcommand{\res}{\mathrm{res}}
\newcommand{\rs}{\mathrm{rs}}
\renewcommand{\sc}{\mathrm{sc}}
\newcommand{\spec}{\mathrm{spec}}
\newcommand{\spl}{\mathbf{spl}}
\newcommand{\srs}{\mathrm{srs}}
\renewcommand{\ss}{\mathrm{ss}}
\newcommand{\st}{\mathrm{st}}
\newcommand{\supp}{\mathrm{supp}}
\newcommand{\sym}{\mathrm{sym}}
\newcommand{\ur}{\mathrm{ur}}

\newcommand{\Art}{\mathrm{Art}}
\newcommand{\KS}{\mathrm{KS}}

\newcommand{\TN}{\mathrm{TN}}
\newcommand{\Wal}{\mathrm{Wal}}

\newcommand{\I}{\mathrm{I}}
\newcommand{\II}{\mathrm{II}}
\newcommand{\III}{\mathrm{III}}
\newcommand{\IV}{\mathrm{IV}}

\newcommand{\C}{\mathbb{C}}
\newcommand{\F}{\mathbb{F}}
\newcommand{\Q}{\mathbb{Q}}
\newcommand{\R}{\mathbb{R}}
\newcommand{\Z}{\mathbb{Z}}
\newcommand{\Gm}{\mathbb{G}_{\mathrm{m}}}

\newcommand{\bbD}{\mathbb{D}}
\newcommand{\bbH}{\mathbb{H}}

\newcommand{\x}{\mathbf{x}}

\newcommand{\bfA}{\mathbf{A}}
\newcommand{\bfB}{\mathbf{B}}
\newcommand{\B}{\mathbf{B}}
\newcommand{\bfD}{\mathbf{D}}
\newcommand{\bfG}{\mathbf{G}}
\newcommand{\G}{\mathbf{G}}
\newcommand{\bfH}{\mathbf{H}}
\renewcommand{\H}{\mathbf{H}}

\newcommand{\J}{\mathbf{J}}
\newcommand{\bfN}{\mathbf{N}}
\newcommand{\bfS}{\mathbf{S}}
\newcommand{\bfT}{\mathbf{T}}
\newcommand{\T}{\mathbf{T}}
\newcommand{\bfU}{\mathbf{U}}
\newcommand{\bfV}{\mathbf{V}}
\newcommand{\bfX}{\mathbf{X}}
\newcommand{\bfZ}{\mathbf{Z}}

\newcommand{\mfg}{\mathfrak{g}}
\newcommand{\mfh}{\mathfrak{h}}
\newcommand{\mfj}{\mathfrak{j}}
\newcommand{\mfp}{\mathfrak{p}}
\newcommand{\mfs}{\mathfrak{s}}
\newcommand{\mft}{\mathfrak{t}}
\newcommand{\mfw}{\mathfrak{w}}
\newcommand{\mfz}{\mathfrak{z}}
\newcommand{\mfG}{\mathfrak{G}}
\newcommand{\mfH}{\mathfrak{H}}
\newcommand{\mfV}{\mathfrak{V}}

\newcommand{\mcA}{\mathcal{A}}
\newcommand{\mcB}{\mathcal{B}}
\newcommand{\mcC}{\mathcal{C}}
\newcommand{\mcD}{\mathcal{D}}

\newcommand{\mcH}{\mathcal{H}}
\newcommand{\mcJ}{\mathcal{J}}

\newcommand{\mcM}{\mathcal{M}}
\newcommand{\mcO}{\mathcal{O}}
\newcommand{\mcS}{\mathcal{S}}
\newcommand{\mcT}{\mathcal{T}}

\newcommand{\bmfg}{\boldsymbol{\mathfrak{g}}}
\newcommand{\bmfh}{\boldsymbol{\mathfrak{h}}}
\newcommand{\bmfj}{\boldsymbol{\mathfrak{j}}}
\newcommand{\bmfs}{\boldsymbol{\mathfrak{s}}}
\newcommand{\bmft}{\boldsymbol{\mathfrak{t}}}
\newcommand{\bmfz}{\boldsymbol{\mathfrak{z}}}

\newcommand{\ol}{\overline}
\newcommand{\ul}{\underline}
\renewcommand{\t}{\tilde}
\newcommand{\h}{\hat}
\renewcommand{\b}{\bar}
\renewcommand{\j}{\jmath}
\renewcommand{\L}{{}^{L}}
\newcommand{\Lj}{{}^{L}\! j}
\renewcommand{\-}{\text{-}}

\newcommand{\dia}{\diamondsuit}
\newcommand{\nat}{\natural}
\newcommand{\mr}{\mathring}

\DeclareMathOperator{\cInd}{c-Ind}

\DeclareMathOperator{\id}{id}
\DeclareMathOperator{\inv}{inv}
\DeclareMathOperator{\ord}{ord}
\DeclareMathOperator{\sgn}{sgn}

\DeclareMathOperator{\tr}{tr}
\DeclareMathOperator{\tran}{\mathfrak{tran}}
\DeclareMathOperator{\val}{val}

\DeclareMathOperator{\Aut}{Aut}

\DeclareMathOperator{\Gal}{Gal}
\DeclareMathOperator{\Hom}{Hom}

\DeclareMathOperator{\Ker}{Ker}
\DeclareMathOperator{\Lie}{Lie}
\DeclareMathOperator{\Nr}{Nr}
\DeclareMathOperator{\Res}{Res}
\DeclareMathOperator{\Tr}{Tr}

\DeclareMathOperator{\GL}{GL}
\DeclareMathOperator{\SL}{SL}
\DeclareMathOperator{\Sp}{Sp}

\title{Twisted endoscopic character relation for toral supercuspidal $L$-packets of classical groups}
\author{Masao Oi}
\address{Department of Mathematics, National Taiwan University, Astronomy Mathematics Building 5F, No.\ 1, Sec.\ 4, Roosevelt Rd., Taipei 10617, Taiwan.}
\email{masaooi@ntu.edu.tw}

\begin{document}

\begin{abstract}
We prove that Kaletha's toral supercuspidal $L$-packets satisfy the twisted endoscopic character relation in some cases, including the case of general linear groups equipped with an involution.
Consequently, we verify that Kaletha's construction of the local Langlands correspondence for toral supercuspidal representations of quasi-split symplectic or special orthogonal groups coincides with Arthur's.
The strategy is to emulate Kaletha's proof of the standard endoscopic character relation in the twisted setting by appealing to Waldspurger's framework ``l'endoscopie tordue n'est pas si tordue''.
\end{abstract}


\maketitle

\begingroup
\renewcommand{\thefootnote}{}
\footnotetext{\DTMnow}
\footnotetext{Note: The material of this paper was originally incorporated as part of \cite{Oi23-TECR} and has been extracted. The remaining part of \cite{Oi23-TECR} is now contained in \cite{Oi25}.}
\endgroup

\setcounter{tocdepth}{2}
\tableofcontents


\section{Introduction}\label{sec:intro}

One fundamental objective in representation theory of reductive groups over local fields is to establish the \textit{local Langlands correspondence}.
For a connected reductive group $\G$ defined over a local field $F$, the local Langlands correspondence is a natural map from the set $\Pi(\G)$ of isomorphism classes of irreducible admissible representations of $G\colonequals \G(F)$ to the set $\Phi(\G)$ of equivalence classes of $L$-parameters of $\G$.
Here, each fiber of the map is expected to be finite; we let $\Pi_{\phi}^{\G}$ denote the fiber at $\phi\in\Phi(\G)$ and call it an \textit{$L$-packet}.
Thus, we may think of the local Langlands correspondence as a natural partition of $\Pi(\G)$ into finite sets labeled by $L$-parameters:
\[
\Pi(\G)=\bigsqcup_{\phi\in\Phi(\G)}\Pi_{\phi}^{\G}.
\]

While the local Langlands correspondence was constructed by Langlands (\cite{Lan89}) when $F$ is archimedean, its existence is still conjectural in general when $F$ is non-archimedean.
However, numerous results have been obtained up to the present.
Let us review some of them in the following by focusing only on the case where $F$ is a $p$-adic field, i.e., a non-archimedean local field of characteristic zero.

Firstly, the local Langlands correspondence has been completely established for several specific groups.
The particularly important examples include the results of Harris--Taylor and Henniart for $\GL_{n}$ (\cite{HT01,Hen00}), Arthur for quasi-split special orthogonal or symplectic groups (\cite{Art13}), and Mok for quasi-split unitary groups (\cite{Mok15}).

On the other hand, one might also attempt to construct the local Langlands correspondence by restricting the class of representations instead of the class of groups.
An important example of this direction is the work of DeBacker--Reeder (\cite{DR09}), which established the local Langlands correspondence for depth zero regular supercuspidal representations of arbitrary unramified groups.
After the work of DeBacker--Reeder, Kaletha investigated some particular cases of positive depth supercuspidal representations (\cite{Kal13-wild,Kal15}).
Currently, all these constructions have been uniformly generalized by Kaletha himself to a considerably broad class of supercuspidal representations called \textit{regular} (more generally, \textit{non-singular/semisimple}) supercuspidal representations of tamely ramified connected reductive groups (\cite{Kal19,Kal19-sc}).

Taking into account these two approaches toward the local Langlands correspondence (i.e., the ``vertical'' direction which restricts the class of groups and the ``horizontal'' direction which restricts the class of representations), it is natural to ask whether two different constructions indeed give rise to the identical correspondence on the ``intersection'' of the domains of the constructions.
This problem is not only interesting in itself but also technically important.
For example, the above-mentioned constructions for specific groups have a favorable compatibility with the global classification theory of automorphic representations.
On the other hand, Kaletha's construction is highly explicit since it is ultimately based on the local Langlands correspondence for tori (note that this is parallel to Langlands' construction \cite{Lan89} in the archimedean case).
If we get the coincidence of these different constructions, we can combine their advantages.

Based on this motivation, we first investigated the case of $\GL_{n}$ in a joint work with Tokimoto.
In \cite{OT21}, we proved that Kaletha's construction exactly realizes the local Langlands correspondences established by Harris--Taylor and Henniart for any regular supercuspidal representation of $\GL_{n}(F)$ whenever $p\neq2$ 
(note that now this result has been generalized by Tokimoto to all inner forms of $\GL_{n}$ in \cite{Tok23}).

The aim of this paper is to establish a methodology for comparing Kaletha's construction with others for more general groups.
Especially, we prove the following:
\begin{thm}[Theorem {\ref{thm:Arthur=Kaletha}}]\label{thm:Arthur=Kaletha-intro}
Let $\H$ be a quasi-split special orthogonal or symplectic group over $F$.
Suppose that $p$ is sufficiently large.
The Local Langlands correspondences of Arthur and Kaletha coincide for any ``toral'' supercuspidal representation of $H\colonequals \H(F)$.
\end{thm}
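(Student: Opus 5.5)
We would prove Theorem \ref{thm:Arthur=Kaletha-intro} by exploiting the fact that Arthur's correspondence for $\H$ is \emph{characterized} by the twisted endoscopic character relation. The relation is taken with respect to $\H$ viewed as a twisted endoscopic group of $(\GL_{N},\theta)$, where $\theta$ is the standard outer involution and $N=2n+1$, $2n$ or $2n$ according to the type of $\H$. Concretely, Arthur's packet $\Pi^{\H}_{\phi}$ attached to $\phi$ is the unique multiset of irreducible tempered representations whose stable character $\sum_{\pi\in\Pi^{\H}_{\phi}}\Theta_{\pi}$ transfers, up to the normalization by a Whittaker datum, to the $\theta$-twisted character of the $\theta$-stable representation $\pi_{\phi}^{\GL_{N}}$. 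Here $\pi_{\phi}^{\GL_{N}}$ is attached to $\phi$ composed with the standard embedding ${}^{L}\H\hookrightarrow\GL_{N}(\C)$ via Harris--Taylor/Henniart. The plan is to show that Kaletha's toral packet $\Pi^{\H,\mathrm{Kal}}_{\phi}$ satisfies the same identity. By \cite{OT21} and the toral hypothesis, $\pi_{\phi}^{\GL_{N}}$ is itself Kaletha's regular supercuspidal representation of $\GL_{N}(F)$ attached to the composed parameter. By uniqueness of the stable distribution in Arthur's theorem and linear independence of characters, the two packets then coincide as sets, and the internal parametrizations also agree once we check the $\mcS_{\phi}$-labelling via standard endoscopy, which Kaletha has already verified for his packets.

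The key steps are as follows. First, we write the toral parameter as $\phi=\Lj\circ\phi_{\bfS,\chi}$ for an elliptic tamely ramified torus $\bfS\subset\H$ and a character $\chi$ of $S$ that is regular with trivial Howe factorization beyond the toral part. We then identify the composed $\GL_{N}$-parameter with the one attached to the $\theta$-stable torus $\bfT=\Res_{E/F}\Gm$-type torus in $\GL_{N}$ that norm-corresponds to $\bfS$. Second, we use Kaletha's explicit character formula (Adler--DeBacker--Spice) for both sides. Near a shallow/regular element $\gamma$, the character of a toral supercuspidal is a sum over conjugates of $\bfS$ of $\chi$ times root-of-unity factors (Kaletha's $\epsilon$, $\Delta_{II}$-type terms) and an orbital-integral-type term on the Lie algebra. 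For the twisted side, we work with the twisted character of $\pi^{\GL_{N}}_{\phi}$ extended to $\GL_{N}(F)\rtimes\theta$ via the Whittaker normalization, and compute it by a twisted version of the Adler--Spice formula at $\theta$-regular twisted elements $\delta\theta$.

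Third, we apply Waldspurger's ``l'endoscopie tordue n'est pas si tordue'' to reduce the twisted transfer factors and twisted orbital integrals, near $\delta\theta$ with norm $\gamma\in\H$, to untwisted (nonstandard) endoscopy for the descent groups $\GL_{N}^{\delta\theta}$ and $\H_{\gamma}$. This matches the twisted Lie-algebra terms with ordinary Lie-algebra transfer, where the Harish-Chandra/Waldspurger transfer theorem of Fourier transforms of regular orbital integrals applies as in Kaletha's standard proof. The remaining steps are to match the root-of-unity constants: the twisted transfer factor $\Delta_{\I}\Delta_{\II}\Delta_{\III}\Delta_{\IV}$ against Kaletha's toral invariants and $\chi$-data, summed over the $\theta$-stable conjugates of $\bfT$ versus stable conjugates of $\bfS$. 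Finally, we extend the identity from a neighborhood of the elliptic set to all test functions. This is automatic, because both sides are supercuspidal distributions determined on elliptic regular elements, and the twisted trace Paley--Wiener argument handles the rest.

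The main obstacle is the matching of constants, namely showing that Waldspurger's twisted transfer factor for $(\GL_{N},\theta)\to\H$ agrees, at topologically semisimple-times-unipotent-free elements of $\bfS$, with the product of Kaletha's sign characters $\epsilon_{f,\mathrm{ram}}$, $\epsilon^{\mathrm{ram}}$, Tam's/Langlands' $\lambda$-constants and the Whittaker-normalized $\theta$-extension sign. We would first attack this by using Kaletha's $\chi$-data normalization on both groups simultaneously, choosing $\chi$-data on $\bfT$ that is $\theta$-invariant and restricts to $\chi$-data on $\bfS$. Then $\Delta_{\III}$ becomes a ratio of toral characters that cancels against the difference between $\chi$ and its $\GL_{N}$ counterpart. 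This leaves a finite product of symplectic/asymmetric root contributions, which we would compute root-by-root in the style of \cite{OT21}, using that $p$ is large to avoid wild phenomena.
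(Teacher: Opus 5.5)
The comparison machinery you outline (twisted Adler--DeBacker--Spice formula, Waldspurger's descent to $\G_{\eta}$ and $\H_{y}$, Lie algebra transfer, matching $\Delta_{\I},\dots,\Delta_{\IV}$ against Kaletha's invariants) is what the body of the paper does. As a proof of this theorem, though, your plan has a genuine gap at the first step.

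You assert that $\pi^{\GL_{N}}_{\phi}$ is Kaletha's regular supercuspidal representation attached to $\h\xi\circ\phi_{\H}$. This holds only when $\h\xi\circ\phi_{\H}$ is irreducible, i.e.\ when $\mcS_{\phi_{\H}}$ is trivial (Lemma \ref{lem:Arthur=Kaletha}). In general the composed parameter is a sum of several distinct self-dual pieces. It is then not discrete for $\GL_{N}$, $\pi^{\GL_{N}}_{\phi}$ is an induced non-supercuspidal representation, and the relevant torus is not elliptic in $\GL_{N}$. Consequently neither the twisted character formula for toral supercuspidals nor \cite{OT21} applies. For $\H=\Sp_{2n}$ with $n\geq1$ (so $N=2n+1$) this always happens, since for $p\neq2$ there are no $\theta$-stable supercuspidals of $\GL_{2n+1}$ (Remark \ref{rem:Prasad}).

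The missing idea is an induction on $|\mcS_{\phi_{\H}}|$ through standard endoscopy. If $\mcS_{\phi_{\H}}\neq1$, then $\phi_{\H}$ factors through a proper elliptic endoscopic group $\H'$, again a product of classical groups, with smaller packets. Both Kaletha's packets \cite[Theorem 6.3.4]{Kal19} and Arthur's satisfy the standard endoscopic character relation. The inductive hypothesis for $\H'$ therefore gives equality of the corresponding signed character sums on the elliptic regular set of $H$. Orthogonality of elliptic characters \cite{Clo91} then yields $\Pi^{\H}_{\phi_{\H}}=\Pi^{\H}_{\phi_{\H},\Art}$. Only the base case $\mcS_{\phi_{\H}}=1$ requires the twisted comparison with $\GL_{N}$.

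Second, your ``main obstacle'' is set up the wrong way. You do not need an exact match with the Whittaker-normalized twisted character, and your sketch of one is not supported by the available tools:
\begin{itemize}
\item The twisted character formula (Theorem \ref{thm:TCF-final}) uses an intertwiner built from Heisenberg--Weil representations and carries an uncomputed constant $C_{\ul{\eta}}$.
\item Your $\Delta_{\III}$ cancellation omits the character $\zeta_{\desc}$, which arises from minimalizing the inflated restricted $\chi$-data.
\end{itemize}
The paper instead defines $\Delta^{\spec}_{\phi,\pi}$ as the ratio of the two sides and proves that it is independent of $\delta$. That is the real content. For $\GL_{N}$ it rests on several facts: $E/E_{\pm}$ is ramified when $e$ is even, $\Xi=\varnothing$ in that case, toral invariants are trivial, $\t{S}_{k}$ contains an element of order $2$, and toral invariants descend along ramified restricted roots.

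In the base case this is enough. By \cite{OT21} we have $\pi=\pi_{\Art}$. Each elliptic $\delta$ has at most one norm up to stable conjugacy, and every elliptic $\gamma$ is a norm. Comparing the two relations gives $\Phi_{\pi_{\H}}=c\cdot\Phi_{\pi_{\H,\Art}}$ on the elliptic set for a single constant $c$, and orthogonality forces $\pi_{\H}=\pi_{\H,\Art}$. Without $\delta$-independence you would only get pointwise proportionality, which does not suffice. No Paley--Wiener extension beyond the elliptic set is needed.
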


We briefly explain what ``toral'' supercuspidal representations are (see Section \ref{subsec:rsc} for more details).
Let $\G$ be a tamely ramified connected reductive group over $F$ in the following.
In \cite{Yu01}, Yu established an explicit method for producing a broad class of supercuspidal representations, which are called \textit{tame supercuspidal representations}.
Yu's construction associates a tame supercuspidal representation to each tuple $(\vec{\G},\vec\vartheta,\vec{r},\x,\rho_{0})$ called a cuspidal $\G$-datum.
Here, we only recall that $\vec{\G}=(\G^{0}\subsetneq\cdots\subsetneq\G^{d})$ is a sequence of tame Levi subgroups of $\G$ and $\rho_{0}$ is a depth zero cuspidal representation of an open compact-modulo-center subgroup of $\G^{0}$ (hence regarded as a representation of a finite reductive group).
In \cite{Kal19}, by invoking the Deligne--Lusztig theory \cite{DL76}, Kaletha introduced the notion of \textit{regularity} for tame supercuspidal representations and discovered that regular supercuspidal representations can be re-parametrized by much simpler data $(\bfS,\vartheta)$ called \textit{tame elliptic regular pairs}, which consist only of a tame elliptic maximal torus $\bfS$ of $\G$ contained in $\G^{0}$ and a character $\vartheta$ of $S\colonequals \bfS(F)$ satisfying a certain regularity condition.
Based on this re-parametrization, he assigned an $L$-parameter to each regular supercuspidal representation and analyzed the internal structures of the resulting $L$-packets.
\textit{Toral supercuspidal representations} constitute a special class among regular supercuspidal representations; they are tame supercuspidal representations obtained from cuspidal $\G$-data whose $\vec{\G}$ are of the form $(\bfS\subsetneq\G)$.

To obtain Theorem \ref{thm:Arthur=Kaletha-intro}, we verify that Kaletha's toral supercuspidal $L$-packets satisfy the \textit{twisted endoscopic character relation}, which is the characterization of Arthur's correspondence.
Thus we next review the general framework of twisted endoscopy.
Suppose that $\H$ is an endoscopic group for $(\G,\theta)$ in the sense of Kottwitz--Shelstad, where $\theta$ is an $F$-rational pinning-preserving automorphism of $\G$ (see Section \ref{sec:tw-endo}).
In particular, $\H$ is equipped with an $L$-embedding $\h\xi\colon\L\H\hookrightarrow\L\G$, which enables us to regard any $L$-parameter $\phi_{\H}$ of $\H$ as an $L$-parameter of $\G$ (write $\phi$) by composing $\phi_{\H}$ with $\h\xi$.
Suppose that the local Langlands correspondence both for $\G$ and $\H$ are available, so that we may associate $L$-packets $\Pi_{\phi_{\H}}^{\H}$ and $\Pi_{\phi}^{\G}$ to both $\phi_{\H}$ and $\phi$.
Assume that $\phi_{\H}$ and $\phi$ are tempered.
\[
\xymatrix@R=10pt{
\Pi(\G) \supset \Pi_{\phi}^{\G} \quad \ar@{<~>}[rr]^-{\text{LLC for $\G$}} &&& \L\G\\
\Pi(\H) \supset \Pi_{\phi_{\H}}^{\H} \quad \ar@{<~>}[rr]^-{\text{LLC for $\H$}} &&\quad W_F\times\SL_2(\C) \ar[r]_-{\phi_{\H}} \ar[ru]^-{\phi} & \L\H \ar[u]_-{\h\xi} \\
}
\]

In this situation, it is expected that the following holds:

\begin{expect}[Twisted endoscopic character relation]\label{expectation:TECR-intro}
For each $\pi\in\Pi_{\phi}^{\G}$ there exists a constant $\Delta^{\spec}_{\phi,\pi}\in\C$ such that the following identity holds for any strongly regular semisimple $\delta\in\t{\G}(F)$:
\begin{align}\label{eq:TECR-intro}
\sum_{\pi\in\Pi_{\phi}^{\G}}
\Delta^{\spec}_{\phi,\pi}\cdot\Phi_{\t{\pi}}(\delta)
=
\sum_{\gamma\in H/{\st}} \mr{\Delta}(\gamma,\delta)
\sum_{\pi_{\H}\in\Pi_{\phi_{\H}}^{\H}}\Phi_{\pi_{\H}}(\gamma).
\end{align}
\end{expect}
Here,
\begin{itemize}
\item
$\t{\G}$ denotes the twisted space determined by $\G$ and $\theta$, i.e., the connected component of the semi-direct product group $\G\rtimes\langle\theta\rangle$ containing $1\rtimes\theta$ (see Section \ref{subsec:twisted-spaces});
\item
$\Phi_{\pi_{\H}}$ is the normalized (Harish-Chandra) character of $\pi_{\H}\in\Pi_{\phi_{\H}}^{\H}$ and $\Phi_{\t\pi}$ is the normalized twisted character of $\pi\in\Pi_{\phi}^{\G}$ (see Section \ref{subsec:TCF}), which is defined only when $\pi$ is $\theta$-stable (thus the coefficient $\Delta^{\spec}_{\phi,\pi}$ is zero unless $\pi$ is $\theta$-stable);
\item
the sum on the right-hand side is over the stable conjugacy classes of norms of $\delta$ in $H$ in the sense of twisted endoscopy (see Section \ref{subsec:norm});
\item
$\mr{\Delta}$ on the right-hand side is the transfer factor of Kottwitz--Shelstad (see Section \ref{subsec:tran}).
\end{itemize}
We note that, by linear independence of twisted characters, a family $\{\Delta^{\spec}_{\phi,\pi}\}_{\pi\in\Pi_{\phi}^{\G}}$ of constants as above is unique if exists.

In the untwisted case (i.e., $\theta$ is trivial), Kaletha proved that Expectation \ref{expectation:TECR-intro} is indeed true for toral supercuspidal $L$-packets under some assumptions on $p$ (\cite[Theorem 6.3.4]{Kal19}; see \cite[Section 4.4]{FKS23} for a more general result in the untwisted situation).

The point is that when $\H$ is a quasi-split special orthogonal or symplectic group, we can find a general linear group $\G=\GL_{n}$ with an involution $\theta$ such that $\H$ is an endoscopic group for $(\G,\theta)$.
The expected identity \eqref{eq:TECR-intro} then tells us the sum of characters of representations in each $L$-packet $\Pi_{\phi_{\H}}^{\H}$ of $\H$ in terms of the twisted characters of representations of $\GL_{n}(F)$; this information is enough to characterize $\Pi_{\phi_{\H}}^{\H}$ as a finite set of representations by linear independence of characters.
What Arthur did is to prove that there indeed exists a finite set $\Pi_{\phi_{\H}}^{\H}$ for each $\phi_{\H}$ satisfying the identity \eqref{eq:TECR-intro} with $\Pi_{\phi}^{\G}$ which is determined by the local Langlands correspondence for $\GL_{n}$.
Therefore, as we already know the coincidence of Kaletha's construction with the one by Harris--Taylor and Henniart, it is enough to verify the twisted endoscopic character relation for Kaletha's $L$-packets in order to obtain the coincidence of Kaletha's and Arthur's constructions.

The main result of this paper is as follows:

\begin{thm}[{Theorems \ref{thm:spec-well-def-GL}, \ref{thm:spec-well-def-unram}}]\label{thm:main-intro}
Suppose that $p$ is sufficiently large.
Kaletha's toral supercuspidal $L$-packets satisfy Expectation \ref{expectation:TECR-intro} in the following cases:
\begin{enumerate}
\item
$\G=\GL_{n}$ or
\item
$\G$ is general, $\theta$ is involutive, and toral supercuspidal $L$-packets arise from a torus splitting over a finite extension $E/F$ with odd ramification index.
\end{enumerate}
\end{thm}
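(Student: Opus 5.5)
The plan is to follow Kaletha's proof of the standard endoscopic character relation for toral supercuspidal $L$-packets (\cite[Section 6]{Kal19}), transported to the twisted setting via Waldspurger's ``l'endoscopie tordue n'est pas si tordue''. That framework rewrites twisted objects on $\t{\G}$ near a strongly regular semisimple $\delta$ in terms of untwisted objects on the connected centralizer $\G_{\delta}$ and its endoscopic data. We fix an $L$-parameter $\phi_{\H}$ of $\H$ whose composite $\phi=\h\xi\circ\phi_{\H}$ is a toral supercuspidal parameter. It is determined by a tame elliptic regular pair $(\bfS,\vartheta)$ for $\G$, and correspondingly $(\bfS_{\H},\vartheta_{\H})$ for $\H$, with $\bfS_{\H}$ a norm torus of $\bfS$ (the $\theta$-coinvariants). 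Two preliminary facts are needed:
\begin{itemize}
\item[(i)] describe which members $\pi_{(j,\vartheta)}$ of $\Pi^{\G}_{\phi}$ are $\theta$-stable, namely those whose embedding $j\colon\bfS\hookrightarrow\G$ can be chosen $\theta$-stable with $\vartheta\circ\theta=\vartheta$;
\item[(ii)] for such $\pi$, compute the twisted character $\Phi_{\t\pi}(\delta)$ on $\t{\G}(F)$ by a twisted analogue of the Adler--DeBacker--Spice character formula. Here $\delta$ ranges over shallow elements, of the form topologically semisimple times topologically unipotent.
\end{itemize}

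For (ii), realize $\t\pi$ as a compact induction from the twisted Yu datum, with $\theta$ acting on Yu's Heisenberg--Weil representation. Applying Harish-Chandra's integration formula together with the twisted Frobenius formula reduces $\Phi_{\t\pi}(\delta)$ to a sum over $\theta$-conjugates of $\bfS$ meeting $\delta$. The summand should have the shape
\[
\epsilon\cdot\Delta^{\theta}_{\mathrm{II}}\cdot\vartheta\bigl(\mathcal{N}(\delta)\bigr),
\]
where $\epsilon$ is a product of twisted Deligne--Lusztig (Digne--Michel) sign and Weil-representation trace terms and $\mathcal{N}(\delta)$ is the norm of $\delta$. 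The right-hand side of \eqref{eq:TECR-intro} is computed from Kaletha's untwisted character formula on $\H$, which we take as given. Both sides then become sums over the same set of norm pairs $(\gamma,\delta)$ in the tori. Matching them amounts to showing that the Kottwitz--Shelstad twisted transfer factor $\mr{\Delta}=\Delta_{\I}\Delta_{\II}\Delta_{\III}\Delta_{\IV}$, computed with tamely ramified $\chi$-data and $a$-data adapted to $(\bfS,\vartheta)$ as in \cite{Kal19}, equals the ratio of the character-formula constants, up to a factor $\Delta^{\spec}_{\phi,\pi}$ that depends only on $\pi$. Because the resulting identity must hold for all $\delta$, the constants $\Delta^{\spec}_{\phi,\pi}$ are well defined; this is exactly the content of Theorems~\ref{thm:spec-well-def-GL} and~\ref{thm:spec-well-def-unram}.

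The computation is organized by Waldspurger's reduction. At a $\theta$-semisimple $\delta=\eta\cdot\theta$ one passes to the descent group $\G_{\eta}$ (the $\theta$-twisted centralizer), where $\theta$ becomes inner, and the twisted transfer factor decomposes as a product of untwisted factors for the descent endoscopic datum. The topologically semisimple part is handled by this descent. The topologically unipotent part is handled by Kaletha's toral analysis on $\G_{\eta}$, which uses the $p$-large hypothesis through the exponential map and Harish-Chandra/Waldspurger homogeneity.

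The main obstacle is the root-by-root comparison of signs, namely the toral invariants and Weil-representation signs $\epsilon$ against $\Delta_{\II}$ and $\Delta_{\III}$. In the twisted case the $\theta$-orbits of roots come in types (symmetric, asymmetric, and orbits of size two whose sum is a root, the type-$\mathrm{II}$/$\mathrm{III}$ orbits in Waldspurger's terminology). I will classify $\theta$-orbits on $R(\bfS,\G)$ and check the identity orbit by orbit.
\begin{itemize}
\item For $\GL_{n}$, the orbits can be described explicitly in terms of the extension $E/F$ and the Jacobi-symbol signs of \cite{OT21}, and the comparison can be done by hand. This gives case (1) with no condition on ramification.
\item For general $\G$, the delicate orbits are the ramified symmetric ones, where the Weil representation sign involves Gauss sums. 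When the ramification index $e(E/F)$ is odd, these orbits either do not occur or the relevant quadratic characters are unramified, so their contribution is $1$ on both sides. This is precisely why case (2) imposes odd ramification index and involutivity of $\theta$.
\end{itemize}
I would first settle the depth-zero, trivial-$\eta$ case to fix normalizations, and then propagate to general $\delta$ by descent.
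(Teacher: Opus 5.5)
Your overall architecture matches the paper: Kaletha's argument from \cite{Kal19} transported through Waldspurger's framework, with odd ramification in case (2) serving to exclude ramified symmetric roots. But the step that carries the proof is missing, and the character formula you start from is wrong for general $\delta$.

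\textbf{The character formula and the Lie algebra transfer.} The twisted character is not a sum of toral terms $\epsilon\cdot\Delta_{\II}\cdot\vartheta(\cdots)$ over conjugates of $\bfS$ containing $\delta$; for most elliptic $\delta$ there are no such conjugates. Take a normal $r$-approximation $\delta=\delta_{0}\delta^{+}_{<r}\delta_{\geq r}$ and put $\eta=\delta_{<r}$; the relevant cut is at depth $r$, not at the topological Jordan decomposition. Then the sum runs over $g$ with ${}^{g}\eta\in\t{S}$. Each summand contains the Fourier transform $\hat{\iota}^{\G_{{}^{g}\eta}}_{X^{\ast}}(\log({}^{g}\delta_{\geq r}))$ of a Lie algebra orbital integral on the descended group (Theorem~\ref{thm:TCF-final}). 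This reduces to a value of $\vartheta$ only when $\G_{\eta}$ is a torus.

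Comparing these terms with their counterparts on $\H_{y}$ is the core of the proof, and it is exactly where the twisted case differs. In general $\H_{y}$ is \emph{not} an endoscopic group of $\G_{\eta}$, so there is no ``descent endoscopic datum'' that turns the twisted transfer factor into an untwisted one. The paper instead uses:
\begin{enumerate}
\item Waldspurger's factorization through a standard endoscopic group $\bar{\H}$ of $\G_{\eta,\sc}$ and a non-standard endoscopic triple $(\H_{y,\sc},\bar{\H}_{\sc},j_{\ast})$;
\item Lie algebra transfer of Fourier transforms in both settings (Theorems~\ref{thm:tran-std} and~\ref{thm:tran-nonst}, assembled in Proposition~\ref{prop:Lie-twisted});
\item Waldspurger's descent of transfer factors near $0$ (Theorem~\ref{thm:NST});
\item a tail-scaling argument (Lemma~\ref{lem:tran-scaling}, Proposition~\ref{prop:NST}) to reach all $D$-norm pairs.
\end{enumerate}

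\textbf{The index sets.} The two sides do not become sums over the same set of norm pairs. The $\G$-side is indexed by $\t{\mcJ}^{\G}_{G}$ together with the double cosets above; the $\H$-side by stable classes of norms of $\delta$. Matching them requires a twisted version of Kaletha's descent lemma (Proposition~\ref{prop:descent-lemma}). This cannot be built from admissible isomorphisms, because in twisted endoscopy these relate tori of $\H$ to $\theta$-coinvariants of tori of $\G$. Waldspurger's diagrams are used instead.

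\textbf{Well-definedness of $\Delta^{\spec}_{\phi,\pi}$.} This cannot be inferred from the identity holding for all $\delta$; that is circular. In the paper $\Delta^{\spec}$ is \emph{defined} as the ratio \eqref{spectral} of corresponding summands on the two sides. The whole content of Theorems~\ref{thm:spec-well-def-GL} and~\ref{thm:spec-well-def-unram} is that this ratio depends neither on $\delta$ (through $\eta$ and $y$) nor on the representative $k$ of a $G$-conjugacy class. The reduction uses the Kaletha--Kottwitz formula (Proposition~\ref{prop:Kaletha--Kottwitz}), the descent of $\Delta_{\II}$ with the $\delta_{0}$-independent constant of Lemma~\ref{lem:chi-constant}, and the computation of $\Delta_{\III}$ on a common torus (Proposition~\ref{prop:delta-III-ratio}). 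What remains is the product \eqref{ramified-spec} over ramified restricted roots. In case (2) that product is empty, which agrees with your explanation.

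\textbf{The $\GL_{n}$ case.} For $e(E/F)$ even, ``the comparison can be done by hand'' gives no mechanism. The paper needs four specific inputs:
\begin{enumerate}
\item Genericity forces $E/E_{\pm}$ to be ramified with $\tau_{\theta}=\sigma^{e/2}$ (Lemma~\ref{lem:tau-ram}).
\item $\Xi(\G,\bfS_{k})=\varnothing$, and all toral invariants of $\GL_{n}$ are trivial.
\item An element of order two in $\t{S}_{k}$ (Lemma~\ref{lem:existence-order-2}) shows that every asymmetric or unramified root with ramified restriction descends to $\G_{\eta_{0}}$.
\item The toral-invariant descent formulas (Propositions~\ref{prop:toral-descent-asym-ram} and~\ref{prop:toral-descent-ur-ram}) absorb the signs $\epsilon_{\alpha}(s_{k})$ into toral invariants at the fixed base point $\ul{\eta}_{k}$.
\end{enumerate}
After the last step the product is visibly independent of $\eta$.
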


We explain the outline of our proof of Theorem \ref{thm:main-intro} in the following.
Our strategy is quite simple in some sense; we reproduce Kaletha's proof of the standard (untwisted) endoscopic character relation while taking into account the effect of the twist $\theta$.
Thus let us first review Kaletha's proof in the untwisted setting briefly.

The starting point of Kaletha's proof is an explicit formula of the characters of tame supercuspidal representations due to Adler--DeBacker--Spice (\cite{AS09,DS18}).
In the toral setting, it is as follows.
Let $\pi_{(\bfS,\vartheta)}$ be the toral supercuspidal representation of $G$ arising from a tame elliptic regular pair $(\bfS,\vartheta)$.
Let $r\in\R_{>0}$ be the depth of the character $\vartheta$ and $X^{\ast}\in(\Lie\bfS)^{\ast}(F)$ be an element representing the restriction of $\vartheta$ to the depth $r$ subgroup $S_{r}$ of $S$ (see Section \ref{subsec:TCF} for details).
Let $\delta\in G$ be any elliptic regular semisimple element.
Then, by the theory of Adler--Spice \cite{AS08}, we can take a \textit{normal $r$-approximation} $\delta=\delta_{<r}\cdot\delta_{\geq r}$.
Roughly speaking, this is a nice product decomposition of $\delta$ into a part ``$p$-adically shallower than $r$'' and a part $``p$-adically deeper than or equal to $r$''.
One of its important properties is that two parts $\delta_{<r}$ and $\delta_{\geq r}$ commute; even more strongly, the deeper part $\delta_{\geq r}$ belongs to the connected centralizer $\G_{\delta_{<r}}\colonequals \bfZ_{\G}(\delta_{<r})^{\circ}$ of the shallower part.
The Adler--DeBacker--Spice formula is expressed by using a normal $r$-approximation as follows (the symbol ${}^{g}(-)$ denotes the conjugate $g(-)g^{-1}$):
\begin{align}\label{eq:ADS}
\Phi_{\pi_{(\bfS,\vartheta)}}(\delta)
=
\sum_{\begin{subarray}{c}g\in S\backslash G/G_{\delta_{<r}} \\ {}^{g}\delta_{<r}\in S\end{subarray}} \varepsilon({}^{g}\delta_{<r})\cdot\vartheta({}^{g}\delta_{<r})\cdot\h\iota^{\G_{\delta_{<r}}}_{g^{-1}X^{\ast}g}(\log(\delta_{\geq r})),
\end{align}
where $\varepsilon({}^{g}\delta_{<r})$ is a certain root of unity determined by ${}^{g}\delta_{<r}$, and $\h\iota^{\G_{\delta_{<r}}}_{g^{-1}X^{\ast}g}(-)$ denotes the normalized Fourier transform of the orbital integral with respect to $g^{-1}X^{\ast}g$ taken in the (Lie algebra of) $\G_{\delta_{<r}}$.
The important observation here is that $\h\iota^{\G_{\delta_{<r}}}_{g^{-1}X^{\ast}g}(-)$, which is nothing but the Lie algebra analogue of the Harish-Chandra characters of representations, is used to express the contribution of the deeper part $\delta_{\geq r}$.
In fact, by the theorem of Waldspurger and Ng\^o (\cite{Wal06,Ngo10}; see Section \ref{subsec:Lie-tran}), we can compare the Fourier transforms of Lie algebra orbital integrals between any group and its standard endoscopic group (this can be thought of as a Lie algebra analogue of the standard endoscopic character relation).
The principal idea of Kaletha's strategy is to reduce the standard endoscopic character relation to the Lie algebra transfer theorem of Waldspurger--Ng\^o through the Adler--DeBacker--Spice character formula.

Now let $\H$ be a standard (untwisted) endoscopic group of $\G$ and suppose that both $\Pi_{\phi_{\H}}^{\H}$ and $\Pi_{\phi}^{\G}$ consist of toral supercuspidal representations (of depth $r\in\R_{>0}$).
When $\gamma\in H$ is a norm of $\delta\in G$, we may transfer a normal $r$-approximation $\delta=\delta_{<r}\cdot\delta_{\geq r}$ to $\gamma=\gamma_{<r}\cdot\gamma_{\geq r}$.
Therefore, by applying the Adler--DeBacker--Spice formula to all the characters of representations in $\Pi_{\phi}^{\G}$ and $\Pi_{\phi_{H}}^{\H}$ with respect to these $r$-approximations, the $\G$-side and the $\H$-side of \eqref{eq:TECR-intro} are rewritten as follows:
\begin{align}\label{eq:G-side-intro}
\sum_{\pi_{(\bfS,\vartheta)}\in\Pi_{\phi}^{\G}}
\Delta_{\phi,\pi_{(\bfS,\vartheta)}}^{\spec}
\sum_{\begin{subarray}{c}g\in S\backslash G/G_{\delta_{<r}} \\ {}^{g}\delta_{<r}\in S\end{subarray}} \varepsilon({}^{g}\delta_{<r})\cdot\vartheta({}^{g}\delta_{<r})\cdot\h\iota^{\G_{\delta_{<r}}}_{g^{-1}X^{\ast}g}(\log(\delta_{\geq r})),
\end{align}
\begin{multline}\label{eq:H-side-intro}
\sum_{\gamma\in H/{\st}} \mr{\Delta}(\gamma,\delta)
\!\!\!\!\!\!
\sum_{\pi_{(\bfS_{\H},\vartheta_{\H})}\in\Pi_{\phi_{\H}}^{\H}}
\sum_{\begin{subarray}{c}h\in S_{\H}\backslash H/H_{\gamma_{<r}} \\ {}^{h}\gamma_{<r}\in S_{\H}\end{subarray}}
\!\!\!\!\!\!
 \varepsilon({}^{h}\gamma_{<r})\cdot\vartheta_{\H}({}^{h}\gamma_{<r})\cdot\h\iota^{\H_{\gamma_{<r}}}_{h^{-1}X_{\H}^{\ast}h}(\log(\gamma_{\geq r})).
\end{multline}

Note that the Lie algebra orbital integrals are taken not in $\G$ and $\H$ but in the ``descended'' groups $\G_{\delta_{<r}}$ and $\H_{\gamma_{<r}}$.
The crucially important ingredient here is the theory of \textit{descent for standard endoscopy} due to Langlands--Shelstad \cite{LS90}, which guarantees that the group $\H_{\gamma_{<r}}$ again has a structure of a standard endoscopic group of $\G_{\delta_{<r}}$.
Moreover, the transfer factor for the pair $(\G,\H)$ is related to that of the descended pair $(\G_{\delta_{<r}},\H_{\gamma_{<r}})$.
Then the basic setup for utilizing the Lie algebra transfer is done.
\[
\xymatrix@R=10pt{
\G\ar@{-}_{\text{standard endoscopy}}[d]\ar@{~>}^{\text{descent}}[rr]&&\G_{\delta_{<r}}\ar@{-}^-{\text{standard endoscopy}}[d]\\
\H\ar@{~>}^{\text{descent}}[rr]&&\H_{\gamma_{<r}}
}
\]

However, there are still several subtle points remaining.
Firstly, before thinking about comparing the summands of \eqref{eq:G-side-intro} and \eqref{eq:H-side-intro} via the Lie algebra transfer, we must investigate how the index sets of those sums can be compared.
Another related task is to rewrite both sides \eqref{eq:G-side-intro} and \eqref{eq:H-side-intro} in a way such that the Fourier transforms of orbital integrals are summed up over rational conjugacy classes within a stable conjugacy class (in $\G_{\delta_{<r}}$ or $\H_{\gamma_{<r}}$), so that the Lie algebra transfer can be applied.
Kaletha resolved these issues by an ingenious trick of rearranging the sums.
By construction, the members of $\Pi_{\phi}^{\G}$ are labeled by the rational conjugacy classes within the stable conjugacy class of \textit{admissible embeddings} (see Definition \ref{defn:adm-emb}) of $\bfS$ into $\G$.
The second index set of \eqref{eq:G-side-intro} can be thought of as a set measuring the difference between the rational conjugacy in $\G$ and that in $\G_{\delta_{<r}}$.
Hence the double sums in \eqref{eq:G-side-intro} are combined into a single sum over $G_{\delta_{<r}}$-conjugacy classes within the stable $\G$-conjugacy class of admissible embeddings.
If we again partition this sum based on the stable $\G_{\delta_{<r}}$-conjugacy, we can obtain a sum over the desired index set, i.e., $G_{\delta_{<r}}$-conjugacy classes within the stable $\G_{\delta_{<r}}$-conjugacy class.
The same argument can be also applied to the $\H$-side \eqref{eq:H-side-intro}.
Then, by utilizing the ``\textit{descent lemma}'', which was established in \cite[Section 5.4]{Kal15}, we can relate the sum over stable $\G_{\delta_{<r}}$-conjugacy classes within the stable $\G$-conjugacy class to that over $\H_{\gamma_{<r}}$-conjugacy classes within the stable $\H$-conjugacy class.
\[
\xymatrix@R=0pt{
&\G\ar@{-}_-{\text{$\Pi_{\phi}^{\G}$}}[ld]\ar@{-}^-{\qquad\text{descent lem.}}[rd]&&&&\H\ar@{-}_-{\text{$\Pi_{\phi_{\H}}^{\H}$}}[ld]\ar@{-}^-{\qquad\text{descent lem.}}[rd]&\\
G\ar@{-}_-{\text{ADS formula}\quad}[rd]&&\G_{\delta_{<r}}\ar@{-}^-{\qquad\text{Lie alg.\ trans.}}[ld]&&H\ar@{-}_-{\text{ADS formula}\quad}[rd]&&\H_{\gamma_{<r}}\ar@{-}^-{\qquad\text{Lie alg.\ trans.}}[ld]\\
&G_{\delta_{<r}}&&&&H_{\gamma_{<r}}&
}
\]

Secondly, we also have to relate the roots of unity ``$\varepsilon$'' in the summands in \eqref{eq:G-side-intro} and \eqref{eq:H-side-intro}.
These factors are explicitly computed in \cite{AS09,DS18}; they reflect the symmetry of the root system $\Phi(\G,\bfS)$, which is a finite set equipped with a (typically, highly nontrivial) Galois action.
Kaletha first showed that this part can be re-interpreted in terms of several invariants having a more ``endoscopic'' nature such as the second transfer factor $\Delta_{\II}$ (\cite[Corollary 4.8.2]{Kal19}).
Then, by computing the transfer factor $\mr{\Delta}(\gamma,\delta)$ explicitly and also by utilizing various nontrivial results on the arithmetic invariants such as local root numbers and Weil constants, he eventually proved that all of these subtle quantities perfectly fit together. 

Now, let us move on to the twisted situation.
In the following, we let $\theta$ be a nontrivial $F$-rational involution of $\G$ and $\H$ is an endoscopic group of $(\G,\theta)$.
Our first task is to establish a twisted version of the Adler--DeBacker--Spice formula.
For this, we have to start with investigating a twisted version of the notion of a normal $r$-approximation because the Adler--DeBacker--Spice formula is based on it.
These are studied in our preceding work \cite{Oi25}.
More precisely, we first established a definition/construction of a ``normal $r$-approximation'' in the setting of twisted spaces, and then established an explicit formula for twisted characters of toral supercuspidal representations, which is completely parallel to the Adler--DeBacker--Spice formula as follows (see \cite[Theorem 7.3]{Oi25} and also Theorem \ref{thm:TCF-final}):
\begin{align}\label{eq:ADS-twisted}
\Phi_{\t\pi_{(\bfS,\vartheta)}}(\delta)
=
\sum_{\begin{subarray}{c}g\in S\backslash G/G_{\delta_{<r}} \\ {}^{g}\delta_{<r}\in \t{S}\end{subarray}} \t\varepsilon({}^{g}\delta_{<r})\cdot\t\vartheta({}^{g}\delta_{<r})\cdot\h\iota^{\G_{\delta_{<r}}}_{g^{-1}X^{\ast}g}(\log(\delta_{\geq r})).
\end{align}
Here, we put ``$\sim$'' on the symbols to indicate that these are quantities determined in this twisted context.
(Although we wrote the above formula \eqref{eq:ADS-twisted} in a way parallel to \eqref{eq:ADS}, the actual expression of $\t\varepsilon$ is much more complicated than the untwisted case; see Proposition \ref{prop:TCF-normal} for the details).

By looking at the formula \eqref{eq:ADS-twisted}, we notice that the contribution of the deeper part $\delta_{\geq r}$ is expressed via the Fourier transform of a Lie algebra orbital integral with respect to the descended group $\G_{\delta_{<r}}$ as well as in the untwisted case.
Therefore, one might expect that the same strategy again works in this twisted setting.
Unfortunately, in the twisted setting, it is not always the case that $\H_{\gamma_{<r}}$ has a structure of an endoscopic group of $\G_{\delta_{<r}}$.
Nevertheless, it is still possible to relate $\H_{\gamma_{<r}}$ to $\G_{\delta_{<r}}$ by introducing another variant of the notion of standard endoscopy called \textit{non-standard endoscopy}.
More precisely, there exists a group $\bar{\H}$ such that $\bar{\H}$ is a standard endoscopic group of the simply-connected cover of the derived subgroup of $\G_{\delta_{<r}}$ and also that the simply-connected covers of the derived subgroups of $\bar{\H}$ and $\H_{\gamma_{<r}}$ form a non-standard endoscopic pair.
Furthermore, the Lie algebra transfer for Fourier transforms of orbital integrals is also available for the non-standard endoscopic pair.
This is the framework ``\textit{l'endoscopie tordue n'est pas si tordue}'' established by Waldspurger (\cite{Wal08}).
\[
\xymatrix@R=10pt{
\t\G\ar@{-}_{\text{twisted endoscopy}}[dd]\ar@{~>}^{\text{descent}}[rr]&&\G_{\delta_{<r}}&\G_{\delta_{<r},\sc}\ar[l]\ar@{-}^-{\text{standard endoscopy}}[d]&\\
&&&\bar{\H}&\bar{\H}_{\sc}\ar[l]\ar@{-}^-{\text{non-standard endoscopy}}[d]\\
\H\ar@{~>}_{\text{descent}}[rr]&&\H_{\gamma_{<r}}&&\H_{\gamma_{<r},\sc}\ar[ll]
}
\]

Now we gained the right to attempt to mimic Kaletha's proof.
Let us discuss the rearranging argument on the index sets of the sums in the twisted endoscopic character relation.
The first difficulty is that only $\theta$-stable members of $\Pi_{\phi}^{\G}$ contribute to the twisted endoscopic character relation.
Hence we must clarify the $\theta$-stability condition in terms of admissible embeddings, which parametrize the members of $\Pi_{\phi}^{\G}$.
We deal with this issue by examining the notion of a \textit{twisted maximal torus}.
The second difficulty is that Kaletha's descent lemma, which is necessary for the index sets comparison, needs a major modification.
The idea of the descent lemma in the untwisted case is to utilize  \textit{admissible isomorphisms}, which are $F$-rational isomorphisms between maximal tori of $\G$ and those of $\H$.
For a given $F$-rational admissible embedding of a maximal torus $\bfS_{\H}$ into $\H$, by composing it with an admissible isomorphism between $\bfS_{\H}$ and a maximal torus (say $\bfS$) in $\G$, we may produce an $F$-rational admissible embedding of $\bfS$ into $\G$.
However, this construction no longer works in the twisted setting because an admissible isomorphism in twisted endoscopy is an $F$-rational isomorphism between an $F$-rational maximal torus of $\H$ and the coinvariant (with respect to the ``twist'') of a maximal torus of $\G$.
To resolve this issue, we utilize the notion of a \textit{diagram} introduced by Waldspurger (see Definition \ref{defn:diag}).
A diagram induces an admissible isomorphism, but also encapsulates more information.
Hence it can be thought of as a ``rigidification'' of an admissible isomorphism.
Using diagrams instead of admissible isomorphisms, we can reproduce Kaletha's descent lemma in the twisted setting.

We next discuss comparing the roots of unity appearing in the $\G$-side with those in the $\H$-side.
Our basic strategy is to unravel various arithmetic or root-theoretic invariants via similar techniques to the untwisted case.
However, somehow our computation left us with a very complicated quantity as a ratio of a summand in the $\G$-side to that in the $\H$-side (see \eqref{spectral} and also \eqref{eq:spectral-tf-final}).
What we do is, rather than trying to express this ratio more explicitly, just defining the coefficient ``$\Delta_{\phi,\pi}^{\spec}$'' in the endoscopic character relation to be exactly this ratio.
Then the endoscopic character relation holds almost tautologically.
Instead, the well-definedness of $\Delta_{\phi,\pi}^{\spec}$ becomes quite nontrivial; a priori $\Delta_{\phi,\pi}^{\spec}$ heavily depends on the elliptic regular semisimple element $\delta\in\t{G}$ taken at the beginning.
Thus what we do next is to show that $\Delta_{\phi,\pi}^{\spec}$ is in fact independent of the choice of $\delta$.
By examining each factor involved in $\Delta_{\phi,\pi}^{\spec}$, we see that it is enough to check that the quantity \eqref{ramified-spec}, which is much simpler than \eqref{spectral}, is independent of $\delta$.
In fact, every factor appearing in \eqref{spectral} is related to ramified symmetric roots contained in the restricted root system (in the sense of Kottwitz--Shelstad; see Section \ref{subsec:twisted-tori}) of a maximal torus $\bfS$ of $\G$ associated to a $\theta$-stable toral supercuspidal representation $\pi$.
Therefore, if the restricted root system does not contain any ramified symmetric root, then \eqref{ramified-spec} is trivial.
This is how we obtained Theorem \ref{thm:main-intro} (2).
What we eventually verified is that \eqref{ramified-spec} is trivial also when $\G=\GL_{n}$; this is achieved by explicitly classifying the possible Galois actions on (restricted) root systems of $\GL_{n}$.

Let us finish this introduction by giving several concluding remarks.
We believe that it is possible to generalize our results in various directions.
The artificial assumption on $\G$ or $\bfS$ of Theorem \ref{thm:main-intro} stems only from the last part of the proof, which is about the well-definedness of $\Delta_{\phi,\pi}^{\spec}$.
Probably this part can be dealt with in general by a case-by-case computation based on a classification of twisted endoscopy; cf.\ \cite[Chapitre 14--18]{Wal08}.
We expect that it is also possible to establish a depth-zero version of our result by replacing the Adler--DeBacker--Spice character formula with the one of DeBacker--Reeder \cite{DR09}.
It is a natural problem to extend our result to the case of general regular (or even non-singular) supercuspidal representations, but it should require twisting the recent work of Spice \cite{Spi18,Spi21}, which are quite deep.

We finally would like to emphasize that our arguments are also inspired by Mezo's proof of the twisted endoscopic character relation for discrete series $L$-packets of real reductive groups (cf.\ \cite{Mez13}).
We think that our constant $\Delta_{\phi,\pi}^{\spec}$ is nothing but the $p$-adic version of what is called the \textit{spectral transfer factor} in the archimedean setting; see Chapter 1 and also (115)--(117) of \cite{Mez13}.

\medbreak
\noindent{\bfseries Organization of this paper.}\quad
In Section \ref{sec:notation}, we list our basic notation.
In Section \ref{sec:LLC}, we review Kaletha's construction of the local Langlands correspondence for regular supercuspidal representations.
In Section \ref{sec:TCF}, we review our preceding result \cite{Oi25} on the twisted character formula for toral supercuspidal representations.
In Section \ref{sec:tw-endo}, we review the framework of twisted endoscopy.
In Section \ref{sec:theta-stable}, we investigate the structure of a $\theta$-stable regular supercuspidal $L$-packet.
In Section \ref{sec:descent}, we examine the notion of a diagram and establish a twisted version of Kaletha's descent lemma.
In Section \ref{sec:Waldspurger}, we briefly review Waldspurger's framework.
In Sections \ref{sec:res-toral-inv} and \ref{sec:transfer-factor}, we prove some technical lemmas needed in the computation of the spectral transfer factor.
In Section \ref{sec:TECR}, we compare the $\G$-side and the $\H$-side of the endoscopic character relation.
Especially, we introduce the spectral transfer factor.
In Section \ref{sec:GL}, we prove that the spectral transfer factor is well-defined in the $\GL_{n}$ case, which implies that the twisted endoscopic character relation holds for toral supercuspidal $L$-packets of twisted $\GL_{n}$.

\medbreak
\noindent{\bfseries Acknowledgment.}\quad
This project was started when I was visiting Tasho Kaletha at the University of Michigan in 2018 winter as a part of the Program for Leading Graduate Schools, MEXT.
Since then, he has kindly responded to a lot of my questions and also has provided a number of valuable advice and encouragement.
Atsushi Ichino and Yoichi Mieda have dedicated a considerable amount of time to have discussions with me on this project.
I would like to express my deep gratitude, particularly to these three individuals.
I also would like to thank Jeffrey Adler, Alexander Bertoloni Meli, Charlotte Chan, Guy Henniart, Tamotsu Ikeda, Wen-Wei Li, Sug Woo Shin, Loren Spice, Kazuki Tokimoto, Sandeep Varma, and Alex Youcis for very helpful discussions on the content of this paper.

This work was carried out with the support from the Program for Leading Graduate Schools MEXT, JSPS Research Fellowship for Young Scientists, and KAKENHI Grant Number 17J05451, 19J00846, 20K14287.
This work was also supported by the Yushan Young Fellow Program, Ministry of Education, Taiwan.
I am grateful to the Hakubi Center of Kyoto University for providing a wonderful research environment between 2019--2024.


\section{Notation and assumptions on $p$}\label{sec:notation}

Here we summarize our basic notation and assumptions on $p$.
(See also the list of symbols at the end of this paper.)

\subsection{Notation}

\subsubsection{$p$-adic fields}
Let $p$ be a prime number.
We fix a $p$-adic field $F$, i.e., $F$ is a finite extension of $\Q_{p}$.
We fix an algebraic closure $\overline{F}$ of $F$ and always work within $\overline{F}$ when we consider algebraic extensions of $F$.
For any finite extension $E$ of $F$, we write $\mcO_{E}$, $\mfp_{E}$, $k_{E}$, and $\Gamma_{E}$ for the ring of integers of $E$, the maximal ideal of $\mcO_{E}$, the residue field, and the absolute Galois group $\Gal(\overline{F}/E)$ of $E$, respectively.
When $E=F$, we omit the subscript $F$ from these symbols. 
For any finite extension $E$ of $F$, we write $W_{E}$, $I_{E}$, and $P_{E}$ for the Weil group of $E$, its inertia subgroup, and its wild inertia subgroup, respectively.
For any $r\in\R_{>0}$, let $I_{F}^{r}$ denote the $r$-th upper ramification filtration of $I_{F}$.
We fix a valuation $\val_{F}$ of $F$ such that $\val_{F}(F^\times)=\Z$.
We extend it to $\overline{F}$ and again write $\val_{F}$ for it.
We define an absolute value $|\cdot|_{\overline{F}}$ of $\overline{F}$ by $|\cdot|_{\overline{F}}\colonequals q^{-\val_{F}(\cdot)}$, where $q$ is the cardinality of the residue field $k_{F}$.

We fix an additive character $\psi_{F}\colon F\rightarrow\C^{\times}$ of level $1$, i.e., $\psi_{F}|_{\mfp}\equiv\mathbbm{1}$ and $\psi_{F}|_{\mcO}\not\equiv\mathbbm{1}$.

\subsubsection{Algebraic varieties and algebraic groups}
In this paper, we use a bold letter for an algebraic variety and use an italic letter for the set of its $F$-valued points when it is defined over $F$.
For example, if $\bfX$ is an algebraic variety defined over $F$, then $X\colonequals \bfX(F)$.

For any algebraic group $\G$, we write $X^{\ast}(\G)$ and $X_{\ast}(\G)$ for the group of characters and cocharacters of $\G$, respectively.
We let $\bfZ_{\G}$ denote the center of $\G$.
When $\G$ is defined over $F$, so is $\bfZ_{\G}$ and the set of its $F$-valued points is denoted by $Z_{\G}$.

For any torus $\bfS$ equipped with an automorphism $\theta_{\bfS}$, we let $\bfS^{\theta_{\bfS}}$ and $\bfS_{\theta_{\bfS}}$ denote the invariant and coinvariant of $\bfS$ with respect to $\theta_{\bfS}$, respectively.

\subsubsection{Centralizers and normalizers}
Suppose that $\G$ is an algebraic group and $\bfX$ is an algebraic variety equipped with left and right actions of $\G$, for which we write $\G\times\bfX\times\G\rightarrow\bfX\colon(g_{1}, x, g_{2})\mapsto g_{1}\cdot x\cdot g_{2}$.
Then we define the conjugate action of $\G$ on $\bfX$ by $\G\times\bfX\rightarrow\bfX\colon (g,x)\mapsto g\cdot x\cdot g^{-1}$.
We introduce the following notation:
\begin{itemize}
\item
For $g\in\G$, let $[g]$ denote the conjugation automorphism $\bfX\rightarrow\bfX\colon x\mapsto g\cdot x\cdot g^{-1}$.
We also often write ${}^{g}x\colonequals [g](x)=g\cdot x\cdot g^{-1}$.
\item
For $x\in\bfX$, let $\G^{x}$ denote the full stabilizer of $x$ in $\G$ with respect to the conjugate action, i.e., $\G^{x}\colonequals \{g\in\G\mid [g](x)=x\}$.
\item
For $x\in\bfX$, let $\G_{x}$ denote the connected stabilizer of $x$ in $\G$ with respect to the conjugate action, i.e., $\G_{x}\colonequals \G^{x,\circ}$.
\end{itemize}
Note that, when $\G$ and $\bfX$ are $F$-rational, $[g]$ is also $F$-rational if $g\in \G(F)$.
Similarly, $\G^{x}$ and $\G_{x}$ are $F$-rational if $x\in\bfX(F)$.

For any subsets $\bfH\subset \bfG$ and $Y\subset \bfX$, we put 
\begin{itemize}
\item
$\bfZ_{\bfH}(Y)\colonequals \{g\in\bfH\mid \text{$[g](y)=y$ for any $y\in Y$}\}$ and 
\item
$\bfN_{\bfH}(Y)\colonequals \{g\in\bfH\mid [g](Y)\subset Y\}$.
\end{itemize}
When $Y$ is a singleton $\{y\}$, we simply write $\bfZ_{\bfH}(y)\colonequals \bfZ_{\bfH}(Y)=\bfN_{\bfH}(Y)$.
If $\G$ and $\bfX$ are defined over $F$ and $Y$ is a subset of $\bfX(F)$, then $\bfZ_{\G}(Y)$ and $\bfN_{\G}(Y)$ are defined over $F$ and the sets of their $F$-valued points are denoted by $Z_{\G}(Y)$ and $N_{\G}(Y)$, respectively.

\subsubsection{Reductive groups}
For any connected reductive group $\G$ and its maximal torus $\bfS$, we let $\Phi(\G,\bfS)$ and $\Phi^{\vee}(\G,\bfS)$ denote the set of roots and coroots of $\bfS$ in $\G$, respectively.
Note that, when both $\G$ and $\bfS$ are defined over $F$, the sets $\Phi(\G,\bfS)$ and $\Phi^{\vee}(\G,\bfS)$ are equipped with an action of $\Gamma$.
We let $\Omega_{\G}(\bfS)$ be the Weyl group of $\bfS$ in $\G$, i.e., $\Omega_{\G}(\bfS)\colonequals \bfN_{\G}(\bfS)/\bfS$.
We sometimes loosely write $\Omega_{\G}$ for $\Omega_{\G}(\bfS)$ when the choice of a maximal torus $\bfS$ is clear from the context (e.g., when $\bfS$ is a maximal torus belonging to a splitting of $\G$).

We write $\bmfg$ for the Lie algebra of $\G$.
When $\G$ is defined over $F$, $\bmfg$ is an algebraic variety over $F$, hence we write $\mfg\colonequals \bmfg(F)$ following the convention explained above.

\subsubsection{Bruhat--Tits theory}

Suppose that $\G$ is a connected reductive group over $F$.
We follow the notation on Bruhat--Tits theory used by \cite{AS08, AS09, DS18}.
(See, for example, \cite[Section 3.1]{AS08} for details.)
Especially, $\mcB(\G,F)$ (resp.\ $\mcB^{\red}(\G,F)$) denotes the enlarged (resp.\ reduced) Bruhat--Tits building of $\G$ over $F$.
For $\x\in\mcB(\G,F)$ (or $\x\in\mcB^{\red}(\G,F)$), we let $G_{\x}$ be the stabilizer of $\x$ in $G$.

We define $\tilde{\R}$ to be the set $\R\sqcup\{r+\mid r\in\R\}\sqcup\{\infty\}$ with a natural order.
Then, for any $r\in\tilde{\R}_{\geq0}$ and $\x\in\mcB^{\red}(\G,F)$, we can consider the $r$-th Moy--Prasad filtration $G_{\x,r}$ of $G$ with respect to the point $\x$.
For any $r,s\in\tilde{\R}_{\geq0}$ satisfying $r<s$, we write $G_{\x,r:s}$ for the quotient $G_{\x,r}/G_{\x,s}$.
We put $G_{r}\colonequals \bigcup_{\x\in\mcB^{\red}(\G,F)}G_{\x,r}$ for $r\in\tilde{\R}_{\geq0}$.
Similarly, we have the Moy--Prasad filtration $\{\mfg_{\x,r}\}_{r}$ on the Lie algebra $\mfg=\bmfg(F)$, their quotients $\mfg_{\x,r:s}$, and the unions $\mfg_{r}$.
We also have the Moy--Prasad filtration on the dual Lie algebra $\mfg^{\ast}\colonequals \Hom_{F}(\mfg,F)$ defined by 
\[
\mfg^{\ast}_{\x,r}\colonequals \{Y^{\ast}\in\mfg^{\ast} \mid \langle \mfg_{\x,(-r)+},Y^{\ast}\rangle\subset \mfp\}
\]
for any $r\in\R_{\geq0}$ and $\x\in\mcB^{\red}(\G,F)$ ($\mfg^{\ast}_{\x,r+}$ is defined to be $\bigcup_{s>r}\mfg^{\ast}_{\x,s}$).

Suppose that $\bfS$ is a tame (i.e., $F$-rational and split over a tamely ramified extension of $F$) maximal torus of $\G$.
By fixing an $S$-equivariant embedding of $\mcB(\bfS,F)$ into $\mcB(\G,F)$, we may regard $\mcB(\bfS,F)$ as a subset of $\mcB(\G,F)$.
Then, for any point $\x\in\mcB(\G,F)$, the property that ``$\x$ belongs to the image of $\mcB(\bfS,F)$'' does not depend on the choice of such an embedding (see the second paragraph of \cite[Section 3]{FKS23} for details).
For any point $\x\in\mcB(\G,F)$ which belongs to $\mcB(\bfS,F)$, we have $S_{\mathrm{b}}\subset G_{\x}$, where $S_{\mathrm{b}}$ denotes the maximal bounded subgroup of $S$.
When $\bfS$ is elliptic in $\G$, the image of $\mcB(\bfS,F)$ in $\mcB^{\red}(\G,F)$ consists of only one point.
If $\x\in\mcB(\G,F)$ belongs to the image of $\mcB(\bfS,F)$, we say that ``$\x$ is associated to $\bfS$''.

We also fix a family of mock-exponential maps $\mfg_{\x,r}\rightarrow G_{\x,r}$ for $x\in\mcB(\G,F)$ and $r\in\t\R_{>0}$ and simply write ``$\exp$'' for it (see \cite[Appendix A]{AS09}; cf.\ \cite[Section 3.4]{Hak18}).
We write ``$\log$'' for the inverse of $\exp$.
It is guaranteed that a mock exponential map in the sense of \cite[Appendix A]{AS09} always exists when $p$ does not divide the order of the absolute Weyl group of $\G$.

\subsubsection{Finite sets with Galois actions}\label{subsubsec:fin-Gal-sets}
We put $\Sigma\colonequals \Gamma\times\{\pm1\}$.
Suppose that $\Phi$ is a finite set with an action of $\Sigma$, e.g., the set of roots of an $F$-rational maximal torus in a connected reductive group ($-1$ acts on $\Phi$ via $\alpha\mapsto -\alpha$ in this case).
Following \cite{AS09}, we put $\dot{\Phi}\colonequals \Phi/\Gamma$\symdef{phidot}{$\dot{\Phi}$} and $\ddot{\Phi}\colonequals \Phi/\Sigma$\symdef{phidotdot}{$\ddot{\Phi}$}.
Also, whenever there is no risk of confusion, we simply write $\dot{\alpha}:=\Gamma\alpha$\symdef{alphadot}{$\dot{\alpha}$} and $\ddot{\alpha}:=\Sigma\alpha$\symdef{alphadotdot}{$\ddot{\alpha}$}.

For each $\alpha\in\Phi$, we put $\Gamma_{\alpha}$ (resp.\ $\Gamma_{\pm\alpha}$) to be the stabilizer of $\alpha$ (resp.\ $\{\pm\alpha\}$) in $\Gamma$.
Let $F_{\alpha}$ (resp.\ $F_{\pm\alpha}$) be the subfield of $\overline{F}$ fixed by $\Gamma_{\alpha}$ (resp.\ $\Gamma_{\pm\alpha}$).
Hence we have $\Gamma_{\alpha}=\Gamma_{F_{\alpha}}$ and $\Gamma_{\pm\alpha}=\Gamma_{F_{\pm\alpha}}$:
\[
F\subset F_{\pm\alpha}\subset F_{\alpha}
\quad
\longleftrightarrow
\quad
\Gamma\supset \Gamma_{\pm\alpha}\supset \Gamma_{\alpha}.
\]
We abbreviate the residue field $k_{F_{\alpha}}$ of $F_{\alpha}$ (resp.\ $k_{F_{\pm\alpha}}$ of $F_{\pm\alpha}$) as $k_{\alpha}$ (resp.\ $k_{\pm\alpha})$.

We say that $\alpha\in\Phi$ is \textit{asymmetric} if $F_{\alpha}=F_{\pm\alpha}$ and that $\alpha$ is \textit{symmetric} if $F_{\alpha}\supsetneq F_{\pm\alpha}$.
We remark that $\alpha$ is symmetric if and only if the $\Gamma$-orbit of $\alpha$ contains $-\alpha$.
By noting that the extension $F_{\alpha}/F_{\pm\alpha}$ is necessarily quadratic if $\alpha$ is symmetric, we say that $\alpha$ is \textit{(symmetric) unramified} (resp.\ \textit{ramified}) if $F_{\alpha}/F_{\pm\alpha}$ is unramified (resp.\ ramified).
We write $\Phi_{\asym}$, $\Phi_{\ur}$, $\Phi_{\ram}$, and $\Phi_{\sym}$ for the set of asymmetric elements, symmetric unramified elements, symmetric ramified elements, and symmetric elements of $\Phi$, respectively.

For $\alpha\in\Phi_{\sym}$, we let $\kappa_{\alpha}\colon F_{\pm\alpha}^{\times}\rightarrow\C^{\times}$ denote the quadratic character of $F_{\pm\alpha}^{\times}$ corresponding to the quadratic extension $F_{\alpha}/F_{\pm\alpha}$ under the local class field theory.

Note that, if $\alpha$ is symmetric, $\dot{\alpha}=\ddot{\alpha}$.
This implies that the sets $\dot{\Phi}_{\sym}$ and $\ddot{\Phi}_{\sym}$ can be naturally identified (and, of course, the same is true for $\Phi_{\ur}$ or $\Phi_{\ram}$).

\subsubsection{Several arithmetic invariants}
For any finite extension $E_{\pm}$ of $F$ and its quadratic extension $E$, we let $\lambda_{E/E_{\pm}}\colonequals \lambda_{E/E_{\pm}}(\psi_{F}\circ\Tr_{E_{\pm}/F})$ denote the Langlands constant with respect to the nontrivial additive character $\psi_{F}\circ\Tr_{E_{\pm}/F}$ of $E_{\pm}$ (see, e.g., \cite[30.4]{BH06}).
When the quadratic extension $E/E_{\pm}$ is given by $F_{\alpha}/F_{\pm\alpha}$ as in Section \ref{subsubsec:fin-Gal-sets}, we simply write $\lambda_{\alpha}$ for $\lambda_{F_{\alpha}/F_{\pm\alpha}}$.

In this paper, we often consider the root number $\varepsilon(\frac{1}{2},X^{\ast}(\bfS)_{\C},\psi_{F})$ of the $\varepsilon$-factor of the Galois representation $X^{\ast}(\bfS)_{\C}$ associated to an $F$-rational torus $\bfS$ (see \cite[Section 30]{BH06} or \cite[Section 3.6]{Tat79} for the definition of the $\varepsilon$-factor).
We shortly write $\varepsilon(\bfS)\colonequals \varepsilon(\frac{1}{2},X^{\ast}(\bfS)_{\C},\psi_{F})$.

\subsubsection{Finite fields}
Suppose that $\ul{k}$ is a finite field of odd characteristic $p$.
Then the multiplicative group $\ul{k}^{\times}$ is cyclic of even order, hence there exists a unique nontrivial quadratic character $\ul{k}^{\times}\rightarrow\{\pm1\}$.
We write $\sgn_{\ul{k}^{\times}}(-)$ for this character.

Next, we furthermore suppose that $[\ul{k}:\F_{p}]$ is even.
Then, there uniquely exists a subextension $\ul{k}_{\pm}$ satisfying $[\ul{k}:\ul{k}_{\pm}]=2$.
We let $\ul{k}^{1}$ denote the kernel of the norm map $\Nr_{\ul{k}/\ul{k}_{\pm}}\colon \ul{k}^{\times}\rightarrow\ul{k}_{\pm}^{\times}$.
By noting that $\ul{k}^{1}$ is also cyclic of even order, we write $\sgn_{\ul{k}^{1}}(-)$ for the unique nontrivial quadratic character of $\ul{k}^{1}$.

\subsection{Assumptions on $p$}
Throughout this paper, we assume that $\G$ is a tamely ramified connected reductive group over $F$.
Furthermore, we assume that $p$ is odd and does not divide the order of the absolute Weyl group of $\G$.
From Section \ref{sec:Waldspurger} to the end of this paper, we furthermore assume that $p$ is greater than or equal to $(2+e_{F})n$, where $n$ is the minimum of the dimension of a faithful representation of $\G$ and $e_{F}$ is the ramification degree of $F/\Q_{p}$.

\section{Kaletha's regular supercuspidal local Langlands correspondence}\label{sec:LLC}

\subsection{Regular supercuspidal representations}\label{subsec:rsc}

In \cite{Yu01}, Yu established a general methodology to produce supercuspidal representations of tamely ramified $p$-adic reductive groups in an explicit manner.
An input datum of Yu's construction is called a \textit{cuspidal $\G$-datum}; hence Yu's construction gives a map from the set of cuspidal $\G$-data to the set of isomorphism classes of supercuspidal representations of $G$.
The supercuspidal representations in the image of Yu's construction are called \textit{tame supercuspidal representations}.
It is known that Yu's construction gives a lot of supercuspidal representations; in fact, any supercuspidal representation is tame under our assumption on $p$ (Fintzen's exhaustion result \cite{Fin21-Ann}, which improves Kim's \cite{Kim07}).
We do not review the definition of a cuspidal $\G$-datum here since in this paper we only deal with ``regular supercuspidal representations'', a special class of tame supercuspidal representations, which can be re-parametrized by different input data as follows.

The ``fibers'' of Yu's construction were investigated by Hakim--Murnaghan \cite{HM08}.
They introduced an equivalence relation called \textit{$\G$-equivalence} and proved that two cuspidal $\G$-data give rise to the isomorphic supercuspidal representations if and only if two data are $\G$-equivalent.
Thus Yu's construction gives a bijection
\[
\xymatrix{
\{\text{cusp.\ $\G$-data}\}/\text{$\G$-eq.} \ar@{->}[rrr]^-{1:1}_-{\text{Yu's construction}} &&& \{\text{tame s.c.\ rep'ns of $G$}\}/{\sim}.
}
\]

In \cite{Kal19}, Kaletha introduced the notion of \textit{(extra) regularity} for cuspidal $\G$-data (see \cite[Section 3]{Kal19}).
Tame supercuspidal representations arising from (extra) regular cuspidal $\G$-data are called \textit{(extra) regular supercuspidal representations}.
By Kaletha's re-parametrizing result \cite[Proposition 3.7.8]{Kal19}, $\G$-equivalence classes of (extra) regular cuspidal $\G$-data bijectively correspond to $G$-conjugacy classes of much simpler data called \textit{tame elliptic (extra) regular pairs} (see \cite[Definition 3.7.5]{Kal19}).
We write $\pi_{(\bfS,\vartheta)}$ for the (extra) regular supercuspidal representation which corresponds to a tame elliptic (extra) regular pair $(\bfS,\vartheta)$.\symdef{pi-S-vartheta}{$\pi_{(\bfS,\vartheta)}$}
\[
\xymatrix{
\{\text{cusp.\ $\G$-data}\}/\text{$\G$-eq.} \ar@{->}[r]^-{1:1} & \{\text{tame s.c.\ rep'ns of $G$}\}/{\sim}\\
\{\text{(ex.) reg.\ cusp.\ $\G$-data}\}/\text{$\G$-eq.}\ar@{}[u]|{\bigcup} \ar@{->}[r]^-{1:1} & \{\text{(ex.) reg.\ s.c.\ rep'ns of $G$}\}/{\sim}\ar@{}[u]|{\bigcup}\\
\{\text{tame ell.\ (ex.) reg.\ pairs}\}/\text{$G$-conj.}\ar@{<->}[u]^-{1:1} \ar@{->}_-{\quad\qquad(\bfS,\vartheta)\mapsto\pi_{(\bfS,\vartheta)}}[ur]&
}
\]

\begin{defn}
    We say that a tame elliptic regular pair $(\bfS,\vartheta)$ is a \textit{toral pair} if $\vartheta$ is $\G$-generic of depth $r>0$ in the sense of \cite[Section 8]{Yu01}.
    We say that a regular supercuspidal representation $\pi$ of $G$ is \textit{toral} if $\pi$ arises from a toral pair.
\end{defn}

\begin{rem}
  \begin{enumerate}
    \item If a tame elliptic regular pair is toral, then it is extra regular.
    \item The meaning of the terminology ``toral'' depends on the literature. For example, our ``toral'' is called ``$0$-toral'' in \cite{FS21}. We use ``toral'' rather than ``$0$-toral'' in this paper following \cite{DS18} and \cite{Kal19}.
  \end{enumerate}
\end{rem}

Let us briefly review Yu's construction in the case of toral supercuspidal representations.
We fix a tame elliptic toral pair $(\bfS,\vartheta)$ of $\G$.
Let $\x\in\mcB^{\red}(\G,F)$ be the point associated to $\bfS$ and $r\in\R_{>0}$ be the depth of $\vartheta$.
We put $s\colonequals r/2$ and define the subgroups $K$\symdef{K}{$K$}, $J$\symdef{J}{$J$}, and $J_{+}$\symdef{J-}{$J_{+}$} of $G$ by
\[
K\colonequals  SG_{\x,s},\quad
J\colonequals  (S,G)_{\x,(r,s)},\quad
J_{+}\colonequals (S,G)_{\x,(r,s+)}, 
\]
where $(S,G)_{\x,(r,s)}$ and $(S,G)_{\x,(r,s+)}$ are the groups defined according to the manner of Yu (see \cite[Sections 1 and 2]{Yu01}).
Note that we have $K=SJ$.

Since the depth of $\vartheta$ is $r$, we can extend $\vartheta$ to a character $\hat{\vartheta}$\symdef{vartheta-hat}{$\hat{\vartheta}$} of $J_{+}$ satisfying $\hat{\vartheta}|_{(S,G)_{\x,(r+,s+)}}\equiv\mathbbm{1}$.
Then, by the definition of the $\G$-genericity, there exists an element $X^{\ast}$\symdef{X-ast}{$X^{\ast}$} of $\mfs^{\ast}_{-r}$ which is $\G$-generic of depth $r$ in the sense of \cite[Section 8]{Yu01} and satisfies
\[
\hat{\vartheta}(\exp(Y))
=
\psi_{F}(\langle Y,X^{\ast}\rangle)
\]
for any $Y\in\mfg_{\x,s+:r+}$ (or, equivalently, for any $Y\in\mfs_{s+:r+}$).
Here, as explained in \cite[Section 8]{Yu01}, we may regard $\mfs^{\ast}$ as a subspace of $\mfg^{\ast}$ by considering the coadjoint action of $\bfS$ on $\mfg^{\ast}$.
We recall that the definition of $\G$-genericity consists of two conditions \textbf{GE1} and \textbf{GE2}.
The condition \textbf{GE1} requires that $\val_{F}(\langle H_{\alpha},X^{\ast}\rangle)=-r$ for any $\alpha\in\Phi(\G,\bfS)$, where $H_{\alpha}\colonequals d\alpha^{\vee}(1)$.\symdef{H-alpha}{$H_{\alpha}$}
We do not review the condition \textbf{GE2} because \textbf{GE1} implies \textbf{GE2} by \cite[Lemma 8.1]{Yu01} when $p$ is not a torsion prime for the dual based root datum of $\G$, which follows from our assumption that $p\nmid|\Omega_{\G}|$ by \cite[Lemma 3.2]{Fin21-IMRN}.

The point of the construction is that, by putting $N\colonequals \Ker\hat{\vartheta}\subset J_{+}$, the quotient $J/N$ has the structure of a finite Heisenberg group:
\begin{itemize}
\item
The center of $J/N$ is given by $J_{+}/N$, which is isomorphic to $\mu_{p}\cong\F_{p}$ via $\hat{\vartheta}$ (here we fix an isomorphism $\mu_{p}\cong\F_{p}$).
\item
The quotient $J/J_{+}$ is a symplectic space with respect to the pairing
\[
(J/J_{+})\times(J/J_{+}) \rightarrow \mu_{p}\cong\F_{p}\colon (g,g')\mapsto\h\vartheta([g,g'])
\]
(see \cite[Section 11]{Yu01}; we will review the structure of the symplectic space $J/J_{+}$ in more detail in Section \ref{subsec:Heisen-structure}).
\end{itemize}
By the Stone--von Neumann theorem, there exists a unique irreducible representation of $J/N$ whose central character on $J_{+}/N$ is given by $\hat{\vartheta}$.
Furthermore, as the conjugate action of $S$ on $J$ preserves $J_{+}$ and $N$ and induces a symplectic action on $J/J_{+}$, we can extend the (inflation of) the representation of $J$ to the semi-direct group $S\ltimes J$, for which we write $\omega_{(\bfS,\vartheta)}$ (so-called the Heisenberg--Weil representation).
Then the tensor representation $\omega_{(\bfS,\vartheta)}\otimes(\vartheta\ltimes\mathbbm{1})$ of $S\ltimes J$ descends to $SJ=K$ (factors through the canonical map $S\ltimes J\twoheadrightarrow K$).
We let $\rho_{(\bfS,\vartheta)}$ be the descended representation of $K$.
The toral supercuspidal representation $\pi_{(\bfS,\vartheta)}$ is given by 
\[
\pi_{(\bfS,\vartheta)}\colonequals \cInd_{K}^{G}\rho_{(\bfS,\vartheta)}.
\]

\subsection{Several invariants for the construction of LLC}\label{subsec:invariants-for-LLC}

We next summarize several invariants which play important roles in Kaletha's construction of the local Langlands correspondence for regular supercuspidal representations \cite{Kal19}, mainly focusing on the case of toral supercuspidal representations.

In the following, by assuming that $\G$ is quasi-split over $F$, we fix an $F$-splitting $\spl_{\G}=(\bfB,\bfT,\{X_{\alpha}\}_{\alpha})$ of $\G$.
Let $\hat{\G}$ be the Langlands dual group of $\G$.
More precisely, $\h\G$ is a connected reductive group over $\C$ with
\begin{itemize}
\item
a $\Gamma$-action on $\hat{\G}$, 
\item
a $\Gamma$-stable splitting $\spl_{\hat{\G}}=(\hat{\bfB},\hat{\T},\{\mathcal{X}_{\alpha^{\vee}}\}_{\alpha^{\vee}})$ of $\hat{\G}$, and
\item
a $\Gamma$-equivariant isomorphism between the based root data $\Psi(\hat{\G})$ of $\hat{\G}$ and the dual $\Psi(\G)^{\vee}$ of that of $\G$.
\end{itemize}
We put ${}^{L}\G\colonequals \hat{\G}\rtimes W_{F}$.

Let $(\bfS,\vartheta)$ be a tame elliptic toral pair of $\G$.
Note that then the set $\Phi(\bfG,\bfS)$ of roots of $\bfS$ in $\G$ is a finite set equipped with a $\Gamma$-action.

\begin{defn}\label{defn:a-data}
A family $\{a_{\alpha}\}_{\alpha\in \Phi(\bfG,\bfS)}$ of elements $a_{\alpha}\in F_{\alpha}^{\times}$ is called a \textit{set of $a$-data (with respect to $\bfS$)} if the following conditions are satisfied:
\begin{itemize}
\item
$a_{-\alpha}=a_{\alpha}^{-1}$ for any $\alpha\in \Phi(\bfG,\bfS)$, and
\item
$a_{\sigma(\alpha)}=\sigma(a_{\alpha})$ for any $\alpha\in \Phi(\bfG,\bfS)$ and $\sigma\in\Gamma$.
\end{itemize}
\end{defn}

Following \cite[Section 4.7]{Kal19}, we associate a set $a_{\vartheta}=\{a_{\vartheta,\alpha}\}_{\alpha\in\Phi(\bfG,\bfS)}$\symdef{a-vartheta}{$a_{\vartheta}$} of $a$-data to $(\bfS,\vartheta)$ by the following (note that $a_{\vartheta}$ is simply denoted by $a$ in \cite[Section 4.7]{Kal19}):
\[
a_{\vartheta,\alpha}=\langle H_{\alpha},X^{\ast}\rangle,
\]
where $H_{\alpha}\colonequals d\alpha^{\vee}(1)\in \bmfs(F_{\alpha})$ and $X^{\ast}\in\mfs^{\ast}_{-r}$ is an element associated to $\vartheta$ (see Section \ref{subsec:rsc}).
We refer to $a_{\vartheta}$ as \textit{Kaletha's $a$-data} associated to $(\bfS,\vartheta)$.

\begin{defn}[{\cite[Definition 4.6.1]{Kal19}}]\label{defn:min-ram}
A family $\{\chi_{\alpha}\}_{\alpha\in \Phi(\bfG,\bfS)}$ of characters $\chi_{\alpha}\colon F_{\alpha}^{\times}\rightarrow\C^{\times}$ is called a \textit{set of $\chi$-data (with respect to $\bfS$)} if the following conditions are satisfied:
\begin{itemize}
\item
$\chi_{-\alpha}=\chi_{\alpha}^{-1}$ for any $\alpha\in \Phi(\bfG,\bfS)$,
\item
$\chi_{\sigma(\alpha)}=\chi_{\alpha}\circ\sigma^{-1}$ for any $\alpha\in \Phi(\bfG,\bfS)$ and $\sigma\in\Gamma$, and
\item
$\chi_{\alpha}|_{F_{\pm\alpha}^{\times}}$ equals the quadratic character $\kappa_{\alpha}$ for any $\alpha\in \Phi(\bfG,\bfS)_{\sym}$.
\end{itemize}
We say that $\{\chi_{\alpha}\}_{\alpha\in \Phi(\bfG,\bfS)}$ is \textit{minimally ramified} if the following conditions are furthermore satisfied:
\begin{itemize}
\item
$\chi_{\alpha}=\mathbbm{1}$ for any $\alpha\in \Phi(\bfG,\bfS)_{\asym}$,
\item
$\chi_{\alpha}$ is unramified for any $\alpha\in \Phi(\bfG,\bfS)_{\ur}$, and
\item
$\chi_{\alpha}$ is tamely ramified for any $\alpha\in \Phi(\bfG,\bfS)_{\ram}$.
\end{itemize}
\end{defn}

Following \cite[Section 4.7]{Kal19}, we associate a set $\chi_{\vartheta}=\{\chi_{\vartheta,\alpha}\}_{\alpha\in\Phi(\bfG,\bfS)}$\symdef{chi-vartheta}{$\chi_{\vartheta}$} of minimally ramified $\chi$-data to $(\bfS,\vartheta)$ as follows ($\chi_{\vartheta}$ is denoted by $\chi'$ in \cite[Section 4.7]{Kal19}):
\begin{itemize}
\item
For $\alpha\in\Phi(\bfG,\bfS)_{\asym}$, let $\chi_{\vartheta,\alpha}$ be the trivial character of $F_{\alpha}^{\times}$.
\item
For $\alpha\in\Phi(\bfG,\bfS)_{\ur}$, let $\chi_{\vartheta,\alpha}$ be the unique unramified nontrivial quadratic character of $F_{\alpha}^{\times}$.
\item
For $\alpha\in\Phi(\bfG,\bfS)_{\ram}$, let $\chi_{\vartheta,\alpha}$ be the unique tamely ramified character of $F_{\alpha}^{\times}$ characterized by the following properties:
\[
\chi_{\vartheta,\alpha}|_{F_{\pm\alpha}^{\times}}=\kappa_{\alpha}
\quad\text{and}\quad
\chi_{\vartheta,\alpha}(2a_{\vartheta,\alpha})=\lambda_{\alpha}.
\]
\end{itemize}

We refer to $\chi_{\vartheta}$ as \textit{Kaletha's $\chi$-data} associated to $(\bfS,\vartheta)$.

\begin{rem}\label{rem:non-toral-a-chi}
For a general (i.e., possibly non-toral) tame elliptic regular pair $(\bfS,\vartheta)$, Kaletha's sets of $a$-data $a_{\vartheta}$ and $\chi$-data $\chi_{\vartheta}$ are defined by noting the inductive structure of $\Phi(\G,\bfS)$ given by the tame twisted Levi subgroups of $\G$ determined by $(\bfS,\vartheta)$.
See \cite[Section 4.7]{Kal19} (and also \cite[Section 6]{OT21}) for the details.
\end{rem}

\begin{defn}
Let $\bfS$ be an $F$-rational maximal torus of $\G$.
Let $a=\{a_{\alpha}\}_{\alpha}$ be a set of $a$-data and $\chi=\{\chi_{\alpha}\}_{\alpha}$ a set of $\chi$-data with respect to $\bfS$.
We define a function $\Delta_{\G,\II}[a,\chi]\colon S\rightarrow\C^{\times}$ by
\[
\Delta_{\G,\II}[a,\chi](s)
\colonequals \prod_{\begin{subarray}{c}\dot{\alpha}\in\dot{\Phi}(\bfG,\bfS)\\ \alpha(s)\neq1\end{subarray}} \chi_{\alpha}\biggl(\frac{\alpha(s)-1}{a_{\alpha}}\biggr).
\]
\end{defn}

We next recall sign characters of $S$ which were introduced by DeBacker--Spice \cite[Section 4.3]{DS18} in their study of character formulas of tame supercuspidal representations.
We define a character $\epsilon_{\alpha}\colon S\rightarrow\C^{\times}$\symdef{epsilon-alpha}{$\epsilon_{\alpha}$} for $\alpha\in\Phi(\G,\bfS)\smallsetminus\Phi(\G,\bfS)_{\ram}$ as follows:
\[
\epsilon_{\alpha}(s)\colonequals 
\begin{cases}
\sgn_{k_{\alpha}^{\times}}(\overline{\alpha(s)}) & \text{if $\alpha\in\Phi(\G,\bfS)_{\asym}$,}\\
\sgn_{k_{\alpha}^{1}}(\overline{\alpha(s)}) & \text{if $\alpha\in\Phi(\G,\bfS)_{\ur}$}.
\end{cases}
\]
Here, $\overline{\alpha(s)}$ denotes the image of $\alpha(s)\in\mcO_{F_\alpha}$ in the residue field $k_{\alpha}$.

Recall that, in the construction of the regular supercuspidal representation $\pi_{(\bfS,\vartheta)}$, we note that the subquotient $J/N$ of $G$ has the structure of a finite Heisenberg group (see Section \ref{subsec:rsc}).
As we will review later, the symplectic quotient $V\colonequals J/J_+$ of $J/N$ has an orthogonal decomposition $V=\bigoplus_{\ddot{\alpha}\in\ddot{\Phi}} V_{\ddot{\alpha}}$\symdef{V}{$V$} into symplectic subspaces induced by the root space decomposition of $\bmfg$ with respect to $\bfS$ (see Section \ref{subsec:Heisen-structure}).
Each piece $V_{\ddot{\alpha}}$ could be zero; we define a subset $\ddot{\Xi}\subset\ddot{\Phi}$ to be the set of all $\ddot{\alpha}\in\ddot{\Phi}$ such that $V_{\ddot{\alpha}}\neq0$.
We define characters $\epsilon_{\vartheta,\asym}$\symdef{epsilon-vartheta-asym}{$\epsilon_{\vartheta,\asym}$} and $\epsilon_{\vartheta,\ur}$\symdef{epsilon-vartheta-ur}{$\epsilon_{\vartheta,\ur}$} of $S$ by taking the products of $\epsilon_{\alpha}$ over $\ddot{\Xi}$:
\[
\epsilon_{\vartheta,\asym}(s)
\colonequals \prod_{\ddot{\alpha}\in\ddot{\Xi}_{\asym}} \epsilon_{\alpha}(s)
\quad
\text{and}
\quad
\epsilon_{\vartheta,\ur}(s)
\colonequals \prod_{\ddot{\alpha}\in\ddot{\Xi}_{\ur}} \epsilon_{\alpha}(s).
\]

\begin{rem}
  In \cite[Section 4.3]{DS18}, the products are taken over $\ddot{\alpha}\in\ddot{\Phi}$ satisfying the condition ``$\frac{r}{2}\in\ord_{\x}(\alpha)$'', where $r$ is the depth of $\vartheta$ and $\ord_{\x}(\alpha)$ is the set defined in \cite[Definition 3.6]{DS18}.
  This condition is equivalent to that $\alpha\in\Xi$ (see the proof of \cite[Proposition 5.12]{OT21}).  
\end{rem}

We finally recall the character $\epsilon_{\bfS,\ram}$\symdef{epsilon-S-ram}{$\epsilon_{\bfS,\ram}$} of $S$ defined in \cite[Definition 4.7.3]{Kal19}.
As explained in \cite[Lemma 4.7.4]{Kal19}, this can be expressed as the product of \textit{toral invariants} $f_{(\G,\bfS)}(\alpha)$ for symmetric ramified roots $\alpha$, which are introduced in \cite[Section 4.1]{Kal15}.
We recall that the toral invariant $f_{(\G,\bfS)}(\alpha)$ for $\alpha\in\Phi(\G,\bfS)_{\sym}$ (not necessarily ramified) is defined as follows.
We fix an element $\tau_{\alpha}\in\Gamma_{\pm\alpha}\smallsetminus\Gamma_{\alpha}$ (i.e., $\tau_{\alpha}\in\Gamma_{\pm\alpha}$ is an element satisfying $\tau_{\alpha}(\alpha)=-\alpha$).
If we take an $F_{\alpha}$-rational root vector $X_{\alpha}\in\bmfg_{\alpha}(F_{\alpha})$, then $\tau_{\alpha}(X_{\alpha})$ belongs to $\bmfg_{-\alpha}(F_{\alpha})$ and the ratio of $[X_{\alpha},\tau_{\alpha}(X_{\alpha})]$ to $H_{\alpha}\colonequals d\alpha^{\vee}(1)\in\bmfs(F_{\alpha})$ lies in $F_{\pm\alpha}^{\times}$.
By noting that $\frac{[X_{\alpha},\tau_{\alpha}(X_{\alpha})]}{H_{\alpha}}$ is well-defined up to $\Nr_{F_{\alpha}/F_{\pm\alpha}}(F_{\alpha}^{\times})$-multiplication, we put
\[
f_{(\G,\bfS)}(\alpha)
\colonequals 
\kappa_{\alpha}\biggl(\frac{[X_{\alpha},\tau_{\alpha}(X_{\alpha})]}{H_{\alpha}}\biggr)\in\{\pm1\}.
\]
Then we have
\[
\epsilon_{\bfS,\ram}(s)
=
\prod_{\begin{subarray}{c}\dot{\alpha}\in\dot{\Phi}(\G,\bfS)_{\ram}\\ \alpha(s)\neq1 \\ \val_{F}(\alpha(s)-1)=0\end{subarray}}
f_{(\G,\bfS)}(\alpha).
\]

\begin{rem}\label{rem:non-toral-epsilon}
Similarly to the definition of $a_{\vartheta}$ and $\chi_{\vartheta}$, for a general tame elliptic regular pair $(\bfS,\vartheta)$, the characters $\epsilon_{\vartheta,\asym}$ and $\epsilon_{\vartheta,\ur}$ are defined by noting the inductive structure of $\Phi(\G,\bfS)$ given by the tame twisted Levi subgroups determined by $(\bfS,\vartheta)$.
See \cite[Section 4.3]{Kal19} (and also \cite[Section 6]{OT21}) for the details.
We also note that, in \cite{Kal19}, the product $\epsilon_{\vartheta,\asym}\cdot\epsilon_{\vartheta,\ur}$ (resp.\ $\epsilon_{\bfS,\ram}$) is denoted by $\epsilon^{\ram}$ (resp.\ $\epsilon_{\ram}$).
\end{rem}

\subsection{Construction of regular supercuspidal $L$-parameters}\label{subsec:construction}
Now we explain Kaletha's construction of regular supercuspidal $L$-parameters (\cite[Proof of Proposition 5.2.7]{Kal19}).

Suppose that a pair $(\bfS,\hat{\jmath})$ is given, where
\begin{itemize}
\item
$\bfS$ is an $F$-rational tame torus having the same rank as $\G$ and
\item
$\hat{\jmath}\colon\hat{\bfS}\hookrightarrow\hat{\G}$ is an embedding  whose $\hat{\G}$-conjugacy class is $\Gamma$-stable.
\end{itemize}
Here, by noting that $\hat{\jmath}(\hat{\bfS})$ is a maximal torus of $\hat{\G}$, we assume that $\hat{\jmath}(\hat{\bfS})$ itself equals $\hat{\T}$ by replacing $\hat{\jmath}$ with its conjugate.

\begin{defn}\label{defn:adm-emb}
Let $j\colon\bfS\hookrightarrow\G$ be an embedding of $\bfS$ into $\G$.
Since its image $\bfS_{j}\colonequals j(\bfS)$ is a maximal torus of $\G$ by the rank condition, there exists an element $g\in\G$ such that $[g](\bfS_{j})=\T$.
We say that $j$ is \textit{$\hat{\jmath}$-admissible} if the inverse of the dual to the isomorphism $[g]\circ j\colon\bfS\rightarrow\T$ is $\hat{\G}$-conjugate to $\hat{\jmath}\colon\hat{\bfS}\xrightarrow{\sim}\hat{\T}$.
\end{defn}
We write $\mcJ^{\G}_{\ol{F}}$\symdef{J-G-F-bar}{$\mcJ^{\G}_{\ol{F}}$} for the $\G$-conjugacy class of $\hat{\jmath}$-admissible embeddings of $\bfS$ into $\G$.
Since the $\hat{\G}$-conjugacy class of $\hat{\jmath}$ is $\Gamma$-stable, so is $\mcJ^{\G}_{\ol{F}}$ (see \cite[Section 5.1]{Kal19}).
Thus, by Kottwitz's result on the rational conjugacy (\cite[Corollary 2.2]{Kot82}), the quasi-splitness of $\G$ implies that $\mcJ^{\G}_{\ol{F}}$ has an $F$-rational point.
In other words, there exists an $F$-rational $\hat{\jmath}$-admissible embedding of $\bfS$ into $\G$.
We let $\mcJ^{\G}$\symdef{J-G}{$\mcJ^{\G}$} denote the set of $F$-rational points of $\mcJ^{\G}_{\ol{F}}$.
For each $j\in\mcJ^{\G}$ (i.e., $j$ is the $\G$-conjugacy class of an $F$-rational $\hat{\jmath}$-admissible embedding), by fixing a representative of $j$ and write again $j$ for it, $\bfS_j \colonequals j(\bfS)$\symdef{S-j}{$\bfS_j$} is an $F$-rational maximal torus of $\G$.
Consequently, we get
\begin{itemize}
\item
an $F$-rational embedding $\bfZ_{\G}\hookrightarrow\bfS_{j}$,
\item
a $\Gamma$-stable subset $\Phi(\G,\bfS_{j})$ of $X^{\ast}(\bfS_{j})$, and
\item
a $\Gamma$-stable subgroup $\Omega_{\G}(\bfS_{j})$ of $\Aut(\bfS_{j})$.
\end{itemize}
Since $j\colon\bfS\xrightarrow{\sim}\bfS_{j}$ is an $F$-rational isomorphism, by pulling back these via $j$, we get
\begin{itemize}
\item
an $F$-rational embedding $\bfZ_{\G}\hookrightarrow\bfS$,
\item
a $\Gamma$-stable subset $j^{\ast}\Phi(\G,\bfS_{j})$ of $X^{\ast}(\bfS)$, and
\item
a $\Gamma$-stable subgroup $j^{\ast}\Omega_{\G}(\bfS_{j})$ of $\Aut(\bfS)$.
\end{itemize}
Noting that all of these are independent of the choice of $j\in\mcJ^{\G}$ (and also of its representative), we write $\Phi(\G,\bfS_{\hat{\jmath}})\colonequals j^{\ast}\Phi(\G,\bfS_{j})$ and $\Omega_{\G}(\bfS_{\hat{\jmath}})\colonequals j^{\ast}\Omega_{\G}(\bfS_{j})$.

\begin{defn}[{\cite[Definition 5.2.4]{Kal19}}]\label{defn:rsc-L-datum}
A \textit{regular supercuspidal $L$-packet datum of $\G$} is a tuple $(\bfS,\hat{\jmath},\chi,\vartheta)$ consisting of 
\begin{enumerate}
\item
an $F$-rational tame torus $\bfS$ having the same rank as $\G$,
\item
an embedding $\hat{\jmath}\colon\hat{\bfS}\hookrightarrow\hat{\G}$ whose $\hat{\G}$-conjugacy class is $\Gamma$-stable,
\item
a set $\chi$ of minimally ramified $\chi$-data for $\Phi(\G,\bfS_{\hat{\jmath}})$, and
\item
a character $\vartheta\colon S\rightarrow\C^{\times}$
\end{enumerate}
satisfying the following conditions:
\begin{enumerate}
\item[(i)]
$\bfS$ is elliptic in $\G$ (i.e., $\bfS/\bfZ_{\G}$ is anisotropic),
\item[(ii)]
$\chi$ is $\Omega_{\G^{0}}(\bfS_{\hat{\jmath}})(F)$-invariant, and
\item[(iii)]
$(\bfS,\vartheta)$ is a tame elliptic extra regular pair of $\G$.
\end{enumerate}
By replacing ``extra regular'' with ``toral'' in (iii), we also define a \textit{toral supercuspidal $L$-packet datum of $\G$}.
\end{defn}

We give a few more comments about the precise meanings of the conditions (ii) and (iii) in Definition \ref{defn:rsc-L-datum} (see \cite[Sections 5.1 and 5.2]{Kal19} for the details).
The condition (i) implies that $\bfS_{j}$ is a tame elliptic maximal torus of $\G$ for any $j\in\mcJ^{\bfG}$.
The condition (iii) means that $(\bfS_{j},\vartheta_{j})$ is a tame elliptic extra regular pair of $\G$ for any $j\in\mcJ^{\bfG}$, where $\vartheta_{j}\colonequals \vartheta\circ j^{-1}$\symdef{vartheta-j}{$\vartheta_{j}$} (this is equivalent to that $(\bfS_{j},\vartheta_{j})$ is a tame elliptic extra regular pair of $\G$ for ``some'' $j\in\mcJ^{\bfG}$).
We define a subset $\Phi(\G^{0},\bfS_{\hat{\jmath}})$ of $\Phi(\G,\bfS_{\hat{\jmath}})$ by
\[
\Phi(\G^{0},\bfS_{\hat{\jmath}})
\colonequals 
\{\alpha\in\Phi(\G,\bfS_{\hat{\jmath}})\mid \vartheta\circ\Nr_{E/F}\circ\alpha^{\vee}(1+\mfp_E)=1 \},
\]
where $E$ is a tame finite extension of $F$ splitting $\bfS$.
Then $\Phi(\G^{0},\bfS_{\hat{\jmath}})$ is a Levi subsystem of $\Phi(\G,\bfS_{\hat{\jmath}})$ and associates a $\Gamma$-stable subgroup $\Omega_{\G^{0}}(\bfS_{\hat{\jmath}})$ of $\Omega_{\G}(\bfS_{\hat{\jmath}})$ canonically.

We take a regular supercuspidal $L$-packet datum $(\bfS,\hat{\jmath},\chi,\vartheta)$ of $\G$.
By applying the local Langlands correspondence for $\bfS$ to $\vartheta$, we get an $L$-parameter $\phi_{\vartheta}$ of $\bfS$, which is a homomorphism from $W_{F}$ to ${}^{L}\bfS$.
On the other hand, by the Langlands--Shelstad construction (\cite[Section 2.6]{LS87}), we can extend $\hat{\jmath}$ to an $L$-embedding ${}^{L}\!j_{\chi}$ from ${}^{L}\bfS$ to ${}^{L}\G$ by using the set $\chi$ of $\chi$-data.\symdef{L-j-chi}{${}^{L}\!j_{\chi}$}
By composing these two homomorphisms, we get an $L$-parameter $\phi$ of $\G$:
\[
\phi
\colon
W_{F}
\xrightarrow{\phi_{\vartheta}}
{}^{L}\bfS
\xrightarrow{{}^{L}\!j_{\chi}}
{}^{L}\G.
\]

The $L$-parameters constructed in this way can be characterized in a purely-Galois-theoretic manner:

\begin{prop}[{\cite[Propositions 5.2.7, 6.1.2]{Kal19}}]\label{prop:rsc-LLC}
Kaletha's construction gives a bijection between the isomorphism classes of (toral) regular supercuspidal $L$-packet data of $\G$ and the $\hat{\G}$-conjugacy classes of (toral) regular supercuspidal $L$-parameters of $\G$.
\end{prop}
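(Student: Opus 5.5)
The plan is to follow Kaletha's argument in \cite[Sections 5.1--5.2, 6.1]{Kal19}. First I recall that a regular supercuspidal $L$-parameter is a discrete $\phi\colon W_F\to\L\G$ (trivial on $\SL_2$) such that
\begin{itemize}
\item $\hat{\bfT}_\phi\colonequals\bfZ_{\h\G}(\phi(P_F))^{\circ}$ is a maximal torus of $\h\G$;
\item the image of $\phi(I_F)$ in $\Omega(\hat{\bfT}_\phi,\h\G)$ satisfies Kaletha's regularity condition.
\end{itemize}
It is toral if moreover $\bfZ_{\h\G}(\phi(I_F^{r}))^{\circ}$ is already $\hat{\bfT}_\phi$, where $r$ is the depth. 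The bijection is then built in both directions.

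\emph{Data $\mapsto$ parameters (well-definedness).} Given $(\bfS,\hat\jmath,\chi,\vartheta)$, I show that $\phi={}^{L}\!j_\chi\circ\phi_\vartheta$ is a (toral) regular supercuspidal parameter whose $\h\G$-class depends only on the isomorphism class of the datum.
\begin{itemize}
\item Since $\chi$ is minimally ramified, $\chi_\alpha$ is tame, so ${}^{L}\!j_\chi$ restricted to $P_F$ is just $\hat\jmath\circ\phi_\vartheta|_{P_F}$.
\item $\vartheta$ is extra regular, so the wild part of $\phi_\vartheta$ avoids the root hyperplanes outside $\Phi(\G^0,\bfS_{\hat\jmath})$. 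For a toral datum, $\G$-genericity (\textbf{GE1}) of $\vartheta$ at depth $r$ gives, via the LLC for tori, that $\phi(I_F^{r})$ is regular in $\hat\jmath(\hat{\bfS})$. Hence the relevant centralizer is exactly $\hat\jmath(\hat{\bfS})$.
\item Ellipticity of $\bfS$ gives that $\phi$ is discrete.
\item Condition (ii) (the $\Omega_{\G^0}(\bfS_{\hat\jmath})(F)$-invariance of $\chi$), together with Langlands--Shelstad's result that ${}^{L}\!j_\chi$ changes by conjugation when $\chi$ changes within this invariance class, gives independence of choices.
\end{itemize}

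\emph{Parameters $\mapsto$ data (inverse).} Given $\phi$, set $\hat{\bfS}\colonequals\hat{\bfT}_\phi$. Conjugation by $\phi(W_F)$ normalizes $\hat{\bfS}$ and defines a $\Gamma$-action on it through $\Omega$, hence a tame torus $\bfS$ with embedding $\hat\jmath$ whose class is $\Gamma$-stable. Ellipticity of $\bfS$ follows from discreteness. By Langlands--Shelstad, $\phi$ factors through $\L\bfS$ after choosing $\chi$-data. The key step is to show that minimally ramified $\chi$-data suffice and are determined up to the $\Omega_{\G^0}$-invariance. For this I use the tame ramification of $\phi$ modulo $P_F$ and the classification of unramified/ramified symmetric roots; for asymmetric roots $\chi_\alpha$ is forced to be trivial. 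The resulting $\phi_\vartheta$ then gives $\vartheta$ via the LLC for $\bfS$. Extra regularity, respectively genericity, of $\vartheta$ is read back from the regularity conditions on $\phi$ by reversing the computation of the first step.

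\emph{Bijectivity and main obstacle.} Mutual inverseness is formal once both constructions are known to be canonical up to the stated equivalences, so the substance lies in the first two steps. The main obstacle is the step showing that the factorization through $\L\bfS$ can be achieved with \emph{minimally ramified} $\chi$-data and that the residual ambiguity is exactly absorbed by the isomorphism classes of data, i.e.\ by $\Omega_{\G^0}(\bfS_{\hat\jmath})(F)$. I would handle this root by root, using that two $\chi$-data differ by characters of $S$ via the Langlands--Shelstad cocycle. I would then check that the minimally ramified condition pins down this difference modulo the restriction to the tame, non-$\G^0$ part, which is invisible to the $\h\G$-conjugacy class of $\phi$.
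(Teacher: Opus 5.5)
Following \cite{Kal19} is the right route, since the paper only cites it. However, the definition of a regular supercuspidal parameter you start from is wrong, and this breaks the non-toral half of the statement.

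In Definition~\ref{defn:rsc-L-par}, $\mcM=\bfZ_{\hat{\G}}(\phi(P_F))^{\circ}$ is only a Levi subgroup. The maximal torus is $\mcT=\bfZ_{\mcM}(\mcC)$ with $\mcC=\bfZ_{\hat{\G}}(\phi(I_F))^{\circ}$. Requiring $\mcM$ itself to be a maximal torus amounts to $\Phi(\G^{0},\bfS_{\hat{\jmath}})=\varnothing$. It excludes, for instance, every depth-zero regular supercuspidal parameter, where $\phi(P_F)$ is trivial and $\mcM=\hat{\G}$.

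This has consequences in both directions:
\begin{itemize}
\item Your inverse construction $\hat{\bfS}\colonequals\bfZ_{\hat{\G}}(\phi(P_F))^{\circ}$ does not give a torus in general; you must use $\mcT$.
\item In the forward direction, your conclusion that the relevant centralizer is exactly $\hat{\jmath}(\hat{\bfS})$ holds only when $\G^{0}=\bfS$. In general $\bfZ_{\hat{\G}}(\phi(P_F))^{\circ}$ is the Levi subgroup dual to $\G^{0}$, and conditions (2) and (3) still have to be checked.
\end{itemize}

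You have also misplaced extra regularity. That the wild part of $\phi_{\vartheta}$ is nontrivial exactly on the coroots outside $\Phi(\G^{0},\bfS_{\hat{\jmath}})$ is just the definition of $\G^{0}$. Extra regularity is instead a depth-zero condition: $\vartheta|_{S_{0}}$ has trivial stabilizer in $\Omega_{\G^{0}}(\bfS_{\hat{\jmath}})(F)$. Through $\Omega_{\mcM}(\hat{\bfS})^{\Gamma}\cong\Omega_{\G^{0}}(\bfS_{\hat{\jmath}})(F)$ and the LLC for tori restricted to inertia, it is what corresponds to condition (3). Condition (ii) on $\chi$ is used exactly there, to make the tame contribution of $\chi$ to ${}^{L}\!j_{\chi}|_{I_F}$ invariant under the relevant Weyl elements.

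For the toral version these issues disappear: $\G^{0}=\bfS$, $\mcM=\mcT$, and condition (3) is vacuous. Your matching of \textbf{GE1} with the regularity of $\phi(I_F^{r})$ via the depth-preserving LLC for tori is the correct argument there.

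Second, the role you assign to $\chi$ is off.
\begin{itemize}
\item Changing the $\chi$-data does not change ${}^{L}\!j_{\chi}$ by a conjugation. It multiplies it by an $\hat{\bfS}$-valued $1$-cocycle whose associated character is $\zeta_{\chi'/\chi,S}$ (Lemma~\ref{lem:zeta-measures-diff}). This is absorbed by the $\zeta$-component of an isomorphism of data (Definition~\ref{defn:isom-rscp-datum}), so independence of choices does not use (ii).
\item The step you single out as the main obstacle is formal. Once $\hat{\bfS}$ carries the $\Gamma$-action $[\phi(-)]$, take \emph{any} $\chi$-data. Then $\phi(w)$ and ${}^{L}\!j_{\chi}(1\rtimes w)$ lie over the same $w\in W_F$ and induce the same automorphism of $\hat{\jmath}(\hat{\bfS})$. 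So they differ by an element of $\bfZ_{\hat{\G}}(\hat{\jmath}(\hat{\bfS}))=\hat{\jmath}(\hat{\bfS})$, and $\phi$ factors through ${}^{L}\!j_{\chi}$ for every $\chi$.
\end{itemize}
What actually requires an argument is the existence of minimally ramified $\chi$-data satisfying (ii), and the fact that any two such choices produce isomorphic data.
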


Here, the definition of an isomorphism between regular supercuspidal $L$-packet data will be reviewed in the next subsection.
The definition of a (toral) regular supercuspidal $L$-parameter is as follows.

\begin{defn}[{\cite[Definition 5.2.3]{Kal19}}]\label{defn:rsc-L-par}
We say that an $L$-parameter $\phi\colon W_{F}\rtimes\SL_{2}(\C)\rightarrow\L\G$ is \textit{regular supercuspidal} if it satisfies the following:
\begin{enumerate}
\item[(0)]
$\phi|_{\SL_{2}(\C)}$ is trivial and $\phi$ is discrete, i.e., $S_{\phi}^{\circ}\colonequals  \bfZ_{\hat{\G}}(\phi(W_{F}))^{\circ}\subset\bfZ_{\hat{\G}}$,
\item[(1)]
$\phi(P_{F})$ is contained in a torus of $\hat{\G}$ (note that then $\mcM\colonequals \bfZ_{\hat{\G}}(\phi(P_{F}))^{\circ}$ is a Levi subgroup of $\hat{\G}$).
\item[(2)]
$\mcC\colonequals \bfZ_{\hat{\G}}(\phi(I_{F}))^{\circ}$ is a torus (note that then $\mcT\colonequals \bfZ_{\mcM}(\mcC)$ is a maximal torus of $\mcM$.
We put $\hat{\bfS}$ to be the $\Gamma$-module $\mcT$ with the $\Gamma$-action given by $[\phi(-)]$).
\item[(3)]
If $n\in \bfN_{\mcM}(\mcT)$ maps to a nontrivial element of $\Omega_{\mcM}(\hat{\bfS})^{\Gamma}$, then $n\notin \bfZ_{\hat{\G}}(\phi(I_{F}))$.
\end{enumerate}
\end{defn}

\begin{defn}[{\cite[Definition 6.1.1]{Kal19}}]\label{defn:toral-L-par}
We say that a regular supercuspidal $L$-parameter $\phi\colon W_{F}\rightarrow\L\G$ is \textit{toral of depth $r>0$} if it satisfies the following:
\begin{enumerate}
\item[(1)]
$\phi$ is trivial on $I_{F}^{r+}$, that is, $\phi(\sigma)=1\rtimes\sigma$ for any $\sigma\in I_{F}^{r+}$, and
\item[(2)]
$\bfZ_{\hat{\G}}(\phi(I_{F}^{r}))$ is a maximal torus of $\hat{\G}$ containing $\phi(P_{F})$.
\end{enumerate}
\end{defn}

\subsection{Construction of regular supercuspidal $L$-packets}\label{subsec:const-packet}
We finally explain Kaletha's construction of regular supercuspidal $L$-packets (\cite[Section 5.3]{Kal19}).

\begin{defn}[{\cite[Definition 5.3.2]{Kal19}}]\label{defn:rsc-datum}
Let $(\bfS,\hat{\jmath},\chi,\vartheta)$ be a regular supercuspidal $L$-packet datum of $\G$.
Let $\mcJ^{\G}$ be the $\G$-conjugacy classes of $F$-rational $\hat{\jmath}$-admissible embeddings of $\bfS$ into $\G$ (see Section \ref{subsec:construction}).
A \textit{regular supercuspidal datum} (over the regular supercuspidal $L$-packet datum $(\bfS,\hat{\jmath},\chi,\vartheta)$) is a tuple $(\bfS,\hat{\jmath},\chi,\vartheta, j)$ where $j$ is an element of $\mcJ^{\G}$.
\end{defn}

\begin{rem}\label{rem:rigid}
In the original definition given in \cite[Definition 5.3.2]{Kal19}, a regular supercuspidal datum is a tuple $(\bfS,\hat{\jmath},\chi,\vartheta,(\G',\psi,z),j)$ which furthermore contains a rigid inner twist $(\G',\psi,z)$ of $\G$ (in the sense of \cite{Kal16}).
In fact, Kaletha's $L$-packet constructed in \cite{Kal19} consist not only of representations of $\G$ but also those of all rigid inner forms of $\G$.
In this paper, since we focus only on the quasi-split case, we always take a rigid inner twist $(\G',\psi,z)$ in a regular supercuspidal datum to be the trivial twist $(\G,\id,1)$, and omit it from the notation.
\end{rem}

\begin{defn}[{\cite[Definitions 4.6.4, 4.6.5]{Kal19}}]
Let $\bfS$ be an $F$-rational maximal torus of $\G$.
A family $\{\zeta_{\alpha}\}_{\alpha\in \Phi(\bfG,\bfS)}$ of characters $\zeta_{\alpha}\colon F_{\alpha}^{\times}\rightarrow\C^{\times}$ is called a \textit{set of $\zeta$-data for $\Phi(\bfG,\bfS)$} if the following conditions are satisfied:
\begin{itemize}
\item
$\zeta_{-\alpha}=\zeta_{\alpha}^{-1}$ for any $\alpha\in \Phi(\bfG,\bfS)$,
\item
$\zeta_{\sigma(\alpha)}=\zeta_{\alpha}\circ\sigma^{-1}$ for any $\alpha\in \Phi(\bfG,\bfS)$ and $\sigma\in\Gamma$, and
\item
$\zeta_{\alpha}|_{F_{\pm\alpha}^{\times}}=\mathbbm{1}$ for any $\alpha\in \Phi(\bfG,\bfS)_{\sym}$.
\end{itemize}
\end{defn}

For a set $\zeta=\{\zeta_{\alpha}\}$ of $\zeta$-data for $\Phi(\bfG,\bfS)$, we define a character $\zeta_{S}\colon S\rightarrow\C^{\times}$ by 
\[
\zeta_{S}
\colonequals 
\prod_{\ddot{\alpha}\in \ddot{\Phi}(\G,\bfS)}\zeta_{\ddot{\alpha}},
\]\symdef{zeta-S}{$\zeta_{S}$}
where 
\begin{itemize}
\item
we put $\zeta_{\ddot{\alpha}}\colonequals \zeta_{\alpha}\circ\alpha$ if $\ddot{\alpha}\in\ddot{\Phi}(\G,\bfS)_{\asym}$ and
\item
we put $\zeta_{\ddot{\alpha}}$ to be the composition $S\xrightarrow{\alpha} F_{\alpha}^{1}\cong F_{\alpha}^{\times}/F_{\pm\alpha}^{\times}\xrightarrow{\zeta_{\alpha}}\C^{\times}$ if $\ddot{\alpha}\in\ddot{\Phi}(\G,\bfS)_{\ur}$ (here, $F_{\alpha}^{1}$ denotes the kernel of the norm map $\Nr_{F_{\alpha}/F_{\pm\alpha}}$ and the middle isomorphism is Hilbert's 90th theorem).
\end{itemize}

\begin{rem}\label{rem:zeta-data}
When we have two sets of $\chi$-data $\chi=\{\chi_{\alpha}\}_{\alpha\in\Phi(\G,\bfS)}$ and $\chi'=\{\chi'_{\alpha}\}_{\alpha\in\Phi(\G,\bfS)}$, we can produce a set of $\zeta$-data by taking the ratio of $\{\chi'_{\alpha}\}_{\alpha\in\Phi(\G,\bfS)}$ to $\{\chi_{\alpha}\}_{\alpha\in\Phi(\G,\bfS)}$.
We let $\zeta_{\chi'/\chi}$ denote the $\zeta$-data defined in this way:
\[
\zeta_{\chi'/\chi}
=
\{\zeta_{\chi'/\chi,\alpha}\}_{\alpha\in\Phi(\G,\bfS)},
\quad
\zeta_{\chi'/\chi,\alpha}\colonequals \chi'_{\alpha}\cdot\chi_{\alpha}^{-1}.
\]\symdef{zeta-chi'-chi}{$\zeta_{\chi'/\chi}$}
\end{rem}

\begin{defn}[{\cite[Definition 5.2.5]{Kal19}}]\label{defn:isom-rscp-datum}
An \textit{isomorphism} between two regular supercuspidal $L$-packet data $(\bfS,\hat{\jmath},\chi,\vartheta)$ and $(\bfS',\hat{\jmath}',\chi',\vartheta')$ is a tuple
\[
(\iota,g,\zeta)
\colon
(\bfS,\hat{\jmath},\chi,\vartheta)
\rightarrow
(\bfS',\hat{\jmath}',\chi',\vartheta')
\]
consisting of 
\begin{enumerate}
\item
an $F$-rational isomorphism $\iota\colon\bfS\rightarrow\bfS'$ of tori, 
\item
an element $g$ of $\hat{\G}$ satisfying $\hat{\jmath}\circ\hat{\iota}=[g]\circ\hat{\jmath}'$, and
\item
a set $\zeta=\{\zeta_{\alpha'}\}_{\alpha'\in\Phi(\G,\bfS'_{\h\j'})}$ of $\zeta$-data for $\Phi(\G,\bfS'_{\h\j'})$ given by $\chi_{\alpha'\circ\iota}=\chi'_{\alpha'}\cdot\zeta_{\alpha'}$
satisfying the equality 
\[
(\zeta_{S'}^{-1}\cdot\vartheta')\circ\iota
=
\vartheta.
\]
\end{enumerate}
\end{defn}

\begin{rem}\label{rem:isom-rscp-datum}
By the condition (2) of Definition \ref{defn:isom-rscp-datum}, for any $F$-rational $\h\j'$-admissible embedding $j'\colon\bfS'\hookrightarrow\G$, we can check that $j'\circ\iota\colon\bfS\hookrightarrow\G$ is an $F$-rational $\h\j$-admissible embedding.
This implies that we have an identification $\Phi(\G,\bfS'_{\h\j'})\cong\Phi(\G,\bfS'_{j'})\cong\Phi(\G,\bfS_{j'\circ\iota})\cong\Phi(\G,\bfS_{\h\j})$ given by $\alpha'\mapsto\alpha'\circ\iota$.
If we transport the set of $\chi$-data $\chi$ from $\Phi(\G,\bfS_{\h\j})$ to $\Phi(\G,\bfS'_{\h\j'})$ via this identification and write $\iota_{\ast}(\chi)$ for it, then the set of $\zeta$-data $\zeta$ in the condition (3) is nothing but $\zeta_{\iota_{\ast}(\chi)/\chi'}$ with the notation as in Remark \ref{rem:zeta-data}.
\end{rem}

\begin{defn}[{\cite[Definition 5.3.3]{Kal19}}]\label{defn:isom-rsc-datum}
An \textit{isomorphism} between two regular supercuspidal data $(\bfS,\hat{\jmath},\chi,\vartheta, j)$ and $(\bfS',\hat{\jmath}',\chi',\vartheta', j')$ is a tuple
\[
(\iota,g,\zeta,f)
\colon
(\bfS,\hat{\jmath},\chi,\vartheta, j)
\rightarrow
(\bfS',\hat{\jmath}',\chi',\vartheta', j')
\]
consisting of 
\begin{enumerate}
\item
an isomorphism of regular supercuspidal $L$-packet data 
\[
(\iota,g,\zeta)
\colon
(\bfS,\hat{\jmath},\chi,\vartheta)
\rightarrow
(\bfS',\hat{\jmath}',\chi',\vartheta'),\quad\text{and}
\]
\item
an automorphism $f$ of $\G$ given by a $G$-conjugation satisfying $j'\circ\iota=f\circ j$.
\end{enumerate}
\end{defn}

\begin{rem}
In \cite[Definitions 5.3.3]{Kal19}, the fourth parameter $f$ of a tuple $(\iota,g,\zeta,f)$ is an isomorphism of rigid inner twists.
As explained in Remark \ref{rem:rigid}, in this paper we always take a rigid inner twist to be trivial.
Since any automorphism of a rigid inner twist is necessarily a rational conjugation (\cite[Fact 5.1]{Kal16}), we may suppose that $f$ is as in Definition \ref{defn:isom-rsc-datum}.
\end{rem}

Let us investigate the isomorphism classes of regular supercuspidal data over a fixed regular supercuspidal $L$-packet datum.
Let $(\bfS,\hat{\jmath},\chi,\vartheta)$ be a regular supercuspidal $L$-packet datum.
If $(\iota,g,\zeta,f)\colon(\bfS,\hat{\jmath},\chi,\vartheta,j)\rightarrow(\bfS,\hat{\jmath},\chi,\vartheta,j')$ is an isomorphism of regular supercuspidal data ($j,j'\in\mcJ^{\G}$), then $\iota$ is necessarily the identity map by \cite[Lemma 5.2.6]{Kal19}.
In particular, the equality $j'\circ\iota=f\circ j$ in Definition \ref{defn:isom-rsc-datum} implies that $j$ and $j'$ are $G$-conjugate.
Conversely, whenever $j$ and $j'$ are $G$-conjugate, two regular supercuspidal data $(\bfS,\hat{\jmath},\chi,\vartheta,j)$ and $(\bfS,\hat{\jmath},\chi,\vartheta,j')$ are isomorphic.
Hence, the isomorphism classes of regular supercuspidal data with a fixed regular supercuspidal $L$-packet datum $(\bfS,\hat{\jmath},\chi,\vartheta)$ are parametrized by the set
\[
\mcJ^{\G}_{G}
\colonequals 
\mcJ^{\G}/{\sim_{G}}
=
\{\text{$\hat{\jmath}$-admissible $F$-rational embeddings $\bfS\hookrightarrow\G$}\}/{\sim_{G}},
\]\symdef{J-G-G}{$\mcJ^{\G}_{G}$}
where $\sim_{G}$ denotes the equivalence relation given by the $G$-conjugacy.
In the following, we often regard $\mcJ^{\G}_{G}$ as a subset of $\mcJ^{\G}$ by fixing a set of representatives as long as there is no risk of confusion.

Now we explain Kaletha's construction of regular supercuspidal $L$-packets (\cite[1153-1154 pages]{Kal19}).
Let $(\bfS,\hat{\jmath},\chi,\vartheta,j)$ be a regular supercuspidal datum with $j\in\mcJ^{\G}_{G}$.
Then $(\bfS_{j},\vartheta_{j})\colonequals (j(\bfS),\vartheta\circ j^{-1})$ is a tame elliptic extra regular pair of $\bfG$ by the definition of a regular supercuspidal datum.
We define a character $\epsilon_{\vartheta_{j}}$ of $S_{j}$ by
\[
\epsilon_{\vartheta_{j}}
\colonequals 
\epsilon_{\vartheta_{j},\asym}\cdot\epsilon_{\vartheta_{j},\ur}\cdot\epsilon_{\bfS_{j},\ram}.
\]\symdef{epsilon-vartheta-j}{$\epsilon_{\vartheta_{j}}$}
As in the manner of Section \ref{subsec:invariants-for-LLC}, we get a set $\chi_{\vartheta_{j}}$ of $\chi$-data for $\Phi(\bfG,\bfS_{j})$.
Via the identification $\Phi(\G,\bfS_{j})\cong\Phi(\G,\bfS_{\hat{\jmath}})$, this induces a set $\chi_{\vartheta_{\hat{\jmath}}}$ of $\chi$-data for $\Phi(\G,\bfS_{\hat{\jmath}})$, which is independent of the choice of $j$.
By comparing $\chi_{\vartheta_{\hat{\jmath}}}$ with the set of $\chi$-data $\chi$ contained in $(\bfS,\hat{\jmath},\chi,\vartheta)$, we get a set of $\zeta$-data $\zeta_{\chi_{\vartheta_{\hat{\jmath}}}/\chi}$ (Remark \ref{rem:zeta-data}).
We define a tame elliptic regular pair $(\bfS_{j},\vartheta'_{j})$ of $\G$ by 
\[
\vartheta'_{j}
\colonequals 
\epsilon_{\vartheta_{j}}\cdot(\vartheta\cdot\zeta_{\chi_{\vartheta_{\hat{\jmath}}}/\chi,S}^{-1})\circ j^{-1} \colon S_j\rightarrow\C^\times.
\]\symdef{vartheta-prime-j}{$\vartheta'_{j}$}

Then we get the regular supercuspidal representation $\pi_{(\bfS_{j},\vartheta'_{j})}$ of $G$ associated with the tame elliptic regular pair $(\bfS_{j},\vartheta'_{j})$ (see Section \ref{subsec:rsc}).
Note that the $G$-conjugacy class of $(\bfS_{j},\vartheta'_{j})$ is independent of the choice of a representative of $j$, hence so is the isomorphism class of $\pi_{(\bfS_{j},\vartheta'_{j})}$.
We put 
\[
\Pi_{\phi}^{\G}
\colonequals 
\{\pi_{(\bfS_{j},\vartheta'_{j})}\mid j\in\mcJ^{\G}_{G}\}.
\]\symdef{Pi-G-phi}{$\Pi_{\phi}^{\G}$}

\section{Twisted character formula for toral supercuspidal representations}\label{sec:TCF}

In this section, we review our previous work \cite{Oi25} on an explicit formula for twisted Harish-Chandra characters of toral supercuspidal representations, which is a generalization of the work of Adler--Spice \cite{AS09} and DeBacker--Spice \cite{DS18}.

\subsection{Twisted spaces and twisted characters}\label{subsec:twisted-spaces}

Let $\G$ be a quasi-split connected reductive group over $F$ with an $F$-splitting $\spl_{\G}=(\B,\T,\{X_{\alpha}\}_{\alpha})$.
Let $\theta$ be an $F$-rational involution of $\G$ (i.e., an automorphism of $\G$ of order $2$) preserving $\spl_{\G}$.
We assume that $(\bfG,\bfT,\theta)$ has no restricted root of type $2$ or $3$ in the sense of \cite{KS99}.

Following Labesse (\cite{Lab04}) and Waldspurger (\cite{Wal08}), we work with the formalism of \textit{twisted spaces} as follows.
We put 
\[
\t\G\colonequals\G\rtimes\theta= \G\theta,
\]
i.e., $\t\G$ is the non-identity connected component of the disconnected reductive group $\G\rtimes\langle\theta\rangle$.
This is a twisted space in the sense of Labesse, that is, an algebraic variety over $F$ which is a bi-$\G$-torsor.
As an algebraic variety, $\tilde{\G}$ is isomorphic to $\G$ by the map $g\mapsto g\theta$.
The right and left actions of $\G$ on $\t\G$ is given by
\[
g_{1}\cdot (g\theta)\cdot g_{2}
=
(g_{1}g\theta(g_{2}))\theta.
\]
Thus the conjugate action of $\G$ on $\t{\G}$ is given by
\[
[g_{1}](g\theta)
\colonequals 
g_{1}\cdot (g\theta)\cdot g_{1}^{-1}
=
(g_{1}g\theta(g_{1})^{-1})\theta.
\]
Note that the $\theta$-twisted conjugacy in $\G$ (as in \cite{KS99}) is amount to the $\G$-conjugacy in $\t\G$.
The conjugate action of $\t\G$ on $\G$ is also defined by, for $\delta=g\theta\in\t\G$, 
\[
[\delta]
\colonequals 
[g]\circ\theta
\colon\G\rightarrow\G.
\]

\begin{rem}\label{rem:Prasad}
We believe that our assumption on restricted roots is harmless for our purpose, i.e., study of twisted characters of toral supercuspidal representations. 
It is known that restricted roots of type $2$ or $3$ appear only when $\G$ contains a factor of type $A_{2n}$ on which $\theta$ acts nontrivially; \cite[(1.3.3)]{KS99}.
However, for example, it is known that $\GL_{2n+1}$ does not have a $\theta$-stable irreducible supercuspidal representation for such a $\theta$ whenever $p\neq2$ (\cite[Proposition 4]{Pra99}).
\end{rem}

Let us review some basics of twisted (Harish-Chandra) characters of irreducible admissible representations (see \cite[Section 2.6]{LH17} for more details).

Let $\eta\in\t{G}$.
Then $[\eta]$ is an $F$-rational automorphism of $\G$.
Recall that, for an irreducible admissible representation $\pi$ of $G$ realized on a $\C$-vector space $V$, its \textit{$\eta$-twist} $\pi^{\eta}$ is defined by the action
\[
\pi^{\eta}(g)\colonequals  \pi\circ[\eta](g)=\pi(\eta g\eta^{-1})
\]
on the same representation space $V$.
We say that $\pi$ is \textit{$\eta$-stable} if $\pi^{\eta}$ is isomorphic to $\pi$ as a representation of $G$.

Suppose that $\pi$ is an $\eta$-stable irreducible admissible representation of $G$.
We fix an intertwiner 
\[
I_{\pi}^{\eta}\colon\pi\xrightarrow{\sim}\pi^{\eta}
\]\symdef{I-eta-pi}{$I_{\pi}^{\eta}$}
(note that such an $I_{\pi}^{\eta}$ is unique up to $\C^{\times}$-multiple as $\pi$ is irreducible)
and put
\[
\t{\pi}(g\eta)\colonequals \pi(g)\circ I_{\pi}^{\eta}
\]
for any $g\eta\in\t{G}$ with $g\in G$.
Then we get a representation $\t{\pi}$ of $\t{G}$ on the representation space $V$ of $\pi$, i.e., a map $\t{\pi}\colon\t{G}\rightarrow\Aut_{\C}(V)$ satisfying the following relation for any $g_{1},g_{2}\in G$ and $\delta\in\t{G}$:
\[
\t{\pi}(g_{1}\cdot\delta\cdot g_{2})=\pi(g_{1})\circ\t{\pi}(\delta)\circ\pi(g_{2}).
\]

For any $f\in C_{c}^{\infty}(\t{G})$, an operator $\t{\pi}(f)$ on $V$ is defined by
\[
\t{\pi}(f)\colonequals  \int_{\t{G}} f(\delta) \t{\pi}(\delta) \, d\delta,
\]
where $d\delta$ is a measure on $\t{G}$ obtained by transferring a Haar measure $dg$ on $G$ by the bijection $G\rightarrow\t{G}\colon g\mapsto g\eta$.
Then, as in the untwisted case, the operator $\t{\pi}(f)$ is of finite rank and hence we can define its trace.
Moreover, the resulting distribution $f\mapsto \tr \t{\pi}(f)$ on $C_{c}^{\infty}(\t{G})$ is known to be locally $L^1$.
In this setting, the \textit{($\eta$-)twisted (Harish-Chandra) character} $\Theta_{\t{\pi}}$\symdef{Theta-pi-tilde}{$\Theta_{\t{\pi}}$} of $\pi$ is defined to be the unique locally constant function on $\t{G}_{\rs}$ such that 
\[
\tr \t{\pi}(f)
=
\int_{\t{G}_{\rs}} \Theta_{\t{\pi}}(\delta) f(\delta)\,d\delta
\]
for every $f \in C_{c}^{\infty}(\t{G})$ satisfying $\supp(f) \subseteq \t{G}_{\rs}$, where $\t{G}_{\rs}$ denotes the set of regular semisimple elements of $\t{G}$.

We emphasize that the twisted representation $\t{\pi}$ and the twisted character $\Theta_{\t{\pi}}$ depend on the choice of an intertwiner $I_{\pi}^{\eta}$ between $\pi$ and $\pi^{\eta}$ although this dependence is not reflected to the symbol $\t{\pi}$.


\subsection{Twisted maximal tori}\label{subsec:twisted-tori}
We will investigate the twisted character of a toral supercuspidal representation of $G$ arising from a ``twisted maximal torus'' of $\t\G$.
An \textit{$F$-rational twisted maximal torus} of $\t\G$ is a pair $(\t\bfS,\bfS)$ of 
\begin{itemize}
\item an $F$-rational maximal torus $\bfS$ of $\G$ and
\item an $F$-rational $\bfS$-twisted subspace $\t\bfS$ of $\t\G$ (i.e., subvariety of $\t{\G}$ which is a bi-$\bfS$-torsor under the bi-$\bfS$-action on $\t\bfS\subset\t\G$).
\end{itemize}
satisfying the following conditions:
\begin{enumerate}
\item
there exists a Borel subgroup $\bfB_{\bfS}$ of $\G$ (not necessarily $F$-rational) containing $\bfS$ and satisfying $\t\bfS=\bfN_{\t\G}(\bfS)\cap \bfN_{\t\G}(\bfB_{\bfS})$,
\item
the set $\t{S}=\t\bfS(F)$ of $F$-valued points of $\t\bfS$ is not empty.
\end{enumerate}
When $(\bfS,\t\bfS)$ is an $F$-rational twisted maximal torus of $\t\G$, we often simply say that ``$\t\bfS$ is an $F$-rational twisted maximal torus of $\t\G$''.
By the condition (1), every $\eta\in\t{\bfS}$ acts on $\bfS$ by the conjugation $[\eta]$.
Since $\bfS$ is commutative, this action is independent of the choice of $\eta$.
We let $\theta_{\bfS}$ denote this automorphism of $\bfS$.
Note that we can take $\eta$ to be $F$-rational by the condition (2), hence $\theta_{\bfS}$ is $F$-rational.
Moreover, since $\theta$ is involutive, so is $\theta_{\bfS}$ (see \cite[Lemma 3.3]{Oi25}).
We put $\bfS^{\nat}\colonequals \bfS^{\theta_{\bfS},\circ}$\symdef{S-natural}{$\bfS^{\nat}$}.
Note that, for any $\eta\in\t\bfS$, we have $\bfS^{\nat}=\bfS_{\eta}\subset\bfG_{\eta}$.

Recall that an element $\delta\in\t\G$ is said to be
\begin{itemize}
\item
\textit{semisimple} if $[\delta]$ is quasi-semisimple,
\item
\textit{regular semisimple} if $\delta$ is semisimple and $\G_{\delta}$ is a torus, and 
\item
\textit{strongly regular semisimple} if $\delta$ is semisimple and $\G^{\delta}$ is abelian
\end{itemize}
(see \cite[Sections 3.2 and 3,3]{KS99}).
We say that an $F$-rational twisted maximal torus $\t\bfS$ of $\t\G$ is \textit{elliptic} if $\bfS^{\nat}$ is anisotropic modulo $\bfA_{\t{\G}}$\symdef{A-G-tilde}{$\bfA_{\t{\G}}$}, where $\bfA_{\t{\G}}$ denotes the maximal split subtorus of $\bfZ_{\G}^{\theta}$ (note that this condition is weaker than that $\bfS$ is elliptic; see \cite[Remark 3.7]{Oi25}).
We say that a semisimple element $\delta\in\t{G}$ is \textit{elliptic} if there exists an $F$-rational elliptic twisted maximal torus $\t\bfS$ of $\t\G$ containing $\delta$.

Let us summarize some basic properties of twisted maximal tori below.

\begin{lem}[{\cite[Lemma 3.8]{Oi25}}]\label{lem:twisted-torus}
Let $\bfS$ be an $F$-rational maximal torus of $\G$.
If there exists a semisimple element $\eta\in\t{G}$ and a Borel subgroup $\bfB_{\bfS}$ containing $\bfS$ such that $(\bfB_{\bfS},\bfS)$ is preserved by $[\eta]$, then $(\t\bfS,\bfS)\colonequals (\bfS\eta,\bfS)$ is an $F$-rational twisted maximal torus of $(\t\G,\G)$.
\end{lem}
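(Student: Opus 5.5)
We have to check the two conditions in the definition of an $F$-rational twisted maximal torus for $\t\bfS\colonequals\bfS\eta$. Condition (2) is immediate, since $\eta\in\t{G}$ lies in $\t\bfS(F)$. It is also clear that $\bfS\eta$ is an $F$-rational subvariety of $\t\G$: $\bfS$ is $F$-rational and $\eta$ is an $F$-point. The substance is therefore condition (1), namely the equality
\[
\bfS\eta=\bfN_{\t\G}(\bfS)\cap\bfN_{\t\G}(\bfB_{\bfS}),
\]
together with the bi-$\bfS$-torsor property.

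\textbf{Inclusion $\subset$.} Take $s\in\bfS$. Then $[s\eta]=[s]\circ[\eta]$. By hypothesis $[\eta]$ preserves $\bfS$ and $\bfB_{\bfS}$, and $[s]$ preserves both since $s\in\bfS\subset\bfB_{\bfS}$. Hence $s\eta$ normalizes both.

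\textbf{Inclusion $\supset$.} Take $g\eta$ with $[g\eta]$ preserving $\bfS$ and $\bfB_{\bfS}$. Since $[g\eta]=[g]\circ[\eta]$ and $[\eta]$ already preserves the pair, $[g]$ preserves the pair $(\bfB_{\bfS},\bfS)$. Thus $g\in\bfN_{\G}(\bfB_{\bfS})\cap\bfN_{\G}(\bfS)=\bfB_{\bfS}\cap\bfN_{\G}(\bfS)$, using that Borel subgroups are self-normalizing. An element of $\bfB_{\bfS}$ normalizing $\bfS$ represents an element of the Weyl group stabilizing the positive system attached to $\bfB_{\bfS}$, so it is trivial in $\Omega_{\G}(\bfS)$. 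Equivalently, $\bfN_{\bfB_{\bfS}}(\bfS)=\bfZ_{\bfB_{\bfS}}(\bfS)=\bfS$. Hence $g\in\bfS$ and $g\eta\in\bfS\eta$.

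\textbf{Torsor property.} The left $\bfS$-action on $\bfS\eta$ is simply transitive. For the right action, $\eta\cdot s=[\eta](s)\,\eta$, and $[\eta]$ restricts to an automorphism of $\bfS$, so $\eta\bfS=\bfS\eta$ and the right action is simply transitive as well.

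The only point needing care is the self-normalizing step $\bfN_{\bfB_{\bfS}}(\bfS)=\bfS$. It is standard, since the Weyl group acts simply transitively on Borel subgroups containing $\bfS$. Semisimplicity of $\eta$ is not needed for the equality itself; it is only what guarantees that a pair $(\bfB_{\bfS},\bfS)$ preserved by $[\eta]$ exists at all, because $[\eta]$ is quasi-semisimple.
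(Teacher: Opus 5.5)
Your proof is correct and complete. $F$-rationality and the bi-$\bfS$-torsor property of $\bfS\eta$ follow at once from $\eta\in\t{G}$ and $[\eta](\bfS)=\bfS$. Condition (1) reduces, via $[g\eta]=[g]\circ[\eta]$, to the identity $\bfN_{\G}(\bfB_{\bfS})\cap\bfN_{\G}(\bfS)=\bfS$. The paper gives no proof of its own and simply cites \cite[Lemma 3.8]{Oi25}; your argument is the standard direct verification.

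One small sharpening of your closing remark: the semisimplicity hypothesis is redundant. The paper calls $\eta$ semisimple when $[\eta]$ is quasi-semisimple, which means $[\eta]$ preserves some Borel pair, and the hypothesis already hands you such a pair $(\bfB_{\bfS},\bfS)$.
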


\begin{prop}[{\cite[Proposition 3.5]{Oi25}}]\label{prop:twisted-tori}
For any $F$-rational twisted maximal torus $\t\bfS$ of $\t\G$, we have $\bfZ_{\G}(\bfS^{\nat})=\bfS$ and $\bfZ_{\G}(\t\bfS)^{\circ}=\bfS^{\nat}$.
Furthermore, for any $\eta\in\t{\bfS}$, $\G_{\eta}$ is a connected reductive group with a maximal torus $\bfS^{\nat}$ (when $\eta\in\t{S}$, both $\bfS$ and $\G_{\eta}$ are $F$-rational).
\end{prop}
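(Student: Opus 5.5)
We use Steinberg's theory of quasi-semisimple automorphisms, applied to a single element $\eta\in\t\bfS$; since the conjugation action of $\t\bfS$ on $\bfS$ is independent of $\eta$, the choice does not matter.

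\emph{Structure of $\G_{\eta}$.} By condition (1) in the definition of a twisted maximal torus, $[\eta]$ preserves the pair $(\bfB_{\bfS},\bfS)$, so $[\eta]$ is quasi-semisimple. Steinberg's theorem (as recalled in \cite[Section 1.1]{KS99}) then gives three facts:
\begin{itemize}
\item $\G_{\eta}=\G^{[\eta],\circ}$ is connected reductive;
\item $\bfB_{\bfS}^{[\eta],\circ}$ is a Borel subgroup of $\G_{\eta}$;
\item $\bfS^{[\eta],\circ}=\bfS^{\theta_{\bfS},\circ}=\bfS^{\nat}$ is a maximal torus of $\G_{\eta}$.
\end{itemize}
If $\eta\in\t S$, then $[\eta]$ is $F$-rational, so $\G_{\eta}$ is $F$-rational; $\bfS$ is $F$-rational by assumption.

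\emph{$\bfZ_{\G}(\bfS^{\nat})=\bfS$.} This is the step needing the most care. Steinberg's theorem also shows that $\bfS^{\nat}$ contains regular elements of $\G$. Concretely, the restriction map $X^{\ast}(\bfS)\to X^{\ast}(\bfS^{\nat})$ sends no root to $0$: the image of $\alpha$ is the average $\tfrac12(\alpha+\theta_{\bfS}\alpha)$ up to torsion, which is nonzero because $\theta_{\bfS}$ permutes the $\bfB_{\bfS}$-positive roots, so $\alpha+\theta_{\bfS}\alpha$ is a sum of two roots of the same sign. Hence $\bfZ_{\G}(\bfS^{\nat})$ is generated by $\bfS$ and root subgroups for roots that are trivial on $\bfS^{\nat}$, and there are none. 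Since $\bfZ_{\G}(\bfS^{\nat})$ is connected, being the centralizer of a torus, it equals $\bfS$.

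\emph{$\bfZ_{\G}(\t\bfS)^{\circ}=\bfS^{\nat}$.} An element $g$ centralizes $\t\bfS=\bfS\eta$ exactly when it commutes with $\bfS$ and with $\eta$. Using the previous step,
\[
\bfZ_{\G}(\t\bfS)=\bfZ_{\G}(\bfS)\cap\G^{\eta}=\bfS\cap\G^{\eta}=\bfS^{\theta_{\bfS}}.
\]
Taking identity components gives $\bfS^{\nat}$.
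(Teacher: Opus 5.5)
Your proof is correct and is essentially the standard argument. The paper gives no proof beyond citing \cite[Proposition 3.5]{Oi25}, and the surrounding text relies on the same ingredients you use: Steinberg's theory of quasi-semisimple automorphisms (as recalled in \cite[Section 1.1]{KS99}) applied to $[\eta]$, together with the fact that no root vanishes on $\bfS^{\nat}$ because $\theta_{\bfS}$ preserves the $\bfB_{\bfS}$-positive roots. The only cosmetic difference is that the paper first transports $(\t\bfS,\bfS)$ to the pinned pair $(\t\T,\T)$ via $[g_{\bfS}]$, whereas you work with $(\bfB_{\bfS},\bfS)$ directly, which is equally valid.
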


Here, the structure of the connected centralizer $\G_{\eta}$ for $\eta\in\t{S}$ is described as follows (see \cite[Section 3.3]{Wal08} for more details).
Let $\t\bfS$  be an $F$-rational twisted maximal torus of $\t\G$ and $\bfB_{\bfS}$ a Borel subgroup which contains $\bfS$ and is preserved by the action of $\t{\bfS}$.
By fixing an element $g_{\bfS}\in\G$ satisfying $[g_{\bfS}](\bfB_{\bfS},\bfS)=(\bfB,\T)$, we get an isomorphism $[g_{\bfS}]\colon(\t\bfS,\bfS)\xrightarrow{\sim}(\t\T,\T)$.
Note that the isomorphism $[g_{\bfS}]\colon\bfS\xrightarrow{\sim}\T$ is independent of the choice of $g_{\bfS}\in\G$ and that the automorphism $\theta_{\bfS}$ of $\bfS$ is transported to $\theta$ on $\T$ via $[g_{\bfS}]$, i.e., $\theta\circ[g_{\bfS}]=[g_{\bfS}]\circ\theta_{\bfS}$.

We put 
\begin{itemize}
\item
$Y^{\ast}(\T)\colonequals X^{\ast}(\T)/(X^{\ast}(\T)\cap(1-\theta)X^{\ast}(\T)_{\Q})$\symdef{Y-ast-T}{$Y^{\ast}(\T)$} and
\item
$Y_{\ast}(\T)\colonequals X_{\ast}(\T)/(X_{\ast}(\T)\cap(1-\theta)X_{\ast}(\T)_{\Q})$\symdef{Y-sub-ast-T}{$Y_{\ast}(\T)$}.
\end{itemize}
We write $p^{\ast}\colon X^{\ast}(\T)\rightarrow Y^{\ast}(\T)$\symdef{p-ast}{$p^{\ast}$} and $p_{\ast}\colon X_{\ast}(\T)\rightarrow Y_{\ast}(\T)$\symdef{p-sub-ast}{$p_{\ast}$} for the natural surjections.
Then we have the following:
\begin{enumerate}
\item
$Y^{\ast}(\T)\cong X^{\ast}(\T^{\theta,\circ})$ is the $\Z$-dual to $X_{\ast}(\T)^{\theta}\cong X_{\ast}(\T^{\theta,\circ})$;
\item
$Y_{\ast}(\T)$ is the $\Z$-dual to $X^{\ast}(\T)^{\theta}$.
\end{enumerate}

We put $\Theta\colonequals \langle\theta\rangle$.
Note that the action of $\Theta$ on $(\G,\T)$ induces an action on $\Phi(\G,\T)$.
For any $\alpha\in\Phi(\G,\T)$, we let $l_{\alpha}$\symdef{l-alpha}{$l_{\alpha}$} be the cardinality of the $\Theta$-orbit of $\alpha$ in $\Phi(\G,\T)$ and define an element $N(\alpha)\in\Phi(\G,\T)$ by
\[
N(\alpha)
\colonequals \sum_{i=0}^{l_{\alpha}-1} \theta^{i}(\alpha)
=\sum_{\beta\in\Theta(\alpha)}\beta.
\]\symdef{N-alpha}{$N(\alpha)$}
We also define $l_{\alpha^{\vee}}$ and $N(\alpha^{\vee})$ for any $\alpha^{\vee}\in\Phi^{\vee}(\G,\T)$ in the same manner.
For $\alpha\in\Phi(\G,\T)$, we shortly write $\alpha_{\res}\colonequals p^{\ast}(\alpha)$.
We define a set $\Phi_{\res}(\G,\T)$\symdef{Phi-res-G-T}{$\Phi_{\res}(\G,\T)$} by
\[
\Phi_{\res}(\G,\T)
=\{p^{\ast}(\alpha) \mid \alpha\in \Phi(\G,\T)\}\subset Y^{\ast}(\T)\cong X^{\ast}(\T^{\theta,\circ}).
\]
Then $\Phi_{\res}(\G,\T)$ forms a (possibly non-reduced) root system in general.
We call elements of $\Phi_{\res}(\G,\T)$ \textit{restricted roots}.
Recall that we have supposed that $(\G,\T,\theta)$ has no restricted root of type $2$ or $3$; this means that $\Phi_{\res}(\G,\T)$ is a reduced root system.

We also define a set $\Phi_{\res}^{\vee}(\G,\T)$ by
\[
\Phi_{\res}^{\vee}(\G,\T)
=\{N(\alpha^{\vee}) \mid \alpha\in \Phi^{\vee}(\G,\T)\}\subset X_{\ast}(\T)^{\theta}\cong X_{\ast}(\T^{\theta,\circ}).
\]
Then we have bijections
\begin{align*}
\Phi(\G,\T)/\Theta&\xrightarrow{1:1}\Phi_{\res}(\G,\T)\colon \alpha\mapsto \alpha_{\res} \,(\colonequals p^{\ast}(\alpha)),\\
\Phi^{\vee}(\G,\T)/\Theta&\xrightarrow{1:1}\Phi_{\res}^{\vee}(\G,\T)\colon \alpha\mapsto N(\alpha).
\end{align*}
(We note that $\Phi_{\res}(\G,\T)$ and $\Phi_{\res}^{\vee}(\G,\T)$ are denoted by $\Sigma^{\res}$ and $\check{\Sigma}^{\res}$ in \cite[Section 3.3]{Wal08}, respectively.)

Now let $\eta$ be an element of $\t{S}$ and let $\nu\in\T$ be the element such that
\[
[g_{\bfS}](\eta)=\nu\theta\in\t\T=\T\theta.
\]
Then $[g_{\bfS}]\colon \G\rightarrow\G$ induces an isomorphism between $(\G_{\eta},\bfS^{\nat})$ and $(\G_{\nu\theta},\T^{\theta,\circ})$.
In particular, the sets $\Phi(\G_{\eta},\bfS^{\nat})$ and $\Phi^{\vee}(\G_{\eta},\bfS^{\nat})$ can be identified with $\Phi(\G_{\nu\theta},\T^{\theta,\circ})$ and $\Phi^{\vee}(\G_{\nu\theta},\T^{\theta,\circ})$, respectively (note that here we ignore the Galois actions).
The latter sets are described in terms of the restricted roots and coroots as follows:
\begin{align*}
\Phi(\G_{\nu\theta},\T^{\theta,\circ})
&=\{p^{\ast}(\alpha) \mid \alpha\in \Phi(\G,\T); N(\alpha)(\nu)=1\}
\subset\Phi_{\res}(\G,\T),\\
\Phi^{\vee}(\G_{\nu\theta},\T^{\theta,\circ})
&=\{N(\alpha^{\vee}) \mid \alpha^{\vee}\in \Phi^{\vee}(\G,\T); N(\alpha)(\nu)=1\}
\subset\Phi_{\res}^{\vee}(\G,\T).
\end{align*}
Note that these sets are thought of as subsets of $X^{\ast}(\T^{\theta,\circ})$ and $X_{\ast}(\T^{\theta,\circ})$.
Under the isomorphism $[g_{\bfS}]$, we may identify $\Phi(\G_{\eta},\bfS^{\nat})$ and $\Phi^{\vee}(\G_{\eta},\bfS^{\nat})$ with subsets of $\Phi_{\res}(\G,\bfS)$ and $\Phi_{\res}^{\vee}(\G,\bfS)$, respectively, where restricted roots and coroots with respect to $(\G,\bfS)$ are defined in the same manner as above.

\subsection{Structure of the Heisenberg quotient}\label{subsec:Heisen-structure}

From now on (until the end of Section \ref{sec:TCF}), suppose that we have the following:
\begin{itemize}
\item
an $F$-rational tame elliptic twisted maximal torus $(\t\bfS,\bfS)$ of $(\t{\G},\G)$, and
\item 
a tame elliptic toral pair $(\bfS,\vartheta)$ of depth $r\in\R_{>0}$ which is $\eta$-invariant, i.e., $(\bfS,\vartheta)=(\bfS,\vartheta)^{\eta}\colonequals ([\eta]^{-1}(\bfS),\vartheta\circ[\eta])$, for $\eta\in\t{S}$.
(Note that $(\bfS,\vartheta)$ is $\eta$-invariant for some $\eta\in\t{S}$ if and only if it is for any $\eta\in\t{S}$.)
\end{itemize}
As we will see in the next subsection, the toral supercuspidal representation $\pi_{(\bfS,\vartheta)}$ of $G$ associated with such $(\bfS,\vartheta)$ is $\theta$-stable (or equivalently, $\eta$-stable for any $\eta\in\t{S}$).
Our aim of this section is to describe an explicit formula for the twisted character of $\pi_{(\bfS,\vartheta)}$.
For this, we first need to specify the choice of an intertwiner between $\pi_{(\bfS,\vartheta)}$ and its $\eta$-twist $\pi_{(\bfS,\vartheta)}^{\eta}$ because the definition of the twisted character depends on it (see Section \ref{subsec:twisted-spaces}).
The choice of such an intertwiner given in \cite[Section 6]{Oi25} is based on the structure of the Heisenberg group $J/N$ associated with $(\bfS,\vartheta)$ (see Section \ref{subsec:rsc} for the notation), so let us review it here.

By fixing a finite tamely ramified extension $E$ of $F$ splitting $\bfS$, we put
\[
\bfV\colonequals \Lie(\bfS,\G)(E)_{\x,(r,s):(r,s+)}
\quad
\text{and}
\quad
V\colonequals \bfV^{\Gamma}.
\]\symdef{V}{$V$}
Then we have
\[
J/J_{+}
\cong
(S,G)_{\x,(r,s):(r,s+)}
=
(\bfS,\G)(E)_{\x,(r,s):(r,s+)}^{\Gamma}.
\]
Thus the exponential map $\Lie(\bfS,\G)(E)_{\x,(r,s):(r,s+)}\xrightarrow{\sim}(\bfS,\G)(E)_{\x,(r,s):(r,s+)}$ induces an identification
\[
V\xrightarrow{\sim}J/J_{+}.
\]

Let us investigate the space $V$ by using the root space decomposition of $\bmfg$ with respect to the maximal torus $\bfS$ in $\G$.
For $\alpha\in\Phi(\G,\bfS)$, we put $\bfV_{\alpha}$ to be the image of $\bmfg_{\alpha}(E)\cap\Lie(\bfS,\G)(E)_{\x,(r,s)}$ in $\Lie(\bfS,\G)(E)_{\x,(r,s):(r,s+)}$.
Then the root space decomposition $\bmfg\cong\bmfs\oplus\bigoplus_{\alpha\in\Phi(\G,\bfS)}\bmfg_{\alpha}$ naturally induces a decomposition
\[
\bfV
=
\bigoplus_{\alpha\in\Phi(\G,\bfS)}\bfV_{\alpha}.
\]
For each $\alpha\in\Phi(\G,\bfS)$, we put $V_{\alpha}\colonequals \bfV_{\alpha}^{\Gamma_{\alpha}}$ (recall that $\Gamma_{\alpha}$ is the stabilizer of $\alpha$ in $\Gamma$).
Here, note that $\bfV_{\alpha}$ and $V_{\alpha}$ might be zero depending on $\alpha\in\Phi(\G,\bfS)$.
We define a subset $\Xi(\G,\bfS)$ of $\Phi(\G,\bfS)$ by
\[
\Xi(\G,\bfS)\colonequals \{\alpha\in\Phi(\G,\bfS)\mid V_{\alpha}\neq0\}.
\]\symdef{Xi-G-S}{$\Xi(\G,\bfS)$}
Note that, for any $\alpha\in\Xi(\G,\bfS)$, the space $V_{\alpha}$ is (noncanonically) isomorphic to the residue field $k_{\alpha}$ of $F_{\alpha}$.
Also note that $\Xi(\G,\bfS)$ is preserved by the action of $\Sigma=\Gamma\times\{\pm1\}$ on $\Phi(\G,\bfS)$.
In the following, we simply write $\Xi$ for $\Xi(\G,\bfS)$.\symdef{Xi}{$\Xi$}
It can be checked that the set $\Xi$ does not contain any symmetric ramified root (see \cite[Lemma 6.2]{Oi25}).

For $\dot{\alpha}\in\dot{\Phi}(\G,\bfS)$, we put
\[
\bfV_{\dot{\alpha}}\colonequals \bigoplus_{\beta\in\dot{\alpha}}\bfV_{\beta}
\quad\text{and}\quad
V_{\dot{\alpha}}\colonequals \bfV_{\dot{\alpha}}^{\Gamma}.
\]
Then, for any $\dot{\alpha}\in\dot{\Phi}(\G,\bfS)$, we have
\[
V_{\alpha}
\xrightarrow{\sim}
V_{\dot{\alpha}}=\Bigl(\bigoplus_{\beta\in\dot{\alpha}}\bfV_{\beta}\Bigr)^{\Gamma}
\colon X_{\alpha} \mapsto \sum_{\sigma\in\Gamma/\Gamma_{\alpha}}\sigma(X_{\alpha}).
\]\symdef{V-alpha}{$V_{\alpha}$}\symdef{V-alpha-dot}{$V_{\dot{\alpha}}$}
Therefore we get
\begin{align}\label{eq:decomp}
V
\cong
\bigoplus_{\dot{\alpha}\in\dot{\Xi}}V_{\dot{\alpha}}
=
\bigoplus_{\ddot{\alpha}\in\ddot{\Xi}}V_{\ddot{\alpha}},
\end{align}
where we put $V_{\ddot{\alpha}}\colonequals V_{\dot{\alpha}}\oplus V_{-\dot{\alpha}}$ for $\alpha\in\Xi_{\asym}$ and $V_{\ddot{\alpha}}\colonequals V_{\dot{\alpha}}$ for $\alpha\in\Xi_{\sym}$.\symdef{V-alpha-ddot}{$V_{\ddot{\alpha}}$}

Recall that $V\cong J/J_{+}$ has a structure of a symplectic $\F_{p}$-vector by 
\[
(J/J_{+})\times (J/J_{+}) \rightarrow \mu_{p}\cong\F_{p}\colon (g,g')\mapsto\h\vartheta([g,g'])
\]
(see Section \ref{subsec:rsc}).
In fact, the above decomposition (\ref{eq:decomp}) gives an orthogonal decomposition of $V$ into symplectic subspaces.
See \cite[Section 6.1]{Oi25} for a detailed description of each symplectic subspace $V_{\ddot{\alpha}}$.

Now let $\eta\in\t{S}$ denote any topologically semisimple element in the sense that the order of $\eta$ in $\t{S}/A_{\t\G}$ is finite of prime-to-$p$.

Recall that, in Section \ref{subsec:twisted-tori}, we introduced the notion of a restricted root.
More precisely, the action of $[\eta]$ on $\bmfg$ (for any $\eta\in \t{S}$) induces an action on the set $\Phi(\G,\bfS)$ of order $2$.
Since this action does not depend on the choice of $\eta\in\t{S}$, let us write $\theta_{\bfS}$ for this action by abuse of notation and $\Theta_{\bfS}$ for the group $\langle\theta_{\bfS}\rangle$ generated by $\theta_{\bfS}$.
Concretely, for any $\alpha\in\Phi(\G,\bfS)$, $\theta_{\bfS}(\alpha)$ is the root given by $\theta_{\bfS}(\alpha)=\alpha\circ[\eta]^{-1}$.
Whenever there is no risk of confusion, we abbreviate $\theta_{\bfS}(\alpha)$ even as $\theta(\alpha)$ (and also write $\Theta$ for $\Theta_{\bfS}$).
We note that, for $X_{\alpha}\in\mfg_{\alpha}$, $[\eta](X_{\alpha})$ belongs to $\mfg_{\theta(\alpha)}$.
We also note that, as $\eta$ is $F$-rational, the actions of $\Theta_{\bfS}$ and $\Sigma=\Gamma\times\{\pm1\}$ on $\Phi(\G,\bfS)$ commute.
Especially, the symmetry of $\Phi(\G,\bfS)$ is preserved by $\Theta_{\bfS}$.
Each restricted root $\alpha_{\res}\in\Phi_{\res}(\G,\bfS)$ is naturally identified with the $\Theta_{\bfS}$-orbit $\Theta_{\bfS}\alpha$ of $\alpha$ in $\Phi(\G,\bfS)$:
\[
\Phi(\G,\bfS)
\twoheadrightarrow\Phi(\G,\bfS)/\Theta_{\bfS}
\xrightarrow{1:1}\Phi_{\res}(\G,\bfS)
\colon \alpha\mapsto\alpha_{\res}.
\]
Note that, since $\Phi_{\res}(\G,\bfS)$ carries a Galois action induced from that of $\Phi(\G,\bfS)$, we can also discuss the symmetry of a restricted root.
For any $\alpha\in\Phi(\G,\bfS)$, we put $\Gamma_{\alpha_{\res}}$ to be the stabilizer in $\Gamma$ of the restricted root $\alpha_{\res}$ (or, equivalently, the $\Theta_{\bfS}$-orbit $\Theta_{\bfS}\alpha$ of $\alpha$):
\[
\Gamma_{\alpha_{\res}}
\colonequals \{\sigma\in\Gamma\mid \sigma(\alpha_{\res})=\alpha_{\res}\}
=\{\sigma\in\Gamma\mid \sigma(\Theta_{\bfS}\alpha)=\Theta_{\bfS}\alpha\}.
\]
Similarly, we put $\Gamma_{\pm\alpha_{\res}}$ to be the stabilizer in $\Gamma$ of the set $\{\pm\alpha_{\res}\}$:
\[
\Gamma_{\pm\alpha_{\res}}
\colonequals \{\sigma\in\Gamma\mid \sigma(\{\pm\alpha_{\res}\})=\{\pm\alpha_{\res}\}\}
=\{\sigma\in\Gamma\mid \sigma(\{\pm\Theta_{\bfS}\alpha\})=\{\pm\Theta_{\bfS}\alpha\}\}.
\]
Let $F_{\alpha_{\res}}$ and $F_{\pm\alpha_{\res}}$ denote the subfields of $\ol{F}$ fixed by $\Gamma_{\alpha_{\res}}$ and $\Gamma_{\pm\alpha_{\res}}$, respectively.
\[
\xymatrix@R=10pt{
F_{\alpha_{\res}}\ar@{}[r]|*{\subset}&F_{\alpha}&&\Gamma_{\alpha_{\res}}\ar@{}[d]|{\bigcap}&\Gamma_{\alpha}\ar@{}[l]|*{\supset}\ar@{}[d]|{\bigcap}\\
F_{\pm\alpha_{\res}}\ar@{}[r]|*{\subset}\ar@{}[u]|{\bigcup}&F_{\pm\alpha}\ar@{}[u]|{\bigcup}&&\Gamma_{\pm\alpha_{\res}}&\Gamma_{\pm\alpha}\ar@{}[l]|*{\supset}
}
\]

As reviewed in Section \ref{subsec:twisted-tori}, the group $\G_{\eta}$ is a connected reductive group with a maximal torus $\bfS^{\nat}$.
Furthermore, $\Phi(\G_{\eta},\bfS^{\nat})$ is regarded as a subset of the set $\Phi_{\res}(\G,\bfS)$.
By \cite[Proposition 5.6]{Oi25}, the point $\x$ can be regarded as a point of $\mcA(\bfS^{\nat},F)\subset\mcB(\G_{\eta},F)$.
We introduce the subgroups $J_{\eta}$ (resp.\ $J_{\eta,+}$) in the same way as $J$ (resp.\ $J_{+}$) by using $(\G_{\eta},\bfS^{\nat},\x,r,s(+))$ instead of $(\G,\bfS,\x,r,s(+))$, i.e.,
\[
J_{\eta}\colonequals (S^{\nat},G_{\eta})_{\x,(r,s)}
\quad\text{and}\quad
J_{\eta,+}\colonequals (S^{\nat},G_{\eta})_{\x,(r,s+)}.
\]
By the same discussion as above, if we put
\[
\bfV_{\eta}\colonequals \Lie(\bfS_{\eta},\G_{\eta})(E)_{\x,(r,s):(r,s+)}
\quad
\text{and}
\quad
V_{\eta}\colonequals \bfV_{\eta}^{\Gamma}
\]
then we have $J_{\eta}/J_{\eta,+}\cong V_{\eta}$ and a root space decomposition similar to (\ref{eq:decomp}):
\[
V_{\eta}
\cong
\bigoplus_{\ddot{\alpha}_{\res}\in\ddot{\Xi}_{\eta}}V_{\eta,\ddot{\alpha}_{\res}},
\]
where we use the notation defined in the same way as in Section \ref{subsec:Heisen-structure}, e.g., 
\[
\Xi_{\eta}
\colonequals \Xi(\bfG_{\eta},\bfS^{\nat})
\colonequals \{\alpha_{\res}\in\Phi(\G_{\eta},\bfS^{\nat}) \mid V_{\eta,\alpha_{\res}}\neq0\}.
\]

We have a natural identification
\[
V_{\eta}
\cong(S^{\nat},G_{\eta})_{\x,(r,s):(r,s+)}
\cong(S,G)_{\x,(r,s):(r,s+)}^{\eta}
\cong V^{\eta},
\]
where $(S,G)_{\x,(r,s):(r,s+)}^{\eta}$ and $V^{\eta}$ denote the set of $[\eta]$-fixed points in $(S,G)_{\x,(r,s):(r,s+)}$ and $V$, respectively.
Let us investigate this identification more precisely.

The Lie algebra $\bmfg_{\eta}$ of $\G_{\eta}$ is naturally identified with the $[\eta]$-fixed points $\bmfg^{\eta}$ of the Lie algebra $\bmfg$ of $\G$.
If $\alpha_{\res}\in\Phi(\G_{\eta},\bfS^{\nat})$ is a restricted root obtained from $\alpha\in\Phi(\G,\bfS)$, then the root subspace $\bmfg_{\eta,\alpha_{\res}}$ of $\bmfg_{\eta}$ is identified with the $[\eta]$-fixed points in the sum $\bigoplus_{\alpha'\in\Theta\alpha}\bmfg_{\alpha'}$ of root subspaces of $\bmfg$:
\[
\bmfg_{\eta,\alpha_{\res}}
\cong
\Bigl(\bigoplus_{\alpha'\in\Theta\alpha}\bmfg_{\alpha}\Bigr)^{\eta}.
\]
This induces an identification
\[
V_{\eta,\ddot{\alpha}_{\res}}
\cong
\Bigl(\bigoplus_{\ddot{\beta}\in\Sigma\backslash(\Sigma\times\Theta)\alpha} V_{\ddot{\beta}}\Bigr)^{\eta}
\]
for any $\alpha_{\res}\in\Xi_{\eta}$.
Let us put $V_{\Theta(\ddot{\alpha})}\colonequals \bigoplus_{\ddot{\beta}\in\Sigma\backslash(\Sigma\times\Theta)\alpha} V_{\ddot{\beta}}$.
In particular, by letting $\Xi_{\res}$ be the set of restricted roots associated to $\Xi$, the set $\Xi_{\eta}$ can be thought of as the set of restricted roots $\alpha_{\res}\in\Xi_{\res}$ such that the $[\eta]$-action has a nonzero fixed point in $V_{\Theta(\ddot{\alpha})}$.
\[
\xymatrix@R=10pt{
\Phi(\G,\bfS)\ar@{->>}[r]&\Phi(\G,\bfS)/\Theta_{\bfS} \ar^-{1:1}[r]&\Phi_{\res}(\G,\bfS)&\Phi(\G_{\eta},\bfS^{\nat})\ar@{}[l]|*{\supset}\\
\Xi\ar@{->>}[r]\ar@{}[u]|{\bigcup}&\Xi/\Theta_{\bfS} \ar^-{1:1}[r]\ar@{}[u]|{\bigcup}&\Xi_{\res}\ar@{}[u]|{\bigcup}&\Xi_{\eta}\ar@{}[l]|*{\supset}\ar@{}[u]|{\bigcup}
}
\]

\subsection{Intertwiner for toral supercuspidal representations}\label{subsec:twist-int}

In the previous subsection, we have fixed
\begin{itemize}
\item
an $F$-rational tame elliptic twisted maximal torus $(\t\bfS,\bfS)$ of $(\t{\G},\G)$, and
\item 
a tame elliptic toral pair $(\bfS,\vartheta)$ of depth $r\in\R_{>0}$ which is $\eta$-invariant, i.e., $(\bfS,\vartheta)=(\bfS,\vartheta)^{\eta}$, for $\eta\in\t{S}$.
\end{itemize}

Recall from Section \ref{subsec:rsc} that $\pi_{(\bfS,\vartheta)}$ is defined to be the compact induction $\cInd_{K}^{G}\rho_{(\bfS,\vartheta)}$ of a representation $\rho_{(\bfS,\vartheta)}$ of an open compact-mod-center subgroup $K=SJ$ of $G$.
Hence the $\eta$-twisted representation $\pi_{(\bfS,\vartheta)}^{\eta}$ is isomorphic to the compact induction of $\rho_{(\bfS,\vartheta)}^{\eta}$ from $\eta^{-1}K\eta=K$ to $G$ by the following explicit intertwiner:
\[
\cInd_{K}^{G}\rho_{(\bfS,\vartheta)}^{\eta}\xrightarrow{\sim}(\cInd_{K}^{G}\rho_{(\bfS,\vartheta)})^{\eta}=\pi_{(\bfS,\vartheta)}^{\eta}\colon f\mapsto f\circ[\eta]^{-1}.
\tag{1}
\]

Let us consider the relationship between the representations $\rho_{(\bfS,\vartheta)}^{\eta}$ and $\rho_{(\bfS,\vartheta)}$ of $K$.
The representation $\rho_{(\bfS,\vartheta)}$ of $K$ is defined to be the push-out of the representation $\omega_{(\bfS,\vartheta)}\otimes(\vartheta\ltimes\mathbbm{1})$ of $S\ltimes J$ along the natural multiplication map $S\ltimes J\twoheadrightarrow SJ$.
Hence $\rho_{(\bfS,\vartheta)}^{\eta}$ is the push-out of $\omega_{(\bfS,\vartheta)}^{\eta}\otimes(\vartheta^{\eta}\ltimes\mathbbm{1})$ along $S\ltimes J\twoheadrightarrow SJ$.
We note that both of $\omega_{(\bfS,\vartheta)}^{\eta}$ and $\omega_{(\bfS,\vartheta)}$ are Heisenberg--Weil representations with central character $\h\vartheta^{\eta}=\h\vartheta$.
Hence, by the Stone--von Neumann theorem, $\omega_{(\bfS,\vartheta)}^{\eta}$ and $\omega_{(\bfS,\vartheta)}$ are isomorphic.
Let us fix an intertwiner
\[
I_{\omega_{(\bfS,\vartheta)}}^{\eta}\colon\omega_{(\bfS,\vartheta)}\xrightarrow{\sim}\omega_{(\bfS,\vartheta)}^{\eta},
\]\symdef{I-eta-omega}{$I_{\omega_{(\bfS,\vartheta)}}^{\eta}$}
which naturally induces an intertwiner
\[
I_{\rho_{(\bfS,\vartheta)}}^{\eta}\colon\rho_{(\bfS,\vartheta)}\xrightarrow{\sim}\rho_{(\bfS,\vartheta)}^{\eta}.
\]\symdef{I-eta-rho}{$I_{\rho_{(\bfS,\vartheta)}}^{\eta}$}
Then we get an intertwiner between $\cInd_{K}^{G}\rho_{(\bfS,\vartheta)}$ and $\cInd_{K}^{G}\rho_{(\bfS,\vartheta)}^{\eta}$ given by 
\[
\cInd_{K}^{G}\rho_{(\bfS,\vartheta)}\xrightarrow{\sim}\cInd_{K}^{G}\rho_{(\bfS,\vartheta)}^{\eta}\colon f\mapsto I_{\rho_{(\bfS,\vartheta)}}^{\eta}\circ f.
\tag{2}
\]
Therefore, combining (1) with (2), we obtain an intertwiner $I_{\pi_{(\bfS,\vartheta)}}^{\eta}$ between $\pi_{(\bfS,\vartheta)}$ and $\pi_{(\bfS,\vartheta)}^{\eta}$ given by $f\mapsto I_{\rho_{(\bfS,\vartheta)}}^{\eta}\circ f\circ [\eta]^{-1}$:
\[
\pi_{(\bfS,\vartheta)}
=\cInd_{K}^{G}\rho_{(\bfS,\vartheta)}
\xrightarrow{(2)}
\cInd_{K}^{G}\rho_{(\bfS,\vartheta)}^{\eta}
\xrightarrow{(1)}
(\cInd_{K}^{G}\rho_{(\bfS,\vartheta)})^{\eta}
=\pi_{(\bfS,\vartheta)}^{\eta}.
\]

We emphasize that we have not specified yet the choice of an intertwiner $I_{\omega_{(\bfS,\vartheta)}}^{\eta}$ of Heisenberg--Weil representations.

From now on, we fix a ``base point'' $\ul{\eta}\in\t{S}$ which is topologically semisimple.\symdef{eta-ul}{$\ul{\eta}$}
Note that we can always find such an element by choosing arbitrary element $\ul{\eta}\in\t{S}$ first and then replacing it with its topologically semisimple part in the sense of a topological Jordan decomposition.
In the following, we explain our choice of an intertwiner $I_{\omega_{(\bfS,\vartheta)}}^{\ul{\eta}}$ for this element $\ul{\eta}$.

Let us investigate the action $[\ul{\eta}]$ on $J/J_{+}$ through the isomorphism $V\cong J/J_{+}$ and the decomposition (\ref{eq:decomp}) of $V$.
Note that $[\ul{\eta}]$ preserves the symplectic structure of $V$.
Indeed, for any $g,g'\in J/J_{+}$, we have
\[
\h\vartheta([[\ul{\eta}](g),[\ul{\eta}](g')])
=\h\vartheta([\ul{\eta} g\ul{\eta}^{-1},\ul{\eta} g'\ul{\eta}^{-1}])
=\h\vartheta([\ul{\eta}]([g,g']))
=\h\vartheta([g,g']).
\]
Moreover, each $V_{\ddot{\alpha}}$ is mapped onto $V_{\theta(\ddot{\alpha})}$, respectively.
In particular, the action of $\Theta_{\bfS}$ on $\Phi(\G,\bfS)$ preserves $\Xi$.

As in Section \ref{subsec:twisted-tori}, for any $\alpha\in\Phi(\G,\bfS)$, we let $l_{\alpha}$ be the cardinality of the $\Theta_{\bfS}$-orbit of $\alpha$.
We furthermore introduce a number denoted by $m_{\alpha}$ as follows:\symdef{m-alpha}{$m_{\alpha}$}
\begin{defn}\label{defn:l'}
For $\alpha\in\Phi(\G,\bfS)$, let $m_{\alpha}$ be the order of $\Sigma\backslash(\Sigma\times\Theta_{\bfS})\alpha$.
In other words, $m_{\alpha}$ is the smallest positive integer such that $\theta^{m_{\alpha}}(\ddot{\alpha})=\ddot{\alpha}$ (hence $m_{\alpha}\mid l_{\alpha}$).
\end{defn}

For $\ddot{\alpha}\in\ddot{\Xi}$, let us write $(\omega_{\ddot{\alpha}},W_{\ddot{\alpha}})$ for a Heisenberg--Weil representation of $\Sp(V_{\ddot{\alpha}})\ltimes\bbH(V_{\ddot{\alpha}})$ with central character given by $\hat{\vartheta}$, which is unique up to isomorphism.
Since the action $[\ul{\eta}]$ on $V$ induces an symplectic isomorphism from $V_{\ddot{\alpha}}$ to $V_{\theta(\ddot{\alpha})}$, an isomorphism from $\Sp(V_{\ddot{\alpha}})\ltimes\bbH(V_{\ddot{\alpha}})$ to $\Sp(V_{\theta(\ddot{\alpha})})\ltimes\bbH(V_{\theta(\ddot{\alpha})})$ is induced (for which we write $[\ul{\eta}]_{\ast}$).
Then the pull back $(\omega_{\theta(\ddot{\alpha})}^{\ul{\eta}},W_{\theta(\ddot{\alpha})})$ of the Heisenberg--Weil representation $W_{\theta(\ddot{\alpha})}$ of $\Sp(V_{\theta(\ddot{\alpha})})\ltimes\bbH(V_{\theta(\ddot{\alpha})})$ to $\Sp(V_{\ddot{\alpha}})\ltimes\bbH(V_{\ddot{\alpha}})$ via $[\ul{\eta}]_{\ast}$ is isomorphic to $(\omega_{\ddot{\alpha}},W_{\ddot{\alpha}})$.
For each $\ddot{\alpha}\in \ddot{\Xi}$, we fix an intertwiner
\[
I_{\ddot{\alpha}}^{\ul{\eta}}\colon \omega_{\ddot{\alpha}}\xrightarrow{\sim} \omega_{\theta(\ddot{\alpha})}^{\ul{\eta}}
\]\symdef{I-eta-alpha}{$I_{\ddot{\alpha}}^{\ul{\eta}}$}
as follows.
If we put
\[
I^{\ul{\eta}}_{\Theta(\ddot{\alpha})}
\colonequals 
I_{\theta^{m_{\alpha}-1}(\ddot{\alpha})}^{\ul{\eta}} \circ\cdots\circ I_{\ddot{\alpha}}^{\ul{\eta}},
\]\symdef{I-eta-theta-alpha}{$I^{\ul{\eta}}_{\Theta(\ddot{\alpha})}$}
then $I^{\ul{\eta}}_{\Theta(\ddot{\alpha})}$ is an automorphism of $W_{\ddot{\alpha}}$ such that 
\[
\xymatrix{
W_{\ddot{\alpha}}\ar^-{I^{\ul{\eta}}_{\Theta(\ddot{\alpha})}}[rr]\ar_-{\omega_{\ddot{\alpha}}(g,h)}[d]&&W_{\ddot{\alpha}}\ar^-{\omega_{\ddot{\alpha}}([\ul{\eta}]^{m_{\alpha}}_{\ast}(g,h))}[d]\\
W_{\ddot{\alpha}}\ar_-{I^{\ul{\eta}}_{\Theta(\ddot{\alpha})}}[rr]&&W_{\ddot{\alpha}}
}
\]
is commutative for any $(g,h)\in \Sp(V_{\ddot{\alpha}})\ltimes\bbH(V_{\ddot{\alpha}})$.
Note that $[\ul{\eta}]^{m_{\alpha}}$ is a symplectic automorphism of $V$ preserving $V_{\ddot{\alpha}}$.
Hence $I^{\ul{\eta}}_{\Theta(\ddot{\alpha})}$ must be a scalar multiple of the Heisenberg--Weil action $\omega_{\ddot{\alpha}}([\ul{\eta}]^{m_{\alpha}})$.
We choose $I^{\ul{\eta}}_{\ddot{\alpha}}$ for each $\ddot{\alpha}\in\ddot{\Xi}$ so that we have
\[
I^{\ul{\eta}}_{\Theta(\ddot{\alpha})}
=
\omega_{\ddot{\alpha}}([\ul{\eta}]^{m_{\alpha}}).
\]
(This choice still does not specify each $I_{\ddot{\alpha}}^{\ul{\eta}}$ uniquely, but this ambiguity does not matter in the following discussion.)

Recall that the representation $\omega=\omega_{(\bfS,\vartheta)}$ is a Heisenberg--Weil representation of $\Sp(V)\ltimes\bbH(V)$ with central character $\hat{\vartheta}$.
Since we have the decomposition \eqref{eq:decomp}, $\omega$ can be realized by tensoring Heisenberg--Weil representations $\omega_{\ddot{\alpha}}$ for $\ddot{\alpha}\in\ddot{\Xi}$.
Furthermore, by tensoring the fixed intertwiners $I_{\ddot{\alpha}}^{\ul{\eta}}$, we get an intertwiner between $\omega$ and its $[\ul{\eta}]_{\ast}$-twist $\omega^{\ul{\eta}}$.
Let us write $I_{\omega}^{\ul{\eta}}$ for the intertwiner obtained in this way:
\[
I_{\omega}^{\ul{\eta}}=I_{\omega_{(\bfS,\vartheta)}}^{\ul{\eta}}\colon\omega_{(\bfS,\vartheta)}\xrightarrow{\sim}\omega_{(\bfS,\vartheta)}^{\ul{\eta}}.
\]\symdef{I-omega-eta-ul}{$I_{\omega}^{\ul{\eta}}$}

Now we have obtained an intertwiner $I_{\pi_{(\bfS,\vartheta)}}^{\ul{\eta}}$ between $\pi_{(\bfS,\vartheta)}$ and its $[\ul{\eta}]_{\ast}$-twist $\pi_{(\bfS,\vartheta)}^{\ul{\eta}}$.
Let $\Theta_{\t{\pi}_{(\bfS,\vartheta)}}$ be the twisted character of $\pi_{(\bfS,\vartheta)}$ with respect to $I_{\pi_{(\bfS,\vartheta)}}^{\ul{\eta}}$.

\subsection{Good product expansion in twisted spaces}\label{subsec:approx}
Before we start talking about an explicit formula for the twisted character $\Theta_{\t{\pi}_{(\bfS,\vartheta)}}$, we also introduce a twisted version of the theory of good product expansion of Adler and Spice \cite{AS08} (see \cite[Section 3.4]{Oi25} for more details).

We first recall the definition of a good product expansion of elements of $p$-adic groups in the untwisted case.
We temporarily let $\G$ be any tamely ramified connected reductive group over $F$.
Let $\bar{\G}$ be the quotient $\G/\bfZ_{\G}^{\circ}$ of $\G$ by the identity component of the center $\bfZ_{\G}$ of $\G$.

\begin{defn}[{\cite[Definitions 4.11 and 6.1]{AS08}}]\label{defn:good-element}
\begin{enumerate}
\item
We say that an element $\gamma\in G$ is \textit{good of depth zero} if $\gamma$ is semisimple and its image $\bar{\gamma}$ in $\bar{G}$ is \textit{absolutely semisimple}, i.e., every character value of $\bar{\gamma}$ (see \cite[Definition A.4]{AS08}) is of finite prime-to-$p$ order.
\item
For $d\in\R_{>0}$, an element $\gamma\in G$ is said to be \textit{good of depth $d$} if there exists an $F$-rational tame-modulo-center torus $\bfS$ in $\G$ such that
\begin{itemize}
\item
$\gamma\in S_{d}\smallsetminus S_{d+}$, and
\item
for every $\alpha\in\Phi(\G,\bfS)$, $\alpha(\gamma)=1$ or $\val_{F}(\alpha(\gamma)-1)=d$.
\end{itemize}
\end{enumerate}
\end{defn}

\begin{defn}[{\cite[Definition 6.4]{AS08}}]\label{defn:good-seq}
For $r\in\tilde{\R}$, a sequence $\underline{\gamma}=(\gamma_{i})_{0\leq i<r}$ of elements of $G$ indexed by real numbers $0\leq i <r$ is called a \textit{good sequence} if
\begin{itemize}
\item
$\gamma_{i}$ is $1$ or a good of depth $i$ for each $0\leq i<r$, and
\item
there exists an $F$-rational tame torus $\bfS$ of $\G$ such that $\gamma_{i}\in S$ for every $0\leq i<r$.
\end{itemize}
\end{defn}

To a good sequence $\underline{\gamma}=(\gamma_{i})_{0\leq i<r}$, we associate subgroups of $\G$ to $\underline{\gamma}$ as follows:
\[
C_{\G}^{(r)}(\underline{\gamma})
\colonequals 
\Bigl(\bigcap_{0\leq i<r} \bfZ_{\G}(\gamma_{i})\Bigr)^{\circ},
\quad
Z_{\G}^{(r)}(\underline{\gamma})
\colonequals 
\bfZ_{C_{\G}^{(r)}(\underline{\gamma})}.
\]

\begin{defn}[{\cite[Definition 6.8]{AS08}}]\label{defn:approx}
For $\gamma\in G$, a good sequence $\underline{\gamma}=(\gamma_{i})_{0\leq i<r}$ $(r\in \tilde{\R}_{>0})$ is called an \textit{$r$-approximation to $\gamma$} if there exists a point $\x\in\mcB(C_{\G}^{(r)}(\underline{\gamma}),F)$ satisfying $\gamma\in(\prod_{0\leq i<r}\gamma_{i})G_{\x,r}$.
When we have $\gamma\in C_{\G}^{(r)}(\underline{\gamma})$, we say that $\underline{\gamma}$ is a \textit{normal $r$-approximation to $\gamma$}.
\end{defn}

When we have a normal $r$-approximation $\underline{\gamma}$ to $\gamma$, we put
\[
\gamma_{<r}\colonequals \prod_{0\leq i<r}\gamma_{i},\quad
\gamma_{\geq r}\colonequals \gamma\cdot\gamma_{<r}^{-1}
\]\symdef{gamma-less-r}{$\gamma_{<r}$}\symdef{gamma-geq-r}{$\gamma_{\geq r}$}
and simply say that ``$\gamma=\gamma_{<r}\cdot\gamma_{\geq r}$ is a normal $r$-approximation.''
Note that $\gamma_{\geq r}$ commutes with $\gamma_{<r}$ when $\gamma=\gamma_{<r}\cdot\gamma_{\geq r}$ is a normal $r$-approximation.

Now we move on to the setting of twisted spaces.
Let $(\G,\theta)$ be as in Section \ref{subsec:twisted-spaces}.
In particular, we have a twisted space $\t{\G}=\G\theta$.
We put $\G^{\dagger}\colonequals \G\rtimes\langle\theta\rangle$.
Note that this is a disconnected reductive group whose identity component is $\G$ and non-identity component is given by $\t\G$.
Recall that $\bfA_{\t{\G}}$ is the maximal split subtorus of $\bfZ_{\G}^{\theta}$.
To extend the theory of Adler--Spice to $\G^{\dagger}$, we utilize Spice's topological Jordan decomposition.

\begin{defn}[{\cite[Definition 1.6]{Spi08}}]\label{defn:Jordan}
For $\gamma\in G^{\dagger}$, we say that a pair $(\gamma_{0},\gamma_{+})$ of elements of $G^{\dagger}$ is a \textit{topological $p$-Jordan decomposition modulo $A_{\t{\G}}$} of $\gamma$ if 
\begin{itemize}
\item
$\gamma=\gamma_{0}\gamma_{+}=\gamma_{+}\gamma_{0}$, 
\item
$\gamma_{0}$ is absolutely $p$-semisimple modulo $A_{\t{\G}}$, i.e., the image $\bar{\gamma}_{0}$ of $\gamma_{0}$ in $G^{\dagger}/A_{\t{\G}}$ is of finite prime-to-$p$ order, and
\item
$\gamma_{+}$ is topologically $p$-unipotent modulo $A_{\t{\G}}$, i.e., the image $\bar{\gamma}_{+}$ of $\gamma_{+}$ in $G^{\dagger}/A_{\t{\G}}$ satisfies $\lim_{n\rightarrow\infty}\bar{\gamma}_{+}^{p^{n}}=1$.
\end{itemize}
\end{defn}

In this paper, we refer to a topological $p$-Jordan decomposition modulo $A_{\t{\G}}$ simply as a \textit{topological Jordan decomposition}.
Similarly, when an element $\gamma$ is absolutely $p$-semisimple modulo $A_{\t\G}$ (resp.\ topologically $p$-unipotent modulo $A_{\t\G}$), we often simply say that $\gamma$ is topologically semisimple (resp.\ topologically unipotent) as long as there is no risk of confusion.

\begin{defn}\label{defn:twisted-approx}
Let $\delta\in\t{G}$ be an elliptic semisimple element.
A \textit{normal $r$-approximation to $\delta$} ($r\in\tilde{\R}_{>0}$) is a pair $(\delta=\delta_{0}\delta_{+}, \underline{\delta}_{+})$ of 
\begin{itemize}
\item
a topological Jordan decomposition $\delta=\delta_{0}\delta_{+}$ of $\delta$ such that $\delta_{0}\in\t{G}$ and $\delta_{+}\in G_{\delta_{0},0+}$, and
\item
a normal $r$-approximation $\underline{\delta}_{+}=(\delta_{i})_{0<i<r}$ to $\delta_{+}$ in $G_{\delta_{0}}$.
\end{itemize}
\end{defn}

For a normal $r$-approximation $(\delta=\delta_{0}\delta_{+}, \underline{\delta}_{+})$ to a tame elliptic semisimple element $\delta\in\t{G}$, we put
\[
\delta^{+}_{<r}\colonequals \prod_{0<i<r}\delta_{i},\quad
\delta_{<r}\colonequals \prod_{0\leq i<r}\delta_{i},\quad
\delta_{\geq r}\colonequals \delta_{<r}^{-1}\delta,
\]\symdef{delta-plus-less-r}{$\delta^{+}_{<r}$}\symdef{delta-less-r}{$\delta_{<r}$}\symdef{delta-geq-r}{$\delta_{\geq r}$}
\[
C_{\G}^{(r)}(\delta)
\colonequals 
C_{\G_{\delta_{0}}}^{(r)}(\delta_{+}).
\]
When $(\delta=\delta_{0}\delta_{+}, \underline{\delta}_{+})$ is a normal $r$-approximation to $\delta$, we often simply say that ``$\delta=\delta_{0}\delta^{+}_{<r}\delta_{\geq r}$ is a normal $r$-approximation to $\delta$''.

\begin{prop}[{\cite[Proposition 3.20]{Oi25}}]\label{prop:twisted-Jordan}
Suppose that $\G$ is tamely ramified and $p$ does not divide the order of the absolute Weyl group of $\G$.
Then any elliptic semisimple element $\delta\in\t{G}$ has a normal $r$-approximation.
\end{prop}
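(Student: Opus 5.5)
The plan has three steps: take a topological Jordan decomposition of $\delta$, descend to the connected centralizer $\G_{\delta_{0}}$, and apply the untwisted Adler--Spice theory to $\delta_{+}$ there.

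\emph{Step 1: Jordan decomposition.} Since $\delta$ is elliptic semisimple, it lies in an $F$-rational elliptic twisted maximal torus $\t\bfS$. Because $\bfS^{\nat}$ is anisotropic modulo $\bfA_{\t\G}$, the subgroup of $G^{\dagger}/A_{\t\G}$ generated by the image of $\delta$ is relatively compact. For such elements, Spice's construction in \cite{Spi08} gives a topological Jordan decomposition $\delta=\delta_{0}\delta_{+}$, in which $\delta_{0}$ is a suitable limit of powers of $\delta$. In particular $\delta_{0}$ and $\delta_{+}$ lie in the closure of the group generated by $\delta$ and $A_{\t\G}$. Then $\delta_{0}\in\t{G}$ and $\delta_{+}\in G$, and both commute with $\delta$. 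Moreover $\delta_{0}$ lies in $\t S$ up to $A_{\t\G}$-translation, since $\t\bfS$ is stable under these operations (its $F$-points form a closed set closed under the relevant products).

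\emph{Step 2: $\delta_{+}$ lies in $G_{\delta_{0},0+}$.} Since $\delta_{0}\in\t S$, Proposition \ref{prop:twisted-tori} shows that $\G_{\delta_{0}}$ is connected reductive with maximal torus $\bfS^{\nat}$. The element $\delta_{+}=\delta_{0}^{-1}\delta$ lies in $S$ and commutes with $\delta_{0}$. I would like to place it in $S^{\nat}$, but a priori it is only $\theta_{\bfS}$-fixed. To deal with this, I use that $\delta_{+}$ is topologically unipotent and $p$ is odd: $S^{\theta_{\bfS}}/S^{\nat}$ is a $2$-group, so the $p$-power limit forces $\delta_{+}\in S^{\nat}$. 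Since $\bfS^{\nat}$ is elliptic modulo $\bfA_{\t\G}$ and $\delta_{+}$ is topologically unipotent, we get $\delta_{+}\in S^{\nat}_{0+}\subset G_{\delta_{0},0+}$. Tameness of $\bfS^{\nat}$ follows from tameness of $\bfS$.

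\emph{Step 3: untwisted approximation inside $\G_{\delta_{0}}$.} This is where the hypotheses are used. First, $\G_{\delta_{0}}$ is tamely ramified, because it contains the tame maximal torus $\bfS^{\nat}$. Second, its absolute Weyl group is a subquotient of that of $\G$: its roots form a subsystem of the restricted root system, whose Weyl group embeds into $\Omega_{\G}(\T)^{\theta}$. Hence $p\nmid|\Omega_{\G_{\delta_{0}}}|$. The element $\delta_{+}$ is semisimple and lies in the tame torus $\bfS^{\nat}$, so the existence theorem of Adler--Spice \cite[Section 6--9]{AS08} for tame semisimple elements applies. It gives a normal $r$-approximation $\underline{\delta}_{+}=(\delta_{i})_{0<i<r}$ to $\delta_{+}$ in $G_{\delta_{0}}$. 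The index set starts above $0$ because $\delta_{+}$ has positive depth, so its depth-zero component is trivial. Concretely, one decomposes $\delta_{+}$ in $S^{\nat}$ along the filtration jumps of the root values, using the tame splitting field of $\bfS^{\nat}$.

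I expect Step 2 to be the main obstacle, since it needs care about components of $S^{\theta_{\bfS}}$ and about working modulo $A_{\t\G}$. If the $2$-group argument fails, I would instead replace $\delta_{0}$ by $\delta_{0}z$ with $z$ a suitable element of finite prime-to-$p$ order in $S^{\theta_{\bfS}}$, so as to absorb the discrepancy.
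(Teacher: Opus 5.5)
Your route (Spice's topological Jordan decomposition modulo $A_{\t\G}$ inside the closed subgroup $S\sqcup\t{S}$ of $G^{\dagger}$, then the $2$-group argument, then the untwisted Adler--Spice existence result in $\G_{\delta_{0}}$) is the natural way to verify Definition~\ref{defn:twisted-approx}. The paper gives no proof of its own and only cites \cite[Proposition 3.20]{Oi25}. The $2$-group argument is sound: $S^{\theta_{\bfS}}/S^{\nat}$ is killed by $2$, which puts $\delta_{+}$ in $S^{\nat}\subset\G_{\delta_{0}}$.

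However, you assert one input without proof, and it is exactly where the hypothesis $p\nmid|\Omega_{\G}|$ is needed. You use throughout that $\bfS$ is tame: ``Tameness of $\bfS^{\nat}$ follows from tameness of $\bfS$'', and in Step 3 you deduce tameness of $\G_{\delta_{0}}$ from that of $\bfS^{\nat}$. But the statement only assumes $\delta$ elliptic semisimple, so $\t\bfS$ is an arbitrary $F$-rational elliptic twisted maximal torus, and nothing you wrote makes $\bfS$ tame. Step 3 presupposes this rather than proving it.

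The fix is as follows.
\begin{enumerate}
\item Let $L/F$ be a tame extension splitting $\G$, hence $\T$, and pick $g\in\G(\ol{F})$ with ${}^{g}\bfS=\T$.
\item For $\sigma\in\Gamma_{L}$, the action of $\sigma$ on $X^{\ast}(\bfS)$, transported to $X^{\ast}(\T)$ via $[g]$, is given by the Weyl element $w_{\sigma}$ of $\sigma(g)g^{-1}\in\bfN_{\G}(\T)$.
\item Since $\Gamma_{L}$ acts trivially on $X^{\ast}(\T)$ and on $\Omega_{\G}$, the map $\sigma\mapsto w_{\sigma}$ is an anti-homomorphism $\Gamma_{L}\rightarrow\Omega_{\G}$.
\item As $L/F$ is tame, $P_{F}\subset\Gamma_{L}$. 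The image of the pro-$p$ group $P_{F}$ is a $p$-subgroup of $\Omega_{\G}$, hence trivial, so $\bfS$ splits over a tame extension.
\end{enumerate}
Only with this in hand do tameness of $\bfS^{\nat}$ and of $\G_{\delta_{0}}$, and hence the appeal to \cite{AS08}, go through.

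There is a smaller point in Step 2. The element $\delta_{+}$ is topologically unipotent only modulo $A_{\t\G}$, so for an arbitrary choice of lifts you get only $\delta_{+}\in A_{\t\G}S^{\nat}_{0+}$, not $\delta_{+}\in S^{\nat}_{0+}$. Even this inclusion needs $S^{\nat}/A_{\t\G}S^{\nat}_{0+}$ to be finite of order prime to $p$, which again uses tameness. Take a tame Galois splitting field $E$ of $\bfS^{\nat}$. Then $S^{\nat}_{\mathrm{b}}/S^{\nat}_{0+}$ embeds in $X_{\ast}(\bfS^{\nat})\otimes k_{E}^{\times}$. Also, $X_{\ast}(\bfA_{\t\G})=X_{\ast}(\bfS^{\nat})^{\Gamma}$ by ellipticity, so $S^{\nat}/A_{\t\G}S^{\nat}_{\mathrm{b}}$ embeds in $X_{\ast}(\bfS^{\nat})^{\Gamma}/e(E/F)X_{\ast}(\bfS^{\nat})^{\Gamma}$. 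You then move the $A_{\t\G}$-factor into $\delta_{0}$. This is harmless because $A_{\t\G}$ is central in $G^{\dagger}$, and the new $\delta_{0}$ is still absolutely $p$-semisimple modulo $A_{\t\G}$. With these two additions your argument is complete.
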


\begin{lem}[{\cite[Lemma 3.22]{Oi25}}]\label{lem:cent-cent}
Let $\delta\in\t{G}$ be a tame elliptic semisimple element having a normal $r$-approximation $\delta=\delta_{0}\delta^{+}_{<r}\delta_{\geq r}$.
Then we have
\[
(\G_{\delta_{0}})_{\delta^{+}_{<r}}
=
\G_{\delta_{<r}}
\]
\end{lem}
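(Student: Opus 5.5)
The plan is to prove the two inclusions separately. Throughout I write $\delta_{<r}=\delta_{0}\delta^{+}_{<r}$ and use that $\delta^{+}_{<r}\in G_{\delta_{0}}$: by Definition \ref{defn:twisted-approx}, each $\delta_{i}$ ($0<i<r$) lies in $G_{\delta_{0}}$.

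\emph{Inclusion $\subset$.} Since $\delta^{+}_{<r}\in G_{\delta_{0}}$, the elements $\delta_{0}$ and $\delta^{+}_{<r}$ commute in $G^{\dagger}$. If $g\in\G$ centralizes both $\delta_{0}$ and $\delta^{+}_{<r}$, then it centralizes their product $\delta_{<r}$. Hence $\bfZ_{\G_{\delta_{0}}}(\delta^{+}_{<r})\subset\bfZ_{\G}(\delta_{<r})$. Passing to identity components gives $(\G_{\delta_{0}})_{\delta^{+}_{<r}}\subset\G_{\delta_{<r}}$.

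\emph{Inclusion $\supset$.} I will show that $\bfZ_{\G}(\delta_{<r})\subset\bfZ_{\G}(\delta_{0})$. Then $\bfZ_{\G}(\delta_{<r})\subset\bfZ_{\G}(\delta_{0})\cap\bfZ_{\G}(\delta^{+}_{<r})$, since anything centralizing $\delta_{<r}$ and $\delta_{0}$ also centralizes $\delta_{0}^{-1}\delta_{<r}=\delta^{+}_{<r}$. Taking identity components then gives $\G_{\delta_{<r}}\subset(\bfZ_{\G}(\delta_{0})\cap\bfZ_{\G}(\delta^{+}_{<r}))^{\circ}\subset(\G_{\delta_{0}})_{\delta^{+}_{<r}}$. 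Here I use that a connected subgroup of $\bfZ_{\G}(\delta_{0})$ lies in $\G_{\delta_{0}}$ and centralizes $\delta^{+}_{<r}$ there.

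\emph{Key step.} The point is that $\delta_{<r}=\delta_{0}\cdot\delta^{+}_{<r}$ is itself a topological Jordan decomposition modulo $A_{\t\G}$. The factors commute, and $\delta_{0}$ is absolutely $p$-semisimple modulo $A_{\t\G}$. Each $\delta_{i}$ with $i>0$ is good of positive depth, hence lies in $G_{\delta_{0},0+}$. All the $\delta_{i}$ lie in a common tame torus of $\G_{\delta_{0}}$ (Definition \ref{defn:good-seq}), so they commute, and their product $\delta^{+}_{<r}$ lies in $G_{\delta_{0},0+}$ and is topologically $p$-unipotent.

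Let $m$ be the prime-to-$p$ order of $\bar\delta_{0}$ in $G^{\dagger}/A_{\t\G}$. Then for a suitable sequence $N_{k}\equiv1\pmod m$ with $N_{k}\to0$ $p$-adically (for instance $N_{k}=p^{k\varphi(m)}$, up to adjusting parity so that it stays in the non-identity component), the powers $\delta_{<r}^{N_{k}}$ converge to $\delta_{0}a$ for some $a\in A_{\t\G}$.

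Now let $\mathbf{C}$ be the centralizer in $\G^{\dagger}$ of the algebraic group $\bfZ_{\G}(\delta_{<r})$. This is an $F$-rational Zariski-closed subgroup containing $\delta_{<r}$, so $\mathbf{C}(F)$ is closed in the $p$-adic topology and contains all powers $\delta_{<r}^{N_{k}}$. Hence $\delta_{0}a\in\mathbf{C}(F)$. Since $a\in\bfZ_{\G}^{\theta}$ is central in $\G^{\dagger}$ (it commutes with $\G$ and is fixed by $\theta$), conjugation by $\delta_{0}a$ equals conjugation by $\delta_{0}$. Therefore every element of $\bfZ_{\G}(\delta_{<r})$ commutes with $\delta_{0}$, as required.

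The main obstacle is this key step: checking carefully that $\delta^{+}_{<r}$ is topologically unipotent modulo $A_{\t\G}$ and commutes with $\delta_{0}$, and making the limit argument pass from $F$-points to the algebraic centralizer. I handle the latter by using that the centralizer of an algebraic subgroup is itself algebraic, so the argument never leaves closed subgroups.
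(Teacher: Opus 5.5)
Your proof is correct. The paper does not prove this lemma; it is quoted from \cite[Lemma 3.22]{Oi25}, so there is no in-text argument to match.

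Your route is the standard topological one. The absolutely $p$-semisimple part $\delta_{0}$ of the topological Jordan decomposition $\delta_{<r}=\delta_{0}\cdot\delta^{+}_{<r}$ lies in the closed subgroup of $G^{\dagger}$ generated by $\delta_{<r}$ and the central subgroup $A_{\t{\G}}$. This gives the stronger identity $\bfZ_{\G}(\delta_{<r})=\bfZ_{\G}(\delta_{0})\cap\bfZ_{\G}(\delta^{+}_{<r})$ of full centralizers, and it uses neither tameness nor ellipticity.

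A root-theoretic alternative, in the spirit of the proofs of Lemmas \ref{lem:tran-II-descent} and \ref{lem:tran-IV-descent}, would proceed as follows. Place $\delta_{0}$ and $\delta^{+}_{<r}$ in a common twisted maximal torus, observe that both groups contain $\T^{\nat}$ as a maximal torus, and compare root systems via $N(\alpha)(\nu_{<r})=N(\alpha)(\nu_{0})\cdot\alpha(\nu^{+}_{<r})^{l_{\alpha}}$. Here the first factor is a root of unity of prime-to-$p$ order and the second lies in $1+\mfp_{\ol{F}}$. That route needs more setup, which is presumably where the tame elliptic hypothesis enters. In return it produces the explicit description of $\Phi(\G_{\delta_{<r}},\T^{\nat})$ that the paper uses later.

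One step needs correcting: the powers $\delta_{<r}^{N_{k}}$ themselves need not converge. Write $\delta_{<r}^{N_{k}}=\delta_{0}\,a_{k}\,(\delta^{+}_{<r})^{N_{k}}$ with $a_{k}\colonequals(\delta_{0}^{m})^{(N_{k}-1)/m}\in A_{\t{\G}}$. The central factor $a_{k}$ is unbounded whenever $\delta_{0}^{m}$ has nonzero valuation in the split torus $A_{\t{\G}}$. The repair is immediate:
\begin{itemize}
\item $A_{\t{\G}}$ is central in $\G^{\dagger}$, hence contained in $\mathbf{C}(F)$.
\item Therefore $\delta_{0}(\delta^{+}_{<r})^{N_{k}}=\delta_{<r}^{N_{k}}a_{k}^{-1}\in\mathbf{C}(F)$.
\item This sequence does converge to $\delta_{0}$, and closedness of $\mathbf{C}(F)$ finishes the argument.
\end{itemize}
Also, no parity adjustment is needed. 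Since $\delta_{0}\notin G$ and $A_{\t{\G}}\subset G$, the order $m$ is even, so $N_{k}\equiv 1\pmod{m}$ already forces $N_{k}$ to be odd.
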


\subsection{Twisted character formula}\label{subsec:TCF}

Now we explain an explicit formula for the twisted character $\Theta_{\t{\pi}_{(\bfS,\vartheta)}}$ of the toral supercuspidal representation $\pi_{(\bfS,\vartheta)}$.

Let us fix an elliptic regular semisimple element $\delta\in\t{G}$ having a normal $r$-approximation $\delta=\delta_{0}\delta^{+}_{<r}\delta_{\geq r}$.
We put $\eta\colonequals \delta_{<r}\in\t{G}$.
Our aim is to describe $\Theta_{\t{\pi}_{(\bfS,\vartheta)}}(\delta)$ in terms of the $r$-approximation $\delta=\delta_{0}\delta^{+}_{<r}\delta_{\geq r}$.
To get a cleaner formula, we normalize $\Theta_{\t{\pi}_{(\bfS,\vartheta)}}$ using the twisted Weyl discriminant (a.k.a.\ the fourth absolute transfer factor; see Section \ref{subsec:tran-IV} or \cite[Section 4.5]{KS99}) as follows:
\[
\Phi_{\t{\pi}}(\delta)\colonequals \Delta_{\IV}^{\t{\G}}(\delta)\cdot\Theta_{\t{\pi}}(\delta).
\]\symdef{Phi-pi-tilde}{$\Phi_{\t{\pi}}$}

Recall that, by the torality of $\vartheta$, there exists a $\G$-generic element $X^{\ast}\in\mfs^{\ast}_{-r}$ of depth $r$ which lifts a unique element of $\mfs^{\ast}_{-r:-r+}$ satisfying $\vartheta(\exp(Y))=\psi_{F}(\langle Y,X^{\ast}\rangle)$ for any $Y\in\mfs_{s+:r+}$.
We may and do choose $X^{\ast}\in\mfs_{-r}^{\ast}$ so that it is $[\ul{\eta}]$-invariant ({\cite[Lemma 5.4]{Oi25}}).

Suppose that $g\in G$ satisfies that ${}^{g}\eta\in\t{S}$, which implies that $\bfS^\nat\subset \bfG_{{}^{g}\eta}$.
Then $X^\ast$ can be regarded as an element of the dual Lie algebra of $\G_{{}^{g}\eta}$.
Since ${}^{g}\delta_{\geq r}$ belongs to $G_{{}^{g}\eta}$, its logarithm image $\log({}^{g}\delta_{\geq r})$ belongs to the Lie algebra of $\G_{{}^{g}\eta}$.
Therefore, it makes sense to consider the normalized Fourier transform of the Lie algebra orbital integral $\hat{\iota}^{\G_{{}^{g}\eta}}_{X^{\ast}}(\log({}^{g}\delta_{\geq r}))$.
To be more precise, we let $\hat{\mu}^{\G_{\eta}}_{\Wal,X^{\ast}}$ be the Fourier transform of the Lie algebra orbital integral defined via Waldspurger's normalized Haar measure.\symdef{mu-hat-G-eta-Wal-X-star}{$\hat{\mu}^{\G_{\eta}}_{\Wal,X^{\ast}}$}
Then we define
\[
\hat{\iota}^{\G_{\eta}}_{X^{\ast}}(-)
\colonequals 
|D^{\red}_{G_{\eta}}(X^{\ast})|^{\frac{1}{2}}\cdot|D^{\red}_{G_{\eta}}(-)|^{\frac{1}{2}}\cdot\hat{\mu}^{\G_{\eta}}_{\Wal,X^{\ast}}(-).
\]\symdef{iota-hat-G-eta-X-star}{$\hat{\iota}^{\G_{\eta}}_{X^{\ast}}$}
See \cite[Section 4.2]{Kal19} and also \cite[Section 7]{Oi25} for the details.

We also note that, for $\alpha\in\Xi_{\asym}$, its restricted root $\alpha_{\res}$ is
\[
\begin{cases}
\text{asymmetric} & \text{if $\theta(\alpha)=\alpha$,}\\
\text{asymmetric} & \text{if $\theta(\alpha)\neq\alpha$, $\theta(\alpha)\notin\ddot{\alpha}$,}\\
\text{asymmetric} & \text{if $\theta(\alpha)\neq\alpha$, $\theta(\alpha)\in\dot{\alpha}$,}\\
\text{unramified or ramified} & \text{if $\theta(\alpha)\neq\alpha$, $\theta(\alpha)\in-\dot{\alpha}$,}
\end{cases}
\]
and that, for $\alpha\in\Xi_{\sym}$, its restricted root $\alpha_{\res}$ is
\[
\begin{cases}
\text{unramified} & \text{if $\theta(\alpha)=\alpha$,}\\
\text{unramified} & \text{if $\theta(\alpha)\neq\alpha$, $\theta(\alpha)\notin\dot{\alpha}$,}\\
\text{unramified or ramified} & \text{if $\theta(\alpha)\neq\alpha$, $\theta(\alpha)\in\dot{\alpha}$.}
\end{cases}
\]
We define a sign character $\t\epsilon_{\Xi}\colon S\rightarrow\C^\times$ by
\[
\t\epsilon_{\Xi}(s)
\colonequals 
\prod_{\begin{subarray}{c} \ddot{\alpha}\in\ddot{\Xi}\\ \alpha_{\res}: \, \asym/\ur\end{subarray}}\epsilon_{\alpha}(s).
\]\symdef{epsilon-tilde-Xi}{$\t\epsilon_{\Xi}$}

The following is the main result of \cite{Oi25}.

\begin{thm}[{\cite[Theorem 7.3]{Oi25}}]\label{thm:TCF-final}
    We write ${}^{g}\eta=s_{g}\cdot\ul{\eta}\in\t{S}$ for each $g\in G$ satisfying ${}^{g}\eta\in\t{S}$.
    Then 
    \[
    \Phi_{\t{\pi}}(\delta)
    =
    C_{\ul{\eta}}\cdot(-1)^{|\ddot{\Xi}_{\eta_{0},\ur}|}\cdot 
    \!\!\!\!\!
    \sum_{\begin{subarray}{c} g\in S\backslash G/G_{\eta} \\ {}^{g}\eta\in\t{S}\end{subarray}}
    \t\epsilon_{\Xi}(s_{g})\cdot\vartheta(s_{g})
    \cdot\mfG_{\G_{{}^{g}\eta_{0}}}(\vartheta,{}^{g}\eta_{+})
    \cdot\hat{\iota}^{\G_{{}^{g}\eta}}_{X^{\ast}}(\log({}^{g}\delta_{\geq r})),
    \]
    where $C_{\ul{\eta}}$ is a constant independent of $\delta$ and $\mfG_{\G_{{}^{g}\eta_{0}}}(\vartheta,{}^{g}\eta_{+})$ is the constant defined in \cite[Definition 5.3.2]{DS18}.
\end{thm}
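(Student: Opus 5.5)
This theorem is quoted from \cite[Theorem 7.3]{Oi25}, so here we only sketch how that proof goes.

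\emph{Step 1: Frobenius formula.} Start from the Frobenius-type formula for the twisted character of a compactly induced representation. Since $\t{\pi}_{(\bfS,\vartheta)}(g\ul{\eta})$ acts by $f\mapsto \pi(g)\,I^{\ul{\eta}}_{\rho}\circ f\circ[\ul{\eta}]^{-1}$, the twisted analogue of Harish-Chandra's integral formula expresses $\Theta_{\t{\pi}}(\delta)$ as a sum over $g\in K\backslash G$ with ${}^{g}\delta\in K\ul{\eta}$. The summand is the twisted trace $\tr(\rho(k)\circ I^{\ul{\eta}}_{\rho})$, where ${}^{g}\delta=k\ul{\eta}$. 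This uses that $\pi$ is supercuspidal, so the sum is finite modulo center for elliptic $\delta$.

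\emph{Step 2: localize near $\eta=\delta_{<r}$.} Imitate Adler--Spice's argument, using the twisted good product expansion of Section \ref{subsec:approx}. The key points are:
\begin{itemize}
\item Proposition \ref{prop:twisted-Jordan} and Lemma \ref{lem:cent-cent} show that ${}^{g}\delta\in K\ul{\eta}$ forces ${}^{g}\eta$ to be $K$-conjugate into $\t{S}$, modulo deeper terms.
\item This lets us reorganize the sum over $S\backslash G/G_{\eta}$ with ${}^{g}\eta\in\t{S}$, writing ${}^{g}\eta=s_g\ul{\eta}$.
\item The inner sum over $G_{{}^g\eta}$-cosets of the deep part is handled as in the untwisted case. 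The contributions of $J$ inside $G_{{}^g\eta}$, integrated against $\hat\vartheta$ and $X^\ast$, assemble into the Fourier transform $\hat\iota^{\G_{{}^g\eta}}_{X^\ast}(\log({}^g\delta_{\geq r}))$.
\end{itemize}
Here we use that $X^\ast$ is $[\ul{\eta}]$-invariant and that the restriction of $\vartheta$ to $S_r$ is read off by $X^\ast$. This part is largely formal and parallels \cite{AS09,DS18}, with $\G_{{}^g\eta}$ playing the role of the centralizer.

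\emph{Step 3: the twisted Weil trace (main obstacle).} The remaining task is to compute the twisted trace of the Heisenberg--Weil factor, $\tr(\omega(s_g)\circ I^{\ul{\eta}}_\omega)$. Use the tensor decomposition over $\ddot{\Xi}$ and group the factors into $\Theta$-orbits. For an orbit of length $m_\alpha$ permuted cyclically, the trace of $\bigotimes_i(\omega_{\ddot\beta_i}\circ I_{\ddot\beta_i})$ collapses to the trace of the composite on a single factor. By the normalization $I^{\ul{\eta}}_{\Theta(\ddot\alpha)}=\omega_{\ddot\alpha}([\ul{\eta}]^{m_\alpha})$, this equals the ordinary Weil-representation trace of $s_g^{(m_\alpha)}[\ul{\eta}]^{m_\alpha}$ on $V_{\ddot\alpha}$. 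Gérardin's formula for such traces, with the $[\eta]$-fixed spaces $V_{\eta,\ddot\alpha_{\res}}$ entering the determinant, should produce three pieces:
\begin{itemize}
\item sign characters $\epsilon_\alpha$, surviving exactly for $\alpha_{\res}$ asymmetric or unramified;
\item the factor $(-1)^{|\ddot{\Xi}_{\eta_0,\ur}|}$;
\item the Gauss-sum constant $\mfG_{\G_{{}^g\eta_0}}(\vartheta,{}^g\eta_+)$ coming from the deep part.
\end{itemize}
The factors depending only on $\ul{\eta}$ are absorbed into $C_{\ul{\eta}}$. The delicate bookkeeping is matching symmetric or ramified restricted roots against the case table preceding the theorem. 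I expect this case analysis to be the hardest part of the proof.
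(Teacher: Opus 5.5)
Your depth-zero skeleton is sensible: the twisted Frobenius formula, the tensor factorization of $\omega$ over $\ddot{\Xi}$, collapsing each $\Theta$-orbit by the cyclic-trace identity and the normalization $I^{\ul{\eta}}_{\Theta(\ddot{\alpha})}=\omega_{\ddot{\alpha}}([\ul{\eta}]^{m_{\alpha}})$, then G\'erardin's formula. However, nothing in the proposal handles the intermediate part $\eta_{+}=\delta^{+}_{<r}$, and Step 3 cannot produce $\mfG_{\G_{{}^{g}\eta_{0}}}(\vartheta,{}^{g}\eta_{+})$.

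\emph{Why the Weil trace cannot see $\eta_{+}$.} Every $t\in S_{0+}$ acts trivially on $V=J/J_{+}$: on $V_{\alpha}\cong k_{\alpha}$ it acts through the reduction of $\alpha(t)\in 1+\mfp_{\ol{F}}$. Write ${}^{g}\eta={}^{g}\eta_{0}\cdot{}^{g}\eta_{+}$ with ${}^{g}\eta_{+}\in S_{0+}$. Then the image of $s_{g}$ in $\Sp(V)$ equals that of ${}^{g}\eta_{0}\,\ul{\eta}^{-1}\in S$. Hence $\tr(\omega(s_{g})\circ I^{\ul{\eta}}_{\omega})$, and everything G\'erardin's formula extracts from it, is independent of $\eta_{+}$. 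It can yield the depth-zero characters $\epsilon_{\alpha}(s_{g})$, a sign like $(-1)^{|\ddot{\Xi}_{\eta_{0},\ur}|}$, the size $|V^{[{}^{g}\eta_{0}]}|^{1/2}$, and constants depending on $\ul{\eta}$. It cannot yield $\mfG$, which genuinely depends on $\eta_{+}$. By the proof of Proposition \ref{prop:TCF-normal}, $\mfG$ equals, up to factors depending only on $\eta_{0}$, the product $e(\G_{\eta})\,\varepsilon(\T_{\G_{\eta}^{\ast}})^{-1}\,\Delta_{\II}^{\G_{\eta_{0}}}[a_{\vartheta}^{\nat},\chi_{\vartheta}^{\nat}](\eta_{+})$. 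This involves $\G_{\eta}=(\G_{\eta_{0}})_{\eta_{+}}$ and the valuations of $\alpha(\eta_{+})-1$.

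\emph{Where the Gauss sums actually come from.} In the untwisted theory they come from cosets your Step 2 discards. Take the toral case with $\gamma_{<r}$ regular, so $G_{\gamma_{<r}}=S$ and the Fourier transform is just a character. The Frobenius sum then contains, in general, cosets $Ku\neq K$ with ${}^{u}\gamma\in K$, where $u$ has root components of depth $(r-d)/2<r/2$ in a direction $\alpha$ with $d=\val_{F}(\alpha(\gamma_{<r})-1)$. The commutator of $u$ with $\gamma_{<r}$ has root components of depth $(r+d)/2>r/2$, which $\hat{\vartheta}$ ignores, and a toral component of depth $r$, on which $\hat{\vartheta}$ is a nondegenerate quadratic character of $u$. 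Summing over such $u$ gives the Adler--Spice Gauss sums that DeBacker--Spice evaluate as $\mfG$. Here ${}^{u}\gamma_{<r}$ is not $K$-conjugate into $S$ and $u\notin G_{\gamma_{<r}}$. So these terms are seen neither by your first bullet of Step 2 nor by your third. (Your first bullet also does not follow from Proposition \ref{prop:twisted-Jordan} and Lemma \ref{lem:cent-cent}, which are existence and centralizer statements.)

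In the twisted case the same phenomenon occurs along root directions of $\G_{\eta_{0}}$ that are nontrivial on $\eta_{+}$. Along directions moved by $[\eta_{0}]$, the map $1-[\eta_{0}]$ is invertible on graded pieces, so only $K$ itself contributes there and your Weil-trace computation is the right tool.

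\emph{The missing step} is a descent to $\G_{\eta_{0}}$:
\begin{enumerate}
\item Split $V=V^{[\eta_{0}]}\oplus(1-[\eta_{0}])V$ orthogonally and factor $\omega$ and the intertwiner accordingly.
\item Treat the $(1-[\eta_{0}])V$ factor by your $\Theta$-orbit/G\'erardin computation.
\item Identify the $[\eta_{0}]$-fixed part with the untwisted Heisenberg--Weil datum of the toral pair $(\bfS^{\nat},\vartheta^{\nat})$ of $\G_{\eta_{0}}$ at $\x\in\mcA(\bfS^{\nat},F)$, using $V^{\eta_{0}}\cong V_{\eta_{0}}$ (cf.\ Section \ref{subsec:Heisen-structure}).
\end{enumerate}
The untwisted Adler--DeBacker--Spice computation in $G_{{}^{g}\eta_{0}}$ at ${}^{g}(\eta_{+}\delta_{\geq r})$ then supplies both $\mfG_{\G_{{}^{g}\eta_{0}}}(\vartheta,{}^{g}\eta_{+})$ and $\hat{\iota}^{(\G_{{}^{g}\eta_{0}})_{{}^{g}\eta_{+}}}_{X^{\ast}}=\hat{\iota}^{\G_{{}^{g}\eta}}_{X^{\ast}}$ (Lemma \ref{lem:cent-cent}). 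The resulting two-stage sum over $g$ then collapses to the sum over $S\backslash G/G_{\eta}$.
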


By using the second absolute transfer factor (see Section \ref{subsec:invariants-for-LLC}) we can furthermore rewrite the above formula as follows.
We first note that, for any $\eta\in\t{S}$, the restriction $(\bfS^{\nat},\vartheta^{\nat}\colonequals \vartheta|_{S^{\nat}})$ gives a tame elliptic toral pair of $\G_{\eta}$ of depth $r$ (see {\cite[Lemma 5.5]{Oi25}}).
We let $a_{\vartheta}^{\nat}$ and $\chi_{\vartheta}^{\nat}$ be the sets of Kaletha's $a$-data and $\chi$-data associated to the tame elliptic toral pair $(\bfS^{\nat},\vartheta^{\nat})$ of $\bfG_{{}^{g}\eta_{0}}$ as in Section \ref{subsec:invariants-for-LLC}.

\begin{prop}\label{prop:TCF-normal}
With the notation as above, we have
\begin{multline*}
\Phi_{\t{\pi}}(\delta)
=
C_{\ul{\eta}}
\cdot(-1)^{|\ddot{\Xi}_{\eta_{0},\ur}|}
\cdot e(\G_{\eta_{0}})\cdot e(\G_{\eta})
\cdot \varepsilon(\T_{\G_{\eta_{0}}^{\ast}})\cdot\varepsilon(\T_{\G_{\eta}^{\ast}})^{-1}\\
\sum_{\begin{subarray}{c} g\in S\backslash G/G_{\eta} \\ {}^{g}\eta\in\t{S}\end{subarray}}
\t\epsilon_{\Xi}(s_{g})\cdot\vartheta(s_{g})
\cdot\Delta_{\II}^{\G_{{}^{g}\eta_{0}}}[a_{\vartheta}^{\nat},\chi_{\vartheta}^{\nat}]({}^{g}\eta_{+})
\cdot\hat{\iota}^{\G_{{}^{g}\eta}}_{X^{\ast}}(\log({}^{g}\delta_{\geq r})),
\end{multline*}
where $e(-)$ denotes the Kottwitz sign and $\bfG^{\ast}_{\eta_0}$ and (resp.\ $\bfG^{\ast}_{\eta}$) denotes the quasi-split inner form of $\bfG_{\eta_0}$ (resp.\ $\bfG_{\eta}$).
\end{prop}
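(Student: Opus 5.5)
Starting from Theorem \ref{thm:TCF-final}, the only change needed is to replace the Adler--DeBacker--Spice constant $\mfG_{\G_{{}^{g}\eta_{0}}}(\vartheta,{}^{g}\eta_{+})$ in each summand by the second transfer factor. The tool is Kaletha's reinterpretation of the DeBacker--Spice root-of-unity in the untwisted toral setting \cite[Corollary 4.8.2]{Kal19}. It applies to a connected reductive group $\bfG'$, a tame elliptic toral pair $(\bfS',\vartheta')$ of $\bfG'$, and an element $\gamma\in S'$ that is topologically unipotent, i.e.\ lies in $S'_{0+}$. For these it gives
\[
\mfG_{\bfG'}(\vartheta',\gamma)=e(\bfG')\cdot\varepsilon(\T_{\bfG'^{\ast}})\cdot\varepsilon(\T_{\bfG'_{\gamma}{}^{\ast}})^{-1}\cdot\Delta_{\II}^{\bfG'}[a_{\vartheta'},\chi_{\vartheta'}](\gamma)\cdot e(\bfG'_{\gamma}).
\]
Here $\T_{(-)^{\ast}}$ is the minimal Levi of the quasi-split inner form, and the product of signs and root numbers may need rearranging to match the exact form there. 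We apply this with $\bfG'=\G_{{}^{g}\eta_{0}}$, $(\bfS',\vartheta')=(\bfS^{\nat},\vartheta^{\nat})$ and $\gamma={}^{g}\eta_{+}$. The hypotheses hold: $(\bfS^{\nat},\vartheta^{\nat})$ is a tame elliptic toral pair of depth $r$ of $\G_{{}^{g}\eta_{0}}$ by \cite[Lemma 5.5]{Oi25}, and ${}^{g}\eta_{+}\in S^{\nat}_{0+}$ because ${}^{g}\eta\in\t{S}$ and the topological Jordan decomposition is compatible with conjugation. The $a$- and $\chi$-data appearing are then exactly $a^{\nat}_{\vartheta}$ and $\chi^{\nat}_{\vartheta}$.

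Next we identify the descended group. By Lemma \ref{lem:cent-cent},
\[
(\G_{{}^{g}\eta_{0}})_{{}^{g}\eta_{+}}=\G_{{}^{g}\eta}
\]
(with $\eta_{+}=\delta^{+}_{<r}$). So the centralizer occurring in Kaletha's formula is $\G_{{}^{g}\eta}$, which is the group carrying the Fourier transform $\hat{\iota}$. Substituting this identity into each summand of Theorem \ref{thm:TCF-final} gives the summand of the proposition, multiplied by the prefactor
\[
e(\G_{{}^{g}\eta_{0}})\, e(\G_{{}^{g}\eta})\,\varepsilon(\T_{\G^{\ast}_{{}^{g}\eta_{0}}})\,\varepsilon(\T_{\G^{\ast}_{{}^{g}\eta}})^{-1}.
\]

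It remains to remove the dependence of this prefactor on $g$. Since $g\in G$ is $F$-rational, $[g]$ gives $F$-isomorphisms $\G_{\eta_{0}}\cong\G_{{}^{g}\eta_{0}}$ and $\G_{\eta}\cong\G_{{}^{g}\eta}$. Kottwitz signs and the root numbers $\varepsilon(\T_{(-)^{\ast}})$ depend only on the $F$-isomorphism class, so the prefactor equals $e(\G_{\eta_{0}})e(\G_{\eta})\varepsilon(\T_{\G^{\ast}_{\eta_{0}}})\varepsilon(\T_{\G^{\ast}_{\eta}})^{-1}$ and can be pulled out of the sum. The constants $C_{\ul{\eta}}$ and $(-1)^{|\ddot{\Xi}_{\eta_{0},\ur}|}$ pass through unchanged, which yields the stated formula.

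The main obstacle is checking that the normalizations of $\mfG$ in \cite[Definition 5.3.2]{DS18} and of $\Delta_{\II}$ with Kaletha's $a$/$\chi$-data exactly match those in \cite[Corollary 4.8.2]{Kal19}. The concern is that the $\epsilon$-characters $\epsilon_{\vartheta,\asym}$, $\epsilon_{\vartheta,\ur}$ and $\epsilon_{\bfS,\ram}$ might be partly absorbed into $\mfG$ there, whereas here $\t\epsilon_{\Xi}$ is kept separate. If such a discrepancy appears, I would first evaluate these characters on ${}^{g}\eta_{+}$. This element lies in $S^{\nat}_{0+}$, so $\alpha({}^{g}\eta_{+})\equiv1$ modulo the maximal ideal and all the sign characters are trivial on it. Hence Kaletha's identity holds on topologically unipotent elements with no extra sign factors.
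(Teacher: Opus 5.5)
Your proposal is correct and follows essentially the same route as the paper: replace $\mfG_{\G_{{}^{g}\eta_{0}}}(\vartheta,{}^{g}\eta_{+})$ by Kaletha's expression involving $\Delta_{\II}^{\G_{{}^{g}\eta_{0}}}[a_{\vartheta}^{\nat},\chi_{\vartheta}^{\nat}]$, note that every depth-zero-sensitive sign factor is trivial at the topologically unipotent element ${}^{g}\eta_{+}$, and pull the $g$-independent prefactor out of the sum. The difference is only in sourcing: the paper does not cite \cite[Corollary 4.8.2]{Kal19} as a black box. Instead it writes $\mfG$ as $\varepsilon(\vartheta,{}^{g}\eta_{+})^{-1}\varepsilon_{\ram}\varepsilon^{\ram}\t{e}$ following the proof of \cite[Proposition 4.21]{DS18}, drops $\varepsilon(\vartheta,{}^{g}\eta_{+})$, $\varepsilon^{\ram}$ and then $\varepsilon_{\bfS^{\nat},\ram}({}^{g}\eta_{+})$ for exactly the reason in your last paragraph, and applies \cite[Corollary 4.7.6]{Kal19} to the product $\varepsilon_{\ram}\t{e}$.
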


\begin{proof}
We investigate the summands of the sum on the right-hand side of Theorem \ref{thm:TCF-final}.
By the proof of \cite[Proposition 4.21]{DS18}, $\mfG_{\G_{{}^{g}\eta_{0}}}(\vartheta,{}^{g}\eta_{+})$ is given by 
\[
\varepsilon_{\G_{{}^{g}\eta_{0}}}(\vartheta,{}^{g}\eta_{+})^{-1}
\cdot\varepsilon_{\G_{{}^{g}\eta_{0}},\ram}(\pi',{}^{g}\eta_{+})
\cdot\varepsilon_{\G_{{}^{g}\eta_{0}}}^{\ram}(\pi',{}^{g}\eta_{+})
\cdot\t{e}(\pi',{}^{g}\eta_{+})
\]
with the notation as in \textit{loc.\ cit.}
Since the depth-zero part of ${}^{g}\eta_{+}$ is trivial, we have $\varepsilon_{\G_{{}^{g}\eta_{0}}}(\vartheta,{}^{g}\eta_{+})=1$ (see \cite[Proposition 3.8]{AS09}) and $\varepsilon_{\G_{{}^{g}\eta_{0}}}^{\ram}(\pi',{}^{g}\eta_{+})=1$ (see \cite[Notation 4.14]{DS18}).
On the other hand, by \cite[Corollary 4.7.6]{Kal19}, the product $\varepsilon_{\G_{{}^{g}\eta_{0}},\ram}(\pi',{}^{g}\eta_{+})\cdot\t{e}(\pi',{}^{g}\eta_{+})$ equals
\[
\varepsilon_{\bfS^{\nat},\ram}({}^{g}\eta_{+})
\cdot e(\G_{{}^{g}\eta_{0}})
\cdot e(\G_{{}^{g}\eta})
\cdot \varepsilon(\T_{\G_{{}^{g}\eta_{0}}^{\ast}})\cdot\varepsilon(\T_{\G_{{}^{g}\eta}^{\ast}})^{-1}
\cdot\Delta_{\II}^{\G_{{}^{g}\eta_{0}}}[a_{\vartheta}^{\nat},\chi_{\vartheta}^{\nat}]({}^{g}\eta_{+}).
\]
Note that $\varepsilon_{\bfS^{\nat},\ram}({}^{g}\eta_{+})$ is trivial as ${}^{g}\eta_{+}$ has no depth zero part.
Moreover, since any $g\in G$ does not change $e(\G_{\eta_{0}})\cdot e(\G_{\eta})\cdot \varepsilon(\T_{\G_{\eta_{0}}^{\ast}})\cdot\varepsilon(\T_{\G_{\eta}^{\ast}})$ by conjugating $\eta$, we get the desired formula.
\end{proof}

\section{Framework of twisted endoscopy}\label{sec:tw-endo}

In this section, we review the framework of twisted endoscopy of Kottwitz--Shelstad \cite{KS99} and Waldspurger \cite{Wal08}.

\subsection{Endoscopic data treated in this paper}\label{subsec:KSW}

We introduce a structure of a twisted space on the $L$-group ${}^{L}\G$ following \cite[Section 1.2]{KS99} and \cite[Section 1.3]{Wal08}.
The automorphism $\theta$ and the fixed splitting $\spl_{\hat{\G}}$ define an automorphism $\hat{\theta}$ of $\hat{\G}$; namely, $\hat{\theta}$ is the unique $\spl_{\hat{\G}}$-preserving automorphism of $\hat{\G}$ which is compatible with $\theta$ under the isomorphism $\Psi(\hat{\G})\cong\Psi(\G)^{\vee}$.
Since $\hat{\theta}$ commutes with the action of $\Gamma$ on $\hat{\G}$, we can extend it to an automorphism ${}^{L}\theta$ of ${}^{L}\G$ by ${}^{L}\theta(x,w)\colonequals (\hat{\theta}(x),w)$ for $(x,w)\in{}^{L}\G=\hat{\G}\rtimes W_{F}$.
We define a twisted space on the dual side by $\L\t\G\colonequals \L\G\L\theta$.

We next review the notion of endoscopic data following \cite[Section 2.1]{KS99} and \cite[Section 1.3]{Wal08}.
We call a quadruple $(\H,\mcH,s,\hat{\xi})$ \textit{endoscopic data for the triple $(\G,\theta,\mathbbm{1})$} if
\begin{itemize}
\item
$\H$ is a quasi-split connected reductive group over $F$,
\item
$\mcH$ is a split extension $1\rightarrow\hat{\H}\rightarrow\mcH\rightarrow W_{F}\rightarrow1$ such that the induced action of $W_{F}$ on $\hat{\H}$ coincides with the action of $W_{F}$ on $\hat{\H}$ induced from the $F$-rational structure of $\H$ up to inner automorphisms of $\hat{\H}$,
\item
$s\in\hat{\G}$ such that the automorphism $[s]\circ\hat{\theta}$ is quasi-semisimple, and
\item
$\hat{\xi}\colon \mcH\hookrightarrow\L\G$ is an $L$-homomorphism (i.e., continuous and commuting with projections to $W_{F}$) satisfying the following conditions:
\begin{itemize}
\item
$[s]\circ\L{\theta}\circ\hat{\xi}=\hat{\xi}$,
\item
$\hat{\xi}|_{\hat{\H}}\colon\hat{\H}\xrightarrow{\sim} \hat{\G}_{s\L\theta}=\bfZ_{\hat{\G}}(s\L\theta)^{\circ}$.
\end{itemize}
\end{itemize}

When a set of endoscopic data $(\H,\mcH,s,\hat{\xi})$ is given, by replacing it with an equivalent data (see \cite[Section 3.1]{KS99} for the definition of the equivalence relation on endoscopic data), we may suppose that 
\begin{itemize}
\item
$s$ belongs to the pinned maximal torus $\hat{\T}$, and
\item
$(\hat{\bfB}_{\H},\hat{\T}_{\H})\colonequals \hat{\xi}^{-1}(\hat{\bfB},\hat{\T})$ is a $\Gamma$-stable Borel pair of $\hat{\H}$.
\end{itemize}
In this paper, we assume that
\[
\textbf{$\mcH$ in the endoscopic data $(\H,\mcH,s,\hat{\xi})$ is equal to $\L\H$.}
\]
Let us fix such an endoscopic data $(\H,\L\H,s,\hat{\xi})$ in the following.

We note that, the absolute Weyl group $\Omega_{\H}(\bfT_\bfH)$ of $\H$ can be identified with a subgroup of that $\Omega_{\G}(\bfT)$ of $\G$ (see Section \ref{subsec:norm}).
In particular, our assumption that $p\nmid|\Omega_{\G}(\bfT)|$ implies that $p\nmid|\Omega_{\H}(\bfT_\bfH)|$.

\subsection{Norm correspondence in twisted endoscopy}\label{subsec:norm}
Let us briefly review the notion of a norm in twisted endoscopy.
(See \cite[Section 3]{KS99} for details.)

We fix a Borel pair $(\bfB_{\H},\T_{\H})$ of $\H$ defined over $F$ so that the Langlands dual group $\hat{\H}$ of $\H$ is equipped with an isomorphism $\Psi(\hat{\H})\cong\Psi(\H)^{\vee}$.
In particular, we have isomorphisms $X^{\ast}(\T_{\H})\cong X_{\ast}(\hat{\T}_{\H})$ and $X_{\ast}(\T_{\H})\cong X^{\ast}(\hat{\T}_{\H})$.
Since the restriction of $\hat{\xi}$ to $\hat{\T}_{\H}$ induces
\[
\hat{\xi}|_{\hat{\T}_{\H}} \colon \hat{\T}_{\H}\xrightarrow{\sim}\hat{\T}^{\hat{\theta},\circ},
\]
by taking the dual of $\hat{\xi}|_{\hat{\T}_{\H}}$, we get an $F$-rational isomorphism
\[
\xi\colon \T_{\theta}\xrightarrow{\sim}\T_{\H}.
\]
By abuse of notation, we often write $\xi$ also for the homomorphism $\T\twoheadrightarrow\T_{\theta}\xrightarrow{\xi}\T_{\H}$.
Via the isomorphism $\xi$, the absolute Weyl group $\Omega_{\H}$ is identified with a subgroup of $\Omega_{\G}^{\theta}$ (see \cite[Section 1.1]{KS99}).
Therefore $\xi^{-1}$ induces a surjective map
\begin{align}\label{eq:norm-map}
\T_{\H}/\Omega_{\H} \twoheadrightarrow \T_{\theta}/\Omega_{\G}^{\theta}.
\end{align}
Note that $\T_{\H}/\Omega_{\H}$ and $\T_{\theta}/\Omega_{\G}^{\theta}$ parametrize the semisimple conjugacy classes of $\H$ and $\t\G$, respectively.
Moreover, the map \eqref{eq:norm-map} is $\Gamma$-equivariant.

We let $\t\G_{\ss}$ (resp.\ $\H_{\ss}$) denote the semisimple locus of $\t\G$ (resp.\ $\H$).
For $\gamma\in\H_{\ss}$ and $\delta\in\t\G_{\ss}$, we say that \textit{$\gamma$ and $\delta$ correspond} if the conjugacy classes of $\gamma$ and $\delta$ correspond under the map \eqref{eq:norm-map}.
We say that $\gamma\in\H_{\ss}$ is \textit{$\t\G$-strongly regular} if it corresponds to a strongly regular semisimple conjugacy class in $\t\G$.
Note that if $\gamma\in\H_{\ss}$ is $\t\G$-strongly regular, then it is strongly regular.
We let $\t\G_{\srs}$ (resp.\ $\H_{\t\G\-\srs}$) denote the strongly regular semisimple locus of $\t\G$ (resp.\ $\t\G$-strongly regular semisimple locus of $\H$).

For two $F$-rational elements $\delta$ and $\delta'$ of $\t{G}_{\srs}$ (resp.\ $\gamma$ and $\gamma'$ of $H_{\t\G\-\srs}$), we say that they are \textit{stably conjugate} if they are conjugate by an element of $\G$ (resp.\ $\H$).

When an $F$-rational element $\gamma\in H_{\t\G\-\srs}$ corresponds to an $F$-rational element $\delta\in\t{G}_{\srs}$, we say that \textit{$\gamma$ is a norm of $\delta$}.
We define $\mcD$\symdef{mathcal-D}{$\mcD$} to be the subset of $H_{\t\G\-\srs}\times\t{G}_{\srs}$ consisting of pairs $(\gamma,\delta)$ such that $\gamma$ is a norm of $\delta$.

\subsection{Transfer factor}\label{subsec:tran}
We have a function
\[
\Delta_{\H,\t{\G}}\colon H_{\G\-\srs}\times \t{G}_{\srs}\rightarrow\C
\]
called the \textit{(geometric) absolute transfer factor of Langlands--Kottwitz--Shelstad} (introduced in \cite{LS87,KS99,KS12}).
When the groups $\H$ and $\t{\G}$ are obvious from the context, we often omit the subscript from the notation and simply write $\Delta$ for $\Delta_{\H,\t{\G}}$.
Instead of reviewing its precise definition, we just give several comments on its basic properties; we refer the readers to \cite[Sections 4, 5]{KS99}, \cite[Chapitre 7]{Wal08}, and \cite{KS12} for the details.
\begin{enumerate}
\item
For any $(\gamma,\delta)$, $\Delta(\gamma,\delta)\neq0$ if and only if $(\gamma,\delta)\in\mcD$, i.e., $\gamma$ is a norm of $\delta$.
\item
The transfer factor $\Delta(\gamma,\delta)$ depends on the choice of a $\theta$-stable Whittaker datum of $\G$.
In this paper, we choose a $\theta$-stable Whittaker datum $\mfw$ of $\G$ determined by the fixed $\theta$-stable splitting $\spl_{\G}$ of $\G$ and the nontrivial additive character $\psi_F$ (see \cite[Section 5.3]{KS99} for how to produce $\mfw$ from $\spl_{\G}$ and $\psi_F$).
\item
The transfer factor $\Delta(\gamma,\delta)$ is defined to be the product of the ratio of root numbers $\varepsilon(\T^{\theta,\circ})\cdot\varepsilon(\T_{\H})^{-1}$ and four factors $\Delta_{\I}(\gamma,\delta)$, $\Delta_{\II}(\gamma,\delta)$, $\Delta_{\III}(\gamma,\delta)$, and $\Delta_{\IV}(\gamma,\delta)$.
Among these factors, $\Delta_{\I}(\gamma,\delta)$, $\Delta_{\II}(\gamma,\delta)$, and $\Delta_{\III}(\gamma,\delta)$ depend on the choice of $a$-data and $\chi$-data for the restricted root system of the $F$-rational maximal torus $\bfZ_{\G}(\bfZ_{\G}(\delta))$  in $\G$ although the whole product does not.
For this reason, we write $\Delta_{\bullet}[a,\chi](\gamma,\delta)$ (where $\bullet\in\{\I,\II,\III\}$) when we want to emphasize the dependence on $a$-data $a$ and $\chi$-data $\chi$.
\item
Following \cite{Kal19}, we let $\mr{\Delta}$ denote the transfer factor $\Delta$ without the fourth factor $\Delta_{\IV}$.
\item
The ratio of absolute transfer factors 
\[
\Delta(\gamma,\delta; \gamma',\delta')\colonequals \Delta(\gamma,\delta)/\Delta(\gamma',\delta')
\]
is called the relative transfer factor.
We also define the relative versions of $\Delta_{\bullet}$ for $\bullet\in\{\I,\II,\III,\IV\}$ in the same way.
\item
The definition of the transfer factor given in \cite{KS99} must be modified as announced in \cite{KS12} (see also \cite[Section 2]{Wal-Errata} or \cite[Appendix]{Kal21-STF}).
We adopt the modified version ``$\Delta'$'' which is compatible with the classical normalization of the local class field theory (hence consistent with, especially, \cite{LS87},\cite{MW16-1},\cite{Kal19}).
More precisely, the factor $\Delta'$ is the defined to be the product of $\Delta_{\I}^{\new,-1}$, $\Delta_{\II}^{\KS}$, $\Delta_{\III}^{\KS,-1}$, and $\Delta_{\IV}^{\KS}$ (and also the epsilon factors), where $\Delta_{\I}^{\new}$ is the factor defined in \cite[Section 3.4]{KS12} and $\Delta_{\II}^{\KS}$, $\Delta_{\III}^{\KS}$ and $\Delta_{\IV}^{\KS}$ are the factors defined in \cite{KS99}.
We note that $\Delta_{\I}^{\new}$ equals the factor $\Delta_{\I}^{\KS}$ defined in \cite{KS99} when there is no restricted roots of type $2$ or $3$, which is the case for the setting treated in this paper.
The symbols $\Delta_{\I}$, $\Delta_{\II}$, $\Delta_{\III}$, and $\Delta_{\IV}$ in this paper denote $\Delta_{\I}^{\new,-1}$, $\Delta_{\II}^{\KS}$, $\Delta_{\III}^{\KS,-1}$, and $\Delta_{\IV}^{\KS}$, respectively.
\end{enumerate}

\section{Analysis of $\theta$-stable regular supercuspidal $L$-packets}\label{sec:theta-stable}

\subsection{Twist of regular supercuspidal $L$-packets and $L$-parameters}\label{subsec:theta-twist-L}
Let $(\bfS,\hat{\jmath},\chi,\vartheta)$ be a regular supercuspidal $L$-packet datum of $\G$.
Let $\phi$ be the $L$-parameter of $\G$ associated to $(\bfS,\hat{\jmath},\chi,\vartheta)$, i.e., $\phi\colonequals {}^{L}j_{\chi}\circ\phi_{\vartheta}$ (see Section \ref{subsec:construction}).

We put $\j\colon\bfT\rightarrow\bfS$ to be the dual isomorphism of $\h{\j}$.
Recall that $\mcJ^{\G}\colonequals \{\text{$\hat{\jmath}$-admissible $F$-rational embeddings $\bfS\hookrightarrow\G$}\}$.
Let us investigate the $\hat{\jmath}$-admissibility condition.
By definition, an embedding $j\colon \bfS\hookrightarrow\G$ is $\hat{\jmath}$-admissible if and only if there exists an element $g\in\G$ such that $[g]\circ j(\bfS)=\bfT$ and the inverse of the dual of $[g]\circ j$ is $\hat{\G}$-conjugate to $\hat{\jmath}$.
Since the image of $\hat{\jmath}$ is assumed to be $\hat{\T}$, this is equivalent to that there exists an element $\h{w}\in\Omega_{\hat{\G}}\colonequals \Omega_{\hat{\G}}(\hat{\T})$ such that the inverse of the dual of $[g]\circ j$ is $[\h{w}]\circ\h{\j}$.
By letting $w\in\Omega_{\G}$ be the element corresponding to $\h{w}\in\Omega_{\h{\G}}$, this condition is equivalent to that $[g]\circ j=[w]^{-1}\circ \j^{-1}$.
Therefore, we see that the $\hat{\jmath}$-admissibility condition simply says that $j$ and $\j^{-1}$ are $\G$-conjugate.

The following lemma follows from this observation.

\begin{lem}\label{lem:adm-emb-twist}
If we put $\theta^{-1}\circ\mcJ^{\G}\colonequals \{\theta^{-1}\circ j\mid j\in\mcJ^{\G}\}$, then we have
\[
\theta^{-1}\circ\mcJ^{\G}
=
\{\text{$\hat{\theta}\circ\hat{\jmath}$-admissible $F$-rational embeddings $\bfS\hookrightarrow\G$}\}.
\]
\end{lem}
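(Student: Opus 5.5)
The plan is to use the reformulation of $\hat{\jmath}$-admissibility obtained just before the statement. An embedding $j\colon\bfS\hookrightarrow\G$ is $\hat{\jmath}$-admissible if and only if $j$ is $\G$-conjugate to $\j^{-1}\colon\bfS\xrightarrow{\sim}\T\subset\G$, where $\j\colon\T\rightarrow\bfS$ is the dual isomorphism of $\hat{\jmath}$. Applying the same reasoning to the embedding $\hat{\theta}\circ\hat{\jmath}$, whose image is again $\hat{\T}$ because $\hat{\theta}$ preserves $\spl_{\hat{\G}}$, shows that $j$ is $\hat{\theta}\circ\hat{\jmath}$-admissible if and only if $j$ is $\G$-conjugate to the inverse of the dual of $\hat{\theta}\circ\hat{\jmath}$.

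The key step is to compute that dual. Under the identification $X^{\ast}(\hat{\T})=X_{\ast}(\T)$, the restriction $\hat{\theta}|_{\hat{\T}}$ corresponds to $\theta|_{\T}$, since $\hat{\theta}$ is defined to be compatible with $\theta$ through $\Psi(\hat{\G})\cong\Psi(\G)^{\vee}$. Therefore the dual of $\hat{\theta}\circ\hat{\jmath}$ is $\j\circ\theta|_{\T}\colon\T\rightarrow\bfS$, and its inverse is $\theta^{-1}\circ\j^{-1}$. Some care with the variance of dualization is needed here. It is harmless, however: composing with the inverse swaps the order of the two maps, and $\hat{\theta}$ corresponds to $\theta$ on $\T$ (not to $\theta^{-1}$) because it is defined via the identification $\Psi(\hat{\G})\cong\Psi(\G)^{\vee}$, under which dual automorphisms are compatible. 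In any case $\theta$ is an involution, so $\theta^{-1}=\theta$ and either variance convention gives the same set.

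Finally, I compare the two sets. If $j\in\mcJ^{\G}$, then $j=[g]\circ\j^{-1}$ for some $g\in\G$. Hence
\[
\theta^{-1}\circ j=[\theta^{-1}(g)]\circ\theta^{-1}\circ\j^{-1},
\]
which is $\G$-conjugate to $\theta^{-1}\circ\j^{-1}$, so $\theta^{-1}\circ j$ is $\hat{\theta}\circ\hat{\jmath}$-admissible. The map $\theta^{-1}\circ j$ is still $F$-rational, since $\theta$ is an $F$-rational automorphism. Conversely, if $j'$ is $F$-rational and $\hat{\theta}\circ\hat{\jmath}$-admissible, the same computation run in reverse shows that $\theta\circ j'$ is $F$-rational and $\hat{\jmath}$-admissible. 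Then $j'=\theta^{-1}\circ(\theta\circ j')$ lies in $\theta^{-1}\circ\mcJ^{\G}$, which gives equality of the two sets.

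The only step needing attention is the identification of the dual of $\hat{\theta}|_{\hat{\T}}$ with $\theta|_{\T}$. Once that is in place, everything else is formal.
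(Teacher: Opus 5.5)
Your proposal is correct and follows the paper's route: the paper derives the lemma directly from the observation that $\hat{\jmath}$-admissibility means $\G$-conjugacy to $\j^{-1}$, and you apply that same observation to $\hat{\theta}\circ\hat{\jmath}$. You identify the inverse dual as $\theta^{-1}\circ\j^{-1}$ and use $\theta^{-1}\circ[g]=[\theta^{-1}(g)]\circ\theta^{-1}$ together with the $F$-rationality of $\theta$, which simply makes explicit what the paper leaves implicit.
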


Recall that $\chi=\{\chi_{\alpha}\}_{\alpha}$ is a set of $\chi$-data for $\Phi(\G,\bfS_{\h{\j}})\cong\Phi(\G,\bfS_{j})$ (for any $j\in\mcJ^{\G}_{G}$).
As we have $\Phi(\G,\bfS_{j})\xrightarrow{\sim}\Phi(\G,\bfS_{\theta^{-1}\circ j})$, we can transport $\chi$ to a set of $\chi$-data for $\Phi(\G,\bfS_{\theta^{-1}\circ j})$, for which we write $\theta(\chi)$.
Then we get a regular supercuspidal $L$-packet datum $(\bfS,\hat{\theta}\circ\hat{\jmath},\theta(\chi),\vartheta)$.

The following lemma can be also found in \cite[Lemma 4.9]{Zha20}.

\begin{lem}\label{lem:packet-twist}
The $L$-parameter $\L\theta\circ\phi$ corresponds to the regular supercuspidal $L$-packet datum $(\bfS,\hat{\theta}\circ\hat{\jmath},\theta(\chi),\vartheta)$.
\end{lem}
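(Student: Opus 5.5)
We verify directly from the construction in Section \ref{subsec:construction} that the $L$-parameter attached to $(\bfS,\hat{\theta}\circ\hat{\jmath},\theta(\chi),\vartheta)$, namely ${}^{L}(\hat\theta\circ\hat\jmath)_{\theta(\chi)}\circ\phi_{\vartheta}$, is $\hat{\G}$-conjugate to $\L\theta\circ{}^{L}j_{\chi}\circ\phi_{\vartheta}$. Both parameters factor through the same $\phi_{\vartheta}\colon W_F\to{}^{L}\bfS$, since the torus and the character are unchanged. So it suffices to show that the two $L$-embeddings ${}^{L}\bfS\hookrightarrow\L\G$ agree up to $\hat{\G}$-conjugacy:
\[
\L\theta\circ{}^{L}j_{\chi}\quad\text{and}\quad{}^{L}(\hat{\theta}\circ\hat{\jmath})_{\theta(\chi)}.
\]

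First, on $\hat{\bfS}$ both restrict to $\hat{\theta}\circ\hat{\jmath}$, because $\L\theta(x,w)=(\hat\theta(x),w)$. Second, recall the Langlands--Shelstad construction \cite[Section 2.6]{LS87}. One transports the $\Gamma$-action via $\hat\jmath$ to an action $\sigma\mapsto \h{w}(\sigma)\rtimes\sigma$ on $\hat{\T}$. One then lifts $\h w(\sigma)$ to $\hat{\G}$ using the Tits section $n(\cdot)$ attached to $\spl_{\hat{\G}}$, and multiplies by the cocycle $r_{p}(\sigma)$ built from $\chi$-data through the coroots $\hat\jmath(\alpha^\vee)$. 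Because $\hat{\theta}$ preserves $\spl_{\hat{\G}}$, it commutes with the $\Gamma$-action and with the Tits section: $\hat{\theta}(n(\h w))=n(\hat\theta \h w\hat\theta^{-1})$. The Weyl element attached to $\hat{\theta}\circ\hat{\jmath}$ is $\hat\theta\h w(\sigma)\hat\theta^{-1}$, and its Tits lift is $\hat\theta$ applied to the old one.

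Next we apply $\hat\theta$ to the $r_p$-part. The root system of $\hat{\theta}\circ\hat{\jmath}$ is the image under $\hat\theta$ of that of $\hat\jmath$. The induced identification of roots of $\bfS$ is exactly the transport $\Phi(\G,\bfS_j)\cong\Phi(\G,\bfS_{\theta^{-1}\circ j})$ defining $\theta(\chi)$; this is consistent with Lemma \ref{lem:adm-emb-twist}. Positivity (the gauge $p$) is also transported by $\hat{\theta}$, since $\hat\theta$ preserves $\hat{\bfB}$. Hence $\hat\theta\bigl(r_p[\chi](\sigma)\bigr)=r_{\hat\theta p}[\theta(\chi)](\sigma)$, and we obtain the equality of $L$-embeddings on the nose, not merely up to conjugation.

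The main obstacle is the bookkeeping in this last step. We must check that the gauge and the $\chi$-data used for $\hat\theta\circ\hat\jmath$ really match the $\hat\theta$-images of those used for $\hat\jmath$, given the normalization that $\hat\jmath$ has image $\hat{\T}$ (here $\hat\theta$ preserves $\hat{\T}$, so $\hat\theta\circ\hat\jmath$ also has image $\hat{\T}$). Any difference in the choice of gauge only changes the embedding by $\hat{\G}$-conjugacy (\cite[Section 2.6]{LS87}), which is harmless. Finally, Proposition \ref{prop:rsc-LLC} identifies the $\hat{\G}$-conjugacy class of $\L\theta\circ\phi$ with the isomorphism class of the datum $(\bfS,\hat{\theta}\circ\hat{\jmath},\theta(\chi),\vartheta)$, which proves the lemma.
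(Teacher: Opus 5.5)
Your proposal is correct and is the same argument as the paper's: the paper simply asserts that tracking the Langlands--Shelstad construction shows $\L\theta\circ\Lj_{\chi}$ is the Langlands--Shelstad embedding attached to $\hat{\theta}\circ\hat{\jmath}$ with $\theta(\chi)$. Your Tits-section and $r_{p}$ transport-of-structure bookkeeping is exactly the check the paper leaves implicit. (One small precision: $\hat{\theta}$ commutes with the $\Gamma$-action because $\theta$ is $F$-rational and preserves $\spl_{\G}$, not merely because $\hat{\theta}$ preserves $\spl_{\hat{\G}}$.)
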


\begin{proof}
By tracking the Langlands--Shelstad construction (\cite[Section 2.6]{LS87}) of the $L$-embedding $\Lj_{\chi}\colon\L\bfS\hookrightarrow\L\G$, we can check that $\L\theta\circ\Lj_{\chi}$ is nothing but the $L$-embedding obtained by applying the Langlands--Shelstad construction to the embedding $\hat{\theta}\circ\hat{\jmath}\colon\hat{\bfS}\hookrightarrow\hat{\G}$ with the $\chi$-data $\theta(\chi)$.
In other words, the $L$-parameter $\L\theta\circ\Lj_{\chi}\circ\phi_{\vartheta}$ is associated to the regular supercuspidal $L$-packet datum $(\bfS,\hat{\theta}\circ\hat{\jmath},\theta(\chi),\vartheta)$.
\end{proof}

\begin{lem}\label{lem:rep-twist}
The $L$-packet $\Pi_{\L\theta\circ\phi}^{\G}$ consists of regular supercuspidal representations whose regular supercuspidal data are given by $(\bfS,\hat{\theta}\circ\hat{\jmath},\theta(\chi),\vartheta,\theta^{-1}\circ j)$ for $j\in\mcJ^{\G}_{G}$.
\end{lem}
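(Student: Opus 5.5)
By Lemma \ref{lem:packet-twist}, the $L$-parameter $\L\theta\circ\phi$ is attached to the regular supercuspidal $L$-packet datum $(\bfS,\hat{\theta}\circ\hat{\jmath},\theta(\chi),\vartheta)$. Kaletha's construction (Section \ref{subsec:const-packet}) then gives
\[
\Pi_{\L\theta\circ\phi}^{\G}=\{\pi_{(\bfS_{j'},\vartheta'_{j'})}\mid j'\in\mcJ'^{\G}_{G}\},
\]
where $\mcJ'^{\G}$ is the set of $F$-rational $\hat{\theta}\circ\hat{\jmath}$-admissible embeddings of $\bfS$ into $\G$. The plan is to identify this index set with $\theta^{-1}\circ\mcJ^{\G}_{G}$. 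We then check that the objects attached to each index are exactly the regular supercuspidal data $(\bfS,\hat{\theta}\circ\hat{\jmath},\theta(\chi),\vartheta,\theta^{-1}\circ j)$.

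\textbf{Index set.} By Lemma \ref{lem:adm-emb-twist}, $\mcJ'^{\G}=\theta^{-1}\circ\mcJ^{\G}$. The map $j\mapsto\theta^{-1}\circ j$ is compatible with $G$-conjugacy: since $\theta$ is $F$-rational, $\theta^{-1}\circ[g]\circ j=[\theta^{-1}(g)]\circ\theta^{-1}\circ j$ with $\theta^{-1}(g)\in G$. Hence it induces a bijection $\mcJ^{\G}_{G}\xrightarrow{\sim}\mcJ'^{\G}_{G}$. Each element of $\mcJ'^{\G}_{G}$ therefore has a representative of the form $\theta^{-1}\circ j$ with $j\in\mcJ^{\G}_{G}$. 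The corresponding regular supercuspidal datum is $(\bfS,\hat{\theta}\circ\hat{\jmath},\theta(\chi),\vartheta,\theta^{-1}\circ j)$, and its representation is $\pi_{(\bfS_{\theta^{-1}\circ j},\vartheta'_{\theta^{-1}\circ j})}$.

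\textbf{Well-definedness of the representations.} We must confirm that these data produce genuine members of the packet, i.e.\ that the character $\vartheta'_{\theta^{-1}\circ j}$ computed from the new $L$-packet datum is the one Kaletha's recipe prescribes. The torus is $\bfS_{\theta^{-1}\circ j}=\theta^{-1}(\bfS_{j})$. The $F$-rational isomorphism $\theta^{-1}\colon\bfS_{j}\to\bfS_{\theta^{-1}\circ j}$ transports $\vartheta_{j}$ to $\vartheta_{\theta^{-1}\circ j}=\vartheta\circ j^{-1}\circ\theta$. Every ingredient of the recipe is defined intrinsically from the triple $(\G,\bfS_{j},\vartheta_{j})$:
\begin{itemize}
\item the roots, the symmetric/asymmetric and ramified/unramified types, and the fields $F_{\alpha}$, $F_{\pm\alpha}$;
\item Kaletha's $a$-data and $\chi$-data $a_{\vartheta_{j}}$, $\chi_{\vartheta_{j}}$, which use the generic element $X^{\ast}$ (transported by $d\theta$) and the canonical $H_{\alpha}$;
\item the set $\Xi$ and the sign characters $\epsilon_{\vartheta_{j},\asym}$ and $\epsilon_{\vartheta_{j},\ur}$;
\item the toral invariants entering $\epsilon_{\bfS_{j},\ram}$.
\end{itemize}
Since $\theta$ is an $F$-rational automorphism of $\G$, all of these are transported compatibly. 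This gives $\epsilon_{\vartheta_{\theta^{-1}\circ j}}=\epsilon_{\vartheta_{j}}\circ\theta$. The datum $\theta(\chi)$ was defined by transporting $\chi$ along the same identification of root systems. Hence $\zeta_{\chi_{\vartheta_{\hat{\theta}\circ\hat{\jmath}}}/\theta(\chi)}$ corresponds to $\zeta_{\chi_{\vartheta_{\hat{\jmath}}}/\chi}$, with the same character $\zeta_{S}$ on $S$.

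\textbf{Main obstacle.} The step needing the most care is the toral invariant $f_{(\G,\bfS)}(\alpha)$. It depends on a root vector $X_{\alpha}$ and the quotient $[X_{\alpha},\tau_{\alpha}(X_{\alpha})]/H_{\alpha}$. Because $d\theta$ is an $F$-rational Lie algebra automorphism commuting with $\tau_{\alpha}$ and sending $H_{\alpha}$ to $H_{\alpha\circ\theta^{-1}}$, this quotient is preserved. The remaining verification is bookkeeping that all identifications go through the same $F$-isomorphism $\theta^{-1}\circ j\circ j^{-1}$.
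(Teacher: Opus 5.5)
Your argument is correct and is essentially the paper's: Lemma \ref{lem:packet-twist} gives the $L$-packet datum of $\L\theta\circ\phi$, and then Lemma \ref{lem:adm-emb-twist}, together with the compatibility $\theta^{-1}\circ[g]=[\theta^{-1}(g)]\circ\theta^{-1}$, identifies $\theta^{-1}\circ\mcJ^{\G}_{G}$ with the $G$-conjugacy classes of $F$-rational $\hat{\theta}\circ\hat{\jmath}$-admissible embeddings, which is all the lemma asserts. Your ``well-definedness'' step is not needed here, because the packet is by definition what Kaletha's recipe produces from these data; the transport computations you sketch there (e.g.\ $\epsilon_{\vartheta_{\theta^{-1}\circ j}}=\epsilon_{\vartheta_{j}}\circ\theta$ and the matching of the $\zeta$-characters) are the ones the paper uses in the next lemma, Lemma \ref{lem:rep-twist2}, to identify $\pi^{\theta}$.
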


\begin{proof}
This follows from Lemma \ref{lem:packet-twist} by noting that the equality of Lemma \ref{lem:adm-emb-twist}
\[
\theta^{-1}\circ\mcJ^{\G}
=\{\text{$\hat{\theta}\circ\hat{\jmath}$-admissible $F$-rational embeddings $\bfS\hookrightarrow\G$}\}
\]
induces an identification
\[
\theta^{-1}\circ\mcJ^{\G}_{G}
\cong\{\text{$\hat{\theta}\circ\hat{\jmath}$-admissible $F$-rational embeddings $\bfS\hookrightarrow\G$}\}/{\sim_{G}}.
\]
\end{proof}

\begin{lem}\label{lem:rep-twist2}
Let $\pi\in\Pi_{\phi}^{\G}$ be a regular supercuspidal representation whose regular supercuspidal datum is $(\bfS,\hat{\jmath},\chi,\vartheta,j)$.
Then its $\theta$-twist $\pi^{\theta}\colonequals \pi\circ\theta$ arises from the regular supercuspidal datum $(\bfS,\hat{\theta}\circ\hat{\jmath},\theta(\chi),\vartheta,\theta^{-1}\circ j)$.
\end{lem}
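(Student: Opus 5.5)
The plan is to compute the $\theta$-twist of $\pi=\pi_{(\bfS_{j},\vartheta'_{j})}$ directly from Kaletha's recipe and to compare the result, term by term, with the tame elliptic regular pair that the datum $(\bfS,\hat{\theta}\circ\hat{\jmath},\theta(\chi),\vartheta,\theta^{-1}\circ j)$ produces.

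Since $\theta$ is $F$-rational, twisting commutes with Yu's construction. Concretely, $\pi_{(\bfS_{j},\vartheta'_{j})}^{\theta}\cong\pi_{(\theta^{-1}(\bfS_{j}),\vartheta'_{j}\circ\theta)}$, because $\theta^{-1}$ carries the groups $K,J,J_{+}$ and the Heisenberg--Weil data for $(\bfS_{j},\vartheta'_{j})$ to those for the transported pair. Equivalently, the twisted representation is $\cInd_{\theta^{-1}(K)}^{G}(\rho\circ\theta)$, and $\rho\circ\theta$ is the Yu representation attached to the transported pair. Put $j'\colonequals\theta^{-1}\circ j$. Then $\bfS_{j'}=\theta^{-1}(\bfS_{j})$ and $\vartheta_{j'}=\vartheta\circ j'^{-1}=\vartheta_{j}\circ\theta$. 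It therefore suffices to prove
\[
\vartheta'_{j'}=\vartheta'_{j}\circ\theta|_{S_{j'}}.
\]
Here the left-hand side is computed from the datum $(\bfS,\hat{\theta}\circ\hat{\jmath},\theta(\chi),\vartheta,j')$. By Lemma \ref{lem:adm-emb-twist} we have $j'\in\theta^{-1}\circ\mcJ^{\G}$, which is exactly the set of $F$-rational $\hat{\theta}\circ\hat{\jmath}$-admissible embeddings. So this tuple is a regular supercuspidal datum over the packet datum of Lemma \ref{lem:packet-twist}.

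Unwinding the definition, $\vartheta'_{j'}$ is the product $\epsilon_{\vartheta_{j'}}\cdot(\vartheta\cdot\zeta^{-1}_{S})\circ j'^{-1}$, where $\zeta$ is the ratio of Kaletha's $\chi$-data to $\theta(\chi)$. I will treat the factors one at a time, and every step uses the same principle: each invariant is defined intrinsically from the pair (torus, character) inside $\G$, so it is transported by any $F$-rational automorphism of $\G$.

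First, $\theta^{-1}\colon\bfS_{j}\to\bfS_{j'}$ is an $F$-rational isomorphism. It induces a $\Gamma$-equivariant bijection of roots $\alpha\mapsto\alpha\circ\theta$ which preserves symmetry type and the fields $F_{\alpha}$, $F_{\pm\alpha}$. The element $X^{\ast}$ transports, and so do the Moy--Prasad filtrations, since $\theta^{-1}$ maps $\x$ to the point associated with $\bfS_{j'}$. Consequently the $a$-data, $\Xi$, and the root vectors all correspond, and we get $\epsilon_{\vartheta_{j'},\asym}=\epsilon_{\vartheta_{j},\asym}\circ\theta$, the analogous identity for $\epsilon_{\vartheta_{j'},\ur}$, and $\epsilon_{\bfS_{j'},\ram}=\epsilon_{\bfS_{j},\ram}\circ\theta$. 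The last identity holds because the toral invariants $f_{(\G,\bfS)}(\alpha)$ are preserved: $\theta$ maps $X_{\alpha}$ to a rational root vector and $H_{\alpha}$ to $H_{\alpha\circ\theta}$. The same transport shows that Kaletha's $\chi$-data $\chi_{\vartheta_{j'}}$ is the transport of $\chi_{\vartheta_{j}}$, and by construction $\theta(\chi)$ is the transport of $\chi$. Hence, under the identification $\Phi(\G,\bfS_{\hat{\theta}\circ\hat{\jmath}})=\Phi(\G,\bfS_{\hat{\jmath}})$ induced by $j\leftrightarrow j'$, the two $\zeta$-data coincide, and the associated characters $\zeta_{S}$ of $S$ are equal.

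Since $(\vartheta\cdot\zeta_{S}^{-1})\circ j'^{-1}=((\vartheta\cdot\zeta_{S}^{-1})\circ j^{-1})\circ\theta$, multiplying all the factors together gives the desired identity.

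The main obstacle is the bookkeeping behind the identification of root systems. One has to check that the identification $\Phi(\G,\bfS_{j})\cong\Phi(\G,\bfS_{\theta^{-1}\circ j})$ used to define $\theta(\chi)$ agrees with the one through which the $\zeta$-data for the datum with $\hat{\theta}\circ\hat{\jmath}$ is computed. In other words, pulling back by $j'$ must be compatible with pulling back by $j$ composed with $\alpha\mapsto\alpha\circ\theta$. This holds because $j'^{\ast}(\alpha\circ\theta)=j^{\ast}\alpha$, but it must be verified carefully. A secondary check is that $\theta$ genuinely preserves the sign characters, which requires the residue-field reductions $\overline{\alpha(s)}$ to be equivariant; they are, because $\theta$ is $F$-rational.
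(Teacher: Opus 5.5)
Your proposal is correct and follows essentially the same route as the paper. Like the paper, you identify $\pi^{\theta}$ with $\pi_{(\theta^{-1}(\bfS_{j}),\vartheta'_{j}\circ\theta)}$, then use transport of structure along the $F$-rational automorphism $\theta$ to show that $\epsilon_{\vartheta_{j}}\circ\theta=\epsilon_{\vartheta_{j}\circ\theta}$ and that the $\zeta$-characters attached to $(\hat{\jmath},\chi)$ and to $(\hat{\theta}\circ\hat{\jmath},\theta(\chi))$ coincide, which gives $\vartheta'_{j}\circ\theta=\vartheta'_{\theta^{-1}\circ j}$. (Your write-up uses toral objects such as $X^{\ast}$, $K$, $J$, $J_{+}$, while the lemma covers general regular data; the same transport argument applies to the twisted Levi sequence and to the inductively defined $a$-data, $\chi$-data and sign characters, as the paper indicates.)
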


\begin{proof}
We first note that, for any tame elliptic regular pair $(\bfS_{0},\vartheta_{0})$ of $\G$, the $\theta$-twist $\pi_{(\bfS_{0},\vartheta_{0})}^{\theta}$ of the associated regular supercuspidal representation $\pi_{(\bfS_{0},\vartheta_{0})}$ is equivalent to $\pi_{(\theta^{-1}(\bfS_{0}),\vartheta_{0}\circ\theta)}$.
(This can be easily checked in the same way as in Section \ref{subsec:twist-int}, where the toral case is treated.)
Thus we have
\[
\pi^{\theta}
=
\pi_{(\bfS_{j}, \vartheta'_{j})}^{\theta}
\cong
\pi_{(\theta^{-1}(\bfS_{j}),\vartheta'_{j}\circ\theta)}.
\]
Here recall that $\vartheta'_{j}$ is a character of $S_{j}=j(S)$ given by
\[
\vartheta'_{j}
\colonequals 
\epsilon_{\vartheta_{j}}\cdot(\vartheta\cdot\zeta_{\chi_{\vartheta_{\hat{\jmath}}}/\chi,S}^{-1})\circ j^{-1}.
\]

On the other hand, the regular supercuspidal representation associated to the datum $(\bfS,\hat{\theta}\circ\hat{\jmath},\theta(\chi),\vartheta,\theta^{-1}\circ j)$ is given by $\pi_{(\theta^{-1}(\bfS_{j}),\vartheta'_{\theta^{-1}\circ j})}$, where 
\[
\vartheta'_{\theta^{-1}\circ j}
=\epsilon_{\vartheta_{\theta^{-1}\circ j}}\cdot(\vartheta\cdot\zeta_{\chi_{\vartheta_{\hat{\theta}\circ\hat{\jmath}}}/\theta(\chi),S}^{-1})\circ (\theta^{-1}\circ j)^{-1}.
\]

By the definition of the character $\epsilon$, we easily see that $\epsilon_{\vartheta_{j}}\circ\theta=\epsilon_{\vartheta_{j}\circ\theta}$.
Moreover, it can be also easily checked that $\zeta_{\chi_{\vartheta_{\hat{\jmath}}}/\chi,S}$ equals $\zeta_{\chi_{\vartheta_{\hat{\theta}\circ\hat{\jmath}}}/\theta(\chi),S}$.
Hence we conclude that the characters $\vartheta'_{j}\circ\theta$ and $\vartheta'_{\theta^{-1}\circ j}$ are equal, which implies that $\pi^{\theta}\cong\pi_{(\theta^{-1}(\bfS_{j}),\vartheta'_{\theta^{-1}\circ j})}$.
\end{proof}

Lemmas \ref{lem:rep-twist} and \ref{lem:rep-twist2} imply the following:
\begin{prop}\label{prop:packet-twist}
We have $\Pi_{\phi}^{\G}\circ\theta=\Pi_{\L\theta\circ\phi}^{\G}$.
\end{prop}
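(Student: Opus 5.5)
The plan is to establish the two inclusions at once, by a direct comparison of index sets. Fix the regular supercuspidal $L$-packet datum $(\bfS,\hat{\jmath},\chi,\vartheta)$ attached to $\phi$. By construction,
\[
\Pi_{\phi}^{\G}=\{\pi_{(\bfS_{j},\vartheta'_{j})}\mid j\in\mcJ^{\G}_{G}\},
\]
so that
\[
\Pi_{\phi}^{\G}\circ\theta=\{\pi^{\theta}\mid \pi\in\Pi_{\phi}^{\G}\}
\]
is indexed by the same set $\mcJ^{\G}_{G}$. Here the member indexed by $j$ is the twist of the representation attached to the regular supercuspidal datum $(\bfS,\hat{\jmath},\chi,\vartheta,j)$.

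First, we apply Lemma \ref{lem:rep-twist2} to each $j\in\mcJ^{\G}_{G}$. This identifies $\pi_{(\bfS_{j},\vartheta'_{j})}^{\theta}$ with the representation arising from the datum $(\bfS,\hat{\theta}\circ\hat{\jmath},\theta(\chi),\vartheta,\theta^{-1}\circ j)$. As a result, $\Pi_{\phi}^{\G}\circ\theta$ is exactly the set of representations attached to these data as $j$ ranges over $\mcJ^{\G}_{G}$.

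Second, Lemma \ref{lem:rep-twist} describes $\Pi_{\L\theta\circ\phi}^{\G}$ as precisely the same set of representations, indexed by the same $j$. Its proof rests on two facts. By Lemma \ref{lem:packet-twist}, the parameter $\L\theta\circ\phi$ corresponds to the $L$-packet datum $(\bfS,\hat{\theta}\circ\hat{\jmath},\theta(\chi),\vartheta)$. By Lemma \ref{lem:adm-emb-twist}, the map $j\mapsto\theta^{-1}\circ j$ is a bijection from $\mcJ^{\G}_{G}$ onto the $G$-conjugacy classes of $F$-rational $\hat{\theta}\circ\hat{\jmath}$-admissible embeddings. Comparing the descriptions from the two steps gives the equality of sets.

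The only point needing care is well-definedness. The bijection $j\mapsto\theta^{-1}\circ j$ must respect $G$-conjugacy, and it does, since $\theta^{-1}\circ[g]\circ j=[\theta^{-1}(g)]\circ\theta^{-1}\circ j$ and $\theta$ is $F$-rational. Likewise, the packet attached to $\L\theta\circ\phi$ depends only on the $\hat{\G}$-conjugacy class of the parameter, which holds by Proposition \ref{prop:rsc-LLC}, and not on the particular packet datum chosen. With these two checks the proposition follows by combining the two lemmas, and I expect no genuine obstacle.
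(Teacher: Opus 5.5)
Your proposal is correct and is exactly the paper's argument. The paper deduces the proposition directly from Lemma \ref{lem:rep-twist2}, which identifies each twisted member, and Lemma \ref{lem:rep-twist}, which describes $\Pi_{\L\theta\circ\phi}^{\G}$ by the same data indexed by $\theta^{-1}\circ j$ for $j\in\mcJ^{\G}_{G}$; your extra check that $j\mapsto\theta^{-1}\circ j$ respects $G$-conjugacy is implicit in the paper's proof of Lemma \ref{lem:rep-twist}.
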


\subsection{Structure of $\theta$-stable $L$-packets}\label{subsec:str-theta-L}
Let us keep the notation as in Section \ref{subsec:theta-twist-L}.
Thus $\phi$ denotes the $L$-parameter attached to a regular supercuspidal $L$-packet datum $(\bfS,\hat{\jmath},\chi,\vartheta)$, i.e., $\phi=\Lj_{\chi}\circ\phi_{\vartheta}$.
In the following, we suppose that $\phi$ factors through the $L$-embedding $\h{\xi}\colon\L\H\hookrightarrow\L\G$.
As we have $[s]\circ\L\theta\circ\h{\xi}=\h{\xi}$, this assumption implies that we have $[s]\circ\L\theta\circ\phi=\phi$.
In particular, the $L$-parameters $\L\theta\circ\phi$ and $\phi$ are $\h\bfG$-conjugate and the conjugation is given by $s$.
Thus, by Lemma \ref{lem:rep-twist} and Proposition \ref{prop:rsc-LLC}, there exists an isomorphism between the regular supercuspidal $L$-packet data $(\bfS,\hat{\jmath},\chi,\vartheta)$ and $(\bfS,\hat{\theta}\circ\hat{\jmath},\theta(\chi),\vartheta)$.
Let us investigate how such an isomorphism can be constructed explicitly.

In the following, we put $\phi'\colonequals \L\theta\circ\phi$, $\hat{\jmath}'\colonequals \hat{\theta}\circ\hat{\jmath}$, and $\chi'\colonequals \theta(\chi)$.
We may and do assume that the image of $\hat{\jmath}$ is given by $\hat{\T}$.
We define an automorphism $\hat{\bfS}$ of $\h\bfS$ by $\hat{\theta}_{\bfS}\colonequals \hat{\jmath}^{-1}\circ\hat{\jmath}'=\h{\j}^{-1}\circ\h\theta\circ\h{\j}$.
Let $\theta_{\bfS}$ be the automorphism of $\bfS$ which is dual to $\hat{\theta}_{\bfS}$.
Note that $\h\theta_{\bfS}$ and $\theta_{\bfS}$ are involutive.

\begin{lem}\label{lem:rationality-of-theta}
The automorphism $\h\theta_{\bfS}$ of $\h\bfS$ is $\Gamma$-equivariant, hence $\theta_{\bfS}$ is $F$-rational.
\end{lem}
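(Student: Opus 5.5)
We need to show that $\h\theta_{\bfS}=\h\j^{-1}\circ\h\theta\circ\h\j$ commutes with the $\Gamma$-action on $\h\bfS$. The natural route is to use the isomorphism of regular supercuspidal $L$-packet data between $(\bfS,\h\j,\chi,\vartheta)$ and $(\bfS,\h\j',\chi',\vartheta)$. Its existence was derived just above from $[s]\circ\L\theta\circ\phi=\phi$, Lemma \ref{lem:packet-twist} and Proposition \ref{prop:rsc-LLC}.

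\textbf{Step 1: the isomorphism.} Let $(\iota,g,\zeta)\colon(\bfS,\h\j,\chi,\vartheta)\to(\bfS,\h\j',\chi',\vartheta)$ be such an isomorphism. By Definition \ref{defn:isom-rscp-datum}, $\iota\colon\bfS\to\bfS$ is an $F$-rational automorphism and $g\in\h\G$ satisfies $\h\j\circ\h\iota=[g]\circ\h\j'$. Hence
\[
\h\iota=\h\j^{-1}\circ[g]\circ\h\theta\circ\h\j .
\]
Since $\iota$ is $F$-rational, $\h\iota$ is a $\Gamma$-equivariant automorphism of $\h\bfS$.

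\textbf{Step 2: reduce to a Weyl element.} Both $[g]\circ\h\theta\circ\h\j$ and $\h\theta\circ\h\j$ have image $\h\T$, because $\h\theta$ preserves $\h\T$. So $[g]$ normalizes $\h\T$, and $g$ defines an element $\h w\in\Omega_{\h\G}(\h\T)$. Transporting by $\h\j$, we get $\h\iota=w_{\bfS}\circ\h\theta_{\bfS}$, where $w_{\bfS}\colonequals\h\j^{-1}\circ[\h w]\circ\h\j$ lies in the Weyl group $\Omega(\h\bfS)$ of $\h\bfS$ (the transported absolute Weyl group). It then suffices to show that $w_{\bfS}$ is trivial, or at least $\Gamma$-fixed in a way compatible with $\h\theta_{\bfS}$.

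\textbf{Step 3: show $w_{\bfS}$ is trivial.} I would argue in two parts.

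First, $\h\theta$ preserves the pinning $\spl_{\h\G}$, so $\h\theta_{\bfS}$ maps the root system $\Phi^{\vee}(\G,\bfS_{\h\j})$ to itself. Therefore $\iota$ preserves the roots, and by Remark \ref{rem:isom-rscp-datum} it satisfies $\vartheta\circ\iota=\vartheta\cdot\zeta_{S}$ for the associated $\zeta$-data. Kaletha's \cite[Lemma 5.2.6]{Kal19} says that an automorphism of a regular supercuspidal $L$-packet datum coming from the Weyl group is trivial. The argument there is that regularity of $\vartheta$ forces a Weyl element $w$ with $\vartheta\circ w=\vartheta\cdot\zeta_S$ to be trivial, using that $\zeta_S$ is trivial on $S_{0+}$. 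Here $\vartheta$ is invariant under $\theta_{\bfS}$ in the relevant sense, since $\vartheta$ is unchanged in the twisted datum.

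Second, I would compare the restrictions to $S_{0+}$. Because $\zeta_S$ is tamely ramified of depth zero, $\iota$ and $\theta_{\bfS}$ act in the same way on $\vartheta|_{S_{0+}}$. Hence the dual of $w_{\bfS}$ fixes $\vartheta|_{S_{0+}}$. The element $w_{\bfS}$ a priori lies only in the absolute Weyl group. However, genericity of $X^{\ast}$ (condition \textbf{GE1}) means that any Weyl element fixing $X^{\ast}$ is trivial, and this works over $\ol F$. So $w_{\bfS}=1$, giving $\h\theta_{\bfS}=\h\iota$, which is $\Gamma$-equivariant.

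The main obstacle is Step 3. The difficulty is making precise that the dual of $w_{\bfS}$ stabilizes $X^{\ast}$ when we only know $\vartheta\circ\iota=\vartheta\zeta_S$ and nothing a priori about $\vartheta\circ\theta_{\bfS}$. The comparison has to go through the $\Gamma$-equivariance of $\iota$ first, and only then through the regularity argument of \cite[Lemma 5.2.6]{Kal19}. If that fails, the fallback is a direct argument via $\phi$: $[s]\circ\L\theta\circ\phi=\phi$, so $[s]\circ\h\theta$ normalizes $\mcT=\bfZ_{\mcM}(\mcC)=\h\T$ and commutes with $[\phi(w)]$. Then conditions (2) and (3) of Definition \ref{defn:rsc-L-par} would show that the induced automorphism of $\h\bfS$ agrees with $\h\theta_{\bfS}$ up to an element of $\Omega_{\mcM}(\h\bfS)^{\Gamma}$, and that this element is trivial.
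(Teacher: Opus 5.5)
Your Steps 1 and 2 are fine, but Step 3 has a genuine gap, and it is circular. To force $w_{\bfS}=1$ via \textbf{GE1}, you need the dual of $w_{\bfS}=\h\iota\circ\h\theta_{\bfS}^{-1}$ to fix $X^{\ast}$. That requires $\vartheta|_{S_{0+}}$, or $X^{\ast}$, to be invariant under $\theta_{\bfS}$.

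Nothing you have supplies this. The isomorphism only gives $(\zeta_{S}^{-1}\cdot\vartheta)\circ\iota=\vartheta$, which is invariance under $\iota$. The fact that both data carry the same $\vartheta$ says nothing about $\theta_{\bfS}$. Indeed, before the lemma is proved, $\vartheta\circ\theta_{\bfS}$ is not even a character of $S$, since $\theta_{\bfS}$ need not preserve $S$. So your sentence ``$\iota$ and $\theta_{\bfS}$ act in the same way on $\vartheta|_{S_{0+}}$'' assumes what is to be shown.

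In the paper, $\vartheta|_{S_{0+}}\circ\theta_{\bfS}=\vartheta|_{S_{0+}}$ is deduced from Proposition \ref{prop:intertwiner-rscp-data}. That proposition presupposes that $\theta_{\bfS}$ is $F$-rational, by condition (1) of Definition \ref{defn:isom-rscp-datum}, i.e.\ it rests on the present lemma. Similarly, \cite[Lemma 5.2.6]{Kal19} concerns automorphisms of a single datum. Your $w_{\bfS}$ becomes such an automorphism only after composing with the inverse of $(\theta_{\bfS},1,\zeta)$, which again needs the lemma. The only input that relates $\theta_{\bfS}$ to the Galois action is the parameter identity $[s]\circ\L\theta\circ\phi=\phi$.

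Your fallback uses exactly that identity, and it is the paper's whole proof. However, no Weyl element and no appeal to conditions (2)--(3) of Definition \ref{defn:rsc-L-par} are needed. The paper normalizes $s\in\h\T$ and $\h\j(\h\bfS)=\h\T$, and $\h\theta$ preserves $\h\T$. Hence $[s]$ is trivial on the image of $\h\j'=\h\theta\circ\h\j$, and $\h\theta_{\bfS}=\h\j^{-1}\circ[s]\circ\h\j'$ on the nose.

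For $\sigma\in W_F$ you have the following:
\begin{itemize}
\item $\h\j\circ\sigma=[\phi(\sigma)]\circ\h\j$, because $\L j_{\chi}$ extends $\h\j$;
\item $\h\j'\circ\sigma=[\phi'(\sigma)]\circ\h\j'$, because $\L\theta$ is an automorphism;
\item $[s]\circ[\phi'(\sigma)]=[\phi(\sigma)]\circ[s]$, which is a restatement of $[s]\circ\phi'=\phi$.
\end{itemize}
Therefore
\[
\h\theta_{\bfS}\circ\sigma
=\h\j^{-1}\circ[s]\circ\h\j'\circ\sigma
=\h\j^{-1}\circ[s]\circ[\phi'(\sigma)]\circ\h\j'
=\h\j^{-1}\circ[\phi(\sigma)]\circ[s]\circ\h\j'
=\sigma\circ\h\theta_{\bfS}.
\]
Work with $W_F$, where $\phi$ is defined, and pass to $\Gamma$ because the actions factor through a finite quotient.

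Steps 1--2 are then unnecessary, and the logical order is the reverse of yours. This lemma is what makes $(\theta_{\bfS},1,\zeta)$ an isomorphism. Uniqueness of $\iota$ then shows afterwards that your $\iota$ equals $\theta_{\bfS}$.
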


\begin{proof}
As the $\Gamma$-actions on $\h\bfS$ and $\h\bfT$ factor through a finite quotient, we may discuss the equivariance for $W_{F}$ instead of $\Gamma$.
Recall that we have $\phi=\Lj_{\chi}\circ\phi_{\vartheta}$.
As $\phi_{\vartheta}\colon W_{F}\rightarrow\L\bfS=\h{\bfS}\rtimes W_{F}$ is an $L$-parameter of $\bfS$, the $W_{F}$-action on $\h{\bfS}$ is given by $\sigma(t)=[\phi_{\vartheta}(\sigma)](t)$ for any $\sigma\in W_{F}$ and $t\in\h{\bfS}$.
By noting that $\Lj_{\chi}\colon \L\bfS\hookrightarrow\L\G$ is an $L$-embedding extending $\h\j\colon\h\bfS\xrightarrow{\sim}\h\T$, this implies that $\h\j\circ\sigma(t)=[\phi(\sigma)]\circ\h\j(t)$ for any $\sigma\in W_{F}$ and $t\in\h{\bfS}$.
Similarly, we also have $\h\j'\circ\sigma(t)=[\phi'(\sigma)]\circ\h\j'(t)$ for any $\sigma\in W_{F}$ and $t\in\h{\bfS}$.
Hence, by noting that $[s]\circ\phi'=\phi$ and that $[s]|_{\h\T}=\id_{\h\T}$, we get
\begin{align*}
\h\theta_{\bfS}\circ\sigma
=\h{\j}^{-1}\circ[s]\circ\h{\j}'\circ\sigma
&=\h{\j}^{-1}\circ[s]\circ[\phi'(\sigma)]\circ\h{\j}'\\
&=\h{\j}^{-1}\circ[\phi(\sigma)]\circ[s]\circ\h{\j}'
=\sigma\circ\h{\j}^{-1}\circ[s]\circ\h{\j}'
=\sigma\circ\h\theta_{\bfS}.
\end{align*}
This completes the proof.
\end{proof}

We define a set $\zeta=(\zeta_{\alpha})_{\alpha\in\Phi(\G,\bfS)}$ of $\zeta$-data for $\Phi(\G,\bfS)$ by $\zeta\colonequals \zeta_{\theta_{\bfS,\ast}(\chi)/\chi'}$ (see Definition \ref{defn:isom-rscp-datum} and also Remark \ref{rem:isom-rscp-datum}).

\begin{prop}\label{prop:intertwiner-rscp-data}
The tuple $(\theta_{\bfS},1,\zeta)$ gives an isomorphism of regular supercuspidal $L$-packet data:
\[
(\theta_{\bfS},1,\zeta)\colon (\bfS,\hat{\jmath},\chi,\vartheta)\xrightarrow{\sim}(\bfS,\hat{\jmath}',\chi',\vartheta).
\]
\end{prop}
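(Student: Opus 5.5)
We verify the three conditions of Definition \ref{defn:isom-rscp-datum} one at a time, using the data fixed in Section \ref{subsec:str-theta-L}.

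\textbf{Conditions (1) and (2).} Condition (1) asks that $\theta_{\bfS}\colon\bfS\to\bfS$ be an $F$-rational isomorphism of tori. This holds by Lemma \ref{lem:rationality-of-theta}, and $\theta_{\bfS}$ is bijective because it is involutive.

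Condition (2), with $g=1$, asks that $\hat{\jmath}\circ\hat{\theta}_{\bfS}=\hat{\jmath}'$. Here $\hat\theta_{\bfS}$ is the dual of $\theta_{\bfS}$ and plays the role of $\hat\iota$. This is immediate from the definition $\hat\theta_{\bfS}=\hat{\jmath}^{-1}\circ\hat{\jmath}'$. The only care needed is to match the convention for which side $\hat\iota$ sits on in Definition \ref{defn:isom-rscp-datum}(2). If that convention forces the inverse, we use $\hat\theta_{\bfS}^{-1}=\hat\theta_{\bfS}$, since $\hat\theta_{\bfS}$ is an involution.

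By Remark \ref{rem:isom-rscp-datum}, condition (2) then gives the identification $\Phi(\G,\bfS_{\hat{\jmath}'})\cong\Phi(\G,\bfS_{\hat{\jmath}})$ via $\alpha'\mapsto\alpha'\circ\theta_{\bfS}$. So $\theta_{\bfS,\ast}(\chi)$ makes sense, and $\zeta=\zeta_{\theta_{\bfS,\ast}(\chi)/\chi'}$ is a set of $\zeta$-data. Indeed, both $\theta_{\bfS,\ast}(\chi)$ and $\chi'$ are $\chi$-data for the same root system, so their ratio is trivial on $F_{\pm\alpha}^\times$ for symmetric $\alpha$ and satisfies the required equivariance. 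This also settles the first half of condition (3).

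\textbf{Condition (3): the character identity.} It remains to show
\[
(\zeta_S^{-1}\cdot\vartheta)\circ\theta_{\bfS}=\vartheta.
\]
This is the main step. A direct computation of $\zeta_S$ is awkward, so I would argue indirectly via Lemma \ref{lem:packet-twist} and Proposition \ref{prop:rsc-LLC}.

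Since $[s]\circ\phi'=\phi$, the data $(\bfS,\hat{\jmath},\chi,\vartheta)$ and $(\bfS,\hat{\jmath}',\chi',\vartheta)$ define $\hat\G$-conjugate parameters. By Proposition \ref{prop:rsc-LLC} there is therefore some isomorphism $(\iota,g,\zeta')$ between them.

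Next I would show that any such isomorphism must have $\iota=\theta_{\bfS}$.
\begin{itemize}
\item Condition (2) for $(\iota,g,\zeta')$ gives $\hat{\jmath}\circ\hat\iota=[g]\circ\hat{\jmath}'$.
\item Compose with the relation $\hat{\jmath}'=\hat{\jmath}\circ\hat\theta_{\bfS}$ from condition (2) above.
\item Then $\hat\iota\circ\hat\theta_{\bfS}^{-1}$ is realized by $\hat{\jmath}^{-1}[g]\hat{\jmath}$, that is, by a Weyl element normalizing $\hat\T$. Note that $[s]$ acts trivially on $\hat\T$.
\item Tracking the $L$-parameters, this Weyl element stabilizes $\phi$ up to the torus $\hat\T$. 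By regularity, Definition \ref{defn:rsc-L-par}(3), it must be trivial. This is the argument of \cite[Lemma 5.2.6]{Kal19}.
\item Hence $\iota=\theta_{\bfS}$, and one may take $g\in\hat\T$, even $g=1$ after modifying by $s$.
\end{itemize}

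Once $\iota=\theta_{\bfS}$ is known, Remark \ref{rem:isom-rscp-datum} shows that the $\zeta$-data of that isomorphism is forced to equal $\zeta_{\theta_{\bfS,\ast}(\chi)/\chi'}=\zeta$. Condition (3) for that isomorphism is then exactly the desired identity.

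\textbf{Main obstacle.} The delicate point is the regularity step showing that the Weyl element is trivial. This requires comparing $\phi$ with $[\hat{\jmath}(w)]\circ\phi$ inside $\hat\T\rtimes W_F$.

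If that step turns out to be hard, I would instead verify the identity directly. By local Langlands for tori, $(\zeta_S^{-1}\vartheta)\circ\theta_{\bfS}$ corresponds to the parameter obtained by transporting $\phi_\vartheta$. The equality ${}^{L}j_{\chi}\circ\phi_\vartheta=[s]\circ{}^{L}\theta\circ{}^{L}j_{\chi}\circ\phi_\vartheta$, combined with the Langlands--Shelstad fact that changing $\chi$-data by $\zeta$ twists the embedding by the $1$-cocycle attached to $\zeta_S$, gives the equality of parameters of $\bfS$. Injectivity of the local Langlands correspondence for tori then gives the equality of characters.
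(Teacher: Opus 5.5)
Your proposal is correct. Your main route for condition (3) is genuinely different from the paper's; the paper's proof is essentially your fallback.

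\textbf{The paper's proof.} Conditions (1) and (2) are handled implicitly, as you do. For condition (3), the paper uses Lemma \ref{lem:zeta-measures-diff} and functoriality of the local Langlands correspondence for tori to reduce the identity $(\zeta_S^{-1}\cdot\vartheta)\circ\theta_{\bfS}=\vartheta$ to the equality $[s']\circ(c_{\chi'/\theta_{\bfS,\ast}(\chi)}\cdot\phi_\vartheta)={}^{L}\theta_{\bfS}\circ\phi_\vartheta$, where $s'=(\hat{\jmath}')^{-1}(s)$. It proves this equality by composing both sides with the injective ${}^{L}j'_{\theta_{\bfS,\ast}(\chi)}$, using ${}^{L}j_{\chi}={}^{L}j'_{\theta_{\bfS,\ast}(\chi)}\circ{}^{L}\theta_{\bfS}$ and $[s]\circ\phi'=\phi$.

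\textbf{Your indirect route.} You first get some isomorphism $(\iota,g,\zeta')$ from the injectivity in Proposition \ref{prop:rsc-LLC}. You then show by rigidity that $\iota=\theta_{\bfS}$, so that $\zeta'=\zeta$ by Remark \ref{rem:isom-rscp-datum}. This is valid, and it gives something extra: any isomorphism between these two data has $\iota=\theta_{\bfS}$ and $g\in\hat{\T}$.

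However, it does not avoid the computation you call awkward. To see that $[g]\circ\phi'$ is $\hat{\T}$-conjugate to $\phi$ (your ``tracking the $L$-parameters''), you must do three things:
\begin{itemize}
\item transport ${}^{L}j_{\chi}$ along $\iota$ and $g$;
\item convert $\zeta'$ into a cocycle via Lemma \ref{lem:zeta-measures-diff};
\item push condition (3) for $(\iota,g,\zeta')$ through the local Langlands correspondence for tori.
\end{itemize}
These are exactly the ingredients of the direct proof, now applied to an unknown $\iota$. On top of that, you use Kaletha's injectivity as an extra black box. The direct route is shorter and self-contained.

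\textbf{A small misattribution.} For a toral parameter, condition (3) of Definition \ref{defn:rsc-L-par} is vacuous. Indeed, $\phi(P_F)\subset\hat{\T}$ and $I_F^r\subset P_F$, so $\mcM=\bfZ_{\hat{\G}}(\phi(P_F))^{\circ}=\hat{\T}$. What actually kills the Weyl element is Definition \ref{defn:toral-L-par}(2). If $n\in\bfN_{\hat{\G}}(\hat{\T})$ and $[n]\circ\phi=[t]\circ\phi$ with $t\in\hat{\T}$, then $t^{-1}n\in\bfZ_{\hat{\G}}(\phi(I_F^r))=\hat{\T}$.
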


To show this proposition, we recall the following property of a set of $\zeta$-data, which is essentially discussed in the proof of \cite[Theorem 3.16]{Kal19-cover}:

\begin{lem}\label{lem:zeta-measures-diff}
Let $\chi_{1}$ and $\chi_{2}$ be sets of $\chi$-data for $\Phi(\G,\bfS_{\h\j})$.
Then the $L$-parameter of the character $\zeta_{\chi_{1}/\chi_{2},S}\colon S\rightarrow\C^{\times}$ is represented by the $1$-cocycle $c_{\chi_{1}/\chi_{2}}\colon W_{F}\rightarrow\hat{\bfS}$ defined by $\Lj_{\chi_{1}}=(\h\j\circ c_{\chi_{1}/\chi_{2}})\cdot \Lj_{\chi_{2}}$.
\end{lem}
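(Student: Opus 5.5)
The plan is to compute both sides through the explicit Langlands--Shelstad construction of $\Lj_{\chi}$ (\cite[Section 2.6]{LS87}). Recall that, after fixing a gauge $p$ on $\Phi(\G,\bfS_{\h\j})$ and a section $\omega\mapsto n(\omega)$ of $\Omega_{\h\G}$ into $\bfN_{\h\G}(\h\T)$, one has
\[
\Lj_{\chi}(1\rtimes w)=\h\j\bigl(r_{p,\chi}(w)\bigr)\cdot n(\omega_{\bfS}(w))\rtimes w \qquad (w\in W_{F}).
\]
Here $\omega_{\bfS}(w)$ measures the difference between the $\Gamma$-actions on $\h\bfS$ and $\h\T$. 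The element $r_{p,\chi}(w)\in\h\bfS$ is the product, over representatives of the $\Gamma$-orbits of gauge-positive roots, of the cochains obtained by transferring $\chi_{\alpha}$ (viewed as a character of $W_{F_{\alpha}}$) to $W_{F}$ via the chosen coset representatives, raised to the coroot $\alpha^{\vee}$.

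The first step is to observe that the factor $n(\omega_{\bfS}(w))$ does not depend on $\chi$, and that $r_{p,\chi}$ is multiplicative in $\chi$. The multiplicativity holds because each factor is of the form $\chi_{\alpha}(v_{0}(u_{i}(w)))^{\alpha^{\vee}}$ and $\h\bfS$ is commutative. Hence, writing $\chi_{1}=\chi_{2}\cdot\zeta$ with $\zeta\colonequals \zeta_{\chi_{1}/\chi_{2}}$, we get
\[
\Lj_{\chi_{1}}(1\rtimes w)=\h\j\bigl(r_{p,\zeta}(w)\bigr)\cdot\Lj_{\chi_{2}}(1\rtimes w).
\]
Thus $c_{\chi_{1}/\chi_{2}}=r_{p,\zeta}$ as cochains. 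This cochain is automatically a $1$-cocycle for the $W_{F}$-action on $\h\bfS$, since it is the ratio of two $L$-embeddings extending the same $\h\j$. Changing the gauge or the coset representatives changes it only by a coboundary (\cite[Lemma 2.6.A]{LS87}), so its class is well defined.

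The second step is to identify the class of $r_{p,\zeta}$ with the Langlands parameter of $\zeta_{S}$. The cocycle $r_{p,\zeta}$ is a product over $\ddot{\Phi}$ of contributions $r_{\ddot{\alpha}}$, so it suffices to match each $r_{\ddot{\alpha}}$ with the parameter of the character $\zeta_{\ddot{\alpha}}$ of $S$.
\begin{itemize}
\item For asymmetric $\ddot{\alpha}$, the contribution $r_{\ddot{\alpha}}$ is the image under $(\alpha^{\vee})_{\ast}$ of the cocycle $\Ind_{W_{F_{\alpha}}}^{W_{F}}$ of $\zeta_{\alpha}$. Via Shapiro's lemma this is the parameter of $\zeta_{\alpha}$ on $\Res_{F_{\alpha}/F}\Gm$. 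By functoriality of the torus LLC for the morphism $\bfS\to\Res_{F_{\alpha}/F}\Gm$ induced by $\alpha$, it is the parameter of $\zeta_{\alpha}\circ\alpha$.
\item For symmetric $\ddot{\alpha}$, the condition $\zeta_{\alpha}|_{F_{\pm\alpha}^{\times}}=\mathbbm{1}$ lets $\zeta_{\alpha}$ factor through $F_{\alpha}^{\times}/F_{\pm\alpha}^{\times}\cong F_{\alpha}^{1}$. The same argument then applies with the norm-one torus $\Res_{F_{\pm\alpha}/F}\Res^{1}_{F_{\alpha}/F_{\pm\alpha}}\Gm$, through which $\alpha$ maps $\bfS$, in place of $\Res_{F_{\alpha}/F}\Gm$. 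Its character group is induced from the sign character, which matches the way the LS construction only uses the transfer of $\zeta_{\alpha}$ from $W_{F_{\alpha}}$ to $W_{F_{\pm\alpha}}$.
\end{itemize}

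I expect the main obstacle to be the symmetric case. It requires carefully checking that the LS transfer formula for a character trivial on $F_{\pm\alpha}^{\times}$ produces exactly the Hilbert~90 composite used in the definition of $\zeta_{\ddot{\alpha}}$, with the correct sign and normalization of local class field theory. I would handle this, as in the proof of \cite[Theorem 3.16]{Kal19-cover}, by reducing to $\bfS=\Res_{F_{\pm\alpha}/F}\Res^{1}_{F_{\alpha}/F_{\pm\alpha}}\Gm$ and comparing the explicit cocycles on $W_{F_{\pm\alpha}}$ using the identification $W_{F_{\alpha}}\backslash W_{F_{\pm\alpha}}=\{1,\tau_{\alpha}\}$. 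The ramified symmetric roots must be treated the same way, consistently with the convention defining $\zeta_{S}$.
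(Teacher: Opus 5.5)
Your proposal is correct and is essentially the argument the paper relies on: the paper gives no proof of its own here but defers to the proof of \cite[Theorem 3.16]{Kal19-cover}. Your computation follows the standard argument behind that reference: multiplicativity of the Langlands--Shelstad cochain in the $\chi$-data, then an orbit-by-orbit identification of $r_{p,\zeta}$ with the parameter of $\zeta_{S}$ via Shapiro's lemma, functoriality for tori, and Hilbert~90.

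You are also right that the ramified symmetric orbits must be included in $\zeta_{S}$. The paper's definition lists only $\ddot{\Phi}(\G,\bfS)_{\ur}$, and it has to be read with $\ddot{\Phi}(\G,\bfS)_{\sym}$ for the lemma to hold.
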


\begin{proof}[Proof of Proposition \ref{prop:intertwiner-rscp-data}]
Our task is to check that the condition (3) of Definition \ref{defn:isom-rscp-datum} is satisfied, i.e., the equality $(\zeta_{S}^{-1}\cdot\vartheta)\circ\theta_{\bfS}=\vartheta$ holds.
With the notation as in Lemma \ref{lem:zeta-measures-diff}, the $L$-parameter of $\zeta_{S}^{-1}\cdot\vartheta$ is given by $c_{\chi'/\theta_{\bfS,\ast}(\chi)}\cdot\phi_{\vartheta}$.
Thus, by the functoriality of the local Langlands correspondence for tori, the $L$-parameter of $(\zeta_{S}^{-1}\cdot\vartheta)\circ\theta_{\bfS}$ is given by $\L\theta_{\bfS}\circ(c_{\chi'/\theta_{\bfS,\ast}(\chi)}\cdot\phi_{\vartheta})$.
We have to show that this is equivalent to $\phi_{\vartheta}$ as an $L$-parameter of $\bfS$, or equivalently, $c_{\chi'/\theta_{\bfS,\ast}(\chi)}\cdot\phi_{\vartheta}$ and $\L\theta_{\bfS}\circ\phi_{\vartheta}$ are $\hat{\bfS}$-conjugate.
By putting $s'\colonequals \h\j^{\prime-1}(s)$, let us check that $[s']\circ(c_{\chi'/\theta_{\bfS,\ast}(\chi)}\cdot\phi_{\vartheta})$ is equal to $\L\theta_{\bfS}\circ\phi_{\vartheta}$.

Since $\Lj'_{\theta_{\bfS,\ast}(\chi)}$ is injective, it suffices to show the equality after composing them with $\Lj'_{\theta_{\bfS,\ast}(\chi)}$.
By using Lemma \ref{lem:zeta-measures-diff}, we have
\begin{align*}
\Lj'_{\theta_{\bfS,\ast}(\chi)}\circ[s']\circ(c_{\chi'/\theta_{\bfS,\ast}(\chi)}\cdot\phi_{\vartheta})
&=[s]\circ\Lj'_{\theta_{\bfS,\ast}(\chi)}\circ(c_{\chi'/\theta_{\bfS,\ast}(\chi)}\cdot\phi_{\vartheta})\\
&=[s]\circ\Lj'_{\chi'}\circ\phi_{\vartheta}=[s]\circ\phi'.
\end{align*}
On the other hand, by noting that $\Lj_{\chi}=\Lj'_{\theta_{\bfS,\ast}(\chi)}\circ\L\theta_{\bfS}$ (this is essentially the same identity as $\Lj'_{\chi'}=\L\theta\circ\Lj_{\chi}$, which was used in the proof of Lemma \ref{lem:packet-twist}), we have
\[
\Lj'_{\theta_{\bfS,\ast}(\chi)}\circ\L\theta_{\bfS}\circ\phi_{\vartheta}
=\Lj_{\chi}\circ\phi_{\vartheta}=\phi.
\]
As we have $[s]\circ\phi'=\phi$, we get the assertion.
\end{proof}

Since we have $\L\theta\circ\phi\cong\phi$, Proposition \ref{prop:packet-twist} implies that $\Pi_{\phi}^{\G}\circ\theta=\Pi_{\phi}^{\G}$.
The effect of $\theta$-twist on $\Pi_{\phi}^{\G}$ can be described more explicitly as follows.

\begin{prop}\label{prop:packet-rep-twist}
Let $\pi$ be the member of $\Pi_{\phi}^{\G}$ labeled by $(\bfS,\hat{\jmath},\chi,\vartheta,j)$ for $j\in\mcJ^{\G}_{G}$.
Then its $\theta$-twist $\pi^{\theta}$ is labeled by $(\bfS,\hat{\jmath},\chi,\vartheta,\theta^{-1}\circ j\circ\theta_{\bfS})$.
\end{prop}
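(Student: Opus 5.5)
We combine Lemma \ref{lem:rep-twist2} with the isomorphism of Proposition \ref{prop:intertwiner-rscp-data}. By Lemma \ref{lem:rep-twist2}, $\pi^{\theta}$ arises from the regular supercuspidal datum $(\bfS,\hat{\jmath}',\chi',\vartheta,\theta^{-1}\circ j)$, where $\hat{\jmath}'=\hat{\theta}\circ\hat{\jmath}$ and $\chi'=\theta(\chi)$. This datum lies over the $L$-packet datum $(\bfS,\hat{\jmath}',\chi',\vartheta)$, not over $(\bfS,\hat{\jmath},\chi,\vartheta)$. We must therefore transport it back along an isomorphism of $L$-packet data and read off which $j''\in\mcJ^{\G}$ it corresponds to.

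Take the isomorphism $(\theta_{\bfS},1,\zeta)\colon(\bfS,\hat{\jmath},\chi,\vartheta)\xrightarrow{\sim}(\bfS,\hat{\jmath}',\chi',\vartheta)$ of Proposition \ref{prop:intertwiner-rscp-data}. By Definition \ref{defn:isom-rsc-datum}, it extends to an isomorphism of regular supercuspidal data
\[
(\theta_{\bfS},1,\zeta,\id)\colon(\bfS,\hat{\jmath},\chi,\vartheta,j'')\xrightarrow{\sim}(\bfS,\hat{\jmath}',\chi',\vartheta,\theta^{-1}\circ j)
\]
as soon as $j''$ satisfies $(\theta^{-1}\circ j)\circ\theta_{\bfS}=j''$. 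So we set $j''\colonequals\theta^{-1}\circ j\circ\theta_{\bfS}$ and check two things.

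First, $j''$ is $F$-rational. This holds because $\theta$ is $F$-rational, $j$ is $F$-rational, and $\theta_{\bfS}$ is $F$-rational by Lemma \ref{lem:rationality-of-theta}.

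Second, $j''$ is $\hat{\jmath}$-admissible. By Remark \ref{rem:isom-rscp-datum}, condition (2) of Definition \ref{defn:isom-rscp-datum} (here $\hat{\jmath}\circ\hat{\theta}_{\bfS}=\hat{\jmath}'$ with $g=1$) implies that precomposing a $\hat{\jmath}'$-admissible embedding with $\theta_{\bfS}$ gives a $\hat{\jmath}$-admissible one. By Lemma \ref{lem:adm-emb-twist}, $\theta^{-1}\circ j$ is $\hat{\jmath}'$-admissible, so $j''$ is $\hat{\jmath}$-admissible. Hence $j''\in\mcJ^{\G}$.

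It remains to see that isomorphic regular supercuspidal data give isomorphic representations. Kaletha's construction of the representation $\pi_{(\bfS_{j},\vartheta'_{j})}$ is invariant under isomorphisms of data: this is implicit in the packet construction and is \cite[Section 5.3]{Kal19}. Granting that, $\pi^{\theta}$ is the member of $\Pi_{\phi}^{\G}$ labeled by $(\bfS,\hat{\jmath},\chi,\vartheta,j'')$, which is the claim.

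The main obstacle is this invariance step. Concretely, one must show that the tame elliptic regular pairs $(\theta^{-1}(\bfS_{j}),\vartheta'_{\theta^{-1}\circ j})$, built from $(\hat{\jmath}',\chi')$, and $(\bfS_{j''},\vartheta'_{j''})$, built from $(\hat{\jmath},\chi)$, coincide. Their tori agree, since $j''(\bfS)=\theta^{-1}(j(\bfS))$. For the characters, the $\epsilon$-factors depend only on the pair, so they match. The remaining $\zeta$-terms match by the equality $(\zeta_{S}^{-1}\cdot\vartheta)\circ\theta_{\bfS}=\vartheta$ together with the cocycle identity $\zeta_{\chi_{\vartheta}/\chi'}\circ\theta_{\bfS}\cdot\zeta_{\theta_{\bfS,\ast}(\chi)/\chi'}^{-1}=\zeta_{\chi_{\vartheta}/\chi}$ of $\zeta$-data ratios. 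I would verify this last identity directly from Remark \ref{rem:zeta-data}, using that $\chi_{\vartheta}$ is transported compatibly under $\theta_{\bfS}$.
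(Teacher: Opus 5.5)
Your argument is the paper's proof: apply Lemma~\ref{lem:rep-twist2}, then transport along the isomorphism $(\theta_{\bfS},1,\zeta,\id)$ induced by Proposition~\ref{prop:intertwiner-rscp-data}, which forces $j''=\theta^{-1}\circ j\circ\theta_{\bfS}$. Your extra checks ($F$-rationality and $\hat{\jmath}$-admissibility of $j''$, and invariance of Kaletha's construction under isomorphisms of data) are details the paper leaves implicit. One correction to the invariance check: $\vartheta\circ j''^{-1}$ and $\vartheta\circ(\theta^{-1}\circ j)^{-1}$ differ by a tamely ramified character coming from $\zeta_{S}$, so they are not the same character. The $\epsilon$-factors and Kaletha's $\chi$-data still agree, but the reason is that they only depend on the restriction to $S_{0+}$, not that the underlying pairs coincide.
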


\begin{proof}
When $\pi$ arises from $(\bfS,\hat{\jmath},\chi,\vartheta,j)$ for $j\in\mcJ^{\G}_{G}$, by Lemma \ref{lem:rep-twist2}, $\pi^{\theta}$ arises from the datum $(\bfS,\hat{\theta}\circ\hat{\jmath},\theta(\chi),\vartheta,\theta^{-1}\circ j)$.
On the other hand, the isomorphism of regular supercuspidal $L$-packet data $(\theta_{\bfS},1,\zeta)$ introduced above induces an isomorphism of regular supercuspidal data
\[
(\theta_{\bfS},1,\zeta,1)\colon (\bfS,\hat{\jmath},\chi,\vartheta,j')\xrightarrow{\sim}(\bfS,\hat{\theta}\circ\hat{\jmath},\theta(\chi),\vartheta,j'\circ\theta_{\bfS}^{-1})
\]
for each $j'\in\mcJ^{\G}_{G}$.
Thus the datum $(\bfS,\hat{\theta}\circ\hat{\jmath},\theta(\chi),\vartheta,\theta^{-1}\circ j)$ is isomorphic to $(\bfS,\hat{\jmath},\chi,\vartheta,\theta^{-1}\circ j\circ\theta_{\bfS})$.
\end{proof}

\begin{cor}\label{cor:theta-stable-member}
Let $\pi$ be the member of $\Pi_{\phi}^{\G}$ labeled by $(\bfS,\hat{\jmath},\chi,\vartheta,j)$ for $j\in\mcJ^{\G}_{G}$.
Then the following are equivalent:
\begin{enumerate}
\item
$\pi$ is $\theta$-stable, i.e., $\pi\cong\pi^{\theta}$,
\item
$j$ equals $\theta^{-1}\circ j\circ\theta_{\bfS}$ in $\mcJ^{\G}_{G}$, i.e., $\theta^{-1}\circ j\circ\theta_{\bfS}$ and $j$ are $G$-conjugate.
\end{enumerate}
\end{cor}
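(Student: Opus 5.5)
The corollary follows from Proposition \ref{prop:packet-rep-twist} together with the fact that the members of $\Pi_{\phi}^{\G}$ are labeled injectively by $\mcJ^{\G}_{G}$. By Proposition \ref{prop:packet-rep-twist}, $\pi^{\theta}$ is the member of $\Pi_{\phi}^{\G}$ labeled by $(\bfS,\hat{\jmath},\chi,\vartheta,\theta^{-1}\circ j\circ\theta_{\bfS})$. This makes sense because $\theta^{-1}\circ j\circ\theta_{\bfS}$ lies in $\mcJ^{\G}$. To see this, note that $\theta^{-1}\circ j$ is $\hat{\theta}\circ\hat{\jmath}$-admissible by Lemma \ref{lem:adm-emb-twist}. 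Precomposing with the $F$-rational automorphism $\theta_{\bfS}$ (Lemma \ref{lem:rationality-of-theta}), and using the isomorphism $(\theta_{\bfS},1,\zeta)$ of Proposition \ref{prop:intertwiner-rscp-data} together with Remark \ref{rem:isom-rscp-datum}, we get an $F$-rational $\hat{\jmath}$-admissible embedding.

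For (2)$\Rightarrow$(1): if $\theta^{-1}\circ j\circ\theta_{\bfS}$ and $j$ are $G$-conjugate, then the two regular supercuspidal data are isomorphic via $(\id,1,1,f)$ with $f$ the conjugation. So they give the same member of $\Pi_{\phi}^{\G}$, and hence $\pi^{\theta}\cong\pi$.

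For (1)$\Rightarrow$(2), we need injectivity of the map $\mcJ^{\G}_{G}\to\Pi_{\phi}^{\G}$, $j\mapsto\pi_{(\bfS_{j},\vartheta'_{j})}$. This is the only step that is more than bookkeeping. By the Hakim--Murnaghan / Kaletha parametrization (\cite[Proposition 3.7.8]{Kal19}), $\pi_{(\bfS_{j},\vartheta'_{j})}\cong\pi_{(\bfS_{j'},\vartheta'_{j'})}$ forces the pairs $(\bfS_{j},\vartheta'_{j})$ and $(\bfS_{j'},\vartheta'_{j'})$ to be $G$-conjugate, say by $g\in G$. Then $[g]\circ j$ and $j'$ are two $F$-rational embeddings of $\bfS$ with the same image, so they differ by an element $w$ of the $F$-rational points of the Weyl group $\Omega_{\G}(\bfS_{\hat\jmath})$. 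This $w$ preserves $\vartheta$ up to the characters $\epsilon$ and $\zeta$. Regularity of $\vartheta$ (condition (iii) of Definition \ref{defn:rsc-L-datum}) together with \cite[Lemma 5.2.6]{Kal19} forces $w=1$, so $j$ and $j'$ are $G$-conjugate.

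This injectivity is exactly the statement that Kaletha's $L$-packet is parametrized bijectively by $\mcJ^{\G}_{G}$ (the discussion preceding the definition of $\Pi_{\phi}^{\G}$, i.e.\ \cite[Section 5.3]{Kal19}). I would cite it directly rather than reprove it. Applying it with $j'=\theta^{-1}\circ j\circ\theta_{\bfS}$ gives (2).
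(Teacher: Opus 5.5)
Your proposal is correct and follows the paper's argument: the corollary is stated as an immediate consequence of Proposition~\ref{prop:packet-rep-twist}, combined with the fact (implicit in the paper, coming from Kaletha's construction) that $j\mapsto\pi_{(\bfS_{j},\vartheta'_{j})}$ is a bijection from $\mcJ^{\G}_{G}$ onto $\Pi_{\phi}^{\G}$. Your check that $\theta^{-1}\circ j\circ\theta_{\bfS}\in\mcJ^{\G}$, and your sketch of injectivity via \cite[Proposition 3.7.8, Lemma 5.2.6]{Kal19}, spell out what the paper leaves implicit.
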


\subsection{Embeddings of twisted tori}\label{subsec:t-tori-emb}
We introduce several notions related to twisted maximal tori of $\t{\G}$.
Suppose that $(\bfS,\t\bfS)$ is a twisted space over $F$ whose $\bfS$ is a torus (here $(\bfS,\t\bfS)$ is not a priori supposed to be a subspace of $(\bfG,\t\bfG)$).
Let $\theta_{\bfS}$ be the automorphism of $\bfS$ given by $\t\bfS$, i.e., for any $s\in\bfS$ and $\eta\in\t{\bfS}$, we have $\theta_{\bfS}(s)=[\eta](s)$.

\begin{defn}\label{defn:tori-emb}
We say that an embedding $j\colon \bfS\hookrightarrow\G$ is an \textit{$F$-rational embedding of a maximal torus} if $j$ is $F$-rational and $\bfS_{j}\colonequals j(\bfS)$ is a maximal torus of $\G$.
\end{defn}

\begin{defn}\label{defn:t-tori-emb}
Let $(j,\t{j})\colon (\bfS,\t\bfS)\hookrightarrow(\G,\t\G)$ be an embedding of a twisted space, i.e., $j\colon\bfS\hookrightarrow\G$ and $\t{j}\colon\t\bfS\hookrightarrow\t\G$ are embeddings such that, for any $s_{1},s_{2}\in\bfS$ and $t\in\t{\bfS}$, we have $\t{j}(s_{1}ts_{2})=j(s_{1})\t{j}(t)j(s_{2})$.
We say that $(j,\t{j})$ is \textit{an $F$-rational embedding of a twisted maximal torus} if the following conditions are satisfied:
\begin{itemize}
\item
$j$ is an $F$-rational embedding of a maximal torus and $\t{j}$ is $F$-rational;
\item
$(\bfS_{j},\t\bfS_{j})\colonequals (j(\bfS),\t{j}(\t\bfS))$ is an $F$-rational twisted maximal torus of $\t\G$.
\end{itemize}
We often simply write ``$(j,\t{j})\colon\t\bfS\hookrightarrow\t\G$ is an $F$-rational embedding of a twisted maximal torus''.
\end{defn}

\begin{rem}\label{rem:theta-eta}
Let $(j,\t{j})\colon \t\bfS\hookrightarrow \t\G$ be an $F$-rational embedding of a twisted maximal torus.
For any $\eta\in\t\bfS$, as we have $\theta_{\bfS}=[\eta]$, we have $j^{-1}\circ[\t{j}(\eta)]\circ j=\theta_{\bfS}$.
\end{rem}

Note that if $(j,\t{j})\colon \t\bfS\hookrightarrow\t\G$ is an $F$-rational embedding of a twisted maximal torus, then $(j,\t{j}_{s})\colon \t\bfS\hookrightarrow\t\G$ is again an $F$-rational embedding of a twisted maximal torus for any $s\in S$, where $\t{j}_{s}$ is defined by $\t{j}_{s}(\eta)\colonequals j(s)\t{j}(\eta)$ for $\eta\in\t\bfS$.
The following lemma says that the converse of this fact is also true:

\begin{lem}\label{lem:S-trans}
Let $(\bfS,\t\bfS)$ be a twisted space defined over $F$.
Let $j\colon\bfS\hookrightarrow\G$ be an $F$-rational embedding of a maximal torus.
If $(j,\t{j}_{1})$ and $(j,\t{j}_{2})$ are $F$-rational embeddings of a twisted maximal torus $\t\bfS\hookrightarrow\t\G$, then there exists an element $s\in S$ satisfying $\t{j}_{2}(\eta)=j(s)\t{j}_{1}(\eta)$ for any $\eta\in\t{\bfS}$.
\end{lem}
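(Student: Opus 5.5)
The plan is to compare $\t{j}_{1}$ and $\t{j}_{2}$ at a single $F$-rational point of $\t\bfS$. We show that the discrepancy there is an $F$-rational element of $\G$ centralizing $\bfS_{j}$, and then propagate to all of $\t\bfS$ using bi-$\bfS$-equivariance.

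\emph{Step 1: choose an $F$-rational base point.} Since $(j,\t{j}_{1})$ is an $F$-rational embedding of a twisted maximal torus, $\t\bfS_{j}\colonequals\t{j}_{1}(\t\bfS)$ is an $F$-rational twisted maximal torus of $\t\G$. By condition (2) in the definition of twisted maximal tori, $\t{S}_{j}=\t\bfS_{j}(F)$ is nonempty. The map $\t{j}_{1}$ is an $F$-rational isomorphism of varieties $\t\bfS\xrightarrow{\sim}\t\bfS_{j}$, so it induces a bijection on $F$-points. Hence we may pick $\eta\in\t\bfS(F)$.

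\emph{Step 2: the discrepancy at $\eta$ lies in $S_{j}$.} Both $\t{j}_{1}(\eta)$ and $\t{j}_{2}(\eta)$ lie in $\t{G}$. Since $\t\G$ is a left $\G$-torsor, there is a unique $g\in\G$ with $\t{j}_{2}(\eta)=g\,\t{j}_{1}(\eta)$, and $g$ is $F$-rational because both points are. By Remark~\ref{rem:theta-eta} applied to each embedding,
\[
[\t{j}_{1}(\eta)]|_{\bfS_{j}}=j\circ\theta_{\bfS}\circ j^{-1}=[\t{j}_{2}(\eta)]|_{\bfS_{j}}.
\]
Therefore $[g]=[\t{j}_{2}(\eta)]\circ[\t{j}_{1}(\eta)]^{-1}$ is trivial on $\bfS_{j}$. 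This gives $g\in\bfZ_{\G}(\bfS_{j})=\bfS_{j}$, since $\bfS_{j}$ is a maximal torus. Combining with rationality, $g\in S_{j}=j(S)$, so $g=j(s)$ for a unique $s\in S$.

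\emph{Step 3: extend to all of $\t\bfS$.} Any $\eta'\in\t\bfS$ can be written as $\eta'=s'\eta$ with $s'\in\bfS$, because $\t\bfS$ is a left $\bfS$-torsor. By the equivariance $\t{j}_{i}(s_{1}ts_{2})=j(s_{1})\t{j}_{i}(t)j(s_{2})$ and the commutativity of $\bfS_{j}$, we get
\[
\t{j}_{2}(\eta')=j(s')\,\t{j}_{2}(\eta)=j(s')\,j(s)\,\t{j}_{1}(\eta)=j(s)\,j(s')\,\t{j}_{1}(\eta)=j(s)\,\t{j}_{1}(\eta').
\]
This is the required identity.

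The only point needing care is Step 2, namely the identification of the centralizer together with the rationality of $g$. Both follow directly from Remark~\ref{rem:theta-eta} and the maximality of $\bfS_{j}$, so I expect no serious obstacle.
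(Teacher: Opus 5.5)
Your proof is correct and follows the same route as the paper. Both fix an $F$-rational point $\eta$, use Remark~\ref{rem:theta-eta} to see that the discrepancy between $\t{j}_{1}(\eta)$ and $\t{j}_{2}(\eta)$ centralizes $\bfS_{j}$ and therefore lies in $S_{j}$, and then propagate to all of $\t\bfS$ by equivariance. You add details the paper leaves implicit: nonemptiness of $\t\bfS(F)$, rationality of the discrepancy, and the commutativity step in the extension. Taking the left discrepancy $\t{j}_{2}(\eta)\t{j}_{1}(\eta)^{-1}$ also lands you directly on the stated form, whereas the paper's $\t{j}_{2}(\eta)^{-1}\t{j}_{1}(\eta)$ gives a right translate.
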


\begin{proof}
If we fix an element $\eta'\in\t{S}$, then we have
\[
j^{-1}\circ[\t{j}_{1}(\eta')]\circ j=\theta_{\bfS}=j^{-1}\circ[\t{j}_{2}(\eta')]\circ j
\]
by Remark \ref{rem:theta-eta}.
This implies that $\t{j}_{2}(\eta')^{-1}\cdot\t{j}_{1}(\eta')$ belongs to $S_{j}$, hence there exists an $s\in S$ such that $\t{j}_{2}(\eta')=j(s)\t{j}_{1}(\eta')$.
From this, we can see that $\t{j}_{2}(\eta)=j(s)\t{j}_{1}(\eta)$ for any $\eta\in\t{\bfS}$.
\end{proof}

For two $F$-rational embeddings $(j_{1},\t{j}_{1})$ and $(j_{2},\t{j}_{2})$ of a twisted maximal torus $\t\bfS\hookrightarrow\t\G$, we write $(j_{1},\t{j}_{1})\sim(j_{2},\t{j}_{2})$ when $j_{1}=j_{2}$ (this gives an equivalence relation on the set of $F$-rational embeddings of a twisted maximal torus).
Note that, by Lemma \ref{lem:S-trans}, the image $\t{\bfS}_{j}$ of $\t{j}$ depends only on the equivalence class of $(j,\t{j})$.
When $(j,\t{j})$ is an $F$-rational embedding of a twisted maximal torus, we often let $j$ denote the equivalence classes of $(j,\t{j})$ by abuse of notation.
Also, if we simply say ``$j\colon\t\bfS\hookrightarrow\t\G$ is an $F$-rational embedding of a twisted maximal torus'', then it means that we have an $F$-rational embedding of a twisted maximal torus $(j,\t{j})$ and $j$ is its equivalence class.

As in the untwisted case, we define the stable/rational conjugacy for $F$-rational embeddings of a twisted maximal torus as follows:
\begin{defn}\label{defn:stab-conj}
Let $(j,\t{j})$ and $(j',\t{j}')$ be $F$-rational embeddings of a twisted maximal torus $\t\bfS\hookrightarrow\t\G$.
We say that $(j,\t{j})$ and $(j',\t{j}')$ are \textit{$\G$-conjugate} (resp.\ \textit{$G$-conjugate}) if there exists an element $x\in\G$ (resp.\ $x\in G$) satisfying $j'=[x]\circ j$.
In this case, we write $(j,\t{j})\sim_{\G}(j',\t{j}')$ (resp.\ $(j,\t{j})\sim_{G}(j',\t{j}')$).
When $j$ and $j'$ are equivalence classes of $F$-rational embeddings of a twisted maximal torus, we say $j$ and $j'$ are $\G$-conjugate (resp.\ $G$-conjugate) if some (or, equivalently, any) representatives $(j,\t{j})$ and $(j',\t{j}')$ are $\G$-conjugate (resp.\ $G$-conjugate).
In this case, we write $j\sim_{\G}j'$ (resp.\ $j\sim_{G}j'$).
\end{defn}

\subsection{Parametrization of $\theta$-stable members of a $\theta$-stable packet}\label{subsec:twisted-parametrization}
Let us go back to the setting of Sections \ref{subsec:theta-twist-L} and \ref{subsec:str-theta-L}.
Recall that $(\bfS,\hat{\jmath},\chi,\vartheta)$ is a regular supercuspidal $L$-packet datum whose $L$-parameter satisfies $[s]\circ\L\theta\circ\phi=\phi$.
In Section \ref{subsec:str-theta-L}, we introduced an $F$-rational involutive automorphism $\theta_{\bfS}$ of $\bfS$.
We consider the twisted space $\t{\bfS}\colonequals \bfS\theta_{\bfS}$ associated to the pair $(\bfS,\theta_{\bfS})$.

\begin{prop}\label{prop:theta-stable-member}
Let $\pi_{j}$ be the member of $\Pi_{\phi}^{\G}$ labeled by $(\bfS,\hat{\jmath},\chi,\vartheta,j)$ for $j\in\mcJ^{\G}_{G}$.
Then the following are equivalent:
\begin{enumerate}
\item
$\pi_{j}$ is $\theta$-stable, i.e., $\pi_{j}\cong\pi_{j}^{\theta}$,
\item
$j$ extends to an $F$-rational embedding of a twisted maximal torus $\t{\bfS}\hookrightarrow\t{\G}$.
\end{enumerate}
\end{prop}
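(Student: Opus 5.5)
By Corollary \ref{cor:theta-stable-member}, condition (1) is equivalent to $\theta^{-1}\circ j\circ\theta_{\bfS}$ and $j$ being $G$-conjugate. So it suffices to show that this $G$-conjugacy holds if and only if $j$ extends to an $F$-rational embedding of a twisted maximal torus $\t\bfS\hookrightarrow\t\G$. I will prove the two implications separately; the reverse implication is formal.

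For (2)$\Rightarrow$(1), let $(j,\t{j})$ be an extension and pick $\eta\in\t{S}$, which exists because $\t\bfS$ is $\bfS\theta_{\bfS}$ with $\theta_{\bfS}$ an $F$-rational involution. Put $\delta\colonequals\t{j}(\eta)\in\t{G}$. Remark \ref{rem:theta-eta} gives $[\delta]\circ j=j\circ\theta_{\bfS}$. Writing $\delta=x\theta$ with $x\in G$, we get $[x]\circ\theta\circ j=j\circ\theta_{\bfS}$, that is, $\theta^{-1}\circ j\circ\theta_{\bfS}=\theta^{-1}\circ[x]\circ\theta\circ j=[\theta^{-1}(x)]\circ j$. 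Since $\theta^{-1}(x)\in G$, the two embeddings are $G$-conjugate.

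For (1)$\Rightarrow$(2), suppose $\theta^{-1}\circ j\circ\theta_{\bfS}=[y]\circ j$ with $y\in G$. Set $x\colonequals\theta(y)\in G$ and $\delta\colonequals x\theta\in\t{G}$. Then $[\delta]\circ j=j\circ\theta_{\bfS}$, so $\delta$ normalizes $\bfS_j$ and induces $j\circ\theta_{\bfS}\circ j^{-1}$ on it. I define $\t{j}(s\theta_{\bfS})\colonequals j(s)\delta$. This is $F$-rational, and the relation $\t{j}(s_1 t s_2)=j(s_1)\t{j}(t)j(s_2)$ follows from $[\delta]\circ j=j\circ\theta_{\bfS}$.

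The remaining point, and the main obstacle, is to check that $(\bfS_j,\bfS_j\delta)$ is a twisted maximal torus in the sense of Section \ref{subsec:twisted-tori}. Nonemptiness of $F$-points is automatic since $\delta\in\t{G}$. We must still produce a Borel subgroup $\bfB_{\bfS_j}\supset\bfS_j$ preserved by $[\delta]$, and check that $\delta$ is semisimple. Given both, Lemma \ref{lem:twisted-torus} finishes the argument.

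For this I use the dual-side origin of $\theta_{\bfS}$. By definition $\h\theta_{\bfS}=\h\j^{-1}\circ\h\theta\circ\h\j$ (recall $\hat{\jmath}'=\hat\theta\circ\hat\jmath$ and $[s]$ is trivial on $\h\T$). Transporting by $[g]\circ j=\j^{-1}$ (up to $\Omega_\G$, which can be absorbed into the choice of $g$ by admissibility), the automorphism $\theta_{\bfS}$ on $\bfS_j$ corresponds to $\theta$ on $\T$ through an isomorphism $[g]\circ j\colon\bfS\to\T$ with $g\in\G$. Hence $[g]\circ[\delta]\circ[g]^{-1}$ acts on $\T$ as $\theta$. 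Therefore $[g]\delta[g]^{-1}\in\bfN_{\t\G}(\T)$ acts on $\T$ exactly as $\theta$, and so lies in $\T\theta$.

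Elements of $\T\theta$ preserve $(\bfB,\T)$ because $\theta$ preserves the pinning. They are also quasi-semisimple, i.e. semisimple in the sense of \cite{KS99}. Pulling back by $[g]^{-1}$, the Borel $\bfB_{\bfS_j}\colonequals[g]^{-1}\bfB$ contains $\bfS_j$ and is $[\delta]$-stable, and $\delta$ is semisimple. Lemma \ref{lem:twisted-torus} then shows that $(\bfS_j\delta,\bfS_j)$ is an $F$-rational twisted maximal torus, so $(j,\t{j})$ is the desired extension. The delicate part is making the identification of $\theta_{\bfS}$ with $\theta|_{\T}$ through an admissible $[g]\circ j$ precise (rather than up to a Weyl element); I would handle this by choosing $g$ so that $[g]\circ j=\j^{-1}$ exactly, as permitted in Section \ref{subsec:theta-twist-L}.
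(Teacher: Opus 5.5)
Your proof is correct and essentially follows the paper's argument: both directions go through Corollary~\ref{cor:theta-stable-member}, and for (1)$\Rightarrow$(2) you set $\t{j}(\theta_{\bfS})=\theta(y)\theta$ and take the $[\delta]$-stable Borel subgroup $[g]^{-1}(\B)$ with $[g]\circ j=\j^{-1}$, which is the paper's ${}^{y}\B$ with $y=g^{-1}$. Your observation that $g\delta g^{-1}\in\T\theta$ repackages the paper's computation that $\theta(x)^{-1}y\theta(y)^{-1}\in\theta(\bfS_{j})$, and the semisimplicity of $\delta$ that you check separately is automatic once $[\delta]$ preserves a Borel pair.
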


\begin{proof}
We first show that (2) implies (1).
Suppose that $j$ extends to an embedding $(j,\t{j})\colon\t{\bfS}\hookrightarrow\t{\G}$ of an $F$-rational twisted maximal torus.
Then, putting $\eta\colonequals \t{j}(\theta_{\bfS})\in\t{G}$, we get $[\eta]\circ j=j\circ\theta_{\bfS}$.
In other words, if we write $\eta=\eta^{\circ}\theta$ with $\eta^{\circ}\in G$, then we have $[\eta^{\circ}]\circ\theta\circ j=j\circ\theta_{\bfS}$.
In particular, $\theta^{-1}\circ j\circ\theta_{\bfS}$ and $j$ are $G$-conjugate.
This implies that $\pi_{j}$ is $\theta$-stable by Corollary \ref{cor:theta-stable-member}.

We next show that (1) implies (2).
Again by Corollary \ref{cor:theta-stable-member}, we may suppose that we have an element $x\in G$ satisfying $\theta^{-1}\circ j\circ\theta_{\bfS}=[x]\circ j$.
Then, by putting $\t{j}(\theta_{\bfS})\colonequals \theta(x)\theta\in \t{G}$, the argument in the previous paragraph shows that $(j,\t{j})$ defines an $F$-rational embedding of a twisted space $(\bfS,\t{\bfS})$ into $(\G,\t{\G})$.
Hence our task is to show that the image $(\bfS_{j},\t{\bfS}_{j})$ is an $F$-rational twisted maximal torus of $(\G,\t{\G})$.
In other words, we have to find a Borel subgroup which contains $\bfS_{j}$ and is preserved by $[\t{j}(\theta_{\bfS})]=[\theta(x)]\circ\theta$ (see Lemma \ref{lem:twisted-torus}).

Recall that we put $\jmath\colon\T\xrightarrow{\sim}\bfS$ to be the dual of $\hat{\jmath}\colon\hat{\bfS}\xrightarrow{\sim}\hat{\T}$ and that $\theta_{\bfS}$ is defined to be the dual of $\h\j^{-1}\circ\h\theta\circ\h\j$.
Hence we have $\theta_{\bfS}=\j\circ\theta\circ\j^{-1}$.
The embedding $j$ is given by $[y]\circ\jmath^{-1}$ for some $y\in\G$ by the $\h\j$-admissibility (see the beginning of Section \ref{subsec:theta-twist-L}).
Thus we get 
\[
j\circ\theta_{\bfS}
=[y]\circ\j^{-1}\circ\j\circ\theta\circ\j^{-1}
=[y\theta(y)^{-1}]\circ\theta\circ[y]\circ\j^{-1}
=[y\theta(y)^{-1}]\circ\theta\circ j.
\]
As we also have $\theta^{-1}\circ j\circ\theta_{\bfS}=[x]\circ j$, or equivalently, $j\circ\theta_{\bfS}=[\theta(x)]\circ\theta\circ j$, we get $[y\theta(y)^{-1}]\circ\theta\circ j=[\theta(x)]\circ\theta\circ j$.
This implies that $\theta(x)^{-1}y\theta(y)^{-1}\in\theta(\bfS_{j})$.
Let us write $\theta(x)^{-1}y\theta(y)^{-1}=t$ with $t\in\theta(\bfS_{j})$.

Since we have $\bfS_{j}={}^{y}\T$, the Borel subgroup ${}^{y}\B$ contains $\bfS_{j}$.
Let us check that ${}^{y}\B$ satisfies the desired condition, i.e., $[\theta(x)]\circ\theta({}^{y}\B)={}^{y}\B$.
By noting that $\B$ is stable under $\theta$ and that $t\in\theta(\bfS_{j})\subset\theta({}^{y}\B)$, we have
\[
[\theta(x)]\circ\theta({}^{y}\B)
={}^{\theta(x)}\theta({}^{y}\B)
={}^{y\theta(y)^{-1}t^{-1}}\theta({}^{y}\B)
={}^{y\theta(y)^{-1}}\theta({}^{y}\B)
={}^{y}\theta(\B)
={}^{y}\B.
\]
This completes the proof.
\end{proof}

Now let us recall that
\[
\mcJ^{\G}_{G}
=
\{
j\colon\bfS\hookrightarrow\G
\mid
\text{$j$ is $F$-rational and $j\sim_{\G}\j^{-1}$}
\}/{\sim_{G}}
\]
(see the discussion at the beginning of Section \ref{subsec:theta-twist-L}).
We put
\[
\t\mcJ_{G}^{\G}
\colonequals 
\{j\colon \t\bfS\hookrightarrow\t\G \mid \text{$j$ is $F$-rational and $j\sim_{\G}\j^{-1}$}\}
/{\sim_{G}},
\]\symdef{J-tilde-G-G}{$\tilde{\mathcal{J}}_{G}^{\G}$}
namely, $\t\mcJ_{G}^{\G}$ is the set of $G$-conjugacy classes of equivalence classes of $F$-rational embeddings of a twisted maximal torus which are $\G$-conjugate to $\j^{-1}$.
Note that the canonical forgetful map $\t\mcJ_{G}^{\G}\hookrightarrow\mcJ_{G}^{\G}\colon(j,\t{j})\mapsto j$ is injective.
Thus, in the following, we regard $\t\mcJ_{G}^{\G}$ as a subset of $\mcJ_{G}^{\G}$.
By Proposition \ref{prop:theta-stable-member}, the set $\t\mcJ_{G}^{\G}$ parametrizes the $\theta$-stable representations in $\Pi_{\phi}^{\G}$.
More precisely, for each $j\in\mcJ^{\G}_{G}$, the corresponding member $\pi_{j}$ is $\theta$-stable if and only if $j$ belongs to $\t{\mcJ}^{\G}_{G}$.

\begin{rem}\label{rem:generic-stable}
Recall that Shahidi's generic packet conjecture \cite[Conjecture 9.4]{Sha90} predicts that every tempered $L$-packet contains a unique $\mfw$-generic member.
If $\Pi_{\phi}^{\G}$ satisfies the generic packet conjecture, then the unique $\mfw$-generic member of $\Pi_{\phi}^{\G}$ (say $\pi_{\mfw}$) is $\theta$-stable.
Indeed, the $\theta$-twist $\pi_{\mfw}^{\theta}$ is again $\mfw$-generic since $\mfw$ is $\theta$-stable.
As we have $\Pi_{\phi}^{\G}\circ\theta=\Pi_{\phi}^{\G}$, both $\pi_{\mfw}$ and $\pi_{\mfw}^{\theta}$ belong to $\Pi_{\phi}^{\G}$.
Hence the uniqueness part of the generic packet conjecture implies that $\pi_{\mfw}$ and $\pi_{\mfw}^{\theta}$ are isomorphic.
We remark that the generic packet conjecture is proved in \cite[Lemma 6.2.2]{Kal19} for toral regular supercuspidal $L$-packets.
\end{rem}

\subsection{Descended regular supercuspidal $L$-packet}\label{subsec:L-packet-descent}
We keep the notation as in the previous subsections.
Recall that $\phi\colon W_{F}\rightarrow\G$ factors through $\h\xi\colon\L\H\hookrightarrow\L\G$.
Let $\phi_{\H}$ be the $L$-parameter of $\H$ such that $\phi=\h\xi\circ\phi_{\H}$.

\begin{prop}\label{prop:toral-toral}
The $L$-parameter $\phi_{\H}$ is regular supercuspidal.
Moreover, if $\phi$ is toral supercuspidal, then so is $\phi_{\H}$.
\end{prop}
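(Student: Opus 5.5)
We verify the conditions of Definitions \ref{defn:rsc-L-par} and \ref{defn:toral-L-par} for $\phi_{\H}$ directly, transporting each one from $\phi$ through the embedding $\h\xi$. The basic tool is that $\h\xi|_{\h\H}$ identifies $\h\H$ with $\h\G_{s\L\theta}=\bfZ_{\h\G}(s\L\theta)^{\circ}$, a connected reductive subgroup of $\h\G$. For any subgroup $A\subset W_{F}$ this gives
\[
\h\xi\bigl(\bfZ_{\h\H}(\phi_{\H}(A))\bigr)=\bfZ_{\h\G}(\phi(A))\cap\h\G_{s\L\theta}.
\]
Since $\phi=\h\xi\circ\phi_{\H}$ and $[s]\circ\L\theta\circ\phi=\phi$, the element $s\L\theta$ normalizes $\bfZ_{\h\G}(\phi(A))$, and its identity component $\bfZ_{\h\G}(\phi(A))^{\circ}$ carries the quasi-semisimple automorphism $[s]\circ\h\theta$.

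Condition (0) follows from this identity. $\phi_{\H}$ is trivial on $\SL_{2}(\C)$, and $\bfZ_{\h\H}(\phi_{\H}(W_F))^{\circ}$ lies in $\bfZ_{\h\G}(\phi(W_F))^{\circ}\cap\h\G_{s\L\theta}\subset \bfZ_{\h\G}$. A torus in $\bfZ_{\h\G}\cap\h\H$ commutes with $\h\H$, so it is central in $\h\H$; hence $\phi_{\H}$ is discrete.

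Conditions (1) and (2) use the same identity. $\phi_{\H}(P_F)$ is a finite-order abelian-image group acting semisimply, so it lies in a torus of $\h\H$; this gives (1). For (2), set $\mcC=\bfZ_{\h\G}(\phi(I_F))^{\circ}$, which is a torus stable under $[s]\circ\h\theta$. Then $\bfZ_{\h\H}(\phi_{\H}(I_F))^{\circ}$ is a connected subgroup of $\mcC^{s\h\theta}$, and connected subgroups of a diagonalizable group are tori, so it is a torus.

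For torality, $\phi_{\H}$ is trivial on $I_{F}^{r+}$ because $\h\xi$ is injective and compatible with the projections to $W_F$. Next, $\mcS\colonequals \bfZ_{\h\G}(\phi(I_F^r))$ is a maximal torus of $\h\G$ that is $s\h\theta$-stable. Then $\bfZ_{\h\H}(\phi_{\H}(I_F^r))$ corresponds to $\mcS\cap \h\G_{s\L\theta}$. We need this intersection to be a maximal torus of $\h\G_{s\L\theta}$. The route is to show that $\mcS$ is the centralizer of a regular element and that $s\h\theta$ preserves a Borel subgroup containing it. Then Proposition \ref{prop:twisted-tori}, applied on the dual side (Steinberg's theory of quasi-semisimple automorphisms), gives that $(\mcS^{s\h\theta})^{\circ}$ is a maximal torus of $\h\G_{s\L\theta}$. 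Finally, $\bfZ_{\h\H}(\phi_{\H}(I_F^r))$ is connected because it equals $\mcS^{s\h\theta}\cap\h\G_{s\L\theta}$, and centralizers in $\h\H$ of the torus part are connected since $\h\H$ is a connected reductive group. It contains $\phi_{\H}(P_F)$ because $\mcS$ contains $\phi(P_F)$.

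Condition (3) is the main obstacle. Let $\mcM_{\H}=\bfZ_{\h\H}(\phi_{\H}(P_F))^{\circ}$ and $\mcT_{\H}=\bfZ_{\mcM_{\H}}(\mcC_{\H})$. Suppose $n\in \bfN_{\mcM_{\H}}(\mcT_{\H})$ defines a nontrivial $\Gamma$-fixed Weyl element and centralizes $\phi_{\H}(I_F)$. Then $\h\xi(n)$ normalizes $\mcT=\bfZ_{\mcM}(\mcC)$ and centralizes $\phi(I_F)$, so condition (3) for $\phi$ forces $\h\xi(n)\in\mcT$. It remains to show $n\in\mcT_{\H}$, i.e.\ $\mcT\cap\h\xi(\mcM_{\H})=\h\xi(\mcT_{\H})$. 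I would prove this by checking that $\mcT_{\H}$ corresponds to $(\mcT^{s\h\theta})^{\circ}$, using that $\mcT$ is $s\h\theta$-stable, together with the identity $\bfZ_{\h\G}(\mcT^{s\h\theta,\circ})=\mcT$ (the dual analogue of Proposition \ref{prop:twisted-tori}). If $\mcT^{s\h\theta}$ is disconnected this step may need care. In that case I would instead invoke Kaletha's equivalent characterization of condition (3): the regularity of the associated character of $\mcT^{\Gamma}$-type data, i.e.\ trivial stabilizer of $\vartheta|_{S_{0+}}$ in $\Omega(\bfS)^{\Gamma}$. That condition restricts well to subsystems of $\Phi(\G,\bfS)$.
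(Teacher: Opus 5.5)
Your verification of conditions (0) and (2) of Definition~\ref{defn:rsc-L-par} is the same as the paper's, but the torality part has a genuine gap.

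You correctly reduce Definition~\ref{defn:toral-L-par}(2) to showing that $\mcS\cap\hat{\G}_{s\L\theta}$ is a maximal torus of $\hat{\G}_{s\L\theta}$, where $\mcS=\bfZ_{\hat{\G}}(\phi(I_F^r))$. However, you only announce a route (that $s\hat{\theta}$ preserves a Borel subgroup containing $\mcS$) and never establish it. It does not follow from the $s\hat{\theta}$-stability of $\mcS$. In $\SL_2$, take $w=\left(\begin{smallmatrix}0&1\\-1&0\end{smallmatrix}\right)$. The diagonal torus is stable under the quasi-semisimple automorphism $[w]$, which acts on it by inversion and swaps the two Borel subgroups containing it; the diagonal torus meets $\bfZ_{\SL_2}(w)^{\circ}$ only in $\{\pm1\}$.

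The missing input is the explicit form $\phi=\Lj_{\chi}\circ\phi_{\vartheta}$ with $\hat{\jmath}(\hat{\bfS})=\hat{\T}$. Since $\bfS$ and $\chi$ are tame, $\phi(I_F^r)\subset\phi(P_F)\subset\hat{\T}$, so $\hat{\T}\subset\mcS$ and therefore $\mcS=\hat{\T}$. This torus contains $s$ and lies in the $\hat{\theta}$-stable $\hat{\B}$. The normalization $\hat{\xi}^{-1}(\hat{\B},\hat{\T})=(\hat{\B}_{\H},\hat{\T}_{\H})$ of the endoscopic datum then gives $\bfZ_{\hat{\H}}(\phi_{\H}(I_F^r))=\hat{\H}\cap\hat{\T}=\hat{\T}_{\H}$ directly; this is the paper's argument. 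Your closing connectedness remark is also misphrased: $\bfZ_{\hat{\H}}(\phi_{\H}(I_F^r))$ is the centralizer of a finite group, not of a torus.

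The same input is what the paper uses for (1), namely $\phi_{\H}(P_F)\subset\hat{\xi}^{-1}(\hat{\T})=\hat{\T}_{\H}$. Your justification there is insufficient as written: a finite abelian group of semisimple elements of a connected reductive group need not lie in a torus (e.g.\ the diagonal Klein four-group in $\mathrm{SO}_3(\C)$). For a $p$-group with $p$ odd and $p\nmid|\Omega_{\H}|$ it does, but only via Steinberg's results on connectedness of centralizers, which you would have to invoke.

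With (1) in hand, torality also follows without any Borel subgroups. A maximal torus $\mcT'$ of $\hat{\H}$ containing $\phi_{\H}(P_F)$ lies in $\bfZ_{\hat{\H}}(\phi_{\H}(I_F^r))$. That centralizer is abelian because $\hat{\xi}$ maps it into $\mcS$, so it is contained in $\bfZ_{\hat{\H}}(\mcT')=\mcT'$ and hence equals $\mcT'$.

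For (3), your reduction to the injectivity and $\Gamma$-equivariance of $\Omega_{\mcM_{\H}}(\hat{\bfS}_{\H})\to\Omega_{\mcM}(\hat{\bfS})$ is exactly the step the paper takes. But the proof you sketch for it relies on the same unproven Borel-stability. Your fallback to Kaletha's regularity criterion is not an argument: regularity concerns the depth-zero part $\vartheta|_{S_{0}}$, whereas $\vartheta_{\H}$ is tied to $\vartheta$ only on $S_{0+}$ (Proposition~\ref{prop:vartheta}). Moreover, $\vartheta_{\H}$ lives on $\bfS_{\H}\cong\bfS_{\theta_{\bfS}}$, which is a quotient of $\bfS$ rather than a subtorus.
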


\begin{proof}
Let us check that the four conditions of Definition \ref{defn:rsc-L-par} for $\H$ are satisfied.
We first check (0).
Obviously $\phi|_{\SL_{2}(\C)}$ is trivial, hence so is $\phi_{\H}|_{\SL_{2}(\C)}$.
Since $\bfZ_{\hat{\H}}(\phi(W_{F}))^{\circ}$ is contained in $\bfZ_{\hat{\G}}(\phi(W_{F}))^{\circ}$, we have $\bfZ_{\hat{\H}}(\phi(W_{F}))^{\circ}\subset \bfZ_{\hat{\G}}\cap\hat{\H}\subset \bfZ_{\hat{\H}}$.
We next check (1).
Since $\phi=\Lj_{\chi}\circ\phi_{\vartheta}$ and $\bfS$ is tamely ramified, a torus of $\h\G$ containing $\phi(P_{F})$ can be taken to be $\h\T$.
Thus $\phi_{\H}(P_F)$ is contained in $\h\xi^{-1}(\h\T)=\h\T_{\H}$.
We check (2).
Since we have $\bfZ_{\h\H}(\phi(I_{F}))^{\circ}\subset\bfZ_{\h\G}(\phi(I_{F}))^{\circ}\cap\h\H$ and $\bfZ_{\h\G}(\phi(I_{F}))^{\circ}$ is a torus, $\bfZ_{\h\H}(\phi(I_{F}))^{\circ}$ is also a torus.
We finally check (3).
We put $\mcM_{\H}\colonequals \bfZ_{\hat{\H}}(\phi(P_{F}))^{\circ}$, $\mcC_{\H}\colonequals \bfZ_{\hat{\H}}(\phi(I_{F}))^{\circ}$, and $\mcT_{\H}\colonequals \bfZ_{\mcM_{\H}}(\mcC_{\H})$.
Then we have $\bfN_{\mcM_{\H}}(\mcT_{\H})\subset \bfN_{\mcM}(\mcT)$ and this inclusion induces a $\Gamma$-equivariant inclusion of Weyl groups $\Omega_{\mcM_{\H}}(\hat{\bfS}_{\H})\hookrightarrow\Omega_{\mcM}(\hat{\bfS})$.
Thus, if $n\in N_{\mcM_{\H}}(\mcT_{\H})$ maps to a nontrivial element of $\Omega_{\mcM_{\H}}(\hat{\bfS}_{\H})^{\Gamma}$, then we have $n\notin \bfZ_{\hat{\G}}(\phi(I_{F}))$.
This implies that $n\notin \bfZ_{\h\H}(\phi(I_{F}))$.

We next consider the case where $\phi$ is toral supercuspidal.
Let us check that the two conditions of Definition \ref{defn:toral-L-par} for $\H$ are satisfied.
The condition (1) for $\phi_{\H}$ is clearly deduced from the condition (1) for $\phi$.
Thus let us check (2).
By the torality of $\phi$, $\bfZ_{\hat{\G}}(\phi(I_{F}^{r}))$ is a maximal torus of $\hat{\G}$ containing $\phi(P_{F})$ .
Since we have $\phi(I_{F}^{r})\subset\h\T$ (by the construction of $\phi$), we have $\bfZ_{\hat{\G}}(\phi(I_{F}^{r}))\supset \bfZ_{\hat{\G}}(\h\T)=\h\T$.
Thus we get $\bfZ_{\hat{\G}}(\phi(I_{F}^{r}))=\h\T$.
This implies that $\mcT_{\H}\colonequals Z_{\hat{\H}}(\phi(I_{F}^{r}))$ is equal to $\h\H\cap\h\T=\h\T_{\H}$, which is a maximal torus of $\h\H$ and contains $\phi(P_{F})$.
\end{proof}

By applying Proposition \ref{prop:rsc-LLC} to the descended $L$-parameter $\phi_{\H}$, we obtain a regular supercuspidal $L$-packet datum $(\bfS_{\H},\hat{\jmath}_{\H},\chi_{\H},\vartheta_{\H})$ of $\H$ and hence a regular supercuspidal $L$-packet $\Pi_{\phi_{\H}}^{\H}$ of $\H$. 
In particular, we may and do assume $\phi_{\H}=\Lj_{\chi_{\H}}\circ\phi_{\vartheta_{\H}}$, where $\Lj_{\chi_{\H}}$ denotes the Langlands--Shelstad extension of $\h\jmath_{\H}$ to an $L$-embedding via the set of $\chi$-data $\chi_{\H}$:
\begin{align}\label{diag:L-parameters}
\xymatrix{
W_{F}\ar^-{\phi_{\vartheta}}[r]\ar_-{\phi_{\vartheta_{\H}}}[rd]&\hat{\bfS}\rtimes W_{F}\ar@{^{(}->}^-{\Lj_{\chi}}[rr]&&\hat{\G}\rtimes W_{F}\\
&\hat{\bfS}_{\H}\rtimes W_{F}\ar@{^{(}->}^-{\Lj_{\chi_{\H}}}[rr]&&\hat{\H}\rtimes W_{F}\ar@{^{(}->}_-{\h\xi}[u]
}
\end{align}
Let us investigate the relationship between $(\bfS,\hat{\jmath},\chi,\vartheta)$ and $(\bfS_{\H},\hat{\jmath}_{\H},\chi_{\H},\vartheta_{\H})$.

As we saw in the proof of Proposition \ref{prop:toral-toral}, the image of $\hat{\jmath}_{\H}$ is given by $\hat{\jmath}_{\H}(\hat{\bfS}_{\H})= \bfZ_{\hat{\H}}(\phi(I_{F}^{r}))=\hat{\H}\cap\hat{\T}=\hat{\T}_{\H}$.
Recall that $\hat{\jmath}\circ\hat{\theta}_{\bfS}=\hat{\theta}\circ\hat{\jmath}$.
This implies that the embedding $\hat{\jmath}$ induces an isomorphism $\hat{\bfS}^{\h{\theta}_{\bfS},\circ}\cong\hat{\T}^{\hat{\theta},\circ}=\h\xi(\hat{\T}_{\H})$.
Hence, we get an identification of $\hat{\bfS}_{\H}$ with $\hat{\bfS}^{\hat{\theta}_{\bfS},\circ}$:
\[
\hat{\jmath}^{-1}\circ\h\xi\circ\hat{\jmath}_{\H}\colon\hat{\bfS}_{\H}\xrightarrow{\sim}\hat{\bfS}^{\hat{\theta}_{\bfS},\circ}.
\]
As discussed in the proof of Lemma \ref{lem:rationality-of-theta}, we have $\h\j\circ\sigma(t)=[\phi(\sigma)]\circ\h\j(t)$ for any $\sigma\in W_{F}$ and $t\in\h{\bfS}$.
Similarly, we also have $\h\j_{\H}\circ\sigma(t)=[\phi_{\H}(\sigma)]\circ\h\j_{\H}(t)$ for any $\sigma\in W_{F}$ and $t\in\h{\bfS}_{\H}$.
Therefore, since we have $\phi=\h\xi\circ\phi_{\H}$, the isomorphism $\hat{\jmath}^{-1}\circ\xi\circ\hat{\jmath}_{\H}$ is $\Gamma$-equivariant.
Thus, by taking dual, we get an $F$-rational isomorphism
\[
\bfS_{\H}\cong\bfS_{\theta_{\bfS}}.
\]

\begin{prop}\label{prop:vartheta}
The restriction $\vartheta|_{S_{0+}}$ of $\vartheta$ to $S_{0+}$ coincide with a pullback of the restriction $\vartheta_{\H}|_{S_{\H,0+}}$ of $\vartheta_{\H}$ to $S_{\H,0+}$ through the map $S_{0+}\rightarrow S_{\theta_{\bfS},0+}\cong S_{\H,0+}$.
\end{prop}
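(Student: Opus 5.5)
The plan is to compare the two characters on the Galois side, through the local Langlands correspondence for tori. We restrict to inertia and use the diagram \eqref{diag:L-parameters}, which gives $\Lj_{\chi}\circ\phi_{\vartheta}=\h\xi\circ\Lj_{\chi_{\H}}\circ\phi_{\vartheta_{\H}}$.

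The map $S_{0+}\rightarrow S_{\theta_{\bfS},0+}\cong S_{\H,0+}$ is induced by the $F$-rational projection $\bfS\twoheadrightarrow\bfS_{\theta_{\bfS}}\cong\bfS_{\H}$. Its dual is the inclusion $\hat{\bfS}_{\H}\cong\hat{\bfS}^{\h\theta_{\bfS},\circ}\hookrightarrow\hat{\bfS}$, namely $\h\j^{-1}\circ\h\xi\circ\h\j_{\H}$. By functoriality of the local Langlands correspondence for tori, the pullback of $\vartheta_{\H}$ to $S$ therefore has $L$-parameter $(\h\j^{-1}\circ\h\xi\circ\h\j_{\H})\circ\phi_{\vartheta_{\H}}$. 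The claim is thus that this parameter and $\phi_{\vartheta}$ agree on the wild inertia part relevant to $S_{0+}$.

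To make this precise I would use the standard fact, valid for tame tori, that the restriction of a character to $S_{0+}$ is determined by the restriction of its parameter to $P_F$. This is a cocycle $P_F\rightarrow\hat{\bfS}$ which is a genuine homomorphism, since $\Gamma$ acts through a tame quotient on which $P_F$ acts trivially. It is compatible with the Moy--Prasad filtration via the upper numbering; Yu's ``On the local Langlands correspondence for tori'' gives this, with $p\nmid|\Omega_{\G}|$ excluding bad primes.

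It then suffices to show
\[
\h\j\circ\phi_{\vartheta}|_{P_F}=\h\xi\circ\h\j_{\H}\circ\phi_{\vartheta_{\H}}|_{P_F}
\]
as maps into $\h\T$. Write $\Lj_{\chi}(t\rtimes w)=\h\j(t)\,r_{\chi}(w)\rtimes w$, where $r_{\chi}$ comes from the Langlands--Shelstad construction, and similarly for $\Lj_{\chi_{\H}}$ with $r_{\chi_{\H}}$. The $\chi$-data are minimally ramified, hence tamely ramified, so $r_{\chi}$ and $r_{\chi_{\H}}$ are trivial on $P_F$. For this I would check that the Langlands--Shelstad cochain is built from $\chi_\alpha$ composed with transfer maps together with Weyl representatives depending only on the image in $\Gamma$, and that wild inertia of $F$ lands in wild inertia of $F_\alpha$ under transfer. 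Likewise $\h\xi$ restricted to $P_F$ is the identity $\hat{\H}\rightarrow\hat{\G}$ twisted by $\h\xi(1\rtimes w)$. The assumption $\mcH=\L\H$ with $\h\xi$ an $L$-embedding does not by itself force $\h\xi(1\rtimes w)=1\rtimes w$ on $P_F$; one would have to argue that the cocycle is trivial there. The cleanest route is the following: both sides of the equality in \eqref{diag:L-parameters} have $\hat{\G}$-components lying in $\h\T$ on $P_F$, and projecting onto $\h\T$ modulo the relevant cocycles recovers the two parameter restrictions.

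The main obstacle is therefore the triviality on $P_F$ of the cochains arising from $\chi$, $\chi_{\H}$ and $\h\xi$. Should $\h\xi$ carry a nontrivial character on $P_F$, I would instead absorb it, for instance by arguing that the resulting character of $S_{\H,0+}$ is trivial because it factors through a tame part. The comparison is only claimed on $S_{0+}$ because the depth-zero parts can differ by such tame characters.
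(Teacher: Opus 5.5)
Your route is essentially the paper's proof. Multiplicativity and Yu's depth preservation for tame tori reduce the statement to comparing $\phi_{\vartheta}$ with the push-forward of $\phi_{\vartheta_{\H}}$ along $\hat{\bfS}_{\H}\hookrightarrow\hat{\bfS}$ on $I_F^{0+}=P_F$. Minimal ramification of the $\chi$-data kills the Langlands--Shelstad cochains there, and the commutativity of \eqref{diag:L-parameters} then gives the claim.

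The point you leave open is that $\hat{\xi}(1\rtimes w)=1\rtimes w$ for $w\in P_F$. The paper does not argue this separately either. It is folded into the assertion that the square restricted to $I_F^{0+}$ commutes ``since every object is tamely ramified''. So you should treat it as a tameness input on the endoscopic embedding rather than something to derive. It does hold for the standard embeddings attached to symplectic and special orthogonal groups, where the twist is at most by a quadratic character, which is tame since $p\neq2$.

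Drop your fallback of ``absorbing'' a wild twist; it cannot work. Take $\H=\GL_{n}$ inside $\GL_{n}\times\GL_{n}$ with the swap involution, and $\hat{\xi}(h\rtimes w)=(hc(w),hc(w))\rtimes w$ with $c$ a character of depth in $(0,r)$. This is legitimate endoscopic data with $\mathcal{H}={}^{L}\H$, and the parameters stay toral. Yet $\vartheta$ and the pullback of $\vartheta_{\H}$ differ, up to tame characters, by $c\circ\Nr_{E/F}$ on each factor, which is nontrivial on $S_{0+}$. Without the tameness input the statement fails outright, so no argument can rescue it.
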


\begin{proof}
By abuse of notation, we again write $\vartheta_{\H}$ for the pullback of $\vartheta_{\H}$ along the canonical map $S\rightarrow S_{\theta_{\bfS}}\cong S_{\H}$.
Then our task is to show that the depth of the character $\vartheta^{-1}\cdot\vartheta_{\H}$ of $S$ is zero.
Since the local Langlands correspondence for tame tori is multiplicative and preserves the depth (see, e.g., \cite{Yu09-LLC}), the depth of $\vartheta^{-1}\cdot\vartheta_{\H}$ equals that of $\phi_{\vartheta}^{-1}\cdot\phi_{\vartheta_{\H}}$. 
Here $\phi_{\vartheta}^{-1}\cdot\phi_{\vartheta_{\H}}$ denotes the product as $1$-cocycles.

We note that the following diagram is commutative since every object is tamely ramified (more precisely, $\Gamma$-actions on $\hat{\bfS}$, $\hat{\bfS}_{\H}$, $\hat{\G}$, and $\hat{\H}$ are trivial on $I_{F}^{0+}$ and the set of $\chi$-data $\chi$ and $\chi_{\H}$ are minimally ramified):
\[
\xymatrix{
\hat{\bfS}\rtimes I_{F}^{0+}\ar@{^{(}->}^-{\Lj_{\chi}}[r]&\hat{\G}\rtimes I_{F}^{0+}\\
\hat{\bfS}_{\H}\rtimes I_{F}^{0+}\ar@{^{(}->}[u]\ar@{^{(}->}^-{\Lj_{\chi_{\H}}}[r]&\hat{\H}\rtimes I_{F}^{0+}\ar@{^{(}->}_-{\h\xi}[u]
}
\]
Thus, by taking into account the commutativity of the diagram \eqref{diag:L-parameters}, we see that the following diagram commutes:
\[
\xymatrix{
I_{F}^{0+}\ar^-{\phi_{\vartheta}|_{I_{F}^{0+}}}[r]\ar_-{\phi_{\vartheta_{\H}}|_{I_{F}^{0+}}}[rd]&\hat{\bfS}\rtimes I_{F}^{0+}\\
&\hat{\bfS}_{\H}\rtimes I_{F}^{0+}\ar@{^{(}->}[u]
}
\]
This implies that depth of the $L$-parameter $\phi_{\vartheta}^{-1}\cdot\phi_{\vartheta_{\H}}$ is zero.
\end{proof}

Now let us suppose that $\phi$ is toral of depth $r>0$.
Recall that, as proved in the proof of Lemma \ref{lem:zeta-measures-diff}, we have $(\zeta_{S}^{-1}\cdot\vartheta)\circ\theta_{\bfS}=\vartheta$.
As $\zeta_{S}$ is tamely ramified, this implies that $\vartheta|_{S_{0+}}\circ\theta_{\bfS}=\vartheta|_{S_{0+}}$.
This implies that we can take a $\theta_{\bfS}$-invariant element $X^{\ast}\in\mfs_{-r}^{\ast}$ realizing $\vartheta|_{S_{s+:r+}}$ (see the discussion in Section \ref{subsec:TCF} using Lemma \cite[Lemma 5.4]{Oi25}).
By Proposition \ref{prop:vartheta}, we furthermore have the following (note that $\mfs_{\theta_{\bfS}}^{\ast}$ is identified with the $\theta_{\bfS}$-fixed subspace of $\mfs^{\ast}$):

\begin{cor}\label{cor:vartheta}
We can take elements $X^{\ast}\in\mfs_{-r}^{\ast}$ and $Y^{\ast}\in\mfs_{\H,-r}^{\ast}$ realizing $\vartheta|_{S_{s+:r+}}$ and $\vartheta_{\H}|_{S_{\H,s+:r+}}$, respectively, so that $Y^{\ast}$ maps to $X^{\ast}$ under the natural map $\mfs_{\H}^{\ast}\cong\mfs_{\theta_{\bfS}}^{\ast}\hookrightarrow\mfs^{\ast}$.
\end{cor}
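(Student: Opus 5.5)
Start from Proposition \ref{prop:vartheta}. It says that the character $\vartheta^{-1}\cdot(\vartheta_{\H}\circ\pr)$ of $S$ has depth zero, where $\pr\colon S\rightarrow S_{\theta_{\bfS}}\cong S_{\H}$ is the natural map. In particular, this character is trivial on $S_{0+}$, and hence on $S_{s+}$. Therefore $\vartheta|_{S_{s+:r+}}=\vartheta_{\H}\circ\pr|_{S_{s+:r+}}$. The plan is to pick an element $Y^{\ast}$ realizing $\vartheta_{\H}$ and show that its image $X^{\ast}$ realizes $\vartheta$.

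\textbf{Step 1 (choice of $Y^{\ast}$).} Since $(\bfS_{\H},\vartheta_{\H})$ is toral of depth $r$ by Proposition \ref{prop:toral-toral}, there is an $\H$-generic $Y^{\ast}\in\mfs_{\H,-r}^{\ast}$ with $\vartheta_{\H}(\exp(Z))=\psi_{F}(\langle Z,Y^{\ast}\rangle)$ for all $Z\in\mfs_{\H,s+:r+}$.

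\textbf{Step 2 (definition of $X^{\ast}$).} Let $X^{\ast}$ be the image of $Y^{\ast}$ under $\mfs_{\H}^{\ast}\cong\mfs_{\theta_{\bfS}}^{\ast}\hookrightarrow\mfs^{\ast}$. Concretely, $X^{\ast}=Y^{\ast}\circ d\pr$, where $d\pr\colon\mfs\rightarrow\mfs_{\theta_{\bfS}}\cong\mfs_{\H}$ is the differential of $\pr$. By construction $X^{\ast}$ is $\theta_{\bfS}$-invariant.

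\textbf{Step 3 (compatibility of $\exp$ and depth with $\pr$).} I need two facts:
\begin{itemize}
\item $\pr(\exp(Z))=\exp(d\pr(Z))$ for $Z\in\mfs_{s+}$;
\item $d\pr(\mfs_{t})\subset\mfs_{\H,t}$ and $\pr(S_{t})\subset S_{\H,t}$ for all $t>0$.
\end{itemize}
For tori, the mock-exponential can be taken to be the honest map induced functorially (over a tame splitting field $E$, through $\bfS(E)\cong E^{\times}\otimes X_{\ast}$). Moy--Prasad filtrations on tame tori are functorial for homomorphisms, since they are defined by valuations of characters over $E$ followed by taking $\Gamma$-fixed points. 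For $\theta_{\bfS}$ an involution and $p$ odd, coinvariants behave well because $2$ is invertible, so I expect no subtlety here.

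\textbf{Step 4 (conclusion).} For $Z\in\mfs_{s+:r+}$, combining the above gives
\[
\vartheta(\exp Z)=\vartheta_{\H}(\pr(\exp Z))=\vartheta_{\H}(\exp(d\pr Z))=\psi_{F}(\langle d\pr Z,Y^{\ast}\rangle)=\psi_{F}(\langle Z,X^{\ast}\rangle).
\]
So $X^{\ast}$ realizes $\vartheta|_{S_{s+:r+}}$.

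\textbf{Step 5 (membership in $\mfs^{\ast}_{-r}$).} The filtration on $\mfs^{\ast}$ is defined by duality, so this follows from $d\pr(\mfs_{(r)+})\subset\mfs_{\H,(r)+}$, which is part of Step 3.

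\textbf{Step 6 ($\G$-genericity, if it is needed).} The statement only asks that $X^{\ast}$ realize $\vartheta$. If genericity is also wanted, it comes for free. Any realizing element differs from one chosen via \cite[Lemma 5.4]{Oi25} by an element of $\mfs^{\ast}_{(-r)+}$. Condition \textbf{GE1} for $\vartheta$ depends only on the class modulo $\mfs^{\ast}_{(-r)+}$, so it is inherited.

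\textbf{Main obstacle.} The hardest step should be Step 3: showing that the isomorphism $\mfs_{\H}\cong\mfs_{\theta_{\bfS}}$, coming from $\bfS_{\H}\cong\bfS_{\theta_{\bfS}}$, respects Moy--Prasad filtrations and commutes with the chosen exponentials. My approach would be to pass to $E$, where both tori split and everything reduces to the statement $E^{\times}_{t}\otimes X_{\ast}\rightarrow E^{\times}_{t}\otimes X_{\ast,\theta}$. There the kernel and torsion of the coinvariant map have order prime to $p$, so the filtrations correspond.
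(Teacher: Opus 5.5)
Your argument is correct. It uses the same key input as the paper, Proposition \ref{prop:vartheta}, but runs in the opposite direction.

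The paper first takes a $\theta_{\bfS}$-invariant $X^{\ast}\in\mfs^{\ast}_{-r}$ realizing $\vartheta$. Such an element exists because $\vartheta|_{S_{0+}}$ is $\theta_{\bfS}$-invariant, via \cite[Lemma 5.4]{Oi25}. The paper then regards $X^{\ast}$ as an element of $(\mfs^{\ast})^{\theta_{\bfS}}\cong\mfs^{\ast}_{\theta_{\bfS}}\cong\mfs^{\ast}_{\H}$, and reads off from Proposition \ref{prop:vartheta} that the corresponding $Y^{\ast}$ realizes $\vartheta_{\H}$. You instead start from $Y^{\ast}$ and push it forward along $d\pr$.

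Your direction is a bit more economical. It only needs the forward inclusions $\pr(S_{t})\subset S_{\H,t}$ and $d\pr(\mfs_{t})\subset\mfs_{\H,t}$, which are pure functoriality. The $\theta_{\bfS}$-invariance of $X^{\ast}$ also comes for free from $\pr\circ\theta_{\bfS}=\pr$, so \cite[Lemma 5.4]{Oi25} is not needed.

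The paper's direction tacitly needs the converse information. It needs that $\pr(S_{s+})$ exhausts $S_{\H,s+}$ modulo $S_{\H,r+}$, so that $\vartheta$ pins down $\vartheta_{\H}$ on all of $S_{\H,s+:r+}$. It also needs that $Y^{\ast}$ lies in $\mfs^{\ast}_{\H,-r}$. This is the surjectivity issue in your ``main obstacle'' paragraph. It does hold, using that $p$ is odd, but your own argument never uses it.

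One small correction in Step 3: for a mock-exponential you should not assert $\pr(\exp Z)=\exp(d\pr Z)$ exactly, only modulo $S_{\H,r+}$. That is all you need, since $\vartheta_{\H}$ is trivial on $S_{\H,r+}$. It holds because on $\mfs_{s+:r+}$ (recall $s=r/2$) any mock-exponential induces the canonical Moy--Prasad isomorphism $\mfs_{s+:r+}\cong S_{s+:r+}$, and this isomorphism is functorial for homomorphisms of tame tori.
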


\begin{rem}
We caution that the diagram
\[
\xymatrix{
\hat{\bfS}\rtimes W_{F}\ar@{^{(}->}^-{\Lj_{\chi}}[r]&\hat{\G}\rtimes W_{F}\\
\hat{\bfS}_{\H}\rtimes W_{F}\ar@{^{(}->}[u]\ar@{^{(}->}^-{\Lj_{\chi_{\H}}}[r]&\hat{\H}\rtimes W_{F}\ar@{^{(}->}_-{\h\xi}[u]
}
\]
is not commutative in general although it is commutative at the positive depth level as observed in the proof of Proposition \ref{prop:vartheta}.
The non-commutativity of this diagram is crucially related to the computation of the transfer factor (Section \ref{subsubsec:tran-III-revisited}).
\end{rem}

\section{Twisted version of Kaletha's descent lemma}\label{sec:descent}

\subsection{Waldspurger's diagram}\label{subsec:diag}

We recall the notion of a ``diagram''.
This was introduced by Waldspurger in \cite[Section 3.2]{Wal08} first .
Then, a modified definition was introduced in \cite[I.1.10]{MW16-1}, which we follow in this paper.

\begin{defn}\label{defn:diag}
For $(\epsilon,\eta)\in H_{\ss}\times\t{G}_{\ss}$, \textit{a diagram associated to $(\epsilon,\eta)$} is a quadruple $D=(\B^{\flat},\T^{\flat},\B^{\dia},\T^{\dia})$ satisfying the following:
\begin{itemize}
\item
$\T^{\flat}$ is an $F$-rational maximal torus of $\H$,
\item
$(\B^{\flat},\T^{\flat})$ is a Borel pair of $\H$ which is $[\epsilon]$-stable,
\item
$\T^{\dia}$ is an $F$-rational maximal torus of $\G$,
\item
$(\B^{\dia},\T^{\dia})$ is a Borel pair of $\G$ which is $[\eta]$-stable,
\item
The Borel pairs $(\B^{\flat},\T^{\flat})$ and $(\B_{\H},\T_{\H})$ induce a unique isomorphism $\xi_{\flat}\colon\T^{\flat}\xrightarrow{\sim}\T_{\H}$ given by $\H$-conjugation.
Similarly, $(\B^{\dia},\T^{\dia})$ and $(\B,\T)$ induce a unique isomorphism $\xi_{\dia}\colon\T^{\dia}\xrightarrow{\sim}\T$ given by $\G$-conjugation.
Then the composition $\xi_{\flat}^{-1}\circ\xi\circ\xi_{\dia}$ is defined over $F$.
(We write $\xi_{D}$ for this composition.)
\item
If we let $g\in\G$ be an element such that $[g]=\xi_{\dia}$, then $\eta$ belongs to $[g]^{-1}(\t\T)$.
(We put $\t\T^{\dia}\colonequals \T^{\dia}\eta$ and write $\t\xi_{\dia}$ for the map $[g]\colon\t\T^{\dia}\rightarrow\t\T$.)
\item
Let $\nu_{D}\in\T$ be an element such that $[g](\eta)=\nu_{D}\theta$.
Then we have $\xi(\nu_{D})=\xi_{\flat}(\epsilon)$.
(We write $\mu_{D}$ for this element.)
In other words, if we define a map $\t\xi_{D}\colon\t\T^{\dia}\rightarrow\T^{\flat}$ by $\t{\xi}_{D}\colonequals \xi_{\flat}^{-1}\circ\xi\circ((-)\cdot\theta^{-1})\circ\t\xi_{\dia}$, then we have $\t{\xi}_{D}(\eta)=\epsilon$.
\end{itemize}
\[
\xymatrix{
\T^{\flat}\ar_-{\xi_{\flat}}[d]&&\T^{\dia}\ar^-{\xi_{\dia}}[d]\ar_-{\xi_{D}}@{->>}[ll]&\t\T^{\dia}=\T^{\dia}\eta\ar^-{\t\xi_{\dia}}[d]&\epsilon\ar^-{\xi_{\flat}}@{|->}[d]&&&\eta\ar^-{\t\xi_{\dia}}@{|->}[d]\ar_-{\t\xi_{D}}@{|->}[lll]\\
\T_{\H}\ar@{<-}_-{\xi}^-{\cong}[r]&\T_{\theta}&\T\ar@{->>}[l]&\t\T=\T\theta\ar^-{(-)\cdot\theta^{-1}}[l]&\mu_{D}&&\nu_{D}\ar@{|->}[ll]&\nu_{D}\theta\ar@{|->}[l]
}
\]
\end{defn}
\symdef{xi-D}{$\xi_{D}$}\symdef{xi-dia}{$\xi_{\dia}$}\symdef{xi-flat}{$\xi_{\flat}$}\symdef{xi-tilde-D}{$\tilde{\xi}_{D}$}\symdef{nu-D}{$\nu_{D}$}\symdef{mu-D}{$\mu_{D}$}

For $(\epsilon,\eta)\in H_{\ss}\times\t{G}_{\ss}$, let $\bfD(\epsilon,\eta)$\symdef{D-epsilon-eta}{$\bfD(\epsilon,\eta)$} denote the set of diagrams associated to $(\epsilon,\eta)$.

\begin{rem}
\begin{enumerate}
\item
We often simply write $\nu$ and $\mu$ for $\nu_{D}$ and $\mu_{D}$, respectively.
\item
The condition that $(\B^{\flat},\T^{\flat})$ is $[\epsilon]$-stable is equivalent to that $\epsilon\in\T^{\flat}$, which is furthermore equivalent to that $\T^{\flat}\subset\H_{\epsilon}$.
\item
Let $D=(\B^{\flat},\T^{\flat},\B^{\dia},\T^{\dia})\in\bfD(\epsilon,\eta)$.
Then $(\T^{\dia},\t\T^{\dia})$ is an $F$-rational twisted maximal torus of $(\G,\t\G)$ by Lemma \ref{lem:twisted-torus}.
In particular, $\T^{\nat}\colonequals \T^{\eta,\circ}$ is a maximal torus of $\G_{\eta}$ and $\T^{\dia}$ is recovered from $\T^{\nat}$ by $\T^{\dia}=\bfZ_{\G}(\T^{\nat})$ (see Proposition \ref{prop:twisted-tori}).
\item
If the set $\bfD(\epsilon,\eta)$ is not empty, then the stable conjugacy classes of $\epsilon$ and $\eta$ correspond in the sense of twisted endoscopy (see Section \ref{subsec:norm}).
\item
In general, even if the stable conjugacy classes of $\epsilon$ and $\eta$ correspond, the set $\bfD(\epsilon,\eta)$ might be empty.
See \cite[I.1.10]{MW16-1}.
\item
When $\epsilon$ is strongly $\t\G$-regular semisimple and $\eta$ is strongly regular semisimple, the set $\bfD(\epsilon,\eta)$ is not empty if and only if $\epsilon$ is a norm of $\eta$ by \cite[I.1.10, Lemme]{MW16-1}.
Furthermore, a diagram associated to $(\epsilon,\eta)$ is essentially unique.
We will investigate these facts later (Lemma \ref{lem:unique-diag}) in detail.
\end{enumerate}
\end{rem}

\begin{rem}\label{rem:Weyl}
Recall that the map $\xi\colon\T\twoheadrightarrow\T_{\theta}\xrightarrow{\sim}\T_{\H}$ induces an identification of the Weyl group $\Omega_{\H}(\T_{\H})$ with a subgroup of $\Omega_{\G}(\T)^{\theta}$.
For any diagram $D=(\B^{\flat},\T^{\flat},\B^{\dia},\T^{\dia})\in\bfD(\epsilon,\eta)$, the maps $\xi_{\dia}$ and $\xi_{\flat}$ induce isomorphisms $\Omega_{\H}(\T^{\flat})\xrightarrow{\sim}\Omega_{\H}(\T_{\H})$ and $\Omega_{\G}(\T^{\dia})\xrightarrow{\sim}\Omega_{\G}(\T)$, respectively.
The image of $\Omega_{\H_{\epsilon}}(\T^{\flat})\subset\Omega_{\H}(\T^{\flat})$ is contained in the image of $\Omega_{\G_{\eta}}(\T^{\nat})\subset\Omega_{\G}(\T^{\dia})$, hence we get an identification $\Omega_{\H_{\epsilon}}(\T^{\flat})\hookrightarrow \Omega_{\G_{\eta}}(\T^{\nat})$.
Since $\xi_{D}$ is $F$-rational, this identification is $F$-rational.
\end{rem}

\begin{lem}\label{lem:key}
Let $D\in\bfD(\epsilon,\eta)$.
Then the map $\t\xi_{D}$ is $F$-rational.
\end{lem}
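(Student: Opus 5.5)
The map $\t\xi_{D}\colon\t\T^{\dia}\rightarrow\T^{\flat}$ is defined as $\xi_{\flat}^{-1}\circ\xi\circ((-)\cdot\theta^{-1})\circ\t\xi_{\dia}$. Only the composite $\xi_{D}=\xi_{\flat}^{-1}\circ\xi\circ\xi_{\dia}\colon\T^{\dia}\rightarrow\T^{\flat}$ is known to be $F$-rational; the individual maps $\xi_{\flat}$, $\xi_{\dia}$, $\t\xi_{\dia}$ need not be. My plan is therefore to write $\t\xi_{D}$ as a translate of $\xi_{D}$ by an $F$-rational element of $\T^{\flat}$, using the base point $\eta$, which is itself $F$-rational.

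First, for $t\in\T^{\dia}$, use the bi-torsor structure $\t\xi_{\dia}(t\eta)=[g](t\eta)=[g](t)\cdot[g](\eta)=\xi_{\dia}(t)\nu_{D}\theta$. Hence
\[
\t\xi_{D}(t\eta)=\xi_{\flat}^{-1}\bigl(\xi(\xi_{\dia}(t)\nu_{D})\bigr)=\xi_{D}(t)\cdot\xi_{\flat}^{-1}(\xi(\nu_{D}))=\xi_{D}(t)\cdot\epsilon,
\]
where the last equality is the defining condition $\xi(\nu_{D})=\xi_{\flat}(\epsilon)$ of a diagram. Here $\xi$ is a homomorphism on $\T$ (factoring through $\T_{\theta}$) and $\xi_{\flat}^{-1}$ is a homomorphism, so both steps are legitimate. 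Thus $\t\xi_{D}(t\eta)=\xi_{D}(t)\epsilon$ for all $t\in\T^{\dia}$.

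Second, deduce rationality. For $\sigma\in\Gamma$ and $x=t\eta\in\t\T^{\dia}$, we have $\sigma(x)=\sigma(t)\sigma(\eta)=\sigma(t)\eta$, since $\eta\in\t G$ is $F$-rational and $\t\T^{\dia}=\T^{\dia}\eta$ is $F$-rational because $\T^{\dia}$ is. Then
\[
\t\xi_{D}(\sigma(x))=\xi_{D}(\sigma(t))\epsilon=\sigma(\xi_{D}(t))\sigma(\epsilon)=\sigma(\t\xi_{D}(x)),
\]
using that $\xi_{D}$ is $F$-rational and $\epsilon\in H$ is $F$-rational. So $\t\xi_{D}$ commutes with the Galois action, i.e. it is defined over $F$.

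The only point needing care is the first step, namely that $[g]$ respects the bi-torsor structure, $[g](t\eta)=[g](t)[g](\eta)$. This is immediate from the formula for the conjugation action on $\t\G$, which is a group-theoretic conjugation inside $\G\rtimes\langle\theta\rangle$. Beyond that there is no real obstacle; the lemma is essentially bookkeeping once the translate description is in hand.
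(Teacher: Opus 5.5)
Your proposal is correct and is essentially the paper's own proof. The paper likewise observes that $\tilde{\xi}_{D}(t\eta)=\xi_{D}(t)\epsilon$, i.e.\ $\tilde{\xi}_{D}=((-)\cdot\epsilon)\circ\xi_{D}\circ((-)\cdot\eta^{-1})$, and concludes from the $F$-rationality of $\eta$, $\epsilon$, and $\xi_{D}$.
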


\begin{proof}
Any element of $\t\T^{\dia}$ can be written as $t\eta$ with $t\in\T^{\dia}$.
Then the image of $t\eta$ under $\t\xi_{D}$ is given by $\xi_{D}(t)\epsilon$.
In other words, the map $\t\xi_{D}$ equals the composition $((-)\cdot\epsilon)\circ\xi_{D}\circ((-)\cdot\eta^{-1})$.
Since $\eta$, $\epsilon$, and $\xi_{D}$ are $F$-rational, so is $\t\xi_{D}$.
\end{proof}

\subsection{An equivalence relation on diagrams}\label{subsec:diag-eq}

Let $(\epsilon,\eta)\in H_{\ss}\times\t{G}_{\ss}$.
We introduce an equivalence relation on $\bfD(\epsilon,\eta)$ as follows:

\begin{defn}\label{defn:diag-eq}
We define $\sim$ to be the equivalence relation on $\bfD(\epsilon,\eta)$ generated by the following two equivalence relations:
Let $D=(\B^{\flat},\T^{\flat},\B^{\dia},\T^{\dia}), \bar{D}=(\bar{\B}^{\flat},\bar{\T}^{\flat},\bar{\B}^{\dia},\bar{\T}^{\dia})\in\bfD(\epsilon,\eta)$.
\begin{description}
\item[(i) $(\H_{\epsilon},\G_{\eta})$-conjugacy]
We say that $D$ and $\bar{D}$ are $(\H_{\epsilon},\G_{\eta})$-conjugate if there exists elements $h\in\H_{\epsilon}$ and $g\in\G_{\eta}$ such that 
\[
\bar{D}=({}^{h}\B^{\flat},{}^{h}\T^{\flat},{}^{g}\B^{\dia},{}^{g}\T^{\dia}).
\]
\item[(ii) $\Omega_{\H}$-conjugacy]
We say that $D$ and $\bar{D}$ are $\Omega_{\H}$-conjugate if there exists elements $w\in\Omega_{\H}(\T^{\flat})$ such that 
\[
\bar{D}=({}^{w}\B^{\flat},\T^{\flat},{}^{w}\B^{\dia},\T^{\dia}).
\]
Here, $w$ is also regarded as an element of $\Omega_{\G}(\T^{\dia})$ (see Remark \ref{rem:Weyl}).
\end{description}
\end{defn}

We write $\bbD(\epsilon,\eta)$\symdef{D-epsilon-eta-eq}{$\bbD(\epsilon,\eta)$} for the set $\bfD(\epsilon,\eta)/{\sim}$ of equivalence classes of diagrams associated to $(\epsilon,\eta)$.


\begin{rem}
When $\G$ is untwisted ($\theta$ is trivial) and $\H_{\epsilon}$ is quasi-split, the set $\bbD(\epsilon,\eta)$ is nothing but the set $\Xi(\H_{\epsilon},\G_{\eta})$ used in the proof of \cite[Theorem 6.3.4]{Kal19}.
\end{rem}

\begin{lem}\label{lem:unique-diag}
If $(\epsilon,\eta)\in\mcD$, then the set $\bbD(\epsilon,\eta)$ is a singleton.
Moreover, the maps $\xi_{D}$ and $\t{\xi}_{D}$ are independent of a diagram $D\in\bfD(\epsilon,\eta)$.
\end{lem}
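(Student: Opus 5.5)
By \cite[I.1.10, Lemme]{MW16-1}, the assumption $(\epsilon,\eta)\in\mcD$ gives $\bfD(\epsilon,\eta)\neq\emptyset$. So the work is to show that any two diagrams $D=(\B^{\flat},\T^{\flat},\B^{\dia},\T^{\dia})$ and $\bar{D}=(\bar{\B}^{\flat},\bar{\T}^{\flat},\bar{\B}^{\dia},\bar{\T}^{\dia})$ in $\bfD(\epsilon,\eta)$ are $\Omega_{\H}$-conjugate, and that $\Omega_{\H}$-conjugation does not change $\xi_D$ or $\t\xi_D$.

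\textbf{Step 1: the tori are forced.} Since $\epsilon$ is $\t\G$-strongly regular, it is regular, so $\H_{\epsilon}$ is a maximal torus of $\H$. By the remark following Definition \ref{defn:diag}, $\T^{\flat}\subset\H_{\epsilon}$, hence $\T^{\flat}=\H_{\epsilon}=\bar{\T}^{\flat}$. Likewise $\G_{\eta}$ is a torus containing $\T^{\nat}=\T^{\dia,\eta,\circ}$, so $\T^{\nat}=\G_{\eta}$. Proposition \ref{prop:twisted-tori} then gives $\T^{\dia}=\bfZ_{\G}(\G_{\eta})=\bar{\T}^{\dia}$. In particular, relation (i) of Definition \ref{defn:diag-eq} acts trivially here, because $\H_{\epsilon}$ and $\G_{\eta}$ are tori and normalize the relevant Borel pairs.

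\textbf{Step 2: normalize the Borels.} Apply $\Omega_{\H}(\T^{\flat})$-conjugation to $\bar{D}$ so that $\bar{\B}^{\flat}=\B^{\flat}$, and hence $\bar{\xi}_{\flat}=\xi_{\flat}$. Then $\B^{\dia}$ and $\bar{\B}^{\dia}$ are two $[\eta]$-stable Borel subgroups containing $\T^{\dia}$, so $\bar{\B}^{\dia}={}^{w}\B^{\dia}$ for some $w\in\Omega_{\G}(\T^{\dia})$. Transported to $\T$ via $\xi_{\dia}$, the element $w$ commutes with $\theta$, i.e.\ $w\in\Omega_{\G}^{\theta}$, and $\bar{\xi}_{\dia}=w'\circ\xi_{\dia}$ for the corresponding $w'\in\Omega_{\G}(\T)^{\theta}$. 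The last condition in Definition \ref{defn:diag}, applied to both diagrams, gives
\[
\xi(\nu_{\bar D})=\xi_{\flat}(\epsilon)=\xi(\nu_D),
\]
with $\nu_{\bar D}$ equal to the $w'$-translate of $\nu_D$ up to $(1-\theta)\T$. Thus the image of $\nu_D$ in $\T_{\theta}$ is fixed by $w'$.

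\textbf{Step 3 (main obstacle).} I must show that the strong regularity of $\eta$ forces the stabilizer in $\Omega_{\G}^{\theta}$ of this image in $\T_{\theta}$ to be trivial, so that $w'=1$ and $\bar{D}=D$. I would argue this via $\G^{\eta}$: a representative of $w$ can be adjusted by $\T^{\dia}$ so that it lies in $\G^{\eta}$. Since $\G^{\eta}$ is abelian and contains $\T^{\nat}$, this representative centralizes $\T^{\nat}$, hence lies in $\bfZ_{\G}(\T^{\nat})=\T^{\dia}$, so $w=1$. The delicate point is the adjustment by $\T^{\dia}$, i.e.\ solving $t\,\theta_{\bfS}(t)^{-1}=\text{(given element)}$ inside the coinvariant quotient. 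This is exactly where the identity $\xi(\nu_{\bar D})=\xi(\nu_D)$ in $\T_{\theta}$, as opposed to an identity in $\T$ itself, must be used.

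\textbf{Step 4: independence of $\xi_D$ and $\t\xi_D$.} Once $\bbD(\epsilon,\eta)$ is a singleton, it suffices to check invariance under relation (ii). Replacing $D$ by its $w$-conjugate replaces $\xi_{\flat}$ by $w\circ\xi_{\flat}$ and $\xi_{\dia}$ by $w\circ\xi_{\dia}$, with $w$ viewed simultaneously in $\Omega_{\H}$ and in $\Omega_{\G}^{\theta}$. Because $\xi$ is $\Omega_{\H}$-equivariant under the identification of Remark \ref{rem:Weyl}, we get $\xi_{\flat}^{-1}w^{-1}\xi w\xi_{\dia}=\xi_D$. The formula for $\t\xi_D$ in the proof of Lemma \ref{lem:key} then shows that $\t\xi_D$ is unchanged as well.
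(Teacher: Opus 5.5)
Your proposal is correct and follows essentially the paper's proof. The tori are forced by strong regularity, you normalize $\B^{\flat}$ by $\Omega_{\H}$, and the equality of images in $\T_{\theta}$ produces $t\in\T$ with ${}^{\bar{g}}\eta=t\cdot({}^{g}\eta)\cdot t^{-1}$, so $\bar{g}^{-1}tg\in\G^{\eta}$; strong regularity then finishes. The step you flag as delicate is in fact immediate, since equality in $\T_{\theta}=\T/(1-\theta)\T$ means precisely that the two elements differ by some $t\theta(t)^{-1}$, and your use of the abelianness of $\G^{\eta}$ together with $\bfZ_{\G}(\T^{\nat})=\T^{\dia}$ is only a cosmetic variant of the paper's appeal to $\G^{\eta}=\bfZ_{\G}^{\eta}\G_{\eta}$.
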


\begin{proof}
The non-emptiness of $\bfD(\epsilon,\eta)$ follows from \cite[Lemma 3.3.B]{KS99} (with the argument in the final paragraph in 29 page of \cite{KS99}).
See also \cite[I.1.10, Lemme]{MW16-1}.

We show that any two diagrams associated to $(\epsilon,\eta)$ are equivalent.
Let $D=(\B^{\flat},\T^{\flat},\B^{\dia},\T^{\dia}), \bar{D}=(\bar{\B}^{\flat},\bar{\T}^{\flat},\bar{\B}^{\dia},\bar{\T}^{\dia})\in\bfD(\epsilon,\eta)$.
Since $\epsilon$ is strongly regular semisimple, we have $\T^{\flat}=\H_{\epsilon}=\bar{\T}^{\flat}$.
Similarly we have $\T^{\nat}=\G_{\eta}=\bar{\T}^{\nat}$ (recall that both $\T^{\nat}$ and $\bar{\T}^{\nat}$ are maximal tori of $\G_{\eta}$ and that $\G_{\eta}$ is a torus by the strong regularity of $\eta$).
As we have $\T^{\dia}=Z_{\G}(\T^{\nat})$ and $\bar{\T}^{\dia}=Z_{\G}(\bar{\T}^{\nat})$, we get $\T^{\dia}=\bar{\T}^{\dia}$.

Since both $\B^{\flat}$ and $\bar{\B}^{\flat}$ are Borel subgroups of $\H$ containing $\T^{\flat}$, there exists an element $w\in\Omega_{\H}(\T^{\flat})$ such that ${}^{w}\B^{\flat}=\bar{\B}^{\flat}$.
Hence, by replacing $D$ with its $\Omega_{\H}(\T^{\flat})$-conjugate diagram $({}^{w}\B^{\flat},\T^{\flat},{}^{w}\B^{\dia},\T^{\dia})$, we may suppose that $\B^{\flat}=\bar{\B}^{\flat}$.

Let $g\in\G$ be an element satisfying $({}^{g}\B^{\dia},{}^{g}\T^{\dia})=(\B,\T)$.
Similarly, let $\bar{g}\in\G$ be an element satisfying $({}^{\bar{g}}\bar{\B}^{\dia},{}^{\bar{g}}\T^{\dia})=(\B,\T)$.
Since $\t\xi_{D}(\eta)=\epsilon=\t\xi_{\bar{D}}(\eta)$, we get
\[
\xi_{\flat}^{-1}\circ\xi({}^{g}\eta\cdot\theta^{-1})
=\xi_{\flat}^{-1}\circ\xi({}^{\bar{g}}\eta\cdot\theta^{-1}).
\]
In other words, ${}^{g}\eta\cdot\theta^{-1},{}^{\bar{g}}\eta\cdot\theta^{-1}\in\T$ map to the same element of $\T_{\theta}$ under the natural quotient map $\T\twoheadrightarrow\T_{\theta}$.
Hence we can find an element $t\in\T$ such that ${}^{\bar{g}}\eta\cdot\theta^{-1}=t\cdot({}^{g}\eta\cdot\theta^{-1})\cdot\theta(t)^{-1}$, equivalently, ${}^{\bar{g}}\eta=t\cdot({}^{g}\eta)\cdot t^{-1}$.
Thus $\bar{g}^{-1}tg$ belongs to the (full) centralizer $\G^{\eta}$ of $\eta$ in $\G$.
By \cite[Section 3.1]{Wal08}, the strong regularity of $\eta$ implies $\G^{\eta}=\bfZ_{\G}^{\eta}\G_{\eta}=\bfZ_{\G}^{\eta}\T^{\nat}$.
In particular, we can take an element $z\in \bfZ_{\G}^{\eta}$ such that $z\bar{g}^{-1}tg$ belongs to $\G_{\eta}$.
If we put $g'\colonequals z\bar{g}^{-1}tg\in\G_{\eta}$, then we have $({}^{g'}\B^{\dia},{}^{g'}\T^{\dia})=(\bar{\B}^{\dia},\bar{\T}^{\dia})$.
Hence $D$ and $\bar{D}$ are $(\H_{\epsilon},\G_{\eta})$-conjugate.

Finally, noting that $\H_{\epsilon}$ and $\G_{\eta}$ are tori, we see that $(\H_{\epsilon},\G_{\eta})$-conjugacy does not change the maps $\xi_{D}$ and $\t{\xi}_{D}$.
We also see that $\Omega_{\H}(\T^{\flat})$-conjugacy does not change $\xi_{D}$ and $\t{\xi}_{D}$.
Hence $\xi_{D}$ and $\t{\xi}_{D}$ are independent of the choice of $D\in\bfD(\epsilon,\eta)$.
\end{proof}

\begin{lem}\label{lem:diag-H-conj}
Let $D=(\B^{\flat},\T^{\flat},\B^{\dia},\T^{\dia})\in\bfD(\epsilon,\eta)$.
For any $F$-rational elliptic maximal torus $\bar{\T}^{\flat}$ of $\H_{\epsilon}$, there exists a diagram $(\bar{\B}^{\flat},\bar{\T}^{\flat},\bar{\B}^{\dia},\bar{\T}^{\dia})\in\bfD(\epsilon,\eta)$ which is equivalent to $D$.
\end{lem}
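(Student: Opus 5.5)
The idea is to move $D$ by an element of $\H_{\epsilon}$ and a matching element of $\G_{\eta}$, so the new diagram is $(\H_{\epsilon},\G_{\eta})$-conjugate to $D$. Since $\T^{\flat}$ and $\bar{\T}^{\flat}$ are both maximal tori of the connected reductive group $\H_{\epsilon}$, there is $h\in\H_{\epsilon}$ with ${}^{h}\T^{\flat}=\bar{\T}^{\flat}$. The real work is choosing $h$ so that the transported isomorphism $\xi_{\flat}^{-1}\circ\xi\circ\xi_{\dia}$ stays $F$-rational, and finding a partner $g\in\G_{\eta}$ on the $\G$-side.

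First I would transport the $F$-rational isomorphism $\xi_{D}\colon\T^{\dia}\to\T^{\flat}$ to the maximal tori of the descended groups. By Proposition \ref{prop:twisted-tori} and Remark (3) after Definition \ref{defn:diag}, $\T^{\nat}=\T^{\dia,\eta,\circ}$ is an $F$-rational maximal torus of $\G_{\eta}$. Restricting $\xi_{D}$ gives an $F$-rational isogeny $\T^{\nat}\to\T^{\flat}$, and it is compatible with the embedding of Weyl groups $\Omega_{\H_{\epsilon}}(\T^{\flat})\hookrightarrow\Omega_{\G_{\eta}}(\T^{\nat})$ of Remark \ref{rem:Weyl}. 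In this sense $\T^{\flat}$ and $\T^{\nat}$ are related tori in the pair $(\H_{\epsilon},\G_{\eta})$, in the manner of Waldspurger's descent.

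The key step is to produce $F$-rational $h\in\H_{\epsilon}$ and $g\in\G_{\eta}$ with ${}^{h}\T^{\flat}=\bar{\T}^{\flat}$ and with ${}^{g}\T^{\nat}$ an $F$-rational torus, such that $[g]^{-1}$, then $\xi_{D}$, then $[h]$ is $F$-rational. The cocycle $\sigma\mapsto h^{-1}\sigma(h)$ takes values in $N_{\H_{\epsilon}}(\T^{\flat})$ and maps to $\Omega_{\H_{\epsilon}}(\T^{\flat})$. I would choose $h$ so that this image lifts to a cocycle in $\T^{\flat}$ twisted by Weyl elements. Using $\Omega_{\H_{\epsilon}}\hookrightarrow\Omega_{\G_{\eta}}$, I would then take the corresponding Weyl-valued class on the $\G_{\eta}$-side and realize it as $g^{-1}\sigma(g)$ for some $g\in\G_{\eta}$. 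This realization step is the main obstacle. For it I would use ellipticity of $\bar{\T}^{\flat}$, so that the relevant $H^{1}$ of the elliptic torus surjects, together with Kottwitz's results, to lift the torus class into $\G_{\eta}$. Quasi-splitness of $\G_{\eta}$ is not available, but the class comes from $\T^{\nat}$, which is already inside $\G_{\eta}$, so its image in $H^{1}(F,\G_{\eta})$ should be trivial, just as the class of $h$ is trivial in $H^{1}(F,\H_{\epsilon})$. Transferring this triviality across the isogeny $\T^{\nat}\to\T^{\flat}$ is the delicate point, and it is exactly where ellipticity enters.

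Once $h$ and $g$ are found, I set $\bar{D}=({}^{h}\B^{\flat},{}^{h}\T^{\flat},{}^{g}\B^{\dia},{}^{g}\T^{\dia})$, noting that ${}^{g}\T^{\dia}=\bfZ_{\G}({}^{g}\T^{\nat})$ is $F$-rational. The remaining axioms of Definition \ref{defn:diag} survive because $h$ fixes $\epsilon$ and $g$ fixes $\eta$. In particular ${}^{h}\B^{\flat}$ is $[\epsilon]$-stable, ${}^{g}\B^{\dia}$ is $[\eta]$-stable, and $\t\xi_{\bar D}(\eta)=\epsilon$ holds because $\mu$ and $\nu$ are unchanged. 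By construction $\bar{D}$ is $(\H_{\epsilon},\G_{\eta})$-conjugate to $D$, hence equivalent to it.
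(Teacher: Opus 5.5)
Your outline is the paper's: conjugate by $h\in\H_{\epsilon}$ with ${}^{h}\T^{\flat}=\bar{\T}^{\flat}$, push the Weyl-valued cocycle $\sigma\mapsto[h^{-1}\sigma(h)]$ through $\Omega_{\H_{\epsilon}}(\T^{\flat})\hookrightarrow\Omega_{\G_{\eta}}(\T^{\nat})$, realize it by some $g\in\G_{\eta}$, and put $\bar{D}=({}^{h}\B^{\flat},{}^{h}\T^{\flat},{}^{g}\B^{\dia},{}^{g}\T^{\dia})$. Your check of the remaining axioms of a diagram is fine, apart from the slip of calling $h$ $F$-rational, which it cannot be in general.

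The gap is the realization step. You correctly single it out as the crux but do not prove it. Since $\G_{\eta}$ need not be quasi-split, a Weyl-valued cocycle on $\T^{\nat}$ need not be of the form $[g^{-1}\sigma(g)]$ with $g\in\G_{\eta}$ at all, because the corresponding twisted torus may not embed in $\G_{\eta}$. The obstruction is not the vanishing of a class in $H^{1}(F,\G_{\eta})$ ``coming from $\T^{\nat}$''; no such class controls the problem. The triviality of the class of $h$ in $H^{1}(F,\H_{\epsilon})$ is automatic and says nothing on the $\G_{\eta}$-side.

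The missing idea is a detour through the quasi-split inner form. Fix an inner twist $\psi_{\eta}\colon\G_{\eta}\to\G_{\eta}^{\ast}$ whose restriction $\T^{\nat}\to\T^{\nat\ast}\colonequals\psi_{\eta}(\T^{\nat})$ is $F$-rational. Then $\Omega_{\G_{\eta}}(\T^{\nat})\cong\Omega_{\G_{\eta}^{\ast}}(\T^{\nat\ast})$ is $\Gamma$-equivariant.

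In the quasi-split group $\G_{\eta}^{\ast}$, \cite[Lemma 2.1]{Kot82} produces $g^{\ast}\in\G_{\eta}^{\ast}$ realizing the transported cocycle. Hence $\bar{\T}^{\nat\ast}\colonequals{}^{g^{\ast}}\T^{\nat\ast}$ is $F$-rational, and $[h]\circ\xi_{D}\circ\psi_{\eta}^{-1}\circ[g^{\ast}]^{-1}\colon\bar{\T}^{\nat\ast}\to\bar{\T}^{\flat}$ is an $F$-rational isogeny.

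Only now does ellipticity enter. It forces $\bar{\T}^{\nat\ast}$ to be elliptic in $\G_{\eta}^{\ast}$ (via the isogeny and the comparison of centres). Elliptic maximal tori of a quasi-split group transfer to every inner form \cite[Section 10]{Kot86}. Cohomologically, $H^{1}(F,\bar{\T}^{\nat\ast}_{\ad})\to H^{1}(F,\G_{\eta,\ad}^{\ast})$ is surjective, so it hits the generally nontrivial class of the inner twist $\G_{\eta}$, not a trivial class.

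This gives $g^{\ast\prime}\in\G_{\eta}^{\ast}$ with $\psi_{\eta}^{-1}\circ[g^{\ast\prime}]$ $F$-rational on $\bar{\T}^{\nat\ast}$. Then $g\colonequals\psi_{\eta}^{-1}(g^{\ast\prime}g^{\ast})\in\G_{\eta}$ is the element you want. Indeed, $[h]\circ\xi_{D}\circ[g]^{-1}$ is the composite of $[g^{\ast\prime}]^{-1}\circ\psi_{\eta}$ with the $F$-rational isogeny above, hence $F$-rational, and ${}^{g}\T^{\nat}=\psi_{\eta}^{-1}([g^{\ast\prime}](\bar{\T}^{\nat\ast}))$ is $F$-rational.
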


\begin{proof}
Let $g\in\G$ be an element satisfying $({}^{g}\B^{\dia},{}^{g}\T^{\dia})=(\B,\T)$ (i.e., $\xi_{\dia}=[g]$).
Similarly, let $h\in\H$ be an element satisfying $({}^{h}\B^{\flat},{}^{h}\T^{\flat})=(\B_{\H},\T_{\H})$ (i.e., $\xi_{\flat}=[h]$).

Since both $\T^{\flat}$ and $\bar{\T}^{\flat}$ are maximal tori of $\H_{\epsilon}$, there exists an element $h'\in\H_{\epsilon}$ satisfying ${}^{h'}\T^{\flat}=\bar{\T}^{\flat}$.
We put $\bar{h}\colonequals hh'^{-1}$ (hence $^{\bar{h}}\bar{\T}^{\flat}=\T_{\H}$).
We define a Borel subgroup $\bar{\B}^{\flat}$ containing $\bar{\T}^{\flat}$ by $\bar{\B}^{\flat}\colonequals {}^{\bar{h}^{-1}}\B_{\H}$ (hence $^{\bar{h}}\bar{\B}^{\flat}=\B_{\H}$).

Let us construct $\bar{\T}^{\dia}$.
For this, we first take a quasi-split inner form $\G_{\eta}^{\ast}$ of $\G_{\eta}$ and an inner twist $\psi_{\eta}\colon\G_{\eta}\rightarrow\G_{\eta}^{\ast}$.
Since $\G_{\eta}^{\ast}$ is quasi-split, the maximal torus $\T^{\nat}$ of $\G_{\eta}$ transfers to an $F$-rational maximal torus $\T^{\nat\ast}$ of $\G_{\eta}^{\ast}$ (see, e.g., \cite[Lemma 3.2.2]{Kal19}).
More precisely, by composing a $\G_{\eta}^{\ast}$-conjugation with $\psi_{\eta}$ if necessary, we may assume that $\psi_{\eta}|_{\T^{\nat}}\colon\T^{\nat}\rightarrow\T^{\nat\ast}\colonequals \psi_{\eta}(\T^{\nat})$ is an $F$-rational isomorphism.
Then the inner twist $\psi_{\eta}$ induces a $\Gamma$-equivariant isomorphism $\Omega_{\G_{\eta}}(\T^{\nat})\cong \Omega_{\G_{\eta}^{\ast}}(\T^{\nat\ast})$.

Since we have ${}^{h'}\T^{\flat}=\bar{\T}^{\flat}$ and $h'\in\H_{\epsilon}$, the map $\sigma\mapsto[\sigma(h')^{-1}h']$ gives a $1$-cocycle of $\Gamma$ valued in $\Omega_{\H_{\epsilon}}(\T^{\flat})$.
Then, by the $\Gamma$-equivariant identifications of Weyl groups $\Omega_{\H_{\epsilon}}(\T^{\flat})\subset\Omega_{\G_{\eta}}(\T^{\nat})$ (see Remark \ref{rem:Weyl}) and $\Omega_{\G_{\eta}}(\T^{\nat})\cong \Omega_{\G_{\eta}^{\ast}}(\T^{\nat\ast})$, we may regard $\sigma\mapsto[\sigma(h')^{-1}h']$ as a $1$-cocycle of $\Gamma$ valued in $\Omega_{\G_{\eta}^{\ast}}(\T^{\nat\ast})$.
By applying \cite[Lemma 2.1]{Kot82} to $(\T^{\nat\ast},\G_{\eta}^{\ast})$, we take an element $g^{\ast}\in\G_{\eta}^{\ast}$ such that $[\sigma(g^{\ast})^{-1}g^{\ast}]=[\sigma(h')^{-1}h']$.
We put $\bar{\T}^{\nat\ast}\colonequals {}^{g^{\ast}}\T^{\nat\ast}$.
Then the map 
\[
\bar{\T}^{\nat\ast}
\xrightarrow{[g^{\ast}]^{-1}}
\T^{\nat\ast}
\xrightarrow{\psi_{\eta}^{-1}}
\T^{\nat}
\subset
\T^{\dia}
\xrightarrow{\xi_{D}}
\T^{\flat}
\xrightarrow{[h']}
\bar{\T}^{\flat}
\]
is defined over $F$.

Note that the maximal torus $\bar{\T}^{\nat\ast}$ is elliptic in $\G_{\eta}^{\ast}$.
Indeed, the above homomorphism $\bar{\T}^{\nat\ast}\rightarrow\bar{\T}^{\flat}$ is locally isomorphic (isomorphic at the Lie algebra level) since $\T^{\nat}$ is the identity component of the $[\eta]$-invariant of $\T^{\dia}$ and the map $\T^{\dia}\rightarrow\T^{\flat}$ induces an isomorphism between the $[\eta]$-coinvariant of $\T^{\dia}$ and $\T^{\flat}$.
Thus, since $\bar{\T}^{\flat}$ is elliptic in $\H$ and the center of $\H$ is smaller than that of $\G_{\eta}^{\ast}$, $\bar{\T}^{\nat\ast}$ is elliptic in $\G_{\eta}^{\ast}$ (later, we will review a description of the relation between these centers; see Section \ref{subsec:decomposition}).
Therefore $\bar{\T}^{\nat\ast}$ transfers to $\G_{\eta}$ (see \cite[Section 10]{Kot86} or \cite[Lemma 3.2.1]{Kal19}).
In other words, there exists an element $g^{\ast\prime}\in\G_{\eta}^{\ast}$ such that $\psi_{\eta}^{-1}\circ[g^{\ast\prime}]\colon\bar{\T}^{\nat\ast}\rightarrow\bar{\T}^{\nat}\colonequals \psi_{\eta}^{-1}\circ[g^{\ast\prime}](\bar{\T}^{\nat\ast})$ is an $F$-rational isomorphism.
Note that then, by putting $g'\colonequals \psi_{\eta}^{-1}(g^{\ast\prime}g^{\ast})\in\G_{\eta}$, we have $\bar{\T}^{\nat}={}^{g'}\T^{\nat}$.

We define $\bar{\T}^{\dia}$ by $\bar{\T}^{\dia}\colonequals \bfZ_{\G}(\bar{\T}^{\nat})={}^{g'}\T^{\dia}$.
We put $\bar{\B}^{\dia}={}^{g'}\B^{\dia}$.
Let $\bar{D}\colonequals (\bar{\B}^{\flat},\bar{\T}^{\flat},\bar{\B}^{\dia},\bar{\T}^{\dia})$.
By construction, it can be easily seen that the map $\xi_{\bar{D}}$ determined by $\bar{D}$, which is given by $[h']\circ\xi_{D}\circ[g']^{-1}$, is $F$-rational.
\[
\xymatrix{
\T^{\nat}\ar^-{[g']}[d]&&\T^{\nat\ast}\ar_-{\psi_{\eta}^{-1}}[ll]\ar^-{[g^{\ast}]}[d]&&\T^{\flat}\ar_-{[h']}[d]&&\T^{\dia}\ar^-{[g']}[d]\ar_-{\xi_{D}}[ll]\\
\bar{\T}^{\nat}&&\bar{\T}^{\nat\ast}\ar_-{\psi_{\eta}^{-1}\circ[g^{\ast\prime}]}[ll]&&\bar{\T}^{\flat}&&\bar{\T}^{\dia}\ar^-{\xi_{\bar{D}}}[ll]
}
\]

Moreover, since $h'\in\H_{\epsilon}$ and $g'\in\G_{\eta}$, we have $\t\xi_{\bar{D}}(\eta)=\epsilon$.
Thus $\bar{D}$ is a diagram associated to $(\epsilon,\eta)$ and $(\H_{\epsilon},\G_{\eta})$-conjugate to $D$.
\end{proof}

\subsection{Kaletha's descent lemma}\label{subsec:descent-lem}
Suppose that we are in the situation of Section \ref{sec:theta-stable}.
In particular, we have the sets $\t{\mcJ}^{\G}_{G}$ and $\mcJ^{\H}_{H}$ parametrizing the ($\theta$-stable) members of our $L$-packets $\Pi_{\phi}^{\G}$ and $\Pi_{\phi_{\H}}^{\H}$.

Let $\jmath\colon\T\rightarrow\bfS$ and $\jmath_{\H}\colon\T_{\H}\rightarrow\bfS_{\H}$ be the duals to $\hat{\jmath}\colon\hat{\bfS}\rightarrow\hat{\T}$ and $\hat{\jmath}_{\H}\colon\hat{\bfS}_{\H}\rightarrow\hat{\T}_{\H}$, respectively.
Since both $\T$ and $\bfS$ are $F$-rational, for any $\sigma\in\Gamma$, the map $a_{\jmath,\sigma}\colonequals \sigma(\jmath)^{-1}\circ\jmath$ is an automorphism of $\T$.
Hence we get a $1$-cocycle $a_{\jmath}\colon\Gamma\rightarrow\Aut(\T)\colon\sigma\mapsto a_{\jmath,\sigma}$.\symdef{a-j-math}{$a_{\jmath}$ ($a_{\jmath,\sigma}$)}
We define a $1$-cocycle $a_{\jmath_{\H}}\colon\Gamma\rightarrow\Aut(\T_{\H})$ in a similar way.\symdef{a-j-math-H}{$a_{\jmath_{\H}}$ ($a_{\jmath_{\H},\sigma}$)}

Recall that any $j\in\t\mcJ^{\G}_{G}$ can be written as $j=[g]\circ\jmath^{-1}$ for some $g\in\G$.
If we define a $1$-cocycle $a_{j}\colon\Gamma\rightarrow\Omega_{\G}$ by $\sigma\mapsto a_{j,\sigma}\colonequals [\sigma(g)^{-1}g]$, then this does not depend on the choice of $g\in\G$.\symdef{a-j}{$a_{j}$ ($a_{j,\sigma}$)}
Similarly, for any $j_{\H}\in\mcJ^{\H}_{H}$, we can define a $1$-cocycle $a_{j_{\H}}\colon\Gamma\rightarrow\Omega_{\H}$.\symdef{a-j-H}{$a_{j_{\H}}$ ($a_{j_{\H},\sigma}$)}

\begin{lem}\label{lem:cocycle}
For any $j_{\H}\in\mcJ^{\H}_{H}$ and $j\in\t\mcJ^{\G}_{G}$, we have $a_{j}=a_{\jmath}=a_{\jmath_{\H}}=a_{j_{\H}}$.
Here, we regard $\Omega_{\H}$, $\Omega_{\G}$, and $\Aut(\T_{\H})$ as subgroups of $\Aut(\T)$ so that the equalities make sense.
\end{lem}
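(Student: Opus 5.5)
The plan is to split the chain of equalities into three pieces: $a_{j}=a_{\jmath}$, $a_{j_{\H}}=a_{\jmath_{\H}}$, and $a_{\jmath}=a_{\jmath_{\H}}$. The first two are formal consequences of $F$-rationality. The third is the real content, and I would prove it on the dual side using $\phi=\h\xi\circ\phi_{\H}$.

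\emph{Step 1: $a_{j}=a_{\jmath}$ (and likewise $a_{j_{\H}}=a_{\jmath_{\H}}$).}
Write $j=[g]\circ\jmath^{-1}$ with $g\in\G$; this is possible by the discussion at the beginning of Section \ref{subsec:theta-twist-L}. For $\sigma\in\Gamma$ we have $\sigma(j)=[\sigma(g)]\circ\sigma(\jmath)^{-1}$. Since $j$ is $F$-rational, $\sigma(j)=j$, and rearranging gives
\[
\sigma(\jmath)^{-1}\circ\jmath=[\sigma(g)^{-1}g]|_{\T}.
\]
Hence $\sigma(g)^{-1}g$ normalizes $\T$, its class in $\Omega_{\G}\subset\Aut(\T)$ is $a_{j,\sigma}$, and this class equals $a_{\jmath,\sigma}$. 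The same argument applies verbatim to $j_{\H}=[h]\circ\jmath_{\H}^{-1}$. Note that only $F$-rationality of $j$ is used, not the twisted structure.

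\emph{Step 2: $a_{\jmath}=a_{\jmath_{\H}}$ inside $\Aut(\T)$.}
Here $\Aut(\T_{\H})$ is viewed inside $\Aut(\T)$ through $\xi\colon\T\twoheadrightarrow\T_{\theta}\cong\T_{\H}$. I would dualize the problem. The cocycle $a_{\jmath,\sigma}$ measures the discrepancy between the $\Gamma$-action on $\T$ and the $\Gamma$-action on $\bfS$ transported through $\jmath$. Dually, $\hat{a}_{\jmath,\sigma}$ is the element of $\Omega_{\hat\G}$ comparing two actions on $\hat\T$: the given (splitting-preserving) action $\sigma_{\hat\T}$, and the action $\hat\jmath\circ\sigma_{\hat\bfS}\circ\hat\jmath^{-1}$. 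By the proof of Lemma \ref{lem:rationality-of-theta}, the latter equals $[\phi(\sigma)]|_{\hat\T}$. Writing $\phi(\sigma)=n_{\sigma}\rtimes\sigma$ with $n_{\sigma}\in\bfN_{\hat\G}(\hat\T)$, we get $\hat a_{\jmath,\sigma}=$ the image $\bar n_{\sigma}$ of $n_{\sigma}$ in $\Omega_{\hat\G}$, up to the inversion and ordering conventions, which I would fix once. In the same way, $\hat a_{\jmath_{\H},\sigma}$ is the class of $m_{\sigma}$, where $\phi_{\H}(\sigma)=m_{\sigma}\rtimes\sigma$ and $m_{\sigma}\in\bfN_{\hat\H}(\hat\T_{\H})$.

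Since $\phi=\h\xi\circ\phi_{\H}$, it then suffices to check two things:
\begin{enumerate}
\item $\h\xi(1\rtimes\sigma)$ acts on $\hat\T$ by the standard action $\sigma_{\hat\T}$, possibly composed with an element of $\hat\T$, which does not affect Weyl classes;
\item $\h\xi(\bfN_{\hat\H}(\hat\T_{\H}))\subset\bfN_{\hat\G}(\hat\T)$, and on Weyl groups this inclusion is the embedding $\Omega_{\H}\hookrightarrow\Omega_{\G}^{\theta}$ of Section \ref{subsec:norm}.
\end{enumerate}
Given (1) and (2), the classes of $n_{\sigma}$ and $\h\xi(m_{\sigma})$ agree in $\Omega_{\hat\G}$. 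Dualizing back via $\xi$ then yields $a_{\jmath,\sigma}=a_{\jmath_{\H},\sigma}$.

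\emph{Main obstacle: point (1).}
We have normalized so that $\hat{\bfB}_{\H}$ and $\hat\T_{\H}$ are $\Gamma$-stable with $\h\xi(\hat{\bfB}_{\H},\hat\T_{\H})\subset(\hat{\bfB},\hat\T)$, and $\mathcal{H}=\L\H$. So $\h\xi(1\rtimes\sigma)=x_{\sigma}\rtimes\sigma$, where $[x_{\sigma}]\circ\sigma_{\hat\G}$ restricts on $\hat\H=\hat\G_{s\L\theta}$ to an action preserving $(\hat{\bfB}_{\H},\hat\T_{\H})$. My first approach to showing $x_{\sigma}\in\hat\T$ is as follows. The element $[x_{\sigma}]$ preserves $\hat\T_{\H}\cong\hat\T^{\hat\theta,\circ}$, hence preserves its centralizer $\hat\T$ (Proposition \ref{prop:twisted-tori} on the dual side), so $x_{\sigma}\in\bfN_{\hat\G}(\hat\T)$. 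Moreover $[x_{\sigma}]$ maps $\hat{\bfB}\cap\hat\H$ into $\hat{\bfB}$. Since $\hat{\bfB}\cap\hat\H$ contains regular elements of $\hat\T^{\hat\theta,\circ}$-type, this should force the Weyl class of $x_{\sigma}$ to fix a $\hat\theta$-regular chamber, and hence to be trivial. If that argument is awkward, the fallback is to bypass (1) altogether. Both $\hat a_{\jmath}$ and $\h\xi\circ\hat a_{\jmath_{\H}}$ are computed relative to the same $\Gamma$-action on $\hat\T_{\H}\subset\hat\T$, namely the one induced by $\h\xi(1\rtimes\sigma)$, and the standard action agrees with it on $\hat\T_{\H}$. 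One can then compare them on $X^{*}(\hat\T)\otimes\Q$ via the restriction to $\hat\T^{\hat\theta,\circ}$, using that $\Omega_{\G}^{\theta}$ acts faithfully on $\T^{\theta,\circ}$.
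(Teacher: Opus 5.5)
Your Step~1 is exactly the paper's argument. For $a_{\jmath}=a_{\jmath_{\H}}$ you take a different, dual-side route, and your primary justification of point~(1) does not work. You only know that $[x_{\sigma}]\circ\sigma_{\hat{\G}}$ preserves $\hat{\xi}(\hat{\mathbf{B}}_{\H},\hat{\T}_{\H})$, not that $[x_{\sigma}]$ itself maps $\hat{\mathbf{B}}\cap\hat{\xi}(\hat{\H})$ into $\hat{\mathbf{B}}$. More seriously, even granting that, stabilizing a Borel subgroup of $\hat{\H}$ only fixes an $\hat{\H}$-chamber in $X_{\ast}(\hat{\T}^{\hat{\theta},\circ})_{\mathbb{R}}$. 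Such a chamber is a union of several chambers for $\Omega_{\hat{\G}}^{\hat{\theta}}$, so the Weyl class of $x_{\sigma}$ need not be trivial.

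Here is an example. Take $\hat{\G}=\GL_{2m}(\mathbb{C})$ and $\hat{\xi}(\hat{\H})=\mathrm{SO}_{2m}(\mathbb{C})$, with $W_{F}$ acting through a quadratic character by the pinned outer automorphism. For $\sigma$ outside the kernel of that character, one has $\hat{\xi}(1\rtimes\sigma)=w_{0}\rtimes\sigma$, where $w_{0}$ is the permutation matrix exchanging $e_{m}$ and $e_{m+1}$. All the paper's normalizations hold ($s\in\hat{\T}$, $\hat{\xi}^{-1}(\hat{\mathbf{B}},\hat{\T})$ $\Gamma$-stable), and $[w_{0}]$ stabilizes $\hat{\mathbf{B}}\cap\hat{\xi}(\hat{\H})$, yet its Weyl class is a transposition.

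So (1) does not follow from the normalization of $(\hat{\mathbf{B}}_{\H},\hat{\T}_{\H})$. It is equivalent to the $\Gamma$-equivariance of $\hat{\xi}|_{\hat{\T}_{\H}}$, i.e.\ to the $F$-rationality of $\xi\colon\T_{\theta}\to\T_{\H}$ asserted in Section~\ref{subsec:norm}; in the example this rationality fails. Your fallback works precisely because its premise is that rationality statement: the standard action and the one induced by $\hat{\xi}(1\rtimes\sigma)$ agree on $\hat{\T}_{\H}$. That same premise gives (1) directly. It makes $[x_{\sigma}]$ trivial on $\hat{\T}^{\hat{\theta},\circ}$, hence $x_{\sigma}\in\mathbf{Z}_{\hat{\G}}(\hat{\T}^{\hat{\theta},\circ})=\hat{\T}$. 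Use this as the argument, citing the rationality of $\xi$ explicitly, and your Step~2 is correct.

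Compared with the paper: what you redo with normalizers in $\hat{\G}$ is the computation from $\phi=\hat{\xi}\circ\phi_{\H}$ already carried out in Section~\ref{subsec:L-packet-descent}. There $\hat{\jmath}^{-1}\circ\hat{\xi}\circ\hat{\jmath}_{\H}$ is shown to be $\Gamma$-equivariant, and the paper simply quotes this. Both $\xi_{\bfS}=\jmath_{\H}\circ\xi\circ\jmath^{-1}$ and $\xi$ are $F$-rational, so applying $\sigma$ gives $\sigma(\jmath_{\H})^{-1}\circ\jmath_{\H}\circ\xi=\xi\circ\sigma(\jmath)^{-1}\circ\jmath$. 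That is, $a_{\jmath_{\H},\sigma}\circ\xi=\xi\circ a_{\jmath,\sigma}$, which is the claimed equality under the identification through $\xi$. This avoids your points (1) and (2) and the faithfulness argument altogether. Your version makes visible exactly where the factorization of $\phi$ and the rationality of $\xi$ enter, but it is longer and rests on the same two inputs.
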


\begin{proof}
Let $g\in\G$ be an element satisfying $j=[g]\circ\jmath^{-1}$.
As $j$ is defined over $F$, for any $\sigma\in\Gamma$, we have $\sigma([g]\circ\jmath^{-1})=[g]\circ\jmath^{-1}$, which implies that $a_{\jmath,\sigma}=a_{j,\sigma}$, hence $a_{\jmath}=a_{j}$.
Similarly, we also have $a_{\jmath_{\H}}=a_{j_{\H}}$.
Thus it is enough to show that $a_{\jmath}=a_{\jmath_{\H}}$.
By construction, the map $\bfS\twoheadrightarrow\bfS_{\theta_{\bfS}}\cong\bfS_{\H}$ (say $\xi_{\bfS}$) is the dual to $\hat{\jmath}^{-1}\circ\hat{\xi}\circ\hat{\jmath}_{\H}$, hence given by $\jmath_{\H}\circ\xi\circ\jmath^{-1}$.
Since $\xi_{\bfS}$ is $F$-rational, for any $\sigma\in\Gamma$, we have $\sigma(\jmath_{\H}\circ\xi\circ\jmath^{-1})=\jmath_{\H}\circ\xi\circ\jmath^{-1}$, which implies the desired assertion (recall that the identification $\Omega_{\H}\subset\Omega_{\G}^{\theta}$ is given through $\xi$).
\end{proof}

For a semisimple element $\eta\in\t{G}_{\ss}$, we define $\t\mcJ_{\G_{\eta}}^{\G}$ to be the set 
\[
\t\mcJ_{\G_{\eta}}^{\G}
\colonequals 
\{j\colon\t\bfS\hookrightarrow\t\G \mid \text{$j$ is $F$-rational, $j\sim_{\G}\jmath^{-1}$, and $\eta\in\t{S}_{j}$}\} / {\sim_{\G_{\eta}}},
\]\symdef{J-tilde-G-G-eta}{$\tilde{\mathcal{J}}_{\G_{\eta}}^{\G}$}
i.e., the set of $\G_{\eta}$-conjugacy classes of $F$-rational $\h{\j}$-admissible embeddings $j$ of a twisted maximal torus satisfying $\eta\in\t{S}_{j}$.

Similarly, for a semisimple element $\epsilon\in H_{\ss}$, we define $\mcJ_{\H_{\epsilon}}^{\H}$ to be the set 
\[
\mcJ_{\H_{\epsilon}}^{\H}
\colonequals 
\{\text{$j_{\H}\colon\bfS_{\H}\hookrightarrow\H$}\mid \text{$j_{\H}$ is $F$-rational, $j_{\H}\sim_{\H}\jmath_{\H}^{-1}$, and $\epsilon\in S_{j_{\H}}$}\} / {\sim_{\H_{\epsilon}}},
\]\symdef{J-H-H-epsilon}{$\mathcal{J}_{\H_{\epsilon}}^{\H}$}
i.e., the set of $\H_{\epsilon}$-conjugacy classes of $F$-rational $\h\j_{\H}$-admissible embeddings of $\bfS_{\H}$ into $\H$ satisfying $\epsilon\in S_{j_{\H}}$ (or equivalently, $\bfS_{j_{\H}}$ is contained in $\H_{\epsilon}$).
Here, to make the notation lighter, we write $\bfS_{j_{\H}}\colonequals \bfS_{\H,j_{\H}}=j_{\H}(\bfS_{\H})$.

In the following, we fix a semisimple element $\eta\in\t{G}_{\ss}$.
Let $\mfH_{\eta}\subset H_{\ss}$ be a set of representatives for the stable conjugacy classes of semisimple elements of $H$ corresponding to $\eta$ such that $\H_{y}$ is quasi-split for any $y\in\mfH_{\eta}$.\symdef{H-eta}{$\mfH_{\eta}$}

Now we define a map
\[
\ul{\tran}\colon\bigsqcup_{y\in \mfH_{\eta}} \bbD(y,\eta)\times\mcJ_{\H_{y}}^{\H}\rightarrow\t\mcJ_{\G_{\eta}}^{\G}
\]
\symdef{tran-ul}{$\underline{\mathfrak{tran}}$}
in the following manner.
Let $D=(\B^{\flat},\T^{\flat},\B^{\dia},\T^{\dia})\in\bbD(y,\eta)$ for $y\in\mfH_{\eta}$ and $j_{\H}=[h]\circ \jmath_{\H}^{-1}\in\mcJ^{\H}_{\H_{y}}$ ($h\in\H$).
Since the torus $\bfS_{j_{\H}}$ is elliptic in $\H$, we may assume that $\T^{\flat}=\bfS_{j_{\H}}$ by replacing $D$ with its equivalent diagram by Lemma \ref{lem:diag-H-conj}.
We take an element $h^{\flat}\in\H$ and $g^{\dia}\in\G$ such that $\xi_{\flat}=[h^{\flat}]$ and $\xi_{\dia}=[g^{\dia}]$, respectively.
Then $n_{\H}\colonequals h^{\flat}h\in\H$ belongs to $\bfN_{\H}(\T_{\H})$.
We take an element $n\in \bfN_{\G^{\theta,\circ}}(\T)$ such that $[n]\in\Omega_{\G}^{\theta}$ is equal to $[n_{\H}]\in\Omega_{\H}\subset\Omega_{\G}^{\theta}$ (we can take $n$ from $\G^{\theta,\circ}$; see \cite[Section 1.1]{KS99}).
We define an element $\ul{\tran}(D,j_{\H})$ of $\t\mcJ^{\G}_{\G_{\eta}}$ to be the following embedding $(j,\t{j})$ of $(\bfS,\t\bfS)$ into $(\G,\t\G)$:
\[
j\colonequals [g^{\dia}]^{-1}\circ[n]\circ\jmath^{-1},\quad
\t{j}\colonequals [g^{\dia}]^{-1}\circ[n]\circ\t\jmath^{-1},
\]
where $\t\jmath^{-1}\colon\t\bfS\rightarrow\t\T$ is given by $s\theta_{\bfS}\mapsto \j^{-1}(s)\theta$ for any $s\in\bfS$.
\[
\xymatrix{
\bfS \ar^-{\jmath^{-1}}[r] \ar_-{\xi_{\bfS}}[d]&\T \ar_-{\xi}[d] \ar^-{[g^{\dia}]^{-1}\circ[n]}[rr]&&\T^{\dia} \ar_-{\xi_{D}}[d] \ar^-{[g^{\dia}]}[r]&\T\ar_-{\xi}[d] \\
\bfS_{\H} \ar_-{\jmath_{\H}^{-1}}[r]&\T_{\H} \ar_-{[h]}[rr]&&\T^{\flat}\ar_-{[h^{\flat}]}[r]&\T_{\H}
}
\]

\begin{prop}\label{prop:tran-well-def}
The above procedure gives a well-defined map.
In other words, 
\begin{enumerate}
\item
$(j,\t{j})$ is an $F$-rational embedding of a twisted maximal torus,
\item
$j$ and $\jmath^{-1}$ are $\G$-conjugate,
\item
$\eta\in\t{S}_{j}$, and
\item
the $\G_{\eta}$-conjugacy class of $j$ is independent of the choices of auxiliary data.
\end{enumerate}
\end{prop}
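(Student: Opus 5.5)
The approach is to express everything through the diagram $D$ and reduce each of (1)--(4) to properties already established: the $F$-rationality of $\xi_{D}$ and $\t\xi_{D}$ (Lemma \ref{lem:key}), Remark \ref{rem:Weyl}, and the cocycle identity of Lemma \ref{lem:cocycle}. The basic identity to record first is
\[
\xi_{D}\circ j
=[h^{\flat}]^{-1}\circ\xi\circ[n]\circ\jmath^{-1}
=[h^{\flat}]^{-1}\circ[n_{\H}]\circ\xi\circ\jmath^{-1}
=[h]\circ\jmath_{\H}^{-1}\circ\xi_{\bfS}
=j_{\H}\circ\xi_{\bfS}.
\]
It uses that $[n]$ and $[n_{\H}]$ induce the same element of $\Omega_{\H}\subset\Omega_{\G}^{\theta}$ via $\xi$, and that $\xi_{\bfS}=\jmath_{\H}\circ\xi\circ\jmath^{-1}$.

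Assertion (2) is immediate, since $j=[(g^{\dia})^{-1}n]\circ\jmath^{-1}$. For (3), note that $n\in\G^{\theta,\circ}$ normalizes $\T$ and commutes with $\theta$, so it preserves $\t\T=\T\theta$. Hence $\t{j}(\t\bfS)=[g^{\dia}]^{-1}(\t\T)=\T^{\dia}\eta=\t\T^{\dia}$, which contains $\eta$ by the definition of a diagram. The same computation shows $j(\bfS)=\T^{\dia}$, and by the remarks after Definition \ref{defn:diag}, $(\T^{\dia},\t\T^{\dia})$ is an $F$-rational twisted maximal torus (Lemma \ref{lem:twisted-torus}).

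The main work, and the step I expect to be the obstacle, is the $F$-rationality of $j$ in (1). For $\sigma\in\Gamma$, the map $w_{\sigma}\colonequals j^{-1}\circ\sigma(j)$ is an automorphism of $\bfS$ coming from an element of the Weyl group of $\T^{\dia}$, namely the one induced by $\sigma(g^{\dia}n)^{-1}g^{\dia}n$ after transport by $\jmath$. I would proceed in two steps.
\begin{enumerate}
\item First show that this Weyl element is $\theta$-fixed. Because $\eta$ is $F$-rational, $[\sigma(g^{\dia})]$ sends $\eta$ to $\sigma(\nu_{D})\theta\in\t\T$, while $[g^{\dia}]$ sends $\eta$ to $\nu_{D}\theta$. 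Therefore $g^{\dia}\sigma(g^{\dia})^{-1}$ normalizes $\T$ and carries $\sigma(\nu_{D})\theta$ to $\nu_{D}\theta$, so its class lies in $\Omega_{\G}^{\theta}$; the same holds for $n$.
\item Then apply $\sigma$ to the basic identity. Since $\xi_{D}$, $j_{\H}$ and $\xi_{\bfS}$ are $F$-rational, $\xi_{D}\circ\sigma(j)=\xi_{D}\circ j$. A $\theta$-fixed Weyl element acting trivially after passing to $\T_{\theta}\cong\T_{\H}$ must be trivial, since $\Omega_{\G}^{\theta}$ acts faithfully on $\T^{\theta,\circ}$ and hence on $\T_{\theta}$. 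So $w_{\sigma}=1$.
\end{enumerate}
As a cross-check, this is exactly the equality $a_{j}=a_{\jmath}=a_{\jmath_{\H}}=a_{j_{\H}}$ of Lemma \ref{lem:cocycle}, now for the candidate $j$. Once $j$ is rational, so is $\t{j}$ up to the $S$-translation freedom of Lemma \ref{lem:S-trans}: the image $\t\T^{\dia}$ is $F$-rational with the rational point $\eta$, and $\t{j}$ is $\bfS$-equivariant. Alternatively, I would compute directly that $\t\xi_{D}\circ\t{j}$ is $F$-rational and repeat the Weyl argument.

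For (4), I would list the auxiliary choices and check that each changes $j$ only by $\G_{\eta}$-conjugation.
\begin{itemize}
\item The choices of $h^{\flat}$, $g^{\dia}$, $n$ and $h$ each differ by elements of $\T_{\H}$, $\T$, $\T$ and $\T_{\H}$ respectively, or by elements of $\H_{y}$ when $j_{\H}$ is changed within its $\H_{y}$-class. Conjugation by an element of the image torus $\T^{\dia}=j(\bfS)$ does not change $j$.
\item Changing $j_{\H}$ by an element of $\H_{y}$ is absorbed into the diagram via Lemma \ref{lem:diag-H-conj}.
\item For the diagram, an $(\H_{y},\G_{\eta})$-conjugation replaces $j$ by a $\G_{\eta}$-conjugate. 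An $\Omega_{\H}$-conjugation by $w$ changes $g^{\dia}$ and $h^{\flat}$ compatibly by representatives of $w$, and this cancels against the corresponding change of $n_{\H}$ and $n$.
\end{itemize}
The only delicate point here is the reduction to the case $\T^{\flat}=\bfS_{j_{\H}}$. Two diagrams with this property that are equivalent differ by conjugations that preserve $\T^{\flat}$. On the $\G$-side these correspond to elements of $\G_{\eta}$ normalizing $\T^{\nat}$, and such elements again change $j$ only by $\G_{\eta}$-conjugation.
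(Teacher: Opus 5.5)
Your proposal is correct and follows essentially the paper's route. For (3) you use the $\theta$-invariance of $n$ together with the diagram condition. For the rationality in (1) you compare $j$ with the rational map $j_{\H}\circ\xi_{\bfS}$ through $\xi_{D}$; this repackages the paper's three Weyl-group identities, and your faithfulness of $\Omega_{\G}^{\theta}$ on $\T_{\theta}$ makes explicit what the paper's third identity uses implicitly. For (4) you use $\Omega_{\H_{y}}(\T^{\flat})\hookrightarrow\Omega_{\G_{\eta}}(\T^{\nat})$, as the paper does.

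One small correction: transported to $\T$, the automorphism $j^{-1}\circ\sigma(j)$ also contains the factor $a_{\jmath,\sigma}=\sigma(\jmath)^{-1}\circ\jmath$, and you need that factor to be $\theta$-fixed as well. It is: either by Lemma \ref{lem:cocycle}, or more simply because $j\circ\theta_{\bfS}=[\eta]\circ j$ with $\theta_{\bfS}$ and $[\eta]$ both $F$-rational, so $j^{-1}\circ\sigma(j)$ commutes with $\theta_{\bfS}$ directly.
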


\begin{proof}
The assertion (2) is obvious by construction.

Let us check that $j$ is $F$-rational.
For any $\sigma\in\Gamma$, we have $\sigma(j)=j$ if and only if $[\sigma(g^{\dia})]^{-1}\circ[\sigma(n)]\circ\sigma(\jmath)^{-1}
=[g^{\dia}]^{-1}\circ[n]\circ\jmath^{-1}$, or equivalently, 
\begin{align}\label{eq:F-rat}
\sigma(\jmath)^{-1}\circ\jmath
=
[\sigma(n)]^{-1}\circ[\sigma(g^{\dia})]\circ[g^{\dia}]^{-1}\circ[n].
\end{align}
If we put $j'_{\H}\colonequals [n_{\H}]\circ\jmath_{\H}^{-1}=[h^{\flat}]\circ j_{\H}$, then we have
\[
\sigma(j'_{\H})\circ j_{\H}^{\prime-1}
=[\sigma(n_{\H})]\circ\sigma(\jmath_{\H})^{-1}\circ\jmath_{\H}\circ[n_{\H}]^{-1}
=[\sigma(h^{\flat})]^{-1}\circ[h^{\flat}],
\]
hence $\sigma(\jmath_{\H})^{-1}\circ\jmath_{\H}
=[\sigma(n_{\H})]^{-1}\circ[\sigma(h^{\flat})]^{-1}\circ[h^{\flat}]\circ[n_{\H}]$.
Since we have 
\begin{itemize}
\item
$\sigma(\jmath)^{-1}\circ\jmath=a_{\jmath,\sigma}=a_{\jmath_{\H},\sigma}=\sigma(\jmath_{\H})^{-1}\circ\jmath_{\H}$ (Lemma \ref{lem:cocycle}), 
\item
$[n]^{-1}=[n_{\H}]^{-1}$ and $[\sigma(n)]^{-1}=[\sigma(n_{\H})]^{-1}$, and
\item
$[\sigma(g^{\dia})]\circ[g^{\dia}]^{-1}=[\sigma(h^{\flat})]\circ[h^{\flat}]^{-1}$ (by the $F$-rationality of $\xi_{D}$),
\end{itemize}
(all the equalities are considered in $\Omega_{\H}\subset\Omega_{\G}^{\theta}$), we get the equality \eqref{eq:F-rat}.

By noting that $n$ is $\theta$-invariant and $D$ is a diagram associated to $(y,\eta)$, we see that $\t\bfS_{j}=\t{j}(\t\bfS)$ contains $\eta\in \t{G}$.
Combined with the $F$-rationality of $j$, this shows that $\t\bfS_{j}$ is $F$-rational and $(\bfS_{j},\t\bfS_{j})$ is an $F$-rational twisted maximal torus of $(\G,\t\G)$.
Hence we get the assertions (1) and also (3).

We consider (4).
As long as $D$ and $j_{\H}$ are fixed, the embedding $(j,\t{j})$ is obviously independent of the choices of $n_{\H}$, $n$, $h^{\flat}$, and $g^{\dia}$.
Moreover, it is also easy to see that $(j,\t{j})$ does not change even if we replace $D$ with a $\Omega_{\H}(\T^{\flat})$-equivalent diagram.
Thus our task is to show that, if we take 
\begin{itemize}
\item
another embedding $\bar{j}_{\H}\in\mcJ^{\H}_{\H_{y}}$ which is $\H_{y}$-conjugate to $j_{\H}$ and
\item
another diagram $\bar{D}=(\bar{\B}^{\flat},\bar{\T}^{\flat},\bar{\B}^{\dia},\bar{\T}^{\dia})\in\bfD(y,\eta)$ which is $(\H_{y},\G_{\eta})$-conjugate to $D$ and satisfies $\bar{\T}^{\flat}=\bfS_{\bar{j}_{\H}}$,
\end{itemize}
then $j$ and $\bar{j}$ (which is constructed from $\bar{D}$ and $\bar{j}_{\H}$) are $\G_{\eta}$-conjugate.

We take $h_{y}\in\H_{y}$ such that $\bar{j}_{\H}=[h_{y}]\circ j_{\H}$ (hence $\bar{j}_{\H}=[\bar{h}]\circ\jmath_{\H}^{-1}$, where $\bar{h}=h_{y}h$).
Let $h'\in\H_{y}$ and $g'\in\G_{\eta}$ be elements such that 
\[
({}^{h'}\bar{\B}^{\flat},{}^{h'}\bar{\T}^{\flat},{}^{g'}\bar{\B}^{\dia},{}^{g'}\bar{\T}^{\dia})
=(\B^{\flat},\T^{\flat},\B^{\dia},\T^{\dia}).
\]
Then the element $\bar{h}^{\flat}\colonequals h^{\flat}h'\in\H$ satisfies $\bar{\xi}_{\flat}=[\bar{h}^{\flat}]$.
Similarly, the element $\bar{g}^{\dia}\colonequals g^{\dia}g'\in\G$ satisfies $\bar{\xi}_{\dia}=[\bar{g}^{\dia}]$.
We take an element $\bar{n}\in\G^{\theta,\circ}$ such that $[\bar{n}]\in\Omega_{\G}^{\theta}$ is equal to $[\bar{n}_{\H}]\in\Omega_{\H}\subset\Omega_{\G}^{\theta}$, where $\bar{n}_{\H}\colonequals \bar{h}^{\flat}\bar{h}\in\bfN_{\H}(\T_{\H})$.
Then, by construction, $\bar{j}$ is given by $[\bar{g}^{\dia-1}]\circ[\bar{n}]\circ\jmath^{-1}$.

In the following, we simply write $\nu$ and $\mu$ for $\nu_{D}$ and $\mu_{D}$ associated to $D$, respectively (see Definition \ref{defn:diag}).
As we have $\bar{g}^{\dia}\colonequals g^{\dia}g'$ and $g'\in\G_{\eta}$, $\bar{j}$ is $\G_{\eta}$-conjugate to $[g^{\dia}]^{-1}\circ[\bar{n}]\circ\j^{-1}$.
Since $j=[g^{\dia}]^{-1}\circ[n]\circ\jmath^{-1}$, it suffices to show that $[g^{\dia}]^{-1}\circ[\bar{n}]\circ[n]^{-1}\circ[g^{\dia}]\in\Aut(\T^{\dia})$ is realized by an element of $\Omega_{\G_{\eta}}(\T^{\nat})\subset\Omega_{\G}(\T^{\dia})$.
Since $\xi_{\dia}=[g^{\dia}]$ induces an identification $\Omega_{\G_{\eta}}(\T^{\nat})\cong\Omega_{\G_{\nu\theta}}(\T^{\theta,\circ})$, it is equivalent to showing that $[\bar{n}]\circ[n]^{-1}\in\Aut(\T)$ is realized by an element of $\Omega_{\G_{\nu\theta}}(\T^{\theta,\circ})\subset\Omega_{\G}(\T)$.
By noting that $\Omega_{\H_{\mu}}(\T_{\H})$ is identified with a subgroup of $\Omega_{\G_{\nu\theta}}(\T^{\theta,\circ})$ (both regarded as subgroups of $\Omega_{\G}(\T)$), let us show a slightly stronger statement that $[\bar{n}]\circ[n]^{-1}\in\Aut(\T)$ is realized by an element of $\Omega_{\H_{\mu}}(\T_{\H})$.
By construction, we have $[n]=[n_{\H}]=[h^{\flat}h]$ and $[\bar{n}]=[\bar{n}_{\H}]=[\bar{h}^{\flat}\bar{h}]=[h^{\flat}h'h_{y}h]$.
Thus we get $[\bar{n}]\circ[n]^{-1}=[h^{\flat}]\circ[h'h_{y}]\circ[h^{\flat}]^{-1}$.
Since $\xi_{\flat}=[h^{\flat}]$ induces an identification $\Omega_{\H_{y}}(\T^{\flat})\cong\Omega_{\H_{\mu}}(\T_{\H})$ and $[h'h_{y}]$ belongs to $\Omega_{\H_{y}}(\T^{\flat})$, we get the assertion.
\end{proof}

The following is the twisted version of Kaletha's ``descent lemma'' \cite[Lemma 6.5]{Kal15}:

\begin{prop}\label{prop:descent-lemma}
For each $y\in\mfH_{\eta}$, the restriction of $\ul{\tran}$ to $\bbD(y,\eta)\times\mcJ_{\H_{y}}^{\H}$ is a $\pi_{0}(\H^{y})(F)$-torsor onto its image.
Furthermore, $\ul{\tran}$ induces a bijective map
\[
\tran\colon\bigsqcup_{y\in \mfH_{\eta}} \bigl(\bbD(y,\eta)\times\mcJ_{\H_{y}}^{\H}\bigr)/\pi_{0}(\H^{y})(F) \rightarrow\t\mcJ_{\G_{\eta}}^{\G}.
\]
\end{prop}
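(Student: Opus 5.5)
The plan is to mimic Kaletha's proof of \cite[Lemma 6.5]{Kal15}, replacing admissible isomorphisms by diagrams. There are three steps: (a) construct the action of $\pi_{0}(\H^{y})(F)$ on $\bbD(y,\eta)\times\mcJ^{\H}_{\H_{y}}$ and show that $\ul{\tran}$ is invariant under it; (b) show that two pairs with the same image differ by this action, and that the action is free; (c) prove surjectivity onto $\t\mcJ^{\G}_{\G_{\eta}}$ and disjointness of the images for distinct $y\in\mfH_{\eta}$.

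For (a), an element $h\in\H^{y}(F)$ acts by $(D,j_{\H})\mapsto({}^{h}D,[h]\circ j_{\H})$. The $\H$-side of ${}^{h}D$ is $({}^{h}\B^{\flat},{}^{h}\T^{\flat})$ and the $\G$-side is left unchanged. Because $h$ is $F$-rational and fixes $y$, the result is again a diagram associated to $(y,\eta)$, and $[h]\circ j_{\H}$ is again $F$-rational with image in $\H_{y}$. Elements of $\H_{y}(F)$ act trivially on equivalence classes by definition, so the action descends to $\pi_{0}(\H^{y})(F)$. Invariance of $\ul{\tran}$ is then read off from the construction: $h^{\flat}$ is replaced by $h^{\flat}h^{-1}$ and $h$ by $hh_{0}$, where $j_{\H}=[h_{0}]\circ\jmath_{\H}^{-1}$. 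The element $n_{\H}=h^{\flat}h_{0}$ is therefore unchanged, and so $j$ is unchanged.

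For (b), suppose $\ul{\tran}(D,j_{\H})\sim_{\G_{\eta}}\ul{\tran}(\bar D,\bar j_{\H})$. First conjugate by $\G_{\eta}$ so that the two embeddings are equal and $\T^{\dia}=\bar\T^{\dia}$. Comparing $[n]$ with $[\bar n]$ as in the proof of Proposition \ref{prop:tran-well-def} produces an element of $\bfN_{\H}(\T^{\flat})$ that carries $(\B^{\flat},\T^{\flat},j_{\H})$ to $(\bar\B^{\flat},\bar\T^{\flat},\bar j_{\H})$ after an $\Omega_{\H}$-adjustment. I would then show that this element can be chosen in $\H^{y}$ and $F$-rational modulo $\H_{y}$. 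Centralizing $y$ follows from $\t\xi_{D}(\eta)=y=\t\xi_{\bar D}(\eta)$. For rationality, the cocycle $\sigma\mapsto x^{-1}\sigma(x)$ is trivial in $\Omega_{\H}$ by Lemma \ref{lem:cocycle}. Freeness is proved the same way: if $h\in\H^{y}(F)$ fixes the class of $(D,j_{\H})$, then $h$ lies in $\H_{y}\cdot\bfN$, and the Weyl element must lie in $\Omega_{\H_{y}}$ because the $\Omega_{\H}$-equivalence already accounts for $\Omega_{\H_{y}}$. Hence $h\in\H_{y}$. I expect this step to be the main obstacle. The disconnectedness of $\H^{y}$ versus $\G^{\eta}$ interacts with the Weyl group identification $\Omega_{\H_{y}}\hookrightarrow\Omega_{\G_{\eta}}$ of Remark \ref{rem:Weyl}, and separating $\pi_{0}(\H^{y})$ from $\Omega$ requires care. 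I would try first to reduce to the torus computation in $\T$ via $\xi_{\dia}$, exactly as in Proposition \ref{prop:tran-well-def}(4).

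For (c), fix $(j,\t j)\in\t\mcJ^{\G}_{\G_{\eta}}$ with $j=[g]\circ\jmath^{-1}$. After modifying $g$ by $\Omega_{\G}$, we may take $g$ to map a $[\eta]$-stable Borel pair containing $\bfS_{j}$ to $(\B,\T)$, so that ${}^{g}\eta=\nu\theta$. Set $\mu=\xi(\nu)$. We then need an $F$-rational $y\in H$ that is stably conjugate to a preimage of $\mu$ and has $\H_{y}$ quasi-split, together with a diagram whose $\G$-part is $({}^{g^{-1}}\B,\bfS_{j})$. Transport $\bfS_{j}$ to $\bfS_{\H}$ via $\xi_{\bfS}$ and $\jmath_{\H}$. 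Since $\H$ is quasi-split and the cocycles agree by Lemma \ref{lem:cocycle}, Kottwitz's theorem \cite[Corollary 2.2]{Kot82} gives an $F$-rational $j_{\H}$, and we let $y$ be the image of $\eta$ in $S_{j_{\H}}$. The same cocycle comparison shows that $\xi_{D}$ is $F$-rational, so the resulting tuple is a diagram. Finally, replace $y$ by its representative in $\mfH_{\eta}$ via stable conjugacy, using the quasi-split form $\H_{y}^{\ast}$ and the transfer of elliptic tori (\cite[Lemma 3.2.1]{Kal19}), as in Lemma \ref{lem:diag-H-conj}. Disjointness for distinct $y$ holds because the image determines the stable class of $y$ through $\t\xi_{D}(\eta)$.
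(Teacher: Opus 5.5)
Your three-step structure, and the arguments in (b) and (c), are essentially the paper's. For surjectivity, a rational $j_{\H}$ comes from \cite[Corollary 2.2]{Kot82}, and $\epsilon$ is then moved to its representative $y\in\mfH_{\eta}$ by transferring the torus into the quasi-split $\H_{y}$. For fibers, you equalize the embeddings by $\G_{\eta}$, then make an $\Omega_{\H}$-move giving $g^{\dia}=\bar g^{\dia}$ and hence $\mu_{D}=\mu_{\bar D}$. Then the $F$-rational isomorphism $[h\bar h^{-1}]=j_{\H}\circ\bar j_{\H}^{-1}$ carries $\bar y$ to $y$, which forces $y=\bar y$ and $h_{y}=h\bar h^{-1}\in\H^{y}$. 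This computation, not a one-line remark, is what justifies your disjointness claim in (c).

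The genuine gap is in (a). You only let $\H^{y}(F)$ act, so your action factors through $\H^{y}(F)/\H_{y}(F)$. That group is the preimage of the distinguished point under the connecting map $\pi_{0}(\H^{y})(F)\to H^{1}(F,\H_{y})$, and it need not be all of $\pi_{0}(\H^{y})(F)$. So the claim that the action ``descends to $\pi_{0}(\H^{y})(F)$'' is unjustified.

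This breaks transitivity in (b). All you know about $h_{y}$ is that $\sigma(h_{y})^{-1}h_{y}$ centralizes the maximal torus $\bfS_{\bar j_{\H}}\subset\H_{y}$. Hence $h_{y}$ only defines an element $x\in\pi_{0}(\H^{y})(F)$, which need not lie in the group you made act. By freeness, any $h'\in\H^{y}(F)$ doing the same job would have to map to $x$, so you cannot substitute a rational element.

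The missing step is to define the action of an arbitrary $x\in\pi_{0}(\H^{y})(F)$:
\begin{itemize}
\item Take a lift $\dot x\in\H^{y}$ and arrange $\T^{\flat}=\bfS_{j_{\H}}$ by Lemma \ref{lem:diag-H-conj}.
\item Since $\sigma(\dot x)^{-1}\dot x\in\H_{y}$, the $\H_{y}$-conjugacy class of $[\dot x]\circ j_{\H}$ is $\Gamma$-stable. As $\H_{y}$ is quasi-split, \cite[Corollary 2.2]{Kot82} gives an $F$-rational member $[u]\circ j_{\H}$ with $u\in\H_{y}\dot x$.
\item Then $[u]|_{\T^{\flat}}=([u]\circ j_{\H})\circ j_{\H}^{-1}$ is $F$-rational, so ${}^{u}D$ is again a diagram for $(y,\eta)$.
\item Your invariance computation, with $n_{\H}=h^{\flat}h_{0}$ unchanged, works verbatim for non-rational $u$.
\end{itemize}
With this action, $h_{y}$ itself carries $(\bar D,\bar j_{\H})$ to $(D,j_{\H})$, and no further rationality argument is needed. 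This is the point the paper settles by citing \cite[Lemma 6.3]{Kal15}.

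Two smaller corrections. First, freeness, which you single out as the main obstacle, is immediate, and your Weyl-group reasoning for it is neither needed nor, as written, a proof. If $x$ fixes the class of $(D,j_{\H})$, then $[u]\circ j_{\H}=[h']\circ j_{\H}$ for some $h'\in\H_{y}$. So $h'^{-1}u$ centralizes the maximal torus $\bfS_{j_{\H}}$ of $\H$, hence lies in $\bfS_{j_{\H}}\subset\H_{y}$, and $x=1$.

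Second, in (c) you cannot modify $g$ by $\Omega_{\G}$, because $j=[g]\circ\jmath^{-1}$ determines $g$ up to right multiplication by $\T$. You do not need to: $\eta\in\t{S}_{j}$ already gives $[g]^{-1}(\eta)\in\t\T$, so ${}^{g}\B$ is automatically $[\eta]$-stable.
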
\symdef{tran}{$\mathfrak{tran}$}

\begin{proof}
We first show the surjectivity.
Suppose that an element $j=[g]\circ\jmath^{-1}$ of $\t\mcJ^{\G}_{\G_{\eta}}$ is given, where $g\in\G$.
We take an(y) element $j_{\H}=[h]\circ\jmath_{\H}^{-1}$ of $\mcJ^{\H}_{H}$, where $h\in\H$.
We put $\T^{\dia}\colonequals \bfS_{j}={}^{g}\T$ and $\B^{\dia}\colonequals {}^{g}\B$.
Then, by putting $[g]^{-1}(\eta)=\nu\theta\in\t\T$, $\mu\colonequals \xi(\nu)$, and $\epsilon\colonequals [h](\mu)$, we can check that $\epsilon\in H_{\ss}$ and that $D'\colonequals ({}^{h}\B_{\H},{}^{h}\T_{\H},\B^{\dia},\T^{\dia})$ is a diagram associated to $(\epsilon,\eta)$ (note that $\bfS_{j_{\H}}={}^{h}\T_{\H}$ and use Lemma \ref{lem:cocycle} to check the $F$-rationality of $\xi_{D'}$).
By the definition of the set $\mfH_{\eta}$, there exists a unique element $y\in\mfH_{\eta}$ which is stably $\H$-conjugate to $\epsilon$.
Since $\H_{y}$ is the quasi-split inner form of $\H_{\epsilon}$, the maximal torus ${}^{h}\T_{\H}$ of $\H_{\epsilon}$ transfers to $\H_{y}$ (see, e.g., \cite[Lemma 3.2.2]{Kal19}).
More precisely, we can find an element $h'\in\H$ such that $[h'](\epsilon)=y$ and $[h']$ gives an $F$-rational isomorphism from ${}^{h}\T_{\H}$ to ${}^{h'h}\T_{\H}$.
Hence, by putting $\T^{\flat}\colonequals {}^{h'h}\T_{\H}$ and $\B^{\flat}\colonequals {}^{h'h}\B_{\H}$, we get a diagram $D\colonequals (\B^{\flat},\T^{\flat},\B^{\dia},\T^{\dia})$ associated to $(y,\eta)$.
If we put $j'_{\H}\colonequals [h']\circ j_{\H}$, then $j'_{\H}$ belongs to $\mcJ^{\H}_{\H_{y}}$.
Furthermore, by going back to the construction of the map $\ul{\tran}$, we can easily check that $\ul{\tran}(D,j'_{\H})=j$.


We next investigate the fibers of $\ul{\tran}$.
For this, let us take two diagrams $D=(\B^{\flat},\T^{\flat},\B^{\dia},\T^{\dia})\in\bfD(y,\eta)$, $\bar{D}=(\bar{\B}^{\flat},\bar{\T}^{\flat},\bar{\B}^{\dia},\bar{\T}^{\dia})\in\bfD(\bar{y},\eta)$ for $y,\bar{y}\in \mfH_{\eta}$ and two embeddings $j_{\H}\in\mcJ^{\H}_{\H_{y}}$, $\bar{j}_{\H}\in\mcJ^{\H}_{\H_{\bar{y}}}$ satisfying $\ul{\tran}(D,j_{\H})=\ul{\tran}(\bar{D},\bar{j}_{\H})$.
We may suppose that $\T^{\flat}=\bfS_{j_{\H}}$ and $\bar{\T}^{\flat}=\bfS_{\bar{j}_{\H}}$ by Lemma \ref{lem:diag-H-conj}.
Let $h\in\H$ and $\bar{h}\in\H$ be elements satisfying $j_{\H}=[h]\circ \jmath_{\H}^{-1}$ and $\bar{j}_{\H}=[\bar{h}]\circ \jmath_{\H}^{-1}$, respectively.
By replacing $\bar{D}$ with its $\G_{\eta}$-equivalent diagram if necessary, we may suppose that $(D,j_{\H})$ and $(\bar{D},\bar{j}_{\H})$ produce exactly the same embedding $j$.
(Note that then $\T^{\dia}=\bfS_{j}=\bar{\T}^{\dia}$.)
We take $h^{\flat}\in\H$, $g^{\dia}\in\G$, and $n\in\G^{\theta,\circ}$ (which corresponds to $n_{\H}\colonequals h^{\flat}h$) for $D$ as in the definition of $\ul{\tran}$.
Similarly, for $\bar{D}$, we take $\bar{h}^{\flat}\in\H$, $\bar{g}^{\dia}\in\G$, and $\bar{n}\in\G^{\theta,\circ}$ (which corresponds to $\bar{n}_{\H}\colonequals \bar{h}^{\flat}\bar{h}$) for $\bar{D}$ as in the definition of $\ul{\tran}$.
Then we have $[g^{\dia}]^{-1}\circ[n]\circ \jmath^{-1}=[\bar{g}^{\dia}]^{-1}\circ[\bar{n}]\circ \jmath^{-1}$.

Thus we have $[n\bar{n}^{-1}]=[g^{\dia}\bar{g}^{\dia-1}]$, which is an equality as elements of the Weyl group $\Omega_{\H}\subset\Omega_{\G}$.
We write $w$ for this element.
Recall that $[\bar{h}^{\flat}]$ and $[\bar{g}^{\dia}]$ induce an identifications $\Omega_{\H}(\bar{\T}^{\flat})\cong\Omega_{\H}$ and $\Omega_{\G}(\bar{\T}^{\dia})\cong\Omega_{\G}$.
If we put $w^{\flat}\in\Omega_{\H}(\bar{\T}^{\flat})$ and $w^{\dia}\in\Omega_{\G}(\bar{\T}^{\dia})$ to be the images of $w\in\Omega_{\H}$ under these identifications, respectively, then $w^{\flat}$ and $w^{\dia}$ are identified through $\xi_{\bar{D}}$ (see Remark \ref{rem:Weyl}).
By replacing the diagram $\bar{D}$ with its $\Omega_{\H}$-equivalent diagram $({}^{w^{\flat}}\bar{\B}^{\flat},\bar{\T}^{\flat},{}^{w^{\dia}}\bar{\B}^{\dia},\bar{\T}^{\dia})$, we may assume that $(\bar{\B}^{\dia},\bar{\T}^{\dia})=(\B^{\dia},\T^{\dia})$.
Note that then we have $g^{\dia}=\bar{g}^{\dia}$ and $[n]=[\bar{n}]$.

Recall that $\nu_{D}\in\T$ (resp.\ $\nu_{\bar{D}}\in\T$) is the element such that $[g^{\dia}](\eta)=\nu_{D}\theta$ (resp.\ $[\bar{g}^{\dia}](\eta)=\nu_{\bar{D}}\theta$), hence we have $\nu_{\bar{D}}=\nu_{D}$.
This implies that $\mu_{\bar{D}}=\mu_{D}$.
As we have $[h^{\flat}](y)=\mu_{D}$ and $[\bar{h}^{\flat}](\bar{y})=\mu_{\bar{D}}$, we get $[h^{\flat-1}\bar{h}^{\flat}](\bar{y})=y$.
Note that the equality $[n]=[\bar{n}]$ is equivalent to the equality $[h^{\flat-1}\bar{h}^{\flat}]=[h\bar{h}^{-1}]$.
Since $[h\bar{h}^{-1}]=j_{\H}\circ \bar{j}_{\H}^{-1}$ gives an $F$-rational isomorphism from $\bar{\T}^{\flat}$ to $\T^{\flat}$ (i.e., stable conjugacy between $\bar{\T}^{\flat}$ and $\T^{\flat}$), this implies that $y$ and $\bar{y}$ are stably conjugate.
Thus the definition of the set $\mfH_{\eta}$ implies that $y=\bar{y}$.
We also get $h\bar{h}^{-1}\in\H^{y}$.
Therefore, by putting $h_{y}\colonequals h\bar{h}^{-1}\in\H^{y}$, we get $(\B^{\flat},\T^{\flat},\B^{\dia},\T^{\dia})=({}^{h_{y}}\bar{\B}^{\flat},{}^{h_{y}}\bar{\T}^{\flat},\bar{\B}^{\dia},\bar{\T}^{\dia})$ and $j_{\H}=[h_{y}]\circ \bar{j}_{\H}$.

Thus the remaining task is to show that, by replacing $\bar{D}$ with its $\H_{y}$-equivalent one if necessary, we can take $h_{y}$ to be $F$-rational.
This follows from \cite[Lemma 6.3]{Kal15} (cf.\ the proof of the descent lemma in the untwisted case; \cite[Lemma 6.5]{Kal15}).
\end{proof}

\begin{rem}\label{rem:descent-lemma}
Note that $\mcJ^{\H}_{\H_{y}}$ is not empty for any $y\in\mfH_{\eta}$.
Hence, in particular, Proposition \ref{prop:descent-lemma} implies the following: $\t{\mcJ}^{\G}_{\G_{\eta}}$ is empty if and only if $\bbD(y,\eta)$ is empty for any $y\in\mfH_{\eta}$.
\end{rem}

\section{Waldspurger's descent theorems on twisted endoscopy}\label{sec:Waldspurger}
In this section, we review part of Waldspurger's framework ``l'endoscopie tordue n'est pas si tordue''.

Note that, in the following of this paper, we need to require that our exponential map is invariant under conjugation.
However, this property might not be satisfied by a mock exponential map in the sense of \cite[Appendix A]{AS09}.
So, from now on, we furthermore assume that 
\[
p\geq(2+e_{F})n,
\]
where $e_{F}$ is the ramification index of $F/\Q_{p}$ and $n$ is the minimum dimension of a faithful representation of $\G$.
It is known that the ``traditional'' exponential map converges on the topologically nilpotent loci under this assumption, thus we can choose it as our exponential map (see \cite[Appendix B]{DR09} and also \cite[Appendice B]{Wal08}).

\subsection{Non-standard endoscopy}\label{subsec:non-std}
Let us start with recalling the formalism of non-standard endoscopy following \cite[Sections 1.7, 1.8]{Wal08}.

Let $\G_{1}$ and $\G_{2}$ be quasi-split semisimple simply-connected groups over $F$.
For each $\G_{i}$, we fix a Borel pair $(\bfB_{i},\T_{i})$ defined over $F$.
Let $\Omega_{i}$ denote the Weyl group of $\T_{i}$ in $\G_{i}$.
We write $\Phi_{i}$ and $\Phi_{i}^{\vee}$ for the set of roots and coroots of $\T_{i}$ in $\G_{i}$, respectively.
Suppose that we have an isomorphism $j_{\ast}\colon X_{\ast}(\T_{1})_{\Q}\xrightarrow{\sim}X_{\ast}(\T_{2})_{\Q}$.
Let $j^{\ast}\colon X^{\ast}(\T_{2})_{\Q}\xrightarrow{\sim}X^{\ast}(\T_{1})_{\Q}$ denote the dual to $j_{\ast}$.

Then the triple $(\G_{1},\G_{2}, j_{\ast})$ is called a \textit{non-standard endoscopic triple} if the following conditions are satisfied:

\begin{enumerate}
\item
There exist bijections $\tau^{\vee}\colon\Phi_{1}^{\vee}\xrightarrow{\sim}\Phi_{2}^{\vee}$ and $\tau\colon\Phi_{2}\xrightarrow{\sim}\Phi_{1}$ and functions $b^{\vee}\colon \Phi_{1}^{\vee}\rightarrow\Q_{>0}$ and $b \colon \Phi_{2}\rightarrow\Q_{>0}$ such that
\begin{enumerate}
\item
$\alpha_{2}^{\vee}=\tau^{\vee}(\tau(\alpha_{2})^{\vee})$ for any $\alpha_{2}\in\Phi_{2}$;
\item
we have $j_{\ast}(\alpha_{1}^{\vee})=b^{\vee}(\alpha_{1}^{\vee})\cdot \tau^{\vee}(\alpha^{\vee}_{1})$ for any $\alpha_{1}^{\vee}\in\Phi_{1}^{\vee}$ and $j^{\ast}(\alpha_{2})=b(\alpha_{2})\cdot \tau(\alpha_{2})$ for any $\alpha_{2}\in\Phi_{2}$.
\end{enumerate}
\item
The isomorphisms $j_{\ast}$ and $j^{\ast}$ are $\Gamma$-equivariant.
\end{enumerate}

For a non-standard endoscopic triple $(\G_{1},\G_{2}, j^{\ast})$, the isomorphism $j_{\ast}$ induces an isomorphism between the Lie algebras $\bmft_{1}\colonequals \Lie\T_{1}$ and $\bmft_{2}\colonequals \Lie\T_{2}$:
\[
\bmft_{1}\cong X_{\ast}(\T_{1})\otimes_{\Z}\overline{F}
\xrightarrow{j_{\ast}}
X_{\ast}(\T_{2})\otimes_{\Z}\overline{F}\cong \bmft_{2},
\]
which induces a bijection
\[
(\bmft_{1}/\Omega_{1})^{\Gamma}
\cong
(\bmft_{2}/\Omega_{2})^{\Gamma}.
\]
Thus, through this bijection, we can define a bijective correspondence between the sets of stable conjugacy classes of semisimple elements of $\mathfrak{g}_{1}$ and $\mathfrak{g}_{2}$, which preserves the regular semisimplicity.

\subsection{Decomposition of twisted endoscopy}\label{subsec:decomposition}
We next briefly review Waldspurger's decomposition result on twisted endoscopy established in \cite[Section 3]{Wal08}.

Let $(y,\eta)\in H_{\ss}\times\t{G}_{\ss}$.
In the following, we assume that
\begin{itemize}
\item
the connected centralizer $\H_{y}$ of $y$ in $\H$ is quasi-split, and
\item
the set $\bfD(y,\eta)$ of diagrams associated to $(y,\eta)$ is not empty.
\end{itemize}
We fix a diagram $D=(\B^{\flat},\T^{\flat},\B^{\dia},\T^{\dia})\in\bfD(y,\eta)$.
In \cite[Sections 3.5 and 3.6]{Wal08}, Waldspurger constructed a quasi-split connected reductive group $\bar{\H}$ over $F$ equipped with 
\begin{itemize}
\item
standard endoscopic data $(\bar{\H}, \bar{\mathcal{H}}, \bar{s}, \hat{\bar{\xi}})$ of $\G_{\eta,\sc}$, and
\item
a non-standard endoscopic triple $(\H_{y,\sc},\bar{\H}_{\sc},j_{\ast})$,
\end{itemize}
where the subscript ``$\sc$'' denotes the simply-connected cover of the derived subgroup.
Here we emphasize that the construction of these objects depends on the choice of $D\in\bfD(y,\eta)$.
\[
\xymatrix@R=10pt{
\t\G\ar@{-}_{\text{twisted endoscopy}}[dd]\ar@{~>}^{\text{descent}}[rr]&&\G_{\eta}&\G_{\eta,\sc}\ar[l]\ar@{-}^-{\text{standard endoscopy}}[d]&\\
&&&\bar{\H}&\bar{\H}_{\sc}\ar[l]\ar@{-}^-{\text{non-standard endoscopy}}[d]\\
\H\ar@{~>}_{\text{descent}}[rr]&&\H_{y}&&\H_{y,\sc}\ar[ll]
}
\]

Let us review how the stable conjugacy classes correspond under this picture (\cite[Section 3.8]{Wal08}).
We first consider the decompositions of the Lie algebras
\begin{align*}
&\bmfg_{\eta}=\bmfg_{\eta,\sc}\oplus\bmfz_{\G_{\eta}}=\Lie\G_{\eta,\sc}\oplus\Lie\bfZ_{\G_{\eta}},\\
&\bar{\bmfh}=\bar{\bmfh}_{\sc}\oplus\bmfz_{\bar{\H}}=\Lie\bar{\H}_{\sc}\oplus\Lie\bfZ_{\bar{\H}},\\
&\bmfh_{y}=\bmfh_{y,\sc}\oplus\bmfz_{\H_{y}}=\Lie\H_{y,\sc}\oplus\Lie\bfZ_{\H_{y}}.
\end{align*}
For any $X\in\bmfg_{\eta}$, $\bar{Y}\in\bar{\bmfh}$, and $Y\in\bmfh_{y}$, we write $X=X_{\sc}+X_{Z}$, $\bar{Y}=\bar{Y}_{\sc}+\bar{Y}_{Z}$, and $Y=Y_{\sc}+Y_{Z}$ for their decompositions according to the above direct sum decompositions, respectively.
We note that we have an $F$-rational isomorphism $\mfz_{\H_{y}}\cong\mfz_{\bar{\H}}\oplus\mfz_{\G_{\eta}}$ (see \cite[Section 3.8]{Wal08}).

For our convenience, let us introduce the following terminology:
\begin{defn}\label{defn:norm-pair}.
We say that $(Y,X)\in\mfh_{y,0+}\times\mfg_{\eta,0+}$ is a \textit{$D$-norm pair} if 
\begin{itemize}
\item
$\eta\exp(X)\in\t{G}$ is strongly regular semisimple,
\item
$y\exp(Y)\in H$ is strongly $\t\G$-regular semisimple,
\end{itemize}
and there exists an element $\bar{Y}\in\bar{\mfh}$ satisfying the following:
\begin{itemize}
\item
$\bar{Y}\in\bar{\mfh}$ is a norm of $X_{\sc}\in\mfg_{\eta,\sc}$ in the sense of standard endoscopy,
\item
the stable conjugacy classes of $\bar{Y}_{\sc}\in\bar{\mfh}_{\sc}$ and $Y_{\sc}\in\mfh_{y,\sc}$ correspond in the sense of non-standard endoscopy (see Section \ref{subsec:non-std}),
\item
$Y_{Z}\in\mfz_{\H_{y}}$ corresponds to $\bar{Y}_{Z}+X_{Z}\in\mfz_{\bar{\H}}\oplus\mfz_{\G_{\eta}}$ under the identification $\mfz_{\H_{y}}\cong\mfz_{\bar{\H}}\oplus\mfz_{\G_{\eta}}$.
\end{itemize}
\end{defn}

The following is part of \cite[Section 3.8, Lemme]{Wal08}:

\begin{prop}\label{prop:norm-decomp}
For any $D$-norm pair $(Y,X)$, $(y\exp(Y),\eta\exp(X))\in\mcD$.
\end{prop}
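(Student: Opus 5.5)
The plan is to pass everything through the fixed diagram $D$ and compare points in the maximal torus $\T$ of $\G$ with points in $\T_{\H}$. Concretely, I will exhibit a diagram associated to the pair $(y\exp(Y),\eta\exp(X))$. Once such a diagram exists, the norm property follows from the remark after Definition~\ref{defn:diag}: a nonempty $\bfD(\epsilon,\eta)$ forces correspondence of stable classes. Together with the strong regularity built into the definition of a $D$-norm pair, this yields $(y\exp(Y),\eta\exp(X))\in\mcD$.

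\emph{Step 1 (reduction to a common torus).} Choose $g\in\G_{\eta}$ such that $\Ad(g)X$ lies in $\Lie$ of an $F$-rational maximal torus $\T_1^{\nat}$ of $\G_{\eta}$. Here $\T_1^{\nat}$ is the centralizer of $X$, since $\eta\exp(X)$ is strongly regular. Likewise $Y$ lies in $\Lie\T_1^{\flat}$ with $\T_1^{\flat}=\H_{y\exp(Y)}$.

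Using Lemma~\ref{lem:diag-H-conj}, and its analogue on the $\G_{\eta}$-side obtained by the same Kottwitz cocycle argument, I will replace $D$ by an equivalent diagram $D_1=(\B^{\flat}_1,\T^{\flat}_1,\B^{\dia}_1,\T^{\dia}_1)$ satisfying $\T^{\dia}_1=\bfZ_{\G}(\T_1^{\nat})$. This is done through $(\H_y,\G_{\eta})$-conjugation, which does not change the endoscopic data $\bar{\H}$ attached to $D$. The delicate point is arranging simultaneously that $\T^{\flat}_1$ equals the centralizer of $Y$. I plan to do this by first matching $\T_1^{\flat}$ with the torus of $\H_y$ determined by $X$ through $\xi_{D_1}$, and then checking that $Y$ lands in it.

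\emph{Step 2 (matching Lie algebra elements).} The three norm conditions in Definition~\ref{defn:norm-pair} should be unwound into a single statement: the image of $X$ under $d\xi_{D_1}$, restricted to $\Lie\T_1^{\nat}$, is stably conjugate to $Y$ in $\bmfh_y$. For this I will use Waldspurger's explicit construction in \cite[Sections 3.5--3.8]{Wal08}. There the standard endoscopic correspondence $\G_{\eta,\sc}\leftrightarrow\bar{\H}$ and the non-standard correspondence $\bar{\H}_{\sc}\leftrightarrow\H_{y,\sc}$ are both realized on the tori $\T^{\theta,\circ}$ and $\T_{\H}$ through $\xi$. The map $j_*$ is the rescaling of $d\xi|_{\Lie \T^{\theta,\circ}}$ by the root-length factors $b,b^{\vee}$, and the central parts are matched through $\mfz_{\H_y}\cong\mfz_{\bar{\H}}\oplus\mfz_{\G_{\eta}}$. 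Composing these identifications should recover exactly $d\xi_{D_1}\colon\Lie\T_1^{\nat}\xrightarrow{\sim}\Lie\T_1^{\flat}$ up to Weyl groups. After a further $\H_y$-conjugation, I can then assume $Y=d\xi_{D_1}(X)$.

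\emph{Step 3 (exponentiating).} Because the exponential is the genuine one, valid under $p\ge(2+e_F)n$, and is equivariant, the condition $Y=d\xi_{D_1}(X)$ with $X\in\Lie\T_1^{\nat}$ gives $\xi_{D_1}(\exp X)=\exp Y$. Then $\t\xi_{D_1}(\eta\exp X)=y\exp Y$. Since $\exp X\in\T^{\nat}_1\subset\T^{\dia}_1$, the Borel pair $(\B^{\dia}_1,\T^{\dia}_1)$ is stable under $[\eta\exp X]$, and $(\B^{\flat}_1,\T^{\flat}_1)$ is stable under $[y\exp Y]$. All the remaining axioms of a diagram carry over from $D_1$ unchanged, so $D_1\in\bfD(y\exp Y,\eta\exp X)$.

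I expect Step 2 to be the main obstacle. Waldspurger's identification of $j_*$, with its scaling by $b^{\vee}$ on short versus long restricted coroots, has to be shown to be compatible with $d\xi$ exactly, and not merely up to those rescalings, on the relevant tori. In practice I would cite \cite[Section 3.8]{Wal08} for this compatibility rather than reprove it.
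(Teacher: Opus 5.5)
\textbf{Gap in Step 1.} The justification you give for Step~1 is wrong, and the step is not needed. The ``analogue of Lemma~\ref{lem:diag-H-conj} on the $\G_{\eta}$-side'' fails in general. Suppose you replace $D$ by an equivalent diagram with a prescribed $F$-rational maximal torus $\T_{1}^{\nat}={}^{g}\T^{\nat}$ of $\G_{\eta}$. Then $\xi_{D_{1}}=[h']\circ\xi_{D}\circ[g]^{-1}$ must be $F$-rational. This forces the Weyl cocycle $\sigma\mapsto[\sigma(g)^{-1}g]\in\Omega_{\G_{\eta}}(\T^{\nat})$ to lie in the image of $\Omega_{\H_{y}}(\T^{\flat})$.

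The argument of Lemma~\ref{lem:diag-H-conj} only runs in the other direction: it starts from an $\Omega_{\H_{y}}$-valued cocycle. For instance, in the untwisted analogue, a split torus of $\SL_{2}$ never occurs in a diagram attached to an elliptic endoscopic torus. For $\T_{1}^{\nat}=(\G_{\eta})_{X}$, the required condition is essentially the conclusion you are trying to reach.

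You do not need an $F$-rational diagram for the new pair at all. Both $y\exp(Y)$ and $\eta\exp(X)$ are $F$-rational and (strongly) regular by the definition of a $D$-norm pair. Membership in $\mcD$ only asks that their geometric conjugacy classes match under \eqref{eq:norm-map}. So keep $D$ and proceed as follows:
\begin{enumerate}
\item Move $X$ into $\mft^{\nat}$ by some $g\in\G_{\eta}(\ol{F})$, and $Y$ into $\mft^{\flat}$ by some $h\in\H_{y}(\ol{F})$; these fix $\eta$ and $y$ respectively.
\item Use Step~2 to get $d\xi_{D}(X')=Y'$ after Weyl moves.
\item Your Step~3 identity $\t{\xi}_{D}(\eta\exp X')=y\exp(d\xi_{D}X')$ then finishes the argument.
\end{enumerate}
A rational diagram can be recovered afterwards, since regularity of $X$ forces the two Weyl cocycles to coincide, but only once Step~2 is known.

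\textbf{Step 2 carries all the content.} Your description of $j_{\ast}$ there is off: $j_{\ast}$ is one linear map, not a root-by-root rescaling of $d\xi$. In Waldspurger's construction it is the map induced by $\xi_{D}$ on the derived parts. The coroots of $\bar{\H}$ are the $N(\alpha^{\vee})$ with $p_{\ast}(\alpha^{\vee})\in\Phi^{\vee}(\H_{y},\T^{\flat})$, as the identification $i^{\vee}$ of Section~\ref{subsec:rootsys} reflects. The constants $b^{\vee}$ only record that $N(\alpha^{\vee})\mapsto l_{\alpha}\,p_{\ast}(\alpha^{\vee})$, exactly as in Lemma~\ref{lem:appearance-of-l-alpha}. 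Similarly, the isomorphism $\mfz_{\H_{y}}\cong\mfz_{\bar{\H}}\oplus\mfz_{\G_{\eta}}$ is $d\xi_{D}$ restricted to the common kernel of the roots of $\bar{\H}$ in $\mft^{\nat}$.

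With this, there is no ``compatibility up to rescaling'' issue and Step~2 is bookkeeping. $d\xi_{D}$ carries the span of the coroots of $\bar{\H}$ onto $\mft^{\flat}\cap\mfh_{y,\sc}$, that common kernel onto $\mfz_{\H_{y}}$, and $\Omega_{\bar{\H}}$ onto $\Omega_{\H_{y}}(\T^{\flat})$. Hence the $\Omega_{\H_{y}}$-orbit of (a conjugate of) $Y$ corresponds to the $\Omega_{\G_{\eta}}$-orbit of (a conjugate of) $X$.

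If instead you cite \cite[Section 3.8]{Wal08} for this compatibility, you are citing the very Lemme from which the paper takes the proposition without proof. Your argument then reduces to that citation plus scaffolding, part of which, Step~1, does not hold as stated.
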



\subsection{Descent of transfer factor}

We write $\Delta\!^{D}$ for the (absolute or relative) Lie algebra transfer factor for the pair $(\bar{\H},\G_{\eta,\sc})$.
Note that we put the symbol $D$ on the exponent in order to emphasize that the endoscopic structure of $(\bar{\H},\G_{\eta,\sc})$ depends on the choice of a diagram $D\in\bfD(y,\eta)$.

\begin{thm}[{\cite[Section 3.9, Th\'eor\`eme]{Wal08}}]\label{thm:NST}
There exists a neighborhood $\mfV$ of $0$ in $\mfh_{y,0+}$ such that, for any $D$-norm pairs $(Y,X), (\ul{Y},\ul{X})\in \mfV\times\mfg_{\eta,0+}$, we have
\[
\Delta\bigl(y\exp(Y),\eta\exp(X); y\exp(\underline{Y}),\eta\exp(\underline{X})\bigr)
=
\Delta\!^{D}(\bar{Y},X_{\sc}; \bar{\underline{Y}},\underline{X}_{\sc}),
\]
where $\bar{Y}$ and $\bar{\ul{Y}}$ are the elements of $\bar{\mfh}$ associated to $(Y,X)$ and $(\ul{Y},\ul{X})$ as in Definition \ref{defn:norm-pair}, respectively.
\end{thm}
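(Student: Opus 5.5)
The plan is to deduce the statement from \cite[Section 3.9, Th\'eor\`eme]{Wal08} rather than re-prove it. The work lies in checking that the objects in our statement agree with Waldspurger's. Three things need matching: the normalization of the transfer factor, the exponential map, and the notion of $D$-norm pair. The comparison is done with the relative factor $\Delta(\cdot,\cdot;\cdot,\cdot)$, which is independent of the choice of $a$-data and $\chi$-data and of the Whittaker normalization. So it suffices to match the two relative factors term by term.

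\textbf{Exponential map.} Under the standing assumption $p\geq(2+e_{F})n$, the classical exponential converges on $\mfg_{\eta,0+}$ and is $G_{\eta}$-equivariant. This is exactly the map used in \cite[Appendice B]{Wal08}, so no modification is needed. Hence our $y\exp(Y)$ and $\eta\exp(X)$ are the same elements as Waldspurger's. By Proposition~\ref{prop:norm-decomp}, every $D$-norm pair gives an element of $\mcD$, so both sides of the identity are defined and nonzero.

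\textbf{Transfer-factor normalization.} The factor $\Delta$ is taken in the modified form $\Delta'$ of \cite{KS12}, that is, built from $\Delta_{\I}^{\new,-1}$, $\Delta_{\II}^{\KS}$, $\Delta_{\III}^{\KS,-1}$ and $\Delta_{\IV}^{\KS}$. Waldspurger's theorem is stated for the factor he uses in \cite{Wal08}, and \cite[Section 2]{Wal-Errata} explains how that factor relates to the corrected one. In our setting there are no restricted roots of type $2$ or $3$, so $\Delta_{\I}^{\new}=\Delta_{\I}^{\KS}$. The only change is then the passage to inverses in $\Delta_{\I}$ and $\Delta_{\III}$, equivalently the switch between the two normalizations of local class field theory. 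The Lie algebra factor $\Delta\!^{D}$ on the right-hand side must be normalized compatibly. I would check that Waldspurger's descent argument is insensitive to this switch: it compares the $\Delta_{\I}$, $\Delta_{\II}$, $\Delta_{\III}$ and $\Delta_{\IV}$ pieces separately on both sides. Applying the same convention change to both sides, in particular replacing the cocycle with its inverse in the $\Delta_{\III}$ comparison, should preserve each equality.

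\textbf{Diagram formalism.} Waldspurger's $\bar{\H}$ and the non-standard triple are built from a diagram in the sense of \cite{Wal08}, whereas we use the modified definition of \cite[I.1.10]{MW16-1}. I would verify that a diagram in our sense yields one in Waldspurger's sense, with the same $\xi_{D}$ and $\t\xi_{D}(\eta)=y$. Then the correspondence of $\bar{Y}$ with $(Y,X)$ in Definition~\ref{defn:norm-pair} is literally the one in \cite[Section 3.8]{Wal08}, and the neighborhood $\mfV$ can be taken from his statement.

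The main obstacle I expect is the normalization comparison for $\Delta_{\III}$. This factor involves the interaction of the twisted cocycle with the $L$-embeddings, and it is the place where sign or inverse conventions could fail to cancel. If the errata do not settle this directly, I would first fall back on the explicit descent computation in \cite[Section 3]{Wal08} for $\Delta_{\III}$. I would redo it with $\chi$-data replaced by their inverses and confirm that both sides change by the same character value, so the relative ratio is unaffected.
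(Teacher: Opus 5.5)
Your proposal takes essentially the same route as the paper, which gives no independent argument. The paper simply imports this identity from \cite[Section 3.9, Th\'eor\`eme]{Wal08}, with the passage to the corrected normalization $\Delta'$ covered by \cite{KS12} and \cite[Section 2]{Wal-Errata} as recorded in item (6) of Section~\ref{subsec:tran}. The compatibility checks you list are the bookkeeping the paper leaves implicit in that citation: the classical exponential under $p\geq(2+e_{F})n$, diagrams in the sense of \cite[I.1.10]{MW16-1}, and the $\Delta_{\I}$, $\Delta_{\III}$ conventions.
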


\begin{cor}\label{cor:NST}
The absolute Lie algebra transfer factor $\Delta\!^{D}(-,-)$ can be normalized so that there exists a neighborhood $\mfV$ of $0$ in $\mfh_{y,0+}$ such that, for any $D$-norm pair $(Y,X)\in\mfV\times\mfg_{\eta,0+}$, we have
\[
\Delta\bigl(y\exp(Y),\eta\exp(X)\bigr)
=
\Delta\!^{D}(\bar{Y},X_{\sc}).
\]
\end{cor}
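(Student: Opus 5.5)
The corollary should follow from Theorem \ref{thm:NST} by fixing a base point and using that the Lie algebra transfer factor is only canonical up to a constant. The plan is to define the absolute normalization of $\Delta\!^{D}$ so that it matches $\Delta$ at one chosen $D$-norm pair, and then propagate this to all other pairs using the relative identity.

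\textbf{Step 1 (choose a base pair).} Take the neighborhood $\mfV$ from Theorem \ref{thm:NST}. I first need a $D$-norm pair $(\ul{Y},\ul{X})\in\mfV\times\mfg_{\eta,0+}$. To produce one, start from $D$ itself: the maximal tori $\T^{\flat}\subset\H_{y}$ and $\T^{\nat}\subset\G_{\eta}$ are related by $\xi_{D}$, which is $F$-rational. Take a small regular element $\ul{X}\in\mft^{\nat}\cap\mfg_{\eta,0+}$ and let $\ul{Y}\in\mft^{\flat}$ be its image under $d\xi_{D}$. For $\ul{X}$ close enough to $0$ and in general position, $\eta\exp(\ul{X})$ is strongly regular semisimple and $y\exp(\ul{Y})$ is strongly $\t\G$-regular. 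Compatibility of Waldspurger's construction with the tori of the diagram should then give the required $\bar{\ul{Y}}$ (Wal08, Section 3.8). If no $D$-norm pair exists in $\mfV\times\mfg_{\eta,0+}$, the statement is vacuous.

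\textbf{Step 2 (normalize at the base pair).} Fix any absolute Lie algebra transfer factor $\Delta\!^{D}_{0}$ for $(\bar{\H},\G_{\eta,\sc})$. It is nonzero on norm pairs, and $\Delta(y\exp(\ul{Y}),\eta\exp(\ul{X}))\neq0$ by Proposition \ref{prop:norm-decomp}. Set
\[
\Delta\!^{D}\colonequals \frac{\Delta(y\exp(\ul{Y}),\eta\exp(\ul{X}))}{\Delta\!^{D}_{0}(\bar{\ul{Y}},\ul{X}_{\sc})}\cdot\Delta\!^{D}_{0}.
\]
This is again an absolute transfer factor, since absolute factors form a single line under scaling, and the desired identity holds at the base pair by construction.

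\textbf{Step 3 (propagate).} For any $D$-norm pair $(Y,X)\in\mfV\times\mfg_{\eta,0+}$, multiply the relative identity of Theorem \ref{thm:NST} by the base-pair equality to get $\Delta(y\exp(Y),\eta\exp(X))=\Delta\!^{D}(\bar{Y},X_{\sc})$.

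One point needs checking: $\bar{Y}$ is defined only up to stable conjugacy, so the value $\Delta\!^{D}(\bar{Y},X_{\sc})$ must not depend on that choice. This holds because the relative factor in Theorem \ref{thm:NST} is already well defined, and the absolute factor is stably invariant in the endoscopic variable. The main, though minor, obstacle is the existence of the base pair in Step 1.
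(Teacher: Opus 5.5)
Your proposal is correct and is the argument the paper implicitly intends, since it states this corollary without a written proof directly after Theorem~\ref{thm:NST}: normalize the absolute factor $\Delta\!^{D}$ at one base $D$-norm pair in $\mfV\times\mfg_{\eta,0+}$, then propagate to all other pairs using the relative identity. Your Step~1 construction of a base pair is harmless but not needed, because, as you yourself observe, the statement is vacuous when no such pair exists.
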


\subsection{Transfer of Fourier transforms of orbital integrals}\label{subsec:Lie-tran}

In this section, we summarize the results on the transfer of the Fourier transforms of orbital integrals on Lie algebras, which were established by Waldspurger and Ng\^o.

For any connected reductive group $\J$ over $F$ equipped with an invariant symmetric non-degenerate bilinear form $B_{\mfj}$ on $\mfj=\Lie\J(F)$, we let $\gamma(\mfj)$ denote the Weil constant of $(\mfj,B_{\mfj})$ with respect to the fixed non-trivial additive character $\psi_{F}$ of $F$ (see \cite[Section 3.1]{Wal97}).\symdef{gamma-j}{$\gamma(\mfj)$}
For regular semisimple elements $X\in\mfj$ and $X^{\ast}\in\mfj$, we put
\[
D_{X^{\ast}}^{\J}(X)\colonequals \gamma(\mfj)\cdot\hat{\iota}^{\J}_{X^{\ast}}(X).
\]\symdef{D-J-X-ast}{$D_{X^{\ast}}^{\J}$}
Here, $\hat{\iota}^{\J}_{X^{\ast}}(X)$ is the normalized Fourier transform of the orbital integral of $X^{\ast}$ (see Section \ref{subsec:TCF}; note that $X^{\ast}$ is regarded also as an element of $\mfj^{\ast}$ via $B_{\mfj}$).
We also put
\[
D^{\J,\st}_{X^{\ast}}(X)
\colonequals 
\sum_{X^{\ast\prime}\sim_{\J}X^{\ast}/{\sim_{J}}}
D^{\J}_{X^{\ast\prime}}(X),
\]\symdef{D-J-st-X-ast}{$D_{X^{\ast}}^{\J,\st}$}
where the index set is over the $J$-conjugacy classes within the stable conjugacy class of $X^{\ast}$ in $\mfj$.

\subsubsection{The case of standard endoscopy}
Let $\J$ be a connected reductive group over $F$ and $\J'$ a standard endoscopic group of $\J$.
We fix an invariant symmetric non-degenerate bilinear form $B_{\mfj}$ on $\mfj$.
Then $B_{\mfj}$ induces an invariant symmetric non-degenerate bilinear form on $\mfj'$ (see \cite[Section VIII.6]{Wal95}).
Let us write $B_{\mfj'}$ for this bilinear form.
We remark that these bilinear forms satisfy the following consistency property on the maximal tori.
Let $\T_{\J}$ and $\T_{\J'}$ be maximal tori of $\J$ and $\J'$ belonging to the (implicitly fixed) pinnings of $\J$ and $\J'$, respectively.
Then the endoscopic structure of $\J'$ in $\J$ gives an isomorphism $\xi_{\J}\colon\T_{\J}\cong\T_{\J'}$, which induces an isomorphism $\xi_{\J}\colon\bmft_{\J}\cong\bmft_{\J'}$ on the Lie algebras.
With these notation, for any $X,X'\in\mft_{\J}$, we have $B_{\mfj}(X,X')=B_{\mfj'}(\xi_{\J}(X),\xi_{\J}(X'))$.

For a strongly $\J$-regular semisimple element $Y^{\ast}\in\mfj'$ and a strongly regular semisimple element $X\in\mfj$, we put
\[
D_{\J',\J}(Y^{\ast},X)
\colonequals 
\sum_{X^{\ast}\leftrightarrow Y^{\ast}/{\sim_{J}}}
\mr{\Delta}_{\J',\J}(Y^{\ast},X^{\ast})
D^{\J}_{X^{\ast}}(X),
\]\symdef{D-J-prime-J-Y-ast-X}{$D_{\J',\J}(Y^{\ast},X)$}
where the index set is over the $J$-conjugacy classes of strongly regular semisimple elements of $\mfj$ which correspond to $Y^{\ast}$, and $\mr{\Delta}_{\J',\J}(Y^{\ast},X^{\ast})$ denotes the Lie algebra transfer factor without the fourth factor.
We also put
\[
\t{D}_{\J',\J}(Y^{\ast},X)
\colonequals 
\sum_{Y'\leftrightarrow X/{\sim_{\J'}}}
\mr{\Delta}_{\J',\J}(Y',X)
D^{\J',\st}_{Y^{\ast}}(Y'),
\]\symdef{D-tilde-J-prime-J-Y-ast-X}{$\t{D}_{\J',\J}(Y^{\ast},X)$}
where the index set is over the stable conjugacy classes of the elements of $\mfj'$ which correspond to $X$.

With these notation, the following holds:
\begin{thm}[{\cite[1.2. Conjecture]{Wal97}}; {\cite{Wal06},\cite{Ngo10}}]\label{thm:tran-std}
We have
\[
\t{D}_{\J',\J}(Y^{\ast},X)
=
D_{\J',\J}(Y^{\ast},X).
\]
\end{thm}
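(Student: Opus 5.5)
The plan is to reduce this identity to the Lie algebra fundamental lemma, following the route established by Waldspurger. Nothing in the present paper is used; the statement is imported as a black box, and the plan only records how the cited results combine.

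First, I rewrite both sides as distributions. By Harish-Chandra's theory, $D^{\J}_{X^{\ast}}$ is the locally integrable function representing the Fourier transform of the orbital integral of $X^{\ast}$, normalized by the Weil constant $\gamma(\mfj)$. So $D_{\J',\J}(Y^{\ast},-)$ is the function representing the Fourier transform of the $\kappa$-orbital integral of $Y^{\ast}$ weighted by $\mr{\Delta}_{\J',\J}$. Similarly, $\t{D}_{\J',\J}(Y^{\ast},-)$ is the pushforward along the transfer of the Fourier transform of the stable orbital integral of $Y^{\ast}$ on $\mfj'$. Pairing against test functions $f\in C_c^\infty(\mfj_{\rs})$, the identity becomes the statement that Fourier transform commutes with endoscopic transfer. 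More precisely, if $f'$ is a transfer of $f$, then $\gamma(\mfj)\hat f$ and $\gamma(\mfj')\hat{f'}$ are again matching functions, with the factor $\Delta_{\IV}$ absorbed into the normalization of $\hat\iota$.

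Second, I invoke \cite{Wal97}, Théorème 1.2, which shows that this compatibility holds for all $p$-adic fields once the transfer conjecture and the Lie algebra fundamental lemma are known. The mechanism is a global argument: a simple trace formula on Lie algebras with the Poisson summation formula, combined with a local induction on semisimple descent to reduce to elliptic, near-nilpotent neighborhoods. The same paper shows that the fundamental lemma for unit elements implies the transfer conjecture.

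Third, I supply the fundamental lemma itself. Ngô \cite{Ngo10} proves it over local fields of positive characteristic. Waldspurger \cite{Wal06} transports it, and hence the full Fourier-transfer statement, to characteristic zero for sufficiently large residual characteristic. Our assumption $p\geq(2+e_F)n$ and $p\nmid|\Omega_{\G}|$ keeps us in that range. This is why the theorem carries the attribution {\cite{Wal06},\cite{Ngo10}}.

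The main obstacle, if one wanted to verify this rather than cite it, is the second step. Controlling the Weil constants $\gamma(\mfj)/\gamma(\mfj')$ and the compatibility of the bilinear forms $B_{\mfj},B_{\mfj'}$ on the tori is delicate, and the global comparison needs careful matching of measures (Waldspurger's normalized measures, as in Section \ref{subsec:TCF}). In this paper I would simply record that our normalizations of $\hat\iota$ and $\mr\Delta$ agree with those of \cite{Wal97} and \cite{Kal19}, so that the cited theorem applies verbatim.
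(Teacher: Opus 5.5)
Your proposal agrees with the paper, which gives no argument and simply imports the statement (Waldspurger's Conjecture~1.2 of \cite{Wal97}) as a known theorem. Your roadmap is the standard reading of that citation: \cite{Wal97} deduces Fourier compatibility of transfer from the Lie algebra fundamental lemma, \cite{Ngo10} proves the fundamental lemma in positive characteristic, and \cite{Wal06} carries it to characteristic zero.

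Two corrections. First, item 1.2 of \cite{Wal97} is the conjecture, not a ``Th\'eor\`eme''. Second, these results are available only for $p$ large, not for all $p$-adic fields, so your claim that the explicit bounds $p\geq(2+e_F)n$ and $p\nmid|\Omega_{\G}|$ already put you in their range is unjustified; the paper itself asserts its main results only for $p$ sufficiently large.
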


\subsubsection{The case of non-standard endoscopy}
Let $(\G_{1},\G_{2},j^{\ast})$ be a non-standard endoscopic triple.
We fix an invariant symmetric non-degenerate bilinear form $B_{i}$ on $\mfg_{i}$ for each $i$ such that we have $B_{1}(X,X')=B_{2}\bigl(j_{\ast}(X),j_{\ast}(X')\bigr)$ for any $X, X'\in\mft_{1}$.

\begin{thm}[{\cite[Proposition 1.8]{Wal08}}]\label{thm:tran-nonst}
For any regular semisimple elements $Y_{1}\in\mfg_{1}$ and $Y_{2}\in\mfg_{2}$ which correspond (resp.\ $X_{1}^{\ast}\in\mfg_{1}$ and $X_{2}^\ast\in\mfg_{2}$ which correspond), we have 
\[
D^{\G_{1},\st}_{X_{1}^{\ast}}(Y_{1})
=
D^{\G_{2},\st}_{X_{2}^{\ast}}(Y_{2}).
\]
\end{thm}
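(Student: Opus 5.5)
This statement is \cite[Proposition 1.8]{Wal08}, and in the paper it is used as a black box. If I had to reprove it, I would follow the same strategy as in the standard case (Theorem \ref{thm:tran-std}): deduce the equality of the kernels $D^{\G_{i},\st}_{X_{i}^{\ast}}$ from a statement about stable distributions on $\mfg_{1}$ and $\mfg_{2}$.

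\emph{Step 1: transfer of stable orbital integrals.} The bijection $(\bmft_{1}/\Omega_{1})^{\Gamma}\cong(\bmft_{2}/\Omega_{2})^{\Gamma}$ induced by $j_{\ast}$ identifies the regular semisimple stable classes. I would first establish the non-standard transfer: for every $f_{1}\in C_{c}^{\infty}(\mfg_{1})$ there is $f_{2}\in C_{c}^{\infty}(\mfg_{2})$, and conversely, whose normalized stable orbital integrals agree on corresponding regular semisimple elements. The transfer factors here are trivial, because stable orbital integrals are compared directly. Near $0$ this reduces to the non-standard fundamental lemma for Lie algebras, which follows from Ng\^o's work \cite{Ngo10} together with Waldspurger's reductions. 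Away from $0$ I would use semisimple descent and homogeneity, arguing by induction on the dimension: centralizers of nonzero semisimple elements again form non-standard triples of smaller rank, after separating off the central parts.

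\emph{Step 2: compatibility with the Fourier transform.} The Fourier transforms on $\mfg_{1}$ and $\mfg_{2}$ are taken with respect to $B_{1}$ and $B_{2}$, which are compatible via $j_{\ast}$. The claim to prove is that if $f_{1}$ and $f_{2}$ match, then $\gamma(\mfg_{1})\hat f_{1}$ and $\gamma(\mfg_{2})\hat f_{2}$ match. This is the non-standard analogue of Waldspurger's theorem that transfer commutes with the Fourier transform. I would follow his standard-endoscopy argument:
\begin{itemize}
\item First check the statement on a generating family, namely characteristic functions of Moy--Prasad lattices attached to points of the building in the unramified case, where it follows from the fundamental lemma.
\item Then extend by the global trace-formula method for Lie algebras, or by the local argument via homogeneity and descent used in \cite{Wal97}.
\end{itemize}
I expect this step to be the main obstacle. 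The Weil constants $\gamma(\mfg_{i})$ enter here, and their ratio must be exactly right. My first attempt would be to handle it by comparing the quadratic spaces $(\mft_{i},B_{i})$ together with the root-space contributions, which pair off under $\tau$ and $\tau^{\vee}$ up to the positive rational scalars $b$ and $b^{\vee}$.

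\emph{Step 3: deduce the kernel identity.} By Harish-Chandra's theorem, the Fourier transform of the stable orbital integral at $X_{i}^{\ast}$ is represented by the locally integrable function $D^{\G_{i},\st}_{X_{i}^{\ast}}$, which is locally constant on the regular set. I would pair this function with matching $f_{1}$ and $f_{2}$ supported in small neighbourhoods of corresponding regular semisimple $Y_{1}$ and $Y_{2}$. Then I would use Step 2 together with the Weyl integration formula, whose measures match under $j_{\ast}$. Finally, letting the neighbourhoods shrink gives $D^{\G_{1},\st}_{X_{1}^{\ast}}(Y_{1})=D^{\G_{2},\st}_{X_{2}^{\ast}}(Y_{2})$. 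The $\gamma(\mfg_{i})$ normalization built into $D$ is precisely what absorbs the constant from Step 2.
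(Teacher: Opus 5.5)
The paper gives no proof of this statement; it imports it from \cite[Proposition 1.8]{Wal08}. Your overall architecture is the right one: non-standard fundamental lemma, then matching and Fourier compatibility, then the kernel identity via Harish-Chandra's theorem and the Weyl integration formula. Your Step 3 is fine. However, the mechanism you propose for Steps 1 and 2 has a genuine gap.

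The fundamental lemma, standard or non-standard, concerns the single function $1_{\mfg_{i}(\mcO)}$ and its dilates, for unramified data. It does not cover characteristic functions of Moy--Prasad lattices at non-hyperspecial points, and there is no reason for those to match. It says nothing about arbitrary test functions either. Semisimple descent and homogeneity only reduce both the existence of transfer and its compatibility with the Fourier transform to a neighbourhood of $0$, that is, to finitely many homogeneous stable distributions attached to the nilpotent cone. They give no way to settle that last piece. So neither ``near $0$ this reduces to the fundamental lemma'' nor a ``local argument via homogeneity and descent'' can close the proof.

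The missing idea is global. In Waldspurger's treatment of the standard case, the kernel identity (Theorem \ref{thm:tran-std}, which is \cite[1.2 Conjecture]{Wal97}) is proved by embedding the local data in global data. One then combines a simple trace formula for the adelic Lie algebra with Poisson summation $\sum_{X}f(X)=\sum_{X}\hat f(X)$. The fundamental lemma is used at almost all places, where also $\hat{1}_{\mfg(\mcO)}=1_{\mfg(\mcO)}$, together with the product formula for Weil constants. Transfer is then obtained from the kernel identity via his local trace formula, rather than serving as an input as in your Step 1. It is this argument that has to be transposed to non-standard triples.

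By contrast, the Weil constants you single out as the main obstacle are harmless. Since $\G_{i}$ is quasi-split, $\mft_{i}^{\perp}=\mathfrak{n}_{i}\oplus\mathfrak{n}_{i}^{-}$ is a hyperbolic space, where these are the nilradicals of $\bfB_{i}$ and of its opposite Borel. Hence $\gamma(\mfg_{i})=\gamma(\mft_{i})$. Moreover $j_{\ast}$ is an $F$-isometry $(\mft_{1},B_{1})\cong(\mft_{2},B_{2})$, so $\gamma(\mfg_{1})=\gamma(\mfg_{2})$.
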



\subsubsection{The case of isogeny}

Let $\J$ be a connected reductive group over $F$.
We fix a $J$-invariant symmetric non-degenerate bilinear form $B_{\mfj}$ on $\mfj$.
Then we get an identification $\mfj\cong\mfj^{\ast}$ which also induces identifications $\mfj_{\sc}\cong\mfj_{\sc}^{\ast}$ and $\mfz_{\J}\cong\mfz_{\J}^{\ast}$.

\begin{lem}\label{lem:isog}
For a strongly regular semisimple element $X\in\mfj$ with decomposition $X=X_{\sc}+X_{Z}\in\mfj_{\sc}\oplus\mfz_{\J}$ and a strongly regular semisimple element $X^{\ast}\in\mfj$ with decomposition $X^{\ast}=X^{\ast}_{\sc}+X^{\ast}_{Z}\in\mfj_{\sc}\oplus\mfz_{\J}$, we have
\[
D^{\J,\st}_{X^{\ast}}(X)
=
\gamma(\mfz_{\J})\cdot\psi_{F}(B_{\mfj}(X^{\ast}_{Z},X_{Z}))\cdot D^{\J_{\sc},\st}_{X^{\ast}_{\sc}}(X_{\sc}).
\]
\end{lem}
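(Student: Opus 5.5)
The plan is to reduce the identity to the corresponding statement on $\mfj_{\sc}$ by splitting everything according to the decomposition $\mfj=\mfj_{\sc}\oplus\mfz_{\J}$, which is orthogonal for $B_{\mfj}$ because $B_{\mfj}$ is invariant and $[\mfj,\mfj]=\mfj_{\sc}$ (identifying $\mfj_{\sc}$ with the derived Lie algebra, legitimate since $p$ is large). Accordingly $B_{\mfj}=B_{\mfj_{\sc}}\oplus B_{\mfz}$, and the Weil constant factors as $\gamma(\mfj)=\gamma(\mfj_{\sc})\cdot\gamma(\mfz_{\J})$ by multiplicativity of Weil constants on orthogonal sums (\cite[Section 3.1]{Wal97}).

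Next I analyze the index set of $D^{\J,\st}_{X^{\ast}}$. Stable conjugation acts trivially on $\mfz_{\J}$, so every $X^{\ast\prime}$ stably conjugate to $X^{\ast}$ has the form $X^{\ast\prime}_{\sc}+X^{\ast}_{Z}$ with $X^{\ast\prime}_{\sc}$ stably conjugate to $X^{\ast}_{\sc}$ in $\mfj_{\sc}$. I then check that $X^{\ast\prime}\mapsto X^{\ast\prime}_{\sc}$ induces a bijection from the $J$-classes in the stable class of $X^{\ast}$ onto the $J_{\sc}$-classes in the stable class of $X^{\ast}_{\sc}$, or at least that the fibers are balanced compatibly with the normalization. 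The $\J$- and $\J_{\sc}$-orbits on $\mfj_{\sc}$ coincide since $\J=\J_{\sc}\bfZ_{\J}$ up to isogeny; at the rational level, $J$-classes in a stable class are parametrized by $\ker(H^{1}(F,\T)\to H^{1}(F,\J))$ and $J_{\sc}$-classes by $\ker(H^{1}(F,\T_{\sc})\to H^{1}(F,\J_{\sc}))$, where $\T$ is the centralizer torus. These sets can differ: the rational $J_{\sc}$-classes may be finer, because the image of $J_{\sc}\to J$ need not carry all of $J$. I expect this to be the main obstacle. The fix I would try first is to use that $\hat{\iota}$ is a $J$-invariant function of $X^{\ast}$, and then to show that summing $\hat{\iota}^{\J_{\sc}}$ over $J_{\sc}$-classes within a single $J$-class exactly reproduces $\hat{\iota}^{\J}$ of that class. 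This holds because orbital integrals on $\mfj_{\sc}$ with respect to $J$ decompose as sums over the $J_{\sc}$-orbits contained in the $J$-orbit, provided the measures and discriminant normalizations are matched. The net outcome should be that the stable sums agree.

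With the index sets matched, I compare individual terms. Since $X^{\ast}$ is regular, the orbit $J\cdot X^{\ast}$ equals $J\cdot X^{\ast}_{\sc}+X^{\ast}_{Z}$. The Fourier transform of the orbital integral, evaluated at $X=X_{\sc}+X_{Z}$, therefore factors as $\psi_{F}(B_{\mfj}(X^{\ast}_{Z},X_{Z}))$ times the transform of the $\mfj_{\sc}$-orbit evaluated at $X_{\sc}$. This uses orthogonality to split the character $\psi_{F}(B(\cdot,\cdot))$, and the fact that Waldspurger's normalized measures are compatible with the product decomposition. The reduced discriminants $|D^{\red}|$ depend only on the $\mfj_{\sc}$-components, since $\ad$ of central elements vanishes. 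Collecting these factors together with the factorization $\gamma(\mfj)=\gamma(\mfj_{\sc})\gamma(\mfz_{\J})$ yields the stated identity.
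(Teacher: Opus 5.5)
Your proposal is correct and follows essentially the same route as the paper: the factorization $\gamma(\mfj)=\gamma(\mfj_{\sc})\gamma(\mfz_{\J})$, the splitting of $\hat{\iota}^{\J}_{X^{\ast}}(X)$ into $\psi_{F}(B_{\mfj}(X^{\ast}_{Z},X_{Z}))$ times a $\mfj_{\sc}$-term (which the paper quotes from \cite[4.4 (1)]{Wal97} rather than deriving from orthogonality and product measures as you do), and then a comparison of the index sets. Your explicit treatment of the fact that $J_{\sc}$-classes inside a stable class can be finer than $J$-classes is also sound: you decompose each $J$-orbit in $\mfj_{\sc}$ into $J_{\sc}$-orbits carrying compatible Waldspurger measures, so that summing over $J$-classes matches the $J_{\sc}$-class sum defining $D^{\J_{\sc},\st}$. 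This makes precise a point that the paper's one-line index-set comparison, phrased in terms of $J$-conjugacy of $X^{\ast\prime}_{\sc}$, leaves implicit.
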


\begin{proof}
According to \cite[4.4 (1)]{Wal97}, we have
\[
\hat{\iota}^{\J}_{X^{\ast}}(X)
=
\psi_{F}(B_{\mfj}(X^{\ast}_{Z},X_{Z}))\cdot \hat{\iota}^{\J_{\sc}}_{X^{\ast}_{\sc}}(X_{\sc}).
\]
In general, for the orthogonal sum $V_{1}\oplus V_{2}$ of any finite-dimensional quadratic spaces $V_{1}$ and $V_{2}$, we have $\gamma(V_{1}\oplus V_{2})=\gamma(V_{1})\cdot\gamma(V_{2})$.
Hence we have $\gamma(\mfj)=\gamma(\mfj_{\sc})\cdot\gamma(\mfz_{\J})$.
This implies that
\[
D^{\J}_{X^{\ast}}(X)
=
\gamma(\mfz_{\J})\cdot\psi_{F}(B_{\mfj}(X^{\ast}_{Z},X_{Z}))\cdot D^{\J_{\sc}}_{X^{\ast}_{\sc}}(X_{\sc}).
\]
For any $X^{\prime\ast}\in\mfj$ with decomposition $X^{\prime\ast}_{\sc}+X^{\prime\ast}_{Z}\in\mfj_{\sc}\oplus\mfz_{\J}$, $X^{\prime\ast}$ is stably $\J$-conjugate (resp.\ $J$-conjugate) to $X^{\ast}$ if and only if $X^{\prime\ast}_{\sc}$ is stably $\J$-conjugate (resp.\ $J$-conjugate) to $X^{\ast}_{\sc}$ and $X^{\prime\ast}_{Z}=X^{\ast}_{Z}$.
Thus we get the assertion.
\end{proof}

\subsubsection{Combined form}
Now let us go back to our situation; $\H$ is a twisted endoscopic group of $\t\G$.
Suppose that we have $(y,\eta)\in \t{G}_{\ss}\times H_{\ss}$ satisfying $y\in\mfH_{\eta}$ (see Section \ref{subsec:descent-lem}) and that we have a diagram $D\in\bfD(y,\eta)$.
Then we get the associated group $\bar{\H}$ as in Section \ref{subsec:decomposition}.
We fix invariant symmetric non-degenerate bilinear forms $B_{\mfg_{\eta}}$ on $\mfg_{\eta}$, $B_{\bar{\mfh}}$ on $\bar{\mfh}$, and $B_{\mfh_{y}}$ on $\mfg_{\eta}$ such that the restriction of $B_{\mfg_{\eta}}$ to $\mfz_{\H_{y}}$ is identified with the orthogonal sum of the restrictions of $B_{\mfg_{\eta}}$ to $\mfz_{\G_{\eta}}$ and $B_{\bar{\mfh}}$ to $\mfz_{\bar{\H}}$ through the isomorphism $\mfz_{\H_{y}}\cong\mfz_{\G_{\eta}}\oplus\mfz_{\bar{\H}}$.

We take
\begin{itemize}
\item
a strongly regular semisimple element $Y^{\ast}\in\mfh_{y,0+}$ with decomposition $Y^{\ast}=Y_{\sc}^{\ast}+Y^{\ast}_{Z}\in\mfh_{y,\sc}\oplus\mfz_{\H_{y}}$,
\item
a strongly regular semisimple element $\bar{Y}_{\sc}^{\ast}\in\bar{\mfh}_{\sc,0+}$ whose stable conjugacy class corresponds to that of $Y_{\sc}^{\ast}$, and
\item
a strongly regular semisimple element $X\in\mfg_{\eta,0+}$ with decomposition $X=X_{\sc}+X_{Z}\in\mfg_{\eta,\sc}\oplus\mfz_{\G_{\eta}}$.
\end{itemize}
Let $Y^{\ast}_{Z}=\bar{Y}_{Z}^{\ast}+X^{\ast}_{Z}\in\mfz_{\bar{\H}}\oplus\mfz_{\G_{\eta}}$ be the decomposition of the center part $Y^{\ast}_{Z}\in\mfz_{\H_{y}}$.
We put $\bar{Y}^{\ast}\colonequals \bar{Y}_{\sc}^{\ast}+\bar{Y}_{Z}^{\ast}$.

\begin{prop}\label{prop:Lie-twisted}
With the above notation, we have
\[
\sum_{Y\overset{D}{\leftrightarrow} X/{\sim_{\H_{y}}}}\mr{\Delta}\!^{D}(\bar{Y},X_{\sc})
D^{\H_{y},\st}_{Y^{\ast}}(Y)
=
\sum_{X^{\ast}\overset{D}{\leftrightarrow} Y^{\ast} /{\sim_{G_{\eta}}}}
\mr{\Delta}\!^{D}(\bar{Y}^{\ast},X^{\ast}_{\sc})
D^{\G_{\eta}}_{X^{\ast}}(X),
\]
where
\begin{itemize}
\item
the left sum is over the stable conjugacy classes of strongly regular semisimple elements $Y$ of $\mfh_{y,0+}$ such that $(Y,X)$ is a $D$-norm pair ($\bar{Y}$ is the element associated to $(Y,X)$ as in Definition \ref{defn:norm-pair}), and
\item
the right sum is over the $G_{\eta}$-conjugacy classes of strongly regular semisimple elements $X^{\ast}$ of $\mfg_{\eta,0+}$ such that $(Y^{\ast},X^{\ast})$ is a $D$-norm pair.
\end{itemize}
\end{prop}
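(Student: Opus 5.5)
Both sides of the identity will be reduced to statements about the simply-connected covers $\G_{\eta,\sc}$, $\bar{\H}_{\sc}$ and $\H_{y,\sc}$, plus a central factor. Theorem \ref{thm:tran-std} will then be applied to the standard endoscopic pair $(\bar{\H},\G_{\eta,\sc})$, and Theorem \ref{thm:tran-nonst} to the non-standard triple $(\H_{y,\sc},\bar{\H}_{\sc},j_{\ast})$.

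First, Lemma \ref{lem:isog} is applied to each summand. On the left, for $Y=Y_{\sc}+Y_{Z}$ we get
\[
D^{\H_{y},\st}_{Y^{\ast}}(Y)=\gamma(\mfz_{\H_{y}})\,\psi_{F}(B_{\mfh_{y}}(Y^{\ast}_{Z},Y_{Z}))\,D^{\H_{y,\sc},\st}_{Y^{\ast}_{\sc}}(Y_{\sc}).
\]
By the last condition of Definition \ref{defn:norm-pair}, $Y_{Z}$ corresponds to $\bar{Y}_{Z}+X_{Z}$. Since $Y^{\ast}_{Z}=\bar{Y}^{\ast}_{Z}+X^{\ast}_{Z}$ and the bilinear forms were chosen compatibly, the character value splits as $\psi_{F}(B(\bar{Y}^{\ast}_{Z},\bar{Y}_{Z}))\psi_{F}(B(X^{\ast}_{Z},X_{Z}))$. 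Here $X^{\ast}_{Z}$ is the fixed central part determined by $Y^{\ast}$, and it is common to every $X^{\ast}$ appearing on the right, because $D$-norm pairs share central components. Also $\gamma(\mfz_{\H_{y}})=\gamma(\mfz_{\bar{\H}})\gamma(\mfz_{\G_{\eta}})$. On the right, Lemma \ref{lem:isog} gives $D^{\G_{\eta}}_{X^{\ast}}(X)=\gamma(\mfz_{\G_{\eta}})\psi_{F}(B(X^{\ast}_{Z},X_{Z}))D^{\G_{\eta,\sc}}_{X^{\ast}_{\sc}}(X_{\sc})$, using the unstable version established in its proof. The common factor $\gamma(\mfz_{\G_{\eta}})\psi_{F}(B(X^{\ast}_{Z},X_{Z}))$ therefore cancels from both sides.

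Next, the index sets need to be reorganized. A $D$-norm pair $(Y,X)$ is the same as a choice of a stable class of norms $\bar{Y}$ of $X_{\sc}$ in $\bar{\mfh}$, together with the non-standard correspondence $\bar{Y}_{\sc}\leftrightarrow Y_{\sc}$, which is a bijection on stable classes by Section \ref{subsec:non-std}. So the left sum becomes a sum over stable classes of $\bar{Y}$ corresponding to $X_{\sc}$. Applying Theorem \ref{thm:tran-nonst}, we replace $D^{\H_{y,\sc},\st}_{Y^{\ast}_{\sc}}(Y_{\sc})$ by $D^{\bar{\H}_{\sc},\st}_{\bar{Y}^{\ast}_{\sc}}(\bar{Y}_{\sc})$. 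Applying Lemma \ref{lem:isog} in reverse for $\bar{\H}$ then reabsorbs $\gamma(\mfz_{\bar{\H}})\psi_{F}(B(\bar{Y}^{\ast}_{Z},\bar{Y}_{Z}))$ into $D^{\bar{\H},\st}_{\bar{Y}^{\ast}}(\bar{Y})$. The left side is now exactly $\t{D}_{\bar{\H},\G_{\eta,\sc}}(\bar{Y}^{\ast},X_{\sc})$, up to the cancelled common factor. Similarly, the right sum over $G_{\eta}$-classes of $X^{\ast}$ with fixed central part $X^{\ast}_{Z}$ is a sum over $G_{\eta}$-classes of $X^{\ast}_{\sc}$ corresponding to $\bar{Y}^{\ast}$. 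This is $D_{\bar{\H},\G_{\eta,\sc}}(\bar{Y}^{\ast},X_{\sc})$, and Theorem \ref{thm:tran-std} identifies the two.

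I expect the main obstacle to be the bookkeeping in this last identification, for two reasons. First, $G_{\eta}$-conjugacy classes of $X^{\ast}_{\sc}$ must match $G_{\eta,\sc}(F)$-conjugacy classes, which are what the transfer theorem for $\G_{\eta,\sc}$ uses; and $D^{\J}_{X^\ast}$ for $\J=\G_{\eta,\sc}$ versus its adjoint action must be compared. I would handle this by noting that the Lie algebra orbital integrals only depend on the adjoint orbit, and that the transfer factor $\mr{\Delta}\!^{D}$ is invariant under $G_{\eta}$-conjugation: the adjoint action of $G_{\eta}$ on $\mfg_{\eta,\sc}$ is by inner automorphisms, and the transfer factor is defined on the adjoint level, so the sum can be regrouped. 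Second, the hypotheses of Theorem \ref{thm:tran-std} require strong regularity of the relevant elements, which follows from strong regularity of $Y^{\ast}$ and $X$.
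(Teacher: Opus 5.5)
Your argument is essentially the paper's proof: Lemma \ref{lem:isog} for $\H_{y}$, Theorem \ref{thm:tran-nonst}, Lemma \ref{lem:isog} for $\bar{\H}$, Theorem \ref{thm:tran-std}, and finally the passage from $G_{\eta,\sc}$-classes to $G_{\eta}$-classes (the step from \eqref{eq:combined-5} to \eqref{eq:combined-6}). The only cosmetic difference is that you cancel $\gamma(\mfz_{\G_{\eta}})\psi_{F}(B_{\mfg_{\eta}}(X^{\ast}_{Z},X_{Z}))$ from both sides at the start instead of carrying it to the end.

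For that last regrouping, which the paper also handles in one line, your stated reasons need sharpening. A $G_{\eta,\sc}$-orbital integral is not invariant under $G_{\eta}$-conjugation of $X^{\ast}_{\sc}$ alone, and Lie algebra transfer factors are not in general invariant under adjoint-group conjugation (think of $\SL_{2}$ versus $\GL_{2}$). What you actually need is, first, that the $G_{\eta}$-orbit of $X^{\ast}_{\sc}$ is a finite union of $G_{\eta,\sc}$-orbits whose suitably normalized Fourier-transformed orbital integrals add up to the $G_{\eta}$ one. Second, you need $G_{\eta}$-invariance of $\mr{\Delta}\!^{D}$. This follows from Theorem \ref{thm:NST}, since $\eta\exp(\mathrm{Ad}(g)X)=g\,\eta\exp(X)\,g^{-1}$ for $g\in G_{\eta}$ and the twisted transfer factor is $G$-invariant, together with homogeneity to remove the restriction to the neighborhood $\mfV$.
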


\begin{proof}
Let $Y\in\mfh_{y,0+}$ be a strongly regular semisimple element with decomposition $Y_{\sc}+Y_{Z}\in\mfh_{y,\sc}\oplus\mfz_{\H_{y}}$.
Suppose that $Y_{\sc}$ corresponds to the stable conjugacy class of a strongly regular semisimple element $\bar{Y}_{\sc}\in\bar{\mfh}_{\sc}$.
Also suppose that $Y_{Z}$ equals $\bar{Y}_{Z}+X'_{Z}$ under the isomorphism $\mfz_{\H_{y}}\cong\mfz_{\bar{\H}}\oplus\mfz_{\G_{\eta}}$.
Then, by definition, $(Y,X)$ is a $D$-norm pair if and only if $\bar{Y}\colonequals \bar{Y}_{\sc}+\bar{Y}_{Z}$ is a norm of $X_{\sc}$ and $X'_{Z}=X_{Z}$.
Hence, by noting that two strongly regular semisimple elements $Y_{1},Y_{2}\in\mfh_{y}$ are stably conjugate if and only if $Y_{1,\sc}, Y_{2,\sc}\in\mfh_{y,\sc}$ are stably conjugate and $Y_{1,Z}=Y_{2,Z}$, we see that the left-hand side of the desired identity equals
\begin{align}\label{eq:combined-1}
\sum_{\bar{Y}_{Z}\in\mfz_{\bar{\H}}}
\gamma(\mfz_{\H_{y}})\cdot\psi_{F}(B_{\mfh_{y}}(Y^{\ast}_{Z}, Y_{Z}))
\sum_{Y_{\sc}\leftrightarrow X_{\sc}/{\sim_{\H_{y,\sc}}}}\mr{\Delta}\!^{D}(\bar{Y},X_{\sc})
D^{\H_{y,\sc},\st}_{Y_{\sc}^{\ast}}(Y_{\sc})
\end{align}
by Lemma \ref{lem:isog} (transfer for isogeny) for $\H_{y}$.
Here, the second sum is over the stable conjugacy classes of strongly regular semisimple elements of $\mfh_{y,\sc}$ such that $\bar{Y}_{\sc}+\bar{Y}_{Z}$ is a norm of $X_{\sc}$, where $\bar{Y}_{\sc}\in\bar{\mfh}_{\sc}$ is an element whose stable conjugacy class corresponds to $Y_{\sc}$.
Note that the index set $\{\bar{Y}_{Z}\in\mfz_{\bar{\H}}\}$ of the first sum is infinite, but only finite of them have a nontrivial contribution because of the second sum.

By Theorem \ref{thm:tran-nonst} (transfer for non-standard endoscopy), \eqref{eq:combined-1} equals
\begin{align}\label{eq:combined-2}
\sum_{\bar{Y}_{Z}\in\mfz_{\bar{\H}}}
\gamma(\mfz_{\H_{y}})\cdot\psi_{F}(B_{\mfh_{y}}(Y^{\ast}_{Z}, Y_{Z}))
\sum_{\bar{Y}_{\sc}\leftrightarrow X_{\sc}/{\sim_{\bar{\H}_{\sc}}}}
\mr{\Delta}\!^{D}(\bar{Y},X_{\sc})
D^{\bar{\H}_{\sc},\st}_{\bar{Y}_{\sc}^{\ast}}(\bar{Y}_{\sc}),
\end{align}
where the second sum is over the stable conjugacy classes of strongly regular semisimple elements $\bar{Y}_{\sc}\in\bar{\mfh}_{\sc}$ such that $\bar{Y}_{\sc}+\bar{Y}_{Z}$ is a norm of $X_{\sc}\in\mfg_{\eta,\sc}$.
By rearranging the sums, we see that \eqref{eq:combined-2} equals
\begin{align}\label{eq:combined-3}
\sum_{\bar{Y}\leftrightarrow X_{\sc} /{\sim_{\bar{\H}}}}
\gamma(\mfz_{\H_{y}})\cdot\psi_{F}(B_{\mfh_{y}}(Y^{\ast}_{Z}, Y_{Z}))
\cdot\mr{\Delta}\!^{D}(\bar{Y},X_{\sc})
D^{\bar{\H}_{\sc},\st}_{\bar{Y}^{\ast}_{\sc}}(\bar{Y}_{\sc}),
\end{align}
where the sum is over the set of stable conjugacy classes of strongly regular semisimple elements of $\bar{\mfh}$ which are norms of $X_{\sc}\in\mfg_{\eta,\sc}$.
By noting that $\gamma(\mfz_{\H_{y}})=\gamma(\mfz_{\G_{\eta}})\cdot\gamma(\mfz_{\bar{\H}})$ and that $\psi_{F}(B_{\mfh_{y}}(Y^{\ast}_{Z}, Y_{Z}))=\psi_{F}(B_{\bar{\mfh}}(\bar{Y}_{Z}^{\ast}, \bar{Y}_{Z}))\cdot\psi_{F}(B_{\mfg_{\eta}}(X^{\ast}_{Z}, X_{Z}))$, Lemma \ref{lem:isog} (transfer for isogeny) for $\bar{\H}$ implies that \eqref{eq:combined-3} equals
\begin{align}\label{eq:combined-4}
\gamma(\mfz_{\G_{\eta}})\cdot\psi_{F}(B_{\mfg_{\eta}}(X^{\ast}_{Z}, X_{Z}))
\sum_{\bar{Y}\leftrightarrow X_{\sc}/{\sim_{\bar{\H}}}}
\mr{\Delta}\!^{D}(\bar{Y},X_{\sc})
D^{\bar{\H},\st}_{\bar{Y}^{\ast}}(\bar{Y}).
\end{align}

Finally, by Theorem \ref{thm:tran-std} (transfer for standard endoscopy), \eqref{eq:combined-4} equals
\begin{align}\label{eq:combined-5}
\gamma(\mfz_{\G_{\eta}})\cdot\psi_{F}(B_{\mfg_{\eta}}(X^{\ast}_{Z}, X_{Z}))\sum_{X^{\ast}_{\sc}\leftrightarrow \bar{Y}^{\ast} /{\sim_{G_{\eta,\sc}}}}
\mr{\Delta}\!^{D}(\bar{Y}^{\ast},X^{\ast}_{\sc})
D^{\G_{\eta,\sc}}_{X^{\ast}_{\sc}}(X_{\sc}),
\end{align}
where the index set is over the $G_{\eta,\sc}$-conjugacy classes of strongly regular semisimple elements of $\mfg_{\eta,\sc}$ which correspond to $\bar{Y}^{\ast}$.
Then the same argument as in the proof of Lemma \ref{lem:isog} implies that \eqref{eq:combined-5} equals
\begin{align}\label{eq:combined-6}
\sum_{X^{\ast}_{\sc}\leftrightarrow \bar{Y}^{\ast}/{\sim_{G_{\eta}}}}
\mr{\Delta}\!^{D}(\bar{Y}^{\ast},X^{\ast}_{\sc})
D^{\G_{\eta}}_{X^{\ast}}(X).
\end{align}
We recall that $(Y^{\ast},X^{\ast})$ is a $D$-norm pair if and only if $X^{\ast}_{\sc}$ corresponds to $\bar{Y}^{\ast}$ and the center part of $X^{\ast}$ is given by $X^{\ast}_{Z}$, which determined by $Y^{\ast}$.
Thus we see that the index set of the sum in \eqref{eq:combined-6} is nothing but that of the sum on the right-hand side of the desired identity.
\end{proof}

\section{Toral invariants for restricted roots}\label{sec:res-toral-inv}

\subsection{Root systems}\label{subsec:rootsys}

Let $\eta\in\t{G}_{\ss}$ and $y\in\mfH_{\eta}$ (see Section \ref{subsec:descent-lem}) such that $\bfD(y,\eta)$ is not empty.
In the following, we fix a diagram $D=(\B^{\flat},\T^{\flat},\B^{\dia},\T^{\dia})\in\bfD(y,\eta)$ and simply write $\nu$ for $\nu_{D}$ (resp.\ $\mu$ for $\mu_{D}$).

Recall that, with the notation as in Section \ref{subsec:twisted-tori}, we have
\begin{align*}
\Phi(\G_{\nu\theta},\T^{\theta,\circ})
&=\{p^{\ast}(\alpha) \mid \alpha\in \Phi(\G,\T); N(\alpha)(\nu)=1\}
\subset\Phi_{\res}(\G,\T),\\
\Phi^{\vee}(\G_{\nu\theta},\T^{\theta,\circ})
&=\{N(\alpha^{\vee}) \mid \alpha^{\vee}\in \Phi^{\vee}(\G,\T); N(\alpha)(\nu)=1\}
\subset\Phi_{\res}^{\vee}(\G,\T)
\end{align*}
(note that now we assume that $\Phi_{\res}(\G,\T)$ does not contain any restricted root of type $2$ or $3$).
The sets $\Phi(\H_{\mu},\T_{\H})$ and $\Phi^{\vee}(\H_{\mu},\T_{\H})$ are given as follows:
\begin{align*}
\Phi(\H_{\mu},\T_{\H})
&=\{ N(\alpha) \mid \alpha\in \Phi(\G,\T); N(\alpha^{\vee})(s)=1, N(\alpha)(\nu)=1\}
\subset X^{\ast}(\T)^{\theta}\cong X^{\ast}(\T_{\H}),\\
\Phi^{\vee}(\H_{\mu},\T_{\H})
&=\{p_{\ast}(\alpha^{\vee}) \mid \alpha^{\vee}\in \Phi^{\vee}(\G,\T); N(\alpha^{\vee})(s)=1, N(\alpha)(\nu)=1\}
\subset Y_{\ast}(\T)\cong X_{\ast}(\T_{\H}).
\end{align*}
We define an injective map $i^{\vee}\colon \Phi^{\vee}(\H_{\mu},\T_{\H})\rightarrow \Phi^{\vee}(\G_{\nu\theta},\T^{\theta,\circ})$ by $i^{\vee}(p_{\ast}(\alpha^{\vee}))\colonequals N(\alpha^{\vee})$.\symdef{i-check}{$i^{\vee}$}
Then we can regard $\Phi^{\vee}(\H_{\mu},\T_{\H})$ as a subset of $\Phi^{\vee}(\G_{\nu\theta},\T^{\theta,\circ})$ via $i^{\vee}$.

Now we transfer this discussion to $\Phi(\G,\T^{\dia})$ by using the fixed diagram $D$.
Via the map $\xi_{\dia}$, $\Phi_{\res}^{(\vee)}(\G,\T^{\dia})$ is identified with $\Phi_{\res}^{(\vee)}(\G,\T)$.
Moreover, $\Phi^{(\vee)}(\G_{\eta},\T^{\nat})$ is mapped to $\Phi^{(\vee)}(\G_{\nu\theta},\T^{\theta,\circ})$ by this identification.
Similarly, via the map $\xi_{\flat}$, $\Phi^{(\vee)}(\H,\T^{\flat})$ is identified with $\Phi^{(\vee)}(\H,\T_{\H})$, and $\Phi^{(\vee)}(\H_{y},\T^{\flat})$ is mapped to $\Phi^{(\vee)}(\H_{\mu},\T_{\H})$.
By combining these bijective maps with the previous injective map $i^{\vee}$, we may identify $\Phi^{\vee}(\H_{y},\T^{\flat})$ as a subset of $\Phi^{\vee}(\G_{\eta},\T^{\nat})$.
Accordingly, we may also may identify $\Phi(\H_{y},\T^{\flat})$ as a subset of $\Phi(\G_{\eta},\T^{\nat})$.

Let $(Y,X)\in\mfh_{y,0+}\times\mfg_{\eta,0+}$ be a $D$-norm pair.
Let us fix bilinear forms $B_{\mfg_{\eta}}$ on $\mfg_{\eta}$, $B_{\bar{\mfh}}$ on $\bar{\mfh}$, and $B_{\mfh_{y}}$ on $\mfh_{y}$ as in Section \ref{subsec:Lie-tran}.
Then $X\in\mfg_{\eta}$ (resp.\ $Y\in\mfh_{y}$) can be identified with an element $X^{\ast}\in\mfg_{\eta}^{\ast}$ (resp.\ $Y^{\ast}\in\mfh_{y}^{\ast}$).

\begin{lem}\label{lem:appearance-of-l-alpha}
Let $\alpha_{y}^{\vee}$ be an element of $\Phi^{\vee}(\H_{y},\T^{\flat})$ which is identified with an element $\alpha_{\eta}^{\vee}$ of $\Phi^{\vee}(\G_{\eta},\T^{\nat})$.
Let $\alpha^{\vee}\in \Phi^{\vee}(\G,\T^{\dia})$ be the coroot satisfying $\alpha_{y}^{\vee}=p_{\ast}(\alpha^{\vee})\in X_{\ast}(\T^{\flat})$ and $\alpha_{\eta}^{\vee}=N(\alpha^{\vee})\in X_{\ast}(\T^{\nat})$.
Then we have
\[
l_{\alpha}\cdot\langle d\alpha_{y}^{\vee}(1),Y^{\ast}\rangle
=\langle d\alpha_{\eta}^{\vee}(1),X^{\ast}\rangle.
\]
\end{lem}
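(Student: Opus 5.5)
The identity is a pairing computation. I would move everything to the maximal tori and compare the pairing $\langle d\alpha^\vee_y(1),Y^\ast\rangle$ on $\mft^{\flat}$ with $\langle d\alpha^\vee_\eta(1),X^\ast\rangle$ on $\mft^{\nat}$ through $\xi_D$ and the chosen bilinear forms.

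\emph{Step 1: reduce to the torus.} Since $(Y,X)$ is a $D$-norm pair and both are strongly regular, I would replace $X$ by a stable conjugate lying in $\mft^{\nat}=\Lie\T^{\nat}$, and $Y$ by one lying in $\mft^{\flat}$. I would choose these conjugates so that $Y=\xi_D(X)$ under the Lie algebra map $d\xi_D\colon\mft^{\nat}\subset\mft^{\dia}\to\mft^{\flat}$. This is legitimate because the norm correspondence of Section \ref{subsec:decomposition} (\cite[3.8]{Wal08}) is, on tori, exactly induced by $\xi_D$. Both sides of the identity depend only on the stable classes once the coroots are moved along with the conjugations. The identifications of $X^\ast$ and $Y^\ast$ come from $B_{\mfg_\eta}$ and $B_{\mfh_y}$, so each side becomes $B(d\alpha^\vee(1),\,\cdot\,)$ evaluated at $X$ or at $Y$.

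\emph{Step 2: compare the coroots.} Transport everything to $\T$ and $\T_{\H}$ via $\xi_{\dia}$ and $\xi_{\flat}$. Then $\alpha^\vee_\eta=N(\alpha^\vee)=\sum_{i<l_\alpha}\theta^i(\alpha^\vee)$ lies in $X_\ast(\T)^\theta$, while $\alpha^\vee_y=p_\ast(\alpha^\vee)$ is its image in the coinvariants $Y_\ast(\T)\cong X_\ast(\T_{\H})$. On Lie algebras, $\mft^{\theta}\hookrightarrow\mft\twoheadrightarrow\mft_\theta\cong\mft_{\H}$ is an isomorphism. Under it, the image of $N(\alpha^\vee)$ is $\sum_i p_\ast(\theta^i\alpha^\vee)=l_\alpha\,p_\ast(\alpha^\vee)$. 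Hence $d\xi_D\bigl(d\alpha^\vee_\eta(1)\bigr)=l_\alpha\, d\alpha^\vee_y(1)$. This is where the factor $l_\alpha$ comes from.

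\emph{Step 3: compatibility of forms.} It remains to check that $B_{\mfh_y}(d\xi_D(Z),d\xi_D(Z'))=B_{\mfg_\eta}(Z,Z')$ for $Z,Z'\in\mft^{\nat}$. For the central parts, this is the orthogonality condition imposed on $\mfz_{\H_y}\cong\mfz_{\bar\H}\oplus\mfz_{\G_\eta}$. For the derived parts, it is the consistency of forms under standard endoscopy ($\G_{\eta,\sc}$ versus $\bar\H$) together with the non-standard condition $B_1=B_2\circ j_\ast$. Granting this, the left side is
\[
l_\alpha B_{\mfh_y}(d\alpha_y^\vee(1),Y)=B_{\mfh_y}\bigl(d\xi_D(d\alpha^\vee_\eta(1)),d\xi_D(X)\bigr)=B_{\mfg_\eta}(d\alpha^\vee_\eta(1),X),
\]
which is the right side.

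\emph{Main obstacle.} I expect the main obstacle to be Step 3, together with the precise meaning of $j_\ast$ in Step 2. Waldspurger's $j_\ast$ rescales coroots by $b^\vee$, so I must make sure the identification of $\mft^{\flat}$ with $\mft^{\nat}$ used for the forms is $d\xi_D$ itself, with no rescaling. If it is not, I would first track $\bar\H$'s torus through \cite[3.5--3.6]{Wal08} and fix the normalization $B_{\mfh_y}=B_{\mfg_\eta}\circ(d\xi_D)^{-1}$ on tori, which is permitted by the freedom in choosing the forms.
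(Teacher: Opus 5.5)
Your proposal is correct and is essentially the paper's proof. It places $X\in\mft^{\nat}$ and $Y\in\mft^{\flat}$ with $d\xi_{D}(X)=Y$, and uses the choice of bilinear forms to identify $X^{\ast}$ with $Y^{\ast}$ through the dual of $d\xi_{D}$; your Step 3 spells out what the paper simply takes as built into that choice. It then extracts $l_{\alpha}$ from $N(\alpha^{\vee})=\sum_{i}\theta^{i}(\alpha^{\vee})$.

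The only difference is the order of operations. The paper first uses the $\theta$-invariance of $X^{\ast}$ to write $\langle dN(\alpha^{\vee})(1),X^{\ast}\rangle=l_{\alpha}\langle d\alpha^{\vee}(1),X^{\ast}\rangle$ and only then passes to $Y^{\ast}$. You instead push $N(\alpha^{\vee})$ forward to $l_{\alpha}p_{\ast}(\alpha^{\vee})$ first, which is the transpose of the same computation.
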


\begin{proof}
Since $(Y,X)$ is a norm pair with respect to $D=(\B^{\flat},\T^{\flat},\B^{\dia},\T^{\dia})$, we may suppose that $X\in\mft^{\nat}$ and $Y\in\mft^{\flat}$ and $\xi_{D}\colon\mft^{\dia}\twoheadrightarrow\mft^{\flat}$ maps $X$ to $Y$ (cf.\ the argument in the proof of Lemma \ref{lem:diag-H-conj}).
Hence, by our choice of bilinear forms, $X^{\ast}$ is identified with $Y^{\ast}$ under the dual isomorphism $\mft^{\nat\ast}\cong\mft^{\flat\ast}$ to $\xi_{D}\colon\mft^{\nat}\cong\mft^{\flat}$.
Thus we can see that, under the identification $\xi_{D}$, we have $\langle d\alpha_{\eta}^{\vee}(1),X^{\ast}\rangle
=\sum_{i=0}^{l_{\alpha}}\langle d[\eta]^{i}(\alpha)^{\vee}(1),X^{\ast}\rangle
=l_{\alpha}\langle d\alpha^{\vee}(1),X^{\ast}\rangle
=l_{\alpha}\langle d\alpha_{y}^{\vee}(1),Y^{\ast}\rangle$.
\end{proof}

\subsection{Analysis of ramified restricted roots}\label{subsec:ram-rest-roots}
Let us next suppose that we have an $F$-rational twisted maximal torus $\t\T^{\dia}$ of $\t\G$ and topologically semisimple elements $\eta,\eta'\in\t{T}^{\dia}$.
We investigate the relation between the ramified roots of $\Phi(\G_{\eta},\T^{\nat})$ and those of $\Phi(\G_{\eta'},\T^{\nat})$.
Note that both sets are regarded as subsets of the restricted root system $\Phi_{\res}(\G,\T^{\dia})$ as explained in Section \ref{subsec:twisted-tori}.
For convenience, let us introduce the following notation:
\[
\Phi_{\res}(\G,\T^{\dia})_{\bullet}^{(\star)}
\colonequals \{\alpha_{\res}\in\Phi_{\res}(\G,\T^{\dia})_{\bullet}\mid \alpha\in\Phi(\G,\T^{\dia})_{\star}\},
\]\symdef{Phi-res-G-S-star-bullet}{$\Phi_{\res}(\G,\bfS)_{\bullet}^{(\star)}$}
where $\bullet,\star\in\{\asym,\ur,\ram\}$.
By fixing a Borel subgroup $\bfB^{\dia}$ containing $\T^{\dia}$ and stabilized by the action of $\t\T^{\dia}$, we take an element $g^{\dia}\in\G$ satisfying $[g^{\dia}](\B^{\dia},\T^{\dia})=(\B,\T)$.
Let $\nu\theta\colonequals [g^{\dia}](\eta)$ and $\nu'\theta\colonequals [g^{\dia}](\eta')$.
We simply write $\theta_{\dia}$ for the involution $\theta_{\T^{\dia}}$ of $\T^{\dia}$ determined by its twisted structure.

\begin{lem}\label{lem:ram-res-from-asym}
For any $\alpha_{\res}\in\Phi_{\res}(\G,\T^{\dia})_{\ram}^{(\asym)}$, we have $\alpha_{\res}\in\Phi(\G_{\eta},\T^{\nat})$.
\end{lem}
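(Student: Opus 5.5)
The plan is to reduce the statement to a scalar identity and then pin that scalar down using Galois symmetry and the topological semisimplicity of $\eta$.

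\textbf{Reduction to a scalar.} By Section \ref{subsec:twisted-tori}, a restricted root $\alpha_{\res}$ lies in $\Phi(\G_{\eta},\T^{\nat})$ if and only if $N(\alpha)(\nu)=1$, where $[g^{\dia}](\eta)=\nu\theta$. I will rephrase this intrinsically on $\T^{\dia}$: $[\eta]$ carries $\bmfg_{\beta}$ onto $\bmfg_{\theta_{\dia}(\beta)}$, so $[\eta]^{l_{\alpha}}$ acts on the line $\bmfg_{\alpha}$ by a scalar $c_{\alpha}$. Since there is no restricted root of type $2$ or $3$, this scalar is exactly $N(\alpha)(\nu)$. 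Because $\eta$ is $F$-rational, the function $\beta\mapsto c_{\beta}$ satisfies
\[
c_{\sigma(\beta)}=\sigma(c_{\beta}),\qquad c_{-\beta}=c_{\beta}^{-1},\qquad c_{\theta_{\dia}(\beta)}=c_{\beta}
\]
for $\sigma\in\Gamma$. The last identity holds because $[\eta]$ intertwines $[\eta]^{l_\alpha}$ on $\bmfg_{\beta}$ and on $\bmfg_{\theta_{\dia}(\beta)}$. In particular $c_{\alpha}$ depends only on $\alpha_{\res}$, and it lies in $F_{\alpha_{\res}}$.

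\textbf{Locating the situation.} By the table in Section \ref{subsec:TCF}, an asymmetric $\alpha$ with ramified $\alpha_{\res}$ forces $\theta_{\dia}(\alpha)\neq\alpha$ and $\theta_{\dia}(\alpha)=-\sigma(\alpha)$ for some $\sigma\in\Gamma$. Hence $l_{\alpha}=2$, and $\sigma\in\Gamma_{\pm\alpha_{\res}}\smallsetminus\Gamma_{\alpha_{\res}}$. The Galois identities above then give
\[
c_{\alpha}=c_{\theta_{\dia}(\alpha)}=c_{-\sigma(\alpha)}=\sigma(c_{\alpha})^{-1},
\]
so $c_{\alpha}$ has norm $1$ from $F_{\alpha_{\res}}$ to $F_{\pm\alpha_{\res}}$.

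\textbf{Using semisimplicity and ramification.} Since $\eta$ is topologically semisimple, its image modulo $A_{\t\G}$ has finite prime-to-$p$ order. Restricted roots are trivial on $A_{\t\G}$, so $c_{\alpha}$ is a root of unity of prime-to-$p$ order. Such roots of unity lie in the maximal unramified subextension of $F_{\alpha_{\res}}$. Because $F_{\alpha_{\res}}/F_{\pm\alpha_{\res}}$ is ramified quadratic, that subextension is already contained in $F_{\pm\alpha_{\res}}$. Therefore $\sigma(c_{\alpha})=c_{\alpha}$, and combined with the norm relation this gives $c_{\alpha}^{2}=1$, i.e.\ $c_{\alpha}=\pm1$.

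\textbf{Main obstacle: excluding $c_{\alpha}=-1$.} This is the step I expect to be hardest; the purely Galois-theoretic argument above cannot see the sign. My first attempt is to use the rational structure more finely. I would pick $X_{\alpha}\in\bmfg_{\alpha}(F_{\alpha})$, set $X_{\theta(\alpha)}\colonequals[\eta](X_{\alpha})$, and compare $\sigma(X_{\alpha})\in\bmfg_{-\theta(\alpha)}$ with $[\eta]^{-1}\sigma(X_{\alpha})$ through the bracket $[X_{\alpha},[\eta]\sigma(X_{\alpha})]$. This bracket lies in $\bmft^{\dia}$, and $\alpha$ and $\theta(\alpha)$ are strongly orthogonal, since otherwise $\alpha+\theta(\alpha)$ would be a root, which is the type $2$/$3$ situation we have excluded. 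Evaluating $[\eta]$ on this bracket should express $c_{\alpha}$ as a ratio of an element to its $\sigma$-conjugate inside $F_{\alpha}$. The target is to show that this ratio is a norm and is congruent to $1$ modulo $\mfp_{F_{\alpha}}$, which would rule out $-1$ since $p\neq2$.

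If that computation stalls, my fallback is a global argument. I would pass to the semisimple automorphism $[\eta]$ of the Levi subgroup generated by $\T^{\dia}$ and $\pm\alpha,\pm\theta(\alpha)$, which is of type $A_{1}\times A_{1}$ with $\theta$ swapping the two factors. In that rank-two setting, the fixed group always contains the diagonal root exactly when $c_{\alpha}=1$, and the ellipticity and rationality constraints can be checked by hand.
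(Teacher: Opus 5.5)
Your reduction is correct as far as it goes: $[\eta]^{2}$ acts on $\bmfg_{\alpha}$ by $c_{\alpha}=N(\alpha)(\nu)=\alpha(\eta^{2})$. The Galois relations make $c_{\alpha}$ an element of norm $1$ for $F_{\alpha_{\res}}/F_{\pm\alpha_{\res}}$. Prime-to-$p$ torsion together with total ramification then forces $c_{\alpha}=\pm1$.

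The gap is the step you flag yourself: excluding $c_{\alpha}=-1$ is the whole content, and nothing in the proposal supplies it.
\begin{itemize}
\item The bracket computation can at best express $c_{\alpha}$ as $x/\tau(x)$. Such a ratio is $\equiv\pm1$ according to the parity of the $F_{\alpha_{\res}}$-valuation of $x$, so it does not exclude $-1$.
\item The fallback only restates that $\alpha_{\res}$ is a root of $\G_{\eta}$ iff $c_{\alpha}=1$.
\end{itemize}

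In fact the step cannot be supplied, because $c_{\alpha}=-1$ occurs under the stated hypotheses. Here is an explicit example.
\begin{itemize}
\item Let $K=F(\sqrt{\varpi_{F}})$, $\G=\Res_{K/F}\GL_{2}$, and let $\theta$ be entrywise Galois conjugation of $K/F$. It fixes the standard $F$-splitting and over $\ol{F}$ swaps the two $\GL_{2}$-factors, so all restricted roots have type $1$.
\item Let $\bfT_{1}\cong\Res_{K/F}\Gm\subset\GL_{2}$ be the torus of multiplication by $K$ on $K\cong F^{2}$, and put $\T^{\dia}\colonequals\Res_{K/F}(\bfT_{1}\times_{F}K)$. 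Then $\T^{\dia}\theta$ is an $F$-rational twisted maximal torus (use $\bfB\times\bfB$ over $\ol{F}$, with $\bfB\supset\bfT_{1}$).
\item For a root $\beta$ of $\bfT_{1}$ put $\alpha\colonequals(\beta,0)$. Then $\theta_{\dia}(\alpha)=(0,\beta)$ and $\Gamma_{K}$ fixes $\alpha$. Since $F_{\beta}=K$, every element of $\Gamma\smallsetminus\Gamma_{K}$ sends $\alpha$ to $(0,-\beta)=-\theta_{\dia}(\alpha)$. Hence $\alpha$ is asymmetric, $F_{\alpha_{\res}}=K$, $F_{\pm\alpha_{\res}}=F$, and $\alpha_{\res}\in\Phi_{\res}(\G,\T^{\dia})^{(\asym)}_{\ram}$.
\item We have $\bfT_{1}(K)=(K\otimes_{F}K)^{\times}\cong K^{\times}\times K^{\times}$, with $\theta$ swapping the factors. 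So there is $g\in\T^{\dia}(F)$ with $g\theta(g)=m$, where $m\in\bfT_{1}(F)=K^{\times}$ is $\sqrt{\varpi_{F}}$.
\item Then $\eta\colonequals g\theta$ satisfies $\eta^{2}=m$ and $\eta^{4}=\varpi_{F}\in A_{\t\G}(F)$, so $\eta$ is topologically semisimple.
\item But $c_{\alpha}=\beta(m)=\sqrt{\varpi_{F}}/(-\sqrt{\varpi_{F}})=-1$. Hence $\alpha_{\res}\notin\Phi(\G_{\eta},\T^{\nat})$; indeed $\G_{\eta}=\T^{\nat}$.
\end{itemize}

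So the statement as literally written is false in this generality, and the paper's proof does not establish it either. The paper compares two topologically semisimple elements $\eta$ and $\eta'=t\eta$ of the same twisted torus. It writes $\alpha(\eta'^{2})=\alpha(t)\,\tau_{\alpha}(\alpha(t))^{-1}\,\alpha(\eta^{2})$ and argues that the middle factor is $\equiv1\pmod{\mfp_{\ol{F}}}$. This is an independence-of-$\eta$ statement, which is the form invoked in Proposition~\ref{prop:toral-descent-asym-ram}. Actual membership is obtained only for $\GL_{n}$, by inserting an element of order $2$ of $\t{S}_{k}$ (Lemma~\ref{lem:existence-order-2}), for which $c_{\alpha}=1$ trivially. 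That comparison with a well-chosen second element, rather than Galois symmetry at a single $\eta$, is what your proposal lacks.

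Be aware, however, that the paper's congruence requires $\alpha(t)$ to have even valuation in $F_{\alpha_{\res}}$. This holds for elliptic tori of $\GL_{n}$, where $\alpha(t)=g_{i}(t)/g_{j}(t)$ is a unit. It fails in the example above: there $\alpha(g)$ necessarily has odd valuation in $K$, and the two elements $\theta$ and $g\theta$ of the same twisted torus give $c_{\alpha}=1$ and $c_{\alpha}=-1$.
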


\begin{proof}
For any $\alpha_{\res}\in\Phi_{\res}(\G,\T^{\dia})_{\ram}^{(\asym)}$, the following hold:
\begin{itemize}
\item
$F_{\alpha}=F_{\pm\alpha}$ and $F_{\alpha}=F_{\alpha_{\res}}$, 
\item
$F_{\alpha_{\res}}/F_{\pm\alpha_{\res}}$ and $F_{\pm\alpha}/F_{\pm\alpha_{\res}}$ are quadratic ramified, and
\item
$\theta_{\dia}(\alpha)\in-\dot{\alpha}$; let $\tau_{\alpha}\in\Gamma$ be an element satisfying $\tau_{\alpha}(\alpha)=-\theta_{\dia}(\alpha)$.
\end{itemize}
\[
\xymatrix{
F_{\alpha_{\res}}\ar@{=}[r]&F_{\alpha}\\
F_{\pm\alpha_{\res}}\ar^-{\text{quad}}_-{\ram}@{-}[r]\ar^-{\text{quad}}_-{\ram}@{-}[u]&F_{\pm\alpha}\ar@{=}[u]
}
\]
By the description explained in Section \ref{subsec:rootsys}, it suffices to show that $N(\alpha)(\nu)=N(\alpha)(\nu')$, which is equivalent to $\alpha(\eta^{2})=\alpha(\eta^{\prime2})$.
(Here, in the first equality, we regard $\alpha$ as a root of $\T$ and again write $\alpha$ for it.)
Since both $\alpha(\eta^{2})$ and $\alpha(\eta^{\prime2})$ are of finite prime-to-$p$ order, it is enough to show that $\alpha(\eta^{2})\equiv\alpha(\eta^{\prime2})\pmod{\mfp_{\ol{F}}}$.

Let $t\in T^{\dia}$ be an element satisfying $\eta'=t\eta$.
Then we have $\eta^{\prime2}=(t\eta)^{2}=t\cdot\theta_{\dia}(t)\cdot\eta^{2}$.
Since $t$ is $F$-rational and $\tau_{\alpha}(\alpha)=-\theta_{\dia}(\alpha)$, we have
\[
\alpha(t\cdot\theta_{\dia}(t))
=\alpha(t)\cdot\theta_{\dia}(\alpha)(t)
=\alpha(t)\cdot \tau_{\alpha}(\alpha(t))^{-1}.
\]
By noting that $F_{\alpha_{\res}}/F_{\pm\alpha_{\res}}$ is ramified, we have $\alpha(t)\cdot \tau_{\alpha}(\alpha(t))^{-1}\equiv1\pmod{\mfp_{\ol{F}}}$.
\end{proof}

\begin{lem}\label{lem:ram-res-from-ur}
For any $\alpha_{\res}\in\Phi_{\res}(\G,\T^{\dia})_{\ram}^{(\ur)}$, we have $\alpha_{\res}\in\Phi(\G_{\eta},\T^{\nat})$ if and only if $\alpha_{\res}\in\Phi(\G_{\eta'},\T^{\nat})$.
\end{lem}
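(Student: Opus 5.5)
The plan is to mimic the proof of Lemma \ref{lem:ram-res-from-asym}: prove the stronger statement $N(\alpha)(\nu)=N(\alpha)(\nu')$. Then the description of $\Phi(\G_{\nu\theta},\T^{\theta,\circ})$ recalled in Section \ref{subsec:rootsys}, transported by $\xi_{\dia}=[g^{\dia}]$, gives the equivalence at once. As in that proof, $N(\alpha)(\nu)=N(\alpha)(\nu')$ is equivalent to $\alpha(\eta^{2})=\alpha(\eta^{\prime2})$, where $\alpha$ is regarded as a root of $\T^{\dia}$. Both values are roots of unity of finite prime-to-$p$ order, since $\eta,\eta'$ are topologically semisimple. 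So it suffices to show that $\alpha(\eta^{2})^{-1}\alpha(\eta^{\prime2})\equiv1\pmod{\mfp_{\ol{F}}}$: a prime-to-$p$ root of unity congruent to $1$ equals $1$.

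First I pin down the Galois structure. By the table in Section \ref{subsec:TCF}, $\alpha\in\Phi(\G,\T^{\dia})_{\ur}$ with $\alpha_{\res}$ ramified forces $\theta_{\dia}(\alpha)\neq\alpha$ and $\theta_{\dia}(\alpha)\in\dot{\alpha}$. Choose $\sigma_{\alpha}\in\Gamma$ with $\sigma_{\alpha}(\alpha)=\theta_{\dia}(\alpha)$ and $\tau_{\alpha}\in\Gamma_{\pm\alpha}\smallsetminus\Gamma_{\alpha}$, so that $\tau_{\alpha}(\alpha)=-\alpha$. Since the actions of $\Theta$ and $\Gamma$ commute, the group $\Gamma_{\pm\alpha_{\res}}/\Gamma_{\alpha}$ acts simply transitively on $\{\pm\alpha,\pm\theta_{\dia}(\alpha)\}$. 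Hence it is a group of order $4$, generated by the images of $\sigma_{\alpha}$ and $\tau_{\alpha}$, and it has the three index-two subgroups $\Gamma_{\pm\alpha}/\Gamma_{\alpha}$, $\Gamma_{\alpha_{\res}}/\Gamma_{\alpha}$, and $\langle\sigma_{\alpha}\tau_{\alpha}\rangle$. I read off the inertia subgroup $I$ of $F_{\alpha}/F_{\pm\alpha_{\res}}$ as follows. Because $F_{\alpha}/F_{\pm\alpha}$ is unramified, $I$ meets $\Gamma_{\pm\alpha}/\Gamma_{\alpha}$ trivially. Because $F_{\alpha_{\res}}/F_{\pm\alpha_{\res}}$ is ramified, $I$ is not contained in $\Gamma_{\alpha_{\res}}/\Gamma_{\alpha}$. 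Therefore $I$ is generated by the class of $\sigma_{\alpha}\tau_{\alpha}$, so this element acts trivially on the residue field $k_{\alpha}$.

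Next, as in the previous lemma, I write $\eta'=t\eta$ with $t\in T^{\dia}$, so that $\eta^{\prime2}=t\,\theta_{\dia}(t)\,\eta^{2}$. Using the $F$-rationality of $t$, I compute
\[
\alpha(t\,\theta_{\dia}(t))=\alpha(t)\cdot\sigma_{\alpha}(\alpha)(t)=\alpha(t)\cdot\sigma_{\alpha}(\alpha(t))=\alpha(t)\cdot\sigma_{\alpha}\tau_{\alpha}(\alpha(t))^{-1},
\]
where the last step uses $\tau_{\alpha}(\alpha(t))=\alpha(t)^{-1}$. Since $\alpha$ is symmetric, $\alpha(t)$ lies in the norm-one group of the unramified extension $F_{\alpha}/F_{\pm\alpha}$, so $\alpha(t)$ is a unit. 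Since $\sigma_{\alpha}\tau_{\alpha}$ acts trivially on $k_{\alpha}$, the displayed value is therefore $\equiv1\pmod{\mfp_{\ol{F}}}$, which finishes the proof.

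The main obstacle is the identification of the inertia group in the second paragraph. I must verify carefully that $\Gamma_{\pm\alpha_{\res}}/\Gamma_{\alpha}$ really has order $4$, that is, that $F_{\alpha_{\res}}\neq F_{\pm\alpha}$ inside $F_{\alpha}$; this should follow from $\theta_{\dia}(\alpha)\neq\pm\alpha$. I also need to handle the bookkeeping relating $N(\alpha)(\nu)$ to $\alpha(\eta^{2})$ when $l_{\alpha}=2$, exactly as in Lemma \ref{lem:ram-res-from-asym}.
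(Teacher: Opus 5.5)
Your proof is correct and is essentially the paper's argument. Both reduce, via $\eta'=t\eta$, to showing $\alpha(t)\,\theta_{\dia}(\alpha)(t)\equiv1\pmod{\mfp_{\ol{F}}}$, and both rest on $\sigma_{\alpha}$ and $\tau_{\alpha}$ inducing the same automorphism of $k_{\alpha}$. The paper phrases this as $\Nr_{F_{\alpha}/F_{\alpha_{\res}}}(\alpha(t))\equiv\Nr_{F_{\alpha}/F_{\pm\alpha}}(\alpha(t))=1$, using that both quadratic extensions are unramified. You phrase it as $\sigma_{\alpha}\tau_{\alpha}$ lying in inertia, and in doing so you derive, via the Klein four-group $\Gamma_{\pm\alpha_{\res}}/\Gamma_{\alpha}$, the field diagram that the paper simply asserts.
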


\begin{proof}
For any $\alpha_{\res}\in\Phi_{\res}(\G,\T^{\dia})_{\ram}^{(\ur)}$, the following hold:
\begin{itemize}
\item
$F_{\alpha}/F_{\pm\alpha}$ and $F_{\alpha}/F_{\alpha_{\res}}$ are quadratic unramified, and
\item
$F_{\alpha_{\res}}/F_{\pm\alpha_{\res}}$ and $F_{\pm\alpha}/F_{\pm\alpha_{\res}}$ are quadratic ramified.
\end{itemize}
\[
\xymatrix{
F_{\alpha_{\res}}\ar^-{\text{quad}}_-{\ur}@{-}[r]&F_{\alpha}\\
F_{\pm\alpha_{\res}}\ar^-{\text{quad}}_-{\ram}@{-}[r]\ar^-{\text{quad}}_-{\ram}@{-}[u]&F_{\pm\alpha}\ar^-{\text{quad}}_-{\ur}@{-}[u]
}
\]
We let $\sigma_{\alpha},\tau_{\alpha}\in\Gamma$ be elements satisfying $\sigma_{\alpha}(\alpha)=\theta_{\dia}(\alpha)$ and $\tau_{\alpha}(\alpha)=-\alpha$, respectively.
With the same notation and arguments as in the proof of Lemma \ref{lem:ram-res-from-asym}, it suffices to show that $\alpha(t)\cdot\theta_{\dia}(\alpha)(t)\equiv1\pmod{\mfp_{\ol{F}}}$ for any $t\in T^{\dia}$.
Since $t$ is $F$-rational and $\tau_{\alpha}(\alpha)=-\alpha$, we have
\[
\Nr_{F_{\alpha}/F_{\pm\alpha}}(\alpha(t))
=\alpha(t)\cdot \tau_{\alpha}(\alpha(t))
=\alpha(t)\cdot \alpha(t)^{-1}
=1.
\]
Similarly, we have
\[
\Nr_{F_{\alpha}/F_{\alpha_{\res}}}(\alpha(t))
=\alpha(t)\cdot \sigma_{\alpha}(\alpha(t))
=\alpha(t)\cdot \theta_{\dia}(\alpha)(t).
\]
Since both $F_{\alpha}/F_{\alpha_{\res}}$ and $F_{\alpha}/F_{\pm\alpha}$ are unramified quadratic extensions, we get
\[
\alpha(t)\cdot \theta_{\dia}(\alpha)(t)
=\Nr_{F_{\alpha}/F_{\alpha_{\res}}}(\alpha(t))
\equiv\Nr_{F_{\alpha}/F_{\pm\alpha}}(\alpha(t))
=1\pmod{\mfp_{\ol{F}}}.
\]
This completes the proof.
\end{proof}

\begin{lem}\label{lem:ram-res-from-ram}
For any $\alpha_{\res}\in\Phi_{\res}(\G,\T^{\dia})_{\ram}^{(\ram)}$ with $l_{\alpha}=2$, we have $\alpha_{\res}\in\Phi(\G_{\eta},\T^{\nat})$ if and only if $\alpha_{\res}\in\Phi(\G_{\eta'},\T^{\nat})$.
\end{lem}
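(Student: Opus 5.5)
Following the pattern of Lemmas \ref{lem:ram-res-from-asym} and \ref{lem:ram-res-from-ur}, I will reduce the statement to showing $N(\alpha)(\nu)=N(\alpha)(\nu')$, equivalently $\alpha(\eta^{2})=\alpha(\eta'^{2})$ (using $l_{\alpha}=2$, so $N(\alpha)=\alpha+\theta_{\dia}(\alpha)$). Writing $\eta'=t\eta$ with $t\in T^{\dia}$, we have $\eta'^{2}=t\,\theta_{\dia}(t)\,\eta^{2}$. Since $\eta,\eta'$ are topologically semisimple, the values $\alpha(\eta^{2})$ and $\alpha(\eta'^{2})$ are roots of unity of prime-to-$p$ order. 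It therefore suffices to prove the congruence $\alpha(t)\cdot\theta_{\dia}(\alpha)(t)\equiv1\pmod{\mfp_{\ol F}}$ for every $F$-rational $t$, since prime-to-$p$ roots of unity are determined by their residues.

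Next I record the field diagram for $\alpha_{\res}$ ramified with $\alpha$ symmetric ramified. We have $F_{\alpha}/F_{\pm\alpha}$ quadratic ramified and $F_{\alpha_{\res}}/F_{\pm\alpha_{\res}}$ quadratic ramified. Since $l_{\alpha}=2$ and $\theta_{\dia}$ commutes with $\Gamma$, the orbit $\Theta\alpha=\{\alpha,\theta_{\dia}(\alpha)\}$ is $\Gamma_{\alpha_{\res}}$-stable. Because $\alpha$ is symmetric, $-\alpha\in\dot{\alpha}$.

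I split into two cases according to whether $\theta_{\dia}(\alpha)\in\dot{\alpha}$.
\begin{enumerate}
\item Suppose $\theta_{\dia}(\alpha)\notin\dot\alpha$. Then $F_{\alpha}=F_{\alpha_{\res}}$, and $\tau\in\Gamma_{\pm\alpha}\smallsetminus\Gamma_{\alpha}$ sends $\alpha_{\res}$ to $-\alpha_{\res}$. However, $-\alpha_{\res}$ is the class of $\{-\alpha,-\theta_{\dia}(\alpha)\}$, which makes $F_{\pm\alpha}=F_{\pm\alpha_{\res}}$. I then expect to apply the residue argument to $\alpha(t)$ and $\theta_{\dia}(\alpha)(t)=\sigma(\alpha(t))$ for suitable $\sigma$. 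The plan here is to show that $\theta_{\dia}(\alpha)$ cannot lie in $-\dot\alpha$ either without contradiction, or else to handle it by the symmetric-ramified property, which gives $\alpha(t)\equiv\pm1$.
\item Suppose $\theta_{\dia}(\alpha)=\sigma(\alpha)$ for some $\sigma\in\Gamma$. Then $\theta_{\dia}(\alpha)(t)=\sigma(\alpha(t))$. For $\alpha$ symmetric ramified and $t$ rational, the relation $\alpha(t)\tau_{\alpha}(\alpha(t))=1$ with $F_{\alpha}/F_{\pm\alpha}$ ramified forces $\alpha(t)\equiv\pm1\pmod{\mfp}$, because $\tau_{\alpha}$ acts trivially on residues. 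Consequently $\sigma(\alpha(t))\equiv\pm1$ with the same sign, since $\sigma$ fixes $\pm1$. This gives $\alpha(t)\sigma(\alpha(t))\equiv1$, which proves the claim in this case.
\end{enumerate}

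In fact the key observation is uniform across both cases: $\alpha(t)$ and $\theta_{\dia}(\alpha)(t)$ each have residue $\pm1$, and the two signs agree. I will prove the agreement of signs via the $\pm1$-residue character $s\mapsto\overline{\alpha(s)}$, which is compatible with $\Gamma$ and with $\theta_{\dia}$. Because $t\mapsto\overline{\alpha(t)}$ is the same sign as $t\mapsto\overline{\theta_{\dia}(\alpha)(t)}=\overline{\alpha(\theta_{\dia}(t))}$, it suffices to compare $\alpha$ on $t$ and on $\theta_{\dia}(t)$.

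The main obstacle is exactly this sign comparison when $\theta_{\dia}(\alpha)\notin\dot\alpha$. Here I would write $\theta_{\dia}(t)=\eta t\eta^{-1}$ and use the decomposition $t=t_{0}t_{+}$. Only the depth-zero part contributes to the residue, and on that part $\overline{\alpha}$ factors through a finite torus over the residue field, on which $[\eta]$ acts. I expect the case to close by either excluding it via the no-type-2/3 assumption, or by checking directly that $\overline{\alpha(t)}^{2}=1$ and then multiplying.
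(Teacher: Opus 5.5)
Your reduction to the congruence $\alpha(t)\cdot\theta_{\dia}(\alpha)(t)\equiv1\pmod{\mfp_{\ol{F}}}$ is correct, and so is your case (2), $\theta_{\dia}(\alpha)=\sigma(\alpha)$ for some $\sigma\in\Gamma$. Together they are exactly the paper's argument. The paper chooses $\sigma_{\alpha}$ with $\sigma_{\alpha}(\alpha)=\theta_{\dia}(\alpha)$ and gets $\alpha(t)\equiv\pm1$ from $\Nr_{F_{\alpha}/F_{\pm\alpha}}(\alpha(t))=1$ and ramification. It then concludes $\alpha(t)\cdot\theta_{\dia}(\alpha)(t)=\Nr_{F_{\alpha}/F_{\alpha_{\res}}}(\alpha(t))\equiv1$. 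Its field diagram, with $F_{\alpha}/F_{\alpha_{\res}}$ quadratic unramified, and its choice of $\sigma_{\alpha}$ both presuppose this case; the paper never treats $\theta_{\dia}(\alpha)\notin\dot{\alpha}$.

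The genuine gap is your case (1), $\theta_{\dia}(\alpha)\notin\dot{\alpha}$, where $F_{\alpha}=F_{\alpha_{\res}}$ and $F_{\pm\alpha}=F_{\pm\alpha_{\res}}$. It cannot be closed. In this case $\alpha(t)$ and $\theta_{\dia}(\alpha)(t)$ both have residue $\pm1$, but nothing ties the two signs together. Your ``uniform'' sign-agreement sentence just restates the goal. The no-type-$2$/$3$ hypothesis does not exclude this case, and $\overline{\alpha(t)}^{2}=1$ only gives $\overline{\alpha(t)\theta_{\dia}(\alpha)(t)}=\pm1$.

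In fact the equivalence is false there. Take $\G=\GL_{2}\times\GL_{2}$ with $\theta(x,y)=(y,x)$, and $E=F(\sqrt{d})$ with $d$ a uniformizer. Let $\T^{\dia}=\bfS_{1}\times\bfS_{1}$ with $\bfS_{1}=\Res_{E/F}\Gm\subset\GL_{2}$. Put $\eta=\theta$ and $\eta'=(\sqrt{d},1)\theta$ with $\sqrt{d}\in E^{\times}=\bfS_{1}(F)$; $\eta'$ is topologically semisimple since $\eta^{\prime4}=(d,d)\in A_{\t\G}$. For $\alpha=(\beta,0)$, with $\beta$ the symmetric ramified root of $\bfS_{1}$, one has $l_{\alpha}=2$, $F_{\alpha_{\res}}=E$ and $F_{\pm\alpha_{\res}}=F$. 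Yet $\alpha(\eta^{2})=1$ while $\alpha(\eta^{\prime2})=\beta(\sqrt{d})=-1$; indeed $\G_{\eta}\cong\GL_{2}$ but $\G_{\eta'}\cong\bfS_{1}$.

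So the lemma has to be read with the extra hypothesis $\theta_{\dia}(\alpha)\in\dot{\alpha}$, equivalently $F_{\alpha}\neq F_{\alpha_{\res}}$. This is what the paper's proof actually covers, and it holds for $\GL_{n}$, where the symmetric ramified roots form a single $\Gamma$-orbit. Under that hypothesis your case (2) is a complete proof, and you should drop case (1) rather than try to prove it. Consistently, Proposition~\ref{prop:toral-descent-ram-ram} handles $F_{\alpha}=F_{\alpha_{\res}}$ only under the assumption that $\alpha_{\res}$ lies in both $\Phi(\G_{\eta},\T^{\nat})$ and $\Phi(\G_{\eta'},\T^{\nat})$.
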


\begin{proof}
Suppose that $\alpha\in\Phi(\G,\T^{\dia})$ is a symmetric ramified root with $l_{\alpha}=2$.
In this case, the following hold:
\begin{itemize}
\item
$F_{\alpha}/F_{\pm\alpha}$ and $F_{\alpha_{\res}}/F_{\pm\alpha_{\res}}$ are quadratic ramified, and
\item
$F_{\alpha}/F_{\alpha_{\res}}$ and $F_{\alpha_{\res}}/F_{\pm\alpha_{\res}}$ are quadratic unramified.
\end{itemize}
\[
\xymatrix{
F_{\alpha_{\res}}\ar^-{\text{quad}}_-{\ur}@{-}[r]&F_{\alpha}\\
F_{\pm\alpha_{\res}}\ar_-{\text{quad}}^-{\ur}@{-}[r]\ar^-{\text{quad}}_{\ram}@{-}[u]&F_{\pm\alpha}\ar_-{\text{quad}}^{\ram}@{-}[u]
}
\]
(Indeed, since $\theta_{\dia}(\alpha)\neq-\alpha$, we cannot have $F_{\alpha_{\res}}=F_{\pm\alpha}$. If $F_{\pm\alpha}/F_{\pm\alpha_{\res}}$ is a quadratic ramified extension different from $F_{\alpha_{\res}}$, then $F_{\alpha}$ must contain a quadratic unramified extension of $F_{\pm\alpha_{\res}}$, hence we get a contradiction. Thus $F_{\pm\alpha}/F_{\pm\alpha_{\res}}$ must be quadratic unramified.)
Let $\tau_{\alpha}\in\Gamma$ be an element satisfying $\tau_{\alpha}(\alpha)=-\alpha$.
Then $\tau_{\alpha}$ restricts to the nontrivial element of $\Gal(F_{\alpha_{\res}}/F_{\pm\alpha_{\res}})$.
Let $\sigma_{\alpha}\in\Gamma$ be an element satisfying $\sigma_{\alpha}(\alpha)=\theta_{\dia}(\alpha)$.

With the same notation and arguments as in the proof of  Lemma \ref{lem:ram-res-from-asym}, it suffices to show that $\alpha(t)\cdot\theta_{\dia}(\alpha)(t)\equiv1\pmod{\mfp_{\ol{F}}}$.
Since $t$ is $F$-rational and $\tau_{\alpha}(\alpha)=-\alpha$, 
\[
\Nr_{F_{\alpha}/F_{\pm\alpha}}(\alpha(t))
=\alpha(t)\cdot \tau_{\alpha}(\alpha(t))
=\alpha(t)\cdot \alpha(t)^{-1}
=1.
\]
As $F_{\alpha}/F_{\pm\alpha}$ is ramified, this implies that $\alpha(t)\equiv\pm1\pmod{\mfp_{\ol{F}}}$.
Thus we get
\[
\alpha(t)\cdot \theta_{\dia}(\alpha)(t)
=\alpha(t)\cdot \sigma_{\alpha}(\alpha(t))
=\Nr_{F_{\alpha}/F_{\alpha_{\res}}}(\alpha(t))
\equiv\Nr_{F_{\alpha}/F_{\alpha_{\res}}}(\pm1)
=1\pmod{\mfp_{\ol{F}}}.
\]
This completes the proof.
\end{proof}

\subsection{Descent of toral invariants}\label{subsec:toral-descent}
Let us keep the notation as in Section \ref{subsec:ram-rest-roots}.
We next investigate the relation between the toral invariants for $(\G_{\eta},\T^{\nat})$ and those for $(\G_{\eta'},\T^{\nat})$.
Before we start our discussion, we note that the roots in the $\Theta$-orbits $\Theta\alpha$ of $\alpha\in\Phi(\G,\T)$ are orthogonal to each other and that, for any root vector $X_{\alpha}\in\bmfg_{\alpha}$, we have $\theta^{l_{\alpha}}(X_{\alpha})=X_{\alpha}$ (these are true since we are assuming that there is no restricted root of type $2$ or $3$; see \cite[(1.3.5-1.3.7)]{KS99}).

Let $t\in T^{\dia}$ be the element satisfying $\eta'=t\eta$.

\begin{prop}\label{prop:toral-descent-asym-ram}
Let $\alpha_{\res}\in\Phi_{\res}(\G,\T^{\dia})^{(\asym)}_{\ram}$.
Suppose that $\alpha_{\res}\in\Phi(\G_{\eta},\T^{\nat})$, which is equivalent to $\alpha_{\res}\in\Phi(\G_{\eta'},\T^{\nat})$ by Lemma \ref{lem:ram-res-from-asym}.
Then we have
\[
f_{(\G_{\eta},\T^{\nat})}(\alpha_{\res})
=
f_{(\G_{\eta'},\T^{\nat})}(\alpha_{\res})\cdot\epsilon_{\alpha}(t).
\]
\end{prop}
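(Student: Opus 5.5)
We will compute both toral invariants directly from the definition of $f$, using the same root vector, and compare them.

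\emph{Setup.} The setting is as in the proof of Lemma \ref{lem:ram-res-from-asym}. Here $\alpha$ is asymmetric, $F_{\alpha}=F_{\alpha_{\res}}$, $l_{\alpha}=2$, and $\theta_{\dia}(\alpha)=-\tau_{\alpha}(\alpha)$ for some $\tau_{\alpha}\in\Gamma_{\pm\alpha_{\res}}\smallsetminus\Gamma_{\alpha_{\res}}$. Fix an $F_{\alpha}$-rational root vector $X_{\alpha}\in\bmfg_{\alpha}(F_{\alpha})$.

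\emph{Root vectors in the descended groups.} The root vector of $\G_{\eta}$ for $\alpha_{\res}$ is
\[
X^{\eta}_{\alpha_{\res}}\colonequals X_{\alpha}+[\eta](X_{\alpha}),
\]
which is $[\eta]$-fixed because $\theta^{2}(X_{\alpha})=X_{\alpha}$. For $\G_{\eta'}$ we take
\[
X^{\eta'}_{\alpha_{\res}}\colonequals X_{\alpha}+[\eta'](X_{\alpha})=X_{\alpha}+\theta_{\dia}(\alpha)(t)\,[\eta](X_{\alpha}).
\]
This vector is $[\eta']$-fixed since $\alpha(t\theta_{\dia}(t))\alpha(\eta^{2})=1$. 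Both vectors are $F_{\alpha_{\res}}$-rational.

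\emph{Computing the brackets.} The coroot $H_{\alpha_{\res}}=d N(\alpha^{\vee})(1)=H_{\alpha}+H_{\theta(\alpha)}$ is the same for $\eta$ and $\eta'$. Apply $\tau_{\alpha}$, which sends $\bmfg_{\alpha}$ to $\bmfg_{-\theta(\alpha)}$ and $\bmfg_{\theta(\alpha)}$ to $\bmfg_{-\alpha}$. Since the roots in $\Theta\alpha$ are orthogonal, only the cross terms survive in the bracket, giving
\[
[X^{\eta}_{\alpha_{\res}},\tau_{\alpha}X^{\eta}_{\alpha_{\res}}]=[X_{\alpha},\tau_{\alpha}[\eta]X_{\alpha}]+[[\eta]X_{\alpha},\tau_{\alpha}X_{\alpha}].
\]
These two terms are $[\eta]$-conjugate. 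For $\eta'$ the same computation applies, with each term multiplied by the scalar
\[
c\colonequals \tau_{\alpha}\bigl(\theta_{\dia}(\alpha)(t)\bigr)=\tau_{\alpha}\bigl(\tau_{\alpha}(\alpha)(t)\bigr)^{-1}\cdot(\text{sign bookkeeping}).
\]
Because $t$ is $F$-rational, $\theta_{\dia}(\alpha)(t)=\tau_{\alpha}(\alpha(t))^{-1}$, and so $c=\tau_{\alpha}^{2}(\alpha(t))^{-1}$. Note that $\tau_{\alpha}^{2}\in\Gamma_{\alpha}$, since $\tau_{\alpha}^{2}$ preserves $\alpha_{\res}$ and $F_{\alpha}=F_{\alpha_{\res}}$. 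Hence $c=\alpha(t)^{-1}$, and
\[
\frac{[X^{\eta'},\tau_{\alpha}X^{\eta'}]}{H_{\alpha_{\res}}}=\alpha(t)^{-1}\cdot\frac{[X^{\eta},\tau_{\alpha}X^{\eta}]}{H_{\alpha_{\res}}}.
\]

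\emph{Taking the quadratic character.} This gives
\[
f_{(\G_{\eta'},\T^{\nat})}(\alpha_{\res})=f_{(\G_{\eta},\T^{\nat})}(\alpha_{\res})\cdot\kappa_{\alpha_{\res}}(\alpha(t))^{-1},
\]
where $\kappa_{\alpha_{\res}}$ is the quadratic character of $F_{\pm\alpha_{\res}}^{\times}$ attached to the ramified extension $F_{\alpha}/F_{\pm\alpha_{\res}}$. A priori $\alpha(t)$ lies in $F_{\alpha}$ rather than $F_{\pm\alpha_{\res}}$. The previous step shows the ratio of the two brackets lies in $F_{\pm\alpha_{\res}}$, so $\alpha(t)$ is in fact fixed by $\tau_{\alpha}$ modulo norms; equivalently, we may replace $\alpha(t)$ by a norm-equivalent element of $F_{\pm\alpha_{\res}}$.

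\emph{Identifying the sign with $\epsilon_{\alpha}(t)$.} It remains to show $\kappa_{\alpha_{\res}}(\alpha(t))=\sgn_{k_{\alpha}^{\times}}(\overline{\alpha(t)})=\epsilon_{\alpha}(t)$. For a tamely ramified quadratic extension, $\kappa$ restricted to units is the Legendre symbol of the residue class. The residue field of $F_{\pm\alpha_{\res}}$ equals $k_{\alpha}$, since $F_{\alpha}/F_{\pm\alpha_{\res}}$ is ramified. Moreover $\alpha(t)$ is a unit, because $t$ lies in a bounded elliptic torus. Together these give the claim.

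I expect the main obstacle to be the middle step: tracking the Galois twist of the scalar carefully, using the correct normalization of $\tau_{\alpha}$ (which swaps $\alpha$ and $-\theta(\alpha)$ rather than $\alpha$ and $-\alpha$), and justifying the reduction of $\alpha(t)$ to an element of $F_{\pm\alpha_{\res}}$ modulo norms. If the scalar came out as $\alpha(t)\tau_{\alpha}(\alpha(t))$ instead, I would instead use $\kappa(\Nr x)=1$ together with residue-field squaring to reach the same sign.
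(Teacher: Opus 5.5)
Your proposal is correct and follows essentially the same route as the paper. Both arguments compute the two toral invariants with the root vectors $X_{\alpha}+[\eta](X_{\alpha})$ and $X_{\alpha}+[\eta'](X_{\alpha})$, reduce to the $H_{\alpha}$-coefficient of the surviving cross term, find that these coefficients differ by $\alpha(t)^{\pm1}$ with $\alpha(t)\in\mcO_{F_{\pm\alpha_{\res}}}^{\times}$, and identify $\kappa_{\alpha_{\res}}(\alpha(t))$ with $\sgn_{k_{\alpha}^{\times}}(\overline{\alpha(t)})=\epsilon_{\alpha}(t)$.

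The loose spots are harmless. No ``sign bookkeeping'' is needed, since $\tau_{\alpha}(\theta_{\dia}(\alpha))=-\alpha$ gives $c=\alpha(t)^{-1}$ at once. Your quotient argument puts $\alpha(t)$ in $F_{\pm\alpha_{\res}}$ exactly, not merely modulo norms; this also follows directly from $\alpha(t)\cdot\theta_{\dia}(\alpha)(t)=1$, which comes from $\alpha(\eta^{2})=\alpha(\eta^{\prime 2})=1$. Finally, your fallback via $\kappa(\Nr x)=1$ would only ever give a trivial sign, so it is fortunate the main computation does not need it.
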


\begin{proof}
We use the notation as in the proof of Lemma \ref{lem:ram-res-from-asym}.
We take an element $X_{\alpha}$ of $\bmfg_{\alpha}(F_{\alpha})$.
Then $X_{\alpha_{\res},\eta}\colonequals X_{\alpha}+[\eta](X_{\alpha})$ belongs to $\bmfg_{\eta,\alpha_{\res}}(F_{\alpha})$.
As $F_{\alpha_{\res}}=F_{\alpha}$, we get $X_{\alpha_{\res},\eta}\in\bmfg_{\eta,\alpha_{\res}}(F_{\alpha_{\res}})$.
Thus, by the definition of the toral invariant (see Section \ref{subsec:invariants-for-LLC}), 
\begin{align*}
f_{(\G_{\eta},\T^{\nat})}(\alpha_{\res})
&=
\kappa_{\alpha_{\res}}\biggl(\frac{[X_{\alpha_{\res},\eta},\tau_{\alpha}(X_{\alpha_{\res},\eta})]}{H_{\alpha_{\res}}}\biggr)\\
&=
\kappa_{\alpha_{\res}}\biggl(\frac{[X_{\alpha},[\eta](\tau_{\alpha}(X_{\alpha}))]+[[\eta](X_{\alpha}),\tau_{\alpha}(X_{\alpha})]}{H_{\alpha_{\res}}}\biggr)
\end{align*}
(note that $\tau_{\alpha}(\alpha)=-\theta_{\dia}(\alpha)$).
Note that $H_{\alpha_{\res}}=H_{\alpha}+[\eta](H_{\alpha})$; here, $[\eta](H_{\alpha})$ is independent of the choice of $\eta$ and linearly independent with $H_{\alpha}$.
Since $[X_{\alpha},[\eta](\tau_{\alpha}(X_{\alpha}))]$ and $[[\eta](X_{\alpha}),\tau_{\alpha}(X_{\alpha})]$ are proportional to $H_{\alpha}$ and $[\eta](H_{\alpha})$, respectively, we have
\[
f_{(\G_{\eta}, \T^{\nat})}(\alpha_\res)
=
\kappa_{\alpha_{\res}}\biggl(\frac{[X_{\alpha},[\eta](\tau_{\alpha}(X_{\alpha}))]}{H_{\alpha}}\biggr).
\]
For the same reasoning, we get
\[
f_{(\G_{\eta'},\T^{\nat})}(\alpha_{\res})
=
\kappa_{\alpha_{\res}}\biggl(\frac{[X_{\alpha},[\eta'](\tau_{\alpha}(X_{\alpha}))]}{H_{\alpha}}\biggr).
\]
Hence, we get
\[
f_{(\G_{\eta'},\T^{\nat})}(\alpha_{\res})
=
f_{(\G_{\eta},\T^{\nat})}(\alpha_{\res})\cdot\kappa_{\alpha_{\res}}(\tau_{\alpha}(\alpha(t))).
\]
Since $\kappa_{\alpha_{\res}}$ is the quadratic character of $F_{\pm\alpha_{\res}}$ corresponding to the extension $F_{\alpha_{\res}}/F_{\pm\alpha_{\res}}$, by noting that $\alpha(t)$ belongs to $\mcO_{F_{\pm\alpha_{\res}}}^{\times}$, we conclude that
\[
\kappa_{\alpha_{\res}}(\tau_{\alpha}(\alpha(t)))
=\sgn_{k_{\alpha}^{\times}}(\ol{\alpha(t)})
=\epsilon_{\alpha}(t).
\]
This completes the proof.
\end{proof}

\begin{prop}\label{prop:toral-descent-ur-ram}
Let $\alpha_{\res}\in\Phi_{\res}(\G,\T^{\dia})^{(\ur)}_{\ram}$.
Suppose that $\alpha_{\res}\in\Phi(\G_{\eta},\T^{\nat})$, which is equivalent to $\alpha_{\res}\in\Phi(\G_{\eta'},\T^{\nat})$ by Lemma \ref{lem:ram-res-from-ur}.
Then we have
\[
f_{(\G_{\eta},\T^{\nat})}(\alpha_{\res})
=
f_{(\G_{\eta'},\T^{\nat})}(\alpha_{\res})\cdot\epsilon_{\alpha}(t).
\]
\end{prop}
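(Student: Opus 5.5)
We follow the proof of Proposition \ref{prop:toral-descent-asym-ram}, keeping the field-theoretic picture from the proof of Lemma \ref{lem:ram-res-from-ur}. There, $F_{\alpha}/F_{\pm\alpha}$ and $F_{\alpha}/F_{\alpha_{\res}}$ are unramified quadratic, while $F_{\alpha_{\res}}/F_{\pm\alpha_{\res}}$ is ramified quadratic. Fix $\sigma_{\alpha},\tau_{\alpha}\in\Gamma$ with $\sigma_{\alpha}(\alpha)=\theta_{\dia}(\alpha)$ and $\tau_{\alpha}(\alpha)=-\alpha$. Then $\tau_{\alpha}\sigma_{\alpha}$ (or a suitable element of $\Gamma_{\pm\alpha_{\res}}\smallsetminus\Gamma_{\alpha_{\res}}$) sends $\alpha$ to $-\theta_{\dia}(\alpha)$. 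Call this element $\tau'$; it serves as the element ``$\tau_{\alpha_{\res}}$'' in the definition of the toral invariant of $\alpha_{\res}$.

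\textbf{Step 1: choose a rational root vector.} Take $X_{\alpha}\in\bmfg_{\alpha}(F_{\alpha})$ and put $X_{\alpha_{\res},\eta}\colonequals X_{\alpha}+[\eta](X_{\alpha})$. Here $F_{\alpha}\supsetneq F_{\alpha_{\res}}$, so this vector is not automatically $F_{\alpha_{\res}}$-rational. We therefore choose $X_{\alpha}$ so that $\sigma_{\alpha}(X_{\alpha})=[\eta](X_{\alpha})$. This is possible by Hilbert 90 for the unramified quadratic extension $F_{\alpha}/F_{\alpha_{\res}}$: the composite $[\eta]^{-1}\circ\sigma_{\alpha}$ acts semilinearly on the line $\bmfg_{\alpha}$, and its square is trivial because $\theta^{l_{\alpha}}(X_{\alpha})=X_{\alpha}$ with $l_{\alpha}=2$. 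With this choice, $X_{\alpha_{\res},\eta}=\Tr_{F_{\alpha}/F_{\alpha_{\res}}}$-type sum, and it lies in $\bmfg_{\eta,\alpha_{\res}}(F_{\alpha_{\res}})$.

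\textbf{Step 2: compute the toral invariants.} By orthogonality of $\alpha$ and $\theta(\alpha)$, exactly as in Proposition \ref{prop:toral-descent-asym-ram}, we get
\[
f_{(\G_{\eta},\T^{\nat})}(\alpha_{\res})=\kappa_{\alpha_{\res}}\biggl(\frac{[X_{\alpha},[\eta](\tau'(X_{\alpha}))]}{H_{\alpha}}\biggr).
\]
For $\eta'=t\eta$ we first repeat Step 1 with $\eta'$. The normalization $\sigma_{\alpha}(X'_{\alpha})=[\eta'](X'_{\alpha})=\theta_{\dia}(\alpha)(t)[\eta](X'_{\alpha})$ forces $X'_{\alpha}=cX_{\alpha}$, where $c\in F_{\alpha}^{\times}$ satisfies $\sigma_{\alpha}(c)/c=\theta_{\dia}(\alpha)(t)$. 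Substituting, the ratio of the two arguments of $\kappa_{\alpha_{\res}}$ becomes an explicit element built from $c$, $\tau'(c)$ and $\alpha(t)$.

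\textbf{Step 3: evaluate the ratio.} This is the main obstacle. The ratio should reduce, modulo $\Nr_{F_{\alpha_{\res}}/F_{\pm\alpha_{\res}}}$, to a unit whose residue class is governed by $\alpha(t)\in F_{\alpha}^{1}$ (the norm-one group for $F_{\alpha}/F_{\pm\alpha}$, by Lemma \ref{lem:ram-res-from-ur}). Since $\kappa_{\alpha_{\res}}$ is ramified quadratic, on units it is the Legendre symbol of the residue field $k_{\pm\alpha_{\res}}=k_{\alpha_{\res}}$. Because $F_{\alpha}/F_{\alpha_{\res}}$ and $F_{\alpha}/F_{\pm\alpha}$ are both unramified, $k_{\alpha}$ is quadratic over $k_{\pm\alpha_{\res}}$ and the residue of $F_{\alpha}^{1}$ is $k_{\alpha}^{1}$. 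We would then check that the Legendre symbol of the relevant norm-type expression in $\overline{\alpha(t)}$ agrees with $\sgn_{k_{\alpha}^{1}}(\overline{\alpha(t)})=\epsilon_{\alpha}(t)$. This uses the fact that an element $u$ of $k_{\alpha}^{1}$ is a square in $k_{\alpha}^{1}$ iff $u=v/\bar v$ with $v$ a square-class representative, equivalently iff a Hilbert-90 preimage $c$ has $\Nr(c)$ a square in $k_{\pm\alpha_{\res}}$. The delicate point is tracking which Hilbert-90 preimage $c$ appears, together with the unit part of $\tau'(c)/c$. We would first handle this by choosing $c$ of valuation zero and reducing everything to the finite-field identity just described.
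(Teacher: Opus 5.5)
Your strategy is the paper's. You rescale a root vector by Hilbert 90 for the unramified quadratic extension $F_{\alpha}/F_{\alpha_{\res}}$ so that $X_{\alpha}+[\eta](X_{\alpha})$ is $F_{\alpha_{\res}}$-rational. You compare the rescalings for $\eta$ and $\eta'=t\eta$ through $c$ with $\sigma_{\alpha}(c)/c=\theta_{\dia}(\alpha)(t)$. You close with the fact that $\bar v^{\,q}/\bar v\mapsto\sgn_{k_{\pm\alpha}^{\times}}(\Nr_{k_{\alpha}/k_{\pm\alpha}}(\bar v))$ is the nontrivial quadratic character of $k_{\alpha}^{1}$, where $q=|k_{\pm\alpha}|$. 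Your relation $\sigma_{\alpha}(c)/c=\theta_{\dia}(\alpha)(t)$ is actually the accurate one. The paper writes $\alpha(t)^{-1}$ for the inverse, which is harmless since $\overline{\theta_{\dia}(\alpha)(t)}=\overline{\alpha(t)}^{\,q}=\overline{\alpha(t)}^{-1}$ in $k_{\alpha}^{1}$.

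As written, however, the proposal is not a complete proof.

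\textbf{Step 1.} The reason you give for the cocycle condition is wrong. The square of $[\eta]^{-1}\circ\sigma_{\alpha}$ on $\bmfg_{\alpha}(F_{\alpha})$ is $[\eta^{2}]^{-1}\circ\sigma_{\alpha}^{2}$. Here $\sigma_{\alpha}^{2}\in\Gamma_{\alpha}$ acts trivially, and $[\eta^{2}]$ acts by the scalar $\alpha(\eta^{2})=N(\alpha)(\nu)$. This scalar is $1$ precisely because of the hypothesis $\alpha_{\res}\in\Phi(\G_{\eta},\T^{\nat})$. The identity $\theta^{l_{\alpha}}(X_{\alpha})=X_{\alpha}$ says nothing about $\eta$. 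Likewise, the existence of $c$ uses $\alpha(t\,\theta_{\dia}(t))=1$, i.e.\ the same hypothesis for $\eta'$.

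\textbf{Step 3.} This is the decisive computation, and you only announce it. The ``delicate point'' you flag is not delicate.
With your $\tau'=\tau_{\alpha}\sigma_{\alpha}$, the ratio of the two $\kappa_{\alpha_{\res}}$-arguments is $c\,\tau'(c)\,\alpha(t)^{-1}$. The factor $\alpha(t)^{-1}$ comes from $[\eta']=[t]\circ[\eta]$ acting on $[\eta](\tau'(X_{\alpha}))\in\bmfg_{-\alpha}$.
Take $c\in\mcO_{F_{\alpha}}^{\times}$. Both $\sigma_{\alpha}$ and $\tau_{\alpha}$ reduce to the Frobenius of $k_{\alpha}/k_{\pm\alpha}$, so $\tau'$ acts trivially on $k_{\alpha}$. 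Hence the ratio is a unit with residue $\bar c^{\,2}\,\overline{\alpha(t)}^{-1}=\bar c^{\,q+1}$, using $\bar c^{\,q-1}=\overline{\theta_{\dia}(\alpha)(t)}=\overline{\alpha(t)}^{-1}$.
Since $\kappa_{\alpha_{\res}}$ is tamely ramified and $k_{\pm\alpha_{\res}}=k_{\pm\alpha}$, you get $\kappa_{\alpha_{\res}}(\text{ratio})=\sgn_{k_{\pm\alpha}^{\times}}(\bar c^{\,q+1})=\sgn_{k_{\alpha}^{1}}(\bar c^{\,q-1})=\sgn_{k_{\alpha}^{1}}(\overline{\alpha(t)})=\epsilon_{\alpha}(t)$, as required.
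It is cleaner still to use $\tau_{\alpha}$ itself, which also lies in $\Gamma_{\pm\alpha_{\res}}\smallsetminus\Gamma_{\alpha_{\res}}$, as the paper does. Then $[\eta]$ drops out of the bracket $[X_{\alpha},\tau_{\alpha}(X_{\alpha})]$, and the ratio is exactly $\Nr_{F_{\alpha}/F_{\pm\alpha}}(c)$.
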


\begin{proof}
We use the notation as in the proof of Lemma \ref{lem:ram-res-from-ur}.
We take an element $X_{\alpha}$ of $\bmfg_{\alpha}(F_{\alpha})$.
Then $X_{\alpha_{\res},\eta}\colonequals X_{\alpha}+[\eta](X_{\alpha})$ belongs to $\bmfg_{\eta,\alpha_{\res}}(F_{\alpha})$.
To compute the toral invariant $f_{(\G_{\eta},\T^{\nat})}(\alpha_{\res})$, we scale $X_{\alpha_{\res},\eta}$ so that it is $F_{\alpha_{\res}}$-rational.
Let $C_{\eta}\in F_{\alpha}^{\times}$ be the constant determined by $\sigma_{\alpha}(X_{\alpha})=C_{\eta}\cdot [\eta](X_{\alpha})$.
Note that then $\sigma_{\alpha}([\eta](X_{\alpha}))=C_{\eta}\cdot X_{\alpha}$.
Indeed, since $\eta$ is $F$-rational, we have
\[
\sigma_{\alpha}([\eta](X_{\alpha}))
=[\eta](\sigma_{\alpha}(X_{\alpha}))
=[\eta](C_{\eta}\cdot[\eta](X_{\alpha}))
=C_{\eta}\cdot\alpha(\eta^{2})\cdot X_{\alpha}
=C_{\eta}\cdot X_{\alpha},
\]
where $\alpha(\eta^{2})=1$ since $\alpha_\res\in\Phi(\G_{\eta},\T^{\nat})$.
Thus we have
\[
\sigma_{\alpha}^{2}(X_{\alpha})
=\sigma_{\alpha}(C_{\eta}\cdot [\eta](X_{\alpha}))
=\sigma_{\alpha}(C_{\eta})\cdot C_{\eta}\cdot X_{\alpha}
=\Nr_{F_{\alpha}/F_{\alpha_{\res}}}(C_{\eta})\cdot X_{\alpha}.
\]
On the other hand, since $\sigma_{\alpha}^{2}$ belongs to $\Gamma_{\alpha}$, $\sigma_{\alpha}^{2}$ fixes $X_{\alpha}$.
This implies that $\Nr_{F_{\alpha}/F_{\alpha_{\res}}}(C_{\eta})=1$.
Hence, by the Hilbert 90th theorem, we can find an element $x_{\eta}\in F_{\alpha}^{\times}$ satisfying $C_{\eta}=x_{\eta}/\sigma_{\alpha}(x_{\eta})$.
By putting $\t{X}_{\alpha_{\res},\eta}\colonequals x_{\eta}\cdot X_{\alpha_{\res},\eta}$, we get an $F_{\alpha_{\res}}$-rational root vector $\t{X}_{\alpha_{\res},\eta}\in\bmfg_{\eta,\alpha_{\res}}(F_{\alpha_{\res}})$.

Now let us compute the toral invariant using $\t{X}_{\alpha_{\res},\eta}$:
\begin{align*}
f_{(\G_{\eta},\T^{\nat})}(\alpha_{\res})
&=
\kappa_{\alpha_{\res}}\biggl(\frac{[\t{X}_{\alpha_{\res},\eta},\tau_{\alpha}(\t{X}_{\alpha_{\res},\eta})]}{H_{\alpha_{\res}}}\biggr)\\
&=
\kappa_{\alpha_{\res}}\biggl(\frac{[x_{\eta}X_{\alpha},\tau_{\alpha}(x_{\eta}X_{\alpha})]+[x_{\eta}[\eta](X_{\alpha}),\tau_{\alpha}(x_{\eta}[\eta](X_{\alpha}))]}{H_{\alpha_{\res}}}\biggr).
\end{align*}
By the same argument as in the proof of Proposition \ref{prop:toral-descent-asym-ram}, this equals
\[
\kappa_{\alpha_{\res}}\biggl(\frac{[x_{\eta}X_{\alpha},\tau_{\alpha}(x_{\eta}X_{\alpha})]}{H_{\alpha}}\biggr).
\]
Similarly, putting $C_{\eta'}\in F_{\alpha}^{\times}$ and $x_{\eta'}\in F_{\alpha}^{\times}$ in the same manner, 
\[
f_{(\G_{\eta'},\T^{\nat})}(\alpha_{\res})
=
\kappa_{\alpha_{\res}}\biggl(\frac{[x_{\eta'}X_{\alpha},\tau_{\alpha}(x_{\eta'}X_{\alpha})]}{H_{\alpha}}\biggr).
\]

Thus we get
\[
f_{(\G_{\eta'},\T^{\nat})}(\alpha_{\res})
=
f_{(\G_{\eta},\T^{\nat})}(\alpha_{\res})
\cdot
\kappa_{\alpha_{\res}}\bigl((x_{\eta'}x_{\eta}^{-1})\cdot\tau_{\alpha}(x_{\eta'}x_{\eta}^{-1})\bigr).
\]
We put $y\colonequals x_{\eta'}x_{\eta}^{-1}\in F_{\alpha}^{\times}$, hence $\kappa_{\alpha_{\res}}((x_{\eta'}x_{\eta}^{-1})\cdot\tau_{\alpha}(x_{\eta'}x_{\eta}^{-1}))=\kappa_{\alpha_{\res}}(y\cdot\tau_{\alpha}(y))$.
As $C_{\eta}\cdot [\eta](X_{\alpha})=\sigma_{\alpha}(X_{\alpha})=C_{\eta'}\cdot [\eta'](X_{\alpha})$, we have $C_{\eta'}=\alpha(t)^{-1}C_{\eta}$.
Hence we have $y/\sigma_{\alpha}(y)=\alpha(t)^{-1}$.
Since $F_{\alpha}/F_{\alpha_{\res}}$ is unramified, we can choose $x_{\eta}$ and $x_{\eta'}$ to be elements of $\mcO_{F_{\alpha}}^{\times}$, hence $y\in\mcO_{F_{\alpha}}^{\times}$.
We note that the composition
\[
k_{\alpha}^{1}\xleftarrow{\cong}k_{\alpha}^{\times}/k_{\pm\alpha}^{\times}\xrightarrow{\Nr_{k_{\alpha}/k_{\pm\alpha}}} k_{\pm\alpha}^{\times}/k_{\pm\alpha}^{\times2}\cong\mu_{2}\colon\quad
y/\sigma_{\alpha}(y)\mapsto y\mapsto y\cdot\tau_{\alpha}(y)
\]
defines the unique nontrivial quadratic character of $k_{\alpha}^{1}$.
Hence we get
\[
\kappa_{\alpha_{\res}}(y\cdot\tau_{\alpha}(y))
=\sgn_{k_{\alpha}^{1}}(\ol{\alpha(t)})^{-1}
=\epsilon_{\alpha}(t).
\]
This completes the proof.
\end{proof}

\begin{prop}\label{prop:toral-descent-ram-ram}
Let $\alpha_{\res}\in\Phi_{\res}(\G,\T^{\dia})^{(\ram)}_{\ram}$.
Suppose that $\alpha_{\res}\in\Phi(\G_{\eta},\T^{\nat})$ and $\alpha_{\res}\in\Phi(\G_{\eta'},\T^{\nat})$.
Then we have
\[
f_{(\G_{\eta},\T^{\nat})}(\alpha_{\res})
=
\begin{cases}
f_{(\G_{\eta'},\T^{\nat})}(\alpha_{\res}) & \text{if $F_{\alpha}=F_{\alpha_{\res}}$,}\\
f_{(\G_{\eta'},\T^{\nat})}(\alpha_{\res})\cdot\alpha(t)& \text{if $F_{\alpha}\neq F_{\alpha_{\res}}$.}
\end{cases}
\]
Here, in the latter case, we have $\alpha(t)=\pm1$, hence regard $\alpha(t)\in\{\pm1\}\subset\C^{\times}$.
\end{prop}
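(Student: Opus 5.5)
We follow the pattern of Propositions \ref{prop:toral-descent-asym-ram} and \ref{prop:toral-descent-ur-ram}, splitting into the cases $l_{\alpha}=1$ and $l_{\alpha}=2$. If $l_{\alpha}=1$, then $\theta_{\dia}(\alpha)=\alpha$, so $F_{\alpha}=F_{\alpha_{\res}}$, $F_{\pm\alpha}=F_{\pm\alpha_{\res}}$, and $\bmfg_{\eta,\alpha_{\res}}$ is the $[\eta]$-fixed part of the line $\bmfg_{\alpha}$. Since $\alpha_{\res}$ is a root of $\G_{\eta}$, $[\eta]$ acts trivially on $\bmfg_{\alpha}$; the same holds for $[\eta']$. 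Hence $\bmfg_{\eta,\alpha_{\res}}=\bmfg_{\alpha}=\bmfg_{\eta',\alpha_{\res}}$ as $F_{\alpha}$-rational lines. Moreover $H_{\alpha_{\res}}$, the coroot $N(\alpha^{\vee})=\alpha^{\vee}$, is the same in both groups, and $\tau_{\alpha}$ is unchanged. So the defining ratio $[X_{\alpha},\tau_{\alpha}(X_{\alpha})]/H_{\alpha}$ is literally identical and the toral invariants agree. This also shows that in this case $\alpha(t)=1$, consistent with the first line of the statement.

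If $l_{\alpha}=2$, we are in the field configuration of Lemma \ref{lem:ram-res-from-ram}: $F_{\alpha}/F_{\alpha_{\res}}$ is unramified quadratic, $F_{\alpha}\neq F_{\alpha_{\res}}$, and we take $\sigma_{\alpha}(\alpha)=\theta_{\dia}(\alpha)$ and $\tau_{\alpha}(\alpha)=-\alpha$. As in the proof of Proposition \ref{prop:toral-descent-ur-ram}, we fix $X_{\alpha}\in\bmfg_{\alpha}(F_{\alpha})$, put $X_{\alpha_{\res},\eta}=X_{\alpha}+[\eta](X_{\alpha})$, and define $C_{\eta}$ by $\sigma_{\alpha}(X_{\alpha})=C_{\eta}[\eta](X_{\alpha})$. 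Using $\alpha(\eta^{2})=1$, we get $\Nr_{F_{\alpha}/F_{\alpha_{\res}}}(C_{\eta})=1$. By Hilbert 90 we write $C_{\eta}=x_{\eta}/\sigma_{\alpha}(x_{\eta})$ and rescale to obtain an $F_{\alpha_{\res}}$-rational vector. By orthogonality of $\alpha$ and $\theta_{\dia}(\alpha)$ (no type $2$ or $3$ roots), the cross terms in the bracket vanish, giving
\[
f_{(\G_{\eta},\T^{\nat})}(\alpha_{\res})=\kappa_{\alpha_{\res}}\bigl(x_{\eta}\tau_{\alpha}(x_{\eta})[X_{\alpha},\tau_{\alpha}(X_{\alpha})]/H_{\alpha}\bigr).
\]
Doing the same for $\eta'$ yields the ratio $\kappa_{\alpha_{\res}}(y\,\tau_{\alpha}(y))$, where $y=x_{\eta'}/x_{\eta}$ and $y/\sigma_{\alpha}(y)=\alpha(t)^{-1}$.

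The main step is evaluating $\kappa_{\alpha_{\res}}(y\tau_{\alpha}(y))$. Since both $\eta,\eta'$ have $\alpha_{\res}$ as a root, $\alpha(t)\cdot\theta_{\dia}(\alpha)(t)=1$, i.e.\ $\Nr_{F_{\alpha}/F_{\alpha_{\res}}}(\alpha(t))=1$. Also $\Nr_{F_{\alpha}/F_{\pm\alpha}}(\alpha(t))=1$, so $\alpha(t)$ lies in the norm-one group of the biquadratic-type extension $F_{\alpha}/F_{\pm\alpha_{\res}}$. We would show that this forces $\alpha(t)=\pm1$: its residue class is $\pm1$ (ramified norm one), and the unramified norm-one condition combined with the ramified one should kill the principal-unit part. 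This point needs care; the first attempt is to write $\alpha(t)=\pm u$ with $u\in 1+\mfp_{F_{\alpha}}$ and use that $p$ is odd to take square roots and compare conjugates.

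Then $y/\sigma_{\alpha}(y)=\pm1$. If the sign is $+1$, we may take $y\in F_{\alpha_{\res}}^{\times}$, so $y\tau_{\alpha}(y)$ is a norm from $F_{\alpha_{\res}}$ and $\kappa_{\alpha_{\res}}$ is trivial. If the sign is $-1$, then $y$ generates $F_{\alpha}$ over $F_{\alpha_{\res}}$ with $y^{2}\in F_{\alpha_{\res}}$, and we may take $y$ to be a non-square unit of the unramified extension. Then $y\tau_{\alpha}(y)$ is, up to norms, a non-square unit of $F_{\pm\alpha_{\res}}$; since $F_{\alpha_{\res}}/F_{\pm\alpha_{\res}}$ is ramified, $\kappa_{\alpha_{\res}}$ on units is the residue Legendre symbol, which gives $-1$. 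Tracking $\tau_{\alpha}$ on residue fields, which acts as the Frobenius of $k_{\alpha}/k_{\pm\alpha_{\res}}$, is the delicate point. In both subcases the factor equals $\alpha(t)$, as claimed.
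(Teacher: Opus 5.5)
Your $l_{\alpha}=1$ case is fine and matches the paper. The rest has two problems, and the first is serious.

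\textbf{The step you flag as delicate cannot be done, because $\alpha(t)=\pm1$ is false in general.} Put $K=F_{\pm\alpha_{\res}}$. Then $F_{\alpha}/K$ is biquadratic with group generated by $\sigma_{\alpha}$ and $\tau_{\alpha}$. Your two conditions $\Nr_{F_{\alpha}/F_{\alpha_{\res}}}(x)=1=\Nr_{F_{\alpha}/F_{\pm\alpha}}(x)$ for $x=\alpha(t)$ say exactly that $x$ lies in the norm-one group $M^{1}$ of $M/K$, where $M$ is the fixed field of $\sigma_{\alpha}\tau_{\alpha}$. Since $F_{\pm\alpha}$ is the unramified quadratic subfield, $M/K$ is ramified. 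Hence $M^{1}=\{\pm1\}\times(M^{1}\cap(1+\mfp_{M}))$, and the second factor is a nontrivial pro-$p$ group.

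Here is a concrete counterexample. Take $\G=\Res_{F_{2}/F}\GL_{2}$ with $F_{2}=F(\sqrt{u_{0}})$ unramified, $\theta$ the Galois conjugation of $F_{2}/F$, and $\eta=\theta$. Take $\T^{\dia}=\Res_{L/F}\Gm$ with $L=F_{2}(\sqrt{\varpi_{F}})$, so that $\theta_{\dia}$ is the conjugation $\sigma$ of $L/F(\sqrt{\varpi_{F}})$. Let $\alpha(t)=t/\tau(t)$, where $\tau$ is the conjugation of $L/F_{2}$. For $t=v/\sigma(v)$ with $v=1+\sqrt{u_{0}\varpi_{F}}$, one has $\eta'^{2}=\eta^{2}=1$, so both hypotheses hold. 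Yet $\alpha(t)=v^{2}/\sigma(v)^{2}\neq\pm1$.

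The paper's own route, via $\alpha(t)^{2}=\alpha(\eta'^{2})\alpha(\eta^{-2})$, has the same defect. Since $\eta'^{2}=t\,\theta_{\dia}(t)\,\eta^{2}$, that quotient equals $\alpha(t)\cdot\theta_{\dia}(\alpha)(t)$, which is $1$ by hypothesis, not $\alpha(t)^{2}$.

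What is true, and suffices, is the following. The rule $x\mapsto\kappa_{\alpha_{\res}}(y\,\tau_{\alpha}(y))$, with $y/\sigma_{\alpha}(y)=x$, is a well-defined homomorphism $M^{1}\to\{\pm1\}$. It is well defined because changing $y$ by $F_{\alpha_{\res}}^{\times}$ changes $y\tau_{\alpha}(y)$ by a norm from $F_{\alpha_{\res}}$. It is trivial on the pro-$p$ factor because $p$ is odd. Your choice $y=\sqrt{u_{0}}$, with $u_{0}\in\mcO_{K}^{\times}$ a non-square, shows it sends $-1$ to $-1$. There is nothing delicate here: $\tau_{\alpha}$ fixes $F_{\pm\alpha}\ni\sqrt{u_{0}}$, so $y\tau_{\alpha}(y)=u_{0}$.

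So the ratio of the two toral invariants is the residue class $\ol{\alpha(t)}\in\{\pm1\}$. The second line of the statement must be read with $\ol{\alpha(t)}$ in place of $\alpha(t)$. Your subcase ``take $y\in F_{\alpha_{\res}}^{\times}$'' only covers $\alpha(t)=1$ exactly.

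\textbf{Your case split is also incomplete.} $l_{\alpha}=2$ does not put you in the configuration of Lemma \ref{lem:ram-res-from-ram}. If $\theta_{\dia}(\alpha)\notin\Gamma\alpha$ (for example, $\theta$ swapping two factors of $\G$ not related by Galois), then $F_{\alpha}=F_{\alpha_{\res}}$ and $F_{\pm\alpha}=F_{\pm\alpha_{\res}}$, and no $\sigma_{\alpha}$ exists. This subcase falls under the first line of the statement and is covered by neither of your cases.

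The paper treats it directly. Here $X_{\alpha}+[\eta](X_{\alpha})$ is already $F_{\alpha_{\res}}$-rational and the cross terms vanish. Since $[\eta]$ commutes with $\tau_{\alpha}$, one gets $[X_{\alpha_{\res},\eta},\tau_{\alpha}(X_{\alpha_{\res},\eta})]=c\,H_{\alpha_{\res}}$ with $c=[X_{\alpha},\tau_{\alpha}(X_{\alpha})]/H_{\alpha}$. Hence $f_{(\G_{\eta},\T^{\nat})}(\alpha_{\res})=f_{(\G,\T^{\dia})}(\alpha)$ for every $\eta$.
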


\begin{proof}
We first consider the case where $F_{\alpha}=F_{\alpha_{\res}}$.
In this case, 
\begin{itemize}
\item
$F_{\alpha}/F_{\pm\alpha}$ and $F_{\alpha_{\res}}/F_{\pm\alpha_{\res}}$ are quadratic ramified, and
\item
$F_{\pm\alpha}=F_{\pm\alpha_{\res}}$.
\end{itemize}
\[
\xymatrix{
F_{\alpha_{\res}}\ar@{=}[r]&F_{\alpha}\\
F_{\pm\alpha_{\res}}\ar@{=}[r]\ar^-{\text{quad}}_-{\ram}@{-}[u]&F_{\pm\alpha}\ar_-{\text{quad}}^-{\ram}@{-}[u]
}
\]
Let $\tau_{\alpha}\in\Gamma$ be an element satisfying $\tau_{\alpha}(\alpha)=-\alpha$.
Then $\tau_{\alpha}$ restricts to the nontrivial element of $\Gal(F_{\alpha_{\res}}/F_{\pm\alpha_{\res}})$.

When $\theta_{\dia}(\alpha)=\alpha$, we get $f_{(\G_{\eta},\T^{\nat})}(\alpha_{\res})=f_{(\G,\T^{\dia})}(\alpha)$ simply because we can use the same root vector $X_{\alpha}$ both for computing $f_{(\G_{\eta},\T^{\nat})}(\alpha_{\res})$ and $f_{(\G,\T^{\dia})}(\alpha)$.
When $\theta_{\dia}(\alpha)\neq\alpha$, by taking $X_{\alpha}$ and $X_{\alpha_{\res},\eta}$ in the same way and applying the same argument as in the case where $\alpha_{\res}\in\Phi_{\res}(\G,\bfS)^{(\ur)}_{\ram}$, we have
\begin{align*}
f_{(\G_{\eta},\T^{\nat})}(\alpha_{\res})
&=
\kappa_{\alpha_{\res}}\biggl(\frac{[X_{\alpha_{\res},\eta},\tau_{\alpha}(X_{\alpha_{\res},\eta})]}{H_{\alpha_{\res}}}\biggr)\\
&=
\kappa_{\alpha_{\res}}\biggl(\frac{[X_{\alpha},\tau_{\alpha}(X_{\alpha})]+[[\eta](X_{\alpha}),\tau_{\alpha}([\eta](X_{\alpha}))]}{H_{\alpha_{\res}}}\biggr)\\
&=
\kappa_{\alpha}\biggl(\frac{[X_{\alpha},\tau_{\alpha}(X_{\alpha})]}{H_{\alpha}}\biggr)
=f_{(\G,\T^{\dia})}(\alpha).
\end{align*}
Hence, eventually, we get $f_{(\G_{\eta},\T^{\nat})}(\alpha_{\res})=f_{(\G,\T^{\dia})}(\alpha)$ regardless of whether $\theta_{\dia}(\alpha)=\alpha$ or not.
Since $f_{(\G_{\eta'},\T^{\nat})}(\alpha_{\res})=f_{(\G,\T^{\dia})}(\alpha)$ for the same reason, we get $f_{(\G_{\eta},\T^{\nat})}(\alpha_{\res})=f_{(\G_{\eta'},\T^{\nat})}(\alpha_{\res})$.

We next consider the case where $F_{\alpha}/F_{\alpha_{\res}}$ is quadratic.
We use the same notation as in the proof of Lemma \ref{lem:ram-res-from-ram}.
By the same usage of notation and arguments as in the case where $\alpha\in\Phi_{\res}(\G,\bfS)^{(\ur)}_{\ram}$, we get
\[
f_{(\G_{\eta'},\T^{\nat})}(\alpha_{\res})
=
f_{(\G_{\eta},\T^{\nat})}(\alpha_{\res})
\cdot
\kappa_{\alpha_{\res}}(y\cdot\tau_{\alpha}(y)).
\]
Recall that $y\in\mcO_{F_{\alpha}^{\times}}$ is an element such that $y/\sigma_{\alpha}(y)=\alpha(t)^{-1}$.
In the present case (where $F_{\alpha}/F_{\pm\alpha}$ is ramified), since $\alpha(t)\in\Ker(\Nr_{F_{\alpha}/F_{\pm\alpha}})$, we have $\alpha(t)\equiv\pm1\pmod{\mfp_{\ol{F}}}$.
Furthermore, as $\alpha(t)^2=\alpha(\eta^{\prime2})\cdot \alpha(\eta^{-2})$, $\alpha(t)$ is a root of unity of order prime to $p$.
Thus we necessarily have $\alpha(t)=\pm1$. 
Then we can check that $\kappa_{\alpha_{\res}}(y\cdot\tau_{\alpha}(y))$ equals $+1$ or $-1$ according to $\alpha(t)=+1$ or $\alpha(t)=-1$.
Hence we get
$
f_{(\G_{\eta'},\T^{\nat})}(\alpha_{\res})
=
f_{(\G_{\eta},\T^{\nat})}(\alpha_{\res})
\cdot
\alpha(t).
$
\end{proof}

\section{Some computation of transfer factors}\label{sec:transfer-factor}

In this section, we establish some formulas on transfer factors which will be needed in our proof of the twisted endoscopic character relation.

Let $(\gamma,\delta)\in\mcD$ and we fix $D=(\B^{\flat},\T^{\flat},\B^{\dia},\T^{\dia})\in\bfD(\gamma,\delta)$, which is unique up to equivalence by Lemma \ref{lem:unique-diag}.
Note that, in particular, we have $\gamma\in T^{\flat}$ and $\delta\in\t{T}^{\dia}$.
We also fix a set $a=\{a_{\alpha_{\res}}\}_{\alpha_{\res}}$ of $a$-data and a set $\chi=\{\chi_{\alpha_{\res}}\}_{\alpha_{\res}}$ of minimally ramified $\chi$-data for $\Phi_{\res}(\G,\T^{\dia})$ (in the sense of Definition \ref{defn:min-ram}).

Let us suppose that $\delta\in\t{T}^{\dia}$ is elliptic strongly regular semisimple with a normal $r$-approximation $\delta=\delta_{0}\delta^{+}_{<r}\delta_{\geq r}$ (recall Definition \ref{defn:twisted-approx}).
Then, by using the maps $\t{\xi}_{D}$ and $\xi_{D}$, we can associate a decomposition $\gamma=\gamma_{0}\gamma^{+}_{<r}\gamma_{\geq r}$ to $\gamma$ by transferring the decomposition $\delta=\delta_{0}\delta^{+}_{<r}\delta_{\geq r}$.

\begin{lem}\label{lem:normal-approx-descent}
The decomposition $\gamma=\gamma_{0}\gamma^{+}_{<r}\gamma_{\geq r}$ gives a normal $r$-approximation.
\end{lem}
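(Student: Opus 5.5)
The plan is to make the transferred decomposition explicit, then check the defining conditions of a normal $r$-approximation (Definitions \ref{defn:good-element}--\ref{defn:twisted-approx}) on the $\H$-side. Put $\gamma_{0}\colonequals\t\xi_{D}(\delta_{0})$ and $\gamma_{i}\colonequals\xi_{D}(\delta_{i})$ for $0<i<r$. Then set $\gamma_{\geq r}\colonequals\xi_{D}(\delta_{\geq r})$. Since $\t\xi_{D}(t\eta)=\xi_{D}(t)\epsilon$ (proof of Lemma \ref{lem:key}) and $\xi_{D}$ is a homomorphism, we get $\gamma=\t\xi_{D}(\delta)=\gamma_{0}\gamma^{+}_{<r}\gamma_{\geq r}$. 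All factors lie in the torus $T^{\flat}$, hence commute, and they are $F$-rational by Lemma \ref{lem:key}.

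\textbf{Topological Jordan decomposition.} The map $\xi_{D}$ factors through the coinvariant quotient $\T^{\dia}\twoheadrightarrow\T^{\dia}_{\theta_{\dia}}\cong\T^{\flat}$, so it is a homomorphism of algebraic tori. It therefore preserves finite prime-to-$p$ order modulo the split central parts, and it preserves topological unipotence. One needs that $\xi_{D}(A_{\t\G})$ lands in the maximal split central torus of $\H$; this holds because $\bfZ_{\G}^{\theta}$ maps to $\bfZ_{\H}$. To show that $\gamma_{0}$ is absolutely $p$-semisimple, write $\delta_{0}^{m}\in A_{\t\G}$ with $\delta_{0}^{m}$ interpreted in $G^{\dagger}$. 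Taking $m$ even, $\delta_{0}^{m}\in T^{\dia}$, and its image under $\xi_{D}$ equals $\gamma_{0}^{m}$ up to the identity $\t\xi_{D}(\eta)^{2}=\xi_{D}(\eta^{2})$ (for $\eta$ of the form $\nu\theta$ one has $\eta^{2}=\nu\theta(\nu)$, and $\xi$ kills $(1-\theta)$). Hence $\gamma_{0}=\gamma_{0}\cdot 1$ with $\gamma^{+}_{<r}\gamma_{\geq r}$ topologically unipotent is a topological Jordan decomposition, and $\gamma^{+}_{<r}\gamma_{\geq r}\in H_{\gamma_{0},0+}$ since it lies in $T^{\flat}_{0+}$.

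\textbf{Good sequence in $\H_{\gamma_{0}}$.} For each $i$, the element $\delta_{i}$ is good of depth $i$ in $\G_{\delta_{0}}$ with respect to $\T^{\nat}$. We must show that $\gamma_{i}\in T^{\flat}_{i}\smallsetminus T^{\flat}_{i+}$ (or $\gamma_{i}=1$), and that for every $\beta\in\Phi(\H_{\gamma_{0}},\T^{\flat})$ either $\beta(\gamma_{i})=1$ or $\val_{F}(\beta(\gamma_{i})-1)=i$. By Section \ref{subsec:rootsys}, each such root corresponds to $N(\alpha)$ with $\alpha_{\res}\in\Phi(\G_{\delta_{0}},\T^{\nat})$, and $N(\alpha)(\xi_{D}^{-1}\text{-lift})$ is computed as $\alpha_{\res}(\delta_{i})^{l_{\alpha}}$ when evaluated on $\T^{\nat}$; here the roots in $\Theta\alpha$ are orthogonal and no type $2$/$3$ roots occur. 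Since $p\nmid l_{\alpha}$ ($l_{\alpha}\le 2$ and $p$ is odd), $\val(x^{l}-1)=\val(x-1)$ for $x\in 1+\mfp_{\ol F}$, so the depth conditions transfer. The Moy--Prasad depth of $\gamma_{i}$ is likewise preserved, because $\T^{\nat}\to\T^{\flat}$ is an isogeny of degree prime to $p$. For the normality conditions, the point $\x$ of $\T^{\nat}$ gives the point of $\T^{\flat}$, and $\gamma_{\geq r}\in T^{\flat}_{r}$ lies in the centralizer $C^{(r)}_{\H_{\gamma_{0}}}$ because everything sits in the torus $T^{\flat}$.

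\textbf{Main obstacle.} The delicate step is the root-value comparison: relating $\beta(\xi_{D}(t))$ to values of roots of $\G_{\delta_{0}}$ on $t\in T^{\nat}$. This must use the precise identification $\Phi(\H_{y},\T^{\flat})\subset\Phi(\G_{\eta},\T^{\nat})$ and the behavior of $\xi_{D}$ restricted to $\T^{\nat}$, which is an isogeny onto $\T^{\flat}$ with kernel of $2$-power order. To handle it, I would first verify the identity $\beta\circ\xi_{D}|_{\T^{\nat}}=\alpha_{\res}^{\,l_{\alpha}}$, by analogy with Lemma \ref{lem:appearance-of-l-alpha}, and then use that $p$ is odd.
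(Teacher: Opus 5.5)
Your proposal is correct and follows the same route as the paper. The paper's proof is the one-line remark that the roots of $\H$ on $\T^{\flat}$ (in particular those of $\H_{\gamma_{0}}$) are identified with restricted roots of $\G$ on $\T^{\dia}$ (in particular those of $\G_{\delta_{0}}$ on $\T^{\nat}$, Section~\ref{subsec:rootsys}), together with $p\neq 2$. Your identity $\beta\circ\xi_{D}|_{\T^{\nat}}=\alpha_{\res}^{\,l_{\alpha}}$ with $l_{\alpha}\in\{1,2\}$ prime to $p$, along with your checks on the depth-zero part and on the isogeny $\T^{\nat}\to\T^{\flat}$ of degree prime to $p$, is exactly that remark spelled out.
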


\begin{proof}
This follows from that $\Phi(\H,\T^{\flat})$ is identified with a subset of $\Phi_{\res}(\G,\T^{\dia})$ (Section \ref{subsec:rootsys}) and that $p\neq2$.
\end{proof}

We put $\nu_{0}\colonequals \t\xi_{\dia}(\delta_{0})\cdot \theta^{-1}$ and $\nu_{+}\colonequals \xi_{\dia}(\delta_{+})$.
We also put $\nu^{+}_{<r}\colonequals \xi_{\dia}(\delta^{+}_{<r})$ and $\nu_{\geq r}\colonequals \xi_{\dia}(\delta_{\geq r})$.
Thus we have $\nu\theta=(\nu_{0}\theta)\cdot\nu^{+}_{<r}\cdot\nu_{\geq r}$.

\subsection{First factor $\Delta_{\I}$}

\begin{lem}\label{lem:tran-I}
For any $(\b\gamma,\b\delta)\in\mcD$ satisfying $\b\gamma\in T^{\flat}$ and $\b\delta\in\t{T}^{\dia}$, we have
\[
\Delta_{\I}[a,\chi](\gamma,\delta)
=
\Delta_{\I}[a,\chi](\b\gamma,\b\delta).
\]
\end{lem}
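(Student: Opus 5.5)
The plan is to observe that $\Delta_{\I}$ depends only on the pair of tori together with the twisted endoscopic datum and the $a$-data, and not on the particular points $\gamma,\delta$ inside them. I will recall the definition from \cite[Section 4.2]{KS99} (with the $\Delta_{\I}^{\new}$ modification of \cite{KS12}, which agrees with $\Delta_{\I}^{\KS}$ here because there is no restricted root of type $2$ or $3$). In this definition $\Delta_{\I}(\gamma,\delta)=\langle\lambda(\T^{\dia}_{\sc}),s_{\T}\rangle$, where the ingredients are as follows. First, $\lambda(\T^{\dia}_{\sc})\in H^{1}(F,\T^{\dia}_{\sc})$, or rather its image in $H^{1}(F,(\T^{\dia}_{\sc})_{\theta})$, is the class built from the $F$-splitting $\spl_{\G}$, the $a$-data $a$, and the element $g^{\dia}$ (equivalently $\xi_{\dia}$) transporting $(\B^{\dia},\T^{\dia})$ to $(\B,\T)$. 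Second, $s_{\T}$ is the image of the endoscopic element $s$ in the dual torus, transported by the admissible embedding attached to the diagram. Neither ingredient involves the elements $\gamma$ or $\delta$ themselves.

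First I will check what data actually enter. The cocycle $\lambda$ is $\sigma\mapsto \prod x_{\alpha}$-type products (the Langlands--Shelstad cocycle $m(\omega_{T}(\sigma))$ times $\prod_{\alpha>0,\sigma^{-1}\alpha<0}\alpha^{\vee}(a_{\alpha})$), computed from the Galois action on $\Phi_{\res}(\G,\T^{\dia})$ transported via $\xi_{\dia}$, together with $a$. The pairing element $s_{\T}$ is determined by $\hat\xi$, $s$, and the admissible isomorphism $\xi_{D}$ between $\T^{\flat}$ and $(\T^{\dia})_{\theta}$. Neither of these uses $\gamma$ or $\delta$ beyond the fact that the tori $\T^{\flat}$ and $\T^{\dia}$, and the maps $\xi_{\dia}$ and $\xi_{D}$, are the ones attached to the diagram.

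Second, I will show that the same diagram $D$ serves for $(\b\gamma,\b\delta)$. Because $\b\gamma\in T^{\flat}$ and $\b\delta\in\t{T}^{\dia}$, the Borel pairs $(\B^{\flat},\T^{\flat})$ and $(\B^{\dia},\T^{\dia})$ are $[\b\gamma]$- and $[\b\delta]$-stable, and $\xi_{D}$ is still $F$-rational. The one condition that could fail is $\t\xi_{D}(\b\delta)=\b\gamma$. I would address it with Lemma \ref{lem:unique-diag}: $\bbD(\b\gamma,\b\delta)$ is a singleton and $\xi$, $\t\xi$ are canonical, so any diagram $\b D$ for $(\b\gamma,\b\delta)$ has $\T^{\flat}=\H_{\b\gamma}$ and $\T^{\dia}=\bfZ_{\G}(\G_{\b\delta})$, which are the same tori as for $D$. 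After an $\Omega_{\H}$-move we may take the same Borel subgroups, and then $\xi_{\b D}=\xi_{D}$. If $\t\xi_{D}(\b\delta)$ differs from $\b\gamma$, I will adjust by $\T^{\dia}$ and $\T^{\flat}$ conjugation (as in the proof of Lemma \ref{lem:unique-diag}), which does not change $\xi_{D}$.

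With the torus data shown to coincide, $\lambda$ and $s_{\T}$ agree for the two pairs, and the factors are equal. I expect the main obstacle to be this second step. There I must verify carefully that the specific $\Delta_{\I}$ construction in \cite{KS99} uses only the triple $(\T^{\dia},\xi_{\dia},\xi_{D})$ up to the equivalences in Definition \ref{defn:diag-eq}, and that those equivalences do not alter $\lambda(\T^{\dia}_{\sc})$ or $s_{\T}$. The Weyl conjugation from $\Omega_{\H}$ has to be checked against the standard invariance of $\Delta_{\I}$ under changing the Borel subgroup. This is the well-known fact that $\Delta_{\I}$ is independent of the $\Gamma$-stable choices, \cite[Section 4.2]{KS99}.
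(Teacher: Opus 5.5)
Your first step is the right idea, and it is the whole of the paper's proof. By the definition in \cite[Section 4.2]{KS99}, $\Delta_{\I}$ depends only on the $F$-rational (twisted) maximal tori containing the two elements, so the equality is immediate.

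The gap is in your second step, where you try to make $D$ serve for $(\b\gamma,\b\delta)$. That is false in general, and your repair cannot work. Conjugating by elements of $\T^{\dia}$ or $\T^{\flat}$ fixes both Borel pairs, so it changes neither $D$ nor $\t\xi_{D}(\b\delta)$.

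A discrepancy $\t\xi_{D}(\b\delta)\neq\b\gamma$ really occurs. Suppose $\Omega_{\H}(\T^{\flat})^{\Gamma}$ contains some $w\neq1$, and take $\b\delta=\delta$, $\b\gamma=w(\gamma)$. Then $(\b\gamma,\b\delta)\in\mcD$, and $({}^{w}\B^{\flat},\T^{\flat},\B^{\dia},\T^{\dia})$ is a diagram for this pair with $\xi=w\circ\xi_{D}$. So by Lemma \ref{lem:unique-diag}, every diagram for $(\b\gamma,\b\delta)$ has $\xi\neq\xi_{D}$.

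Your sentence ``after an $\Omega_{\H}$-move we may take the same Borel subgroups'' fails because the move in Definition \ref{defn:diag-eq}(ii) shifts both Borel subgroups at once. You can align the $\H$-side one or the $\G$-side one, but not both. Your appeal to Lemma \ref{lem:unique-diag} also gives canonical maps for each pair separately, and these agree only if $D\in\bfD(\b\gamma,\b\delta)$. When $\Omega_{\H}\subsetneq\Omega_{\G}^{\theta}$, $\b\gamma$ may even be a norm of $\b\delta$ that is not stably conjugate to $\t\xi_{D}(\b\delta)$. Then the $\G$-side Borel subgroup must change as well.

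So invariance under the equivalences of Definition \ref{defn:diag-eq} is beside the point, since they do not change $\xi_{D}$ anyway. What you need is invariance of $\Delta_{\I}$ under changing the Borel subgroups that contain the fixed tori.

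Part of this comes for free once you correct your list of ingredients: $\xi_{D}$ does not enter $\Delta_{\I}$ at all. The class $\lambda$ is built from $\spl_{\G}$, the $a$-data and the $\G$-side torus. The element $s$ is transported through the identification coming from the $\G$-side Borel pair, as in the construction of $s_{\bfS_{0}}$ in Section \ref{subsec:Delta-III}. Hence in the stably conjugate case, the diagram $({}^{w}\B^{\flat},\T^{\flat},\B^{\dia},\T^{\dia})$ has the same $\G$-side data as $D$, and the two values of $\Delta_{\I}$ visibly coincide.

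In general you need $\Delta_{\I}$ to be independent of the choice of $\theta$-stable Borel subgroup containing the torus. That is exactly what the paper takes from \cite[Section 4.2]{KS99} when it says $\Delta_{\I}$ depends only on the tori, and with that fact your second step becomes unnecessary.
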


\begin{proof}
By definition (see \cite[Section 4.2]{KS99}), the first factor $\Delta_{\I}(\bar\gamma,\bar\delta)$ depends only on the $F$-rational (twisted) maximal tori of $\H$ and $\t\G$ containing $\bar\gamma$ and $\bar\delta$, respectively.
Thus we get the assertion.
\end{proof}

\subsection{Second factor $\Delta_{\II}$}
For any element $\delta'\in\t{T}^{\dia}$, we put
\[
\Delta_{\II}^{\t\G}[a,\chi](\delta')
\colonequals \prod_{\begin{subarray}{c}\alpha_{\res}\in \dot{\Phi}_{\res}(\G,\T^{\dia})\\ N(\alpha)(\nu')\neq1\end{subarray}}\chi_{\alpha_{\res}}\biggl(\frac{N(\alpha)(\nu')-1}{a_{\alpha_{\res}}}\biggr),
\]
where $\nu'\in\T$ is the element such that $\t{\xi}_{\dia}(\delta')=\nu'\theta$.

We put 
\[
\chi(\delta_{0})\colonequals \prod_{\alpha_{\res}\in\dot{\Phi}(\G_{\delta_{0}},\T^{\nat})}
\chi_{\alpha_{\res}}(l_{\alpha}).
\]\symdef{chi-delta-zero}{$\chi(\delta_0)$}
The following is the twisted version of \cite[Lemma 4.6.7]{Kal19}.

\begin{lem}\label{lem:tran-II-descent}
We have
\[
\Delta_{\II}^{\t\G}[a,\chi](\delta)
=\Delta_{\II}^{\t\G}[a,\chi](\delta_{0})\cdot\Delta_{\II}^{\G_{\delta_{0}}}[a,\chi](\delta_{+})\cdot\chi(\delta_{0}).
\]
\end{lem}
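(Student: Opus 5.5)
We would follow Kaletha's proof of \cite[Lemma 4.6.7]{Kal19}, working root by root. Write $\nu\theta=\t\xi_{\dia}(\delta)$, so that $\nu\theta=(\nu_{0}\theta)\cdot\nu_{+}$ with $\nu_{+}=\xi_{\dia}(\delta_{+})\in T$ commuting with $\nu_{0}\theta$, i.e.\ $\theta(\nu_{+})=\nu_{+}$ up to the relevant identification. For each $\alpha\in\Phi(\G,\T)$ we then get
\[
N(\alpha)(\nu)=N(\alpha)(\nu_{0})\cdot\prod_{i=0}^{l_{\alpha}-1}\theta^{i}(\alpha)(\nu_{+})=N(\alpha)(\nu_{0})\cdot\alpha(\nu_{+})^{l_{\alpha}},
\]
using that $\nu_{+}$ is $\theta$-fixed. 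The product defining $\Delta_{\II}^{\t\G}[a,\chi](\delta)$ is over $\dot{\Phi}_{\res}(\G,\T^{\dia})$. We split it according to whether $\alpha_{\res}$ lies in $\Phi(\G_{\delta_{0}},\T^{\nat})$, i.e.\ whether $N(\alpha)(\nu_{0})=1$.

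\emph{Roots with $N(\alpha)(\nu_{0})\neq1$.} Here $N(\alpha)(\nu_{0})$ is a root of unity of prime-to-$p$ order different from $1$, and $\alpha(\nu_{+})^{l_{\alpha}}\in 1+\mfp_{\ol F}$ because $\delta_{+}$ is topologically unipotent. Hence $N(\alpha)(\nu)-1$ and $N(\alpha)(\nu_{0})-1$ differ by a factor in $1+\mfp_{\ol F}$, more precisely in $1+\mfp_{F_{\alpha_{\res}}}$. Since $\chi$ is minimally ramified, hence tamely ramified, $\chi_{\alpha_{\res}}$ is trivial on $1+\mfp_{F_{\alpha_{\res}}}$. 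The two factors therefore agree. Moreover $N(\alpha)(\nu)\neq1$ for these roots, so the index sets match, and these roots reproduce $\Delta_{\II}^{\t\G}[a,\chi](\delta_{0})$, whose product runs exactly over the roots with $N(\alpha)(\nu_{0})\neq1$.

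\emph{Roots with $N(\alpha)(\nu_{0})=1$.} Here $\alpha_{\res}\in\Phi(\G_{\delta_{0}},\T^{\nat})$, and the factor becomes
\[
\chi_{\alpha_{\res}}\bigl((\alpha(\nu_{+})^{l_{\alpha}}-1)/a_{\alpha_{\res}}\bigr).
\]
By the description of $\Phi(\G_{\nu_{0}\theta},\T^{\theta,\circ})$ in Section \ref{subsec:twisted-tori}, the restricted root $\alpha_{\res}$, as a character of $\T^{\nat}$, evaluates on $\nu_{+}$ to $\alpha(\nu_{+})$. So the untwisted factor in $\Delta_{\II}^{\G_{\delta_{0}}}[a,\chi](\delta_{+})$ is $\chi_{\alpha_{\res}}((\alpha_{\res}(\delta_{+})-1)/a_{\alpha_{\res}})$, with the same $a$- and $\chi$-data restricted to $\Phi(\G_{\delta_{0}},\T^{\nat})$. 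Write $x=\alpha(\nu_{+})-1\in\mfp_{\ol F}$. Then
\[
x^{l}-1\text{ form: }\quad (1+x)^{l_{\alpha}}-1=l_{\alpha}x\,(1+u),\qquad u\in\mfp,
\]
and $l_{\alpha}\in\{1,2\}$ is a unit since $p\neq2$. By tameness of $\chi_{\alpha_{\res}}$ the factor becomes $\chi_{\alpha_{\res}}(l_{\alpha})\cdot\chi_{\alpha_{\res}}(x/a_{\alpha_{\res}})$. The vanishing conditions also match: $\alpha(\nu_{+})^{l_{\alpha}}=1$ if and only if $\alpha(\nu_{+})=1$, since $\nu_{+}$ is topologically unipotent and $l_{\alpha}$ is prime to $p$. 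However, the $\chi_{\alpha_{\res}}(l_{\alpha})$ factors appear only for roots with $\alpha(\nu_{+})\neq1$, while $\chi(\delta_{0})$ is a product over all of $\dot{\Phi}(\G_{\delta_{0}},\T^{\nat})$.

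\emph{Main obstacle.} To close this gap we must show $\chi_{\alpha_{\res}}(l_{\alpha})=1$ whenever $\alpha(\nu_{+})=1$. Our plan is to use that $\delta$ is strongly regular: then $\delta_{+}$ is regular in $\G_{\delta_{0}}$, so $\alpha_{\res}(\delta_{+})\neq1$ for every $\alpha_{\res}\in\Phi(\G_{\delta_{0}},\T^{\nat})$, and the two index sets coincide. If this regularity argument failed, the fallback would be the identity $\chi_{\alpha}\chi_{-\alpha}=1$: for asymmetric pairs the contributions of $\pm\alpha_{\res}$ to $\chi(\delta_{0})$ cancel, and for symmetric roots $\chi_{\alpha_{\res}}(l_{\alpha})=\kappa_{\alpha_{\res}}(l_{\alpha})$ with $l_{\alpha}\in F^{\times}$ a unit. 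The remaining bookkeeping is to check that the orbit set $\dot{\Phi}$ on each side is the same under the identification of Section \ref{subsec:twisted-tori}, and that the unit differences live in the fields $F_{\alpha_{\res}}$ on which $\chi_{\alpha_{\res}}$ is defined.
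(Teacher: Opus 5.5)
Your proposal is correct and follows essentially the same route as the paper: you split the roots according to whether $N(\alpha)(\nu_{0})=1$, use tameness of $\chi_{\alpha_{\res}}$ in the first case and the factorization $(1+x)^{l_{\alpha}}-1\in l_{\alpha}x(1+\mfp)$ in the second, and then use strong regularity of $\delta$ to see that no $\alpha_{\res}\in\Phi(\G_{\delta_{0}},\T^{\nat})$ is trivial on $\nu_{+}$, so the two index sets coincide. Your fallback is unnecessary, and it would not close the gap in general: for symmetric ramified $\alpha_{\res}$ with $l_{\alpha}=2$, the value $\kappa_{\alpha_{\res}}(2)$ can be $-1$, which is exactly why the factor $\chi(\delta_{0})$ survives in the statement.
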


\begin{proof}
By definition, we have
\[
\Delta_{\II}^{\t\G}[a,\chi](\delta_{0})
\colonequals \prod_{\begin{subarray}{c}\alpha_{\res}\in \dot{\Phi}_{\res}(\G,\T^{\dia}) \\ N(\alpha)(\nu_{0})\neq1\end{subarray}}\chi_{\alpha_{\res}}\biggl(\frac{N(\alpha)(\nu_{0})-1}{a_{\alpha_{\res}}}\biggr),
\]
and
\[
\Delta_{\II}^{\G_{\delta_{0}}}[a,\chi](\delta_{+})
=\prod_{\begin{subarray}{c}\alpha_{\res}\in \dot{\Phi}(\G_{\delta_{0}},\T^{\nat}) \\ \alpha_{\res}(\nu_{+})\neq1\end{subarray}}\chi_{\alpha_{\res}}\biggl(\frac{\alpha_{\res}(\nu_{+})-1}{a_{\alpha_{\res}}}\biggr).
\]

Let $\alpha_{\res}\in \Phi_{\res}(\G,\T^{\dia})$.
Note that we have
\[
N(\alpha)(\nu_{0})^{\frac{2}{l_{\alpha}}}
=\biggl(\prod_{i=0}^{l_{\alpha}-1}\theta^{i}(\alpha)(\nu_{0})\biggr)^{\frac{2}{l_{\alpha}}}
=\prod_{i=0}^{1}\theta^{i}(\alpha)(\nu_{0})
=\alpha\bigl((\nu_{0}\theta)^{2}\bigr).
\]
As $\nu_{0}\theta$ is topologically semisimple and $p\neq2$, we see that $N(\alpha)(\nu_{0})$ is a root of unity of prime-to-$p$-order in $\ol{F}$.
Since we have
\[
N(\alpha)(\nu)
=N(\alpha)(\nu_{0})\cdot N(\alpha)(\nu_{+})
=N(\alpha)(\nu_{0})\cdot \alpha(\nu_{+})^{l_{\alpha}},
\]
and $\nu_{+}$ is pro-unipotent, $N(\alpha)(\nu)\neq1$ holds if and only if exactly one of the following holds:
\begin{itemize}
\item
$N(\alpha)(\nu_{0})\neq1$, or
\item
$N(\alpha)(\nu_{0})=1$ and $\alpha(\nu_{+})^{l_{\alpha}}\neq1$ (the latter condition is furthermore equivalent to $\alpha(\nu_{+})\neq1$ as $l_{\alpha}$ is prime to $p$).
\end{itemize}

When $N(\alpha)(\nu_{0})\neq1$, by noting that $\chi_{\alpha_{\res}}$ is tamely ramified, we get
\[
\chi_{\alpha_{\res}}\biggl(\frac{N(\alpha)(\nu)-1}{a_{\alpha_{\res}}}\biggr)
=
\chi_{\alpha_{\res}}\biggl(\frac{N(\alpha)(\nu_{0})-1}{a_{\alpha_{\res}}}\biggr).
\]
When $N(\alpha)(\nu_{0})=1$ and $\alpha(\nu_{+})\neq1$, we have
\[
\chi_{\alpha_{\res}}\biggl(\frac{N(\alpha)(\nu)-1}{a_{\alpha_{\res}}}\biggr)
=
\chi_{\alpha_{\res}}\biggl(\frac{N(\alpha)(\nu_{+})-1}{a_{\alpha_{\res}}}\biggr).
\]
As we have $N(\alpha)(\nu_{+})=\alpha(\nu_{+})^{l_{\alpha}}$ and $l_{\alpha}$ is prime to $p$, we have
\begin{align*}
N(\alpha)(\nu_{+})-1
=\bigl(\alpha(\nu_{+})-1\bigr)\bigl(\alpha(\nu_{+})^{l_{\alpha}-1}+\cdots+1\bigr)
\in \bigl(\alpha(\nu_{+})-1\bigr)\cdot l_{\alpha}\cdot (1+\mfp_{F_{\alpha}}).
\end{align*}
Hence again the tamely-ramifiedness of $\chi_{\alpha_{\res}}$ implies that
\[
\chi_{\alpha_{\res}}\biggl(\frac{N(\alpha)(\nu_{+})-1}{a_{\alpha_{\res}}}\biggr)
=
\chi_{\alpha_{\res}}\biggl(\frac{\alpha(\nu_{+})-1}{a_{\alpha_{\res}}}\biggr)\cdot\chi_{\alpha_{\res}}(l_{\alpha}).
\]
Recall that $\Phi(\G_{\delta_{0}},\T^{\nat})$ is identified with the subset of $\Phi_{\res}(\G,\T^{\dia})$ consisting of restricted roots $\alpha_{\res}$ satisfying $N(\alpha)(\nu_{0})=1$ (see Section \ref{subsec:twisted-tori}).
Thus, by noting that $\delta$ is strongly regular semisimple, hence no root $\alpha_{\res}\in \Phi(\G_{\delta_{0}},\T^{\nat})$ satisfies $\alpha_{\res}(\nu_{+})=1$, we get the assertion.
\end{proof}

The following will be a crucially important ingredient in our proof of the twisted endoscopic character relation.

\begin{lem}\label{lem:chi-constant}
The constant $\chi(\delta_{0})$ does not depend on $\delta_{0}\in\t{T}^{\dia}$.
\end{lem}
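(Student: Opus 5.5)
The plan is to show that almost every factor of $\chi(\delta_{0})$ equals $1$ for reasons that have nothing to do with $\delta_{0}$. The factors that remain form a set of ramified restricted roots, and Lemmas \ref{lem:ram-res-from-asym}, \ref{lem:ram-res-from-ur} and \ref{lem:ram-res-from-ram} show that this set does not move when $\delta_{0}$ varies.

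First, I will note that the data entering $\chi(\delta_{0})$ are fixed once $\T^{\dia}$ is fixed. The $\chi$-data $\chi=\{\chi_{\alpha_{\res}}\}$ are a fixed set of minimally ramified $\chi$-data for $\Phi_{\res}(\G,\T^{\dia})$. The Galois action on $\Phi(\G_{\delta_{0}},\T^{\nat})\subset\Phi_{\res}(\G,\T^{\dia})$ is the restriction of the action on $\Phi_{\res}(\G,\T^{\dia})$. So each factor $\chi_{\alpha_{\res}}(l_{\alpha})$ depends only on $\alpha_{\res}$, and $\delta_{0}$ enters only through the index set $\dot{\Phi}(\G_{\delta_{0}},\T^{\nat})$.

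Next, I evaluate the individual factors. Since $\theta$ is an involution, $l_{\alpha}\in\{1,2\}$.
\begin{itemize}
\item If $l_{\alpha}=1$, then $\chi_{\alpha_{\res}}(1)=1$.
\item If $\alpha_{\res}$ is asymmetric, then $\chi_{\alpha_{\res}}$ is trivial by minimal ramification, so the factor is $1$.
\item If $\alpha_{\res}$ is symmetric and $l_{\alpha}=2$, then $2\in F^{\times}\subset F_{\pm\alpha_{\res}}^{\times}$. Hence $\chi_{\alpha_{\res}}(2)=\kappa_{\alpha_{\res}}(2)$.
\end{itemize}
Because $p\neq2$, the element $2$ is a unit. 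When $\alpha_{\res}$ is unramified, $\kappa_{\alpha_{\res}}$ is unramified and so $\kappa_{\alpha_{\res}}(2)=1$. Therefore
\[
\chi(\delta_{0})=\prod_{\begin{subarray}{c}\alpha_{\res}\in\dot{\Phi}(\G_{\delta_{0}},\T^{\nat})_{\ram}\\ l_{\alpha}=2\end{subarray}}\kappa_{\alpha_{\res}}(2),
\]
where each factor is a sign depending only on $\alpha_{\res}$.

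It remains to show that the set of $\alpha_{\res}\in\Phi_{\res}(\G,\T^{\dia})_{\ram}$ with $l_{\alpha}=2$ lying in $\Phi(\G_{\delta_{0}},\T^{\nat})$ is the same for any two topologically semisimple $\delta_{0},\delta_{0}'\in\t{T}^{\dia}$. I apply the lemmas of Section \ref{subsec:ram-rest-roots} with $\eta=\delta_{0}$ and $\eta'=\delta_{0}'$, splitting according to the type of the underlying root $\alpha$.
\begin{itemize}
\item If $\alpha$ is asymmetric, Lemma \ref{lem:ram-res-from-asym} shows that $\alpha_{\res}$ always lies in $\Phi(\G_{\delta_{0}},\T^{\nat})$.
\item If $\alpha$ is unramified, Lemma \ref{lem:ram-res-from-ur} gives that membership for $\delta_{0}$ and for $\delta_{0}'$ are equivalent.
\item If $\alpha$ is ramified with $l_{\alpha}=2$, Lemma \ref{lem:ram-res-from-ram} gives the same equivalence.
\end{itemize}
These three cases exhaust the relevant restricted roots, so the index set, and hence $\chi(\delta_{0})$, is independent of $\delta_{0}$.

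The only delicate point I expect is the reduction of $\chi_{\alpha_{\res}}(l_{\alpha})$ to $\kappa_{\alpha_{\res}}(2)$ and its triviality in the unramified case. After that, the statement is just a bookkeeping assembly of the three lemmas, which were evidently proved for exactly this purpose.
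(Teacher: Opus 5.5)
Your proof is correct and is essentially the paper's own argument, made slightly more explicit. The paper likewise notes that minimal ramification forces $\chi_{\alpha_{\res}}(l_{\alpha})=1$ unless $\alpha_{\res}$ is ramified and $l_{\alpha}\neq1$ (your identification of the surviving factors as $\kappa_{\alpha_{\res}}(2)$ is a harmless refinement), and then invokes Lemmas \ref{lem:ram-res-from-asym}, \ref{lem:ram-res-from-ur}, \ref{lem:ram-res-from-ram} to see that this index set does not depend on $\delta_{0}$.

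There is one caveat, which you inherit from the paper. The proof of Lemma \ref{lem:ram-res-from-asym} uses $\alpha(t)\tau_{\alpha}(\alpha(t))^{-1}\equiv1\pmod{\mfp_{\ol{F}}}$, and this fails when $\alpha(t)$ has odd valuation in the ramified quadratic extension $F_{\alpha}/F_{\pm\alpha_{\res}}$. So the argument really needs that all $\alpha(t)$ be units, which holds for instance when $\T^{\dia}$ is elliptic in $\G$. That is the case for the tori $\bfS_{k}$ to which the lemma is actually applied.
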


\begin{proof}
Recall that $\chi(\delta_{0})$ is defined to be the product of $\chi_{\alpha_{\res}}(l_{\alpha})$ over the set $\alpha_{\res}\in\dot{\Phi}(\G_{\delta_{0}},\T^{\nat})$.
Since $\chi$ is minimally ramified, $\chi_{\alpha_{\res}}(l_{\alpha})$ can be nontrivial only when $\alpha_{\res}$ is ramified and $l_{\alpha}\neq1$.
However, the set of such restricted roots does not depend on $\delta_{0}$ by Lemmas \ref{lem:ram-res-from-asym}, \ref{lem:ram-res-from-ur}, and \ref{lem:ram-res-from-ram}.
\end{proof}

\begin{lem}\label{lem:tran-II}
For any sufficiently large positive integer $m\in\Z_{>0}$, we have
\[
\Delta_{\II}[a,\chi](\gamma_{<r}\cdot\gamma^{p^{2m}}_{\geq r}, \delta_{<r}\cdot\delta^{p^{2m}}_{\geq r})
=
\Delta_{\II}[a,\chi](\gamma_{<r}\cdot\gamma_{\geq r}, \delta_{<r}\cdot\delta_{\geq r}).
\]
\end{lem}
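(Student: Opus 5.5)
The plan is to reduce the statement to Lemma \ref{lem:tran-II-descent}, which factors the second transfer factor into a depth-zero part, a descended part and a constant. I will also use the fact that only the $\G$-side factor $\Delta_{\II}^{\t\G}[a,\chi]$ enters $\Delta_{\II}(\gamma,\delta)$. In the Kottwitz--Shelstad definition, $\Delta_{\II}(\gamma,\delta)$ is a product over restricted roots of $\chi$-values attached to $\delta$ through the torus $\T^{\dia}$ and the diagram $D$. I will check that replacing $\delta_{\geq r}$ by $\delta_{\geq r}^{p^{2m}}$, and correspondingly $\gamma_{\geq r}$ by $\gamma_{\geq r}^{p^{2m}}$, keeps us in the same situation. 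Both new elements still lie in $\t{T}^{\dia}$ and $T^{\flat}$, and they still form a pair in $\mcD$ with the same diagram $D$, since $\t\xi_{D}$ is compatible with multiplication by $T^{\dia}$ (Lemma \ref{lem:key}). For $m$ large they remain strongly regular, because $\delta_{\geq r}^{p^{2m}}\to1$ and $\delta_{<r}\cdot\delta_{\geq r}^{p^{2m}}$ has the same centralizer pattern as long as no root value of the deep part becomes $1$. Since $\delta_{\geq r}$ is topologically unipotent and $p$-powers of a nontrivial root value $1+x$ stay $\neq1$, this holds for every $m$.

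Next I apply Lemma \ref{lem:tran-II-descent} to both $\delta$ and $\delta'\colonequals\delta_{<r}\delta_{\geq r}^{p^{2m}}$. They share the topologically semisimple part $\delta_{0}$, since the topological Jordan decomposition of $\delta'$ has semisimple part $\delta_{0}$ and unipotent part $\delta^{+}_{<r}\delta_{\geq r}^{p^{2m}}$. Hence the factors $\Delta_{\II}^{\t\G}[a,\chi](\delta_{0})$ and $\chi(\delta_{0})$ coincide, and it remains to compare $\Delta_{\II}^{\G_{\delta_{0}}}[a,\chi](\delta_{+})$ with $\Delta_{\II}^{\G_{\delta_{0}}}[a,\chi](\delta'_{+})$. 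This is an untwisted second factor for the torus $\T^{\nat}$ in $\G_{\delta_{0}}$.

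For the untwisted comparison I will argue rootwise, as in \cite[Lemma 4.6.7]{Kal19}. Let $\alpha_{\res}\in\Phi(\G_{\delta_{0}},\T^{\nat})$ and write $\alpha_{\res}(\nu_{+})=\alpha_{\res}(\nu^{+}_{<r})\cdot\alpha_{\res}(\nu_{\geq r})$.
\begin{itemize}
\item If $\alpha_{\res}(\nu^{+}_{<r})\neq1$, it has some depth $d<r$, while $\alpha_{\res}(\nu_{\geq r})-1$ and its $p^{2m}$-th power minus $1$ have valuation $\geq r$. Then $\alpha_{\res}(\nu_{+})-1$ and $\alpha_{\res}(\nu'_{+})-1$ differ by a factor in $1+\mfp$, and tame ramification of $\chi_{\alpha_{\res}}$ gives equality.
\item If $\alpha_{\res}(\nu^{+}_{<r})=1$, the relevant value is $\alpha_{\res}(\nu_{\geq r})^{p^{2m}}-1$. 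This equals $(\alpha_{\res}(\nu_{\geq r})-1)\cdot p^{2m}\cdot u$ with $u\in1+\mfp$, using the expansion $(1+x)^{N}-1=Nx(1+\ldots)$. That expansion is valid once $\val(x)$ is large relative to $\val(p)$, which I expect the assumption $p\geq(2+e_{F})n$ ensures, perhaps after enlarging $m$. The ratio is then $\chi_{\alpha_{\res}}(p^{2m}u)=\chi_{\alpha_{\res}}(p)^{2m}$.
\end{itemize}

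The main obstacle is showing that the total contribution $\prod\chi_{\alpha_{\res}}(p)^{2m}$ over the second kind of roots is $1$. Restricted to $F_{\pm\alpha}^{\times}$, $\chi_{\alpha_{\res}}$ is quadratic on symmetric roots, so $\chi_{\alpha_{\res}}(p^{2m})=\kappa_{\alpha}(p)^{2m}=1$ there. For asymmetric roots, $\alpha$ and $-\alpha$ lie in different $\Gamma$-orbits and $\chi_{-\alpha}=\chi_{\alpha}^{-1}$, so the factors pair off. The even exponent $2m$ is presumably chosen exactly for this. For the $\gamma$-side I expect nothing to check, since $\Delta_{\II}$ only involves $\delta$ through $D$.
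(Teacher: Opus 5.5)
The gap is your claim that only $\Delta_{\II}^{\t\G}[a,\chi]$ enters $\Delta_{\II}(\gamma,\delta)$, so that nothing needs checking on the $\gamma$-side. This misstates the Kottwitz--Shelstad definition. The product in \cite[Section 4.3]{KS99} runs only over restricted roots that do \emph{not} come from $\H$. That is why the paper writes $\Delta_{\II}[a,\chi](\gamma,\delta)=\Delta_{\II}^{\t\G}[a,\chi](\delta)/\Delta_{\II}^{\H}[a,\chi](\gamma)$, with the data on $\Phi(\H,\T^{\flat})$ induced from $\Phi_{\res}(\G,\T^{\dia})$. Consequently your argument, even if every step holds, only proves $\Delta_{\II}^{\t\G}[a,\chi](\delta')=\Delta_{\II}^{\t\G}[a,\chi](\delta)$, which is half of the lemma.

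You also need $\Delta_{\II}^{\H}[a,\chi](\gamma_{<r}\gamma_{\geq r}^{p^{2m}})=\Delta_{\II}^{\H}[a,\chi](\gamma)$. This is the untwisted statement for $\H$, using the transported normal $r$-approximation of Lemma \ref{lem:normal-approx-descent}; it is exactly the second input the paper takes from \cite[Lemma 6.3.3]{Kal19}. Your two-case root argument proves it verbatim once you replace $(\G_{\delta_{0}},\T^{\nat},\delta^{+}_{<r},\delta_{\geq r})$ by $(\H,\T^{\flat},\gamma_{<r},\gamma_{\geq r})$. Alternatively, note that your comparison is genuinely root by root: with $\chi$ minimally ramified, $\chi_{\alpha_{\res}}=\mathbbm{1}$ for asymmetric $\alpha_{\res}$ and $\chi_{\alpha_{\res}}(p)^{2m}=\kappa_{\alpha_{\res}}(p)^{2m}=1$ for symmetric ones. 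It therefore applies directly to the $\Gamma$-stable set of restricted roots outside $\Phi(\H,\T^{\flat})$.

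Apart from this, your $\t\G$-side argument is the paper's. You apply Lemma \ref{lem:tran-II-descent} to $\delta$ and $\delta'$, cancel $\Delta_{\II}^{\t\G}[a,\chi](\delta_{0})$ and $\chi(\delta_{0})$, and compare the untwisted factors. The only difference is that you carry out the final step by hand rather than citing \cite[Lemma 6.3.3]{Kal19}.

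One smaller correction concerns your second case: ``perhaps after enlarging $m$'' cannot help, because you are comparing with the original element ($m=0$). What you actually need is $\val_{F}(d\alpha(X))>e_{F}/(p-1)$ for $X=\log(\nu_{\geq r})$, and this does follow from $p\geq(2+e_{F})n$:
\begin{itemize}
\item eigenvalues of the topologically nilpotent $F$-rational $\log(\delta_{\geq r})$ in an $n$-dimensional faithful representation have valuation at least $1/n>e_{F}/(p-1)$;
\item root values are differences of such eigenvalues;
\item hence $\alpha(\exp X)^{N}-1=\exp(N\,d\alpha(X))-1\in N\,d\alpha(X)(1+\mfp)$.
\end{itemize}
The same bound is what excludes $1+x$ being a nontrivial $p$-power root of unity in your regularity remark. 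With it, your argument actually gives the equality for every $m\geq0$.
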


\begin{proof}

By Lemma \ref{lem:tran-II-descent}, we have
\begin{align*}
\Delta_{\II}^{\t\G}[a,\chi](\delta_{<r}\cdot\delta_{\geq r})
&=
\Delta_{\II}^{\t\G}[a,\chi](\delta_{0})\cdot\Delta_{\II}^{\G_{\delta_{0}}}[a,\chi](\delta^{+}_{<r}\cdot\delta_{\geq r})\cdot\chi(\delta_{0}),\\
\Delta_{\II}^{\t\G}[a,\chi](\delta_{<r}\cdot\delta_{\geq r}^{p^{2m}})
&=
\Delta_{\II}^{\t\G}[a,\chi](\delta_{0})\cdot\Delta_{\II}^{\G_{\delta_{0}}}[a,\chi](\delta^{+}_{<r}\cdot\delta_{\geq r}^{p^{2m}})\cdot\chi(\delta_{0}).
\end{align*}
According to the proof of \cite[Lemma 6.3.3]{Kal19}, we have
\[
\Delta_{\II}^{\G_{\delta_{0}}}[a,\chi](\delta^{+}_{<r}\cdot\delta_{\geq r})
=
\Delta_{\II}^{\G_{\delta_{0}}}[a,\chi](\delta^{+}_{<r}\cdot\delta_{\geq r}^{p^{2m}})
\]
for any sufficiently large positive integer $m\in\Z_{>0}$, hence get $\Delta_{\II}^{\t\G}[a,\chi](\delta_{<r}\cdot\delta_{\geq r})=\Delta_{\II}^{\t\G}[a,\chi](\delta_{<r}\cdot\delta_{\geq r}^{p^{2m}})$.
Similarly, we have $\Delta_{\II}^{\H}[a,\chi](\gamma_{<r}\cdot\gamma_{\geq r})=
\Delta_{\II}^{\H}[a,\chi](\gamma_{<r}\cdot\gamma_{\geq r}^{p^{2m}})$.
Since the second factor $\Delta_{\II}[a,\chi](\gamma,\delta)$ is defined to be the ratio of $\Delta_{\II}^{\t\G}[a,\chi](\delta)$ to $\Delta_{\II}^{\H}[a,\chi](\gamma)$ (see \cite[Section 4.3]{KS99}), we get the assertion.
\end{proof}

\subsection{Third factor $\Delta_{\III}$}\label{subsec:Delta-III}

Since we assume that $\G$ is quasi-split and also fix a $\theta$-stable $F$-splitting of $\G$, we have the absolute third factor $\Delta_{\III}[a,\chi](\gamma,\delta)$ (\cite[Section 5.3]{KS99}), which satisfies
\[
\Delta_{\III}[a,\chi](\gamma,\delta; \bar{\gamma},\bar{\delta})
=
\Delta_{\III}[a,\chi](\gamma,\delta)/\Delta_{\III}[a,\chi](\bar{\gamma},\bar{\delta})
\]
for any $(\bar{\gamma},\bar{\delta})\in\mcD$.
We review the construction of $\Delta_{\III}[a,\chi](\gamma,\delta; \bar{\gamma},\bar{\delta})$ following \cite[Section 4.4]{KS99}.
We fix a diagram $\bar{D}=(\bar{\B}^{\flat},\bar{\T}^{\flat},\bar{\B}^{\dia},\bar{\T}^{\dia})\in\bfD(\bar{\gamma},\bar{\delta})$.

The relative third factor is given by using the following Take--Nakayama pairing for hyper-cohomology of $F$-rational tori $\bfU_{0}$ and $\bfS_{0}$ (see \cite[Appendix A]{KS99}):
\begin{align}\label{eq:TN}
\langle-,-\rangle_{\TN}\colon H^{1}(F,\bfU_{0}\xrightarrow{1-\theta}\bfS_{0})\times H^{1}(W_{F},\h\bfS_{0}\xrightarrow{1-\h\theta}\h\bfU_{0})\rightarrow\C^{\times}.
\end{align}
Let us recall the definitions of the tori $\bfU_{0}$ and $\bfS_{0}$.

We begin with the following lemma, which is a reformulation of \cite[Lemma 3.3.B]{KS99}:
\begin{lem}\label{lem:T-0}
There exists a $\theta$-stable $F$-rational maximal torus $\T_{0}$ and a $\theta$-stable Borel subgroup $\bfB_{0}$ containing $\T_{0}$ such that the isomorphism $\T^{\dia}\rightarrow\T_{0}$ determined by the Borel pairs $(\B^{\dia},\T^{\dia})$ and $(\bfB_{0},\T_{0})$ is $F$-rational.
\end{lem}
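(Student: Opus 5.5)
The plan is to produce $\T_0$ as the image of the pinned torus $\T$ under a twisted-conjugation adapted to $\t\T^{\dia}$, by a Galois-cohomological argument (Steinberg/Kottwitz) applied inside the quasi-split group $\G^{\theta,\circ}$. Recall from the definition of a diagram that $(\B^{\dia},\T^{\dia})$ is $[\delta]$-stable and $\xi_{\dia}=[g^{\dia}]$ carries $(\B^{\dia},\T^{\dia})$ to $(\B,\T)$, with $[g^{\dia}](\delta)=\nu\theta\in\t\T$. For $\sigma\in\Gamma$, the $F$-rationality of $\T^{\dia}$ and of $\delta$ implies that $w_\sigma\colonequals [g^{\dia}]\circ\sigma([g^{\dia}])^{-1}|_{\T}$ lies in $\Omega_{\G}(\T)$. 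It also commutes with $\theta$: both $\B^{\dia}$ and $\sigma(\B^{\dia})$ are $[\delta]$-stable, and $\theta_{\T^{\dia}}$ is $F$-rational. Hence $\sigma\mapsto w_\sigma$ is a $1$-cocycle of $\Gamma$ valued in $\Omega_{\G}(\T)^{\theta}$.

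The key step uses the identification $\Omega_{\G}(\T)^{\theta}\cong\Omega_{\G^{\theta,\circ}}(\T^{\theta,\circ})$ from \cite[Section 1.1]{KS99}, which relies on the absence of restricted roots of type $2$ or $3$. Since $\G^{\theta,\circ}$ is quasi-split (it contains the $\theta$-stable $F$-splitting), \cite[Lemma 2.1]{Kot82} applied to $(\G^{\theta,\circ},\T^{\theta,\circ})$ gives $m\in\G^{\theta,\circ}$ with $[\sigma(m)^{-1}m]=w_\sigma$ in the Weyl group for all $\sigma$. Here I will need to check that the cocycle actually arises from an $F$-rational torus of $\G^{\theta,\circ}$, i.e.\ that it lands in the image relevant to Kottwitz's lemma; using $\T^{\dia,\theta,\circ}=\T^{\nat}$ and the fact that $\Omega_{\G}(\T)^{\theta}$ acts faithfully on $\T^{\theta,\circ}$ should suffice. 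Then set $\T_0\colonequals {}^{m}\T$ and $\B_0\colonequals {}^{m}\B$. Both are $\theta$-stable because $m$ is $\theta$-fixed and $(\B,\T)$ is $\theta$-stable. Moreover $\T_0$ is $F$-rational because $\sigma(m)^{-1}m$ normalizes $\T$.

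Finally, the isomorphism $\T^{\dia}\to\T_0$ determined by the two Borel pairs is $[m]\circ[g^{\dia}]$. Applying $\sigma$ and using $[\sigma(m)]|_{\T}=[m]\circ w_\sigma^{-1}$ and $\sigma([g^{\dia}])=w_\sigma^{-1}\circ[g^{\dia}]$, the two Weyl factors cancel, so the isomorphism is $F$-rational. I expect the main obstacle to be the precise matching of the cocycle in $\Omega_{\G}(\T)^{\theta}$ with Weyl elements of $\G^{\theta,\circ}$, together with the sign and order conventions in the cancellation. This is the place where \cite[Lemma 3.3.B]{KS99} is really invoked, and I would fall back on quoting it directly if the hand argument gets delicate.
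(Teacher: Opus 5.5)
Your argument is correct in substance and is essentially the standard proof behind \cite[Lemma 3.3.B]{KS99}, which the paper simply cites: pass the Weyl cocycle to $\Omega_{\G}(\T)^{\theta}\cong\Omega_{\G^{\theta,\circ}}(\T^{\theta,\circ})$ and apply \cite[Lemma 2.1]{Kot82} in the quasi-split group $\G^{\theta,\circ}$. The extra compatibility check you were worried about is not needed, since that lemma realizes every Weyl-group cocycle for $\T^{\theta,\circ}\subset\G^{\theta,\circ}$.

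The one repair is a convention slip. Your cocycle $w_{\sigma}=[g^{\dia}\sigma(g^{\dia})^{-1}]$ satisfies $w_{\sigma\tau}=w_{\sigma}\,\sigma(w_{\tau})$, so you must take $m$ with $[m^{-1}\sigma(m)]=w_{\sigma}$. With $[\sigma(m)^{-1}m]=w_{\sigma}$ as you wrote, the two Weyl factors compose to $w_{\sigma}^{-2}$ instead of cancelling. With the corrected choice you get $\sigma([m]\circ[g^{\dia}])=[m]\circ w_{\sigma}\circ w_{\sigma}^{-1}\circ[g^{\dia}]=[m]\circ[g^{\dia}]$, as required.
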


In the following, we fix $(\bfB_{0},\T_{0})$ and $(\b\bfB_{0},\b\T_{0})$ as in this lemma for $(\bfB^{\dia},\T^{\dia})$ and $(\b{\bfB}^{\dia},\b{\T}^{\dia})$.
We then get canonical isomorphisms $\T_{0}\cong\T$ and $\bar{\T}_0\cong\T$ (induced by the Borel pairs), which induce $\h\T_{0}\cong\h\T$ and $\h{\b{\T}}_{0}\cong\h\T$ (not necessarily $\Gamma$-equivariant).
We identify $\h{\T}_{0}$ and $\h{\b{\T}}_{0}$ with $\h\T$ via these isomorphisms but keep using the symbols $\h{\T}_{0}$ and $\h{\b{\T}}_{0}$ in order to emphasize that their Galois actions are not the one coming from the $\Gamma$-action on $\h\T\subset\h\G$.
We take $g_{1}\in\G_{\sc}$ such that $[g_{1}](\bfB^{\dia},\T^{\dia})=(\bfB_{0},\T_{0})$, i.e., $[g_{1}]\colon \T^{\dia}\rightarrow\T_{0}$ is the $F$-rational isomorphism as in Lemma \ref{lem:T-0}.
Similarly, also for $(\bar{\bfB}^{\dia},\bar{\T}^{\dia})$, we take $\bar{g}_{1}\in\G_{\sc}$ realizing the $F$-rational isomorphism $[\bar{g}_{1}]\colon \bar{\T}^{\dia}\rightarrow\bar{\T}_{0}$.

In the following, the subscript ``$\sc$'' denotes the preimage in the simply-connected cover $\G_{\sc}$ of the derived group of $\G$ and the subscript ``$\ad$'' denotes the image in the adjoint group $\G_{\ad}$ of $\G$.
We use a similar notation also for $\h\G$.
We define $F$-rational tori $\bfS_{0}$ and $\bfU_{0}$ by
\[
\bfS_{0}:=(\T_{0}\times\b\T_{0})/\Delta_{-}\bfZ_{\G},
\]
\[
\bfU_{0}:=(\T_{0,\sc}\times\b\T_{0,\sc})/\Delta_{-}\bfZ_{\G_{\sc}}.
\]
Here, $\Delta_{-}\bfZ_{\G_{(\sc)}}\colonequals \{(z,z^{-1})\mid z\in\bfZ_{\G_{(\sc)}}\}$.
Thus the dual tori are given by
\[
\h\bfS_{0}\cong \h\T_{0}\times\h{\b{\T}}_{0,\sc}
\quad\text{and}\quad
\h\bfU_{0}\cong \h\T_{0,\ad}\times\h{\b{\T}}_{0,\sc}.
\]

We consider the homomorphism $1-\theta\colon\T_{0}\rightarrow\T_{0}\colon t\mapsto t/\theta(t)$ and also its lift to $\T_{0,\sc}$.
We define homomorphisms of $\bfS_{0}$ and $\bfU_{0}$, for which we again write $1-\theta$, to be the one induced by $(t,\b{t})\mapsto ((1-\theta)(t),(1-\theta)(\b{t}))$.
Then we get homomorphisms $1-\h\theta$ on $\h\bfS_{0}$ and $\h\bfU_{0}$.
Accordingly, the hyper-cohomology and the Tate--Nakayama pairing as in \eqref{eq:TN} makes sense.
The relative third factor is defined by
\[
\Delta_{\III}[a,\chi](\gamma,\delta; \bar{\gamma},\bar{\delta})
\colonequals 
\langle \inv(\gamma,\delta; \bar{\gamma},\bar{\delta}), \bfA\rangle_{\TN}^{-1}.
\]
(Note that the right-hand side is inverted according to \cite{KS12}.)
Thus let us next explain the constructions of $\inv(\gamma,\delta; \bar{\gamma},\bar{\delta})$ and $\bfA$.

We first consider $\inv(\gamma,\delta; \bar{\gamma},\bar{\delta})$.
By putting $v_{\sigma}\colonequals g_{1}\cdot\sigma(g_{1})^{-1}$ and $\b{v}_{\sigma}\colonequals \b{g}_{1}\cdot\sigma(\b{g}_{1})^{-1}$, we get a $1$-cocycle $V\colon \Gamma\rightarrow \bfS_{0}$ which maps $\sigma$ to the image of $(v_{\sigma}^{-1},\b{v}_{\sigma})\in\T_{0}\times\b\T_{0}$ in $\bfS_{0}$.
On the other hand, we put $\delta_{0}\rtimes\theta\colonequals [g_{1}](\delta)$ and $\b\delta_{0}\rtimes\theta\colonequals [g_{1}](\b\delta)$ (thus $\delta_{0},\b\delta_{0}\in\T_{0}$).
We define an element $D\in\bfS_{0}$ to be the image of $(\delta_{0},\b{\delta}_{0}^{-1})\in\T_{0}\times\b\T_{0}$ in $\bfS_{0}$.
Then $(V,D)$ forms a $1$-hyper-cocycle.
We let $\inv(\gamma,\delta; \bar{\gamma},\bar{\delta})$ be the hyper-cohomology class of $(V,D)$.

We next consider $\bfA$.
We introduce two kinds of $L$-embeddings $\Lj_{\chi}^{1}$\symdef{L-j-1-chi}{$\Lj_{\chi}^{1}$} and $\Lj_{\chi}^{\H}$\symdef{L-j-H-chi}{$\Lj_{\chi}^{\H}$}.
\begin{enumerate}
\item
Let $\L\G^{1}\colonequals \h\G^{1}\rtimes W_{F}$, where $\h\G^{1}\colonequals \h\G^{\h\theta,\circ}$.
If we put $\h\T^{1}\colonequals \h\T^{\h\theta,\circ}$, then $\h\G^{1}$ is a connected reductive group whose root system $\Phi(\h\G^{1},\h\T^{1})$ is regarded as a subset of $\Phi_{\res}(\h\G,\h\T)$ (see Section \ref{subsec:twisted-tori}).
Since we fixed sets of $a$-data and $\chi$-data for $\Phi_{\res}(\G,\T^{\dia})$, we also have sets of $a$-data and $\chi$-data for $\Phi_{\res}(\h\G,\h\T)$ which is equipped with a $\Gamma$-action derived from that of $\Phi_{\res}(\G,\T^{\dia})$.
Hence, by the Langlands--Shelstad construction \cite[Section 2.6]{LS87}, we obtain an $L$-embedding $\Lj_{\chi}^{1}\colon \h\T^{1}\rtimes W_{F}\hookrightarrow\L\G^{1}$.
Here, we emphasize that the $\Gamma$-action on $\h\T^{1}$ is imported from that on $\h\T_{0}$ through the isomorphism $\h\T_{0}\cong\h\T$.
Thus $\h\T^{1}\rtimes W_{F}$ is nothing but the $L$-group of the $\theta$-coinvariant $\T_{0,\theta}$ of $\T_{0}$.
\item
On the other hand, as $\Phi(\H,\T^{\flat})$ is regarded as a subset of $\Phi_{\res}(\G,\T^{\dia})$ (see Section \ref{subsec:rootsys}), the fixed sets of $a$-data and $\chi$-data also induce those for $\Phi(\H,\T^{\flat})$.
Hence, by the Langlands--Shelstad construction \cite[Section 2.6]{LS87}, we obtain an $L$-embedding $\Lj^{\H}_{\chi}\colon \L\T^{\flat}\hookrightarrow\L\H$.
\end{enumerate}
Now we note that the homomorphism $\xi_{D}\circ[g_{1}]^{-1}\colon \T_{0}\rightarrow\T^{\flat}$ is $F$-rational and induces an isomorphism $\T_{0,\theta}\cong\T^{\flat}$.
Thus we can compare two $L$-embeddings $\Lj_{\chi}^{1}$ and $\Lj_{\chi}^{\H}$ through this identification $\T_{0,\theta}\cong\T^{\flat}$ and $\h\xi\colon\L\H\hookrightarrow\L\G$.
We define a map $a_{\T_{0}}\colon\Gamma\rightarrow\h{\T}^{1}\colon \sigma\mapsto a_{\T_{0},\sigma}$ by
\[
\h\xi\circ\Lj_{\chi}^{\H}(1\rtimes\sigma)
=
a_{\T_{0},\sigma}\cdot\Lj_{\chi}^{1}(1\rtimes\sigma),
\]\symdef{a-T-0}{$a_{\T_{0}}$ ($a_{\T_{0},\sigma}$)}
then $a_{\T_{0}}$ is regarded as a $1$-cocycle $\Gamma\rightarrow \h\T_{0}$ under the identification $\h\T_{0}\cong\h\T$.
We define a $1$-cocycle $a_{\b\T_{0}}\colon\Gamma\rightarrow \h{\b\T}_{0}$ in the same manner and put $A\colonequals (a_{\T_{0}},a_{\b\T_{0}}/a_{\T_{0}})\colon \Gamma\rightarrow\h\bfS_{0}$.
On the other hand, we take an element $s_{\sc}\in\h\T_{\sc}$ having the same image in $\h\T_{\ad}$ as $s\in\h\T$ and write $s_{\T_{0}}$ and $s_{\b\T_{0}}$ for its images in $\h{\T}_{0,\sc}$ and $\h{\b{\T}}_{0,\sc}$, respectively.
We put $s_{\bfS_{0}}\colonequals (s_{\T_{0},\ad}, s_{\b\T_{0}}/s_{\T_{0}})\in\h\bfU_{0}$.
Then $(A^{-1},s_{\bfS_{0}})$ forms a $1$-hyper-cocycle.
We let $\bfA$ be the hyper-cohomology class of $(A^{-1},s_{\bfS_{0}})$.

The following proposition and its proof are inspired by \cite[Lemma 17]{Mez13}.

\begin{prop}\label{prop:Delta-III-same-torus}
Let $(\b\gamma,\b\delta)\in\mcD$ be another pair such that $D=(\B^{\flat},\T^{\flat},\B^{\dia},\T^{\dia})\in\bfD(\b\gamma,\b\delta)$.
Then we have
\[
\Delta_{\III}[a,\chi](\gamma,\delta;\b\gamma,\b\delta)
=
\langle \delta/\b\delta,a_{\T_{\dia}}\rangle_{\TN}.
\]
Here, the pairing on the right-hand side is the Tate--Nakayama pairing for $\T^{\dia}$ and $a_{\T_{\dia}}$ is the $1$-cocycle transported from $a_{\T_{0}}$ via the identification $[g_{1}]\colon\T_{\dia}\rightarrow\T_{0}$.
\end{prop}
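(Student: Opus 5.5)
Since both pairs share the same diagram $D$, I would make the same auxiliary choices on both sides: $\b\T_{0}=\T_{0}$, $\b\bfB_{0}=\bfB_{0}$, and $\b{g}_{1}=g_{1}$. Then $\bfS_{0}=(\T_{0}\times\T_{0})/\Delta_{-}\bfZ_{\G}$ and $\bfU_{0}=(\T_{0,\sc}\times\T_{0,\sc})/\Delta_{-}\bfZ_{\G_{\sc}}$, and the cocycle $V$ is $\sigma\mapsto(v_{\sigma}^{-1},v_{\sigma})$. This cocycle is trivial in $\bfS_{0}$, because $(v^{-1},v)=(v^{-1},1)\cdot(1,v)$ and $(z,z^{-1})$ is killed.

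The plan is to reduce the two-variable hypercohomology pairing to a one-variable pairing on $\T_{0}$ via the diagonal-type maps. Consider the $F$-rational homomorphism $m\colon\bfS_{0}\to\T_{0}$, $(t,\b t)\mapsto t\b t$, which is well defined since $\Delta_{-}\bfZ_{\G}$ lies in the kernel. Define the analogous map on $\bfU_{0}$, and note both are compatible with $1-\theta$. Under $m$, the class $\inv(\gamma,\delta;\b\gamma,\b\delta)=[(V,D)]$ goes to $(m\circ V,\delta_{0}\b\delta_{0}^{-1})=(1,\delta_{0}/\b\delta_{0})$. This is the image of $\delta_{0}/\b\delta_{0}\in T_{0}$ under the natural map $H^{0}(F,\T_{0})\to H^{1}(F,\T_{0,\sc}\to\T_{0})$, or rather its twisted analogue with $1-\theta$. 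Note that $\delta_{0}/\b\delta_{0}$ is $F$-rational, because $[g_{1}]$ is $F$-rational on $\T^{\dia}$ and $\delta/\b\delta\in T^{\dia}$.

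On the dual side, the dual of $m$ is $\h\T_{0}\to\h\bfS_{0}$, $t\mapsto(t,1)$ after the identifications $\h\bfS_{0}\cong\h\T_{0}\times\h{\b\T}_{0,\sc}$. Functoriality of the Tate--Nakayama pairing (adjointness, \cite[Appendix A]{KS99}) then gives
\[
\langle m^{\ast}x,\bfA\rangle_{\TN}=\langle x,m_{\ast}\bfA\rangle_{\TN}.
\]
Here $m_{\ast}\bfA$ is the projection of $(A^{-1},s_{\bfS_{0}})$ to the first factor. Since $a_{\b\T_{0}}=a_{\T_{0}}$ (same torus, same $a$- and $\chi$-data), we have $A=(a_{\T_{0}},1)$, and the second component $s_{\b\T_{0}}/s_{\T_{0}}$ is trivial. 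So the dual class degenerates to $(a_{\T_{0}}^{-1},s_{\T_{0},\ad})$.

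Because the class on the $\G$-side has trivial $1$-cochain part and degree-$0$ part $\delta_{0}/\b\delta_{0}$, the hypercohomology pairing should collapse to the ordinary Tate--Nakayama pairing $H^{0}(F,\T_{0})\times H^{1}(W_{F},\h\T_{0})\to\C^{\times}$, evaluated on $\delta_{0}/\b\delta_{0}$ and $a_{\T_{0}}^{-1}$. Along the way, $s$ should drop out since it only enters the $\h\bfU_{0}$-component. Transporting through $[g_{1}]$ back to $\T^{\dia}$ turns $\delta_{0}/\b\delta_{0}$ into $\delta/\b\delta$ and $a_{\T_{0}}$ into $a_{\T_{\dia}}$. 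The inversion in the definition of $\Delta_{\III}$ and the $A^{-1}$ cancel, giving $\langle\delta/\b\delta,a_{\T_{\dia}}\rangle_{\TN}$.

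The main obstacle is the collapse step. I need to check carefully, with the explicit cocycle formulas of \cite[Appendix A]{KS99}, that a hyper-cocycle of the form $(1,d)$ paired with $(a^{-1},s)$ yields exactly $\langle d,a\rangle^{-1}$ with no residual contribution from $s$ or from the $1-\theta$ maps. I also need to fix the sign conventions so that they match the inversion adopted from \cite{KS12}; I would follow Mezo's \cite[Lemma 17]{Mez13} computation for the explicit formulas.
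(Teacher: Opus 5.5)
Your argument is the paper's argument, except for two misstatements and one step you leave open. With $\b\T_{0}=\T_{0}$ and $\b g_{1}=g_{1}$, the paper identifies $\bfS_{0}\cong\T_{0}\times\T_{0,\ad}$ via $(t,\b t)\mapsto(t\b t,\b t_{\ad})$. Your $m$ is the first projection for this identification. The key point, which you have, is that $(A^{-1},s_{\bfS_{0}})$ is the image of $(a_{\T_{0}}^{-1},s_{\T_{0},\ad})$ under the dual inclusion $i_{1}$.

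The two misstatements are these.
\begin{itemize}
\item $V$ is not trivial. $(v_{\sigma}^{-1},v_{\sigma})$ dies modulo $\Delta_{-}\bfZ$ only if $v_{\sigma}$ is central, which need not hold. In the coordinates above, $V_{\sigma}=(1,v_{\sigma,\ad})$. This is harmless, because you only use $m\circ V=1$.
\item Your adjointness formula runs the wrong way. Hypercohomology is covariant in the complex of tori, so there is no $m^{\ast}$ from the $\T_{0}$-side to the $\bfS_{0}$-side. The correct identity is $\langle m_{\ast}x,y\rangle_{\TN}=\langle x,\h m_{\ast}y\rangle_{\TN}$ with $\h m=i_{1}$. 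Apply it with $x=\inv(\gamma,\delta;\b\gamma,\b\delta)$ and $y=(a_{\T_{0}}^{-1},s_{\T_{0},\ad})$, using $\h m_{\ast}y=\bfA$. That equality is exactly what the vanishing of the second components of $A$ and $s_{\bfS_{0}}$ gives you. ``Projecting $\bfA$ to the first factor'' is not the map dual to $m$; it gives the right answer here only because those second components are trivial.
\end{itemize}

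The collapse step you flag as the main obstacle needs no explicit cocycle formulas. You already observed that $(1,\delta_{0}/\b\delta_{0})$ is the image of $\delta_{0}/\b\delta_{0}\in H^{0}(F,\T_{0})=H^{1}(F,\{1\}\to\T_{0})$.
\begin{itemize}
\item That map is induced by the morphism of complexes $[\{1\}\to\T_{0}]\to[\T_{0,\sc}\xrightarrow{1-\theta}\T_{0}]$, which is the identity on $\T_{0}$.
\item Its dual, $[\h\T_{0}\xrightarrow{1-\h\theta}\h\T_{0,\ad}]\to[\h\T_{0}\to\{1\}]$, sends $(a_{\T_{0}}^{-1},s_{\T_{0},\ad})$ to $(a_{\T_{0}}^{-1},1)$.
\item Functoriality once more gives $\langle(1,\delta_{0}/\b\delta_{0}),(a_{\T_{0}}^{-1},s_{\T_{0},\ad})\rangle_{\TN}=\langle\delta_{0}/\b\delta_{0},a_{\T_{0}}^{-1}\rangle_{\TN}$. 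The pairing for $(\{1\}\to\T_{0},\h\T_{0}\to\{1\})$ is the ordinary Tate--Nakayama pairing for $\T_{0}$ \cite[A.3.13]{KS99}.
\end{itemize}
So $s$ drops out for structural reasons, with no residual contribution from $1-\theta$; this is how the paper concludes. The rest of your proposal is correct: the rationality of $\delta_{0}/\b\delta_{0}=[g_{1}](\delta/\b\delta)$, the transport back to $\T^{\dia}$, and the cancellation of the two inversions.
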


\begin{proof}
We examine the construction of $\Delta_{\III}[a,\chi](\gamma,\delta;\b\gamma,\b\delta)$ explained above by assuming that $\bar{D}=D$.
We note that, when $\bar{\T}^\dia=\T^\dia$, we have the identifications
\[
\bfS_{0}=(\T_{0}\times\b\T_{0})/\Delta_{-}\bfZ_{\G}\cong \T_{0}\times\b\T_{0,\ad}\colon (t,\b{t})\mapsto (t\b{t}, \b{t}_{\ad})
\]
\[
\bfU_{0}=(\T_{0,\sc}\times\b\T_{0,\sc})/\Delta_{-}\bfZ_{\G_{\sc}}\cong \T_{0,\sc}\times\b\T_{0,\ad}\colon (t,\b{t})\mapsto (t\b{t}, \b{t}_{\ad}).
\]
Under these identifications, we see that 
\[
V\colon\sigma\mapsto (\b{v}_{\sigma}/v_{\sigma},\b{v}_{\sigma,\ad})=(1,v_{\sigma,\ad})
\quad
\text{and}
\quad
D=(\delta_{0}/\b{\delta}_{0},\b{\delta}_{0,\ad}^{-1}).
\]
On the other hand, $A$ and $s_{\bfS_{0}}$ are given by
\[
A\colon\sigma\mapsto (a_{\T_{0}},1)
\quad
\text{and}
\quad
s_{\bfS_{0}}=(s_{\T_{0},\ad},1).
\]

We have the following commutative diagrams which are dual to each other:
\[
\xymatrix{
\T_{0,\sc} \ar^-{1-\theta}[rr]&& \T_{0}\\
\bfU_{0}\cong\T_{0,\sc}\times\T_{0,\ad} \ar^-{\pr_{1}}[u]\ar^-{1-\theta}[rr]&& \bfS_{0}\cong\T_{0}\times\T_{0,\ad}\ar^-{\pr_{1}}[u]
}
\]
\[
\xymatrix{
\h\T_{0,\ad} \ar_-{i_{1}}[d]&& \h\T_{0} \ar_-{i_{1}}[d] \ar_-{1-\h\theta}[ll]\\
\h\bfU_{0}\cong\h\T_{0,\ad}\times\h\T_{0,\sc} && \h\bfS_{0}\cong\h\T_{0}\times\h\T_{0,\sc} \ar_-{1-\h\theta}[ll]
}
\]
Here, $\pr_{1}$ and $i_{1}$ denote the first projection and the injection to the first entry, respectively.
We note that $(A^{-1},s_{\bfS_{0}})$ is the push-out of the $1$-hyper-cocycle $(a_{\T_{0}}^{-1},s_{\T_{0},\ad})$ along the map $i_{1}$.
Thus, since $(\pr_{1}(V),\pr_{1}(D))=(1,\delta_{0}/\b\delta_{0})$, the functoriality of the Tate--Nakayama pairing (see \cite[Section 6.3]{Wal08}) implies that
\begin{align*}
\langle \inv(\gamma,\delta; \bar{\gamma},\bar{\delta}), \bfA\rangle_{\TN}
=\langle (V,D), (A^{-1},s_{\bfS_{0}})\rangle_{\TN}
=\langle (1,\delta_{0}/\b\delta_{0}), (a_{\T_{0}}^{-1},s_{\T_{0},\ad})\rangle_{\TN},
\end{align*}
where the last pairing is the one for $(\T_{0,\sc}\xrightarrow{1-\theta},\T_{0}, \h\T_{0}\xrightarrow{1-\h\theta}\h\T_{0,\ad})$.

We next note the following commutative diagrams which are dual to each other:
\[
\xymatrix{
\{1\} \ar[r] \ar[d]& \T_{0}\ar_-{\id}[d]&&\{1\}& \h\T_{0}\ar[l]\\
\T_{0,\sc}\ar^-{1-\theta}[r]&\T_{0}&&\h\T_{0,\ad}\ar[u]&\h\T_{0}\ar_-{\id}[u]\ar_-{1-\h\theta}[l]\\
}
\]
Since $(1,\delta_{0}/\b\delta_{0})$ is obtained by the push-out from ${1}\rightarrow\T_{0}$ and the image of $(a_{\T_{0}}^{-1},s_{\T_{0},\ad})$ in $\h\T_{0}\rightarrow\{1\}$ is $(a_{\T_{0}}^{-1},1)$, again the functoriality of the Tate--Nakayama pairing implies
\begin{align*}
\langle (1,\delta_{0}/\b\delta_{0}), (a_{\T_{0}}^{-1},s_{\T_{0},\ad})\rangle_{\TN}
=\langle \delta_{0}/\b\delta_{0}, a_{\T_{0}}^{-1}\rangle_{\TN},
\end{align*}
where the pairing on the right-hand side is the one for $(\{1\}\rightarrow\T_{0},\h\T_{0}\rightarrow\{1\})$, which is nothing but the usual Tate--Nakayama pairing for $\T_{0}$ (see \cite[A.3.13]{KS99} and also \cite[Section 4.3]{KS12}).
Finally, by noting that $\delta_{0}/\bar{\delta}_{0}=[g_{1}](\delta/\b\delta)$, we get the assertion.
\end{proof}

\begin{lem}\label{lem:tran-III}
For any positive integer $m\in\Z_{>0}$, we have
\[
\Delta_{\III}[a,\chi](\gamma_{<r}\cdot\gamma^{m}_{\geq r}, \delta_{<r}\cdot\delta^{m}_{\geq r})
=
\Delta_{\III}[a,\chi](\gamma,\delta).
\]
\end{lem}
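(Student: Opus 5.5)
The plan is to reduce to Proposition \ref{prop:Delta-III-same-torus} and then show the resulting Tate--Nakayama pairing is trivial.

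Put $\bar\gamma\colonequals \gamma_{<r}\cdot\gamma^{m}_{\geq r}$ and $\bar\delta\colonequals \delta_{<r}\cdot\delta^{m}_{\geq r}$. Since $\delta_{\geq r}\in T^{\dia}$ and $\gamma_{\geq r}\in T^{\flat}$ are the parts of $\delta$ and $\gamma$ transferred through $\t\xi_{D}$ and $\xi_{D}$, we have $\bar\delta\in\t{T}^{\dia}$, $\bar\gamma\in T^{\flat}$, and $\t\xi_{D}(\bar\delta)=\bar\gamma$, because $\t\xi_{D}(t\eta)=\xi_{D}(t)\t\xi_{D}(\eta)$. The same $D$ is therefore a diagram for $(\bar\gamma,\bar\delta)$, provided $(\bar\gamma,\bar\delta)\in\mcD$. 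This needs strong regularity of $\bar\delta$. The roots that are nontrivial on $\delta_{\geq r}$ inside $\G_{\delta_{<r}}$ remain nontrivial on $\delta_{\geq r}^{m}$, because $\delta_{\geq r}$ is topologically unipotent and $(\alpha(\delta_{\geq r})-1)$ has positive valuation, and for $p\nmid m$ raising to the $m$-th power preserves being $\neq1$. I expect this to hold in general, or at least in the sense the lemma is used (for $m=p^{2k}$, using the torsion-freeness of $1+\mfp$). The restricted roots outside $\G_{\delta_{<r}}$ are unaffected, since $N(\alpha)(\nu_{<r})$ already determines nonvanishing modulo deeper terms, as in the proof of Lemma \ref{lem:tran-II-descent}. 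The corresponding statement for $\gamma$ follows from the identification of $\Phi(\H,\T^{\flat})$ inside $\Phi_{\res}(\G,\T^{\dia})$. I anticipate this regularity bookkeeping to be the fiddliest part.

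Proposition \ref{prop:Delta-III-same-torus} then gives
\[
\Delta_{\III}[a,\chi](\gamma,\delta;\bar\gamma,\bar\delta)=\langle \delta/\bar\delta,a_{\T^{\dia}}\rangle_{\TN}=\langle \delta_{\geq r}^{1-m},a_{\T^{\dia}}\rangle_{\TN},
\]
where $\delta/\bar\delta$ denotes the element $t\in T^{\dia}$ with $\delta=t\bar\delta$, namely $t=\delta_{\geq r}^{1-m}$. The pairing is the Langlands pairing of $T^{\dia}$ with the character attached to the cocycle $a_{\T^{\dia}}$. The key claim is that this character is tamely ramified, i.e. trivial on $T^{\dia}_{0+}$. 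Indeed $a_{\T_{0}}$ measures the difference between the two Langlands--Shelstad $L$-embeddings $\h\xi\circ\Lj^{\H}_{\chi}$ and $\Lj^{1}_{\chi}$, built from the same minimally ramified (hence tame) $\chi$-data. Following the argument in the proof of Proposition \ref{prop:vartheta}, both embeddings agree on $I_{F}^{0+}$, since everything is tame, so $a_{\T_{0}}$ is trivial on wild inertia. By depth preservation of the LLC for tame tori (\cite{Yu09-LLC}), the associated character of $T^{\dia}$ has depth zero. Since $\delta_{\geq r}\in T^{\dia}_{r}\subset T^{\dia}_{0+}$, the pairing is $1$, so the two absolute third factors agree.

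The main obstacle is making the depth-zero claim for $a_{\T_{0}}$ rigorous. The $\chi$-data enter $\Lj^{1}_{\chi}$ and $\Lj^{\H}_{\chi}$ on different root systems (restricted roots versus $\Phi(\H,\T^{\flat})$). I would verify agreement on $I_{F}^{0+}$ directly from the explicit Langlands--Shelstad formula, in which only $\chi_{\alpha}$ restricted to wild inertia appears, and these restrictions are trivial by minimal ramification.
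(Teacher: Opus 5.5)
Your proposal is correct and follows the paper's proof: you apply Proposition~\ref{prop:Delta-III-same-torus} (the same diagram $D$ serves both pairs) to reduce the relative third factor to $\langle \delta_{\geq r}^{\pm(m-1)},a_{\T^{\dia}}\rangle_{\TN}$, and then use that the character attached to $a_{\T^{\dia}}$ is tamely ramified because the $\chi$-data are minimally ramified. Your extra checks are details the paper leaves implicit: the strong regularity of $\delta_{<r}\delta_{\geq r}^{m}$, which under the standing assumption $p\geq(2+e_F)n$ follows from $\log(\delta_{\geq r}^{m})=m\log(\delta_{\geq r})$, and the agreement of the two Langlands--Shelstad embeddings on $I_F^{0+}$.
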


\begin{proof}
It suffices to show that the relative factor $\Delta_{\III}[a,\chi](\gamma_{<r}\cdot\gamma^{m}_{\geq r}, \delta_{<r}\cdot\delta^{m}_{\geq r}; \gamma,\delta)$ is trivial.
By Proposition \ref{prop:Delta-III-same-torus}, this relative factor equals $\langle \delta_{\geq r}^{m-1},a_{\T^{\dia}}\rangle_{\TN}$.
Since we assume that $\chi$ is minimally ramified, the character $\langle -,a_{\T^{\dia}}\rangle_{\TN}$ of $T^{\dia}$ is tamely ramified.
Thus we get $\langle \delta_{\geq r}^{m-1},a_{\T^{\dia}}\rangle_{\TN}=1$.
\end{proof}

\subsection{Fourth factor $\Delta_{\IV}$}\label{subsec:tran-IV}
Recall from \cite[Section 4.5]{KS99} that the fourth factor $\Delta_{\IV}(\gamma,\delta)$ is defined by 
\[
\Delta_{\IV}(\gamma,\delta)
\colonequals 
\Delta_{\IV}^{\t\G}(\delta)/\Delta_{\IV}^{\H}(\gamma),
\]
where
\[
\Delta_{\IV}^{\t\G}(\delta)
\colonequals |\det([\delta]-1 \mid \mfg/\mft^{\dia})|_{\ol{F}}^{\frac{1}{2}}
\quad
\text{and}
\quad
\Delta_{\IV}^{\H}(\gamma)
\colonequals |\det([\gamma]-1 \mid \mfh/\mft^{\flat})|_{\ol{F}}^{\frac{1}{2}}.
\]
Recall that, since we are assuming that $\Phi_{\res}(\G,\T^{\dia})$ does not contain a restricted root of type $2$ or $3$, we have
\[
\Delta_{\IV}^{\t\G}(\delta)
=
\prod_{\alpha\in\Phi_{\res}(\G,\T^{\dia})}|N(\alpha)(\nu)-1|^{\frac{1}{2}}_{\ol{F}}
\]
(see \cite[Section 4.5]{KS99}).
By noting this, we extend the definition of $\Delta_{\IV}^{\t\G}$ also for any semisimple element $\delta'\in\t{T}^{\dia}$ by
\[
\Delta_{\IV}^{\t\G}(\delta')
=
\prod_{\begin{subarray}{c}\alpha\in \Phi_{\res}(\G,\T^{\dia}) \\ N(\alpha)(\nu')\neq1\end{subarray}}|N(\alpha)(\nu')-1|^{\frac{1}{2}}_{\ol{F}},
\]
where $\nu'\in\T$ is the element such that $\t{\xi}_{\dia}(\delta')=\nu'\theta$.
We define $\Delta_{\IV}^{\H}(\gamma')$ for any semisimple $\gamma'\in T^{\flat}$ in a similar way.

\begin{lem}\label{lem:tran-IV-descent}
We have
\begin{align*}
\Delta_{\IV}^{\t{\G}}(\delta)
&=
\Delta_{\IV}^{\t\G}(\delta_{<r})\cdot\Delta_{\IV}^{\G_{\delta_{<r}}}(\delta_{\geq r})
=
|D^{\red}_{G_{\eta_{0}}}(\eta_{+})|^{\frac{1}{2}}\cdot|D^{\red}_{G_{\eta}}(\log(\delta_{\geq r}))|^{\frac{1}{2}},\\
\Delta_{\IV}^{\H}(\gamma)
&=\Delta_{\IV}^{\H}(\gamma_{<r})\cdot\Delta_{\IV}^{\H_{\gamma_{<r}}}(\gamma_{\geq r}).
\end{align*}
\end{lem}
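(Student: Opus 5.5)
The plan is to argue root by root, exactly as in the proof of Lemma \ref{lem:tran-II-descent}, using the explicit product formula
\[
\Delta_{\IV}^{\t\G}(\delta)=\prod_{\alpha_{\res}\in\Phi_{\res}(\G,\T^{\dia})}|N(\alpha)(\nu)-1|^{\frac12}_{\ol{F}}.
\]
Write $\nu\theta=(\nu_{<r}\theta)\cdot\nu_{\geq r}$ with $\nu_{<r}\theta\colonequals\t\xi_{\dia}(\delta_{<r})$, so that $N(\alpha)(\nu)=N(\alpha)(\nu_{<r})\cdot\alpha(\nu_{\geq r})^{l_{\alpha}}$. This uses that $\nu_{\geq r}$ commutes with $\theta$-twisting on the level of $\T^{\nat}$, i.e.\ $\delta_{\geq r}\in\T^{\nat}$, so $\theta(\alpha)(\nu_{\geq r})=\alpha(\nu_{\geq r})$.

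We split the restricted roots into two classes.
\begin{enumerate}
\item If $N(\alpha)(\nu_{<r})\neq1$, the normal $r$-approximation property gives $\val_{F}(N(\alpha)(\nu_{<r})-1)<r$. This holds because $\delta_{<r}$ is a product of a topologically semisimple part and good elements of depth $<r$. Meanwhile $\alpha(\nu_{\geq r})^{l_\alpha}-1$ has valuation $\geq r$. Hence $|N(\alpha)(\nu)-1|=|N(\alpha)(\nu_{<r})-1|$.
\item If $N(\alpha)(\nu_{<r})=1$, then $\alpha_{\res}$ is a root of $\G_{\delta_{<r}}$ with respect to $\T^{\nat}$. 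In that case $|N(\alpha)(\nu)-1|=|\alpha(\nu_{\geq r})^{l_\alpha}-1|=|\alpha_{\res}(\nu_{\geq r})-1|$, since $\alpha_{\res}(\nu_{\geq r})=\alpha(\nu_{\geq r})^{l_\alpha}$ on $\T^{\nat}$.
\end{enumerate}
The product over the first class is $\Delta_{\IV}^{\t\G}(\delta_{<r})$ by the extended definition. The product over the second class is the untwisted Weyl discriminant $\Delta_{\IV}^{\G_{\delta_{<r}}}(\delta_{\geq r})$, computed via Section \ref{subsec:twisted-tori} and Lemma \ref{lem:cent-cent}. This gives the first equality; the $\H$-identity follows verbatim, or as a special case with $\theta$ trivial, using Lemma \ref{lem:normal-approx-descent}.

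For the second equality, we refine the first class further via $\eta_0=\delta_0$ and $\eta_+=\delta^+_{<r}$. Roots with $N(\alpha)(\nu_0)\neq1$ give root-of-unity values. These have $|N(\alpha)(\nu_{<r})-1|=1$: as in Lemma \ref{lem:tran-II-descent}, they are prime-to-$p$ roots of unity times pro-unipotent elements, so the differences are units. The remaining roots lie in $\Phi(\G_{\eta_0},\T^{\nat})$. Their contribution $\prod|\alpha_{\res}(\nu^+_{<r})-1|^{1/2}$ is $|D^{\red}_{G_{\eta_0}}(\eta_+)|^{1/2}$, where we drop roots trivial on $\eta_+$, which is the meaning of the reduced discriminant.

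Finally, for $\delta_{\geq r}$ we pass to the Lie algebra. Under our assumption $p\geq(2+e_F)n$, the exponential is the genuine one. Hence $\alpha_{\res}(\exp X)-1$ and $d\alpha_{\res}(X)$ have the same absolute value for topologically nilpotent $X$. This gives $\Delta_{\IV}^{\G_\eta}(\delta_{\geq r})=|D^{\red}_{G_\eta}(\log\delta_{\geq r})|^{1/2}$, with $\G_\eta=\G_{\delta_{<r}}$ and all roots nontrivial on $\delta_{\geq r}$ by strong regularity.

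The main obstacle is the valuation claim in case (1), namely that $N(\alpha)(\nu_{<r})\neq1$ forces depth $<r$. I would deduce it from the good-sequence property (Definition \ref{defn:good-element}) transported through Lemma \ref{lem:cent-cent}. The key input is that $l_\alpha\in\{1,2\}$ is prime to $p$.
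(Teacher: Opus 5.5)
Your proposal follows the paper's proof. You split the restricted roots according to whether $N(\alpha)(\nu_{<r})=1$, and use that $\val_{F}(N(\alpha)(\nu_{<r})-1)<r$ otherwise. The paper simply asserts this; your sketch via the good-sequence property and $p\nmid l_{\alpha}$ is fine. You then refine along $\delta_{0}$, discard the factors with $N(\alpha)(\nu_{0})\neq1$ because they are units, and pass to reduced discriminants and to $\log(\delta_{\geq r})$. The paper quotes \cite[Remark 2.12]{DS18} for that rewriting where you compute it directly; both work.

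One step, however, rests on a false identity. In case (2) you write $\alpha_{\res}(\nu_{\geq r})=\alpha(\nu_{\geq r})^{l_{\alpha}}$ on $\T^{\nat}$. With the conventions of Section~\ref{subsec:twisted-tori}, the root $\alpha_{\res}=p^{\ast}(\alpha)$ of $\G_{\eta}$ is the \emph{restriction} of $\alpha$ to $\T^{\theta,\circ}$. So $\alpha_{\res}(\nu_{\geq r})=\alpha(\nu_{\geq r})$, whereas $\alpha(\nu_{\geq r})^{l_{\alpha}}=N(\alpha)(\nu_{\geq r})$. In additive notation $N(\alpha)|_{\T^{\theta,\circ}}=l_{\alpha}\cdot\alpha_{\res}$, so your identity fails whenever $l_{\alpha}=2$. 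This discrepancy is exactly what produces the constant $\chi(\delta_{0})=\prod\chi_{\alpha_{\res}}(l_{\alpha})$ in Lemma~\ref{lem:tran-II-descent}, so it is not harmless in general.

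For $\Delta_{\IV}$ you only need equality of absolute values, and that holds for a different reason. Since $\alpha(\nu_{\geq r})\in1+\mfp_{\ol{F}}$ and $l_{\alpha}\in\{1,2\}$ is prime to $p$, write $\alpha(\nu_{\geq r})^{l_{\alpha}}-1=(\alpha(\nu_{\geq r})-1)\cdot(1+\alpha(\nu_{\geq r})+\cdots+\alpha(\nu_{\geq r})^{l_{\alpha}-1})$. The second factor is congruent to $l_{\alpha}$ and hence a unit. This is the paper's step ``use that $l_{\alpha}=1,2$ and $p\neq2$''.

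The same correction is needed where you identify the surviving factors $|N(\alpha)(\nu_{<r})-1|=|\alpha(\nu^{+}_{<r})^{l_{\alpha}}-1|$ with $|\alpha_{\res}(\nu^{+}_{<r})-1|$ in the second equality. With this fix your argument coincides with the paper's.
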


\begin{proof}
We consider only $\Delta_{\IV}^{\t{\G}}(\delta)$ since the formula for $\Delta_{\IV}^{\H}(\gamma)$ follows from a simpler argument.
By noting that the valuation of $N(\alpha)(\nu_{<r})-1$ is smaller than $r$ when $N(\alpha)(\nu_{<r})\neq1$, we have
\[
|N(\alpha)(\nu)-1|^{\frac{1}{2}}_{\ol{F}}
=
\begin{cases}
|N(\alpha)(\nu_{<r})-1|^{\frac{1}{2}}_{\ol{F}} & \text{if $N(\alpha)(\nu_{<r})\neq1$,}\\
|N(\alpha)(\nu_{\geq r})-1|^{\frac{1}{2}}_{\ol{F}} & \text{if $N(\alpha)(\nu_{<r})=1$.}
\end{cases}
\]
Since $\{\alpha\in \Phi_{\res}(\G,\T^{\dia}) \mid N(\alpha)(\nu_{<r})=1\}$ is identified with the set $\Phi(\G_{\delta_{<r}},\T^{\nat})$ (Section \ref{subsec:twisted-tori}) and 
\[
|N(\alpha)(\nu_{\geq r})-1|^{\frac{1}{2}}_{\ol{F}}
=|\alpha(\nu_{\geq r})^{l_{\alpha}}-1|^{\frac{1}{2}}_{\ol{F}}
=|\alpha(\nu_{\geq r})-1|^{\frac{1}{2}}_{\ol{F}}
\]
(use that $l_{\alpha}=1,2$ and $p\neq2$), we get $\Delta_{\IV}^{\t\G}(\delta)=\Delta_{\IV}^{\t\G}(\delta_{<r})\cdot\Delta_{\IV}^{\G_{\delta_{<r}}}(\delta_{\geq r})$.

By applying the same argument to $\Delta_{\IV}^{\t\G}(\delta_{<r})$, we also have a decomposition $\Delta_{\IV}^{\t\G}(\delta_{<r})=\Delta_{\IV}^{\t\G}(\delta_{0})\cdot \Delta_{\IV}^{\G_{\delta_{0}}}(\delta_{<r}^{+})$.
However, we have $|N(\alpha)(\nu_{0})-1|_{\ol{F}}=1$ whenever $N(\alpha)(\nu_{0})\neq1$ since $N(\alpha)(\nu_{0})$ is of prime-to-$p$ order.
Hence we get $\Delta_{\IV}^{\t\G}(\delta)=\Delta_{\IV}^{\G_{\delta_{0}}}(\delta_{<r}^{+})\cdot\Delta_{\IV}^{\G_{\delta_{<r}}}(\delta_{\geq r})$.
This can be rewritten as $\Delta_{\IV}^{\t{\G}}(\delta)=|D^{\red}_{G_{\eta_{0}}}(\eta_{+})|^{\frac{1}{2}}\cdot|D^{\red}_{G_{\eta}}(\delta_{\geq r})|^{\frac{1}{2}}$ by \cite[Remark 2.12]{DS18}.
By also noting that $|D^{\red}_{G_{\eta}}(\delta_{\geq r})|=|D^{\red}_{G_{\eta}}(\log(\delta_{\geq r}))|$, we obtain the assertion.
\end{proof}

\begin{lem}\label{lem:tran-IV}
There exists a constant $d\in\Z_{\geq0}$\symdef{d}{$d$} determined by $\delta_{<r}$ such that, for any positive integer $m\in\Z_{>0}$, we have
\[
\Delta_{\IV}(\gamma_{<r}\cdot\gamma^{p^{m}}_{\geq r}, \delta_{<r}\cdot\delta^{p^{m}}_{\geq r})
=
|p|^{md}_{\ol{F}}\cdot\Delta_{\IV}(\gamma_{<r}\cdot\gamma_{\geq r}, \delta_{<r}\cdot\delta_{\geq r}).
\]
\end{lem}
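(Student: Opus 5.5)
The plan is to apply Lemma \ref{lem:tran-IV-descent} to both $\delta'\colonequals\delta_{<r}\cdot\delta_{\geq r}^{p^{m}}$ and $\gamma'\colonequals\gamma_{<r}\cdot\gamma_{\geq r}^{p^{m}}$, and then track how the descended factors scale when the deep part is raised to the $p^{m}$-th power. First we check that $\delta'=\delta_{0}\delta^{+}_{<r}\delta_{\geq r}^{p^{m}}$ is again a normal $r$-approximation with the same $\delta_{<r}$. Indeed, $\delta_{\geq r}^{p^{m}}$ still lies in $\G_{\delta_{<r}}$ and in the depth $\geq r$ part of $T^{\dia}$, and $\delta'$ is still strongly regular, since $\alpha(\nu_{\geq r}^{p^{m}})-1$ is nonzero whenever $\alpha(\nu_{\geq r})-1$ is: the element is pro-unipotent, so there is no $p$-power torsion to kill it. The transferred decomposition of $\gamma'$ is likewise a normal $r$-approximation, by the argument of Lemma \ref{lem:normal-approx-descent}.

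Applying Lemma \ref{lem:tran-IV-descent}, the shallow factors $\Delta_{\IV}^{\t\G}(\delta_{<r})$ and $\Delta_{\IV}^{\H}(\gamma_{<r})$ coincide for the two pairs, so the ratio of the two sides reduces to
\[
\frac{\Delta_{\IV}^{\G_{\delta_{<r}}}(\delta_{\geq r}^{p^{m}})}{\Delta_{\IV}^{\G_{\delta_{<r}}}(\delta_{\geq r})}\cdot
\frac{\Delta_{\IV}^{\H_{\gamma_{<r}}}(\gamma_{\geq r})}{\Delta_{\IV}^{\H_{\gamma_{<r}}}(\gamma_{\geq r}^{p^{m}})}.
\]
For each root $\beta$ of the descended group, evaluated on an element $u$ of depth $\geq r>0$ with $\beta(u)\neq1$, we write $\beta(u)=\exp(x)$ with $x=\log\beta(u)$ in the maximal ideal of $\ol{F}$. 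Then $|\beta(u^{p^{m}})-1|_{\ol{F}}=|p^{m}x|_{\ol{F}}=|p|^{m}_{\ol{F}}\,|\beta(u)-1|_{\ol{F}}$. This uses that our exponential converges on the relevant locus, which holds under the standing assumption $p\geq(2+e_{F})n$, so that the depth of $\beta(u)-1$ is large enough for $\log$ and $\exp$ to be isometric. Equivalently, we can pass through $|D^{\red}(\log(\cdot))|$ as in Lemma \ref{lem:tran-IV-descent}, using that $\log(u^{p^{m}})=p^{m}\log u$, so that the reduced discriminant scales by $|p^{m}|^{\#\text{roots}}$.

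It follows that the $\G_{\delta_{<r}}$-factor is multiplied by $|p|_{\ol{F}}^{m|\Phi(\G_{\delta_{<r}},\T^{\nat})|/2}$ and the $\H_{\gamma_{<r}}$-factor by $|p|_{\ol{F}}^{m|\Phi(\H_{\gamma_{<r}},\T^{\flat})|/2}$. We therefore set
\[
d\colonequals\tfrac12\bigl(|\Phi(\G_{\delta_{<r}},\T^{\nat})|-|\Phi(\H_{\gamma_{<r}},\T^{\flat})|\bigr).
\]
This is an integer because roots come in $\pm$ pairs, and it is nonnegative because $\Phi(\H_{\gamma_{<r}},\T^{\flat})$ embeds into $\Phi(\G_{\delta_{<r}},\T^{\nat})$ by Section \ref{subsec:rootsys}. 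It depends only on $\delta_{<r}$, since $\gamma_{<r}$ is determined by $\delta_{<r}$ via $\t\xi_{D}$. The main point needing care is the strong regularity and normal-approximation check for $\delta'$ together with the isometry of $\log$ on the deep part; the remainder is bookkeeping.
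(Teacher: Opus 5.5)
This is correct and is essentially the paper's argument: you split off the common shallow factors via Lemma \ref{lem:tran-IV-descent} and then scale the descended deep discriminants. The paper quotes \cite[Lemma 3.1]{Hal93} (for $p>e_{F}+1$) for the scaling $|\beta(u^{p^{m}})-1|_{\ol{F}}=|p|^{m}_{\ol{F}}|\beta(u)-1|_{\ol{F}}$, whereas you derive it from the $\log$/$\exp$ isometry and also check that $\delta_{<r}\cdot\delta_{\geq r}^{p^{m}}$ is again a normal $r$-approximation, which the paper uses tacitly; your value $d=\tfrac12\bigl(|\Phi(\G_{\delta_{<r}},\T^{\nat})|-|\Phi(\H_{\gamma_{<r}},\T^{\flat})|\bigr)$ is the one consistent with the square root in $\Delta_{\IV}$ (the paper's displayed exponent $m|\Phi(\G_{\delta_{<r}},\T^{\nat})|$ drops the $\tfrac12$), which is harmless since only the existence of some $d\in\Z_{\geq0}$ is asserted.
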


\begin{proof}
By Lemma \ref{lem:tran-IV-descent}, we have $\Delta_{\IV}^{\t{\G}}(\delta)=\Delta_{\IV}^{\t\G}(\delta_{<r})\cdot\Delta_{\IV}^{\G_{\delta_{<r}}}(\delta_{\geq r})$ and $\Delta_{\IV}^{\t\G}(\delta_{<r}\cdot\delta_{\geq r}^{p^{m}})=\Delta_{\IV}^{\t\G}(\delta_{<r})\cdot\Delta_{\IV}^{\G_{\delta_{<r}}}(\delta_{\geq r}^{p^{m}})$.
On the other hand, by \cite[Lemma 3.1]{Hal93}, we have $\Delta_{\IV}^{\G_{\delta_{<r}}}(\delta_{\geq r}^{p^{m}})=|p|^{m|\Phi(\G_{\delta_{<r}},\T^{\nat})|}_{\ol{F}}\cdot\Delta_{\IV}^{\G_{\delta_{<r}}}(\delta_{\geq r})$ when $p>e_{F}+1$, which is assumed to hold (see the beginning of Section \ref{sec:Waldspurger}).
Similarly, we have $\Delta_{\IV}^{\H}(\gamma_{<r}\cdot\gamma_{\geq r})
=\Delta_{\IV}^{\H}(\gamma_{<r})\cdot\Delta_{\IV}^{\H}(\gamma_{\geq r})$, $\Delta_{\IV}^{\H}(\gamma_{<r}\cdot\gamma_{\geq r}^{p^{m}})
=\Delta_{\IV}^{\H}(\gamma_{<r})\cdot\Delta_{\IV}^{\H}(\gamma_{\geq r}^{p^{m}})$, and $\Delta_{\IV}^{\H_{\gamma_{<r}}}(\gamma_{\geq r}^{p^{m}})=|p|^{m|\Phi(\H_{\gamma_{<r}},\T^{\flat})|}_{\ol{F}}\cdot\Delta_{\IV}^{\H_{\gamma_{<r}}}(\gamma_{\geq r})$ when $p>e_{F}+1$.
By putting all of these into together, we get the assertion.
\end{proof}

\subsection{Tail-scaling lemma on the full transfer factor}

By combining Lemmas \ref{lem:tran-I}, \ref{lem:tran-II}, \ref{lem:tran-III}, and \ref{lem:tran-IV}, we get the following proposition, which is the twisted version of \cite[Lemma 6.3.3]{Kal19}.

\begin{lem}\label{lem:tran-scaling}
For any sufficiently large positive integer $m\in\Z_{>0}$, we have
\[
\mr{\Delta}(\gamma_{<r}\cdot\gamma^{p^{2m}}_{\geq r}, \delta_{<r}\cdot\delta^{p^{2m}}_{\geq r})
=
\mr{\Delta}(\gamma_{<r}\cdot\gamma_{\geq r}, \delta_{<r}\cdot\delta_{\geq r})
\quad\text{and}
\]
\[
\Delta(\gamma_{<r}\cdot\gamma^{p^{2m}}_{\geq r}, \delta_{<r}\cdot\delta^{p^{2m}}_{\geq r})
=
|p|_{\ol{F}}^{2md}\cdot\Delta(\gamma_{<r}\cdot\gamma_{\geq r}, \delta_{<r}\cdot\delta_{\geq r})
\]
with constant $d$ as in Lemma \ref{lem:tran-IV}.
\end{lem}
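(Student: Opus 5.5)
The proof is a bookkeeping exercise combining the four factor-by-factor scaling statements. The absolute transfer factor is, by definition (Section \ref{subsec:tran}), the product of the root-number ratio $\varepsilon(\T^{\theta,\circ})\cdot\varepsilon(\T_{\H})^{-1}$ with $\Delta_{\I}[a,\chi]$, $\Delta_{\II}[a,\chi]$, $\Delta_{\III}[a,\chi]$ and $\Delta_{\IV}$, while $\mr{\Delta}$ omits $\Delta_{\IV}$. I would fix the diagram $D\in\bfD(\gamma,\delta)$ and the $a$-data and minimally ramified $\chi$-data for $\Phi_{\res}(\G,\T^{\dia})$, and then compare each factor at $(\gamma,\delta)$ with its value at the scaled pair $(\gamma_{<r}\gamma_{\geq r}^{p^{2m}},\delta_{<r}\delta_{\geq r}^{p^{2m}})$.

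First I need the scaled pair to be a legitimate point where all the lemmas apply. Since $\delta_{\geq r}\in T^{\dia}$ (indeed in $\T^{\nat}$) commutes with $\delta_{<r}$, the scaled element again lies in $\t T^{\dia}$. It remains strongly regular for $m$ large, because $\alpha(\nu_{\geq r}^{p^{2m}})-1$ has the same vanishing pattern as $\alpha(\nu_{\geq r})-1$ (its valuation only increases, and $p$-power maps are injective on pro-$p$ parts). Its norm is $\gamma_{<r}\gamma_{\geq r}^{p^{2m}}$, obtained via $\t\xi_D$, which is compatible with powers because $\xi_D$ is a homomorphism. So $D$ is also a diagram for the scaled pair, and the same tori $\T^{\dia},\T^{\flat}$ occur.

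Next I would treat the factors one at a time. The root-number ratio depends only on the tori, hence is unchanged. $\Delta_{\I}$ is unchanged by Lemma \ref{lem:tran-I}. $\Delta_{\II}$ is unchanged for $m$ large by Lemma \ref{lem:tran-II}. $\Delta_{\III}$ is unchanged by Lemma \ref{lem:tran-III}, applied with exponent $p^{2m}$. The product of these gives the first identity, for $\mr{\Delta}$. Finally, $\Delta_{\IV}$ picks up the factor $|p|^{2md}_{\ol F}$ by Lemma \ref{lem:tran-IV} (with $m$ replaced by $2m$), and multiplying this in gives the second identity.

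The only point needing care is that Lemmas \ref{lem:tran-I}–\ref{lem:tran-IV} are stated for pairs sharing the diagram $D$, together with the common normalization of the $a$- and $\chi$-data. I expect verifying the regularity and diagram-compatibility of the scaled pair to be the main (mild) obstacle; everything else is immediate multiplication.
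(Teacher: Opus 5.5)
Your proposal is correct and matches the paper's proof, which multiplies together Lemmas \ref{lem:tran-I}, \ref{lem:tran-II}, \ref{lem:tran-III} (applied with exponent $p^{2m}$) and \ref{lem:tran-IV} (with $m$ replaced by $2m$), noting that the root-number ratio does not depend on the point. The paper leaves implicit your check that the scaled pair still lies in $\mcD$ with the same diagram $D$, but that check is what makes the factor-by-factor comparison legitimate.
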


An important consequence of this lemma is the following.

\begin{prop}\label{prop:NST}
With the notation as in Theorem \ref{thm:NST} and Corollary \ref{cor:NST}, for any $D$-norm pair $(Y,X)\in\mfh_{y,0+}\times\mfg_{\eta,0+}$, we have
\[
\Delta\bigl(y\exp(Y),\eta\exp(X)\bigr)
=
\Delta\!^{D}(\bar{Y},X_{\sc})
\quad\text{and}
\]
\[
\mr{\Delta}\bigl(y\exp(Y),\eta\exp(X)\bigr)\cdot\Delta_{\IV}(y,\eta)
=
\mr{\Delta}\!^{D}(\bar{Y},X_{\sc})
\]
\end{prop}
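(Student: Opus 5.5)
The plan is to remove the smallness restriction $Y\in\mfV$ in Corollary \ref{cor:NST} by a scaling argument, using Lemma \ref{lem:tran-scaling}. Fix a $D$-norm pair $(Y,X)\in\mfh_{y,0+}\times\mfg_{\eta,0+}$. Put $\gamma\colonequals y\exp(Y)$ and $\delta\colonequals \eta\exp(X)$; then $(\gamma,\delta)\in\mcD$ by Proposition \ref{prop:norm-decomp}. Since $y$ and $\eta$ are topologically semisimple and $X,Y$ are topologically nilpotent, we may arrange that $\delta=\eta\cdot\exp(X)$ and $\gamma=y\cdot\exp(Y)$ play the roles of $\delta_{<r}\cdot\delta_{\geq r}$ and $\gamma_{<r}\cdot\gamma_{\geq r}$ in Lemma \ref{lem:tran-scaling}. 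The proofs of Lemmas \ref{lem:tran-II}–\ref{lem:tran-IV} only use that the "head" is fixed and that the "tail" lies in the descended torus and is topologically unipotent, so the same scaling holds with this decomposition. For large $m$, $(p^{2m}Y,p^{2m}X)$ is again a $D$-norm pair, since the defining conditions are homogeneous under scaling and strong regularity is preserved for $p^{2m}$ with $p$ large. Moreover $p^{2m}Y\in\mfV$, and $\exp(p^{2m}X)=\exp(X)^{p^{2m}}$ because the genuine exponential is used.

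First I would apply Corollary \ref{cor:NST} to get
\[
\Delta(y\exp(p^{2m}Y),\eta\exp(p^{2m}X))=\Delta\!^{D}(p^{2m}\bar{Y},p^{2m}X_{\sc}).
\]
Next, Lemma \ref{lem:tran-scaling} rewrites the left side as $|p|^{2md}_{\ol{F}}\Delta(\gamma,\delta)$. On the right, I would use the standard homogeneity of Lie algebra transfer factors for $(\bar{\H},\G_{\eta,\sc})$. Here $\mr{\Delta}\!^{D}$ is invariant under scaling by $p^{2m}$: $\Delta_{\II}$ changes by $\chi$-values at squares of the tame characters, which are trivial, and $\Delta_{\I},\Delta_{\III}$ depend only on the tori. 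Meanwhile $\Delta_{\IV}^{D}$ scales by $|p|^{2m(|\Phi(\G_{\eta},\T^{\nat})|-|\Phi(\bar{\H},\cdot)|)/2}$.

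The main obstacle is matching these exponents to $d$ from Lemma \ref{lem:tran-IV}, namely $(|\Phi(\G_{\eta})|-|\Phi(\H_{y})|)/2$. This needs $|\Phi(\bar{\H})|=|\Phi(\H_{y})|$, which follows from the non-standard endoscopic triple via the bijection $\tau$ of roots. Once that is checked, the powers of $|p|$ cancel and the first identity holds. For the second identity, I would divide by the fourth factors. Lemma \ref{lem:tran-IV-descent} gives $\Delta_{\IV}(\gamma,\delta)=\Delta_{\IV}(y,\eta)\cdot\Delta_{\IV}^{\G_\eta}(\exp X)/\Delta_{\IV}^{\H_y}(\exp Y)$, and the last ratio equals $\Delta_{\IV}^{D}(\bar{Y},X_{\sc})$. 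This uses the Lie algebra discriminant comparison ($|\alpha(\exp X)-1|=|d\alpha(X)|$ for $p$ large) together with the same root count identity.
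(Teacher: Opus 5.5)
Your proof follows the paper's route: you rescale by $p^{2m}$ via Lemma~\ref{lem:tran-scaling} until Corollary~\ref{cor:NST} applies, undo the rescaling by homogeneity of $\Delta\!^{D}$, and obtain the second identity from Lemma~\ref{lem:tran-IV-descent}; your check that $|\Phi(\bar{\H})|=|\Phi(\H_{y})|$ via $\tau$ is exactly what makes the powers of $|p|_{\ol{F}}$ cancel, a point the paper leaves implicit. One correction: $\eta=\delta_{<r}=\delta_{0}\delta^{+}_{<r}$ is generally not topologically semisimple, and Lemmas~\ref{lem:tran-II} and~\ref{lem:tran-IV} need more than a topologically unipotent tail, namely the depth separation of a normal $r$-approximation ($\val_{F}(\alpha(\text{head})-1)<r\leq\val_{F}(\alpha(\text{tail})-1)$ for restricted roots nontrivial on the head), so the scaling step is justified because $y\cdot\exp(Y)$ and $\eta\cdot\exp(X)$ are the fixed normal $r$-approximations, as in the paper's application, and not for an arbitrary $D$-norm pair.
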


\begin{proof}
By Lemma \ref{lem:tran-scaling}, we have
\[
\Delta\bigl(y\exp(Y),\eta\exp(X)\bigr)
=|p|^{-2md}_{\ol{F}}\cdot\Delta(y\exp(p^{2m}Y),\eta\exp(p^{2m}X))
\]
for any sufficiently large $m\in\Z_{>0}$.
By taking $m$ to be a sufficiently large integer so that $p^{2m}Y$ belongs to the set $\mfV$ as in Corollary \ref{cor:NST}, we get
\[
\Delta(y\exp(p^{2m}Y),\eta\exp(p^{2m}X))
=\Delta\!^{D}(p^{2m}\bar{Y},p^{2m}X_{\sc})
\]
by Corollary \ref{cor:NST}.
By the homogeneity of the Lie algebra transfer factor, we have $\Delta\!^{D}(p^{2m}\bar{Y},p^{2m}X_{\sc})=|p|^{2md}_{\ol{F}}\cdot \Delta\!^{D}(\bar{Y},X_{\sc})$ (see \cite[Section 2.3]{Wal97} and \cite[Section 10]{Hal93}).
Thus we get the first equality.

By Lemma \ref{lem:tran-IV-descent}, we have $\Delta_{\IV}(\gamma,\delta)=\Delta_{\IV}(y,\eta)\cdot\Delta_{\IV}^{\G_{\eta}}(\delta_{\geq r})\cdot\Delta_{\IV}^{\H_{y}}(\gamma_{\geq r})^{-1}$.
Hence, by noting $\Delta_{\IV}^{\G_{\eta}}(\delta_{\geq r})\cdot\Delta_{\IV}^{\H_{y}}(\gamma_{\geq r})^{-1}=\Delta^{D}_{\IV}(\bar{Y},X_{\sc})$, we get the second equality.
\end{proof}

\section{Twisted endoscopic character relation}\label{sec:TECR}

\subsection{Twisted endoscopic character relation}

We assume that $(\bfS,\hat{\jmath},\chi,\vartheta)$ in a toral supercuspidal $L$-packet datum of $\G$ whose $L$-parameter $\phi$ factors though the $L$-embedding $\h\xi$ for an endoscopic data $(\H,\L\H,s,\hat{\xi})$ (i.e., we are in the situation as in Section \ref{subsec:str-theta-L}).
Here, by replacing $(\bfS,\hat{\jmath},\chi,\vartheta)$ with its isomorphic data if necessary, we may assume that $\chi=\chi_{\vartheta_{\h\j}}$ (see Section \ref{subsec:const-packet}).
As in the manner of Section \ref{subsec:L-packet-descent}, we get a toral supercuspidal $L$-packet datum $(\bfS_{\H},\hat{\jmath}_{\H},\chi_{\H},\vartheta_{\H})$.
Similarly, we may assume that $\chi_{\H}=\chi_{\vartheta_{\h\j_{\H}}}$.
Let $\Pi_{\phi}^{\G}$ (resp.\ $\Pi_{\phi_{\H}}^{\H}$) denote the associated toral supercuspidal $L$-packet of $\G$ (resp.\ $\H$).

The aim of this section is to establish the following in some special cases:

\begin{expect}\label{expect:TECR}
For each $\pi\in\Pi_{\phi}^{\G}$ there exists a constant $\Delta^{\spec}_{\phi,\pi}\in\C$ such that the following identity holds for any elliptic strongly regular semisimple $\delta\in\t{G}$:
\[
\sum_{\pi\in\Pi_{\phi}^{\G}}
\Delta^{\spec}_{\phi,\pi}\Theta_{\t{\pi}}(\delta)
=
\sum_{\gamma\in H/{\st}} \frac{\Delta_{\IV}^{\H}(\gamma)^{2}}{\Delta_{\IV}^{\t\G}(\delta)^{2}} \Delta(\gamma,\delta)
\sum_{\pi_{\H}\in\Pi_{\phi_{\H}}^{\H}}\Theta_{\pi_{\H}}(\gamma),
\]
or equivalently, 
\begin{align}\label{eq:TECR}
\sum_{\pi\in\Pi_{\phi}^{\G}}
\Delta^{\spec}_{\phi,\pi}\Phi_{\t{\pi}}(\delta)
=
\sum_{\gamma\in H/{\st}} \mr{\Delta}(\gamma,\delta)
\sum_{\pi_{\H}\in\Pi_{\phi_{\H}}^{\H}}\Phi_{\pi_{\H}}(\gamma),
\end{align}
where the first sum on the right-hand sides is over the stable conjugacy classes of strongly $\G$-regular semisimple elements of $H$ and we put $\Phi_{\t{\pi}}(\delta)\colonequals \Delta_{\IV}^{\t\G}(\delta)\cdot\Theta_{\t{\pi}}(\delta)$ and $\Phi_{\pi_{\H}}(\gamma)\colonequals \Delta_{\IV}^{\H}(\gamma)\cdot\Theta_{\pi_{\H}}(\gamma)$.
\end{expect}

\subsection{Several preliminary considerations}

\subsubsection{Initial observation on the index sets}\label{subsubsec:initial}

In the following, we fix an elliptic strongly regular semisimple element $\delta\in\t{G}$ and also fix a normal $r$-approximation $\delta=\delta_{0}\delta^{+}_{<r}\delta_{\geq r}$ in the sense of Definition \ref{defn:twisted-approx} (recall that we can always find a normal $r$-approximation by Proposition \ref{prop:twisted-Jordan}).
We let $\eta$ denote $\delta_{<r}\in\t{G}_{\ss}$.
We take a set $\mfH_{\eta}\subset H_{\ss}$ as in Section \ref{subsec:descent-lem}, i.e., $\mfH_{\eta}$ is a set of representatives for the stable conjugacy classes of semisimple elements of $H$ such that $y$ corresponds to $\eta$ and $\H_{y}$ is quasi-split for any $y\in \mfH_{\eta}$.

Recall that the $\theta$-stable members of $\Pi^{\G}_{\phi}$ are parametrized by 
\[
\t\mcJ_{G}^{\G}
\colonequals 
\{j\colon \t\bfS\hookrightarrow\t\G \mid \text{$j$ is $F$-rational and $j\sim_{\G}\j^{-1}$}\}
/{\sim_{G}}
\]
(see Section \ref{subsec:twisted-parametrization})
For each $j\in\t{\mcJ}^{\G}_{G}$, the corresponding member is given to be the toral supercuspidal representation (let us write $\pi_{j}$) arising from the tame elliptic regular pair $(\bfS_{j},\vartheta'_{j})$ of $\G$ (see Section \ref{subsec:const-packet}).
According to the twisted character formula (Theorem \ref{thm:TCF-final}), the twisted character $\Phi_{\t{\pi}_{j}}(\delta)$ is expressed by a sum over the set $\{g\in S_{j}\backslash G/G_{\eta} \mid {}^{g}\eta\in \t{S}_{j}\}$.
For each $j\in\t{\mcJ}^{\G}_{G}$, we put
\[
\t\mcJ^{G}_{G_{\eta}}(j)\colonequals 
\{k\colon\t\bfS\hookrightarrow\t\G \mid \text{$k$ is $F$-rational, $k\sim_{G}j$, and $\eta\in\t{S}_{k}$}\} / {\sim_{G_{\eta}}}.
\]\symdef{J-tilde-G-G-eta-j}{$\tilde{\mathcal{J}}_{G_{\eta}}^{G}(j)$}

\begin{lem}\label{lem:index-bij}
We have a bijection
\[
\{g\in S_{j}\backslash G/G_{\eta} \mid {}^{g}\eta\in \t{S}_{j}\}
\xleftrightarrow{1:1}
\t\mcJ^{G}_{G_{\eta}}(j)
\colon
g\mapsto [g^{-1}]\circ j.
\]
\end{lem}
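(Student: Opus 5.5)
The map will be checked directly: it is well defined, lands in $\t\mcJ^{G}_{G_{\eta}}(j)$, and is injective and surjective. Throughout, $j$ denotes a fixed representative $(j,\t{j})$ of an $F$-rational embedding of the twisted maximal torus $\t\bfS$. For $g\in G$ we consider $k\colonequals[g^{-1}]\circ j$, together with $\t{k}\colonequals[g^{-1}]\circ\t{j}$.

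\emph{Image and well-definedness.} First, $k$ is $F$-rational, since $g\in G$. Its image $(\bfS_{k},\t\bfS_{k})=({}^{g^{-1}}\bfS_{j},{}^{g^{-1}}\t\bfS_{j})$ is again an $F$-rational twisted maximal torus: the Borel subgroup ${}^{g^{-1}}\bfB_{\bfS_{j}}$ does the job required by the definition. Moreover $k\sim_{G}j$ by construction, and the condition ${}^{g}\eta\in\t{S}_{j}$ is the same as $\eta\in{}^{g^{-1}}\t{S}_{j}=\t{S}_{k}$. So $k$ represents an element of $\t\mcJ^{G}_{G_{\eta}}(j)$.

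Next we check that the class does not depend on the representative $g$. Replacing $g$ by $sgh$ with $s\in S_{j}$ and $h\in G_{\eta}$ gives $[h^{-1}g^{-1}s^{-1}]\circ j=[h^{-1}]\circ[g^{-1}]\circ j$, because $[s^{-1}]$ acts trivially on $\bfS_{j}$. The result is therefore $G_{\eta}$-conjugate to $k$. The condition ${}^{g}\eta\in\t{S}_{j}$ is also stable under this change: $\t{S}_{j}$ is stable under $S_{j}$-conjugation, and $h$ centralizes $\eta$. Note that for the embedding $j$ itself, as opposed to $\t{j}$, the equivalence relation used in the definition only records $j$, so no further compatibility is needed.

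\emph{Surjectivity.} Let $k$ represent a class in $\t\mcJ^{G}_{G_{\eta}}(j)$. Since $k\sim_{G}j$, we have $k=[x]\circ j$ for some $x\in G$. Put $g\colonequals x^{-1}$. Then $\eta\in\t{S}_{k}={}^{x}\t{S}_{j}$; here we use that $\t\bfS_{k}$ depends only on the equivalence class of $k$, by Lemma~\ref{lem:S-trans}. Hence ${}^{g}\eta\in\t{S}_{j}$, and the class of $g$ maps to $k$.

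\emph{Injectivity.} This is the step that needs some care. Suppose $[g^{-1}]\circ j$ and $[g'^{-1}]\circ j$ are $G_{\eta}$-conjugate, say $[g'^{-1}]\circ j=[h]\circ[g^{-1}]\circ j$ with $h\in G_{\eta}$. Then $g'hg^{-1}$ acts trivially on $\bfS_{j}=j(\bfS)$, so it lies in $Z_{\G}(\bfS_{j})(F)=S_{j}$, because $\bfS_{j}$ is a maximal torus. Writing $g'hg^{-1}=s$, we get $g'=sgh^{-1}$, so $g$ and $g'$ define the same double coset in $S_{j}\backslash G/G_{\eta}$. The only subtlety is the use of the connected centralizer $G_{\eta}$ rather than the full centralizer. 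This causes no difficulty, because both sides of the bijection are taken modulo the same group $G_{\eta}$.
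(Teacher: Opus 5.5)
Your proof is correct and is essentially the paper's argument: in both, the key step is that $[g'hg^{-1}]\circ j=j$ forces $g'hg^{-1}\in\bfZ_{\G}(\bfS_{j})(F)=S_{j}$. The only difference is in organization. The paper first proves that the map $\{g\in S_{j}\backslash G\mid {}^{g}\eta\in\t{S}_{j}\}\to\{k\}$ is a bijection and then passes to $G_{\eta}$-orbits on both sides, whereas you work directly with double cosets and spell out the surjectivity and well-definedness that the paper leaves implicit.
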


\begin{proof}
It suffices to check that the surjective map from $\{g\in S_{j}\backslash G\mid {}^{g}\eta\in \t{S}_{j}\}$ to $\{k\colon\t\bfS\hookrightarrow\t\G \mid \text{$k$ is $F$-rational, $k\sim_{G}j$, and $\eta\in\t{S}_{k}$}\}$ given by $g\mapsto [g]^{-1}\circ j$
is in fact injective.
(Then we get the assertion by taking the quotient by $G_{\eta}$.) 
Let us suppose that two elements $g$ and $g'$ of $G$ map to the same element, i.e., we have $[g]^{-1}\circ j=[g']^{-1}\circ j$.
Then $g'g^{-1}$ belongs to $S_{j}$, hence $g$ and $g'$ belong to the same double coset.
Hence the map in the assertion is injective.
\end{proof}

By this lemma, the $\G$-side $\sum_{\pi\in\Pi_{\phi}^{\G}}\Delta^{\spec}_{\phi,\pi}\Theta_{\t{\pi}}(\delta)$ of the twisted endoscopic character relation \eqref{eq:TECR} can be written as a double sum over the sets $\t{\mcJ}^{\G}_{G}$ and $\t\mcJ^{G}_{G_{\eta}}(j)$ (for $j\in\t{\mcJ}^{\G}_{G}$).
We rearrange this double as follows.
We first combine $\t{\mcJ}^{\G}_{G}$ and $\t\mcJ^{G}_{G_{\eta}}(j)$ (for $j\in\t{\mcJ}^{\G}_{G}$) into the following single set:
\[
\t\mcJ^{\G}_{G_{\eta}}\colonequals 
\{k\colon\t\bfS\hookrightarrow\t\G \mid \text{$k$ is $F$-rational, $k\sim_{\G}\jmath^{-1}$, and $\eta\in\t{S}_{k}$}\} / {\sim_{G_{\eta}}}.
\]\symdef{J-tilde-G-G-eta-rat}{$\tilde{\mathcal{J}}_{G_{\eta}}^{\G}$}
Then we again divide $\mcJ^{\G}_{G_{\eta}}$ into the sets $\t\mcJ_{\G_{\eta}}^{\G}$ and $\t\mcJ^{\G_{\eta}}_{G_{\eta}}(j)$ (for $j\in\t{\mcJ}^{\G}_{\G_{\eta}}$), where 
\begin{itemize}
\item
$\t\mcJ_{\G_{\eta}}^{\G}
\colonequals 
\{j\colon\t\bfS\hookrightarrow\t\G \mid \text{$j$ is $F$-rational, $j\sim_{\G}\jmath^{-1}$, and $\eta\in\t{S}_{j}$}\} / {\sim_{\G_{\eta}}}$\symdef{J-tilde-G-G-eta}{$\tilde{\mathcal{J}}_{\G_{\eta}}^{\G}$},
\item
$\t\mcJ^{\G_{\eta}}_{G_{\eta}}(j)\colonequals 
\{k\colon\t\bfS\hookrightarrow\t\G \mid \text{$k$ is $F$-rational, $k\sim_{\G_{\eta}}j$, and $\eta\in\t{S}_{k}$}\} / {\sim_{G_{\eta}}}$.\symdef{J-tilde-G-eta-G-eta-j}{$\tilde{\mathcal{J}}_{G_{\eta}}^{\G_\eta}(j)$}
\end{itemize}
In the following arguments, we fix representatives of these sets and loosely identify these sets with the fixed sets of representatives.
\[
\xymatrix@R=0pt{
&\G\ar@{-}_-{\t{\mcJ}^{\G}_{G}}[ld]\ar@{-}^-{\t{\mcJ}^{\G}_{\G_{\eta}}}[rd]&\\
G\ar@{-}_-{\t\mcJ^{G}_{G_{\eta}}}[rd]&&\G_{\eta}\ar@{-}^-{\t\mcJ^{\G_{\eta}}_{G_{\eta}}}[ld]\\
&G_{\eta}&
}
\]

Keeping this observation in mind, let us first consider the case where
\[
\text{$\bfD(y,\eta)=\varnothing$ for any $y\in\mfH_{\eta}$.}
\]
In this case, by Proposition \ref{prop:descent-lemma} (and Remark \ref{rem:descent-lemma}), $\t{\mcJ}^{\G}_{\G_{\eta}}$ is necessarily empty.
This implies that the $\G$-side of the twisted endoscopic character relation contains a sum over the empty set, hence equals $0$.
On the other hand, we see that also the $\H$-side equals $0$ by the following lemma:

\begin{lem}\label{lem:no-norm}
If $\bfD(y,\eta)=\varnothing$ for any $y\in\mfH_{\eta}$, then there is no norm of $\delta$ in $H$.
\end{lem}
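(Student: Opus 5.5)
We prove the contrapositive: if $\delta$ has a norm $\gamma\in H$, then $\bfD(y,\eta)\neq\varnothing$ for some $y\in\mfH_{\eta}$. The plan is to push a diagram for $(\gamma,\delta)$ down from $\delta$ to its shallow part $\eta=\delta_{<r}$, and then move the resulting element of $H$ into the set $\mfH_{\eta}$.

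First, since $\gamma$ is a norm of $\delta$, the pair $(\gamma,\delta)$ lies in $\mcD$. By Lemma \ref{lem:unique-diag} there is a diagram $D=(\B^{\flat},\T^{\flat},\B^{\dia},\T^{\dia})\in\bfD(\gamma,\delta)$; note that $\T^{\flat}=\H_{\gamma}$ and $\delta\in\t{T}^{\dia}$.

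Second, we transfer the normal $r$-approximation of $\delta$ to $\gamma$ along $\t\xi_{D}$ and $\xi_{D}$, as at the beginning of Section \ref{sec:transfer-factor}. This gives a decomposition $\gamma=\gamma_{0}\gamma^{+}_{<r}\gamma_{\geq r}$, which is a normal $r$-approximation by Lemma \ref{lem:normal-approx-descent}. Set $\epsilon\colonequals\gamma_{<r}=\t\xi_{D}(\eta)\in T^{\flat}$. This element is $F$-rational by Lemma \ref{lem:key}, because $\eta\in\t{T}^{\dia}$ is $F$-rational.

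Third, we check that $D$ is also a diagram for $(\epsilon,\eta)$. All conditions of Definition \ref{defn:diag} other than those involving the elements are unchanged, since $\T^{\flat},\T^{\dia},\xi_{\flat},\xi_{\dia}$ and $\xi_{D}$ are the same. For the remaining conditions:
\begin{itemize}
\item $(\B^{\dia},\T^{\dia})$ is $[\eta]$-stable, because $\eta\in\t\T^{\dia}=\T^{\dia}\delta$ and $[\eta]$ differs from $[\delta]$ by conjugation by an element of $\T^{\dia}$;
\item $(\B^{\flat},\T^{\flat})$ is $[\epsilon]$-stable, because $\epsilon\in\T^{\flat}$;
\item $\eta\in[g]^{-1}(\t\T)$, and $\t\xi_{D}(\eta)=\epsilon$ by construction.
\end{itemize}
Hence $D\in\bfD(\epsilon,\eta)$, so $\bfD(\epsilon,\eta)\neq\varnothing$.

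Finally, we replace $\epsilon$ by its representative $y\in\mfH_{\eta}$. Since $\epsilon$ corresponds to $\eta$, there is a unique $y\in\mfH_{\eta}$ stably conjugate to $\epsilon$. By \cite[Lemma 3.2.2]{Kal19}, the $F$-rational maximal torus $\T^{\flat}$ of $\H_{\epsilon}$ transfers to the quasi-split inner form $\H_{y}$. So there is $h'\in\H$ with $[h'](\epsilon)=y$ such that $[h']\colon\T^{\flat}\to{}^{h'}\T^{\flat}$ is $F$-rational. The same argument as in the surjectivity part of the proof of Proposition \ref{prop:descent-lemma} then shows that $({}^{h'}\B^{\flat},{}^{h'}\T^{\flat},\B^{\dia},\T^{\dia})\in\bfD(y,\eta)$: replacing $\xi_{\flat}$ by $\xi_{\flat}\circ[h']^{-1}$ keeps the composition $\xi_{D}$ $F$-rational and sends $\eta$ to $y$. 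This contradicts the hypothesis that $\bfD(y,\eta)=\varnothing$ for every $y\in\mfH_{\eta}$.

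The main obstacle is this last step: making sure that the rationality of $[h']$ on the torus, rather than of $h'$ itself, is enough to keep $\xi_{D}$ $F$-rational. This is exactly the argument already used in the proof of Proposition \ref{prop:descent-lemma}, so we will cite it.
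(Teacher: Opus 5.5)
Your proof is correct and follows essentially the same route as the paper: take $D\in\bfD(\gamma,\delta)$ via Lemma \ref{lem:unique-diag}, observe that $D$ is also a diagram for $(\epsilon,\eta)$ with $\epsilon=\t\xi_{D}(\eta)$, and then move $\epsilon$ to its representative $y\in\mfH_{\eta}$ by $\H$-conjugation, exactly as in the surjectivity part of Proposition \ref{prop:descent-lemma}. Your explicit checks of the diagram axioms and of the $F$-rationality of $[h']\circ\xi_{D}$ fill in details the paper leaves implicit; the detour through Lemma \ref{lem:normal-approx-descent} is harmless but not needed.
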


\begin{proof}
For the sake of contradiction, let us suppose that there exists a norm $\gamma\in H$ of $\delta$.
Then we have a diagram $D\in\bfD(\gamma,\delta)$ associated to $(\gamma,\delta)$ by Lemma \ref{lem:unique-diag}.
If we put $\epsilon\colonequals \t\xi_{D}(\eta)$, $D$ is also a diagram associated to $(\epsilon,\eta)$.
Then, by the same argument as in the proof of the surjectivity part of Proposition \ref{prop:descent-lemma}, we can construct a unique $y\in\mfH_{\eta}$ and a diagram $D'$ associated to $(y,\eta)$ by modifying $D$ via $\H$-conjugacy.
Thus we get a contradiction.
\end{proof}

Therefore, in the following, we focus on an(y) elliptic strongly regular semisimple element $\delta\in\G$ such that
\[
\text{$\bfD(y,\eta)\neq\varnothing$ for some $y\in\mfH_{\eta}$.}
\]

\begin{rem}\label{rem:no-norm}
This argument shows that if there does not exist an elliptic strongly regular semisimple element $\delta\in\G$ such that $\bfD(y,\eta)\neq\varnothing$ for some $y\in\mfH_{\eta}$, then there is nothing to prove anymore.
In this case, we simply put $\Delta^{\spec}_{\phi,\pi}\colonequals 0$  for any $\pi\in\Pi_{\phi}^{\G}$.
\end{rem}

\subsubsection{Head-tail stratification of the endoscopic index set}\label{subsubsec:head-tail}

When $\gamma\in H$ is a norm of $\delta\in\t{G}$, we take a diagram $D\in\bfD(\gamma,\delta)$ and consider the associated map $\t{\xi}_{D}$.
According to Lemma \ref{lem:unique-diag}, such a diagram always exists uniquely up to equivalence and the map $\t{\xi}_{D}$ is independent of the choice of $D$.
By noting this, we put $\gamma_{i}\colonequals \t{\xi}_{D}(\delta_{i})$ for $i\in\R_{\geq0}$.
Then we get a normal $r$-approximation $\gamma=\gamma_{<r}\cdot\gamma_{\geq r}$ (see Lemma \ref{lem:normal-approx-descent}).
Note that the $r$-approximation to $\gamma$ induced from that to $\delta$ in this way is consistent with respect to the stable $\H$-conjugacy.
More precisely, for any norms $\gamma\in\H$ and $\bar{\gamma}\in\H$ of $\delta$ which are stably conjugate by $h\in\H$ (say $\bar{\gamma}={}^{h}\gamma$), we have $\bar{\gamma}_{i}={}^{h}\gamma_{i}$.

For each $y\in\mfH_{\eta}$, we put
\[
H_{y}[\delta]_{r}
\colonequals 
\bigg\{
z\in H_{y,\srs}
\,\bigg\vert\,
\begin{array}{l}
\text{$yz\in H$ is a norm of $\delta$,} \\
\text{$y\cdot z$ is the fixed normal $r$-approximation to $yz$}
\end{array}
\bigg\}.
\]\symdef{H-y-delta-r}{$H_{y}[\delta]_{r}$}

The following is the twisted version of \cite[Lemma 6.4]{Kal15}.

\begin{lem}\label{lem:index-set}
The map
\[
\bigsqcup_{y\in\mfH_{\eta}} H_{y}[\delta]_{r}/{\sim_{\H_{y}}}\rightarrow\{\gamma\in H_{\srs}\mid\text{$\gamma$ is a norm of $\delta$}\}/{\sim_{\H}}
\colon z\mapsto yz
\]
is a $\pi_{0}(\H^{y})(F)$-torsor on each disjoint summand $H_{y}[\delta]_{r}/{\sim_{\H_{y}}}$ (onto its image).
In other words, the induced map 
\[
\bigsqcup_{y\in\mfH_{\eta}} (H_{y}[\delta]_{r}/{\sim_{\H_{y}}})/\pi_{0}(\H^{y})(F)\rightarrow\{\gamma\in H_{\srs}\mid\text{$\gamma$ is a norm of $\delta$}\}/{\sim_{\H}}
\colon z\mapsto yz
\]
is bijective.
Here, ${\sim_{\H_{y}}}$ on the left-hand side (resp.\ ${\sim_{\H}}$ on the right-hand side) denotes the stable conjugacy in $\H_{y}$ (resp.\ $\H$).
\end{lem}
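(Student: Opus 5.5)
The plan is to follow the proof of \cite[Lemma 6.4]{Kal15}, replacing admissible isomorphisms by the transferred approximations of Section \ref{subsubsec:head-tail}. The three steps are well-definedness, surjectivity, and the description of the fibers.

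\textbf{Well-definedness.} If $z,z'\in H_{y}[\delta]_{r}$ are stably $\H_{y}$-conjugate, say $z'={}^{h}z$ with $h\in\H_{y}$, then ${}^{h}(yz)=yz'$. Hence $yz$ and $yz'$ are stably $\H$-conjugate, and the map is defined.

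\textbf{Surjectivity.} Let $\gamma$ be a norm of $\delta$. I take a diagram $D\in\bfD(\gamma,\delta)$ (Lemma \ref{lem:unique-diag}) and put $\gamma_{<r}\colonequals\t\xi_{D}(\eta)$. By Lemma \ref{lem:normal-approx-descent}, $\gamma=\gamma_{<r}\gamma_{\geq r}$ is a normal $r$-approximation. The pair $(\gamma_{<r},\eta)$ corresponds, so there is a unique $y\in\mfH_{\eta}$ stably conjugate to $\gamma_{<r}$. As in the surjectivity part of Proposition \ref{prop:descent-lemma}, $\H_{y}$ is the quasi-split inner form of $\H_{\gamma_{<r}}$. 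The elliptic torus $\bfZ_{\H}(\gamma)^{\circ}=\T^{\flat}\subset\H_{\gamma_{<r}}$ therefore transfers: there is $h\in\H$ with ${}^{h}\gamma_{<r}=y$ such that $[h]|_{\T^{\flat}}$ is $F$-rational. Put $z\colonequals{}^{h}\gamma_{\geq r}$. Then $yz={}^{h}\gamma$ is $F$-rational, stably conjugate to $\gamma$, and again a norm of $\delta$. Replacing $D$ by ${}^{h}D$, we get $\t\xi_{{}^{h}D}(\eta)=y$. By the consistency remark in Section \ref{subsubsec:head-tail}, $y\cdot z$ is exactly the fixed normal $r$-approximation to $yz$. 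So $z\in H_{y}[\delta]_{r}$.

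\textbf{Fibers.} Suppose $yz$ and $y'z'$ are stably $\H$-conjugate, $y'z'={}^{h}(yz)$. Consistency of the induced approximations under stable conjugacy gives ${}^{h}y=y'$ and ${}^{h}z=z'$. Since $y,y'\in\mfH_{\eta}$, this forces $y=y'$ and $h\in\H^{y}$.

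I claim the class of $h$ in $\pi_{0}(\H^{y})$ is $F$-rational, and that it is well-defined modulo $\H_{y}$-conjugations fixing $z$ stably. The conjugation $[h]$ restricts to an $F$-rational isomorphism $\bfZ_{\H}(yz)^{\circ}\to\bfZ_{\H}(yz')^{\circ}$, because both sides are the stable transfer of the same torus. Hence $\sigma(h)^{-1}h$ centralizes a maximal torus of $\H_{y}$, so it lies in $\H_{y}$, and $\sigma\mapsto\sigma(h)\H_{y}$ is trivial. This defines an action of $\pi_{0}(\H^{y})(F)$ on $H_{y}[\delta]_{r}/{\sim_{\H_{y}}}$ whose orbits are exactly the fibers.

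Conversely, I need every element of $\pi_{0}(\H^{y})(F)$ to lift to some $h\in\H^{y}$ that keeps $yz$ $F$-rational up to $\H_{y}$-stable conjugacy. I would invoke \cite[Lemma 6.3]{Kal15} for this, as in the last step of Proposition \ref{prop:descent-lemma}. That lemma says every element of $\pi_{0}(\H^{y})(F)$ is represented by an element normalizing a given $F$-rational maximal torus $\T^{\flat}\ni z$ of $\H_{y}$ with $F$-rational action on it.

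\textbf{Freeness.} If $h\in\H^{y}$ with ${}^{h}z$ stably $\H_{y}$-conjugate to $z$, say ${}^{h}z={}^{h'}z$ with $h'\in\H_{y}$, then $h'^{-1}h$ centralizes the strongly regular element $yz$. I expect this freeness step to be the main obstacle. It needs the full centralizer $\H^{yz}$ to be contained in $\H_{y}$ modulo the appropriate components, which uses that $yz$ is strongly $\t\G$-regular, so that $\H^{yz}$ is connected and $\H^{yz}=\T^{\flat}\subset\H_{y}$. I would try first to deduce this from $\H_{\sc}$-type connectedness for strongly regular centralizers, exactly as in Kaletha's untwisted argument. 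Then $h\in\H_{y}$, its class in $\pi_{0}$ is trivial, and the action is free.
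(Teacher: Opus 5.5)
Your proposal is correct and follows the paper's proof: surjectivity comes from transferring the maximal torus $\bfZ_{\H}(\gamma)^{\circ}$ to the quasi-split inner form $\H_{y}$, and the fibers are described via consistency of the transferred $r$-approximations together with \cite[Lemma 6.3]{Kal15}, with your explicit check that the class of $h$ in $\pi_{0}(\H^{y})$ is $F$-rational being a welcome addition. The freeness step, which the paper leaves implicit and you flag as the main obstacle, is immediate: by strong regularity $\H^{yz}$ is abelian and hence equals the torus $\H_{yz}$, and this torus contains $y$ (since $y$ commutes with $yz$), so it lies in $\H_{y}$ and no detour through simply-connected covers is needed.
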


\begin{proof}
The well-definedness of the map is obvious.

We first show the surjectivity of the map.
Let $\gamma\in H_{\srs}$ be a norm of $\delta$.
Then, according to Lemma \ref{lem:unique-diag}, there exists a diagram $D\in\bfD(\gamma,\delta)$ unique up to equivalence.
We put $\epsilon\colonequals \t{\xi}_{D}(\eta)$.
Then, by the definition of $\mfH_{\eta}$, there uniquely exists a $y\in\mfH_{\eta}$ which is stably $\H$-conjugate to $\epsilon$.
Let us take an element $h\in\H$ giving this stable conjugacy, that is, $[h](\epsilon)=y$ and $\sigma(h)^{-1}h\in\H_{\epsilon}$ for any $\sigma\in\Gamma$.
Then the map $[h]$ gives an inner twist between $\H_{\epsilon}$ and the quasi-split connected reductive group $\H_{y}$.
Since any $F$-rational maximal torus transfers to the quasi-split inner form, we may suppose that the map $[h]\colon\H_{\epsilon}\rightarrow\H_{y}$ induces an $F$-rational isomorphism from $\H_{\gamma}$ (this is a maximal torus of $\H_{\epsilon}$) to a maximal torus of $\H_{y}$.
Then $z\colonequals [h](\xi_{D}(\delta_{\geq r}))$ is an element of $\H_{y}$ such that $yz=[h](\gamma)$.
This means that the map in the assertion is surjective.

We next investigate the fibers of the map.
Suppose that we have $z\in H_{y}[\delta]_{r}$ and $\bar{z}\in H_{\bar{y}}[\delta]_{r}$ for $y,\bar{y}\in\mfH_{\eta}$ such that $yz$ and $\bar{y}\bar{z}$ are stably $\H$-conjugate.
Let $h\in\H$ be an element giving the stable conjugacy, i.e., $[h](yz)=\bar{y}\bar{z}$.
As we took the normal $r$-approximations to be consistent with the stable conjugacy, this implies that $[h](y)=\bar{y}$.
Then, by the definition of $\mfH_{\eta}$, we get $y=\bar{y}$, hence $h\in\H^{y}$.
Since we also have $[h](z)=\bar{z}$, we know that $z$ and $\bar{z}$ are conjugate under the action of $\pi_{0}(\H^{y})(F)$ by \cite[Lemma 6.3]{Kal15}.
\end{proof}

\begin{lem}\label{lem:norm-descent}
We put $X\colonequals \log(\delta_{\geq r})\in\mfg_{\eta}$.
Let $z\in H_{y}[\delta]_{r}$ and we put $Y\colonequals \log(z)\in\mfh_{y}$.
There exists a diagram $D\in\bfD(y,\eta)$ uniquely up to equivalence such that $(Y,X)$ is a $D$-norm pair.
\end{lem}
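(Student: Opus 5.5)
Existence comes first. Since $z\in H_{y}[\delta]_{r}$, the element $\gamma\colonequals yz$ is a norm of $\delta$. By Lemma \ref{lem:unique-diag}, the set $\bfD(\gamma,\delta)$ is nonempty, and any diagram $D_{0}=(\B^{\flat},\T^{\flat},\B^{\dia},\T^{\dia})$ in it has $\T^{\flat}=\H_{\gamma}$ and $\T^{\nat}=\G_{\delta}$. Because the approximations $y\cdot z$ and $\eta\cdot\delta_{\geq r}$ are normal and strongly regular, $y\in T^{\flat}$ and $\eta\in\t{T}^{\dia}$. The map $\t\xi_{D_{0}}$ sends $\delta\mapsto\gamma$, and it is compatible with the head--tail decomposition (Lemma \ref{lem:normal-approx-descent}, together with the consistency of the transferred approximation with stable conjugacy). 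Hence $\t\xi_{D_{0}}(\eta)$ is stably conjugate to $y$, with the tail going to $z$.

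To obtain a diagram for $(y,\eta)$, I would check that $\t\xi_{D_{0}}(\eta)=y$ exactly rather than only up to stable conjugacy. I expect this to follow from the way the transfer was used to define $H_{y}[\delta]_{r}$: the decomposition of $yz$ is the transferred one, and $\t\xi_{D_{0}}$ is independent of $D_{0}$. Granting this, $D\colonequals D_{0}$ is itself an element of $\bfD(y,\eta)$. The Borel pairs are still $[y]$- and $[\eta]$-stable, since $y\in T^{\flat}$ and $\eta\in\t{T}^{\dia}$, and $F$-rationality of $\xi_{D}$ is unchanged. If an $\H$-conjugation is needed to move $\t\xi_{D_{0}}(\eta)$ onto $y$, I would absorb it as in the surjectivity part of Proposition \ref{prop:descent-lemma}.

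The next step is to check that $(Y,X)$ is a $D$-norm pair. Here $X\in\mft^{\nat}$, $Y\in\mft^{\flat}$, and $\xi_{D}$ maps $X$ to $Y$. I would then trace Waldspurger's construction of $\bar{\H}$ from $D$ (\cite[Section 3.8]{Wal08}). The maximal torus $\T^{\nat}$ of $\G_{\eta}$ transfers to a torus $\bar{\T}$ of $\bar{\H}$, and $\T^{\flat}$ is matched with $\bar{\T}$ through the non-standard triple $j_{\ast}$. Following these identifications produces $\bar{Y}\in\bar{\mft}$ satisfying the three conditions of Definition \ref{defn:norm-pair}, with the central components compatible via $\mfz_{\H_{y}}\cong\mfz_{\bar{\H}}\oplus\mfz_{\G_{\eta}}$. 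Strong regularity of $y\exp(Y)$ and of $\eta\exp(X)$ is immediate, because these elements are $\gamma$ and $\delta$.

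Uniqueness is next. Suppose $D,D'\in\bfD(y,\eta)$ both make $(Y,X)$ a norm pair. After an $(\H_{y},\G_{\eta})$-conjugation (Lemma \ref{lem:diag-H-conj}), I would take $\T^{\flat}$ and $\T'^{\flat}$ to be the centralizers of $Y$, which equal $\H_{\gamma}$, and similarly $\T^{\dia}=\T'^{\dia}$. Then $D$ and $D'$ are both diagrams for $(\gamma,\delta)$, and Lemma \ref{lem:unique-diag} shows they are equivalent. Both $\gamma$ and $\delta$ commute with $y$ and $\eta$, so the equivalences allowed for $(\gamma,\delta)$, namely conjugation by $\H_{\gamma}\subset\H_{y}$, by $\G_{\delta}\subset\G_{\eta}$, and by $\Omega_{\H}$, are also equivalences for $(y,\eta)$.

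I expect the main obstacle to be the step showing that the norm-pair condition really forces $D$ to be a diagram for $(\gamma,\delta)$. This amounts to showing that the torus correspondence induced by $D$ through $\bar{\H}$ is determined by $\xi_{D}$, so that a norm pair forces $\xi_{D}(X)=Y$ up to $\Omega_{\H_{y}}$. I would deduce this from Proposition \ref{prop:norm-decomp} combined with the explicit description of the descended root systems in Section \ref{subsec:rootsys}.
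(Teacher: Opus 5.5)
Your proposal follows the paper's proof essentially step for step. Existence comes from the definition of the fixed $r$-approximation of a norm, $\gamma_i=\t\xi_{D}(\delta_i)$, which gives $\t\xi_{D}(\eta)=y$ and $\xi_{D}(\delta_{\geq r})=z$ exactly, so no extra $\H$-conjugation is needed. Uniqueness comes from conjugating the second diagram (as in Lemma \ref{lem:diag-H-conj}) until both diagrams lie in $\bfD(y\exp(Y),\eta\exp(X))$, and then applying Lemma \ref{lem:unique-diag}. The step you flag is the one the paper also treats only briefly. The ingredient it needs is that Waldspurger's stable-class correspondence through $\bar{\H}$ is induced by $\xi_{D}$ on the tori $\mft^{\nat}\to\mft^{\flat}$; Proposition \ref{prop:norm-decomp} does not supply this, since it only gives $(\gamma,\delta)\in\mcD$, which you already know.
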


\begin{proof}
By the definition of the set $H_{y}[\delta]_{r}$, $yz$ is a norm of $\delta$ and $y\cdot z$ is the fixed $r$-approximation to $yz$.
Hence, according to our choice of normal $r$-approximations, there exists a diagram $D=(\B^{\flat},\T^{\flat},\B^{\dia},\T^{\dia})\in\bfD(y,\eta)$ satisfying $\t{\xi}_{D}(\eta)=y$ and $\xi_{D}(\delta_{\geq r})=z$.
This implies that $(Y,X)$ is a $D$-norm pair.

To check the uniqueness of $D$, let us suppose that $(Y,X)$ is a $\bar{D}$-norm pair for another diagram $\bar{D}=(\b\B^{\flat},\b\T^{\flat},\b\B^{\dia},\b\T^{\dia})\in\bfD(y,\eta)$.
Then, by replacing $\bar{D}$ with its equivalent diagram appropriately, we may assume that $\b\T^{\flat}=\T^{\flat}$ and $\b\T^{\dia}=\T^{\dia}$ and that $\xi_{D}(\exp(X))=\exp(Y)=\xi_{\bar{D}}(\exp(X))$ (cf.\ the argument in the proof of Lemma \ref{lem:diag-H-conj}).
This implies that both $D$ and $\b{D}$ belong to $\bfD(y\exp(Y),\eta\exp(X))$.
Thus, by Lemma \ref{lem:unique-diag}, $D$ and $\b{D}$ are equivalent in $\bfD(y\exp(Y),\eta\exp(X))$, hence also in $\bfD(y,\eta)$.
\end{proof}

By the invariance of the logarithm map, Lemma \ref{lem:norm-descent} implies the following.

\begin{lem}\label{lem:norm-tran}
The association $z\mapsto\log(z)$ induces a bijection
\[
H_{y}[\delta]_{r}/{\sim_{\H_{y}}}
\xrightarrow{1:1}
\bigsqcup_{D\in\bbD(y,\eta)}
\{Y\xleftrightarrow{D} X\}/{\sim_{\H_{y}}},
\]
where the left-hand side denotes the set of $\H_{y}$-conjugacy classes of elements of $H_{y}[\delta]_{r}$ and the right-hand side denotes the set of $\H_{y}$-conjugacy classes of elements $Y\in\mfh_{y}$ which constitute a $D$-norm pair with $X\colonequals \log(\delta_{\geq r})$ (over $D\in\bbD(y,\eta)$).
\end{lem}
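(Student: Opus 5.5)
The map is built in two stages. First, for $z\in H_{y}[\delta]_{r}$ I pass to $Y\colonequals\log(z)\in\mfh_{y}$. Lemma \ref{lem:norm-descent} then attaches to $z$ a diagram class $D\in\bbD(y,\eta)$, unique up to equivalence, for which $(Y,X)$ is a $D$-norm pair. So the map sends $z$ to the pair $(D,Y)$, and the target is the disjoint union over $\bbD(y,\eta)$.

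Next I check that this is well defined on stable $\H_{y}$-conjugacy classes. Take $\bar z={}^{h}z$ with $h\in\H_{y}$ giving a stable conjugacy. The exponential map was chosen conjugation-invariant (this is the reason for the assumption $p\geq(2+e_{F})n$ at the start of Section \ref{sec:Waldspurger}), so $\log(\bar z)={}^{h}\log(z)$, and the two images are stably $\H_{y}$-conjugate. I also need the diagram class to be unchanged. If $D$ works for $z$, then the diagram obtained from $D$ by $[h]$ on the $\H$-side (and keeping the $\G$-side) works for $\bar z$. Conjugation by $h\in\H_{y}$ preserves $y$, and the $F$-rationality of $\xi$ is kept because $\sigma(h)^{-1}h\in\H_{y\exp Y}$, which is a torus. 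By the uniqueness in Lemma \ref{lem:norm-descent}, $\bar z$ therefore lands in the same summand.

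For injectivity, suppose $\log z$ and $\log\bar z$ are stably conjugate in $\H_{y}$. Invariance of $\exp$ gives that $z$ and $\bar z$ are stably conjugate.

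For surjectivity, let $D\in\bbD(y,\eta)$ and let $Y$ form a $D$-norm pair with $X$. Put $z\colonequals\exp(Y)$. Since $Y\in\mfh_{y,0+}$, the element $z$ is strongly regular in $H_{y}$. By Proposition \ref{prop:norm-decomp}, $yz$ is a norm of $\eta\exp(X)=\delta$. It remains to show that $y\cdot z$ is the fixed normal $r$-approximation to $yz$, i.e.\ the one transported from $\delta=\eta\cdot\delta_{\geq r}$. As in the proof of Lemma \ref{lem:norm-descent}, I replace $D$ within its class so that $\T^{\flat}\ni z$, $\T^{\dia}\ni\delta_{\geq r}$ and $\xi_{D}(X)=Y$. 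Then $D$ lies in $\bfD(yz,\delta)$ and $\t\xi_{D}$ carries $\eta\mapsto y$ and $\delta_{\geq r}\mapsto z$. By Lemma \ref{lem:unique-diag} this $\t\xi_{D}$ is the canonical map used to define the induced approximation, so the transported decomposition is exactly $y\cdot z$.

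The main obstacle is this last step. One must make sure that a $D$-norm pair, which is defined via $\bar{\H}$ and the non-standard correspondence, really produces an actual diagram relating $yz$ and $\delta$. This is what lets the transported $r$-approximation be read off from it. I would handle it by appealing to Waldspurger's lemma in \cite[Section 3.8]{Wal08}, which underlies Proposition \ref{prop:norm-decomp} and asserts that the correspondence is realized through $\xi_{D}$ on tori.
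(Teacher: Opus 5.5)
Your proposal is correct and is essentially the paper's argument. The paper obtains the lemma in one line from Lemma~\ref{lem:norm-descent} and the conjugation-invariance of $\log$; your surjectivity step (Proposition~\ref{prop:norm-decomp}, plus realizing the $D$-norm pair by $\xi_{D}$ on tori, which the paper also uses tacitly in the proofs of Lemmas~\ref{lem:appearance-of-l-alpha} and~\ref{lem:norm-descent}) just spells out what the paper leaves implicit. One small correction: $z$ is strongly regular in $H_{y}$ because the $D$-norm-pair definition requires $y\exp(Y)$ to be strongly $\t{\G}$-regular, not because $Y\in\mfh_{y,0+}$.
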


\subsubsection{Lie algebra transfer: revisited}\label{subsubsec:Lie-trans}

We introduce the sets $\mcJ^{\H}_{H}$, $\mcJ^{H}_{H_{y}}$, $\mcJ^{\H}_{\H_{y}}$ and $\mcJ^{\H_{y}}_{H_{y}}$ to rearrange the index sets on the $\H$-side of \eqref{eq:TECR} in a similar manner to Section \ref{subsubsec:initial}.
\[
\xymatrix@R=0pt{
&\G\ar@{-}_-{\t\mcJ^{\G}_{G}}[ld]\ar@{-}^-{\t\mcJ^{\G}_{\G_{\eta}}}[rd]&\\
G\ar@{-}_-{\t\mcJ^{G}_{G_{\eta}}}[rd]&&\G_{\eta}\ar@{-}^-{\t\mcJ^{\G_{\eta}}_{G_{\eta}}}[ld]\\
&G_{\eta}&
}
\quad\quad
\xymatrix@R=0pt{
&\H\ar@{-}_-{\mcJ^{\H}_{H}}[ld]\ar@{-}^-{\mcJ^{\H}_{\H_{y}}}[rd]&\\
H\ar@{-}_-{\mcJ^{H}_{H_{y}}}[rd]&&\H_{y}\ar@{-}^-{\mcJ^{\H_{y}}_{H_{y}}}[ld]\\
&H_{y}&
}
\]
Recall that, by Proposition \ref{prop:descent-lemma}, we have a bijective map
\[
\tran\colon\bigsqcup_{y\in \mfH_{\eta}} \bigl(\bbD(y,\eta)\times\mcJ_{\H_{y}}^{\H}\bigr)/\pi_{0}(\H^{y})(F) \rightarrow\t\mcJ_{\G_{\eta}}^{\G}.
\]
Suppose that $D\in \bfD(y,\eta)$, $j_{\H}\in\mcJ^{\H}_{\H_{y}}$, and $j\in\t\mcJ^{\G}_{\G_{\eta}}$ satisfy $\tran(D,j_{\H})=j$.

We also recall that, in Section \ref{sec:theta-stable}, we fixed $X^{\ast}\in\mfs^{\ast}$ and $Y^{\ast}\in\mfs_{\H}^{\ast}$, which are the elements realizing the toral characters $\vartheta$ and $\vartheta_{\H}$, respectively (see Corollary \ref{cor:vartheta}).
In the following, for $k\in\mcJ^{\G}$, we put
\[
X^{\ast}_{k}\colonequals (dk^{\ast})^{-1}(X^{\ast})\in \mfs_{k}^{\ast}\hookrightarrow\mfg^{\ast}
\]\symdef{X-ast-k}{$X^{\ast}_{k}$}
(note that $dk\colon\mfs\cong\mfs_{k}$, hence $dk^{\ast}\colon\mfs_{k}^{\ast}\cong\mfs^{\ast}$).
This element $X^{\ast}_{k}\in\mfs_{k}^{\ast}$ represents the character $\vartheta_{k}|_{S_{k,r}}=\vartheta'_{k}|_{S_{k,r}}$.
Note that when $k$ belongs to $\t\mcJ^{\G}_{G_{\eta}}$, we also have an element $X^{\ast}_{k}\in\mfs_{k}^{\nat\ast}\hookrightarrow\mfg_{\eta}^{\ast}$, which can be thought of as the image of the above $X_{k}^{\ast}$  under the natural map $\mfs_{k}^{\ast}\twoheadrightarrow\mfs_{k}^{\nat\ast}$.
Similarly, for $k_{\H}\in\mcJ^{\H}$, we put
\[
Y^{\ast}_{k_{\H}}\colonequals (dk_{\H}^{\ast})^{-1}(Y^{\ast})\in \mfs_{k_{\H}}^{\ast}\hookrightarrow\mfh^{\ast}.
\]\symdef{Y-ast-k-H}{$Y^{\ast}_{k_{\H}}$}

By the construction of the map $\tran$, the maps $\xi_{\bfS}$ and $\xi_{D}$ coincide under the embeddings $j$ and $j_{\H}$, i.e., $\xi_{D}\circ j= j_{\H}\circ\xi_{\bfS}$.
This implies that $d\xi_{D}^{\ast}(Y^{\ast}_{j_{\H}})=X^{\ast}_{j}\in(\mft^{\dia\ast})^{\theta_{\dia}}$.

\begin{lem}\label{lem:norm-pairs-TECR}
\begin{enumerate}
\item
The set $\{Y^{\ast}_{k_{\H}}\mid k_{\H}\in\mcJ^{\H_{y}}_{H_{y}}(j_{\H})\}$ represents the $H_{y}$-conjugacy classes within a stable $\H_{y}$-conjugacy class.
\item
The set $\{X^{\ast}_{k}\mid k\in\t\mcJ^{\G_{\eta}}_{G_{\eta}}(j)\}$ represents the $G_{\eta}$-conjugacy classes of elements of $\mfg_{\eta,0+}$ constituting a $D$-norm pair with $Y^{\ast}_{k_{\H}}$ for a(ny) $k_{\H}\in\mcJ^{\H_{y}}_{H_{y}}(j_{\H})$.
\end{enumerate}
\end{lem}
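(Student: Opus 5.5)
For (1), I would mimic the untwisted argument of \cite[Section 6.3]{Kal19}. Since $\bfS_{j_{\H}}\subset\H_{y}$ is an elliptic maximal torus and $Y^{\ast}_{j_{\H}}$ is $\H$-generic of depth $r$, it is regular semisimple in $\mfh_{y}^{\ast}$ with connected centralizer $\bfS_{j_{\H}}$. An element $k_{\H}\in\mcJ^{\H_{y}}_{H_{y}}(j_{\H})$ has the form $k_{\H}=[h]\circ j_{\H}$ with $h\in\H_{y}$ and $k_{\H}$ $F$-rational. Its image $Y^{\ast}_{k_{\H}}=[h](Y^{\ast}_{j_{\H}})$ is therefore $F$-rational and stably $\H_{y}$-conjugate to $Y^{\ast}_{j_{\H}}$.

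Conversely, given an $F$-rational $Y'$ stably conjugate to $Y^{\ast}_{j_{\H}}$ via $h\in\H_{y}$, the centralizer of $Y'$ is ${}^{h}\bfS_{j_{\H}}$. Regularity then forces $\sigma(h)^{-1}h\in\bfS_{j_{\H}}$, so $[h]\circ j_{\H}$ is $F$-rational. Two such embeddings give $H_{y}$-conjugate $Y^{\ast}$'s exactly when the embeddings are $H_{y}$-conjugate, because the centralizer of $Y^{\ast}_{k_{\H}}$ is $\bfS_{k_{\H}}$ and an automorphism of $\bfS_{k_{\H}}$ fixing a regular element is trivial. This gives the bijection in (1).

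For (2), I would run the same argument in $\G_{\eta}$ with the torus $\bfS^{\nat}_{k}$. The set $\t\mcJ^{\G_{\eta}}_{G_{\eta}}(j)$ maps to $G_{\eta}$-classes of $X^{\ast}_{k}=[g](X^{\ast}_{j})$ for $g\in\G_{\eta}$, which is the $G_{\eta}$-classes inside the stable $\G_{\eta}$-class of $X^{\ast}_{j}$. Injectivity and surjectivity follow as in (1), using that $X^{\ast}_{j}\in\mfs_{j}^{\nat\ast}$ is regular in $\mfg_{\eta}^{\ast}$. This regularity holds by \textbf{GE1} for $\G_{\eta}$: by \cite[Lemma 5.5]{Oi25}, $(\bfS^{\nat},\vartheta^{\nat})$ is toral of depth $r$ in $\G_{\eta}$. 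A point to check is that $k=[g]\circ j$ extends to an $F$-rational twisted embedding with $\eta\in\t S_{k}$. For this I would use that $g\in\G_{\eta}$ commutes with $\eta$, so $\t k\colonequals[g]\circ\t j$ still contains $\eta$, together with Lemma \ref{lem:S-trans}.

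It then remains to show that the stable class of $X^{\ast}_{j}$ is exactly the set of $X'\in\mfg_{\eta,0+}$ forming a $D$-norm pair with $Y^{\ast}_{k_{\H}}$. The map $\tran$ gives $d\xi_{D}^{\ast}(Y^{\ast}_{j_{\H}})=X^{\ast}_{j}$. As in the proof of Lemma \ref{lem:appearance-of-l-alpha}, this says that $(Y^{\ast}_{j_{\H}},X^{\ast}_{j})$ is a $D$-norm pair, after matching the centre parts via $\mfz_{\H_{y}}\cong\mfz_{\bar\H}\oplus\mfz_{\G_{\eta}}$ and the $\sc$-parts through $\bar{\H}$. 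Replacing $Y^{\ast}_{j_{\H}}$ by a stable conjugate $Y^{\ast}_{k_{\H}}$ does not change the condition, since it depends only on stable classes. Finally, the set of $X'$ forming a $D$-norm pair with a fixed $Y$ is a single stable class: the $\sc$-part is determined up to stable conjugacy through the norm correspondences $\H_{y,\sc}\leftrightarrow\bar{\H}_{\sc}$ and $\bar{\H}\leftrightarrow\G_{\eta,\sc}$, both bijective on regular stable classes, and the central part $X'_{Z}$ is determined. I expect this bookkeeping through Waldspurger's $\bar{\H}$ to be the main obstacle. I would handle it by working directly on the tori $\mft^{\flat}$ and $\mft^{\nat}$ via $\xi_{D}$, where all the identifications are explicit.
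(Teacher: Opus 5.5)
Your proposal is correct and follows the same route as the paper. Part (1) is the standard bijection between rational classes of $F$-rational embeddings and rational classes inside the stable class of a regular element. Part (2) combines $d\xi_{D}^{\ast}(Y^{\ast}_{j_{\H}})=X^{\ast}_{j}$, which gives a $D$-norm pair, with the fact that the $D$-norm partners of a fixed element form a single stable $\G_{\eta}$-class; you spell this out in more detail than the paper, which treats it as essentially obvious.

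One slip: the standard-endoscopic correspondence between $\bar{\H}$ and $\G_{\eta,\sc}$ is not bijective on stable classes. It is only a well-defined, finite-to-one map from stable classes in $\bar{\mfh}$ to those in $\mfg_{\eta,\sc}$. That direction is all your argument uses, so the conclusion is unaffected.
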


\begin{proof}
The assertion (1) is obvious by the definitions of $Y^{\ast}_{k_{\H}}$ and $\mcJ^{\H_{y}}_{H_{y}}(j_{\H})$.
Since we have $d\xi_{D}^{\ast}(Y^{\ast}_{j_{\H}})=X^{\ast}_{j}$, $(Y^{\ast}_{j_{\H}}, X^{\ast}_{j})$ is a $D$-norm pair.
Noting that all elements of $\mfg_{\eta,0+}$ constituting a norm pair with $Y^{\ast}_{k_{\H}}$ are $\G_{\eta}$-conjugate, the assertion (2) follows.
\end{proof}

Lemma \ref{lem:norm-pairs-TECR} enables us to rewrite Proposition \ref{prop:Lie-twisted} as follows:

\begin{prop}\label{prop:Lie-twisted-TECR}
We have
\[
\sum_{Y\overset{D}{\leftrightarrow} X /{\sim_{\H_{y}}}}
\mr{\Delta}\!^{D}(\bar{Y},X_{\sc})
\sum_{k_{\H}\in \mcJ^{\H_{y}}_{H_{y}}(j_{\H})} D^{\H_{y}}_{Y_{k_{\H}}^{\ast}}(Y)
=
\sum_{k\in \t\mcJ^{\G_{\eta}}_{G_{\eta}}(j)}
\mr{\Delta}\!^{D}(\bar{Y}_{j_{\H}}^{\ast},X^{\ast}_{k,\sc})
D^{\G_{\eta}}_{X^{\ast}_{k}}(X).
\]
\end{prop}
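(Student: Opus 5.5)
The plan is to derive this directly from Proposition \ref{prop:Lie-twisted}, applied with the choices $Y^{\ast}\colonequals Y^{\ast}_{j_{\H}}$ (viewed in $\mfh_{y}$ via $B_{\mfh_{y}}$) and $X\colonequals\log(\delta_{\geq r})$. With these choices, the associated $\bar{Y}^{\ast}$ of that proposition is $\bar{Y}^{\ast}_{j_{\H}}$. The remaining work is to identify both index sets with those appearing in the present statement, using Lemma \ref{lem:norm-pairs-TECR}.

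On the left-hand side of Proposition \ref{prop:Lie-twisted}, expand $D^{\H_{y},\st}_{Y^{\ast}}(Y)$ by its definition as a sum of $D^{\H_{y}}_{Y^{\ast\prime}}(Y)$ over $H_{y}$-conjugacy classes $Y^{\ast\prime}$ in the stable class of $Y^{\ast}$. By Lemma \ref{lem:norm-pairs-TECR}(1), these classes are represented exactly once each by $Y^{\ast}_{k_{\H}}$ for $k_{\H}\in\mcJ^{\H_{y}}_{H_{y}}(j_{\H})$. The representation is exactly once because distinct $H_{y}$-classes of embeddings $k_{\H}$ give non-$H_{y}$-conjugate elements: the stabilizer of the regular element $Y^{\ast}_{k_{\H}}$ is $S_{k_{\H}}$, and the embedding is recovered from its image torus together with the $F$-rational identification. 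This converts the inner stable sum into $\sum_{k_{\H}}D^{\H_{y}}_{Y^{\ast}_{k_{\H}}}(Y)$, which gives the left-hand side of the claim.

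On the right-hand side, the index set of Proposition \ref{prop:Lie-twisted} consists of the $G_{\eta}$-classes of $X^{\ast}$ with $(Y^{\ast},X^{\ast})$ a $D$-norm pair. By Lemma \ref{lem:norm-pairs-TECR}(2), this set is parametrized by $X^{\ast}_{k}$ for $k\in\t\mcJ^{\G_{\eta}}_{G_{\eta}}(j)$. The same regularity argument, with stabilizer $S^{\nat}_{k}$ and $\bfS_{k}=\bfZ_{\G}(\bfS^{\nat}_{k})$ from Proposition \ref{prop:twisted-tori}, shows that $k\mapsto X^{\ast}_{k}$ is injective on $G_{\eta}$-classes. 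The transfer factor $\mr{\Delta}\!^{D}(\bar{Y}^{\ast},X^{\ast}_{\sc})$ then becomes $\mr{\Delta}\!^{D}(\bar{Y}^{\ast}_{j_{\H}},X^{\ast}_{k,\sc})$.

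The main obstacle I expect is the bookkeeping in these bijections. One must check that the genericity of $X^{\ast}$ (condition \textbf{GE1}) makes $X^{\ast}_{k}$ and $Y^{\ast}_{k_{\H}}$ strongly regular semisimple in $\mfg_{\eta}$ and $\mfh_{y}$; Lemma \ref{lem:appearance-of-l-alpha} gives the nonvanishing on restricted coroots. One must also check that Proposition \ref{prop:Lie-twisted}, though stated for elements of depth $0+$, applies to these depth $-r$ dual elements. For the latter I would either observe that its proof never used the depth condition on $Y^{\ast}$, or rescale by a power of $p$ using the homogeneity of $\hat{\iota}$ and of the Lie algebra transfer factors.
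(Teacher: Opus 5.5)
Your proposal is correct and matches the paper's argument. The paper specializes Proposition~\ref{prop:Lie-twisted} to $Y^{\ast}=Y^{\ast}_{j_{\H}}$ and $X=\log(\delta_{\geq r})$, then uses Lemma~\ref{lem:norm-pairs-TECR} to replace $D^{\H_{y},\st}_{Y^{\ast}}$ by the sum over $k_{\H}\in\mcJ^{\H_{y}}_{H_{y}}(j_{\H})$ and the $D$-norm-pair index set by $\t\mcJ^{\G_{\eta}}_{G_{\eta}}(j)$. Your extra injectivity and regularity checks, and your remark about the depth hypothesis on the dual elements, address details the paper leaves implicit, and your treatment of them is fine.
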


\subsubsection{$a$-data and $\chi$-data for restricted roots}\label{subsubsec:a-chi-res}
In our computation of the transfer factor carried out later, we need to fix sets of $a$-data and $\chi$-data for the restricted roots.
We explain our choice in the following.

We first discuss the $\bfG$-side.
Suppose that $j\in\t\mcJ^{\G}_{\G_{\eta}}$.
For any $k\in\t\mcJ^{\G_{\eta}}_{G_{\eta}}(j)$, we get an $\eta$-stable (hence also $\eta_{0}$-stable) tame elliptic toral pair $(\bfS_{k},\vartheta'_{k})$ of $\G$.
Then we have the set $\Phi_{\res}(\G,\bfS_{k})$ of restricted roots.
We define a set $a^{\res}_{k}=\{a^{\res}_{k,\alpha_{\res}}\}_{\alpha_{\res}\in \Phi_{\res}(\G,\bfS_{k})}$ of $a$-data for $\Phi_{\res}(\G,\bfS_{k})$\symdef{a-res-k}{$a^{\res}_{k}$} by
\[
a^{\res}_{k,\alpha_{\res}}=\langle H_{\alpha_{\res}},X_{k}^{\ast}\rangle,
\]\symdef{a-res-k}{$a^{\res}_{k}$}
where
\begin{itemize}
\item
$H_{\alpha_{\res}}\colonequals  d\alpha_{\res}^{\vee}(1)\in \bmfs_{k}^{\nat}(F_{\alpha_{\res}})$, and
\item
$X_{k}^{\ast}\in\mfs^{\nat\ast}_{k,-r}$ is an element associated to $\vartheta'_{k}$ as in Section \ref{subsubsec:Lie-trans}.
\end{itemize}

We define a set $\chi^{\res}_{k}=\{\chi^{\res}_{k,\alpha_{\res}}\}_{\alpha_{\res}\in \Phi_{\res}(\G,\bfS_{k})}$ of $\chi$-data for $\Phi_{\res}(\G,\bfS_{k})$\symdef{chi-res-k}{$\chi^{\res}_{k}$} as follows:\symdef{chi-res-k}{$\chi^{\res}_{k}$}
\begin{itemize}
\item
For $\alpha_{\res}\in\Phi_{\res}(\G,\bfS_{k})_{\asym}$, let $\chi^{\res}_{k,\alpha_{\res}}$ be the trivial character of $F_{\alpha_{\res}}^{\times}$.
\item
For $\alpha_{\res}\in\Phi_{\res}(\G,\bfS_{k})_{\ur}$, let $\chi^{\res}_{k,\alpha_{\res}}$ be the unique unramified nontrivial quadratic character of $F_{\alpha_{\res}}^{\times}$.
\item
For $\alpha_{\res}\in\Phi_{\res}(\G,\bfS_{k})_{\ram}$, let $\chi^{\res}_{k,\alpha_{\res}}$ be the unique tamely ramified character of $F_{\alpha_{\res}}^{\times}$ characterized by the following properties:
\[
\chi^{\res}_{k,\alpha_{\res}}|_{F_{\pm\alpha_{\res}}^{\times}}=\kappa_{\alpha_{\res}}
\quad\text{and}\quad
\chi^{\res}_{j,\alpha_{\res}}(2a^{\res}_{k,\alpha_{\res}})=\lambda_{\alpha_{\res}}.
\]
\end{itemize}

\begin{rem}
We can check that the above conditions uniquely specify the tamely ramified quadratic character $\chi_{k,\alpha_{\res}}^{\res}$ for $\alpha_{\res}\in\Phi_{\res}(\G,\bfS_{k})_{\ram}$ in the same manner as in \cite[Section 4.7]{Kal19}.
Indeed, if we let $\sigma_{\alpha_{\res}}\in\Gal(F_{\alpha_{\res}}/F_{\pm\alpha_{\res}})$ be the unique nontrivial element, then we have $\sigma_{\alpha_{\res}}(H_{\alpha_{\res}})=\sigma_{\alpha_{\res}}(d\alpha^{\vee}_{\res}(1))=d\sigma_{\alpha_{\res}}(\alpha^{\vee}_{\res})(1)=-d\alpha^{\vee}_{\res}(1)=-H_{\alpha_{\res}}$ and $\sigma_{\alpha_{\res}}(X_{k}^{\ast})=X_{k}^{\ast}$.
Hence $\sigma_{\alpha_{\res}}(a_{k,\alpha_{\res}}^{\res})=-a_{k,\alpha_{\res}}^{\res}$.
This implies that the valuation (normalized with respect to $F_{\alpha_{\res}}$) of $a_{k,\alpha_{\res}}^{\res}$ is odd.
\end{rem}

We note that, by restriction, $(\bfS_{k},\vartheta'_{k})$ induces a tame elliptic toral pair $(\bfS_{k}^{\nat},\vartheta^{\prime\nat}_{k})$ of $\G_{\eta_{0}}$, where $\vartheta^{\prime\nat}_{k}=\vartheta'_{k}|_{S_{k}^{\nat}}$ (\cite[Lemma 5.5]{Oi25}).
Then, by the construction of Kaletha (see Section \ref{subsec:invariants-for-LLC}), we have a set $a_{\vartheta^{\prime\nat}_{k}}=\{a_{\vartheta^{\prime\nat}_{k},\alpha}\}_{\alpha\in\Phi(\G_{\eta_{0}},\bfS_{k}^{\nat})}$ of $a$-data and a set $\chi_{\vartheta^{\prime\nat}_{k}}=\{\chi_{\vartheta^{\prime\nat}_{k},\alpha}\}_{\alpha\in\Phi(\G_{\eta_{0}},\bfS_{k}^{\nat})}$ of $\chi$-data for $\Phi(\G_{\eta_{0}},\bfS_{k}^{\nat})$.
We shortly write $(a_{k}^{\nat},\chi_{k}^{\nat})$ for $(a_{\vartheta^{\prime\nat}_{k}},\chi_{\vartheta^{\prime\nat}_{k}})$.\symdef{a-nat-k}{$a^{\nat}_{k}$}\symdef{chi-nat-k}{$\chi^{\nat}_{k}$}
As explained in Section \ref{subsec:rootsys}, the set $\Phi(\G_{\eta_{0}},\bfS_{k}^{\nat})$ can be regarded as a subset (root subsystem) of $\Phi_{\res}(\G,\bfS_{k})$.
By construction, we have the following:

\begin{lem}\label{lem:a-res-eta}
The sets of $a$-data and $\chi$-data $(a_{k}^{\nat},\chi_{k}^{\nat})$ are restrictions of $(a_{k}^{\res},\chi_{k}^{\res})$.
\end{lem}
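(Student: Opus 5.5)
The plan is to compare both sets of data root by root, using the identification of $\Phi(\G_{\eta_{0}},\bfS_{k}^{\nat})$ with a subset of $\Phi_{\res}(\G,\bfS_{k})$ from Sections \ref{subsec:twisted-tori} and \ref{subsec:rootsys}. Under this identification a root $\alpha_{\res}$ of $(\G_{\eta_{0}},\bfS^{\nat}_{k})$ has the same Galois stabilizers in both settings, since the $\Gamma$-action on $\Phi(\G_{\eta_{0}},\bfS_{k}^{\nat})$ is inherited from that on $\Phi_{\res}(\G,\bfS_{k})$. Hence $F_{\alpha_{\res}}$, $F_{\pm\alpha_{\res}}$, $\kappa_{\alpha_{\res}}$, $\lambda_{\alpha_{\res}}$ and the asymmetric/unramified/ramified trichotomy agree whether $\alpha_{\res}$ is viewed as a root of $\G_{\eta_{0}}$ or as a restricted root of $\G$. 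I will check this first, since everything else rests on it.

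Next I compare the $a$-data. Kaletha's $a$-datum for $(\bfS_{k}^{\nat},\vartheta_{k}^{\prime\nat})$ is $\langle H^{\nat}_{\alpha_{\res}},X^{\nat\ast}\rangle$. Here $H^{\nat}_{\alpha_{\res}}=d\alpha_{\res}^{\vee}(1)$ is taken with the coroot of $\G_{\eta_{0}}$, which by Section \ref{subsec:twisted-tori} is $N(\alpha^{\vee})$; this is exactly the coroot used in defining $H_{\alpha_{\res}}$ for $a^{\res}_{k}$. The element $X^{\nat\ast}$ is any element realizing $\vartheta^{\prime\nat}_{k}|_{S^{\nat}_{k,r}}$. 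By construction it may be taken to be the image of $X^{\ast}_{k}$ in $\mfs^{\nat\ast}_{k}$, since restriction of characters from $S_{k}$ to $S^{\nat}_{k}$ corresponds, through the compatible exponential maps, to restriction of linear forms from $\mfs_{k}$ to $\mfs_{k}^{\nat}$. Since $H_{\alpha_{\res}}\in\mfs^{\nat}_{k}$, pairing with $X^{\ast}_{k}$ or with its image gives the same value, so the $a$-data agree.

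There is a subtlety I must handle: $X^{\nat\ast}$ is only well defined modulo $\mfs^{\nat\ast}_{k,(-r)+}$, so Kaletha's $a$-data are only well defined up to this ambiguity. I will make the same choice on both sides and note that the $\chi$-data depend only on $a$ modulo $1+\mfp$, so they are unaffected by it.

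Finally I compare the $\chi$-data. For asymmetric and unramified roots both definitions prescribe the same character (trivial, respectively the unramified quadratic character), because the type agrees. For ramified $\alpha_{\res}$, both are the unique tamely ramified character restricting to $\kappa_{\alpha_{\res}}$ and sending $2a_{\alpha_{\res}}$ to $\lambda_{\alpha_{\res}}$. Since the $a$-data and $\lambda_{\alpha_{\res}}$ agree, uniqueness (as in the preceding remark) gives equality.

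The only step I expect to need care is matching the coroot normalization $H_{\alpha_{\res}}$ with the coroot of $\G_{\eta_{0}}$, i.e. $N(\alpha^{\vee})$ versus possibly a multiple. This is where the assumption that there is no restricted root of type $2$ or $3$ enters, through the description of $\Phi^{\vee}(\G_{\nu\theta},\T^{\theta,\circ})$ in Section \ref{subsec:twisted-tori}.
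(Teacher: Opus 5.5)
Your proposal is correct and is essentially the paper's argument. The paper asserts the lemma simply ``by construction,'' and your root-by-root check unpacks exactly that: the identification $\Phi(\G_{\eta_0},\bfS_k^{\nat})\subset\Phi_{\res}(\G,\bfS_k)$ is Galois-equivariant, the coroots of $\G_{\eta_0}$ are the $N(\alpha^{\vee})$, the representative of $\vartheta_k^{\prime\nat}$ can be taken to be the image of $X_k^{\ast}$, and the $\chi$-data then match case by case by uniqueness. Your remark that the $\chi$-data do not depend on the choice of representative modulo $\mfs^{\nat\ast}_{k,(-r)+}$ is a correct addition, since the paper tacitly uses the same $X_k^{\ast}$ on both sides.
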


We next discuss the $\bfH$-side.
Suppose that $D\in \bfD(y,\eta)$, $j_{\H}\in\mcJ^{\H}_{\H_{y}}$, and $j\in\t\mcJ^{\G}_{\G_{\eta}}$ satisfy $\tran(D,j_{\H})=j$.
For any $k_{\H}\in\mcJ^{\H_{y}}_{H_{y}}$, we get a tame elliptic toral pair $(\bfS_{k_{\H}},\vartheta'_{k_{\H}})\colonequals (\bfS_{\H,k_{\H}},\vartheta'_{\H,k_{\H}})$ of $\H$.
By applying Kaletha's construction (Section \ref{subsec:invariants-for-LLC}) to $(\bfS_{k_{\H}},\vartheta'_{k_{\H}})$, we get the sets $a_{k_{\H}}\colonequals a_{\vartheta'_{k_{\H}}}$ of $a$-data and $\chi_{k_{\H}}\colonequals \chi_{\vartheta'_{k_{\H}}}$ of $\chi$-data with respect to $\Phi(\H,\bfS_{k_{\H}})$.
As explained in Section \ref{subsec:rootsys}, the set $\Phi(\H_{y},\bfS_{k_{\H}})$ can be regarded as a subset (root subsystem) of $\Phi(\G_{\eta},\bfS_{k}^{\nat})$, which is a subset of $\Phi(\G_{\eta_{0}},\bfS_{k}^{\nat})$.

\begin{lem}\label{lem:a-res-H}
Suppose that $\alpha_{y}\in\Phi(\H_{y},\bfS_{k_{\H}})$ is identified with $\alpha_{\eta}\in\Phi(\G_{\eta},\bfS_{k}^{\nat})$.
Let $\alpha^{\vee}\in \Phi^{\vee}(\G,\bfS_{k})$ be the coroot satisfying $\alpha_{y}^{\vee}=p_{\ast}(\alpha^{\vee})$ and $\alpha_{\eta}^{\vee}=N(\alpha^{\vee})$.
Then, we have $l_{\alpha}\cdot a_{k_{\H},\alpha_{y}}=a_{k,\alpha_{\eta}}^{\res}$.
\end{lem}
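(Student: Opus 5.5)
The plan is to unwind both sides into pairings of a coroot with an element realizing the toral character, and then read off the identity from Lemma \ref{lem:appearance-of-l-alpha}. By definition, $a_{k_{\H},\alpha_{y}}=\langle d\alpha_{y}^{\vee}(1),Y^{\ast}_{k_{\H}}\rangle$, where $Y^{\ast}_{k_{\H}}$ realizes $\vartheta'_{k_{\H}}|_{S_{k_{\H},r}}$ and is taken to be $(dk_{\H}^{\ast})^{-1}(Y^{\ast})$ as in Section \ref{subsubsec:Lie-trans}. On the other side, $a^{\res}_{k,\alpha_{\eta}}=\langle d\alpha_{\eta}^{\vee}(1),X^{\ast}_{k}\rangle$, with $X_{k}^{\ast}\in\mfs_{k}^{\nat\ast}$. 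Kaletha's $a$-data depend only on the class of the realizing element modulo the deeper lattice; since $\langle H_{\alpha},-\rangle$ has valuation $-r$ on the generic part, the choice does not matter. So it suffices to prove
\[
l_{\alpha}\cdot\langle d\alpha_{y}^{\vee}(1),Y^{\ast}_{k_{\H}}\rangle=\langle d\alpha_{\eta}^{\vee}(1),X^{\ast}_{k}\rangle .
\]

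First I will reduce to $k=j$ and $k_{\H}=j_{\H}$. Since $k\in\t\mcJ^{\G_{\eta}}_{G_{\eta}}(j)$, we have $k=[g]\circ j$ with $g\in\G_{\eta}$, and $X^{\ast}_{k}={}^{g}X^{\ast}_{j}$. Likewise $k_{\H}=[h]\circ j_{\H}$ with $h\in\H_{y}$. Transporting the coroots $\alpha_{\eta}^{\vee}$ and $\alpha_{y}^{\vee}$ along these conjugations leaves both pairings unchanged, because they are defined through $k^{\ast}$ and $k_{\H}^{\ast}$ and hence only through the abstract tori $\bfS^{\nat}$ and $\bfS_{\H}$. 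The identification of $\alpha_{y}$ with $\alpha_{\eta}$ is likewise transported. This needs the identification of roots to be intrinsic to $\bfS_{\H}\cong\bfS_{\theta_{\bfS}}$ and $\bfS$, that is, independent of $k$. That follows from $\xi_{D}\circ j=j_{\H}\circ\xi_{\bfS}$ and the compatibility of the descent lemma (Proposition \ref{prop:tran-well-def}) under $(\H_{y},\G_{\eta})$-conjugacy.

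For $k=j$ and $k_{\H}=j_{\H}$, we have $d\xi_{D}^{\ast}(Y^{\ast}_{j_{\H}})=X^{\ast}_{j}$, as noted just before Lemma \ref{lem:norm-pairs-TECR}. Thus $(Y^{\ast}_{j_{\H}},X^{\ast}_{j})$ lie in $\mft^{\flat\ast}$ and $\mft^{\nat\ast}$ and are matched by $\xi_{D}$. This is exactly the situation of the proof of Lemma \ref{lem:appearance-of-l-alpha}. There $\alpha_{\eta}^{\vee}=N(\alpha^{\vee})=\sum_{\beta\in\Theta\alpha}\beta^{\vee}$, each summand pairs with the $\theta$-invariant $X_{j}^{\ast}$ to give the same value, and $p_{\ast}(\alpha^{\vee})$ pairs with $Y^{\ast}$ through $\xi_{D}$ to give that common value. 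Hence
\[
\langle d\alpha_{\eta}^{\vee}(1),X^{\ast}_{j}\rangle=l_{\alpha}\langle d\alpha^{\vee}(1),X_{j}^{\ast}\rangle=l_{\alpha}\langle d\alpha_{y}^{\vee}(1),Y^{\ast}_{j_{\H}}\rangle .
\]
The invariance of $X^{\ast}$ under $\theta_{\bfS}$ is Corollary \ref{cor:vartheta}, together with the choice of $X^{\ast}$ in Section \ref{subsec:TCF}.

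The main obstacle is the bookkeeping in the reduction. One has to check that the identification of $\Phi(\H_{y},\bfS_{k_{\H}})$ with a subset of $\Phi(\G_{\eta},\bfS_{k}^{\nat})$ (via $D$, $i^{\vee}$, $\xi_{\dia}$, $\xi_{\flat}$) agrees with the one transported from $(j,j_{\H})$ by $[g]$ and $[h]$. I expect this to follow by replacing $D$ with the equivalent diagram $({}^{h}\B^{\flat},{}^{h}\T^{\flat},{}^{g}\B^{\dia},{}^{g}\T^{\dia})$ and using Lemma \ref{lem:unique-diag}-type independence of $\xi_{D}$.
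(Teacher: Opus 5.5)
Your proof is correct and is essentially the paper's argument: the paper notes that $(Y^{\ast}_{k_{\H}},X^{\ast}_{k})$ is a $D$-norm pair by Lemma~\ref{lem:norm-pairs-TECR} and then applies Lemma~\ref{lem:appearance-of-l-alpha}. Your reduction to $(j,j_{\H})$ via the $(\H_{y},\G_{\eta})$-conjugate diagram $({}^{h}\B^{\flat},{}^{h}\T^{\flat},{}^{g}\B^{\dia},{}^{g}\T^{\dia})$ is an explicit unpacking of those two lemmas. (One minor correction: the remark that the choice of realizing element ``does not matter'' is inaccurate, since the exact $a$-data depend on that element and only their classes modulo $1+\mfp$ do not; the identity therefore genuinely requires the compatible choices $Y^{\ast}_{k_{\H}}$ and $X^{\ast}_{k}$, which are exactly the ones you and the paper use, so nothing breaks.)
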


\begin{proof}
By definition, we have $a_{k_{\H},\alpha_{y}}=\langle H_{\alpha_{y}},Y_{k_{\bfH}}^{\ast}\rangle$ and $a^{\res}_{k,\alpha_{\eta}}=\langle H_{\alpha_{\eta}},X_{k}^{\ast}\rangle$.
Since $(Y_{k_{\bfH}}^{\ast},X_{k}^{\ast})$ is a $D$-norm pair (Lemma \ref{lem:norm-pairs-TECR}) and we have $H_{\alpha_{y}}=d\alpha_{y}^{\vee}(1)$, $H_{\alpha_{\eta}}=d\alpha_{\eta}^{\vee}(1)$, we get the equality $l_{\alpha}\cdot a_{k_{\H},\alpha_{y}}=a_{k,\alpha_{\eta}}^{\res}$ by Lemma \ref{lem:appearance-of-l-alpha}.
\end{proof}

\subsubsection{Twisted character formula of a normalized form}

In the following, for each $j\in\t{\mcJ}^{\G}_{G}$, we fix a set of elements $\{g_{k}\in G\mid k\in\t\mcJ^{G}_{G_{\eta}}(j)\}$ such that $\{[g_{k}]^{-1}\circ j\}$ is a (fixed) set of representatives of $\t\mcJ^{G}_{G_{\eta}}(j)$.
(Note that this set also represents $\{g\in S_{j}\backslash G/G_{\eta} \mid {}^{g}\eta\in \t{S}_{j}\}$ by Lemma \ref{lem:index-bij}.)
Moreover, we fix a topologically semisimple element $\ul{\eta}_{j}$ of the twisted space $\t{S}_{j}$.\symdef{eta-ul-j}{$\ul{\eta}_{j}$}
For each $k\in\t{\mcJ}^{G}_{G_{\eta}}(j)$, we fix a base point $\ul{\eta}_{k}$ of the twisted space $\t{S}_{k}$ by $\ul{\eta}_{k}\colonequals [g_{k}]^{-1}(\ul{\eta}_{j})$.\symdef{eta-ul-k}{$\ul{\eta}_{k}$}

For any $k\in\t\mcJ^{\G}_{G_{\eta}}$, we define a character $\epsilon_{\vartheta_{k}}^{\star}$ of $S_{k}$ by
\[
\epsilon_{\vartheta_{k}}^{\star}(s)
\colonequals 
\prod_{\begin{subarray}{c} \ddot{\alpha}\in\ddot{\Xi}(\G,\bfS_{k})\\ \alpha_{\res}: \, \ram\end{subarray}}\epsilon_{\alpha}(s).
\]\symdef{epsilon-star-vartheta-k}{$\epsilon_{\vartheta_{k}}^{\star}$}
(Recall that $\ddot{\Xi}(\G,\bfS_{k})$ is the set of $\Sigma$-orbits of roots appearing in the Heisenberg quotient determined by the character $\vartheta_{k}$. Also recall that any such root must be asymmetric or unramified, hence the character $\epsilon_{\alpha}$ is defined; see Section \ref{subsec:Heisen-structure}.)

\begin{prop}\label{prop:TCF-final}
Let $j\in\t\mcJ^{\G}_{G}$.
For each $k\in\t\mcJ^{G}_{G_{\eta}}(j)$, we write $\eta=s_{k}\cdot\ul{\eta}_{k}\in\t{S}_{k}$.
Then we have
\begin{multline*}
\Phi_{\t{\pi}_{j}}(\delta)
=
C_{\ul{\eta}_{j}}
\cdot(-1)^{|\ddot{\Xi}_{\eta_{0},\ur}|}
\cdot e(\G_{\eta_{0}})\cdot e(\G_{\eta})
\cdot \varepsilon(\T_{\G_{\eta_{0}}^{\ast}})\cdot\varepsilon(\T_{\G_{\eta}^{\ast}})^{-1}\\
\cdot\sum_{k\in\t\mcJ^{G}_{G_{\eta}}(j)}
\vartheta_{k}(s_{k})
\cdot\epsilon_{\bfS_{k},\ram}(s_{k})
\cdot\epsilon_{\vartheta_{k}}^{\star}(s_{k})
\cdot\Delta_{\II}^{\G_{\eta_{0}}}[a_{k}^{\res},\chi_{k}^{\res}](\eta_{+})
\cdot\hat{\iota}^{\G_{\eta}}_{X_{k}^{\ast}} \bigl(\log(\delta_{\geq r})\bigr),
\end{multline*}
where $a_{k}^{\res}$ and $\chi_{k}^{\res}$ are the sets of $a$-data and $\chi$-data for $\Phi(\bfG_{\eta_{0}},\bfS_{k}^{\nat})$ as in Section \ref{subsubsec:a-chi-res}.
\end{prop}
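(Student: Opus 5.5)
Start from Proposition \ref{prop:TCF-normal} applied to the toral pair $(\bfS_{j},\vartheta'_{j})$ with base point $\ul{\eta}_{j}$. This writes $\Phi_{\t{\pi}_{j}}(\delta)$ as the prefactor displayed in the statement times a sum over $g\in S_{j}\backslash G/G_{\eta}$ with ${}^{g}\eta\in\t{S}_{j}$. The summands involve $\t\epsilon_{\Xi}(s_{g})\cdot\vartheta'_{j}(s_{g})$, the factor $\Delta_{\II}^{\G_{{}^{g}\eta_{0}}}[a^{\nat},\chi^{\nat}]({}^{g}\eta_{+})$ and the Fourier transform $\hat{\iota}^{\G_{{}^{g}\eta}}_{X_{j}^{\ast}}(\log({}^{g}\delta_{\geq r}))$.

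The first step is to reindex by Lemma \ref{lem:index-bij}, using the fixed representatives $g_{k}$ with $k=[g_{k}]^{-1}\circ j$. Conjugating every ingredient by $[g_{k}]^{-1}$ moves the data to the torus $\bfS_{k}$ and the point $\eta$ itself:
\begin{itemize}
\item ${}^{g_{k}}\eta=s_{g_{k}}\ul{\eta}_{j}$ becomes $\eta=s_{k}\ul{\eta}_{k}$, so $s_{k}=[g_{k}]^{-1}(s_{g_{k}})$;
\item $X_{j}^{\ast}$ becomes $X_{k}^{\ast}$, so the Fourier transform becomes $\hat{\iota}^{\G_{\eta}}_{X_{k}^{\ast}}(\log\delta_{\geq r})$;
\item the $\Delta_{\II}$ factor becomes $\Delta_{\II}^{\G_{\eta_{0}}}[a_{k}^{\nat},\chi_{k}^{\nat}](\eta_{+})$, and by Lemma \ref{lem:a-res-eta} this equals $\Delta_{\II}^{\G_{\eta_{0}}}[a_{k}^{\res},\chi_{k}^{\res}](\eta_{+})$.
\end{itemize}
The prefactor is unchanged, since $\eta$ is fixed. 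It remains to check that the character values transform correctly. Here $\vartheta'_{j}\circ[g_{k}]=\vartheta'_{k}$ holds by definition, because conjugation by $g_{k}\in G$ preserves the $G$-class.

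The second step unpacks $\vartheta'_{k}=\epsilon_{\vartheta_{k}}\cdot(\vartheta\cdot\zeta_{S}^{-1})\circ k^{-1}$. We assumed $\chi=\chi_{\vartheta_{\h\j}}$, so $\zeta$ is trivial and $\vartheta'_{k}=\epsilon_{\vartheta_{k},\asym}\,\epsilon_{\vartheta_{k},\ur}\,\epsilon_{\bfS_{k},\ram}\cdot\vartheta_{k}$. The twisted sign $\t\epsilon_{\Xi}$ is the product of $\epsilon_{\alpha}$ over those $\ddot{\alpha}\in\ddot{\Xi}$ whose restricted root is asymmetric or unramified. The product $\epsilon_{\vartheta_{k},\asym}\epsilon_{\vartheta_{k},\ur}$ is the product of $\epsilon_{\alpha}$ over all $\ddot{\alpha}\in\ddot{\Xi}$; this uses that $\Xi$ contains no symmetric ramified root. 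The two products therefore differ exactly by the factor over ramified $\alpha_{\res}$, which is $\epsilon^{\star}_{\vartheta_{k}}$. Since these are quadratic characters, we get
\[
\t\epsilon_{\Xi}\cdot\vartheta'_{k}=\vartheta_{k}\cdot\epsilon_{\bfS_{k},\ram}\cdot\epsilon^{\star}_{\vartheta_{k}},
\]
evaluated at $s_{k}$. Substituting gives the stated formula.

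The main obstacle is bookkeeping rather than mathematics. Proposition \ref{prop:TCF-normal} implicitly uses the base point $\ul{\eta}_{j}$ and the $a$/$\chi$-data attached to $\vartheta$. One must verify that the constant $C_{\ul{\eta}_{j}}$ and the term $(-1)^{|\ddot{\Xi}_{\eta_{0},\ur}|}$ are compatible with the transported base points $\ul{\eta}_{k}=[g_{k}]^{-1}(\ul{\eta}_{j})$. One must also check that $\vartheta^{\nat}$ in Proposition \ref{prop:TCF-normal} is the restriction of $\vartheta'_{k}$, as opposed to $\vartheta_{k}$. The sign characters are tame, so they do not affect the positive-depth data $a$ and $\chi$. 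Both points should follow from $G$-equivariance of all the constructions.
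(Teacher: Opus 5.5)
Your proposal is correct and follows essentially the same route as the paper. You apply Proposition~\ref{prop:TCF-normal} to $(\bfS_{j},\vartheta'_{j})$ with base point $\ul{\eta}_{j}$, absorb $\t\epsilon_{\Xi}$ into $\epsilon_{\vartheta,\asym}\epsilon_{\vartheta,\ur}$ (since $\Xi$ has no symmetric ramified roots) to leave $\epsilon_{\bfS,\ram}\cdot\epsilon^{\star}_{\vartheta}$, and transport along $g_{k}$ via $s_{j,g_{k}}={}^{g_{k}}s_{k}$ and Lemma~\ref{lem:a-res-eta}; the only difference is that the paper does the sign cancellation on $S_{j}$ before transporting rather than after.

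Your worry about $C_{\ul{\eta}_{j}}$ is moot. The statement keeps the constant attached to $\ul{\eta}_{j}$, and the transported base points $\ul{\eta}_{k}$ only serve to define $s_{k}$.
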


\begin{proof}
By Proposition \ref{prop:TCF-normal}, we have
\begin{multline*}
\Phi_{\t{\pi}_{j}}(\delta)
=
C_{\ul{\eta}_{j}}
\cdot(-1)^{|\ddot{\Xi}_{\eta_{0},\ur}|}
\cdot e(\G_{\eta_{0}})\cdot e(\G_{\eta})
\cdot \varepsilon(\T_{\G_{\eta_{0}}^{\ast}})\cdot\varepsilon(\T_{\G_{\eta}^{\ast}})^{-1}\\
\sum_{\begin{subarray}{c} g\in S_{j}\backslash G/G_{\eta} \\ {}^{g}\eta\in\t{S}_{j}\end{subarray}}
\t\epsilon_{\Xi}(s_{j,g})\cdot\vartheta'_{j}(s_{j,g})
\cdot\Delta_{\II}^{\G_{{}^{g}\eta_{0}}}[a_{\vartheta'_{j}}^{\nat},\chi_{\vartheta'_{j}}^{\nat}]({}^{g}\eta_{+})
\cdot\hat{\iota}^{\G_{{}^{g}\eta}}_{X_{j}^{\ast}}(\log({}^{g}\delta_{\geq r})),
\end{multline*}
where $s_{j,g}\in S_{j}$ is the element satisfying ${}^{g}\eta=s_{j,g}\ul{\eta}_{j}$.
Recall that $\vartheta'_{j}=\epsilon_{\vartheta_{j}}\cdot\vartheta_{j}$ (Section \ref{subsec:const-packet}).
Here, we caution that we took the initial regular supercuspidal packet datum such that its $\chi$-data is equal to $\chi_{\vartheta_{\h\j}}$, hence the zeta character contained in $\vartheta'_{j}$ is trivial.
As we have $\epsilon_{\vartheta_{j}}=\epsilon_{\vartheta_{j},\asym}\cdot\epsilon_{\vartheta_{j},\ur}\cdot\epsilon_{\bfS_{j},\ram}$ and $\t\epsilon_{\Xi}$ is the product of $\epsilon_{\alpha}$'s for $\alpha\in\ddot{\Xi}$ such that $\alpha_{\res}$ is asymmetric or unramified (see Section \ref{subsec:TCF}), we get
\[
\t\epsilon_{\Xi}(s_{j,g})\cdot\vartheta'_{j}(s_{j,g})
=\epsilon_{\bfS_{j},\ram}(s_{j,g})\cdot\epsilon_{\vartheta_{j}}^{\star}(s_{j,g})\cdot\vartheta_{j}(s_{j,g}).
\]

By our choice of base points, when $g=g_{k}$, we have $\ul{\eta}_{j}={}^{g}\ul{\eta}_{k}$, hence $s_{j,g}={}^{g}s_{k}$.
Thus we get $\vartheta_{j}(s_{j,g})=\vartheta(j^{-1}(s_{j,g}))=\vartheta(j^{-1}\circ[g](s_{k}))=\vartheta(k^{-1}(s_{k}))=\vartheta_{k}(s_{k})$.
Similarly, we have $\epsilon_{\bfS_{j},\ram}(s_{j,g})=\epsilon_{\bfS_{j},\ram}({}^{g}s_{k})=\epsilon_{\bfS_{k},\ram}(s_{k})$,  $\epsilon_{\vartheta_{j}}^{\star}(s_{j,g})=\epsilon_{\vartheta_{j}}^{\star}({}^{g}s_{k})=\epsilon_{\vartheta_{k}}^{\star}(s_{k})$, and $\hat{\iota}^{\G_{{}^{g}\eta}}_{X_{j}^{\ast}}(\log({}^{g}\delta_{\geq r}))=\hat{\iota}^{\G_{\eta}}_{{}^{g^{-1}}X_{j}^{\ast}}(\log(\delta_{\geq r}))=\hat{\iota}^{\G_{\eta}}_{X_{k}^{\ast}}(\log(\delta_{\geq r}))$.
Moreover, by noting that $\vartheta^{\prime\nat}_{j}$ and $\vartheta^{\nat}_{j}$ give rise to the same $a$-data and $\chi$-data (Section \ref{subsec:invariants-for-LLC}) and using Lemma \ref{lem:a-res-eta}, we have $\Delta_{\II}^{\G_{{}^{g}\eta_{0}}}[a_{\vartheta'_{j}}^{\nat},\chi_{\vartheta'_{j}}^{\nat}]({}^{g}\eta_{+})=\Delta_{\II}^{\G_{\eta_{0}}}[a_{k}^{\res},\chi_{k}^{\res}](\eta_{+})$.
Thus we arrive at the claimed formula.
\end{proof}

\subsubsection{Third factor $\Delta_{\III}$: revisited}\label{subsubsec:tran-III-revisited}

We next rewrite Proposition \ref{prop:Delta-III-same-torus} in a form suitable for our purpose.
Suppose that $D=(\B^{\flat},\T^{\flat},\B^{\dia},\T^{\dia})\in \bfD(y,\eta)$, $j_{\H}\in\mcJ^{\H}_{\H_{y}}$, and $j\in\t\mcJ^{\G}_{\G_{\eta}}$ satisfy $\tran(D,j_{\H})=j$.
Hence, we may and do assume that $\T^{\flat}=\bfS_{j_{\H}}$ and $\T^{\dia}=\bfS_{j}$.
Let $k\in\t\mcJ^{\G_{\eta}}_{G_{\eta}}(j)$.
Note that then we also have $\bfS_{k}\cong\bfS_{j}$.
Combining it with the $F$-rational homomorphism $\xi_D\colon \bfT^\dia \rightarrow\bfT^\flat$, we have an $F$-rational isomorphism $\bfS_{k}\cong \bfS_{j}\rightarrow\bfS_{j_\bfH}$.

We introduce the following character according to \cite[Proposition 5.25]{Kal19-cover}:
\begin{defn}
Let $\zeta_{\desc}\colon S_{k}\rightarrow\C^{\times}$ be a character given by
\[
\zeta_{\desc}(s)
\colonequals 
\prod_{\begin{subarray}{c}\ddot{\alpha}\in\ddot{\Phi}(\G,\bfS_{k})_{\asym}\\ \text{$\alpha_{\res}$: ramified} \end{subarray}} \epsilon_{\alpha}(s)
\prod_{\begin{subarray}{c}\ddot{\alpha}\in\dot{\Phi}(\G,\bfS_{k})_{\ur}\\ \text{$\alpha_{\res}$: ramified} \end{subarray}} \epsilon_{\alpha}(s).
\]\symdef{zeta-desc}{$\zeta_{\desc}$}
\end{defn}

Our aim here is to show the following:
\begin{prop}\label{prop:delta-III-ratio}
Suppose that $(\b\gamma,\b\delta),(\b\gamma,'\b\delta')\in\mcD$ are such that $D\in\bfD(\b\gamma,\b\delta)$ and $D\in\bfD(\b\gamma',\b\delta')$.
Then we have
\[
\Delta_{\III}[a_{k}^{\res},\chi_{k}^{\res}](\b\gamma,\b\delta;\b\gamma',\b\delta')
=
\frac{\vartheta_{k}(\b\delta/\b\delta')}{\vartheta_{j_{\H}}(\b\gamma/\b\gamma')}
\cdot\zeta_{\desc}(\b\delta/\b\delta')
\cdot \zeta_{\chi^{\res}_{j}/\chi_{j_{\H}},S_{j_{\H}}}(\b\gamma/\b\gamma').
\]
\end{prop}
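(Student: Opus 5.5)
Starting from Proposition \ref{prop:Delta-III-same-torus}, we have
\[
\Delta_{\III}[a_{k}^{\res},\chi_{k}^{\res}](\b\gamma,\b\delta;\b\gamma',\b\delta')=\langle \b\delta/\b\delta',a_{\T^{\dia}}\rangle_{\TN},
\]
where $a_{\T^{\dia}}$ is the $1$-cocycle measuring the difference between $\h\xi\circ\Lj^{\H}_{\chi}$ and $\Lj^{1}_{\chi}$ on $W_F$, built from the $\chi$-data $\chi_{k}^{\res}$ for $\Phi_{\res}(\G,\bfS_{k})$ and its restriction to $\Phi(\H,\T^{\flat})$. Here $\T^{\dia}=\bfS_{j}\cong\bfS_{k}$, so $\langle -,a_{\T^{\dia}}\rangle_{\TN}$ is the character of $S_{k}$ attached by local Langlands for tori to this cocycle, and the task is to identify that character. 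The plan is to compare $a_{\T^{\dia}}$ with the cocycle defined by the commutative diagram \eqref{diag:L-parameters} $\h\xi\circ\Lj_{\chi_{\H}}\circ\phi_{\vartheta_{\H}}=\Lj_{\chi}\circ\phi_{\vartheta}$, in which $\chi=\chi_{\vartheta_{\h\j}}$ and $\chi_{\H}=\chi_{\vartheta_{\h\j_{\H}}}$ are Kaletha's $\chi$-data.

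The first step is to factor both $L$-embeddings through $\L\G^{1}$. The $L$-embedding $\Lj_{\chi}\colon\L\bfS\to\L\G$ restricted to the $\theta_{\bfS}$-coinvariant part should factor as $\Lj^{1}$ composed with the natural map, up to a cocycle. Transporting via $k$, I would use Lemma \ref{lem:zeta-measures-diff}, in a twisted form, to write every discrepancy as a $\zeta$-type character. This produces three pieces.

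\begin{enumerate}
\item $\h\xi\circ\Lj_{\chi_{\H}}$ versus $\h\xi\circ\Lj^{\H}_{\chi^{\res}}$. Since both are $L$-embeddings of the same torus $\bfS_{j_{\H}}\cong\T^{\flat}$ differing only in $\chi$-data, their difference is $\zeta_{\chi^{\res}_{j}/\chi_{j_{\H}},S_{j_{\H}}}$, evaluated at $\b\gamma/\b\gamma'$.
\item $\Lj_{\chi}\circ\phi_{\vartheta}$ versus $\phi_{\vartheta_{\H}}$ pulled back along $\bfS\to\bfS_{\theta_{\bfS}}\cong\bfS_{\H}$. By \eqref{diag:L-parameters}, these give the ratio $\vartheta_{k}(\b\delta/\b\delta')/\vartheta_{j_{\H}}(\b\gamma/\b\gamma')$. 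Note that $\xi_{D}(\b\delta/\b\delta')=\b\gamma/\b\gamma'$, since $D$ is a diagram for both pairs.
\item $\Lj_{\chi}$, built from $\chi$-data on $\Phi(\G,\bfS)$, versus $\Lj^{1}_{\chi^{\res}}$, built from $\chi$-data on restricted roots. This is the remaining discrepancy, to be identified with $\zeta_{\desc}$.
\end{enumerate}

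The main obstacle is piece 3: comparing the Langlands--Shelstad cocycles for $\Phi(\G,\bfS)$ and $\Phi_{\res}(\G,\bfS)$. Following \cite[Proposition 5.25]{Kal19-cover}, I would compute root-by-root over $\Theta$-orbits, using the explicit Langlands--Shelstad formula, the standard induction and transfer description, and the identification with characters of $S$. Orbits with $\theta(\alpha)=\alpha$ or with $\alpha_{\res}$ unramified or asymmetric should cancel, because Kaletha's $\chi$-data and $\chi^{\res}_{k}$ agree there up to unramified twists, using the tables in Section \ref{subsec:TCF}.

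Only orbits with $\alpha_{\res}$ ramified and $\alpha$ asymmetric or unramified should contribute. For these I would reduce to a computation on the rank-one tori $\Res_{F_{\alpha}/F}$ or $F_{\alpha}^{1}$. There the discrepancy is a tame quadratic character whose restriction to units is the sign character $\epsilon_{\alpha}$, as in the computations of Propositions \ref{prop:toral-descent-asym-ram} and \ref{prop:toral-descent-ur-ram}. I expect the normalization $\chi^{\res}_{k,\alpha_{\res}}(2a^{\res})=\lambda_{\alpha_{\res}}$, together with Lemma \ref{lem:a-res-H} relating the $a$-data by the factor $l_{\alpha}$, to control the value on the uniformizer. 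Multiplying the three pieces gives the claim.
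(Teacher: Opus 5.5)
This is essentially the paper's proof. It likewise starts from Proposition~\ref{prop:Delta-III-same-torus} and writes the cocycle $a[\Lj_{\chi_{j_{\H}}}/\Lj_{\chi_{k}}]$ furnished by \eqref{diag:L-parameters} (whose pairing gives $\vartheta_{k}/\vartheta_{j_{\H}}$) as the $\Delta_{\III}$-cocycle times correction cocycles. The only difference is that the paper splits your third piece through the inflated $\chi$-data $\inf(\chi_{k}^{\res})$: it quotes \cite[Proposition 5.25]{Kal19-cover} (with all $e(\alpha/\alpha_{\res})=1$) for $\Lj_{\inf(\chi_{k}^{\res})}$ versus $\Lj_{\chi_{k}}$, and checks from the Langlands--Shelstad construction that $\Lj^{1}_{\chi_{k}^{\res}}$ versus $\Lj_{\inf(\chi_{k}^{\res})}$ pairs trivially; Lemma~\ref{lem:a-res-H} plays no role in this step (the paper uses it only later, in Lemma~\ref{lem:Delta-II-ratio}).
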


Recall that we introduced a $1$-cocycle $a_{\bfS_{k}}$ which measures the difference between $\Lj^{1}_{\chi_{k}^{\res}}$ and $\h\xi\circ\Lj^{\H}_{\chi_{k}^{\res}}$ in Section \ref{subsec:Delta-III}.
Let us write $a[\Lj^{\H}_{\chi_{k}^{\res}}/\Lj^{1}_{\chi_{k}^{\res}}]$ for $a_{\bfS_{k}}$.

On the other hand, we also have $L$-embeddings $\Lj_{\chi_{k}}\colon\L\bfS_{k}\hookrightarrow\L\G$ and $\Lj_{\chi_{\H}}\colon\L\bfS_{j_{\H}}\hookrightarrow\L\H$ obtained by applying the Langlands--Shelstad construction to the $\chi$-data $\chi_{k}$ and $\chi_{j_{\H}}$ as in Section \ref{subsubsec:a-chi-res}.
We define a $1$-cocycle 
\[
  a[\Lj_{\chi_{j_{\H}}}/\Lj_{\chi_{k}}] \colon W_F \rightarrow \hat{\T}
\]
by $a[\Lj_{\chi_{j_{\H}}}/\Lj_{\chi_{k}}]\cdot \Lj_{\chi_{k}}=\h\xi\circ\Lj_{\chi_{j_{\H}}}$.
Here, the $\Gamma$-action on $\hat{\T}$ on the target is the one coming from the embedding $\hat{\bfS}_k$ through the isomorphism $\hat{\bfS}_k\cong\hat{\bfT}$.

We furthermore introduce one more $L$-embedding $\Lj_{\inf(\chi_{k}^{\res})}\colon\L\bfS_{k}\hookrightarrow\L\G$.
For this, we first define a set of $\chi$-data $\inf(\chi_{k}^{\res})$ of $\Phi(\G,\bfS_{k})$ by inflating the set of $\chi$-data $\chi_{k}^{\res}$ of $\Phi_{\res}(\G,\bfS_{k})$ along the natural restriction map $\Phi(\G,\bfS_{k})\twoheadrightarrow\Phi_{\res}(\G,\bfS_{k})$ (see \cite[Definition 5.14]{Kal19-cover}) and then apply the Langlands--Shelstad construction.

We define $1$-cocycles
\begin{itemize}
  \item
   $a[\Lj_{\inf(\chi_{k}^{\res})}/\Lj_{\chi_{k}}]\colon W_F \rightarrow \hat{\T}$, 
   \item 
   $a[\Lj_{\chi_{k}^{\res}}^{1}/\Lj_{\inf(\chi_{k}^{\res})}]\colon W_F \rightarrow \hat{\T}$, and 
   \item 
   $a[\Lj_{\chi_{j_{\H}}}/\Lj^{\H}_{\chi_{k}^{\res}}]\colon W_F \rightarrow \hat{\T}_\bfH$
\end{itemize} 
in a similar way to $a[\Lj_{\chi_{j_{\H}}}/\Lj_{\chi_{k}}] \colon W_F \rightarrow \hat{\T}$.

\[
\xymatrix{
\L\H\ar@/^10pt/^-{\h\xi}[rr]&\L\G^{1}\ar@{^{(}->}[r]&\L\G\\
\L\bfS_{j_{\H}}\ar^-{\cong}[r] \ar@{^{(}->}_{\Lj_{\chi^{\res}_{k}}^{\H}}^{\Lj_{\chi_{j_{\H}}}}[u] &\L(\bfS_{k,\theta_{\bfS_{k}}})\ar@{^{(}->}^-{\Lj_{\chi_{k}^{\res}}^{1}}[u] \ar@{^{(}->}[r]& \L\bfS_{k}\ar@{^{(}->}_-{\Lj_{\chi_{k}}}^-{\Lj_{\inf(\chi_{k}^{\res})}}[u]
}
\]

\begin{lem}\label{lem:Delta-III-diff}
For any $t\in S_{k}$, we have
\begin{enumerate}
\item
$\langle t, a[\Lj_{\chi_{j_{\H}}}/\Lj_{\chi_{k}}]\rangle_{\TN}=\vartheta_{k}/\vartheta_{j_{\H}}(s)$,
\item
$\langle t, a[\Lj_{\inf(\chi_{k}^{\res})}/\Lj_{\chi_{k}}]\rangle_{\TN}=\zeta_{\desc}(s)^{-1}$,
\item
$\langle t, a[\Lj_{\chi_{k}^{\res}}^{1}/\Lj_{\inf(\chi_{k}^{\res})}]\rangle_{\TN}=1$,
\item
$\langle t, a[\Lj_{\chi_{j_{\H}}}/\Lj^{\H}_{\chi_{k}^{\res}}]\rangle_{\TN}=\zeta_{\chi_{j_{\H}}/\chi_{k}^{\res},S_{j_\bfH}}(s)$.
\end{enumerate}
Here, in (1) and (4), $t$ is also regarded as an element of $S_{j_{\H}}$ via $S_{k}\rightarrow S_{j_{\H}}$.
\end{lem}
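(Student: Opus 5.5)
The plan is to prove the four identities separately, each by comparing two Langlands--Shelstad $L$-embeddings whose ratio is a $1$-cocycle. The basic tool is the principle behind Lemma \ref{lem:zeta-measures-diff}: if two sets of $\chi$-data $\chi_{1},\chi_{2}$ for the same torus are used, then $\Lj_{\chi_{1}}=(\h\j\circ c_{\chi_{1}/\chi_{2}})\cdot\Lj_{\chi_{2}}$, and the Tate--Nakayama pairing $\langle -, c_{\chi_{1}/\chi_{2}}\rangle_{\TN}$ is the character $\zeta_{\chi_{1}/\chi_{2},S}$. This is exactly the local Langlands correspondence for tori, which Tate--Nakayama realizes. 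So the strategy is to identify each cocycle as either a "difference of $\chi$-data" cocycle or as coming from the parameter diagram \eqref{diag:L-parameters}.

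\textbf{Item (4).} Both embeddings land in $\L\H$ and extend the same $\h\j_{\H}$; only the $\chi$-data differ. The data are $\chi_{j_{\H}}$ and the restriction of $\chi^{\res}_{k}$ to $\Phi(\H,\bfS_{j_{\H}})\subset\Phi_{\res}$. Lemma \ref{lem:zeta-measures-diff} then gives $\zeta_{\chi_{j_{\H}}/\chi^{\res}_{k},S_{j_{\H}}}$ directly.

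\textbf{Item (1).} I would use $\phi=\Lj_{\chi}\circ\phi_{\vartheta}=\h\xi\circ\Lj_{\chi_{\H}}\circ\phi_{\vartheta_{\H}}$. Transported through $k$ and $j_{\H}$ (which are compatible with $\xi_{\bfS}$ and $\xi_{D}$ by the construction of $\tran$), the cocycle $a[\Lj_{\chi_{j_{\H}}}/\Lj_{\chi_{k}}]$ sends $\phi_{\vartheta_{k}}$ to the inflation of $\phi_{\vartheta_{j_{\H}}}$. Here we use $\chi=\chi_{\vartheta_{\h\j}}$ and $\chi_{\H}=\chi_{\vartheta_{\h\j_{\H}}}$, so no extra zeta characters appear. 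Hence its pairing with $t$ is the ratio of the two characters, $\vartheta_{k}(t)/\vartheta_{j_{\H}}(\xi_{D}(t))$, with the orientation fixed by multiplicativity of LLC for tori. The $s$ in the statement should be read as $t$.

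\textbf{Items (2) and (3).}
\begin{itemize}
\item For (3), both $\Lj^{1}_{\chi^{\res}_{k}}$ composed with $\L\G^{1}\hookrightarrow\L\G$ and $\Lj_{\inf(\chi^{\res}_{k})}$ are built from the same restricted data. I would cite \cite[Section 5]{Kal19-cover}, where inflation of $\chi$-data along $\Phi\twoheadrightarrow\Phi_{\res}$ is shown to be compatible with the Langlands--Shelstad construction for the fixed-point group. This shows the two embeddings agree after composing with $\L\bfS_{k,\theta}\hookrightarrow\L\bfS_{k}$, so the cocycle is trivial.
\item For (2), this is again a difference of $\chi$-data, so the pairing is $\zeta_{\inf(\chi^{\res}_{k})/\chi_{k},S_{k}}$. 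I would then compute the ratio root by root. For $\alpha$ with $\alpha_{\res}$ of the same type as $\alpha$, Kaletha's normalizations agree, using Lemma \ref{lem:a-res-H}/\ref{lem:appearance-of-l-alpha} for the $a$-data and the fact that $l_{\alpha}$ is tame. For asymmetric or unramified $\alpha$ with $\alpha_{\res}$ ramified, the ratio is a character whose $\zeta$-contribution equals $\epsilon_{\alpha}$, mirroring \cite[Proposition 5.25]{Kal19-cover}.
\end{itemize}

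\textbf{Main obstacle.} I expect item (2) to be the hard step. One has to case-split $\Phi_{\res}^{(\star)}$ as in Lemmas \ref{lem:ram-res-from-asym}--\ref{lem:ram-res-from-ram} and match $\lambda$-constants and tame characters on $\mcO^{\times}$. I would first try to reduce directly to \cite[Proposition 5.25]{Kal19-cover}, checking that its hypotheses (no type $2$ or $3$ restricted roots, $p\neq2$) hold here.
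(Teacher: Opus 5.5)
Your plan matches the paper's proof item by item: (1) rewrites $\Lj_{\chi}\circ\phi_{\vartheta}=\h\xi\circ\Lj_{\chi_{\H}}\circ\phi_{\vartheta_{\H}}$ as $\phi_{\vartheta_{k}}=a[\Lj_{\chi_{j_{\H}}}/\Lj_{\chi_{k}}]\cdot\phi_{\vartheta_{j_{\H}}}$, using $\chi=\chi_{\vartheta_{\h\j}}$ and $\chi_{\H}=\chi_{\vartheta_{\h\j_{\H}}}$. (2) is quoted from \cite[Proposition 5.25]{Kal19-cover} with $\chi_{k}$ regarded as the minimalization of $\inf(\chi_{k}^{\res})$; the paper's only check there is that the integers $e(\alpha/\alpha_{\res})$ all equal $1$, read off from the field diagrams in Lemmas \ref{lem:ram-res-from-asym} and \ref{lem:ram-res-from-ur}, not the type-$2$/$3$ and $p\neq2$ hypotheses you list. (3) is a routine Langlands--Shelstad check, and (4) is the definition of the $\zeta$-character.

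One correction to your side heuristic in (2): tameness of $l_{\alpha}$ does not make Kaletha's normalizations agree for roots whose restriction has the same type. $\theta$-invariance of $X^{\ast}_{k}$ gives $a^{\res}_{k,\alpha_{\res}}=l_{\alpha}\,a_{\vartheta_{k},\alpha}$. So for a symmetric ramified $\alpha$ with $l_{\alpha}=2$ and $F_{\alpha}=F_{\alpha_{\res}}$ one gets $\inf(\chi^{\res}_{k})_{\alpha}(2a_{\vartheta_{k},\alpha})=\kappa_{\alpha}(2)\,\chi_{k,\alpha}(2a_{\vartheta_{k},\alpha})$. Here $\kappa_{\alpha}(2)$ can be $-1$, since tame characters are only trivial on $1+\mfp$, so that case is not handled by the heuristic.
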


\begin{proof}
We first consider (1).
Recall that we have $\Lj_{\chi}\circ\phi_{\vartheta}=\h\xi\circ\Lj_{\chi_{\H}}\circ\phi_{\vartheta_{\H}}$ (see Section \ref{subsec:L-packet-descent}, especially, \eqref{diag:L-parameters}).
Since we assumed that $\chi=\chi_{\vartheta_{\h\j}}$ and $\chi_{\H}=\chi_{\vartheta_{\h\j_{\H}}}$ (see the beginning of Section \ref{subsubsec:initial}), this identity can be rewritten as $\Lj_{\chi_{k}}\circ\phi_{\vartheta_{k}}=\h\xi\circ\Lj_{\chi_{j_{\H}}}\circ\phi_{\vartheta_{j_{\H}}}$.
This implies that $\phi_{\vartheta_{k}}=a[\Lj_{\chi_{j_{\H}}}/\Lj_{\chi_{k}}]\cdot\phi_{\vartheta_{j_{\H}}}$.
Hence we get the identity (1).

We next consider (2).
We note that the set $\chi_{k}$ of $\chi$-data is minimally ramified, which is obtained by the ``minimalization'' of $\inf(\chi_{k}^{\res})$ (\cite[Definition 5.24]{Kal19-cover}).
Therefore the claimed identity is a direct consequence of \cite[Proposition 5.25]{Kal19-cover}.
(Just note that the integers ``$e(\alpha/\alpha_{\res})$'' in \cite[Proposition 5.25]{Kal19-cover} are all equal to $1$, which can be checked by looking at the proofs of Lemma \ref{lem:ram-res-from-asym} and \ref{lem:ram-res-from-ur}).

It is a routine work to check the assertion (3) by going back to the Langlands--Shelstad construction.

The last assertion (4) is also a direct consequence of the definition of the character $\zeta_{\chi_{j_{\H}}/\chi_{k}^{\res},S_{j_\bfH}}$ (cf.\ the second paragraph of \cite[Proof of Proposition 5.2.7]{Kal19}).
\end{proof}

\begin{proof}[Proof of Proposition \ref{prop:delta-III-ratio}]
By Proposition \ref{prop:Delta-III-same-torus}, we have
\[
\Delta_{\III}[a_{k}^{\res},\chi_{k}^{\res}](\b\gamma,\b\delta;\b\gamma',\b\delta')
=
\langle \b\delta/\b\delta',a[\Lj^{\H}_{\chi_{k}^{\res}}/\Lj^{1}_{\chi_{k}^{\res}}]\rangle_{\TN}.
\]
We note that $a[\Lj_{\chi_{j_{\H}}}/\Lj_{\chi_{k}}]$ is equal to
\[
a[\Lj_{\chi_{j_{\H}}}/\Lj^{\H}_{\chi_{k}^{\res}}]
\cdot a[\Lj^{\H}_{\chi_{k}^{\res}}/\Lj^{1}_{\chi_{k}^{\res}}]
\cdot a[\Lj_{\chi_{k}^{\res}}^{1}/\Lj_{\inf(\chi_{k}^{\res})}]
\cdot a[\Lj_{\inf(\chi_{k}^{\res})}/\Lj_{\chi_{k}}].
\]
Hence, Lemma \ref{lem:Delta-III-diff} implies that
\[
\vartheta_{k}/\vartheta_{j_{\H}}(\b\delta/\b\delta')
=
\zeta_{\chi_{j_{\H}}/\chi_{k}^{\res},S_{j_{\H}}}(\b\delta/\b\delta')
\cdot\Delta_{\III}[a_{k}^{\res},\chi_{k}^{\res}](\b\gamma,\b\delta;\b\gamma',\b\delta')
\cdot\zeta_{\desc}(\b\delta/\b\delta')^{-1}.
\]
Since the image of $\b\delta/\b\delta'$ under the map $S_{k}\rightarrow S_{j_{\H}}$ is $\b\gamma/\b\gamma'$, we get the assertion.
(Also note that $\zeta_{\chi^{\res}_{j}/\chi_{j_{\H}},S_{j_{\H}}}=\zeta_{\chi_{j_{\H}}/\chi_{k}^{\res},S_{j_{\H}}}^{-1}$.)
\end{proof}

\subsection{Appearance of the spectral transfer factor}
We start with rewriting the endoscopic side of \eqref{eq:TECR}.
We put $\Phi_{\phi}^{\H,\st}\colonequals\sum_{\pi_{\H}\in\Pi_{\phi_{\H}}^{\H}}\Phi_{\pi_{\H}}$.
By Lemma \ref{lem:index-set}, we have
\begin{align}\label{H-side}
\sum_{\gamma\in H_{\srs}/{\sim_{\H}}} \mr{\Delta}(\gamma,\delta)\Phi_{\phi}^{\H,\st}(\gamma)
=
\sum_{y\in \mfH_{\eta}} \frac{1}{|\pi_{0}(\H^{y})(F)|}\sum_{z\in H_{y}[\delta]_{r}/{\sim_{\H_{y}}}}
\!\!\!\!\!\!
\mr{\Delta}(yz,\delta)\Phi_{\phi}^{\H,\st}(yz).
\end{align}
In the following, we put $X\colonequals \log(\delta_{\geq r})\in\mfg_{\eta}$.
Let $z\in H_{y}[\delta]_{r}$ and we put $Y\colonequals \log(z)\in\mfh_{y}$.
By Lemma \ref{lem:norm-descent}, there exists a unique $D\in\bbD(y,\eta)$ such that $(Y,X)$ is a $D$-norm pair.
Therefore, by Proposition \ref{prop:NST}, we have
\[
\mr{\Delta}\!^{D}(\bar{Y},X_{\sc})
=
\begin{cases}
\mr{\Delta}(yz,\delta)\cdot\Delta_{\IV}(y,\eta) & \text{for a unique $D\in\bbD(y,\eta)$,}\\
0 & \text{otherwise.}
\end{cases}
\]
Thus, by also using Lemma \ref{lem:norm-tran}, we see that the right-hand side of \eqref{H-side} equals
\begin{multline}\label{H-side-2}
\sum_{y\in\mfH_{\eta}} \frac{1}{|\pi_{0}(\H^{y})(F)|}\sum_{D\in\bbD(y,\eta)}\sum_{Y\overset{D}{\leftrightarrow} X/{\sim_{\H_{y}}}}
\mr{\Delta}\!^{D}(\bar{Y},X_{\sc})\Delta_{\IV}(y,\eta)^{-1}
 \Phi_{\phi}^{\H,\st}(yz).
\end{multline}

Now we utilize Kaletha's normalized and stable version of the character formula for toral supercuspidal representations (\cite[Lemma 6.3.1]{Kal19}):
\[
 \Phi_{\phi}^{\H,\st}(yz)
=
\varepsilon(\T_{\H})\varepsilon(\T_{\H_{y}})^{-1}
\sum_{j_{\H}\in\mcJ_{\H_{y}}^{\H}}\Delta_{\II}^{\H}[a_{j_{\H}},\chi_{j_{\H}}](y)\vartheta_{j_{\H}}(y)
\sum_{k_{\H}\in \mcJ^{\H_{y}}_{H_{y}}(j_{\H})} \hat{\iota}^{\H_{y}}_{Y_{k_{\H}}^{\ast}}(Y),
\]
where $a_{j_{\H}}\colonequals a_{\vartheta_{j_{\H}}}$, $\chi_{j_{\H}}\colonequals \chi_{\vartheta_{j_{\H}}}$ (see Section \ref{subsubsec:a-chi-res}).
Note that the above formula is simplified compared to \cite[Lemma 6.3.1]{Kal19} because now $\H_{y}$ is quasi-split and $\langle\inv(j_{\H,\mfw_{\H}},k_{\H}),1\rangle=1$ since the groups $S_{\phi_{\H}}^{+}$ is abelian. 
Thus \eqref{H-side-2} equals
\begin{multline}\label{H-side-3}
\sum_{y\in\mfH_{\eta}} \frac{\varepsilon(\T_{\H})\varepsilon(\T_{\H_{y}})^{-1}}{|\pi_{0}(\H^{y})(F)|}\sum_{D\in\bbD(y,\eta)}\sum_{j_{\H}\in\mcJ^{\H}_{\H_{y}}}\Delta_{\IV}(y,\eta)^{-1}\\
\cdot\Delta_{\II}^{\H}[a_{j_{\H}},\chi_{j_{\H}}](y)\vartheta_{j_{\H}}(y)
\sum_{Y\overset{D}{\leftrightarrow} X/{\sim_{\H_{y}}}}
\mr{\Delta}\!^{D}(\bar{Y},X_{\sc})
\sum_{k_{\H}\in \mcJ^{\H_{y}}_{H_{y}}(j_{\H})} \hat{\iota}^{\H_{y}}_{Y_{k_{\H}}^{\ast}}(Y).
\end{multline}

By Proposition \ref{prop:descent-lemma}, the first three index sets with $1/|\pi_{0}(\H^{y})(F)|$ are combined into one index set $\t{\mcJ}^{\G}_{\G_{\eta}}$.
Hence we can rewrite the above sum as
\begin{multline}\label{H-side-4}
\sum_{j\in\t{\mcJ}^{\G}_{\G_{\eta}}} \varepsilon(\T_{\H})\varepsilon(\T_{\H_{y}})^{-1}\Delta_{\IV}(y,\eta)^{-1}\Delta_{\II}^{\H}[a_{j_{\H}},\chi_{j_{\H}}](y)\vartheta_{j_{\H}}(y)\\
\cdot\sum_{Y\overset{D}{\leftrightarrow}X/{\sim_{\H_{y}}}}
\mr{\Delta}\!^{D}(\bar{Y},X_{\sc})
\sum_{k_{\H}\in \mcJ^{\H_{y}}_{H_{y}}(j_{\H})} \hat{\iota}^{\H_{y}}_{Y_{k_{\H}}^{\ast}}(Y).
\end{multline}
Here, for each $j\in\t{\mcJ}^{\G}_{\G_{\eta}}$, we let $y\in\mfH_{\eta}$ and $j_{\H}\in\mcJ^{\H}_{\H_{y}}$ denote the unique (up to $\pi_{0}(\H^{y})(F)$-action) elements determined by Proposition \ref{prop:descent-lemma}.
By applying the Lie algebra transfer for twisted endoscopy (Proposition \ref{prop:Lie-twisted-TECR})
\[
\sum_{Y\overset{D}{\leftrightarrow}X /{\sim_{\H_{y}}}}
\mr{\Delta}\!^{D}(\bar{Y},X_{\sc})
\sum_{k_{\H}\in \mcJ^{\H_{y}}_{H_{y}}(j_{\H})} D^{\H_{y}}_{Y_{k_{\H}}^{\ast}}(Y)
=
\sum_{k\in \t\mcJ^{\G_{\eta}}_{G_{\eta}}(j)}
\mr{\Delta}\!^{D}(\bar{Y}_{j_{\H}}^{\ast},X^{\ast}_{k,\sc})
D^{\G_{\eta}}_{X^{\ast}_{k}}(X)
\]
to the last double sum of \eqref{H-side-4}, we see that \eqref{H-side-4} is equal to
\begin{multline}\label{H-side-5}
\sum_{j\in\t{\mcJ}^{\G}_{\G_{\eta}}}
\varepsilon(\T_{\H})\varepsilon(\T_{\H_{y}})^{-1}\Delta_{\IV}(y,\eta)^{-1}\Delta_{\II}^{\H}[a_{j_{\H}},\chi_{j_{\H}}](y)\vartheta_{j_{\H}}(y)\\
 \cdot\gamma(\mfg_{\eta})\gamma(\mfh_{y})^{-1}
\sum_{k\in \mcJ^{\G_{\eta}}_{G_{\eta}}(j)}
\mr{\Delta}\!^{D}(\bar{Y}_{j_{\H}}^{\ast},X^{\ast}_{k,\sc})
\hat{\iota}^{\G_{\eta}}_{X^{\ast}_{k}}(X)
\end{multline}
(recall that $D^{\H_{y}}_{Y_{k_{\H}}^{\ast}}(Y)=\gamma(\mfh_{y})\hat{\iota}^{\H_{y}}_{Y_{k_{\H}}^{\ast}}(Y)$ and $D^{\G_{\eta}}_{X^{\ast}_{k}}(X)=\gamma(\mfg_{\eta})\hat{\iota}^{\G_{\eta}}_{X^{\ast}_{k}}(X)$).
Again using Proposition \ref{prop:NST}, this equals
\begin{multline}\label{H-side-6}
\sum_{j\in\t{\mcJ}^{\G}_{\G_{\eta}}}
\varepsilon(\T_{\H})\varepsilon(\T_{\H_{y}})^{-1}\Delta_{\II}^{\H}[a_{j_{\H}},\chi_{j_{\H}}](y)\vartheta_{j_{\H}}(y)\\
 \cdot\gamma(\mfg_{\eta})\gamma(\mfh_{y})^{-1}\sum_{k\in \mcJ^{\G_{\eta}}_{G_{\eta}}(j)} 
\mr{\Delta}\bigl(y\exp(Y_{j_{\H}}^{\ast}), \eta\exp(X_{k}^{\ast})\bigr)
\hat{\iota}^{\G_{\eta}}_{X_{k}^{\ast}}(X).
\end{multline}

By definition, $\mr{\Delta}$ is given by the product of $\varepsilon(\T_{\G_{\theta}})\varepsilon(\T_{\H})^{-1}$, $\Delta_{\I}$, $\Delta_{\II}$, and $\Delta_{\III}$.
In the following, we choose the $a$-data $a_{k}^{\res}$ and $\chi$-data $\chi_{k}^{\res}$ as in Section \ref{subsubsec:a-chi-res} to compute these factors.
Then, in summary, the $\H$-side \eqref{H-side-6} equals
\begin{multline}\label{H-side-7}
\sum_{j\in\t{\mcJ}^{\G}_{\G_{\eta}}}
\varepsilon(\T_{\G_{\theta}})\varepsilon(\T_{\H_{y}})^{-1}
\cdot\Delta_{\II}^{\H}[a_{j_{\H}},\chi_{j_{\H}}](y)\vartheta_{j_{\H}}(y)
\gamma(\mfg_{\eta})\gamma(\mfh_{y})^{-1}\\
\cdot\sum_{k\in \mcJ^{\G_{\eta}}_{G_{\eta}}(j)} 
\Delta_{\I,\II,\III}[a_{k}^{\res},\chi_{k}^{\res}]\bigl(y\exp(Y_{j_{\H}}^{\ast}), \eta\exp(X_{k}^{\ast})\bigr)
\hat{\iota}^{\G_{\eta}}_{X_{k}^{\ast}}(X).
\end{multline}

Now we are reduced to comparing the above to the $\G$-side of \eqref{eq:TECR}.
Let $\{\Delta^{\spec}_{\phi,j}\}_{j\in\mcJ^{\G}_{G}}$ be any family of constants such that $\Delta^{\spec}_{\phi,j}=0$ for any $j\in\mcJ^{\G}_{G}\smallsetminus\t\mcJ^{\G}_{G}$.\symdef{Delta-spec}{$\Delta^{\spec}_{\phi,j}$}
By Proposition \ref{prop:TCF-final}, $\sum_{j\in\t{\mcJ}^{\G}_{G}}\Delta^{\spec}_{\phi,j}\Phi_{\t{\pi}_{j}}(\delta)$ equals 
\begin{multline}\label{G-side-2-pre}
\sum_{j\in\t{\mcJ}^{\G}_{G}}\Delta^{\spec}_{\phi,j}
 C_{\ul{\eta}_{j}}
(-1)^{|\ddot{\Xi}_{\eta_{0},\ur}|}
 e(\G_{\eta_{0}}) e(\G_{\eta})
\varepsilon(\T_{\G_{\eta_{0}}^{\ast}})\varepsilon(\T_{\G_{\eta}^{\ast}})^{-1}\\
\cdot\sum_{k\in\t\mcJ^{G}_{G_{\eta}}(j)}
\vartheta_{k}(s_{k})
\cdot\epsilon_{\bfS_{k},\ram}(s_{k})
\cdot\epsilon_{\vartheta_{k}}^{\star}(s_{k})
\cdot\Delta_{\II}^{\G_{\eta_{0}}}[a_{k}^{\res},\chi_{k}^{\res}](\eta_{+}) \cdot\hat{\iota}^{\G_{\eta}}_{X_{k}^{\ast}}(X).
\end{multline}
By putting $\b{\Delta}^{\spec}_{\phi,k}\colonequals \Delta^{\spec}_{\phi,j}C_{\ul{\eta}_{j}}$\symdef{Delta-spec-bar}{$\bar{\Delta}^{\spec}_{\phi,j}$} for any $k\in\t\mcJ^{G}_{G_{\eta}}(j)$, \eqref{G-side-2-pre} equals
\begin{multline}\label{G-side-2}
\sum_{j\in\t{\mcJ}^{\G}_{G}}
\sum_{k\in\t\mcJ^{G}_{G_{\eta}}(j)}
\b\Delta^{\spec}_{\phi,k}
\cdot(-1)^{|\ddot{\Xi}_{\eta_{0},\ur}|}
\cdot e(\G_{\eta_{0}}) e(\G_{\eta})
\cdot\varepsilon(\T_{\G_{\eta_{0}}^{\ast}})\varepsilon(\T_{\G_{\eta}^{\ast}})^{-1}\\
\cdot\vartheta_{k}(s_{k})
\cdot\epsilon_{\bfS_{k},\ram}(s_{k})
\cdot\epsilon_{\vartheta_{k}}^{\star}(s_{k})
\cdot\Delta_{\II}^{\G_{\eta_{0}}}[a_{k}^{\res},\chi_{k}^{\res}](\eta_{+}) \cdot\hat{\iota}^{\G_{\eta}}_{X_{k}^{\ast}}(X).
\end{multline}
Therefore it suffices to prove that, for every $j\in\t\mcJ^{\G}_{\G_{\eta}}$, the contribution of each $k\in\t\mcJ^{\G_{\eta}}_{G_{\eta}}(j)$ to the $\H$-side \eqref{H-side-7}
\begin{multline}\label{H-side-summand}
\varepsilon(\T_{\G_{\theta}})\varepsilon(\T_{\H_{y}})^{-1}
\cdot\Delta_{\II}^{\H}[a_{j_{\H}},\chi_{j_{\H}}](y)\vartheta_{j_{\H}}(y)
\cdot\gamma(\mfg_{\eta})\gamma(\mfh_{y})^{-1}\\
\cdot\Delta_{\I,\II,\III}[a_{k}^{\res},\chi_{k}^{\res}]\bigl(y\exp(Y_{j_{\H}}^{\ast}), \eta\exp(X_{k}^{\ast})\bigr)
\end{multline}
(other than the Fourier transform of the orbital integral $\hat{\iota}^{\G_{\eta}}_{X_{k}^{\ast}}(X)$) is equal to that to the $\G$-side \eqref{G-side-2}
\begin{multline}\label{G-side-summand}
\b\Delta^{\spec}_{\phi,k}
\cdot(-1)^{|\dot{\Xi}_{\eta_{0},\ur}|}
\cdot
e(\G_{\eta_{0}})e(\G_{\eta})
\cdot \varepsilon(\T_{\G_{\eta_{0}}^{\ast}})\varepsilon(\T_{\G_{\eta}^{\ast}})^{-1}\\
\cdot\vartheta_{k}(s_{k})
\cdot\epsilon_{\bfS_{k},\ram}(s_{k})
\cdot\epsilon_{\vartheta_{k}}^{\star}(s_{k})
\cdot\Delta_{\II}^{\G_{\eta_{0}}}[a_{k}^{\res},\chi_{k}^{\res}](\eta_{+}).
\end{multline}
Hence let us just define $\Delta^{\spec}_{\phi,k}$, or equivalently $\b\Delta^{\spec}_{\phi,k}$, so that \eqref{H-side-summand} equals \eqref{G-side-summand}:
\begin{multline}\label{spectral}
\b\Delta^{\spec}_{\phi,k}
\colonequals 
\frac{\varepsilon(\T_{\G_{\eta}^{\ast}})\cdot\varepsilon(\T_{\H_{y}})^{-1}\cdot\gamma(\mfg_{\eta})\gamma(\mfh_{y})^{-1}}{e(\G_{\eta})}
\cdot
\frac{\Delta_{\II}^{\H}[a_{j_{\H}},\chi_{j_{\H}}](y)}{\Delta_{\II}^{\G_{\eta_{0}}}[a_{k}^{\res},\chi_{k}^{\res}](\eta_{+})}\\
\cdot\frac{\varepsilon(\T_{\G_{\theta}})\cdot\varepsilon(\T_{\G_{\eta_{0}}^{\ast}})^{-1}}{e(\G_{\eta_{0}})\cdot(-1)^{|\dot{\Xi}_{\eta_{0},\ur}|}\cdot \epsilon_{\bfS_{k},\ram}(s_{k})\cdot\epsilon_{\vartheta_{k}}^{\star}(s_{k})}\\
\cdot\frac{\vartheta_{j_{\H}}(y)}{\vartheta_{k}(s_{k})}\cdot\Delta_{\I,\II,\III}[a_{k}^{\res},\chi_{k}^{\res}]\bigl(y\exp(Y_{j_{\H}}^{\ast}), \eta\exp(X_{k}^{\ast})\bigr).
\end{multline}\symdef{Delta-spec-bar}{$\bar{\Delta}^{\spec}_{\phi,j}$}
Then the problem is that this quantity a priori heavily depends on $k\in \t\mcJ^{\G_{\eta}}_{G_{\eta}}(j)$, $\eta$, and $y$.
What we have to do now is to check the well-definedness of $\Delta^{\spec}_{\phi,k}$; in other words, 
\begin{enumerate}
\item
$\b\Delta^{\spec}_{\phi,k}$ is constant for $k\in\t\mcJ^{G}_{G_{\eta}}(j)$, and
\item
$\b\Delta^{\spec}_{\phi,k}$ is independent of $\eta$ and $y$.
\end{enumerate}

We first recall the following formula of Kaletha--Kottwitz:

\begin{prop}[{\cite[Lemma 4.8, Theorem 4.10]{Kal15}}]\label{prop:Kaletha--Kottwitz}
Let $\J$ be a connected reductive group over $F$ and $\bfS_{\J}$ an $F$-rational maximal torus of $\J$.
We fix a $J$-invariant symmetric non-degenerate bilinear form $B_{\mfj}$ on $\mfj$.
Then we have
\[
\varepsilon(\bfS_{\J})\varepsilon(\T_{\J^{\ast}})^{-1}
=
e(\J)\gamma(\mfj)\prod_{\alpha\in\dot{\Phi}(\J,\bfS_{\J})_{\sym}}\kappa_{\alpha}(B_{\mfj,\alpha})^{-1},
\]
where $\T_{\J^{\ast}}$ denotes a minimal Levi subgroup of the quasi-split inner form of $\J$ and $B_{\mfj,\alpha}\colonequals B_{\mfj}(X_{\alpha},Y_{\alpha})\in F_{\pm\alpha}^{\times}$ for any elements $X_{\alpha}\in\bmfj_{\alpha}(F_{\alpha})$ and $Y_{\alpha}\in\bmfj_{-\alpha}(F_{\alpha})$ satisfying $[X_{\alpha},Y_{\alpha}]=H_{\alpha} (\colonequals d\alpha^{\vee}(1))$.
\end{prop}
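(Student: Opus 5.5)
This is the Kaletha--Kottwitz formula (cited from \cite{Kal15}), so the plan follows its standard proof: compare both sides as functions of the torus $\bfS_{\J}$, reduce to a product of local contributions indexed by symmetric roots, and match each contribution with a Weil index.

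First we reduce to the case of an adjoint (or simply connected) group. Both sides depend only on $(\J,\bfS_{\J})$ up to central isogeny, provided we track how the center contributes to $\gamma(\mfj)$. We split $\mfj=\mfj_{\sc}\oplus\mfz_{\J}$ orthogonally, as in Lemma \ref{lem:isog}, so that $\gamma(\mfj)=\gamma(\mfj_{\sc})\gamma(\mfz_{\J})$. The central torus appears identically in $\varepsilon(\bfS_{\J})$ and $\varepsilon(\T_{\J^{\ast}})$, because the two tori have the same central part. We then check that $\gamma(\mfz_{\J})$ combines with the toral Weil constants below so that it cancels; alternatively we absorb it by working modulo the center from the start.

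Next we decompose $\mfj=\mfs_{\J}\oplus\bigoplus_{\ddot{\alpha}}\mfj_{\ddot{\alpha}}$ into $B_{\mfj}$-orthogonal $F$-subspaces, which gives
\[
\gamma(\mfj)=\gamma(\mfs_{\J})\prod_{\ddot{\alpha}}\gamma(\mfj_{\ddot{\alpha}}).
\]
We compute each factor directly.
\begin{itemize}
\item For an asymmetric $\ddot{\alpha}$, the space $\mfj_{\ddot{\alpha}}$ is hyperbolic, so its Weil index is $1$.
\item For a symmetric $\alpha$, the space $\mfj_{\dot{\alpha}}$ is $F_{\alpha}$ viewed over $F$ with the quadratic form $x\mapsto \Tr_{F_{\pm\alpha}/F}(B_{\mfj,\alpha}\,\Nr_{F_{\alpha}/F_{\pm\alpha}}(x))$. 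Its Weil index is therefore $\lambda_{F_{\alpha}/F_{\pm\alpha}}^{-1}\cdot\kappa_{\alpha}(B_{\mfj,\alpha})$, up to the normalization of the Weil index of a transferred form (scaling and restriction of scalars).
\end{itemize}
For the torus part, $\gamma(\mfs_{\J})$ should be expressed through root numbers: applying Langlands' formula $\varepsilon(\mathrm{Ind}\,\kappa)=\lambda\cdot\varepsilon(\kappa)$ to $X^{\ast}(\bfS_{\J})_{\C}$ relates $\varepsilon(\bfS_{\J})$ to the $\lambda_{\alpha}$'s.

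The third step compares $\bfS_{\J}$ with the quasi-split minimal Levi $\T_{\J^{\ast}}$. Both are twisted forms of one another by Weyl-group-valued cocycles. We write $\varepsilon(\bfS_{\J})/\varepsilon(\T_{\J^{\ast}})$ as a product over symmetric roots of $\lambda_{\alpha}$-terms, using that $X^{\ast}(\bfS_{\J})_{\Q}\oplus(\text{trivial})$ and $X^{\ast}(\T_{\J^{\ast}})_{\Q}$ differ by the inductions attached to symmetric root pairs. This is the step I expect to be hardest. To carry it out, I would first verify the formula when $\J$ is quasi-split and $\bfS_{\J}=\T_{\J^{\ast}}$; there the $\lambda$'s cancel in pairs with the Weil constants, and the sign $e(\J)=1$. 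I would then show that both sides transform by the same factor when $\bfS_{\J}$ is changed by stable conjugacy. The Kottwitz sign $e(\J)$ arises when passing to the non-quasi-split inner form; it is detected by $\gamma(\mfj)/\gamma(\mfj^{\ast})$, using the Hasse-invariant interpretation of the Kottwitz sign through the Weil index of the Killing form.

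Finally, independence of the choice of $X_{\alpha},Y_{\alpha}$ is immediate: rescaling $X_{\alpha}$ by $c\in F_{\alpha}^{\times}$ changes $B_{\mfj,\alpha}$ by a norm, which is killed by $\kappa_{\alpha}$. Assembling the local factors then gives the identity.
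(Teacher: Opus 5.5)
\paragraph{Review of the proposal.} Your base case does not check out, and this exposes a problem with the identity in the form you are trying to prove it. If $\J$ is quasi-split and $\bfS_{\J}=\T_{\J^{\ast}}$, then $\bfS_{\J}$ lies in an $F$-rational Borel subgroup. Hence there are no symmetric roots and $e(\J)=1$. Moreover $\mfj=\mft_{\J^{\ast}}\oplus(\mathfrak{n}\oplus\bar{\mathfrak{n}})$ with $\mathfrak{n}\oplus\bar{\mathfrak{n}}$ hyperbolic. So the left side is $1$, while the right side is the Weil constant of $B_{\mfj}|_{\mft_{\J^{\ast}}}$, which depends on $B_{\mfj}$ and is not $1$ in general. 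There are no $\lambda_{\alpha}$'s for it to cancel against, and for the same reason $\gamma(\mfz_{\J})$ in your reduction step has nothing to cancel against.

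No torus-independent normalization of $\gamma(\mfj)$ rescues this. Take $\J=\SL_{2}$ with $B_{\mfj}(X,Y)=\tr(XY)$; then $B_{\mfj,\alpha}=\tfrac12B_{\mfj}(H_{\alpha},H_{\alpha})=1$. The diagonal torus forces $\gamma(\mathfrak{sl}_{2})=1$. The norm-one torus of an unramified quadratic $E/F$ (so $X^{\ast}(\bfS_{\J})_{\C}=\kappa_{E/F}$) forces $\gamma(\mathfrak{sl}_{2})=\varepsilon(\tfrac12,\kappa_{E/F},\psi_{F})=\lambda_{E/F}=-1$ for the level-one $\psi_{F}$. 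The identity can therefore only hold with $\gamma(\mfj)$ replaced by a torus-relative constant such as $\gamma(\mfj)\gamma(\mfs_{\J})^{-1}$, the Weil constant of $\mfs_{\J}^{\perp}$. For the paper this is harmless: in Lemma~\ref{lem:spec-1} the tori $\bfS^{\nat}_{k}$ and $\bfS_{j_{\H}}$ are matched compatibly with the chosen bilinear forms, so their Weil constants cancel.

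Your local computation at symmetric roots also drops the decisive term. Because $[X_{\alpha},Y_{\alpha}]=H_{\alpha}$ ties $Y_{\alpha}$ to $X_{\alpha}$, invariance gives $B_{\mfj,\alpha}=\tfrac12B_{\mfj}(H_{\alpha},H_{\alpha})$, which depends on no choice: rescaling $X_{\alpha}$ by $c$ rescales $Y_{\alpha}$ by $c^{-1}$. Your closing remark about norms really applies to $B_{\mfj}(X_{\alpha},\tau_{\alpha}(X_{\alpha}))$, and that is the quantity which actually governs the form on $\mfj_{\dot{\alpha}}(F)$, namely $x\mapsto 2\Tr_{F_{\pm\alpha}/F}\bigl(B_{\mfj}(X_{\alpha},\tau_{\alpha}(X_{\alpha}))\Nr_{F_{\alpha}/F_{\pm\alpha}}(x)\bigr)$. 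Here $B_{\mfj}(X_{\alpha},\tau_{\alpha}(X_{\alpha}))=c_{\alpha}B_{\mfj,\alpha}$ with $c_{\alpha}=[X_{\alpha},\tau_{\alpha}(X_{\alpha})]/H_{\alpha}$ and $\kappa_{\alpha}(c_{\alpha})=f_{(\J,\bfS_{\J})}(\alpha)$. So, up to conventions, $\gamma(\mfj_{\dot{\alpha}})=f_{(\J,\bfS_{\J})}(\alpha)\kappa_{\alpha}(B_{\mfj,\alpha})\lambda_{\alpha}$. The toral invariants, which are exactly what carry $e(\J)$ and the rational class of $\bfS_{\J}$, are missing from your bookkeeping.

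With both corrections, the statement becomes equivalent to $\varepsilon(\bfS_{\J})\varepsilon(\T_{\J^{\ast}})^{-1}=e(\J)\prod_{\alpha\in\dot{\Phi}(\J,\bfS_{\J})_{\sym}}f_{(\J,\bfS_{\J})}(\alpha)\lambda_{\alpha}$, i.e.\ Lemma~\ref{lem:Kaletha--Kottwitz2}. Your third step does not prove this, for two reasons.
\begin{itemize}
\item A general $\bfS_{\J}$ is not stably conjugate to $\T_{\J^{\ast}}$. Both sides are stable invariants anyway, so transport along stable conjugacy yields nothing.
\item The root-number ratio is not a formal consequence of inductivity. Take $\J=\GL_{3}$ and $\bfS_{\J}=\Res_{E/F}\Gm$, with $E/F$ a tame cubic extension whose Galois closure $L$ has group $S_{3}$. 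Then $X^{\ast}(\bfS_{\J})_{\C}-X^{\ast}(\T)_{\C}=\rho-2\cdot\mathbbm{1}$ with $\rho$ the $2$-dimensional irreducible representation, so the left side is $\varepsilon(\rho)=\lambda_{E/F}$. The unique symmetric orbit has $F_{\alpha}=L$ and $F_{\pm\alpha}$ cubic, and it contributes $f_{(\J,\bfS_{\J})}(\alpha)\lambda_{L/F_{\pm\alpha}}=f_{(\J,\bfS_{\J})}(\alpha)\varepsilon(\det\rho)$. Equating the two is a sign identity for $\varepsilon(\rho)/\varepsilon(\det\rho)$, of the type governed by Deligne's theorem on orthogonal root numbers.
\end{itemize}
Put differently, even granting $e(\J)=\gamma(\mfj)\gamma(\mfj^{\ast})^{-1}$, what remains is an arithmetic comparison between $\gamma(\mfs_{\J})\gamma(\mft_{\J^{\ast}})^{-1}$ and $\varepsilon(\bfS_{\J})\varepsilon(\T_{\J^{\ast}})^{-1}$. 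That comparison is the real content of \cite{Kal15}, and your sketch leaves it at ``should be expressed through root numbers''.
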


\begin{lem}\label{lem:spec-1}
We have
\[
\frac{\varepsilon(\T_{\G_{\eta}^{\ast}})\cdot\varepsilon(\T_{\H_{y}})^{-1}\cdot\gamma(\mfg_{\eta})\gamma(\mfh_{y})^{-1}}{e(\G_{\eta})}
=
\frac{\Delta_{\II}^{\H_{y}}[a_{j_{\H}},\chi_{j_{\H}}](\exp(Y_{j_{\H}}^{\ast}))}{\Delta_{\II}^{\G_{\eta}}[a_{k}^{\res},\chi_{k}^{\res}](\exp(X_{k}^{\ast}))}.
\]
\end{lem}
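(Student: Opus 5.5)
We apply Proposition \ref{prop:Kaletha--Kottwitz} twice, once to $(\J,\bfS_{\J})=(\G_{\eta},\bfS_{k}^{\nat})$ and once to $(\J,\bfS_{\J})=(\H_{y},\bfS_{j_{\H}})$, using the bilinear forms $B_{\mfg_{\eta}}$ and $B_{\mfh_{y}}$ fixed in Section \ref{subsec:Lie-tran}. Since $\H_{y}$ is quasi-split, $e(\H_{y})=1$ and we may take $\T_{\H_{y}}$ as the minimal Levi. We obtain
\[
\varepsilon(\bfS_{k}^{\nat})\varepsilon(\T_{\G_{\eta}^{\ast}})^{-1}=e(\G_{\eta})\gamma(\mfg_{\eta})\prod_{\alpha_{\eta}}\kappa_{\alpha_{\eta}}(B_{\mfg_{\eta},\alpha_{\eta}})^{-1},
\]
\[
\varepsilon(\bfS_{j_{\H}})\varepsilon(\T_{\H_{y}})^{-1}=\gamma(\mfh_{y})\prod_{\alpha_{y}}\kappa_{\alpha_{y}}(B_{\mfh_{y},\alpha_{y}})^{-1}.
\]
The relevant tori are $\bfS_{k}^{\nat}=\bfS_{k}^{\theta,\circ}$ and $\bfS_{j_{\H}}\cong\bfS_{k,\theta}$. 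These are isogenous over $F$, so their character modules are rationally isomorphic as Galois representations. Hence $\varepsilon(\bfS_{k}^{\nat})=\varepsilon(\bfS_{j_{\H}})$, because root numbers depend only on the Galois representation $X^{\ast}\otimes\C$. Dividing the two identities, the left side of the lemma reduces to
\[
\prod_{\alpha_{\eta}\in\dot{\Phi}(\G_{\eta},\bfS_{k}^{\nat})_{\sym}}\kappa_{\alpha_{\eta}}(B_{\mfg_{\eta},\alpha_{\eta}})\Big/\prod_{\alpha_{y}\in\dot{\Phi}(\H_{y},\bfS_{j_{\H}})_{\sym}}\kappa_{\alpha_{y}}(B_{\mfh_{y},\alpha_{y}}).
\]

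Next we evaluate the right side. The elements $\exp(Y^{\ast}_{j_{\H}})$ and $\exp(X^{\ast}_{k})$ are of depth $r$ and regular, and $(Y^{\ast}_{j_{\H}},X^{\ast}_{k})$ is a $D$-norm pair. For every root, $\alpha(\exp X^{\ast}_{k})-1\equiv d\alpha(X^{\ast}_{k})$ modulo higher depth. Since the $\chi$-data are tamely ramified, each $\chi$-factor of $\Delta_{\II}$ becomes $\chi_{\alpha}(d\alpha(X^{\ast})/a_{\alpha})$. Identifying $\mfs^{\ast}$ with $\mfs$ via $B$, we have $d\alpha(X^{\ast})=\langle H_{\alpha},X^{\ast}\rangle\cdot B_{\alpha}^{-1}$ up to the usual normalisation, and $a_{\alpha}=\langle H_{\alpha},X^{\ast}\rangle$. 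So each factor is $\chi_{\alpha}(B_{\alpha}^{-1})$. Only symmetric roots survive, because asymmetric pairs cancel ($\chi_{-\alpha}=\chi_{\alpha}^{-1}$). For a symmetric root we have $B_{\alpha}\in F_{\pm\alpha}^{\times}$, so $\chi_{\alpha}(B_{\alpha}^{-1})=\kappa_{\alpha}(B_{\alpha})^{-1}$. This is the computation in \cite[Lemma 6.3.2 / proof of Thm.\ 6.3.4]{Kal19}, and we would cite it. Thus
\[
\Delta_{\II}^{\G_{\eta}}[a_{k}^{\res},\chi_{k}^{\res}](\exp X^{\ast}_{k})=\prod_{\alpha_{\eta}}\kappa_{\alpha_{\eta}}(B_{\mfg_{\eta},\alpha_{\eta}})^{-1},
\]
and similarly on $\H_{y}$. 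The ratio of the two matches the previous display.

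The main obstacle is a mismatch between the two root systems. The inner forms on the $\G_{\eta}$-side and the $\H_{y}$-side are not obviously paired, because the $a$-data differ by the factor $l_{\alpha}$ (Lemma \ref{lem:a-res-H}) and $\Phi(\H_{y})\subsetneq\Phi(\G_{\eta})$ in general. Our approach is to avoid comparing them term by term. The argument above treats each side self-containedly: both the $\kappa$-product from Kaletha--Kottwitz and the $\Delta_{\II}$-evaluation are taken within the same group, with its own $a$-data. The factor $l_{\alpha}$ enters only through $a$-data on one side, so it cancels inside that side's computation. The remaining points to verify are the following.

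\begin{enumerate}
\item The Lie-algebra identification $d\alpha(X^{\ast})/\langle H_{\alpha},X^{\ast}\rangle=B_{\alpha}^{-1}$ (up to a norm, hence killed by $\kappa_{\alpha}$) holds for restricted roots of $\G_{\eta}$ with $B_{\mfg_{\eta}}$.
\item The equality $\varepsilon(\bfS_{k}^{\nat})=\varepsilon(\bfS_{j_{\H}})$ holds; this is where the isogeny $\bfS_{k}^{\theta,\circ}\to\bfS_{k,\theta}$ is used.
\end{enumerate}
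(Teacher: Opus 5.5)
Your proposal is correct and follows essentially the same route as the paper. You apply the Kaletha--Kottwitz formula to $(\G_{\eta},\bfS_{k}^{\nat})$ and $(\H_{y},\bfS_{j_{\H}})$, cancel the root numbers via $X^{\ast}(\bfS_{j_{\H}})_{\C}\cong X^{\ast}(\bfS_{k}^{\nat})_{\C}$, and identify each remaining $\kappa$-product with the corresponding second transfer factor using $\langle H_{\alpha},X^{\ast}\rangle=d\alpha(X^{\ast})\cdot B_{\alpha}$ together with the tameness of the $\chi$-data. The only cosmetic difference is how $\alpha(\exp X)-1$ is replaced by $d\alpha(X)$: the paper scales $X\mapsto p^{2m}X$ and takes a limit, which is harmless since $\chi_{\alpha}(p^{2m})=\kappa_{\alpha}(p)^{2m}=1$, whereas you use the direct congruence $\alpha(\exp X)-1\in d\alpha(X)\,(1+\mfp_{F_{\alpha}})$.
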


\begin{proof}
By Proposition \ref{prop:Kaletha--Kottwitz}, we have
\begin{align*}
\varepsilon(\bfS_{k}^{\nat})\cdot\varepsilon(\T_{\G_{\eta}^{\ast}})^{-1}
&=
e(\G_{\eta})\gamma(\mfg_{\eta})\prod_{\alpha_{\eta}\in\dot{\Phi}(\G_{\eta},\bfS_{k}^{\nat})_{\sym}}\kappa_{\alpha_{\eta}}(B_{\mfg_{\eta},\alpha_{\eta}})^{-1},\\
\varepsilon(\bfS_{j_{\H}})\cdot\varepsilon(\T_{\H_{y}})^{-1}
&=
\gamma(\mfh_{y})\prod_{\alpha_{y}\in\dot{\Phi}(\H_{y},\bfS_{j_{\H}})_{\sym}}\kappa_{\alpha_{y}}(B_{\mfh_{y},\alpha_{y}})^{-1}.
\end{align*}
(note that $e(\H_{y})=1$ since $\H_{y}$ is quasi-split).
Hence, by noting that $X^{\ast}(\bfS_{j_{\H}})_{\C}\cong X^{\ast}(\bfS_{k}^{\nat})_{\C}$, the left-hand side of the assertion is equal to
\begin{align}\label{kappa}
\prod_{\alpha_{\eta}\in\dot{\Phi}(\G_{\eta},\bfS_{k}^{\nat})_{\sym}}\kappa_{\alpha_{\eta}}(B_{\mfg_{\eta},\alpha_{\eta}})
\prod_{\alpha_{y}\in\dot{\Phi}(\H_{y},\bfS_{j_{\H}})_{\sym}}\kappa_{\alpha_{y}}(B_{\mfh_{y},\alpha_{y}})^{-1}.
\end{align}

This can be computed by the same argument as in the final paragraph of the proof of \cite[Theorem 6.3.4]{Kal19} as follows.
For any $\alpha_{\eta}\in\Phi(\G_{\eta},\bfS_{k}^{\nat})_{\sym}$, we have
\[
\lim_{m\rightarrow\infty}\frac{\alpha_{\eta}(\exp(p^{2m}X^{\ast}_{k}))-1}{p^{2m}}
=d\alpha_{\eta}(X^{\ast}_{k}),
\]
where $X^{\ast}_{k}\in\mfg_{\eta}^{\ast}$ is regarded as an element of $\mfg_{\eta}$ via non-degenerate bilinear form $B_{\mfg_{\eta}}$ on $\mfg_{\eta}$.
By noting that $B_{\mfg_{\eta}}$ is an invariant bilinear form, we have
\[
\langle H_{\alpha_{\eta}},X^{\ast}_{k}\rangle
=B_{\mfg_{\eta}}(X^{\ast}_{k},H_{\alpha_{\eta}})
=B_{\mfg_{\eta}}(X^{\ast}_{k},[X_{\alpha_{\eta}},Y_{\alpha_{\eta}}])
=B_{\mfg_{\eta}}([X^{\ast}_{k},X_{\alpha_{\eta}}],Y_{\alpha_{\eta}}).
\]
Since we have $[X^{\ast}_{k},X_{\alpha_{\eta}}]=d\alpha_{\eta}(X^{\ast}_{k})X_{\alpha_{\eta}}$, we get $\langle H_{\alpha_{\eta}},X^{\ast}_{k}\rangle=d\alpha_{\eta}(X^{\ast}_{k})\cdot B_{\mfg_{\eta},\alpha_{\eta}}$.
Hence, as we have $a^{\res}_{k,\alpha_{\eta}}=\langle H_{\alpha_{\eta}},X^{\ast}_{k}\rangle$, we get
\[
\chi_{k,\alpha_{\eta}}^{\res}\biggl(\frac{\alpha_{\eta}(\exp(X^{\ast}_{k}))-1}{a_{k,\alpha_{\eta}}^{\res}}\biggr)
=
\chi_{k,\alpha_{\eta}}^{\res}\biggl(\frac{\alpha_{\eta}(\exp(p^{2m}X^{\ast}_{k}))-1}{a_{k,\alpha_{\eta}}^{\res}}\biggr)
=\kappa_{\alpha_{\eta}}(B_{\mfg_{\eta},\alpha_{\eta}})^{-1},
\]
where we used that $\chi_{k}^{\res}$ is minimally ramified in the first equality.

Similarly, for any $\alpha_{y}\in\Phi(\bfS_{\H},\H_{y})_{\sym}$, we have
\[
\lim_{m\rightarrow\infty}\frac{\alpha_{y}(\exp(p^{2m}Y^{\ast}_{j_{\H}}))-1}{p^{2m}}
=d\alpha_{y}(Y^{\ast}_{j_{\H}})
=\langle H_{\alpha_{y}},Y^{\ast}_{j_{\H}}\rangle\cdot B_{\mfh_{y},\alpha_{y}}^{-1}.
\]
Since we have $a_{j_{\H},\alpha_{y}}=\langle H_{\alpha_{y}},Y^{\ast}_{j_{\H}}\rangle$, we get
\[
\chi_{j_{\H},\alpha_{y}}\biggl(\frac{\alpha_{y}(\exp(Y^{\ast}_{j_{\H}}))-1}{a_{j_{\H},\alpha_{y}}}\biggr)
=\chi_{j_{\H},\alpha_{y}}\biggl(\frac{\alpha_{y}(\exp(p^{2m}Y^{\ast}_{j_{\H}}))-1}{a_{j_{\H},\alpha_{y}}}\biggr)
=\kappa_{\alpha_{y}}(B_{\mfh_{y},\alpha_{y}})^{-1}.
\]
Therefore we see that \eqref{kappa} is given by the ratio of $\Delta_{\II}^{\H_{y}}[a_{j_{\H}},\chi_{j_{\H}}](\exp(Y_{j_{\H}}^{\ast}))$ to $\Delta_{\II}^{\G_{\eta}}[a_{k}^{\res},\chi_{k}^{\res}](\exp(X_{k}^{\ast}))$ as in the right-hand side of the assertion.
\end{proof}

From Lemma \ref{lem:spec-1} and the descent properties of the second transfer factors (both in the twisted and untwisted cases, Lemma \ref{lem:tran-II-descent} and \cite[Lemma 4.6.7]{Kal19}), we see that the defining formula \eqref{spectral} of $\b\Delta^{\spec}_{\phi,k}$ equals 
\begin{multline}\label{spectral-2}
\Delta_{\II}^{\t\G}[a_{k}^{\res},\chi_{k}^{\res}](\eta_{0})
\cdot\chi_{k}^{\res}(\eta_{0})
\cdot \frac{\Delta_{\II}^{\H}[a_{j_{\H}},\chi_{j_{\H}}](y\exp(Y_{j_{\H}}^{\ast}))}{\Delta_{\II}^{\H}[a_{k}^{\res},\chi_{k}^{\res}](y\exp(Y_{j_{\H}}^{\ast}))}\\
\cdot\frac{\varepsilon(\T_{\G_{\theta}})\cdot\varepsilon(\T_{\G_{\eta_{0}}^{\ast}})^{-1}}{e(\G_{\eta_{0}})\cdot(-1)^{|\dot{\Xi}_{\eta_{0},\ur}|}\cdot \epsilon_{\bfS_{k},\ram}(s_{k})\cdot\epsilon_{\vartheta_{k}}^{\star}(s_{k})}\\
\cdot\frac{\vartheta_{j_{\H}}(y)}{\vartheta_{k}(s_{k})}\cdot\Delta_{\I,\III}[a_{k}^{\res},\chi_{k}^{\res}]\bigl(y\exp(Y_{j_{\H}}^{\ast}), \eta\exp(X_{k}^{\ast})\bigr),
\end{multline}
where $\chi_{k}^{\res}(\eta_{0})$ is as in Lemma \ref{lem:tran-II-descent}:
\[
\chi_{k}^{\res}(\eta_{0})
\colonequals 
\prod_{\alpha_{\res}\in\dot{\Phi}(\G_{\eta_{0}},\bfS_{k}^{\nat})}
\chi_{k,\alpha_{\res}}^{\res}(l_{\alpha}).
\]
Since $\chi_{k}^{\res}(\eta_{0})$ is independent of $\eta_{0}$ by Lemma \ref{lem:chi-constant}, let us write $\chi_{k}^{\res}(\bfS_{k}^{\nat})$\symdef{chi-res-k-S}{$\chi_{k}^{\res}(\bfS_{k}^{\nat})$} for $\chi_{k}^{\res}(\eta_{0})$ in the following.

\begin{lem}\label{lem:spec-3}
The quantity $\Delta_{\II}^{\t\G}[a_{k}^{\res},\chi_{k}^{\res}](\eta_{0})$ is given by
\[
\lambda_{k,\ur}^{\res}\cdot(-1)^{r_{k,\ur}^{\res}+|\dot{\Xi}_{\eta_{0},\ur}|}
\cdot
\prod_{\alpha_{\res}\in\dot{\Phi}(\G_{\eta_{0}},\bfS_{k}^{\nat})_{\ur}}
f_{(\G_{\eta_{0}},\bfS_{k}^{\nat})}(\alpha_{\res})
\cdot\prod_{\begin{subarray}{c}\alpha_{\res}\in\dot{\Phi}_{\res}(\G,\bfS_{k})_{\sym}\\ N(\alpha)(\nu_{0})\neq1\end{subarray}}
\lambda_{\alpha_{\res}},
\]
where we put $r_{k,\ur}^{\res}\colonequals \sum_{\alpha_{\res}\in\dot{\Phi}_{\res}(\G,\bfS_{k})_{\ur}}e_{\alpha_{\res}}r$ and $\lambda_{k,\ur}^{\res}=\prod_{\alpha_{\res}\in\dot{\Phi}_{\res}(\G,\bfS_{k})_{\ur}}\lambda_{\alpha_{\res}}$.
\end{lem}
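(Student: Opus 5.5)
We compute $\Delta_{\II}^{\t\G}[a_{k}^{\res},\chi_{k}^{\res}](\eta_{0})$ directly from its definition as a product over $\alpha_{\res}\in\dot{\Phi}_{\res}(\G,\bfS_{k})$ with $N(\alpha)(\nu_{0})\neq1$ of $\chi^{\res}_{k,\alpha_{\res}}\bigl((N(\alpha)(\nu_{0})-1)/a^{\res}_{k,\alpha_{\res}}\bigr)$. We split the product according to whether $\alpha_{\res}$ is asymmetric, symmetric unramified, or symmetric ramified. This is the twisted analogue of the computation of $\Delta_{\II}$ at a depth-zero element in \cite[Section 4.7]{Kal19}; the approach is to run that argument with $(\G,\bfS)$ replaced by the restricted root datum $\Phi_{\res}(\G,\bfS_{k})$ and $\alpha(\gamma_{0})$ replaced by $N(\alpha)(\nu_{0})$.

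Asymmetric factors vanish since $\chi^{\res}_{k,\alpha_{\res}}$ is trivial. For a symmetric $\alpha_{\res}$ with $N(\alpha)(\nu_{0})\neq1$, recall from the proof of Lemma \ref{lem:tran-II-descent} that $N(\alpha)(\nu_{0})$ is a prime-to-$p$ root of unity. Its inverse is its $\tau_{\alpha_{\res}}$-conjugate, so $N(\alpha)(\nu_{0})-1$ is a unit with $\tau(x)=-x\cdot N(\alpha)(\nu_{0})^{-1}$. Then $\chi^{\res}_{k,\alpha_{\res}}$ applied to the quotient can be evaluated by writing it as a unit times $(2a^{\res}_{k,\alpha_{\res}})^{-1}$ times $2$. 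This gives $\lambda_{\alpha_{\res}}^{-1}$ (equivalently $\lambda_{\alpha_{\res}}$, as $\lambda^2=\kappa(-1)$ is absorbed) times a sign recording the residue class of the unit. In the ramified case the unit factor is handled by the characterization $\chi^{\res}_{k,\alpha_{\res}}(2a^{\res}_{k,\alpha_{\res}})=\lambda_{\alpha_{\res}}$ together with the fact that $\chi^{\res}|_{\mcO^\times}$ is then a quadratic residue computation, which should come out trivial after the usual manipulation (as in \cite[Lemma 4.7.x]{Kal19}). This yields the factor $\prod\lambda_{\alpha_{\res}}$ over symmetric $\alpha_{\res}$ with $N(\alpha)(\nu_{0})\neq1$.

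For unramified $\alpha_{\res}$, $\chi^{\res}_{k,\alpha_{\res}}$ is the unramified quadratic character, so the factor is $(-1)^{\val(a^{\res}_{k,\alpha_{\res}})}$. Since $a^{\res}_{k,\alpha_{\res}}$ has depth $-r$, this is $(-1)^{e_{\alpha_{\res}}r}$. We then complete the product over all of $\dot{\Phi}_{\res}(\G,\bfS_{k})_{\ur}$ by multiplying and dividing by the contributions of those unramified $\alpha_{\res}$ with $N(\alpha)(\nu_{0})=1$, which are exactly $\dot{\Phi}(\G_{\eta_{0}},\bfS_{k}^{\nat})_{\ur}$. This produces $\lambda_{k,\ur}^{\res}(-1)^{r_{k,\ur}^{\res}}$ times the inverse of the corresponding factors over $\G_{\eta_{0}}$.

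The main obstacle is identifying those inverse factors, i.e.\ $\lambda_{\alpha_{\res}}(-1)^{e_{\alpha_{\res}} r}$ for $\alpha_{\res}\in\dot{\Phi}(\G_{\eta_0},\bfS_k^\nat)_{\ur}$, with $f_{(\G_{\eta_{0}},\bfS_{k}^{\nat})}(\alpha_{\res})$ together with the sign $(-1)^{|\dot{\Xi}_{\eta_{0},\ur}|}$. Here $\lambda_{\alpha_{\res}}=-1$ for unramified extensions, and the parity of $e_{\alpha_{\res}} r$ versus membership in $\Xi_{\eta_{0}}$ should be read off from the Heisenberg description in Section \ref{subsec:Heisen-structure}: a root lies in $\Xi$ iff $r/2\in\ord(\alpha)$. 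The toral invariant $f$ for unramified symmetric roots should be expressible through the same parity (cf.\ \cite[Lemma 4.7.x]{Kal19}, \cite[Proposition 5.12]{OT21}). I would first try to prove the identity $\lambda_{\alpha_{\res}}(-1)^{e_{\alpha_{\res}}r}=f(\alpha_{\res})\cdot(-1)^{[\alpha_{\res}\in\Xi_{\eta_0}]}$ root by root, applying Kaletha's untwisted lemma to the toral pair $(\bfS_k^\nat,\vartheta_k^{\prime\nat})$ of $\G_{\eta_0}$ via Lemma \ref{lem:a-res-eta}.
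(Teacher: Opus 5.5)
Your route is the paper's: evaluate $\Delta_{\II}^{\t\G}[a_{k}^{\res},\chi_{k}^{\res}](\eta_{0})$ factor by factor according to the type of $\alpha_{\res}$, complete the unramified product, and convert the contributions of $\dot{\Phi}(\G_{\eta_{0}},\bfS_{k}^{\nat})_{\ur}$ into toral invariants. Two things need repair.

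\textbf{The symmetric factors.} Your second paragraph applies $\chi^{\res}_{k,\alpha_{\res}}(2a^{\res}_{k,\alpha_{\res}})=\lambda_{\alpha_{\res}}$ to all symmetric $\alpha_{\res}$. That normalization is imposed only for ramified $\alpha_{\res}$. For unramified $\alpha_{\res}$ with $N(\alpha)(\nu_{0})\neq1$ the factor is exactly $(-1)^{e_{\alpha_{\res}}r}$, as in your third paragraph, with no $\lambda_{\alpha_{\res}}$. Read literally, your second and third paragraphs count these roots twice.

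The factors $\lambda_{\alpha_{\res}}$ for unramified $\alpha_{\res}$ with $N(\alpha)(\nu_{0})\neq1$ in the statement, and $\lambda^{\res}_{k,\ur}$, arise only from bookkeeping. Since $\lambda_{\alpha_{\res}}=-1$ for these roots, the product of $\lambda_{\alpha_{\res}}$ over unramified $\alpha_{\res}$ with $N(\alpha)(\nu_{0})=1$ equals $\lambda^{\res}_{k,\ur}$ times the same product over those with $N(\alpha)(\nu_{0})\neq1$.

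For ramified $\alpha_{\res}$ the cleaner argument is this. The nontrivial element $\tau$ of $\Gal(F_{\alpha_{\res}}/F_{\pm\alpha_{\res}})$ acts trivially on the residue field, so $\tau(\zeta)=\zeta^{-1}$ with $\zeta\colonequals N(\alpha)(\nu_{0})$ forces $\zeta\equiv-1$. Writing $a\colonequals a^{\res}_{k,\alpha_{\res}}$, the factor is $\chi^{\res}_{k,\alpha_{\res}}(-2a^{-1})$. Since $\tau(a)=-a$, we get $\Nr_{F_{\alpha_{\res}}/F_{\pm\alpha_{\res}}}(a)=-a^{2}$, and hence $\chi^{\res}_{k,\alpha_{\res}}(-2a^{-1})=\chi^{\res}_{k,\alpha_{\res}}(2a)=\lambda_{\alpha_{\res}}$. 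In your normalization the ``unit sign'' is $\kappa_{\alpha_{\res}}(-1)$, not $1$; it is absorbed by $\lambda_{\alpha_{\res}}^{2}=\kappa_{\alpha_{\res}}(-1)$.

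\textbf{The step you leave open.} This is the only non-formal input, and the paper supplies it from \cite[Proposition 4.5.1]{Kal19} applied to the toral pair of $\G_{\eta_{0}}$. For unramified $\alpha_{\res}\in\Xi_{\eta_{0}}$ one has $\frac{r}{2}\in e_{\alpha_{\res}}^{-1}\Z$ if $f_{(\G_{\eta_{0}},\bfS_{k}^{\nat})}(\alpha_{\res})=+1$, and $\frac{r}{2}\in e_{\alpha_{\res}}^{-1}(\Z+\frac{1}{2})$ if it is $-1$; that is, $(-1)^{e_{\alpha_{\res}}r}=f_{(\G_{\eta_{0}},\bfS_{k}^{\nat})}(\alpha_{\res})$. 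Because $\val_{F}(a^{\res}_{k,\alpha_{\res}})=-r\in\val_{F}(F_{\alpha_{\res}}^{\times})$, we have $e_{\alpha_{\res}}r\in\Z$, so $\frac{r}{2}$ always lies in one of these two cosets. The two cases therefore swap for $\alpha_{\res}\notin\Xi_{\eta_{0}}$, giving $(-1)^{e_{\alpha_{\res}}r}=-f_{(\G_{\eta_{0}},\bfS_{k}^{\nat})}(\alpha_{\res})$.

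This is exactly your identity $\lambda_{\alpha_{\res}}(-1)^{e_{\alpha_{\res}}r}=f_{(\G_{\eta_{0}},\bfS_{k}^{\nat})}(\alpha_{\res})\cdot(-1)^{[\alpha_{\res}\in\Xi_{\eta_{0}}]}$, which is correct. Its product over $\dot{\Phi}(\G_{\eta_{0}},\bfS_{k}^{\nat})_{\ur}$ yields the sign $(-1)^{|\dot{\Xi}_{\eta_{0},\ur}|}$ and the toral invariants, completing the argument as in the paper.
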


\begin{proof}
By definition, we have 
\[
\Delta_{\II}^{\t\G}[a_{k}^{\res},\chi_{k}^{\res}](\eta_{0})
=
\prod_{\begin{subarray}{c}\dot{\alpha}_{\res}\in\dot{\Phi}_{\res}(\G,\bfS_{k})\\ N(\alpha)(\nu_{0})\neq1\end{subarray}}
\chi_{k,\alpha_{\res}}^{\res}\biggl(\frac{N(\alpha)(\nu_{0})-1}{a_{k,\alpha_{\res}}^{\res}}\biggr).
\]
Since $\nu_{0}\theta$ is topologically semisimple, the valuation of $N(\alpha)(\nu_{0})-1$ is zero whenever $N(\alpha)(\nu_{0})\neq1$.
Hence, for any $\alpha_{\res}\in\dot{\Phi}_{\res}(\G,\bfS_{k})$ such that $N(\alpha)(\nu_{0})\neq1$, we can compute each factor as follows (cf.\ \cite[Lemma 4.7.1]{Kal19}):
\begin{description}
\item[The case where $\alpha_{\res}$ is asymmetric]
Since $\chi_{k,\alpha_{\res}}^{\res}$ is the trivial character of $F_{\alpha_{\res}}^{\times}$ in this case, we have 
\[
\chi_{k,\alpha_{\res}}^{\res}\biggl(\frac{N(\alpha)(\nu_{0})-1}{a_{k,\alpha_{\res}}^{\res}}\biggr)=1.
\]
\item[The case where $\alpha_{\res}$ is symmetric unramified]
Since $\chi_{k,\alpha_{\res}}^{\res}$ is the unique nontrivial quadratic unramified character of $F_{\alpha_{\res}}^{\times}$ and 
\[
\val_{F}(a_{k,\alpha_{\res}}^{\res})
=\val_{F}(\langle H_{\alpha_{\res}},X_{k}^{\ast}\rangle)
=-r
\in\val_{F}(F_{\alpha_{\res}}^{\times}),
\]
we have
\[
\chi_{k,\alpha_{\res}}^{\res}\biggl(\frac{N(\alpha)(\nu_{0})-1}{a_{k,\alpha_{\res}}^{\res}}\biggr)
=
(-1)^{e_{\alpha_{\res}}r}.
\]
(Recall that $e_{\alpha_{\res}}$ denotes the ramification index of $F_{\alpha_{\res}}/F$.)
\item[The case where $\alpha_{\res}$ is symmetric ramified]
Since $\nu_{0}\theta$ is topologically semisimple and $N(\alpha)(\nu_{0})$ belongs to the kernel of the norm map $\Nr\colon F_{\alpha_{\res}}^{\times}\rightarrow F_{\pm\alpha_{\res}}^{\times}$, we have $N(\alpha)(\nu_{0})\equiv-1 \pmod{\mfp_{F_{\alpha_{\res}}}}$ whenever $N(\alpha)(\nu_{0})\neq1$.
By noting that $\chi_{k,\alpha_{\res}}^{\res}$ is tamely ramified, we get
\[
\chi_{k,\alpha_{\res}}^{\res}\biggl(\frac{N(\alpha)(\nu_{0})-1}{a_{k,\alpha_{\res}}^{\res}}\biggr)
=
\chi_{k,\alpha_{\res}}^{\res}(-2a_{k,\alpha_{\res}}^{\res,-1}).
\]
As we have $\Tr_{F_{\alpha_{\res}}/F_{\pm\alpha_{\res}}}(a_{k,\alpha_{\res}}^{\res})=0$, we have $\Nr_{F_{\alpha_{\res}}/F_{\pm\alpha_{\res}}}(a_{k,\alpha_{\res}}^{\res})=-a_{k,\alpha_{\res}}^{\res,2}$.
Hence $\chi_{k,\alpha_{\res}}^{\res}(-2a_{k,\alpha_{\res}}^{\res,-1})=\chi_{k,\alpha_{\res}}^{\res}(2a_{k,\alpha_{\res}}^{\res})=\lambda_{\alpha_{\res}}$.
\end{description}

Therefore, we get
\begin{align*}
\Delta_{\II}^{\t\G}[a_{k}^{\res},\chi_{k}^{\res}](\eta_{0})
&=
\prod_{\begin{subarray}{c}\dot{\alpha}_{\res}\in\dot{\Phi}_{\res}(\G,\bfS_{k})_{\ur}\\ N(\alpha)(\nu_{0})\neq1\end{subarray}}
(-1)^{e_{\alpha_{\res}}r}
\prod_{\begin{subarray}{c}\dot{\alpha}_{\res}\in\dot{\Phi}_{\res}(\G,\bfS_{k})_{\ram}\\ N(\alpha)(\nu_{0})\neq1\end{subarray}}\lambda_{\alpha_{\res}}\\
&=
(-1)^{r_{k,\ur}^{\res}}\prod_{\begin{subarray}{c}\dot{\alpha}_{\res}\in\dot{\Phi}_{\res}(\G,\bfS_{k})_{\ur}\\ N(\alpha)(\nu_{0})=1\end{subarray}}
(-1)^{e_{\alpha_{\res}}r}
\prod_{\begin{subarray}{c}\dot{\alpha}_{\res}\in\dot{\Phi}_{\res}(\G,\bfS_{k})_{\ram}\\ N(\alpha)(\nu_{0})\neq1\end{subarray}}\lambda_{\alpha_{\res}}.
\end{align*}

We compute $(-1)^{e_{\alpha_{\res}}r}$ for $\alpha_{\res}\in\dot{\Phi}_{\res}(\G,\bfS_{k})_{\ur}$ satisfying $N(\alpha)(\nu_{0})=1$ by noting whether $\alpha_{\res}\in\Xi_{\eta_{0}}$ (i.e., $\alpha_{\res}$ appears in the Heisenberg quotient of $\G_{\eta_{0}}$ with respect to $(\x,\frac{r}{2})$) or not.
When $\alpha_{\res}\in\Xi_{\eta_{0}}$, we have
\[
\begin{cases}
\frac{r}{2}\in e_{\alpha_{\res}}^{-1}\Z & \text{if $f_{(\G_{\eta_{0}},\bfS_{k}^{\nat})}(\alpha_{\res})=+1$,}\\
\frac{r}{2}\in e_{\alpha_{\res}}^{-1}(\Z+\frac{1}{2}) & \text{if $f_{(\G_{\eta_{0}},\bfS_{k}^{\nat})}(\alpha_{\res})=-1$,}
\end{cases}
\]
by \cite[Proposition 4.5.1]{Kal19}.
This is equivalent to that
\[
\begin{cases}
e_{\alpha_{\res}}r \equiv 0 \pmod{2} & \text{if $f_{(\G_{\eta_{0}},\bfS_{k}^{\nat})}(\alpha_{\res})=+1$,}\\
e_{\alpha_{\res}}r \equiv 1 \pmod{2} & \text{if $f_{(\G_{\eta_{0}},\bfS_{k}^{\nat})}(\alpha_{\res})=-1$.}
\end{cases}
\]
By noting that these conditions are simply swapped when $\alpha_{\res}\notin\Xi_{\eta_{0}}$, we see that
\begin{align*}
(-1)^{e_{\alpha_{\res}}r}
=
\begin{cases}
f_{(\G_{\eta_{0}},\bfS_{k}^{\nat})}(\alpha_{\res})\\
-f_{(\G_{\eta_{0}},\bfS_{k}^{\nat})}(\alpha_{\res})
\end{cases}
=
\begin{cases}
-f_{(\G_{\eta_{0}},\bfS_{k}^{\nat})}(\alpha_{\res})\cdot\lambda_{\alpha_{\res}} & \text{if $\alpha_{\res}\in\Xi_{\eta_{0}}$,} \\
f_{(\G_{\eta_{0}},\bfS_{k}^{\nat})}(\alpha_{\res})\cdot\lambda_{\alpha_{\res}} & \text{if $\alpha_{\res}\notin\Xi_{\eta_{0}}$}
\end{cases}
\end{align*}
(recall that $\lambda_{\alpha_{\res}}=-1$ since $F_{\alpha_{\res}}/F_{\pm\alpha_{\res}}$ is unramified).
Thus we get
\[
\prod_{\begin{subarray}{c}\dot{\alpha}_{\res}\in\dot{\Phi}_{\res}(\G,\bfS_{k})_{\ur}\\ N(\alpha)(\nu_{0})=1\end{subarray}}
(-1)^{e_{\alpha_{\res}}r}
=
(-1)^{|\dot{\Xi}_{\eta_{0},\ur}|}\prod_{\begin{subarray}{c}\dot{\alpha}_{\res}\in\dot{\Phi}_{\res}(\G,\bfS_{k})_{\ur}\\ N(\alpha)(\nu_{0})=1\end{subarray}}
f_{(\G_{\eta_{0}},\bfS_{k}^{\nat})}(\alpha_{\res})\cdot\lambda_{\alpha_{\res}}.
\]
Again noting that $\lambda_{\alpha_{\res}}=-1$, we have
\[
\prod_{\begin{subarray}{c}\dot{\alpha}_{\res}\in\dot{\Phi}_{\res}(\G,\bfS_{k})_{\ur}\\ N(\alpha)(\nu_{0})=1\end{subarray}}
\lambda_{\alpha_{\res}}
=
\lambda_{k,\ur}^{\res}\cdot
\prod_{\begin{subarray}{c}\dot{\alpha}_{\res}\in\dot{\Phi}_{\res}(\G,\bfS_{k})_{\ur}\\ N(\alpha)(\nu_{0})\neq1\end{subarray}}
\lambda_{\alpha_{\res}}.
\]
Recalling that $\Phi(\G_{\eta_{0}},\bfS_{k}^{\nat})=\{\alpha_{\res}\in\Phi_{\res}(\G,\bfS_{k}) \mid N(\alpha)(\nu_{0})=1\}$, we get the assertion.
\end{proof}

\begin{lem}\label{lem:Kaletha--Kottwitz2}
We have
\[
\varepsilon(\bfS_{k}^{\nat})\cdot\varepsilon(\T_{\G_{\eta_{0}}^{\ast}})^{-1}
=
e(\G_{\eta_{0}})\prod_{\dot{\alpha}_{\res}\in \dot{\Phi}(\G_{\eta_{0}},\bfS_{k}^{\nat})_{\sym}}f_{(\G_{\eta_{0}},\bfS_{k}^{\nat})}(\alpha_{\res})\cdot\lambda_{\alpha_{\res}}
\]
\end{lem}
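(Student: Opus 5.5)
We will apply Proposition \ref{prop:Kaletha--Kottwitz} to $\J=\G_{\eta_{0}}$ with the maximal torus $\bfS_{\J}=\bfS_{k}^{\nat}$. This gives
\[
\varepsilon(\bfS_{k}^{\nat})\cdot\varepsilon(\T_{\G_{\eta_{0}}^{\ast}})^{-1}
=
e(\G_{\eta_{0}})\gamma(\mfg_{\eta_{0}})\prod_{\dot{\alpha}_{\res}\in\dot{\Phi}(\G_{\eta_{0}},\bfS_{k}^{\nat})_{\sym}}\kappa_{\alpha_{\res}}(B_{\mfg_{\eta_{0}},\alpha_{\res}})^{-1}.
\]
So the task is to identify $\gamma(\mfg_{\eta_{0}})\prod\kappa_{\alpha_{\res}}(B_{\mfg_{\eta_{0}},\alpha_{\res}})^{-1}$ with $\prod f_{(\G_{\eta_{0}},\bfS_{k}^{\nat})}(\alpha_{\res})\lambda_{\alpha_{\res}}$. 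This is exactly the combination used by Kaletha in the untwisted setting (\cite[Lemma 4.8, Theorem 4.10]{Kal15}, in the form cited in \cite[Section 4.7]{Kal19}).

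The plan is to compute the Weil constant $\gamma(\mfg_{\eta_{0}})$ by the root space decomposition. Orthogonally, $\mfg_{\eta_{0}}=\mfs_{k}^{\nat}\oplus\bigoplus_{\ddot{\alpha}_{\res}}\mfg_{\eta_{0},\ddot{\alpha}_{\res}}$, where each $\mfg_{\eta_{0},\ddot{\alpha}_{\res}}$ is the $F$-rational part of $\bigoplus_{\beta\in\Sigma\alpha_{\res}}\bmfg_{\eta_{0},\beta}$. By multiplicativity of Weil constants, $\gamma(\mfg_{\eta_{0}})$ splits as a product over these summands.

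For asymmetric $\ddot{\alpha}_{\res}$, the summand is a hyperbolic plane over $F_{\alpha_{\res}}$ restricted to $F$, so its Weil constant is $1$; those roots also do not appear in either product. The toral summand $\mfs_{k}^{\nat}$ and the comparison with the quasi-split torus are what Kaletha--Kottwitz absorb into the ratio $\varepsilon(\bfS_{k}^{\nat})/\varepsilon(\T_{\G_{\eta_{0}}^{\ast}})$; this step will be handled by a citation and no new work.

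For symmetric $\alpha_{\res}$, we choose $X\in\bmfg_{\eta_{0},\alpha_{\res}}(F_{\alpha_{\res}})$. The summand is then the one-dimensional $F_{\alpha_{\res}}$-space $\{cX+\tau(cX)\}$, with quadratic form $c\mapsto \Tr_{F_{\pm\alpha_{\res}}/F}\bigl(\Nr(c)\,B(X,\tau X)\bigr)$. Writing $[X,\tau X]=u\,H_{\alpha_{\res}}$ gives $B(X,\tau X)=u\,B_{\mfg_{\eta_{0}},\alpha_{\res}}$, and by definition $\kappa_{\alpha_{\res}}(u)=f_{(\G_{\eta_{0}},\bfS_{k}^{\nat})}(\alpha_{\res})$. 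The Weil constant of the norm form of $F_{\alpha_{\res}}/F_{\pm\alpha_{\res}}$ scaled by $a$ is $\kappa_{\alpha_{\res}}(a)\lambda_{\alpha_{\res}}$, up to the standard normalization in \cite[Section 3.1]{Wal97}; induction to $F$ does not change this, since the additive character is $\psi_{F}\circ\Tr$. Hence this factor equals $\kappa_{\alpha_{\res}}(B_{\mfg_{\eta_{0}},\alpha_{\res}})\,f_{(\G_{\eta_{0}},\bfS_{k}^{\nat})}(\alpha_{\res})\,\lambda_{\alpha_{\res}}$. It cancels the corresponding $\kappa^{-1}$, and multiplying over $\dot{\Phi}(\G_{\eta_{0}},\bfS_{k}^{\nat})_{\sym}$ gives the claim.

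The main obstacle is bookkeeping of conventions rather than new mathematics. We must get the sign and inverse of $\lambda_{\alpha_{\res}}$ right in the Weil-index formula for the norm form, and we must check that the residual toral Weil constant is consistent with how \cite{Kal15} normalizes. The first thing to try is to avoid recomputing this altogether by citing the proof of \cite[Theorem 4.10]{Kal15}, where this exact identity $\gamma(\mfj)\prod\kappa_{\alpha}(B_{\alpha})^{-1}=\prod f(\alpha)\lambda_{\alpha}$ is established for any pair $(\J,\bfS_{\J})$ with $\bfS_{\J}$ tame. We would apply it to $\J=\G_{\eta_{0}}$, which is connected reductive with tame maximal torus $\bfS_{k}^{\nat}$ by Proposition \ref{prop:twisted-tori}.
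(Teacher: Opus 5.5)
Your proposal matches the paper's argument: the paper's proof consists only of observing that this is the variant of the Kaletha--Kottwitz formula recorded in \cite[Corollary 4.11]{Kal15}, applied to $\J=\G_{\eta_{0}}$ with its tame maximal torus $\bfS_{k}^{\nat}$, which is exactly where your fallback citation lands. If you wanted your hand computation of $\gamma(\mfg_{\eta_{0}})$ to stand on its own, note that the toral summand is not ``absorbed'' by the $\varepsilon$-ratio, since after applying Proposition \ref{prop:Kaletha--Kottwitz} that ratio already is the left-hand side; the Weil constant $\gamma(\mfs_{k}^{\nat})$ produced by your orthogonal decomposition would therefore still have to be accounted for separately, but the citation makes this unnecessary.
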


\begin{proof}
This is a variant of the formula of Kaletha--Kottwitz (Proposition \ref{prop:Kaletha--Kottwitz}), which is stated in \cite[Corollary 4.11]{Kal15}.
\end{proof}

By noting that $\{\alpha_{\res}\in\dot{\Phi}_{\res}(\G,\bfS_{k}) \mid N(\alpha)(\nu_{0})\neq1\}=\dot{\Phi}_{\res}(\G,\bfS_{k})\smallsetminus\dot{\Phi}(\G_{\eta_{0}},\bfS_{k}^{\nat})$, Lemmas \ref{lem:spec-3} and \ref{lem:Kaletha--Kottwitz2} imply that \eqref{spectral-2} equals
\begin{multline}\label{spectral-3}
\chi_{k}^{\res}(\bfS_{k}^{\nat})
\cdot (-1)^{r_{k,\ur}^{\res}} \cdot \lambda_{k,\ur}^{\res}
\cdot \!\!\!\!\!\! \prod_{\dot{\alpha}_{\res}\in\dot{\Phi}_{\res}(\G,\bfS_{k})_{\sym}} \!\!\!\!\!\! \lambda_{\alpha_{\res}}
\cdot \!\!\!\!\!\! \prod_{\dot{\alpha}_{\res}\in\dot{\Phi}(\G_{\eta_{0}},\bfS_{k}^{\nat})_{\ram}} \!\!\!\!\!\! f_{(\G_{\eta_{0}},\bfS_{k}^{\nat})}(\alpha_{\res})\\
\cdot \frac{\Delta_{\II}^{\H}[a_{j_{\H}},\chi_{j_{\H}}](y\exp(Y_{j_{\H}}^{\ast}))}{\Delta_{\II}^{\H}[a_{k}^{\res},\chi_{k}^{\res}](y\exp(Y_{j_{\H}}^{\ast}))}
\cdot \frac{\varepsilon(\T_{\G_{\theta}})\cdot\varepsilon(\bfS_{k}^{\nat})^{-1}}{\epsilon_{\bfS_{k},\ram}(s_{k})
\cdot\epsilon_{\vartheta_{k}}^{\star}(s_{k})}\\
\cdot\frac{\vartheta_{j_{\H}}(y)}{\vartheta_{k}(s_{k})}\cdot\Delta_{\I,\III}[a_{k}^{\res},\chi_{k}^{\res}]\bigl(y\exp(Y_{j_{\H}}^{\ast}), \eta\exp(X_{k}^{\ast})\bigr).
\end{multline}

We put
\[
\chi_{j}^{\res}(\bfS_{j_{\H}})
\colonequals 
\prod_{\dot{\alpha}_{\res}\in\dot{\Phi}(\H,\bfS_{j_{\H}})}\chi_{j}^{\res}(l_{\alpha}).
\]\symdef{chi-res-j-S-j-H}{$\chi_{j}^{\res}(\bfS_{j_{\H}})$}

\begin{lem}\label{lem:Delta-II-ratio}
We have
\[
\frac{\Delta_{\II}^{\H}[a_{j_{\H}},\chi_{j_{\H}}](y\exp(Y_{j_{\H}}^{\ast}))}{\Delta_{\II}^{\H}[a_{k}^{\res},\chi_{k}^{\res}](y\exp(Y_{j_{\H}}^{\ast}))}
=
\zeta_{\chi_{j_{\H}}/\chi_{j}^{\res},S_{j_{\H}}}(y\exp(Y^{\ast}_{j_{\H}}))
\cdot
\chi_{j}^{\res}(\bfS_{j_{\H}}).
\]
\end{lem}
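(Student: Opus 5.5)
Both sides are products over $\dot{\Phi}(\H,\bfS_{j_{\H}})$ with $\gamma'\colonequals y\exp(Y^{\ast}_{j_{\H}})$. The plan is to compare them root by root. The point is that, via the identification of $\Phi(\H,\bfS_{j_{\H}})$ with a subset of $\Phi_{\res}(\G,\bfS_{j})\cong\Phi_{\res}(\G,\bfS_{k})$ (Section \ref{subsec:rootsys}), the two second factors differ only in their $a$-data and $\chi$-data.

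First, recall that the $\H$-side $\Delta_{\II}^{\H}[a,\chi](\gamma')$ is the product over $\dot{\alpha}_{y}$ with $\alpha_{y}(\gamma')\neq1$ of $\chi_{\alpha_{y}}((\alpha_{y}(\gamma')-1)/a_{\alpha_{y}})$. By strong regularity every root is counted. In the denominator the data $(a_{k}^{\res},\chi_{k}^{\res})$ are transported to $\Phi(\H,\bfS_{j_{\H}})$ exactly as in the construction of $\Lj^{\H}_{\chi^{\res}_{k}}$ in Section \ref{subsec:Delta-III}. Thus each factor of the ratio becomes
\[
\frac{\chi_{j_{\H},\alpha_{y}}\bigl((\alpha_{y}(\gamma')-1)/a_{j_{\H},\alpha_{y}}\bigr)}{\chi^{\res}_{k,\alpha_{\eta}}\bigl((\alpha_{y}(\gamma')-1)/a^{\res}_{k,\alpha_{\eta}}\bigr)}.
\]
Next I would use Lemma \ref{lem:a-res-H}, which gives $a^{\res}_{k,\alpha_{\eta}}=l_{\alpha}a_{j_{\H},\alpha_{y}}$. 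This holds for roots of $\H_{y}$. For roots of $\H$ not in $\H_{y}$, I would instead use the general identity $a^{\res}_{\alpha_{\res}}=l_{\alpha}a_{\alpha_{y}}$, which comes from the pairing computation in the proof of Lemma \ref{lem:appearance-of-l-alpha} applied to $X_{k}^{\ast}$ and $Y_{j_{\H}}^{\ast}$ on the whole tori, since $d\xi_{D}^{\ast}(Y^{\ast}_{j_{\H}})=X^{\ast}_{j}$. Substituting, each factor splits as
\[
(\chi_{j_{\H},\alpha_{y}}\chi^{\res,-1}_{k,\alpha_{y}})\bigl((\alpha_{y}(\gamma')-1)/a_{j_{\H},\alpha_{y}}\bigr)\cdot \chi^{\res}_{k,\alpha_{y}}(l_{\alpha}).
\]
The product of the second pieces is $\chi^{\res}_{j}(\bfS_{j_{\H}})$, using $\chi^{\res}_{k}=\chi^{\res}_{j}$ under $\bfS_{k}\cong\bfS_{j}$. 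Here $l_{\alpha}$ is rational, so $\chi(l_{\alpha})$ does not depend on the representative of $\dot{\alpha}$.

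It remains to identify the product of the first pieces with $\zeta_{\chi_{j_{\H}}/\chi^{\res}_{j},S_{j_{\H}}}(\gamma')$. The ratio $\zeta_{\alpha_{y}}\colonequals \chi_{j_{\H},\alpha_{y}}\chi^{\res,-1}_{j,\alpha_{y}}$ is a set of $\zeta$-data. Both families restrict to $\kappa_{\alpha_{y}}$ on symmetric roots, provided symmetry types match under the identification, which holds because the identification is $\Gamma$-equivariant. I would then invoke the standard fact (\cite[Section 4.6]{Kal19}) that for $\zeta$-data, $\prod_{\dot\alpha}\zeta_{\alpha}((\alpha(t)-1)/a_{\alpha})=\zeta_{S}(t)$, independently of $a$-data. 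For asymmetric pairs $\{\alpha,-\alpha\}$ the product is $\zeta_{\alpha}(\alpha(t)-1)\,\zeta_{\alpha}(\alpha(t)^{-1}-1)^{-1}=\zeta_{\alpha}(-\alpha(t))$, and the sign is absorbed because $\zeta_{\alpha}(-1)$ cancels in pairs. For symmetric $\alpha$, $(\alpha(t)-1)/\tau(\alpha(t)-1)=-\alpha(t)$ and $\zeta_{\alpha}$ is trivial on $F_{\pm\alpha}^{\times}$, so we obtain $\zeta_{\alpha}$ evaluated through Hilbert 90. This matches the definition of $\zeta_{S}$.

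The main obstacle is likely the symmetric ramified case and the bookkeeping of $\zeta_{\alpha}(-1)$ and $a$-data signs there. The definition of $\zeta_{S}$ in the paper only lists asymmetric and unramified orbits. So I would check that, for ramified $\alpha_{y}$, the factor $\zeta_{\alpha}(\alpha(t)\cdot\text{unit})$ is trivial. I would argue this as in the toral case: $\zeta_{\alpha}$ is tame and trivial on $F_{\pm\alpha}^{\times}$, so $\zeta_{\alpha}$ is trivial on $\mcO^{\times}_{F_{\alpha}}$ modulo norms. I would also double-check the second step for roots outside $\H_{y}$, where the $l_{\alpha}$ identity must be taken from the global torus pairing rather than from Lemma \ref{lem:a-res-H} directly.
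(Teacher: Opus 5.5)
Your route is the paper's: compare the two products over $\dot{\Phi}(\H,\bfS_{j_{\H}})$ root by root, use $a^{\res}_{k,\alpha_{\res}}=l_{\alpha}\,a_{j_{\H},\alpha_{y}}$ to split off $\prod\chi^{\res}_{j}(l_{\alpha})=\chi^{\res}_{j}(\bfS_{j_{\H}})$, and identify the remaining ratio (same $a$-data, different $\chi$-data) with $\zeta_{S}$. The paper does the last step by citing \cite[Lemma 4.6.6]{Kal19} instead of recomputing it. You also note that Lemma \ref{lem:a-res-H} is stated only for roots of $\H_{y}$. That point is correct. The pairing argument of Lemma \ref{lem:appearance-of-l-alpha} uses only $d\xi_{D}^{\ast}(Y^{\ast}_{j_{\H}})=X^{\ast}_{j}$ and the $\theta$-invariance of $X^{\ast}_{k}$, so it gives the identity for every root of $\H$. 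This fills a point the paper passes over silently.

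The step that fails is your planned reconciliation for symmetric ramified roots. You want to show their factors are trivial because $\zeta_{\alpha}$ is tame, trivial on $F_{\pm\alpha}^{\times}$, and hence trivial on $\mcO_{F_{\alpha}}^{\times}$. That last conclusion is true, but the factor is not $\zeta_{\alpha}$ of a unit. It is $\zeta_{\alpha}\bigl((\alpha(t)-1)/a_{j_{\H},\alpha}\bigr)=\zeta_{\alpha}(x)$ with $x/\tau_{\alpha}(x)=\alpha(t)$. Here $x$ has odd valuation exactly when $\alpha(t)\equiv-1\pmod{\mfp_{F_{\alpha}}}$.

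Moreover, $\zeta_{\alpha}$ need not be trivial on uniformizers. Comparing the normalizations $\chi_{j_{\H},\alpha}(2a_{j_{\H},\alpha})=\lambda_{\alpha}=\chi^{\res}_{j,\alpha}(2l_{\alpha}a_{j_{\H},\alpha})$ gives $\zeta_{\alpha}(2a_{j_{\H},\alpha})=\kappa_{\alpha}(l_{\alpha})$. The element $2a_{j_{\H},\alpha}$ has odd valuation, and $\kappa_{\alpha}(2)=-1$ whenever $2$ is a nonsquare in $k_{\pm\alpha}$.

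In fact, both $\chi_{j_{\H}}$ and $\chi^{\res}_{j}$ are minimally ramified of Kaletha's form. Hence $\zeta_{\alpha}=\mathbbm{1}$ on all asymmetric and symmetric unramified roots, and the ramified symmetric orbits carry the entire content of the $\zeta$-term. Proving them trivial would prove the false statement that the ratio is just $\chi^{\res}_{j}(\bfS_{j_{\H}})$.

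The fix is to keep your own Hilbert-90 computation for all symmetric roots. $\zeta_{S}$ has to be read as the character for which \cite[Lemma 4.6.6]{Kal19} holds: every symmetric orbit, ramified or not, contributes $\zeta_{\alpha}$ through $F_{\alpha}^{1}\cong F_{\alpha}^{\times}/F_{\pm\alpha}^{\times}$. The display in Section \ref{subsec:const-packet} that lists only unramified orbits should be read that way. A smaller point: in your asymmetric computation the sign $\zeta_{\alpha}(-1)$ is absorbed by $a_{-\alpha}=-a_{\alpha}$, not by cancellation in pairs. Here it is moot, since $\zeta_{\alpha}=\mathbbm{1}$ on those roots.
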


\begin{proof}
Since $y\exp(Y_{j_{\H}}^{\ast})$ is regular semisimple in $\H$, we have
\[
\Delta_{\II}^{\H}[a_{j_{\H}},\chi_{j_{\H}}](y\exp(Y_{j_{\H}}^{\ast}))
=
\prod_{\dot{\alpha}_{\res}\in\dot{\Phi}(\H,\bfS_{j_{\H}})}\chi_{j_{\H}}\biggl(\frac{\alpha_{\res}(y\exp(Y_{j_{\H}}^{\ast}))-1}{a_{j_{\H},\alpha_{\res}}}\biggr),
\]
\[
\Delta_{\II}^{\H}[a_{k}^{\res},\chi_{k}^{\res}](y\exp(Y_{j_{\H}}^{\ast}))
=
\prod_{\dot{\alpha}_{\res}\in\dot{\Phi}(\H,\bfS_{j_{\H}})}\chi_{k}^{\res}\biggl(\frac{\alpha_{\res}(y\exp(Y_{j_{\H}}^{\ast}))-1}{a_{k,\alpha_{\res}}^{\res}}\biggr).
\]
By Lemma \ref{lem:a-res-H}, we have $a_{k,\alpha_{\res}}^{\res}=l_{\alpha}\cdot a_{j_{\H},\alpha_{\res}}$.
Thus we get
\begin{align*}
\frac{\Delta_{\II}^{\H}[a_{j_{\H}},\chi_{j_{\H}}](y\exp(Y_{j_{\H}}^{\ast}))}{\Delta_{\II}^{\H}[a_{k}^{\res},\chi_{k}^{\res}](y\exp(Y_{j_{\H}}^{\ast}))}
=
\frac{\Delta_{\II}^{\H}[a_{j_{\H}},\chi_{j_{\H}}](y\exp(Y_{j_{\H}}^{\ast}))}{\Delta_{\II}^{\H}[a_{j_{\H}},\chi_{k}^{\res}](y\exp(Y_{j_{\H}}^{\ast}))}
\cdot
\prod_{\dot{\alpha}_{\res}\in\dot{\Phi}(\H,\bfS_{j_{\H}})}\chi_{k}^{\res}(l_{\alpha}).
\end{align*}
Here, on the right-hand side, the ratio of two second transfer factors is given by $\zeta_{\chi_{j_{\H}}/\chi_{k}^{\res},S_{j_{\H}}}(y\exp(Y^{\ast}_{j_{\H}}))$ by\cite[Lemma 4.6.6]{Kal19}.
Thus, by noting that both $\chi_{k}^{\res}$ and $\chi_{j}^{\res}$ induce the same set of $\chi$-data on $\Phi(\H,\bfS_{j_{\H}})$, we get the assertion.
\end{proof}

Now recall that Proposition \ref{prop:descent-lemma} associates to $j\in\t\mcJ^{\G}_{\G_{\eta}}$ a unique element $y\in\mfH_{\eta}$ and $(D,j_{\H})\in \bbD(y,\eta)\times\mcJ^{\H}_{\H_{y}}$.
Also recall that we have fixed an element $\ul{\eta}_{j}\in \t{S}_{j}$.
We put $\ul{y}_{j}\colonequals \t\xi_{D}(\ul{\eta}_{j})\in S_{j_{\H}}$.
Then, by Lemma \ref{lem:tran-I} and Proposition \ref{prop:delta-III-ratio}, we have
\begin{multline}\label{eq:tran-III-ratio}
\frac{\Delta_{\I,\III}[a_{k}^{\res},\chi_{k}^{\res}](y\exp(Y_{j_{\H}}^{\ast}), \eta\exp(X_{k}^{\ast}))}{\Delta_{\I,\III}[a_{k}^{\res},\chi_{k}^{\res}](\ul{y}_{j}\exp(Y_{j_{\H}}^{\ast}), \ul{\eta}_{k}\exp(X_{k}^{\ast}))}\\
=
\frac{\vartheta_{k}(s_{k})}{\vartheta_{j_{\H}}(y/\ul{y}_{j})}
\cdot\zeta_{\desc}(s_{k})
\cdot \zeta_{\chi^{\res}_{j}/\chi_{j_{\H}},S_{j_{\H}}}(y/\ul{y}_{j}).
\end{multline}
(Recall that $\eta=s_{k}\ul{\eta}_{k}$.)
Therefore, by using Lemma \ref{lem:Delta-II-ratio}, we see that \eqref{spectral-3} equals
\begin{multline}\label{spectral-4}
\chi_{k}^{\res}(\bfS_{k}^{\nat})
\cdot \chi_{j}^{\res}(\bfS_{j_{\H}})
\cdot (-1)^{r_{k,\ur}^{\res}} \cdot \lambda_{k,\ram}^{\res}
\cdot \varepsilon(\T_{\G_{\theta}})\cdot\varepsilon(\bfS_{k}^{\nat})^{-1}
\\
\cdot\vartheta_{j_{\H}}(\ul{y}_{j})
\cdot\zeta_{\chi_{j_{\H}}/\chi_{j}^{\res},S_{j_{\H}}}(\ul{y}_{j}\exp(Y^{\ast}_{j_{\H}}))
\cdot\Delta_{\I,\III}[a_{k}^{\res},\chi_{k}^{\res}](\ul{y}_{j}\exp(Y_{j_{\H}}^{\ast}), \ul{\eta}_{k}\exp(X_{k}^{\ast}))\\
\cdot\epsilon_{\bfS_{k},\ram}(s_{k})
\cdot\epsilon_{\vartheta_{k}}^{\star}(s_{k})
\cdot\zeta_{\desc}(s_{k})
\cdot\prod_{\dot{\alpha}_{\res}\in\dot{\Phi}(\G_{\eta_{0}},\bfS_{k}^{\nat})_{\ram}}f_{(\G_{\eta_{0}},\bfS_{k}^{\nat})}(\alpha_{\res}),
\end{multline}
where we put $\lambda_{k,\ram}^{\res}=\prod_{\dot{\alpha}_{\res}\in\dot{\Phi}_{\res}(\G,\bfS_{k})_{\ram}}\lambda_{\alpha_{\res}}$.

Now let us examine the factors contained in \eqref{spectral-4}.
The factor $\varepsilon(\T_{\G_{\theta}})$ is obviously independent of $j$.
Let $j'\in\t\mcJ^{\G}_{\G_{\eta}}$ and $k'\in\t\mcJ^{\G_{\eta}}_{G_{\eta}}(j')$ such that $k$ and $k'$ are $G$-conjugate.
Suppose that $g\in G$ be an element such that $k'=[g]\circ k$ and $\ul{\eta}_{k'}={}^{g}\ul{\eta}_{k}$.
Since the $F$-rational isomorphism $[g]\colon\bfS_{k}\rightarrow\bfS_{k'}$ gives a $\Gamma$-equivariant isomorphism $\Phi(\G,\bfS_{k})\rightarrow\Phi(\G,\bfS_{k'})$ compatible with twists, we get $r_{k,\ur}^{\res}=r_{k',\ur}^{\res}$, $\lambda_{k,\ram}^{\res}=\lambda_{k',\ram}^{\res}$, and $\varepsilon(\bfS_{k}^{\nat})=\varepsilon(\bfS_{k'}^{\nat})$.
Lemma \ref{lem:chi-constant} implies that $\chi_{k'}^{\res}(\bfS_{k'}^{\nat})=\chi_{k'}^{\res}({}^{g}\eta_{0})=\chi_{k}^{\res}(\eta_{0})=\chi_{k}^{\res}(\bfS_{k})$.
It is a routine work to check that the factors $\chi_{j}^{\res}(\bfS_{j_{\H}})$, $\vartheta_{j_{\H}}(\ul{y}_{j})$, $\zeta_{\chi_{j_{\H}}/\chi_{j}^{\res},S_{j_{\H}}}(\ul{y}_{j}\exp(Y^{\ast}_{j_{\H}}))$, and $\Delta_{\I,\III}[a_{k}^{\res},\chi_{k}^{\res}](\ul{y}_{j}\exp(Y_{j_{\H}}^{\ast}), \ul{\eta}_{k}\exp(X_{k}^{\ast}))$ do not change even if we replace $(j,k)$ with $(j',k')$.

We summarize our discussion so far.
We obtained 
\begin{multline}\label{eq:spectral-tf-final}
\b\Delta^{\spec}_{\phi,k}
=\chi_{k}^{\res}(\bfS_{k}^{\nat})
\cdot \chi_{j}^{\res}(\bfS_{j_{\H}})
\cdot (-1)^{r_{k,\ur}^{\res}} \cdot \lambda_{k,\ram}^{\res}
\cdot \varepsilon(\T_{\G_{\theta}})\cdot\varepsilon(\bfS_{k}^{\nat})^{-1}
\cdot\vartheta_{j_{\H}}(\ul{y}_{j})\\
\cdot\zeta_{\chi_{j_{\H}}/\chi_{j}^{\res},S_{j_{\H}}}(\ul{y}_{j}\exp(Y^{\ast}_{j_{\H}}))
\cdot\Delta_{\I,\III}[a_{k}^{\res},\chi_{k}^{\res}](\ul{y}_{j}\exp(Y_{j_{\H}}^{\ast}), \ul{\eta}_{k}\exp(X_{k}^{\ast}))\\
\cdot\epsilon_{\bfS_{k},\ram}(s_{k})
\cdot\epsilon_{\vartheta_{k}}^{\star}(s_{k})
\cdot\zeta_{\desc}(s_{k})
\cdot\prod_{\alpha_{\res}\in\dot{\Phi}(\G_{\eta_{0}},\bfS_{k}^{\nat})_{\ram}}f_{(\G_{\eta_{0}},\bfS_{k}^{\nat})}(\alpha_{\res}).
\end{multline}
Moreover, we checked that all factors contained in the first and second lines of the right-hand side of \eqref{eq:spectral-tf-final} depend only on the $G$-conjugacy class of $k$ and are independent of $\eta$.
In other words, our remaining task is to check that
\begin{align}\label{ramified-spec}
\epsilon_{\bfS_{k},\ram}(s_{k})
\cdot\epsilon_{\vartheta_{k}}^{\star}(s_{k})
\cdot\zeta_{\desc}(s_{k})
\cdot\prod_{\alpha_{\res}\in\dot{\Phi}(\G_{\eta_{0}},\bfS_{k}^{\nat})_{\ram}}f_{(\G_{\eta_{0}},\bfS_{k}^{\nat})}(\alpha_{\res}).
\end{align}
depends only on the $G$-conjugacy class of $k$ and is independent of $\eta$.

We note that all the factors in \eqref{ramified-spec} are products over sets of ramified (restricted) roots.
Thus, there is nothing to prove if $\Phi(\G,\bfS_{k})$ and $\Phi_{\res}(\G,\bfS_{k})$ do not contain a ramified symmetric element.
For example, a sufficient condition for this is that $\bfS$ splits over a finite extension $E$ of $F$ whose ramification index $e(E/F)$ is odd.
Indeed, we have the following diagram:
\[
\xymatrix@R=10pt{
&F_{\alpha_{\res}}\ar@{}[r]|*{\subset}&F_{\alpha}\ar@{}[r]|*{\subset}& E\\
F\ar@{}[r]|*{\subset}&F_{\pm\alpha_{\res}}\ar@{}[r]|*{\subset}\ar@{}[u]|{\bigcup}&F_{\pm\alpha}\ar@{}[u]|{\bigcup}
}
\]
Hence, if $e(E/F)$ is odd, then the extension $F_{\alpha}/F_{\pm\alpha}$ and $F_{\alpha_{\res}}/F_{\pm\alpha_{\res}}$ cannot be quadratic ramified.
Let us record this observation here.

\begin{thm}\label{thm:spec-well-def-unram}
The spectral transfer factor $\Delta^{\spec}_{\phi,k}$ is well-defined if $\bfS$ splits over a finite extension $E$ of $F$ whose ramification index $e(E/F)$ is odd.
In particular, the twisted endoscopic character relation (Expectation \ref{expect:TECR}) is satisfied by Kaletha's toral supercuspidal $L$-packets.
\end{thm}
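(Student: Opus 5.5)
The argument is essentially a bookkeeping one, building on the reduction carried out just before the statement. The preceding discussion has already shown that the $\H$-side and the $\G$-side of \eqref{eq:TECR} agree term by term once $\b\Delta^{\spec}_{\phi,k}$ is defined by \eqref{spectral}. It has also rewritten this quantity as \eqref{eq:spectral-tf-final}, whose first two lines depend only on the $G$-conjugacy class of $k$ and not on $\eta$ (nor on $y$). So it remains to show that, under the hypothesis on $E$, the product \eqref{ramified-spec} is identically $1$. The two required properties then follow at once: independence of $k\in\t\mcJ^{G}_{G_{\eta}}(j)$ (equivalently, $\Delta^{\spec}_{\phi,j}=\b\Delta^{\spec}_{\phi,k}/C_{\ul{\eta}_{j}}$ depends only on $j$), and independence of $\eta$ and $y$.

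The plan is to check, factor by factor, that each of the four factors in \eqref{ramified-spec} is indexed by roots or restricted roots that are \emph{ramified symmetric}, and then to show that no such roots exist.
\begin{itemize}
\item $\epsilon_{\bfS_{k},\ram}$ is a product of toral invariants over $\dot{\Phi}(\G,\bfS_{k})_{\ram}$.
\item $\epsilon^{\star}_{\vartheta_{k}}$ runs over $\ddot{\alpha}\in\ddot{\Xi}$ with $\alpha_{\res}$ ramified.
\item $\zeta_{\desc}$ runs over $\alpha$ (asymmetric or unramified) with $\alpha_{\res}$ ramified.
\item The last product runs over $\dot{\Phi}(\G_{\eta_{0}},\bfS_{k}^{\nat})_{\ram}$, which is a subset of $\Phi_{\res}(\G,\bfS_{k})_{\ram}$.
\end{itemize}
Hence each factor is an empty product provided both $\Phi(\G,\bfS_{k})_{\ram}$ and $\Phi_{\res}(\G,\bfS_{k})_{\ram}$ are empty.

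To verify the emptiness, first note that $j\colon\bfS\to\bfS_{k}$ is $F$-rational up to $G$-conjugation. Therefore $\bfS_{k}$ also splits over $E$, and all fields $F_{\alpha}$, $F_{\pm\alpha}$, $F_{\alpha_{\res}}$, $F_{\pm\alpha_{\res}}$ lie between $F$ and $E$. The inclusions $\Gamma_{E}\subset\Gamma_{\alpha}\subset\Gamma_{\pm\alpha}$ and $\Gamma_{E}\subset\Gamma_{\alpha_{\res}}\subset\Gamma_{\pm\alpha_{\res}}$ are immediate: a restricted root is a $\Theta_{\bfS}$-orbit, and $\Gamma_{E}$ fixes every root. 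Ramification indices are multiplicative in towers, so $e(F_{\alpha}/F_{\pm\alpha})$ and $e(F_{\alpha_{\res}}/F_{\pm\alpha_{\res}})$ divide $e(E/F)$, which is odd. A quadratic ramified extension would have index $2$, so neither extension can be quadratic ramified. Thus every symmetric root or restricted root is unramified, and \eqref{ramified-spec} equals $1$.

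Finally, I would combine this with the term-by-term matching of \eqref{H-side-7} and \eqref{G-side-2}. Summed over $j\in\t\mcJ^{\G}_{\G_{\eta}}$ and $k$, this gives \eqref{eq:TECR} for every elliptic strongly regular semisimple $\delta$ that admits some $y\in\mfH_{\eta}$ with $\bfD(y,\eta)\neq\varnothing$. The remaining case is handled by Lemma \ref{lem:no-norm}, where both sides vanish, together with Remark \ref{rem:no-norm}. The main obstacle I anticipate is not the parity argument but the accompanying claim in the text that the first two lines of \eqref{eq:spectral-tf-final} are genuinely independent of $\eta$. That claim was described as routine, and I would double-check it for $\Delta_{\I,\III}$ at the base points $\ul{\eta}_{k}$ using Lemma \ref{lem:tran-I} and Proposition \ref{prop:Delta-III-same-torus}.
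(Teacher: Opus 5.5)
Your proposal is correct and follows essentially the same route as the paper. Every factor in \eqref{ramified-spec} is a product indexed by ramified symmetric roots or restricted roots. Since $F\subset F_{\pm\alpha_{\res}}\subset F_{\alpha_{\res}}\subset F_{\alpha}\subset E$ and $F_{\pm\alpha_{\res}}\subset F_{\pm\alpha}\subset F_{\alpha}$, an odd $e(E/F)$ rules out quadratic ramified extensions $F_{\alpha}/F_{\pm\alpha}$ and $F_{\alpha_{\res}}/F_{\pm\alpha_{\res}}$, so all four products are empty.

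One minor point: $k$ is itself $F$-rational, so $\bfS_{k}\cong\bfS$ over $F$ directly, and no ``up to $G$-conjugation'' is needed.
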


What we will do in the rest of paper is to show that the quantity \eqref{ramified-spec} indeed depends only on the $G$-conjugacy class of $k$ and is independent of $\eta$ in the case where $\G=\GL_{n}$.

\begin{rem}
Recall that the members of $\Pi_{\phi}^{\G}$ are parametrized by the set $\mcJ^{\G}_{G}$.
In the case of standard endoscopy, in \cite[Section 5.3]{Kal19}, Kaletha introduced the paring
\[
\langle-,-\rangle_{\mfw}\colon\mcJ^{\G}_{G}\times\pi_{0}(S_{\phi}^{+})\rightarrow\C^{\times};\quad
\langle j,s\rangle_{\mfw}
\mapsto
\langle\inv(j_{\mfw},j),s\rangle
\]
(see \cite[1155 page]{Kal19} for the details).
This is nothing but the spectral transfer factor in the sense of this paper in the untwisted case.
In other words, we have
\[
\Delta^{\spec}_{\phi,j}
=
\langle\inv(j,j_{\mfw}),s\rangle
\]
when $\theta$ is trivial.
We may understand that Kaletha's proof of the standard endoscopic character relation (\cite[Theorem 6.3.4]{Kal19}) shows that our definition of contains $\Delta^{\spec}_{\phi,j}$ coincides with $\langle\inv(j,j_{\mfw}),s\rangle$.
\end{rem}

\section{$\GL_{n}$ consideration}\label{sec:GL}

In the following, let $\G\colonequals \GL_{n}$, where $n$ is even, and $\theta\colonequals J_{n}{}^{t}(-)^{-1}J_{n}^{-1}$, where $J_{n}$ is the anti-diagonal matrix whose $(i,n+1-i)$-entry is given by $(-1)^{i-1}$.
(Recall that the assumption on the parity of $n$ is harmless for our purpose; see Remark \ref{rem:Prasad}).

\subsection{Twisted elliptic maximal tori of $\GL_{n}$}
Let us assume that $(\t\bfS,\bfS)$ is an $F$-rational twisted maximal torus of $\G$ whose $\bfS$ is elliptic (then $(\t\bfS,\bfS)$ is necessarily elliptic).
It is well-known that there exists a finite extension $E$ of $F$ of degree $n$ such that $\bfS$ is isomorphic to $\Res_{E/F}\Gm$.

\begin{lem}\label{lem:GL-involution}
There exists an element $\tau_{\theta}\in\Aut_{F}(E)$ of order $2$ such that $\theta_{\bfS}(s)=\tau_{\theta}(s)^{-1}$ for any $s\in S\cong E^{\times}$.
\end{lem}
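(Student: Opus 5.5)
We fix an $F$-rational isomorphism $\iota\colon\Res_{E/F}\Gm\xrightarrow{\sim}\bfS$ and transport $\theta_{\bfS}$ to an $F$-rational involutive automorphism of $\Res_{E/F}\Gm$. The plan is to identify this automorphism through its action on the character lattice. We have $X^{\ast}(\Res_{E/F}\Gm)=\Z[\Gamma/\Gamma_{E}]$, the permutation module on the $F$-embeddings $E\hookrightarrow\ol{F}$. The automorphisms of this torus over $F$ are exactly the $\Gamma$-equivariant automorphisms of the lattice, which is $\Z[\Gamma/\Gamma_E]^\times$-type data. The task is to show that $\theta_{\bfS}$ acts on $X^{\ast}(\bfS)$ as minus a permutation induced by an element of $\Aut_{F}(E)$.

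First, the action of $\theta_{\bfS}$ on $X^{\ast}(\bfS)$ is transported from the action of $\theta$ on $X^{\ast}(\T)$ via $[g_{\bfS}]$ (Section \ref{subsec:twisted-tori}), where $\T$ is the diagonal torus. For our $\theta=J_{n}{}^{t}(-)^{-1}J_{n}^{-1}$, $\theta$ acts on $X^{\ast}(\T)=\bigoplus_{i}\Z e_{i}$ by $e_{i}\mapsto -e_{n+1-i}$. Hence $-\theta_{\bfS}$ acts on $X^{\ast}(\bfS)$ by a permutation $w$ of the basis $\{e_i\}$, which correspond to the $F$-embeddings of $E$. Moreover, $w$ is fixed-point-free of order $2$. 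Since $\theta_{\bfS}$ is $F$-rational (by Proposition \ref{prop:twisted-tori} and the discussion there), $w$ commutes with the $\Gamma$-action on $\Gamma/\Gamma_{E}$.

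The key step is the standard fact that a $\Gamma$-equivariant permutation of the transitive $\Gamma$-set $\Gamma/\Gamma_{E}$ is right multiplication by an element of $N_{\Gamma}(\Gamma_{E})/\Gamma_{E}\cong\Aut_{F}(E)$. Concretely, $w(\Gamma_{E})=\sigma\Gamma_{E}$ forces $\Gamma_{E}\sigma\Gamma_{E}=\sigma\Gamma_{E}$, so $\sigma$ normalizes $\Gamma_{E}$, and equivariance gives $w(\rho\Gamma_{E})=\rho\sigma\Gamma_{E}$. Let $\tau_{\theta}\in\Aut_{F}(E)$ be the induced automorphism. It has order $2$ because $w$ is a fixed-point-free involution, so $\sigma\notin\Gamma_{E}$ while $\sigma^{2}\in\Gamma_{E}$.

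Finally, we translate back to points. The permutation of characters $\chi_{\rho}\mapsto\chi_{\rho\sigma}$, where $\chi_{\rho}(x)=\rho(x)$ on $E^{\times}$, is the pullback along $x\mapsto\tau_{\theta}(x)$ (or its inverse, which coincides since the order is $2$). Inserting the sign then gives $\theta_{\bfS}(s)=\tau_{\theta}(s)^{-1}$ on $S\cong E^{\times}$.

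The only delicate point is the sign convention relating $-\theta$ on characters to a permutation. This needs care with $J_{n}$ and with $[g_{\bfS}]$ possibly introducing a Weyl twist. However, any Weyl element just conjugates the permutation, so $-\theta_{\bfS}$ remains a fixed-point-free involutive permutation, and that suffices.
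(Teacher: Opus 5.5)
Your argument is correct, but it takes a different route from the paper's.

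\textbf{The paper's route.} It stays at the level of $F$-points. It writes $\theta_{\bfS}=x\,{}^{t}(-)^{-1}x^{-1}$ with $x\in\GL_{n}(F)$. The anti-automorphism $X\mapsto x\,{}^{t}X\,x^{-1}$ of $M_{n}(F)$ preserves the commutative subalgebra $\mfs\cong E$, so it restricts to an $F$-algebra automorphism $\tau_{\theta}$ of $E$ with $\tau_{\theta}^{2}=\id$. The case $\tau_{\theta}=\id$ is then excluded by a rank count: inversion would make $\bfS^{\theta_{\bfS}}$ finite, whereas $\bfS^{\nat}\cong\T^{\theta,\circ}$ (via $[g_{\bfS}]$) has rank $n/2>0$.

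\textbf{Your route.} You work on $X^{\ast}(\bfS)$. The explicit action $e_{i}\mapsto-e_{n+1-i}$ of $\theta$ on $X^{\ast}(\T)$ shows that $-\theta_{\bfS}$ is a nontrivial involutive permutation of the weights. You then identify $\Gamma$-equivariant permutations of $\Gamma/\Gamma_{E}$ with right multiplication by $N_{\Gamma}(\Gamma_{E})/\Gamma_{E}\cong\Aut_{F}(E)$. This produces $\tau_{\theta}$ and its exact order in one step, and exhibits it Galois-theoretically.

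\textbf{A step you assert rather than prove.} You need that $[g_{\bfS}]^{\ast}$ carries $\{e_{i}\}$ onto the characters $\{\delta_{\rho}\}$ attached to the $F$-embeddings $\rho$ of $E$. Your closing Weyl-twist remark does not establish this. The actual reason is that $[g_{\bfS}]$ is conjugation by an element of $\GL_{n}(\ol{F})$. Hence it matches the weights of the standard representation on $\T$ with those on $\bfS$. The latter are the $\delta_{\rho}$ because $\ol{F}^{n}\cong E\otimes_{F}\ol{F}\cong\prod_{\rho}\ol{F}$ as $E^{\times}$-modules.

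This also means $\iota$ must be the natural identification $S=E^{\times}\subset\GL_{F}(E)\cong\GL_{n}(F)$, as the statement intends. An arbitrary $F$-isomorphism $\Res_{E/F}\Gm\cong\bfS$ need not send $\{\delta_{\rho}\}$ to the weights of the standard representation.

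\textbf{Comparison.} The paper's version avoids all lattice bookkeeping and uses the comparison with $\T$ only to prove nontriviality. Yours makes the identification of $\tau_{\theta}$ more transparent, at the cost of the weight-matching step above.
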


\begin{proof}
Note that $\theta_{\bfS}$ is of the form $x {}^{t}(-)^{-1}x^{-1}$ for some $x\in\GL_{n}(F)$.
Since $\theta_{\bfS}$ preserves $S\subset \GL_{n}(F)$, the map $X\mapsto x{}^{t}Xx^{-1}$ preserves $\mfs\subset\mathfrak{gl}_{n}(F)$.
As the map $X\mapsto x{}^{t}Xx^{-1}$ is an involutive $F$-algebra homomorphism on $\mfs\cong E$, it is given by an element $\tau_{\theta}$ of $\Aut_{F}(E)$ whose order is either $1$ or $2$.
In other words, we have $\theta_{\bfS}(s)=\tau_{\theta}(s)^{-1}$ for any $s\in S\cong E^{\times}$.

Let us show that $\tau_{\theta}$ is not trivial.
For the sake of contradiction, we suppose that $\tau_{\theta}$ is trivial.
Then the automorphism $\theta_{\bfS}$ of $\bfS$ is given by $s\mapsto s^{-1}$ on $S\cong E^{\times}$.
Hence we have $\bfS^{\nat}(F)\subset\bfS^{\theta_{\bfS}}(F)=S^{\theta_{\bfS}}=\{\pm1\}$.
On the other hand, since $(\t{\bfS},\bfS)$ is a twisted maximal torus, there exists an element $g\in \G$ such that ${}^{g}\bfS=\T$ and $\theta_{\bfS}$ is mapped to $\theta|_{\T}$.
This implies that the torus $\bfS^{\nat}$ is isomorphic to $\T^{\theta,\circ}$ over $\overline{F}$.
In particular, the rank of $\bfS^{\nat}$ is given by $n/2$.
However, there is no $F$-rational torus whose rank is nonzero such that the set of $F$-valued points is of order at most $2$.
Hence we get a contradiction.
\end{proof}

In the following, we let $\tau_{\theta}\in\Aut_{F}(E)$ be the element as in Lemma \ref{lem:GL-involution}.
Let $E_{\pm}$ be the fixed field of $\tau_{\theta}$ in $E$.

\subsection{Roots of elliptic maximal tori of $\GL_{n}$}
We next recall a description of the set of roots of $\bfS$ in $\GL_{n}$ following \cite[Sections 3.1 and 3.2]{Tam16} (see also \cite[Sections 3.2 and 5.1]{OT21}).
First we fix a set $\{g_{1},\ldots,g_{n}\}$ of representatives of the quotient $\Gamma/\Gamma_{E}$ such that $g_{1}=\id$.
Then we get an isomorphism $\bfS(\overline{F})\cong\prod_{i=1}^{n} \overline{F}^{\times}$ which maps $x\in E^{\times}\cong\bfS(F)$ to $(g_{1}(x),\ldots,g_{n}(x))$.
Then the projections 
\[
\delta_{i}
\colon 
\bfS(\overline{F})\xrightarrow{\cong}\prod_{i=1}^{n} \overline{F}^{\times}\rightarrow \overline{F}^{\times}
;\quad
(x_{1},\ldots,x_{n})\mapsto x_{i}
\]
form a $\Z$-basis of $X^{\ast}(\bfS)$.
The set $\Phi(\G,\bfS)$ of roots of $\bfS$ in $\G$ is given by
\[
\biggl\{
\begin{bmatrix}g_{i}\\g_{j}\end{bmatrix}\colonequals \delta_{i}-\delta_{j}
\,\bigg\vert\,
1\leq i \neq j \leq n
\biggr\}
\]
and the set $\dot{\Phi}(\G,\bfS)$ is described as follows:
\[
(\Gamma_{E}\backslash\Gamma/\Gamma_{E})'
\xrightarrow{1:1}
\dot{\Phi}(\G,\bfS)
;\quad
\Gamma_{E}g_{i}\Gamma_{E}\mapsto \Gamma\cdot\begin{bmatrix}1\\g_{i}\end{bmatrix},
\]
where $(\Gamma_{E}\backslash\Gamma/\Gamma_{E})'$ is the set of nontrivial double-$\Gamma_{E}$-cosets in $\Gamma$.

Suppose that $E/F$ is tamely ramified in the following.
We simply write $e$ (resp.\ $f$) for the ramification index $e(E/F)$ (resp.\ residue degree $f(E/F)$).
We first recall an explicit choice of a set of representatives of 
$\Gamma/\Gamma_E$, following \cite[Section 3.2]{Tam16}.
Let $\mu_{E}$ denote the set of roots of unity in $E$.
We take uniformizers $\varpi_{E}$ and $\varpi_{F}$ of $E$ and $F$, respectively, so that $\varpi_{E}^e=\zeta _{E/F}\varpi_{F}$ for some $\zeta_{E/F} \in \mu_{E}$.
We fix a primitive $e$-th root $\zeta_{e}$ of unity 
and an $e$-th root $\zeta_{E/F, e}$ of $\zeta_{E/F}$,
and put $\zeta_{\phi}\colonequals \zeta_{E/F, e}^{q-1}$.
Then $L\colonequals E[\zeta_{e}, \zeta_{E/F, e}]$ is a tamely ramified extension of $F$
which contains the Galois closure of $E/F$ and is unramified over $E$.
The Galois group $\Gal(L/F)$ of the extension $L/F$ is given by the semi-direct product
$\langle \sigma \rangle \rtimes \langle \phi \rangle$, where
\begin{align*}
&\sigma \colon \zeta \mapsto \zeta \quad (\zeta \in \mu_{L}), \quad \varpi_{E} \mapsto \zeta_e \varpi_{E} \\
&\phi \colon \zeta \mapsto \zeta ^q \quad (\zeta \in \mu_{L}), \quad \varpi_{E} \mapsto \zeta_{\phi}\varpi_{E}
\end{align*}
and
$\phi \sigma \phi ^{-1}=\sigma ^q$.
Moreover, as explained in \cite[Proposition 3.3 (i)]{Tam16}, we can take a set of representatives of $\Gamma/\Gamma_{E}$ to be
\[
\{\Gamma_{F}/\Gamma_{E}\}
\colonequals 
\{
\sigma^{k}\phi^{i}
\mid
0\leq k \leq e-1,\, 0\leq i \leq f-1
\}.
\]
Here we implicitly regard each $\sigma^{k}\phi^{i}\in\Gamma_{L/F}$ as an element of $\Gamma_{F}$ by taking its extension to $\overline{F}$ from $L$.
We note that, as $L/E$ is unramified, there exists an integer $c$ such that 
$\Gal(L/E)=\langle \sigma^{c} \phi^{f}\rangle$.
\[
\xymatrix@R=10pt{
&L=F[\varpi_{E},\mu_{L}]&\\
E\ar@{-}[ur]^-{\text{ur.\ with $\langle\sigma^{c}\phi^{f}\rangle$}\quad\quad\quad}&&F[\mu_{L}]\ar@{-}[ul]_-{\quad\quad\quad\quad\text{tot.\ ram.\ with $\langle\sigma\rangle$}}\\
&F\ar@{-}[ul]^-{\text{tame.\ ram.}\quad}\ar@{-}[ur]_-{\quad\quad\quad\text{ur.\ with $\langle\phi\rangle$}}&
}
\]

We recall a fact about symmetric ramified roots of $\bfS$ in $\G$.

\begin{prop}[{\cite[Proposition 5.3]{OT21}}]\label{prop:GL-roots-classification}
Let $\alpha\in\Phi(\G,\bfS)$ be a root of the form $\left[\begin{smallmatrix}1\\g\end{smallmatrix}\right]$ for some $g=\sigma^{k}\phi^{i}$.
The root $\alpha$ is symmetric ramified if and only if $g=\sigma^{\frac{e}{2}}$ ($e$ must be even in this case).
\end{prop}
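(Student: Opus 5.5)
We need a proof of Proposition (OT21 5.3): for $\alpha=[1;g]$ with $g=\sigma^k\phi^i$, $\alpha$ is symmetric ramified iff $g=\sigma^{e/2}$.

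The plan is to reduce symmetry of $\alpha$ to a statement about double cosets and then compute the relevant fields explicitly inside $L$.

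\textbf{Step 1 (symmetry criterion).} The root $-\alpha=[g;1]$ is obtained from $\alpha$ by applying $g^{-1}$, which sends $[1;g]$ to $[g^{-1};1]$ up to the identification via $\Gamma_E$. Hence $\alpha$ is symmetric exactly when some $\tau\in\Gamma$ satisfies $\tau\Gamma_E=g\Gamma_E$ and $\tau g\Gamma_E=\Gamma_E$, i.e.\ $g^2\in\Gamma_E g\Gamma_E\cdots$. Concretely, this reduces to the double-coset condition $\Gamma_E g\Gamma_E=\Gamma_E g^{-1}\Gamma_E$ together with the existence of an element swapping the two cosets. In that case $\tau$ acts as the nontrivial element of $\Gal(F_\alpha/F_{\pm\alpha})$.

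\textbf{Step 2 (compute $F_\alpha$).} By definition, $\Gamma_\alpha=\Gamma_E\cap g\Gamma_E g^{-1}$, so $F_\alpha=E\cdot g(E)$. Working in $L=F[\varpi_E,\mu_L]$, we have $g(\varpi_E)=\zeta\varpi_E$ for an explicit root of unity $\zeta$, and $g$ acts on $\mu_L$ as $\phi^i$. From this, $F_\alpha=E\cdot g(E)$ is unramified over $E$. Since $F_\alpha/F_{\pm\alpha}$ is quadratic, it is therefore ramified iff $F_{\pm\alpha}$ has ramification index $e/2$ over $F$.

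\textbf{Step 3 (the case $g=\sigma^{e/2}$).} Here $g(\varpi_E)=-\varpi_E$, so $g$ preserves $E$ and $\alpha$ is clearly symmetric with $\tau=g$. The field $F_{\pm\alpha}=E^{\langle g\rangle}$ contains $\varpi_E^2$ but not $\varpi_E$. This gives ramification index $2$ for $E/F_{\pm\alpha}$, hence the root is ramified.

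\textbf{Step 4 (converse).} Suppose $\alpha$ is symmetric ramified. Then $\tau$ must act nontrivially on the uniformizer modulo units while inducing the identity on the residue field of $F_\alpha$ (ramified quadratic). Writing $\tau=\sigma^{a}\phi^{b}$ modulo $\Gamma_{F_\alpha}$, triviality on the residue field forces $b\equiv0$ modulo the residue degree. The condition $\tau^2\in\Gamma_\alpha$ with $\tau\notin\Gamma_\alpha$ then forces $\tau(\varpi_E)=-\varpi_E\cdot$(unit $\equiv1$). Matching the double coset $\Gamma_E g\Gamma_E$ against $\tau$, and using the representative set $\{\sigma^k\phi^i\}$, we conclude $g=\sigma^{e/2}$ with $e$ even.

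\textbf{Main obstacle.} I expect Step 4 to be the hardest: a careful bookkeeping of the relation $\phi\sigma\phi^{-1}=\sigma^q$ and of $\Gamma_E=\langle\sigma^c\phi^f\rangle$ is needed to rule out the mixed cases $i\neq0$. For those, the natural first attempt is to show that $g^2\notin\Gamma_E$ unless $g\in\langle\sigma\rangle$, by comparing the actions on $\mu_L$, since the residue action of $g^2$ is $q^{2i}$.
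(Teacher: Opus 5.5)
Your Steps 2 and 3 are correct: $F_\alpha=E\cdot g(E)$ is unramified over $E$, and for $g=\sigma^{e/2}$ one has $g(E)=E$, $g^{2}=\sigma^{e}\in\Gamma_E$, and $F_{\pm\alpha}=F(\mu_E,\varpi_E^{2})$ with $E/F_{\pm\alpha}$ ramified. The gap is in Step 4, which is the whole content of the converse. As written it is only a sketch, and the plan in your last paragraph would not work.

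\textbf{The proposed route for $i\neq 0$ fails.} The claim ``$g^{2}\notin\Gamma_E$ unless $g\in\langle\sigma\rangle$'' is false. Take $E=F_{2}(\varpi_F^{1/2})$ with $F_2/F$ unramified quadratic, so $e=f=2$, and $g=\phi$. Then $g^{2}\in\Gamma_E$, and $[1;\phi]$ is a symmetric (unramified) root with $i=1$. So no argument that uses only symmetry can exclude $i\neq 0$; the ramification of $F_\alpha/F_{\pm\alpha}$ has to enter.

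\textbf{Minor error in Step 1.} $g^{-1}$ sends $\alpha$ to $[g^{-1};1]$, not to $-\alpha$. What is true is $g^{-1}(-\alpha)=[1;g^{-1}]$, so $\alpha$ is symmetric if and only if $\Gamma_E g\Gamma_E=\Gamma_E g^{-1}\Gamma_E$, with no extra condition.

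\textbf{The missing argument.} It is short and needs no bookkeeping with $\phi\sigma\phi^{-1}=\sigma^{q}$.
\begin{enumerate}
\item Choose $\tau\in\Gamma_{\pm\alpha}\smallsetminus\Gamma_\alpha$, so $\tau(\alpha)=-\alpha$, i.e.\ $\tau\in g\Gamma_E$ and $\tau g\in\Gamma_E$.
\item Then $\tau|_E=g|_E$. Hence $\tau(\varpi_E)=\zeta\varpi_E$ with $\zeta=g(\varpi_E)/\varpi_E\in F_\alpha$, a root of unity of order prime to $p$ by the explicit formulas for $\sigma$ and $\phi$.
\item Since $F_\alpha/F_{\pm\alpha}$ is ramified quadratic, $k_{F_\alpha}=k_{F_{\pm\alpha}}$. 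So $\tau$ acts trivially on $k_{F_\alpha}$, and therefore fixes $\zeta$ and the prime-to-$p$ roots of unity of $E$.
\item As $\tau^{2}\in\Gamma_\alpha\subset\Gamma_E$, we get $\varpi_E=\tau^{2}(\varpi_E)=\zeta^{2}\varpi_E$, so $\zeta=\pm1$.
\item If $\zeta=1$, then $\tau$ fixes $E=F(\mu_E,\varpi_E)$. Then $g\Gamma_E=\tau\Gamma_E=\Gamma_E$, which is impossible.
\item So $g|_E$ fixes $\mu_E$ and sends $\varpi_E\mapsto-\varpi_E$. Applying this to $\varpi_E^{e}=\zeta_{E/F}\varpi_F$ forces $(-1)^{e}=1$, so $e$ is even.
\item Also $g|_E=\sigma^{e/2}|_E$, i.e.\ $g\Gamma_E=\sigma^{e/2}\Gamma_E$. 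Since both lie in the fixed representative set, $g=\sigma^{e/2}$.
\end{enumerate}
In particular, $i=0$ follows immediately from $g$ acting trivially on $k_E$, so there are no ``mixed cases'' to treat separately.
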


\begin{lem}\label{lem:tau-ram}
  Suppose that $e$ is even.
  If there exists a $\theta_{\bfS}$-stable toral character of $S$, then $E/E_{\pm}$ must be ramified.
\end{lem}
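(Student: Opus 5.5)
We argue by contradiction: assume $e$ is even, $E/E_{\pm}$ is unramified, and $\vartheta$ is a $\theta_{\bfS}$-stable toral character of $S\cong E^{\times}$ of depth $r>0$. Since $\theta_{\bfS}(s)=\tau_{\theta}(s)^{-1}$ by Lemma \ref{lem:GL-involution}, $\theta_{\bfS}$-stability says $\vartheta(\tau_{\theta}(s))=\vartheta(s)^{-1}$ for all $s\in E^{\times}$. On the Lie algebra side, the element $X^{\ast}\in\mfs^{\ast}_{-r}$ realizing $\vartheta|_{S_{s+:r+}}$ can be taken $\theta_{\bfS}$-invariant (cf.\ \cite[Lemma 5.4]{Oi25}). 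Identify $\mfs\cong E$ and $X^{\ast}$ with an element $c\in E$ via the trace pairing, $\langle Y,X^{\ast}\rangle=\Tr_{E/F}(cY)$. Since $d\theta_{\bfS}(Y)=-\tau_{\theta}(Y)$, invariance gives $\tau_{\theta}(c)=-c$, so $c$ is a ``trace-zero'' element for $E/E_{\pm}$ with $\val_{F}(c)=-r$ up to normalization.

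The key step is to test genericity (\textbf{GE1}) on the symmetric ramified root. By Proposition \ref{prop:GL-roots-classification}, $\alpha=\left[\begin{smallmatrix}1\\ \sigma^{e/2}\end{smallmatrix}\right]$ is a root. Since $d\alpha^{\vee}(1)$ corresponds to $\delta_{1}^{\vee}-\delta_{j}^{\vee}$, one computes $\langle H_{\alpha},X^{\ast}\rangle=c-\sigma^{e/2}(c)$, up to the trace-form normalization. \textbf{GE1} requires $\val_{F}(c-\sigma^{e/2}(c))=-r=\val_{F}(c)$. In particular, the leading term of $c$ must not be fixed by $\sigma^{e/2}$. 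Write $c=u\varpi_{E}^{m}(1+\cdots)$ with $u\in\mu_{E}$. Because $\sigma^{e/2}$ fixes roots of unity and sends $\varpi_{E}\mapsto -\varpi_{E}$, the leading term of $c-\sigma^{e/2}(c)$ is $(1-(-1)^{m})u\varpi_{E}^{m}$. Hence \textbf{GE1} forces $m=\val_{E}(c)$ to be odd.

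Now use $\tau_{\theta}(c)=-c$ with $E/E_{\pm}$ unramified. Since $[E:E_{\pm}]=2$ is unramified, $e(E/F)=e(E_{\pm}/F)$, so $\varpi_{E}$ may be chosen in $E_{\pm}$, i.e.\ $\tau_{\theta}$-fixed. Then $c\varpi_{E}^{-m}$ is a unit with $\tau_{\theta}(c\varpi_{E}^{-m})=-c\varpi_{E}^{-m}$. This by itself gives no contradiction, so the parity argument has to be sharpened. Here the main obstacle is to pin down $m$ through the depth. One has $\val_{E}(c)=-er$, and \textbf{GE1} for the root $\sigma^{e/2}$ forces $m$ odd. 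I would therefore instead derive a contradiction by combining this with the requirement that $\tau_{\theta}$ act compatibly with $\sigma^{e/2}$.

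The approach I would take is to show $\tau_{\theta}$ restricts on the Galois closure to something commuting with $\sigma^{e/2}$. If $E/E_{\pm}$ is unramified, then $\tau_{\theta}$ acts on $\varpi_{E}$ (chosen in $E_{\pm}$) trivially and on $\mu_{E}$ by $\zeta\mapsto\zeta^{q^{f/2}}$. Then consider the toral character condition at depth $r$ restricted to $1+\mfp_{E}^{er}$: the map $x\mapsto\psi_{F}(\Tr_{E/F}(cx))$ on $\mfp_{E}^{er}/\mfp_{E}^{er+1}$ must satisfy the stability relation, which combined with $\tau_{\theta}(c)=-c$ and $c=u\varpi_{E}^{m}$ gives $u^{q^{f/2}-1}=-1$. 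I expect this step to be the crux. My plan is to produce from \textbf{GE1} for all roots $\left[\begin{smallmatrix}1\\ \sigma^{k}\phi^{i}\end{smallmatrix}\right]$ (not just the symmetric one) that the residue $u$ generates $k_{E}$ modulo the subgroup forced by $m$, and to show this is incompatible with $u^{q^{f/2}}=-u$ together with $m$ odd. The comparison should go through the root $\sigma^{e/2}\phi^{f/2}$, which would then have $c-g(c)$ of leading term $(1+(-1)^{m+1}\cdot(-1))u\varpi_{E}^{m}=0$ after tracking $\zeta_{\phi}$ factors, violating \textbf{GE1}. Checking these $\zeta_{\phi}$ factors carefully is where I expect the real work to lie.
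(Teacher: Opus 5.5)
Your setup is right, and so is your first step: with $\langle H_{\alpha},X^{\ast}\rangle=c-g(c)$, \textbf{GE1} for the root attached to $\sigma^{\frac{e}{2}}$ forces $m=\val_{E}(c)$ to be odd. But the proposal stops where the contradiction has to be produced. You note that $\tau_{\theta}(c)=-c$ ``by itself gives no contradiction'' and then defer the decisive step to an unverified check of ``$\zeta_{\phi}$ factors''. That check would in fact fail, because the element you propose, $\sigma^{\frac{e}{2}}\phi^{\frac{f}{2}}$, does not work in general. With $\varpi_{E}\in E_{\pm}$ one has $\phi^{\frac{f}{2}}(\varpi_{E})=\omega\varpi_{E}$ with $\omega=\zeta_{E/F,e}^{q^{f/2}-1}$ an $e$-th root of unity. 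So for $m$ odd the leading coefficient of $c-\sigma^{\frac{e}{2}}\phi^{\frac{f}{2}}(c)$ is $u(1-\omega^{m})$, which need not vanish. For example, take $e=f=2$, $E_{\pm}=F(\sqrt{\zeta\varpi_{F}})$ with $\zeta\in\mu_{F}$ a non-square, and $\varpi_{E}=\sqrt{\zeta\varpi_{F}}$. Then $\omega=-1$ and $\sigma\phi$ restricts to $\tau_{\theta}$ on $E$, so $c-\sigma\phi(c)=2c$ has exactly the valuation \textbf{GE1} demands, and this root yields no contradiction.

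The missing idea, which is the paper's argument, is to use an abstract lift instead of explicit coordinates. Let $\tilde{\tau}\in\Gamma$ be any element restricting to $\tau_{\theta}$ on $E$, and put $g'\colonequals\tilde{\tau}\sigma^{\frac{e}{2}}$.

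\begin{itemize}
\item Write $\sigma^{\frac{e}{2}}(c)=(-1)^{m}c(1+y)$ with $y\in\mfp_{\overline{F}}$. Since $\tilde{\tau}$ preserves valuations and $\tau_{\theta}(c)=-c$, you get $g'(c)=(-1)^{m+1}c(1+\tilde{\tau}(y))$.
\item Hence, as $p\neq2$, exactly one of $c-\sigma^{\frac{e}{2}}(c)$ and $c-g'(c)$ has valuation $\val_{F}(c)$, and the other has strictly larger valuation. This holds whatever the parity of $m$.
\item \textbf{GE1} requires both of these to have valuation $-r$, which is the contradiction.
\end{itemize}

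The hypothesis that $E/E_{\pm}$ is unramified is needed only to ensure $g'\notin\Gamma_{E}$, so that $g'$ genuinely defines a root. If $g'\in\Gamma_{E}$, then $\sigma^{\frac{e}{2}}$ would restrict to $\tau_{\theta}^{-1}=\tau_{\theta}$ on $E$. But $\sigma^{\frac{e}{2}}$ fixes $\mu_{L}\supset\mu_{E}$ pointwise, whereas an unramified $\tau_{\theta}$ induces the nontrivial automorphism of $k_{E}/k_{E_{\pm}}$ and so moves $\mu_{E}$. With this, the detour through $u^{q^{f/2}-1}=-1$ and through \textbf{GE1} for all remaining roots is unnecessary.
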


\begin{proof}
Let $\vartheta$ be a $\theta_{\bfS}$-stable toral character of $S$.
If we let $r\in\R_{>0}$ be the depth of $\vartheta$, then we can take a $\theta_{\bfS}$-stable element $X^{\ast}\in\mfs^{\ast}_{-r}$ representing $\vartheta|_{S_{r}}$ (\cite[Lemma 5.4]{Oi25}; see the discussion in Section \ref{subsec:TCF}).
By the torality of $\vartheta$, $X^{\ast}$ must satisfy Yu's condition \textbf{GE2} (see \cite[Section 8]{Yu01}), which means that $\val_{F}(\langle X^{\ast},H_{\alpha}\rangle)=-r$ for any $\alpha\in\Phi(\G,\bfS)$.

We identify $\mfs^{\ast}\cong E^{\ast}=\Hom_{F}(E,F)$ with $E$ via the $F$-linear isomorphism $[Y\mapsto \Tr_{E/F}(XY)]\leftrightarrow X$.
Write $X$ for the element of $E$ corresponding to $X^{\ast}\in E^{\ast}$ under this identification.
If we write $\alpha=\left[\begin{smallmatrix}g_{i}\\g_{j}\end{smallmatrix}\right]$ as in the above notation, then we have $\langle X^{\ast},H_{\alpha}\rangle=g_{i}(X)-g_{j}(X)$.
Thus, for any $\alpha$ belonging to the $\Gamma$-orbit of $\alpha=\left[\begin{smallmatrix}1\\ g\end{smallmatrix}\right]$ with $g=\sigma^{k}\phi^{i}$, we have $\val_{F}(\langle X^{\ast},H_{\alpha}\rangle)=\val_{F}(X-g(X))$.

Now, for the sake of contradiction, let us suppose that $E/E_{\pm}$ is unramified.
Since $X^{\ast}$ is $\theta_{\bfS}$-invariant and the above identification between $E^{\ast}$ and $E$ is Galois-equivariant, $X$ must satisfy $-\tau_{\theta}(X)=X$.
We write $X=\varpi_{E}^{t}u$ with $t\in\Z$ and $u\in\mcO_{E}^{\times}$.
Then we have $\sigma^{\frac{e}{2}}(X)\equiv(-1)^{t}X \pmod{\mfp_{E}^{t+1}}$.
On the other hand, we have $\tau_{\theta}\sigma^{\frac{e}{2}}(X)\equiv(-1)^{t+1}X \pmod{\mfp_{E}^{t+1}}$.
Since $E/E_{\pm}$ is unramified, $\tau_{\theta}\sigma^{\frac{e}{2}}\neq\id$.
Thus, by considering the condition \textbf{GE2} for $g=\sigma^{\frac{e}{2}}$ and $g=\tau_{\theta}\sigma^{\frac{e}{2}}$, we get 
\[
\val_{F}(X-\sigma^{\frac{e}{2}}(X))
=r
=\val_{F}(X-\tau_{\theta}\sigma^{\frac{e}{2}}(X)).
\]
However, this is impossible because we have
\[
X-\sigma^{\frac{e}{2}}(X)\equiv X-(-1)^{t}X \pmod{\mfp_{E}^{t+1}}
\quad\text{and}
\]
\[
X-\tau_{\theta}\sigma^{\frac{e}{2}}(X)\equiv X-(-1)^{t+1}X \pmod{\mfp_{E}^{t+1}}
\]
and exactly one of these is nonzero.
\end{proof}

\subsection{Computation of spectral transfer factors}
Now we go back to the situation as in Section \ref{sec:TECR}.
Thus the explanation given in the previous subsections are applied to the $F$-rational elliptic twisted maximal torus $(\t\bfS_{k},\bfS_{k})$ of $(\t\G,\G)$.
Recall that we want to show that the quantity \eqref{ramified-spec}, which is given by
\begin{align*}
\epsilon_{\bfS_{k},\ram}(s_{k})
\cdot\epsilon_{\vartheta_{k}}^{\star}(s_{k})
\cdot\zeta_{\desc}(s_{k})^{-1}
\cdot\prod_{\alpha_{\res}\in\dot{\Phi}(\G_{\eta_{0}},\bfS_{k}^{\nat})_{\ram}}f_{(\G_{\eta_{0}},\bfS_{k}^{\nat})}(\alpha_{\res}),
\end{align*}
depends only on the $G$-conjugacy class of $k$ and is independent of $\eta$.

Suppose that $\bfS_{k}$ is isomorphic to $\Res_{E/F}\Gm$, where $E/F$ is a tamely ramified extension of degree $n$.
If the ramification index $e$ of $E/F$ is odd, then so is that of the Galois closure of $E/F$, which implies that $\bfS_{k}$ splits over a finite extension of $F$ with odd ramification index.
Since such a case is already treated in Theorem \ref{thm:spec-well-def-unram}, we assume that $e$ is even in the following.
In particular, by Lemma \ref{lem:tau-ram}, $E/E_{\pm}$ is a ramified quadratic extension with Galois group generated by $\tau_{\theta}=\sigma^{\frac{e}{2}}$.

Note that the toral invariant is always trivial when $\G=\GL_{n}$ (see \cite[Proposition 4.4]{OT21}), hence the character $\epsilon_{\bfS_{k},\ram}$ is trivial.
Moreover, we have

\begin{lem}
If $e$ is even, then the character $\epsilon_{\vartheta_{k}}^{\star}$ is trivial.
\end{lem}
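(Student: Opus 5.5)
The plan is to show that the indexing set of the product defining $\epsilon_{\vartheta_{k}}^{\star}$ is empty. That set consists of the $\ddot{\alpha}\in\ddot{\Xi}(\G,\bfS_{k})$ whose restricted root $\alpha_{\res}$ is ramified. Recall two facts: roots in $\Xi$ are asymmetric or symmetric unramified (\cite[Lemma 6.2]{Oi25}), and by the case table in Section \ref{subsec:TCF}, $\alpha_{\res}$ can be ramified only if
\begin{itemize}
\item $\alpha$ is asymmetric with $\theta(\alpha)\in-\dot{\alpha}$, or
\item $\alpha$ is symmetric (unramified) with $\theta(\alpha)\in\dot{\alpha}$, $\theta(\alpha)\neq\alpha$.
\end{itemize}
In either case $-\theta(\alpha)$ or $\theta(\alpha)$ lies in the $\Gamma$-orbit of $\alpha$, so I would translate this into the language of double cosets for $\GL_{n}$.

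Write $\alpha=\left[\begin{smallmatrix}1\\ g\end{smallmatrix}\right]$ with $g=\sigma^{k}\phi^{i}$. By Lemma \ref{lem:GL-involution} we have $\theta_{\bfS}(s)=\tau_{\theta}(s)^{-1}$, and by Lemma \ref{lem:tau-ram} (since $e$ is even) $\tau_{\theta}=\sigma^{e/2}$. The twist therefore acts on the characters by $\delta_{h}\mapsto-\delta_{h\tau_{\theta}}$, so
\[
\theta\left[\begin{smallmatrix}1\\ g\end{smallmatrix}\right]=-\left[\begin{smallmatrix}\tau_{\theta}\\ g\tau_{\theta}\end{smallmatrix}\right].
\]
The condition that $\alpha_{\res}$ is ramified then becomes an explicit condition on the double coset $\Gamma_{E}g\Gamma_{E}$ relative to $\Gamma_{E}\tau_{\theta}$. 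For example, $\theta(\alpha)\in-\dot{\alpha}$ means $\left[\begin{smallmatrix}\tau_{\theta}\\ g\tau_{\theta}\end{smallmatrix}\right]\in\Gamma\cdot\left[\begin{smallmatrix}1\\ g\end{smallmatrix}\right]$. I would then combine this with the condition $\alpha\in\Xi$.

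The key input is the \textbf{GE1} argument from the proof of Lemma \ref{lem:tau-ram}. I would identify $X^{\ast}$ with an element $X=\varpi_{E}^{t}u\in E$ satisfying $\tau_{\theta}(X)=-X$; since $E/E_{\pm}$ is ramified, this pins down the parity of $t$. Next I would use the criterion that $\alpha\in\Xi$ if and only if $r/2\in\ord_{\x}(\alpha)$ (Remark after the definition of $\epsilon_{\vartheta,\asym}$ and \cite[Prop.~5.12]{OT21}), together with $\val_{F}(X-g(X))=r$. By \cite[Section 5]{OT21} this is a condition on the valuations of $\alpha$-root vectors, namely $\frac{r}{2}\in\frac{1}{e_{\alpha}}\Z$ up to the usual shift. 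For roots $\alpha$ in the orbits above I expect to show, using the explicit action $\sigma(\varpi_{E})=\zeta_{e}\varpi_{E}$, that the relation between $\alpha$ and $\theta(\alpha)$ forces $F_{\alpha}$ and $F_{\alpha_{\res}}$ to have equal ramification. The valuation $er$ of $X-g(X)$, measured in $E$, then has the wrong parity for $r/2$ to lie in $\ord_{\x}(\alpha)$, so $\alpha\notin\Xi$.

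The main obstacle is this case analysis. It requires a careful description of $F_{\alpha}$, $F_{\pm\alpha}$ and $F_{\alpha_{\res}}$ for $g=\sigma^{k}\phi^{i}$, using the tables in the proofs of Lemmas \ref{lem:ram-res-from-asym} and \ref{lem:ram-res-from-ur}. Two residual cases need particular care: $\alpha$ asymmetric with $g\tau_{\theta}$ relating $\alpha$ to $-\alpha$, and $\alpha$ unramified with $g$ commuting with $\sigma^{e/2}$. I would first handle the simplest case $g=\phi^{i}$ to calibrate the parity computation, and then treat general $g$ by the same congruence modulo $\mfp_{E}^{t+1}$ used in Lemma \ref{lem:tau-ram}.
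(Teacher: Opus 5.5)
Your plan contains the key ingredient, but you aim it at the wrong target. From $X=\varpi_E^{t}u$ and $\sigma^{e/2}(X)=\tau_\theta(X)=-X$ you get $t$ odd. For $\alpha=\left[\begin{smallmatrix}1\\ \sigma^{e/2}\end{smallmatrix}\right]$ one has $\langle X^{\ast},H_{\alpha}\rangle=X-\sigma^{e/2}(X)=2X$, so \textbf{GE1} gives $t/e=\val_F(X)=-r$, and hence $er$ is odd.

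This step does not even need $\theta$. For the symmetric ramified root of Proposition~\ref{prop:GL-roots-classification}, $F_\alpha=E$, and $\langle H_\alpha,X^{\ast}\rangle$ is anti-invariant under the ramified quadratic $\Gal(F_\alpha/F_{\pm\alpha})$, so it has odd valuation in $E$. This is the fact $r\in\frac1e(2\Z+1)$ from \cite[Section 4.7]{Kal19} and \cite[Section 6.4]{OT21} that the paper invokes.

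The Moy--Prasad filtration of $\mfg$ at $\x$ is the filtration by powers of the radical of the hereditary order attached to $\mcO_E$, so it jumps only at $\frac1e\Z$. Since $\frac r2\notin\frac1e\Z$, you get $\mfg_{\x,r/2}=\mfg_{\x,r/2+}$. Hence $V=J/J_+=0$ and $\Xi(\G,\bfS_k)=\varnothing$; this is \cite[Remark 5.10]{OT21}. That is the paper's whole proof. The product defining $\epsilon^{\star}_{\vartheta_k}$ is empty, not just the part indexed by roots with ramified $\alpha_{\res}$.

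So the step you call the main obstacle is misdirected, and as you formulate it, it would not close the argument. Whether $\alpha\in\Xi$ is a property of the Heisenberg quotient of $\G$ with respect to $(\bfS_k,\x,\frac r2)$. It is decided by whether $\frac r2$ is a jump for $\alpha$ at $\x$, and $\alpha_{\res}$ plays no role in it; neither do $\theta$, $F_{\pm\alpha}$ or $F_{\alpha_{\res}}$. Showing that $F_\alpha$ and $F_{\alpha_{\res}}$ have the same ramification index does not produce a parity mismatch. What produces it is that all jumps lie in $\frac1e\Z$, combined with $er$ odd. The jumps lie in $\frac1e\Z$ because, for instance, $e(F_\alpha/F)=e$ for every root (each $F_\alpha$ lies between $E$ and $L$, with $L/E$ unramified) and toral invariants are trivial for $\GL_n$. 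With that replacement your argument goes through, and the case analysis over $\theta$-orbits, including the two residual cases, can be dropped.
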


\begin{proof}
Recall that, for any $s\in S_{k}$, $\epsilon_{\vartheta_{k}}^{\star}(s)$ is defined to be the product of $\epsilon_{\alpha}(s)$ over $\ddot{\alpha}\in\ddot{\Xi}(\G,\bfS_{k})$ whose restricted root $\alpha_{\res}$ is ramified.
If the ramification index $e$ of $E/F$ is even, then there exists a ramified symmetric root of $\bfS$ in $\G$ by Proposition \ref{prop:GL-roots-classification}.
Then, as discussed in \cite[Section 4.7]{Kal19} (see also \cite[Section 6.4]{OT21}), the depth $r$ of the toral character $\vartheta_{k}$ of $S_{k}$ is given by $\frac{2s+1}{e}$ for some integer $s$.
However, this implies that the set $\Xi(\G,\bfS_{k})$ of roots appearing in the Heisenberg space is empty (see \cite[Remark 5.10]{OT21}).
Thus we get the assertion.
\end{proof}

Hence we are reduced to investigate the following product:
\begin{align}\label{spectral-GL}
\prod_{\begin{subarray}{c}\ddot{\alpha}\in\ddot{\Phi}(\G,\bfS_{k})_{\asym}\\\alpha_{\res}: \ram \end{subarray}} \epsilon_{\alpha}(s_{k})
\prod_{\begin{subarray}{c}\dot{\alpha}\in\dot{\Phi}(\G,\bfS_{k})_{\ur}\\ \alpha_{\res}: \ram \end{subarray}} \epsilon_{\alpha}(s_{k})
\prod_{\dot{\alpha}_{\res}\in\dot{\Phi}(\G_{\eta_{0}},\bfS_{k}^{\nat})_{\ram}}f_{(\G_{\eta_{0}},\bfS_{k}^{\nat})}(\alpha_{\res}).
\end{align}

\begin{lem}\label{lem:res-toral-inv-GL}
The third product in \eqref{spectral-GL} equals
\[
\prod_{\ddot{\alpha}_{\res}\in\dot{\Phi}(\G_{\eta_{0}},\bfS_{k}^{\nat})_{\ram}^{(\asym)}}f_{(\G_{\eta_{0}},\bfS_{k}^{\nat})}(\alpha_{\res})
\cdot
\prod_{\dot{\alpha}_{\res}\in\dot{\Phi}(\G_{\eta_{0}},\bfS_{k}^{\nat})_{\ram}^{(\ur)}}f_{(\G_{\eta_{0}},\bfS_{k}^{\nat})}(\alpha_{\res}).
\]
\end{lem}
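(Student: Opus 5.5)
The plan is to split the index set $\dot{\Phi}(\G_{\eta_{0}},\bfS_{k}^{\nat})_{\ram}$, which we regard as a subset of $\dot{\Phi}_{\res}(\G,\bfS_{k})_{\ram}$, according to the type of an underlying root $\alpha\in\Phi(\G,\bfS_{k})$. This gives three pieces:
\[
\Phi_{\res}(\G,\bfS_{k})_{\ram}=\Phi_{\res}(\G,\bfS_{k})_{\ram}^{(\asym)}\sqcup\Phi_{\res}(\G,\bfS_{k})_{\ram}^{(\ur)}\sqcup\Phi_{\res}(\G,\bfS_{k})_{\ram}^{(\ram)}.
\]
The type of $\alpha$ is independent of the choice of $\alpha$ in its $\Theta$-orbit, because $\Theta_{\bfS}$ commutes with $\Sigma$. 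The claimed identity then amounts to showing that the toral invariants $f_{(\G_{\eta_{0}},\bfS_{k}^{\nat})}(\alpha_{\res})$ multiply to $1$ over the third piece.

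On the third piece, $\alpha$ is a symmetric ramified root of $\bfS_{k}\cong\Res_{E/F}\Gm$. By Proposition \ref{prop:GL-roots-classification}, such an $\alpha$ lies in the $\Gamma$-orbit of $\left[\begin{smallmatrix}1\\ \sigma^{e/2}\end{smallmatrix}\right]$. I will separate two cases.

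\emph{Case $F_{\alpha}=F_{\alpha_{\res}}$.} The computation in the first half of the proof of Proposition \ref{prop:toral-descent-ram-ram} shows directly that $f_{(\G_{\eta_{0}},\bfS_{k}^{\nat})}(\alpha_{\res})=f_{(\G,\bfS_{k})}(\alpha)$. That computation only uses $\alpha_{\res}\in\Phi(\G_{\eta_{0}},\bfS_{k}^{\nat})$ and the shape of the root vectors. The right-hand side is trivial, since toral invariants of $\GL_{n}$ always vanish (\cite[Proposition 4.4]{OT21}).

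\emph{Case $F_{\alpha}\ne F_{\alpha_{\res}}$.} Here $\theta_{\bfS}(\alpha)\neq\pm\alpha$ and $l_{\alpha}=2$. I would show this case cannot occur. By Lemma \ref{lem:GL-involution} and Lemma \ref{lem:tau-ram}, $\theta_{\bfS_{k}}$ acts on $X^{\ast}(\bfS_{k})$ by $\delta_{i}\mapsto-\delta_{\tau_{\theta}(i)}$ with $\tau_{\theta}=\sigma^{e/2}$. Hence $\theta(\alpha)=-\left[\begin{smallmatrix}\tau_{\theta}\\ \tau_{\theta}\sigma^{e/2}\end{smallmatrix}\right]$, and for $\alpha=\left[\begin{smallmatrix}1\\ \sigma^{e/2}\end{smallmatrix}\right]$ this equals $-\left[\begin{smallmatrix}\sigma^{e/2}\\ 1\end{smallmatrix}\right]=\alpha$. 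Thus $\theta(\alpha)=\alpha$ and $F_{\alpha}=F_{\alpha_{\res}}$, since a $\theta$-fixed root has $\alpha_{\res}$ stabilized by exactly the stabilizer of $\alpha$. Since $\Theta$ commutes with $\Gamma$, the same conclusion holds for every $\alpha$ in the $\Gamma$-orbit. So only the first case occurs, and every factor on the third piece equals $1$.

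The main obstacle is the identification of $\theta_{\bfS_{k}}$ on characters, including the Galois-equivariance needed to transport the $\tau_{\theta}$-description of $\theta_{\bfS}$ on $S\cong E^{\times}$ to the action $\delta_{i}\mapsto-\delta_{\tau_{\theta}(i)}$ on the $\delta_{i}$ after composing with the fixed coset representatives. I would handle this by identifying $\theta_{\bfS}$ with $x\mapsto\tau_{\theta}(x)^{-1}$ on $\bar F$-points via $E\otimes_{F}\bar F\cong\bar F^{n}$. If the coset bookkeeping fails, the fallback is to use the second half of Proposition \ref{prop:toral-descent-ram-ram}, comparing with $\eta'=\ul{\eta}_{k}$, and absorb the resulting sign $\alpha(t)$.
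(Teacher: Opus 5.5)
Your proposal is correct and is essentially the paper's own argument: the paper likewise observes that every symmetric ramified $\alpha$ may be taken to be $\left[\begin{smallmatrix}1\\ \sigma^{e/2}\end{smallmatrix}\right]$ and is fixed by $\theta_{\bfS}$ because $\tau_{\theta}=\sigma^{e/2}$. It then uses the $\theta$-fixed case from the proof of Proposition \ref{prop:toral-descent-ram-ram} to get $f_{(\G_{\eta_{0}},\bfS_{k}^{\nat})}(\alpha_{\res})=f_{(\G,\bfS_{k})}(\alpha)=1$ by \cite[Proposition 4.4]{OT21}, so your separate case $F_{\alpha}\neq F_{\alpha_{\res}}$ and the proposed fallback are redundant.
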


\begin{proof}
Let $\dot{\alpha}\in\dot{\Phi}(\G,\bfS_{k})_{\ram}$ be an element satisfying $\dot{\alpha}_{\res}\in\dot{\Phi}(\G_{\eta_{0}},\bfS_{k}^{\nat})_{\ram}^{(\ram)}$.
Then $\alpha$ is fixed by $\theta_{\bfS}$.
Indeed, we may suppose that $\alpha$ is of the form $\left[\begin{smallmatrix}1\\\sigma^{\frac{e}{2}}\end{smallmatrix}\right]$.
Since $\tau_{\theta}=\sigma^{\frac{e}{2}}$, 
\[
\theta_{\bfS}(\alpha)
=\tau_{\theta}\cdot\begin{bmatrix}1\\\sigma^{\frac{e}{2}}\end{bmatrix}^{-1}
=\sigma^{\frac{e}{2}}\begin{bmatrix}\sigma^{\frac{e}{2}}\\1\end{bmatrix}
=\begin{bmatrix}1\\\sigma^{\frac{e}{2}}\end{bmatrix}
=\alpha.
\]
Hence, we have $f_{(\G_{\eta_{0}},\bfS_{k}^{\nat})}(\alpha_{\res})=f_{(\G,\bfS_{k})}(\alpha)$ as noted in the proof of Proposition \ref{prop:toral-descent-ram-ram}.
Again by using that $f_{(\G,\bfS_{k})}(\alpha)=1$ (\cite[Proposition 4.4]{OT21}), we get $f_{(\G_{\eta_{0}},\bfS_{k}^{\nat})}(\alpha_{\res})=1$.
\end{proof}

\begin{lem}\label{lem:existence-order-2}
There exists an element of $\t{S}_{k}$ of order $2$.
\end{lem}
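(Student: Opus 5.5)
We must show that $\t{S}_{k}$ contains an element $\eta'$ with $\eta'^{2}=1$, where the square is taken in $\G\rtimes\langle\theta\rangle$. The plan is to work entirely inside the abstract twisted torus $\t\bfS=\bfS\theta_{\bfS}$ with $S\cong E^{\times}$. By Lemma \ref{lem:GL-involution} and Lemma \ref{lem:tau-ram}, $\theta_{\bfS}(s)=\tau_{\theta}(s)^{-1}$ with $\tau_{\theta}=\sigma^{e/2}$ generating $\Gal(E/E_{\pm})$.

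Fix any $\eta_{1}\in\t{S}_{k}$. Every element of $\t{S}_{k}$ has the form $s\eta_{1}$ with $s\in S_{k}$, and
\[
(s\eta_{1})^{2}=s\,\theta_{\bfS}(s)\,\eta_{1}^{2}=\frac{s}{\tau_{\theta}(s)}\,\eta_{1}^{2}.
\]
Here $\eta_{1}^{2}\in\G$ commutes with $\eta_{1}$ and normalizes the Borel pair. Since $\bfZ_{\G}(\bfS^{\nat})=\bfS$ (Proposition \ref{prop:twisted-tori}) and $\eta_{1}^{2}$ centralizes $\bfS^{\nat}$, we get $c\colonequals\eta_{1}^{2}\in S_{k}$. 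Moreover $c$ is fixed by $\theta_{\bfS}$, because $\theta_{\bfS}(c)=[\eta_{1}](\eta_{1}^{2})=c$. Hence, viewed in $E^{\times}$, $c$ satisfies $\tau_{\theta}(c)=c^{-1}$, i.e.\ $\Nr_{E/E_{\pm}}(c)=1$.

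By Hilbert's Theorem 90 for the quadratic extension $E/E_{\pm}$, there is $s\in E^{\times}$ with $s/\tau_{\theta}(s)=c^{-1}$. Then $(s\eta_{1})^{2}=1$, so $\eta'\colonequals s\eta_{1}$ has order dividing $2$. Its order is exactly $2$ because it lies in the non-identity component $\t{G}$. As a check, this element is automatically topologically semisimple, since $2\neq p$.

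The main point needing care is the claim that $\eta_{1}^{2}\in S_{k}$ and is $\theta_{\bfS}$-fixed. This is where the twisted-torus structure is used: $[\eta_{1}]$ acts on $\bfS_{k}$ by $\theta_{\bfS}$, an involution, so $[\eta_{1}^{2}]$ is trivial on $\bfS_{k}$, giving $\eta_{1}^{2}\in\bfZ_{\G}(\bfS_{k})=\bfS_{k}$. Rationality holds because $\eta_{1}$ is $F$-rational. Everything after that is the one-line Hilbert 90 step.
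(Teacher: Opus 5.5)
Your proof is correct, and it takes a genuinely different route from the paper.

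The paper passes to Waldspurger's model of $\t{G}$ as the space of non-degenerate bilinear forms on $V$. In that model, $g\rtimes\theta$ has order $2$ exactly when $g\cdot\t{\theta}$ is symplectic. The paper then realizes $\t{S}_{k}$ as the forms $\t{x}(v,v')=\Tr_{E/F}(v\tau_{\theta}(v')x)$ with $x\in E^{\times}$, taken from \cite[Section 1.3]{Wal10}, and simply chooses $x$ with $\Tr_{E/E_{\pm}}(x)=0$.

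You instead stay inside the abstract twisted torus. The formula $(s\eta_{1})^{2}=(s/\tau_{\theta}(s))\,\eta_{1}^{2}$ shows that the only obstruction is the class of the norm-one element $\eta_{1}^{2}\in E^{\times}$ modulo $\{s/\tau_{\theta}(s)\}$, and Hilbert 90 removes it.

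What your route buys:
\begin{itemize}
\item It uses only that $\t{S}_{k}\neq\varnothing$, that $\theta_{\bfS}$ is an involution, that $\bfZ_{\G}(\bfS_{k})=\bfS_{k}$, and Lemma \ref{lem:GL-involution} (that $\tau_{\theta}$ has order exactly $2$). The ramification of $E/E_{\pm}$ from Lemma \ref{lem:tau-ram} and the identification $\tau_{\theta}=\sigma^{e/2}$ are never needed.
\item It avoids relying on the external identification of $\t{S}_{k}$ with trace forms.
\item It isolates a general sufficient condition: an element of order $2$ exists as soon as every $\theta_{\bfS}$-fixed element of $S$ has the form $s\,\theta_{\bfS}(s)$. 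So the argument would carry over to other twisted tori without an explicit model of $\t{G}$.
\end{itemize}

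What the paper's route buys is explicitness. It exhibits the order-$2$ elements directly as the forms $\t{x}$ with $\tau_{\theta}(x)=-x$. These form an $E_{\pm}^{\times}$-coset, matching the $E_{\pm}^{\times}$-coset of solutions $s$ in your Hilbert 90 step.
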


\begin{proof}
We utilize a realization of $\t{G}$ as the space of bilinear forms as in \cite[Section 1.2]{Wal10} (see also \cite[Section 3.6]{Li13}).

Let $V$ be an $n$-dimensional $F$-vector space equipped with basis $\{e_{i}\}_{i=1,\ldots,n}$.
We let $\t{\theta}$ be a symplectic form on $V$ such that the representation matrix of $\t{\theta}$ with respect to $\{e_{i}\}_{i=1,\ldots,n}$ is $J_{2n}$, i.e., $\t{\theta}(e_{k},e_{l})=(-1)^{k-1}\delta_{k,2n+1-l}$.
Let $\Hom_{F}^{\mathrm{nondeg}}(V\otimes_{F}V,F)$ denote the space of non-degenerate $F$-bilinear forms on $V$.
Note that $\Hom_{F}^{\mathrm{nondeg}}(V\otimes_{F}V,F)$ has a bi-$\GL_{F}(V)$-torsor structure by
\[
(g\cdot q\cdot g')(v,v')
\colonequals q(g^{-1}v,g'v')
\]
for any $q\in\Hom_{F}^{\mathrm{nondeg}}(V\otimes_{F}V,F)$ and $g,g'\in\GL_{F}(V)$.
(Thus we may regard $\t{\theta}$ as a ``base point'' of $\Hom_{F}^{\mathrm{nondeg}}(V\otimes_{F}V,F)$.)
Then $\Hom_{F}^{\mathrm{nondeg}}(V\otimes_{F}V,F)$ is identified with $\t{G}=\GL_{n}(F)\rtimes\theta$ bi-$\GL_{n}(F)$-equivariantly by the following association:
\[
\Hom_{F}^{\mathrm{nondeg}}(V\otimes_{F}V,F)
\leftrightarrow
\GL_{n}(F)\rtimes\theta
\colon\quad g\cdot\t{\theta} \mapsto g\rtimes\theta.
\]

Let us examine how the condition that $g\rtimes\theta$ is of order $2$ can be rephrased on the space $\Hom_{F}^{\mathrm{nondeg}}(V\otimes_{F}V,F)$.
The order of $g\rtimes\theta\in\t{G}$ is $2$ if and only if we have $g\cdot\t{\theta}=\t{\theta}\cdot g^{-1}$.
Let $\iota$ be the involution on the space $\Hom_{F}^{\mathrm{nondeg}}(V\otimes_{F}V,F)$ given by swapping two entries of $V\otimes_{F}V$, i.e.,
\[
\iota(q)(v,v')=q(v',v)
\]
for $q\in\Hom_{F}^{\mathrm{nondeg}}(V\otimes_{F}V,F)$ and $v,v'\in V$.
Then we have $\iota(g\cdot\t{\theta})=-\t{\theta}\cdot g^{-1}$.
Indeed, we have
\[
\iota(g\cdot\t{\theta})(v,v')
=(g\cdot\t{\theta})(v',v)
=\t{\theta}(g^{-1}v',v)
=-\t{\theta}(v,g^{-1}v')
=-(\t{\theta}\cdot g^{-1})(v,v')
\]
for $v,v'\in V$ (we used that $\t{\theta}$ is symplectic in the third equality).
Hence, $g\rtimes\theta$ is of order $2$ if and only if $\iota(g\cdot\t{\theta})=-g\cdot\t{\theta}$, in other words, $g\cdot\t{\theta}$ is symplectic.

Now we note that elements of $\t{S}_{k}$ can be realized in $\Hom_{F}^{\mathrm{nondeg}}(V\otimes_{F}V,F)$ in the following way (\cite[Section 1.3]{Wal10}).
Recall that $S_{k}\cong E^{\times}$ and we have a degree $2$ subextension $E/E_{\pm}$ with Galois group $\langle\tau_{\theta}\rangle$.
For any $x\in E^{\times}$, we define an $F$-bilinear form $\t{x}$ on $E$ by
\[
\t{x}(v,v')\colonequals \Tr_{E/F}(v\tau_{\theta}(v')x).
\]
Then, by choosing an $F$-basis of $E$, we can embed $\{\t{x}\mid x\in E^{\times}\}$ in $\Hom_{F}^{\mathrm{nondeg}}(V\otimes_{F}V,F)$.
This subset realizes $\t{S}_{k}$.

Therefore, in order to show the claim, it suffices to find an element $x\in E^{\times}$ such that $\t{x}$ is symplectic.
If we let $x\in E^{\times}$ be any element satisfying $\Tr_{E/E_{\pm}}(x)=0$, then $\t{x}$ is symplectic.
\end{proof}

\begin{prop}\label{prop:GL-rem-rest-index-sets}
If $e$ is even, then we have natural identifications
\[
\{\ddot{\alpha}\in\ddot{\Phi}(\G,\bfS_{k})_{\asym} \mid \dot{\alpha}_{\res}: \ram\}
\xrightarrow{1:1}
\dot{\Phi}(\G_{\eta_{0}},\bfS_{k}^{\nat})_{\ram}^{(\asym)}
\colon
\ddot{\alpha}\mapsto \dot{\alpha}_{\res},
\]
\[
\{\ddot{\alpha}\in\dot{\Phi}(\G,\bfS_{k})_{\ur} \mid \dot{\alpha}_{\res}: \ram\}
\xrightarrow{1:1}
\dot{\Phi}(\G_{\eta_{0}},\bfS_{k}^{\nat})_{\ram}^{(\ur)}
\colon
\ddot{\alpha}\mapsto \dot{\alpha}_{\res}.
\]
\end{prop}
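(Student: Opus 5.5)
The plan is to check three things for each of the two maps: that it is well defined, that its image lies in the right set, and that it is bijective. The two ingredients are Lemmas \ref{lem:ram-res-from-asym} and \ref{lem:ram-res-from-ur}, combined with the explicit description of roots of $\bfS_{k}\cong\Res_{E/F}\Gm$ when $e$ is even. By Lemma \ref{lem:tau-ram} we then have $\tau_{\theta}=\sigma^{e/2}$, and the action on roots is $\theta_{\bfS}\left[\begin{smallmatrix}g_i\\g_j\end{smallmatrix}\right]=\left[\begin{smallmatrix}\tau_\theta g_j\\ \tau_\theta g_i\end{smallmatrix}\right]$.

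\textbf{Well-definedness and image.} Since the actions of $\Theta_{\bfS}$ and $\Sigma$ commute, $\alpha\mapsto\alpha_{\res}$ sends $\Sigma$-orbits (respectively $\Gamma$-orbits) to $\Sigma$-orbits (respectively $\Gamma$-orbits) of restricted roots. For ramified $\alpha_{\res}$ the latter coincide with $\dot{\alpha}_{\res}$, because $\alpha_{\res}$ is symmetric. Containment in $\Phi(\G_{\eta_0},\bfS_{k}^{\nat})$ is then handled separately in the two cases.
\begin{itemize}
\item For the asymmetric case, Lemma \ref{lem:ram-res-from-asym} gives containment automatically.
\item For the unramified case, Lemma \ref{lem:ram-res-from-ur} reduces containment to a single choice of $\eta_0$. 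I would take $\eta_0$ to be the order-$2$ element of $\t{S}_{k}$ from Lemma \ref{lem:existence-order-2}; it is topologically semisimple. Then $N(\alpha)(\nu_0)=\alpha(\eta_0^2)=1$ for every $\alpha$, so every restricted root lies in $\Phi(\G_{\eta_0},\bfS_{k}^{\nat})$.
\end{itemize}
It is worth noting that, by Lemmas \ref{lem:ram-res-from-asym}--\ref{lem:ram-res-from-ram}, the ramified part of $\Phi(\G_{\eta_0},\bfS^{\nat}_{k})$ is independent of $\eta_0$. So the order-$2$ choice computes it for all $\eta_0$ at once, and the right-hand sets are simply the ramified restricted roots of the indicated origin. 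Surjectivity is then immediate, since each $\dot{\alpha}_{\res}$ of type $(\asym)$ or $(\ur)$ comes from some $\alpha$ of that type.

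\textbf{Injectivity.} This is the main step. Suppose $\alpha_{\res}$ and $\beta_{\res}$ lie in the same $\Gamma$-orbit. Then $\beta\in\Gamma\Theta\alpha$, so it suffices to show that $\theta_{\bfS}(\alpha)$ already lies in the orbit $\ddot{\alpha}$ (respectively $\dot{\alpha}$). In the asymmetric case with ramified $\alpha_{\res}$, the table in Section \ref{subsec:TCF} gives $\theta(\alpha)\in-\dot{\alpha}\subset\ddot{\alpha}$, which settles it. In the unramified case with ramified $\alpha_{\res}$, the same table gives $\theta(\alpha)\in\dot{\alpha}$, which again settles it.

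I expect the main obstacle to be exactly this use of the tables: confirming that a ramified restricted root from an asymmetric root forces $\theta(\alpha)\in-\dot{\alpha}$, and from an unramified root forces $\theta(\alpha)\in\dot{\alpha}$. The table entries listed as ``asymmetric'' or ``unramified'' rule out the other possibilities. To double-check this for $\GL_n$, I would also verify it directly by writing $\alpha=\left[\begin{smallmatrix}1\\g\end{smallmatrix}\right]$ with $g=\sigma^k\phi^i$ and computing $\theta_{\bfS}(\alpha)=\left[\begin{smallmatrix}\sigma^{e/2}g\\ \sigma^{e/2}\end{smallmatrix}\right]$, which lies in the $\Gamma$-orbit of $\left[\begin{smallmatrix}1\\ \sigma^{-e/2}g^{-1}\sigma^{e/2}\end{smallmatrix}\right]$ up to sign. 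The double-coset description of $\dot{\Phi}$ via $(\Gamma_E\backslash\Gamma/\Gamma_E)'$ then makes the orbit comparison explicit.
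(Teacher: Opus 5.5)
Your proposal is correct and is essentially the paper's proof: injectivity comes from $\theta(\alpha)\in-\dot{\alpha}\subset\ddot{\alpha}$ (resp.\ $\theta(\alpha)\in\dot{\alpha}$). The image lands in $\Phi(\G_{\eta_{0}},\bfS_{k}^{\nat})$ because membership is independent of the topologically semisimple $\eta_{0}$ and holds for the order-$2$ element of Lemma \ref{lem:existence-order-2}. One caveat: the proof of Lemma \ref{lem:ram-res-from-asym} only establishes independence of $\eta$, not membership itself, so route the asymmetric case through the order-$2$ element as well (as the paper does, and as your own remark already allows); also, $N(\alpha)(\nu_{0})=\alpha(\eta_{0}^{2})$ uses $l_{\alpha}=2$, which holds for the roots in question but not for every $\alpha$.
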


\begin{proof}
We consider only the case of asymmetric roots with ramified restriction since the case of symmetric unramified roots with ramified restriction can be treated in the same manner.
To show that the association $\ddot{\alpha}\mapsto\dot{\alpha}_{\res}$ gives the asserted identification, we must check the following:
\begin{enumerate}
\item
$\alpha$ and $\theta(\alpha)$ belong to the same class in $\ddot{\Phi}(\G,\bfS_{k})_{\asym}$;
\item
any $\alpha\in\Phi(\G,\bfS_{k})_{\asym}$ whose $\alpha_{\res}$ is ramified descends to $\G_{\eta_{0}}$.
\end{enumerate}
As investigated in the proof of Lemma \ref{lem:ram-res-from-asym}, we must have $\theta(\alpha)\neq\alpha$.
Moreover, if we let $\tau_{\alpha}$ be the nontrivial element of $\Gal(F_{\alpha_{\res}}/F_{\pm\alpha_{\res}})$, then we have $\tau_{\alpha}(\alpha)=-\theta(\alpha)$.
This implies the condition (1).
For the condition (2), we note that $\alpha$ descends to $\G_{\eta_{0}}$ if and only if $\alpha$ descends to $\G_{\eta'_{0}}$ for any topologically semisimple $\eta'_{0}\in\t{S}_{k}$, which is equivalent to $\alpha(\eta^{\prime2}_{0})=1$ (see Lemma \ref{lem:ram-res-from-asym} and its proof).
Any element $\eta'_{0}$ of $\t{S}$ of order $2$, which exists by Lemma \ref{lem:existence-order-2}, satisfies the latter condition.
\end{proof}

By combining Lemma \ref{lem:res-toral-inv-GL} with Proposition \ref{prop:GL-rem-rest-index-sets}, we see that \eqref{spectral-GL} equals the following product:
\begin{align}\label{spectral-GL-2}
\prod_{\dot{\alpha}_{\res}\in\dot{\Phi}(\G_{\eta_{0}},\bfS_{k}^{\nat})_{\ram}^{(\asym)}\sqcup\dot{\Phi}(\G_{\eta_{0}},\bfS_{k}^{\nat})_{\ram}^{(\ur)}}\epsilon_{\alpha}(s_{k})\cdot f_{(\G_{\eta_{0}},\bfS_{k}^{\nat})}(\alpha_{\res}).
\end{align}
Recalling that $\eta=s_{k}\ul{\eta}_{k}$, we see that \eqref{spectral-GL-2} equals
\begin{align}\label{spectral-GL-3}
\prod_{\dot{\alpha}_{\res}\in\dot{\Phi}(\G_{\ul{\eta}_{k}},\bfS_{k}^{\nat})_{\ram}^{(\asym)}\sqcup\dot{\Phi}(\G_{\ul{\eta}_{k}},\bfS_{k}^{\nat})_{\ram}^{(\ur)}}f_{(\G_{\ul{\eta}_{k}},\bfS_{k}^{\nat})}(\alpha_{\res})
\end{align}
by Propositions \ref{prop:toral-descent-asym-ram} and \ref{prop:toral-descent-ur-ram}.

\begin{prop}
If $e$ is even, then \eqref{spectral-GL-3} depends only on the $G$-conjugacy class of $k$ and is independent of $\eta$.
\end{prop}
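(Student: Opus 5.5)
The expression \eqref{spectral-GL-3} no longer involves $\eta$ except through the index sets $\dot{\Phi}(\G_{\ul{\eta}_{k}},\bfS_{k}^{\nat})_{\ram}^{(\asym)}$ and $\dot{\Phi}(\G_{\ul{\eta}_{k}},\bfS_{k}^{\nat})_{\ram}^{(\ur)}$ and the toral invariants, all of which are defined purely in terms of the base point $\ul{\eta}_{k}$ and the torus $\bfS_{k}$. So independence of $\eta$ is essentially automatic. The real content is the dependence on the choice of base point. We first fix the base point $\ul{\eta}_{k}$ by a canonical normalization. By Lemma \ref{lem:existence-order-2} there is an element of $\t{S}_{k}$ of order $2$, which is in particular topologically semisimple. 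I would therefore show that \eqref{spectral-GL-3} is unchanged if $\ul{\eta}_{k}$ is replaced by any other topologically semisimple $\eta'\in\t{S}_{k}$, and then compare everything with an order-$2$ element.

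To do this, write $\eta'=t\ul{\eta}_{k}$ with $t\in S_{k}$. By Lemmas \ref{lem:ram-res-from-asym} and \ref{lem:ram-res-from-ur}, the two index sets are the same for $\ul{\eta}_{k}$ and for $\eta'$. By Propositions \ref{prop:toral-descent-asym-ram} and \ref{prop:toral-descent-ur-ram}, the two products differ by $\prod\epsilon_{\alpha}(t)$, taken over the same index set. So it suffices to show that this product of sign characters is trivial on the elements $t$ that relate two topologically semisimple points, or at least on those relating an arbitrary point to an order-$2$ point.

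For this I would use the explicit structure of $\GL_{n}$. Here $S_{k}\cong E^{\times}$, and the twist $\theta_{\bfS}$ is $s\mapsto\tau_{\theta}(s)^{-1}$ with $\tau_{\theta}=\sigma^{e/2}$. The roots $\alpha=\left[\begin{smallmatrix}1\\g\end{smallmatrix}\right]$ with ramified restriction are exactly those with $\theta(\alpha)\in-\dot\alpha$, respectively $\theta(\alpha)\in\dot\alpha$ in the unramified case. For $\tau_{\theta}\sigma^{e/2}$-type pairings, this means $g\neq\sigma^{e/2}$ and $\tau_{\theta}g$ is conjugate to $g^{-1}$. Then $\epsilon_{\alpha}(t)=\sgn(\overline{t/g(t)})$ in $k_\alpha^\times$ or $k_\alpha^1$. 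Since $t$ lies in $E^{\times}$ and $\sigma$ acts trivially on residues, $\overline{t/g(t)}=\bar u/\phi^{i}(\bar u)$ for the unit part $u$ of $t$.

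The claim is that these signs pair off. I would show that the involution $\alpha\mapsto\tau_{\theta}\cdot\alpha$, which corresponds to $g\mapsto\sigma^{e/2}g$ on double cosets, preserves the index set, is fixed-point free on it modulo $\Sigma$, and sends $\epsilon_{\alpha}(t)$ to itself. Pairing orbits would then make the total product a square, hence $1$. If instead the orbits are fixed, I would fall back on counting: the product equals a power of $\sgn_{k_E^\times}(\bar u)$, with exponent the number of such orbits, and I would show that exponent is even via the orbit-count formulas of \cite{OT21}.

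That $G$-conjugating $k$ does not change anything is routine by transport of structure, as was already checked for the other factors. The main obstacle is the parity and pairing argument for $\prod\epsilon_{\alpha}(t)$, which requires a careful case analysis of the double cosets $\sigma^{k}\phi^{i}$ with ramified restriction.
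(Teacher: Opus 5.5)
Your first observation (independence of $\eta$ is automatic) and your last one ($G$-conjugation invariance by transport of structure) together are the paper's entire proof. The step you call the real content, base-point independence, is unnecessary, and as you formulate it, false.

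\textbf{Why it is unnecessary.} The base points are not chosen independently for each $k$. The paper fixes one topologically semisimple $\ul{\eta}_{j}\in\t{S}_{j}$ for each $j\in\t\mcJ^{\G}_{G}$ and sets $\ul{\eta}_{k}\colonequals [g_{k}]^{-1}(\ul{\eta}_{j})$. Hence for $G$-conjugate $k,k'$ there is $g\in G$ with $k'=[g]\circ k$ and $\ul{\eta}_{k'}={}^{g}\ul{\eta}_{k}$. Then $[g]$ gives $F$-rational isomorphisms $\G_{\ul{\eta}_{k}}\to\G_{\ul{\eta}_{k'}}$ and $\bfS_{k}^{\nat}\to\bfS_{k'}^{\nat}$, which carry the index sets and toral invariants of \eqref{spectral-GL-3} onto each other. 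That is the whole argument. Any dependence on the base point is absorbed into the constant $C_{\ul{\eta}_{j}}$ coming from the normalization of the intertwiner, so it never has to be removed.

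\textbf{Why it is false.} Topologically semisimple points of $\t{S}_{k}$ can differ by non-units $t$. Your formula $\overline{t/g(t)}=\bar u/\phi^{i}(\bar u)$ drops the root of unity $(\varpi_{E}/g(\varpi_{E}))^{m}$, and that factor breaks the pairing.

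Concretely, take $n=4$, $q\equiv1\pmod 4$, $E=F(\sqrt{\epsilon},\sqrt{\varpi_{F}})$ with $\varpi_{E}=\sqrt{\varpi_{F}}$ and $\tau_{\theta}=\sigma$. All roots are symmetric. The index set of \eqref{spectral-GL-3} consists of the two symmetric unramified roots $\alpha=\left[\begin{smallmatrix}1\\\phi\end{smallmatrix}\right]$ and $\alpha'=\left[\begin{smallmatrix}1\\\sigma\phi\end{smallmatrix}\right]$, whose restrictions are ramified.

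Let $\eta\in\t{S}_{k}$ be topologically semisimple, even of order $2$. Then $(\varpi_{E}\eta)^{2}=\varpi_{E}\,\tau_{\theta}(\varpi_{E})^{-1}\eta^{2}=-\eta^{2}$, so $\varpi_{E}\eta$ is topologically semisimple as well. However, $\alpha(\varpi_{E})=1$ and $\alpha'(\varpi_{E})=-1$. This gives $\epsilon_{\alpha}(\varpi_{E})\epsilon_{\alpha'}(\varpi_{E})=\sgn_{k^{1}}(-1)=-1$, because $-1$ is not a square in the cyclic group $k^{1}$ of order $q+1\equiv2\pmod 4$.

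By Propositions~\ref{prop:toral-descent-asym-ram} and \ref{prop:toral-descent-ur-ram}, the value of \eqref{spectral-GL-3} at base point $\varpi_{E}\eta$ is therefore the negative of its value at $\eta$. Your pairing $\left[\begin{smallmatrix}1\\g\end{smallmatrix}\right]\mapsto\left[\begin{smallmatrix}1\\\sigma^{e/2}g\end{smallmatrix}\right]$ does not preserve $\epsilon_{\alpha}(t)$ here. Your fallback parity count also fails, since the product is not a power of $\sgn(\bar u)$.

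Drop the base-point step and make explicit the compatible choice $\ul{\eta}_{k'}={}^{g}\ul{\eta}_{k}$; your transport-of-structure line then suffices.
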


\begin{proof}
It is obvious that \eqref{spectral-GL-3} is independent of $\eta$.
Let $j'\in\t\mcJ^{\G}_{\G_{\eta}}$ and $k'\in\t\mcJ^{\G_{\eta}}_{G_{\eta}}(j)$ such that $k$ and $k'$ are $G$-conjugate.
Suppose that $g\in G$ be an element such that $k'=[g]\circ k$ and $\ul{\eta}_{k'}={}^{g}\ul{\eta}_{k}$.
Then the $g$-conjugation induces $F$-rational isomorphisms $[g]\colon\G_{\ul{\eta}_{k}}\rightarrow\G_{\ul{\eta}_{k'}}$ and $\bfS_{k}^{\nat}\rightarrow\bfS_{k'}^{\nat}$.
Hence we get the assertion.
\end{proof}

We summarize what we obtained.

\begin{thm}\label{thm:spec-well-def-GL}
The spectral transfer factor $\Delta^{\spec}_{\phi,k}$ is well-defined for $\GL_{n}$ with even $n$.
In particular, the twisted endoscopic character relation (Expectation \ref{expect:TECR}) is satisfied.
\end{thm}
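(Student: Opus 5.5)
The plan is to collect what Section \ref{sec:TECR} and the preceding subsections of this section have already established. By the discussion culminating in \eqref{eq:spectral-tf-final}, the constant $\b\Delta^{\spec}_{\phi,k}$ is forced by the requirement that \eqref{H-side-summand} equal \eqref{G-side-summand}. All factors in the first two lines of \eqref{eq:spectral-tf-final} were already shown to depend only on the $G$-conjugacy class of $k$ and not on $\eta$. So well-definedness of $\Delta^{\spec}_{\phi,k}=\b\Delta^{\spec}_{\phi,k}\cdot C_{\ul{\eta}_{j}}^{-1}$ reduces to the same property for \eqref{ramified-spec}.

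I split according to the parity of the ramification index $e$ of $E/F$, where $\bfS_{k}\cong\Res_{E/F}\Gm$.

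If $e$ is odd, the Galois closure of $E/F$ also has odd ramification index, so $\bfS_{k}$ splits over an extension with odd ramification index. Theorem \ref{thm:spec-well-def-unram} then applies directly: \eqref{ramified-spec} is an empty product.

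If $e$ is even, Lemma \ref{lem:tau-ram} gives $\tau_{\theta}=\sigma^{e/2}$ with $E/E_{\pm}$ ramified. We then argue as follows.
\begin{enumerate}
\item $\epsilon_{\bfS_{k},\ram}$ is trivial, since toral invariants for $\GL_{n}$ are trivial.
\item $\epsilon^{\star}_{\vartheta_{k}}$ is trivial by the lemma above, since $\Xi$ is empty.
\item By Lemma \ref{lem:res-toral-inv-GL}, the toral invariants of ramified restricted roots coming from ramified roots drop out.
\item Proposition \ref{prop:GL-rem-rest-index-sets}, which relies on Lemma \ref{lem:existence-order-2} (existence of an order-$2$ element of $\t{S}_{k}$), matches the index sets of $\zeta_{\desc}$ with $\dot{\Phi}(\G_{\eta_{0}},\bfS_{k}^{\nat})_{\ram}^{(\asym)}\sqcup\dot{\Phi}(\G_{\eta_{0}},\bfS_{k}^{\nat})_{\ram}^{(\ur)}$.
\end{enumerate}
This turns \eqref{ramified-spec} into \eqref{spectral-GL-2}.

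Next I apply Propositions \ref{prop:toral-descent-asym-ram} and \ref{prop:toral-descent-ur-ram} with $\eta_{0}$ and the base point $\ul{\eta}_{k}$, taking $t$ to be (the topologically semisimple part of) $s_{k}$. Each factor $\epsilon_{\alpha}(s_{k})f_{(\G_{\eta_{0}},\bfS_{k}^{\nat})}(\alpha_{\res})$ then becomes $f_{(\G_{\ul{\eta}_{k}},\bfS_{k}^{\nat})}(\alpha_{\res})$. Here $\epsilon_{\alpha}$ only sees the depth-zero part, and the root index sets for $\eta_{0}$ and $\ul{\eta}_{k}$ coincide by Lemmas \ref{lem:ram-res-from-asym} and \ref{lem:ram-res-from-ur}. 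This yields \eqref{spectral-GL-3}. By the last proposition above, \eqref{spectral-GL-3} is independent of $\eta$ and invariant under $G$-conjugation of $k$, via the $F$-rational isomorphism $[g]$ with $\ul{\eta}_{k'}={}^{g}\ul{\eta}_{k}$.

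Finally, with $\Delta^{\spec}_{\phi,j}$ well defined, substituting back into \eqref{G-side-2} and comparing with \eqref{H-side-7} summand by summand gives \eqref{eq:TECR} for every elliptic strongly regular $\delta$ admitting some nonempty $\bfD(y,\eta)$. The remaining $\delta$ are handled by Lemma \ref{lem:no-norm} and Remark \ref{rem:no-norm}, where both sides vanish.

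The main obstacle I expect is step 4: confirming that the $\Sigma$-orbit indexing in $\zeta_{\desc}$ matches the $\Gamma$-orbit indexing of restricted roots exactly, so that nothing is double counted. I also need to confirm that the use of $s_{k}$ rather than its topologically semisimple part in the descent propositions is harmless, since $\epsilon_{\alpha}$ is trivial on $S_{k,0+}$.
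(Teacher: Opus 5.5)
Your proposal is correct and follows the paper's proof essentially step for step: the odd-$e$ case is reduced to Theorem \ref{thm:spec-well-def-unram}; $\epsilon_{\bfS_{k},\ram}$ and $\epsilon^{\star}_{\vartheta_{k}}$ vanish; Lemma \ref{lem:res-toral-inv-GL} and Proposition \ref{prop:GL-rem-rest-index-sets} (which already settles your indexing concern) lead to \eqref{spectral-GL-2}; and Propositions \ref{prop:toral-descent-asym-ram} and \ref{prop:toral-descent-ur-ram} lead to \eqref{spectral-GL-3}. Your worry about $s_{k}$ is resolved as you suggest: the descent propositions are applied with the $t\in S_{k}$ defined by $\eta_{0}=t\ul{\eta}_{k}$, which differs from $s_{k}$ by an element of $S_{k,0+}$ on which every $\epsilon_{\alpha}$ is trivial, a point the paper leaves implicit.
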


\subsection{A consequence}\label{subsec:Arthur=Kaletha}

\begin{lem}\label{lem:Arthur=Kaletha}
Let $\H$ be either a quasi-split special orthogonal or symplectic group over $F$ which is an endoscopic group of $(\G,\theta)$.
Let $\phi_{\H}$ be a toral supercuspidal $L$-parameter of depth $r\in\R_{>0}$ in the sense of Kaletha (Definition \ref{defn:toral-L-par}).
Suppose that $\mcS_{\phi_{\H}}\colonequals \pi_{0}(\bfZ_{\h\G}(\mathrm{Im}(\phi_{\H}))/\bfZ_{\h\G})$ is trivial.
Then $\h\xi\circ\phi_{\H}$ is toral supercuspidal as an $L$-parameter of $\G$ of depth $r\in\R_{>0}$.
\end{lem}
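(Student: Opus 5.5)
We need to check Definitions \ref{defn:rsc-L-par} and \ref{defn:toral-L-par} for $\phi\colonequals\h\xi\circ\phi_{\H}$, with $\h\G=\GL_{n}(\C)$. Here $\h\xi$ is the standard representation of $\h\H$ (an orthogonal or symplectic group), and $W_F$ acts trivially on $\h\G$. The plan is to translate everything into statements about the $n$-dimensional representation $\phi\colon W_F\to\GL_n(\C)$. By definition, $\phi$ is regular supercuspidal toral of depth $r$ exactly when $\phi$ is irreducible, trivial on $I_F^{r+}$, and $\phi|_{I_F^{r}}$ decomposes into $n$ distinct characters. (In $\GL_n$, the condition that $\bfZ_{\h\G}(\phi(I_F^r))$ is a maximal torus means precisely multiplicity-free with one-dimensional constituents.) Once this holds, the other conditions of Definition \ref{defn:rsc-L-par} follow automatically. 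Indeed, $\phi(P_F)\subset\phi(I_F^r)$ lies in that torus. Irreducibility also gives discreteness, and regularity follows from the general fact that a toral parameter of $\GL_n$ in Kaletha's sense is regular (\cite[Proposition 6.1.2]{Kal19} style).

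\textbf{Step 1.} Triviality on $I_F^{r+}$ is immediate, since $\phi_{\H}$ already has it by Definition \ref{defn:toral-L-par}(1).

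\textbf{Step 2: distinct characters on $I_F^r$.} By torality of $\phi_{\H}$, $\bfZ_{\h\H}(\phi_{\H}(I_F^r))=\h\T_{\H}$ is a maximal torus of $\h\H$. So $\phi(I_F^r)$ lies in $\h\T_{\H}\subset\h\T$ and is regular in $\h\H$. This gives a weight decomposition of $\C^n$ under $\h\T_{\H}$ with weights $\pm e_i$, together with a possible $0$ weight in the odd orthogonal case $\h\H=\Sp_{n}$. But $n$ is even, so the dual groups in play are $\SO_n$ or $\Sp_n$ with $n$ even, and the weights are $\pm e_1,\dots,\pm e_{n/2}$.

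We must show these $n$ characters of $I_F^r$ are pairwise distinct. Regularity in $\h\H$ says $\alpha(\phi(I_F^r))\neq1$ for every root $\alpha$ of $\h\H$. This handles $e_i\pm e_j$ for $i\neq j$, and also $2e_i$ in the symplectic case. The one pair not controlled by regularity is $e_i$ versus $-e_i$ in the $\SO_{n}$ case (type $D$). Equality there means $e_i|_{\phi(I_F^r)}$ has order dividing $2$. However, $\phi(I_F^r)$ is a pro-$p$ group image for $r>0$ and $p\neq2$, so $e_i=-e_i$ would force $e_i$ trivial on $I_F^r$. I expect the main obstacle to be excluding this trivial-weight possibility. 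I would use the depth-$r$ genericity: by Definition \ref{defn:toral-L-par}(2), the character $\phi_{\vartheta_{\H}}$ is generic of depth $r$. The $\Gamma$-orbit structure, with the torus elliptic, should then force each $e_i$ to be nontrivial on $I_F^r$. Alternatively, triviality of some $e_i$ would make $\bfZ_{\h\H}(\phi(I_F^r))$ contain the root subgroups for $\pm e_i\pm e_j$ whenever another $e_j$ is also trivial. For a single such $i$, I would argue via the Galois orbit: $W_F$ permutes the weights transitively within orbits, so all weights in its orbit are trivial.

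\textbf{Step 3: irreducibility.} Since the characters on $I_F^r$ are distinct, any $W_F$-stable subspace is a sum of weight lines. The centralizer $\bfZ_{\h\G}(\phi)$ is therefore a torus $\prod\C^\times$ indexed by the $W_F$-orbits on the weights. Next, $\bfZ_{\h\H}(\phi_{\H})$ contains the $\{\pm1\}$ acting on each self-dual orbit block. Hence if there were at least two orbits (or two orbits swapped by duality), $\mcS_{\phi_{\H}}$ would be nontrivial. The hypothesis $\mcS_{\phi_{\H}}=1$ therefore forces a single orbit, i.e.\ irreducibility. The exact component-group bookkeeping is routine. This completes the checks for toral supercuspidality of depth $r$.
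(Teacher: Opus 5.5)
The gap is in Step~2, at exactly the case you flag, and neither of your suggested remedies closes it.

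In type $D$ ($\hat{\H}=\mathrm{SO}_n(\C)$, $n\geq 4$), suppose a single weight $e_i$ is trivial on $\phi(I_F^r)$ and all other weights are nontrivial and pairwise distinct. This is fully compatible with $\phi_{\H}$ being toral of depth $r$. No root $\pm e_i\pm e_j$ vanishes on $\phi_{\H}(I_F^r)$. A type-$D$ Weyl element fixing $\phi_{\H}(I_F^r)$ pointwise would need a second sign change, so $\bfZ_{\hat{\H}}(\phi_{\H}(I_F^r))$ is still $\hat{\T}_{\H}$. For example, take $\phi=\phi_0\oplus\phi_1$ with $\phi_0$ a tame $2$-dimensional irreducible orthogonal representation and $\phi_1$ irreducible orthogonal and toral of depth $r$. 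This gives a toral $\phi_{\H}$ with elliptic $\bfS_{\H}$ and generic $\vartheta_{\H}$. Yet $\bfZ_{\hat{\G}}(\phi(I_F^r))$ contains a $\GL_2(\C)$ block, and $\mcS_{\phi_{\H}}\neq 1$. So genericity and ellipticity cannot exclude the trivial weight.

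Your orbit argument gives a contradiction only when the $W_F$-orbit of $e_i$ contains some $\pm e_j$ with $j\neq i$. That is, it needs the span of the $\pm e_i$ weight lines not to be a $W_F$-stable summand, which is irreducibility. You only obtain irreducibility in Step~3, and Step~3 uses the distinctness from Step~2. The argument is therefore circular at its key point, and $\mcS_{\phi_{\H}}=1$ is never used where it is really needed. The ``routine bookkeeping'' in Step~3 also hinges on this. In type $D$, $-1$ on one orbit block lies in $\mathrm{SO}_n$ only if that block is even-dimensional. That holds only because an odd self-dual block would contain a self-dual, hence trivial, weight.

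The repair is to use $\mcS_{\phi_{\H}}=1$ right there. If $e_i$ is trivial on $\phi(I_F^r)$, torality of $\phi_{\H}$ forces every other weight to be nontrivial. So $V_0\colonequals(\C^n)^{\phi(I_F^r)}$ is the span of the $\pm e_i$ weight lines. It is $W_F$-stable because $I_F^r$ is normal in $W_F$, and it is a hyperbolic plane for the invariant symmetric form. Hence $-1_{V_0}\oplus 1_{V_0^{\perp}}$ has determinant $1$, lies in $\hat{\H}$, centralizes $\phi_{\H}$, and is non-central. By discreteness it defines a nontrivial element of $\mcS_{\phi_{\H}}$. For $n=2$, where $\hat{\H}=\mathrm{SO}_2(\C)$ is a torus, torality says nothing, and you must use that the depth of $\phi_{\H}$ is exactly $r$.

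The paper goes in the opposite order. It first gets irreducibility of $\phi$ from $\mcS_{\phi_{\H}}=1$. It then observes that $\bfZ_{\hat{\G}}(\phi(I_F^r))$ is a $\hat\theta$-stable Levi subgroup whose $\hat\theta$-fixed identity component is the maximal torus $\bfZ_{\hat{\H}}(\phi_{\H}(I_F^r))$, and concludes that it is a torus. In type $D$, the one Levi shape this fixed-point count does not exclude is again a $\GL_2$-block with $\mathrm{SO}_2$ fixed points. It is irreducibility, through the $W_F$-stable space $V_0$, that removes it.

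Finally, a smaller slip: for $r>0$ one has $I_F^r\subset P_F$, not $P_F\subset I_F^r$. So the fact that $\phi(P_F)$ lies in the torus must be deduced from $\phi_{\H}(P_F)\subset\hat{\T}_{\H}$, not from the inclusion you wrote.
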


\begin{proof}
We put $\phi\colonequals \h\xi\circ\phi_{\H}$.
Obviously $\phi$ is trivial on $\SL_2(\C)$.
By the assumption that $\mcS_{\phi_{\H}}$ is trivial, we see that $\phi$ is irreducible as an $n$-dimensional representation of $W_{F}$, which implies that $\phi$ is discrete.
By these observations, to check that $\phi$ is toral of depth $r>0$, it suffices to check that the conditions (1) and (2) of Definition \ref{defn:toral-L-par} are satisfied by $\phi$.
The condition (1) is obviously satisfied, so let us consider (2).
Since $\phi_{\H}$ is toral supercuspidal of depth $r\in\R_{>0}$, $\bfZ_{\h\H}(\phi_{\H}(I_{F}^{r}))$ is a maximal torus of $\h\H$ containing $\phi_{\H}(P_{F})$.
We note that $\bfZ_{\h\G}(\phi(I_{F}^{r}))$ is a Levi subgroup of $\h\G$ by (the proof of) \cite[Lemma 5.2.2 (1)]{Kal19}.
In other words, $\bfZ_{\h\G}(\phi(I_{F}^{r}))$ is a $\h\theta$-stable Levi subgroup of $\h\G$ whose $\bfZ_{\h\H}(\phi_{\H}(I_{F}^{r}))=\bfZ_{\h\G}(\phi(I_{F}^{r}))^{\h\theta,\circ}$ is a maximal torus of $\h\H$.
This implies that the Levi subgroup $\bfZ_{\h\G}(\phi(I_{F}^{r}))$ is necessarily a ($\h\theta$-stable) maximal torus of $\G$.
\end{proof}

Now we arrive at the following consequence.

\begin{thm}\label{thm:Arthur=Kaletha}
Let $\H$ be either a quasi-split special orthogonal or symplectic group over $F$.
Let $\Pi_{\phi_{\H}}^{\H}$ be a toral supercuspidal $L$-packet with $L$-parameter $\phi_{\H}$ in the sense of Kaletha (see Section \ref{sec:LLC}).
Let $\Pi_{\phi_{\H},\Art}^{\H}$ be the $L$-packet of $\H$ corresponding to $\phi_{\H}$ in the sense of Arthur (\cite[Theorem 2.2.1]{Art13}).
Then we have $\Pi_{\phi_{\H}}^{\H}=\Pi_{\phi_{\H},\Art}^{\H}$.
\end{thm}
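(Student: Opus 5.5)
We reduce to the case where $\mcS_{\phi_{\H}}$ is trivial and then combine Theorem \ref{thm:spec-well-def-GL} with Arthur's characterization. Realize $\H$ as an endoscopic group of $(\G,\theta)$ with $\G=\GL_{n}$, $n$ even, as in Section \ref{sec:GL}; the odd-dimensional case is excluded by Remark \ref{rem:Prasad}. Arthur's packet $\Pi_{\phi_{\H},\Art}^{\H}$ is characterized as the unique finite set of representations whose stable character sum satisfies the twisted endoscopic character identity \eqref{eq:TECR}. The $\G$-side of that identity uses the $\theta$-stable representation $\pi_{\phi}$ of $\GL_{n}(F)$ attached to $\phi=\h\xi\circ\phi_{\H}$ by Harris--Taylor--Henniart, with Arthur's normalization of the intertwiner (Whittaker normalized). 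Since Kaletha's correspondence agrees with Harris--Taylor--Henniart for regular supercuspidal representations of $\GL_{n}$ (\cite{OT21}), it suffices to show two things: $\phi$ is a toral supercuspidal parameter of $\G$, and Kaletha's packets $\Pi_\phi^\G$ and $\Pi_{\phi_\H}^\H$ satisfy \eqref{eq:TECR} with the coefficient matching Arthur's normalization.

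\textbf{Step 1: reduction to trivial $\mcS_{\phi_{\H}}$.} In general $\phi$ is a sum of irreducible, pairwise distinct, self-dual pieces, and $\phi_{\H}$ factors through an elliptic endoscopic (Levi-type product) subgroup of $\H$. I would argue that Arthur's packet is compatible with this product decomposition via his multiplicativity and the standard endoscopic character relations for $\H$. Kaletha's packet is likewise compatible with standard endoscopy by \cite[Theorem 6.3.4]{Kal19}. Both packets are determined by their stable characters, and the stable character of $\Pi_{\phi_\H}^\H$ is the transfer from the simple-parameter case. So it is enough to treat the case where $\phi$ is irreducible, i.e.\ $\mcS_{\phi_{\H}}$ trivial. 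I expect this reduction to be the main obstacle, and I might instead need to cite Arthur's twisted identity for the whole group $\prod_{i}\GL_{n_{i}}$. That is still covered by Theorem \ref{thm:main-intro}, since a product of $\GL$'s with involution decomposes orbitwise into the $\GL_n$ case and pairs swapped by $\theta$ (which are untwisted).

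\textbf{Step 2: the case of trivial $\mcS_{\phi_{\H}}$.} By Lemma \ref{lem:Arthur=Kaletha}, $\phi$ is toral supercuspidal of depth $r$, so $\Pi_{\phi}^{\G}=\{\pi_{\phi}\}$ is a single toral supercuspidal representation. Since $\phi$ factors through $\h\xi$, this representation is $\theta$-stable. Theorem \ref{thm:spec-well-def-GL} then gives constants $\Delta^{\spec}_{\phi,\pi_{\phi}}$ such that \eqref{eq:TECR} holds for Kaletha's $\Pi_{\phi_{\H}}^{\H}$. This constant is nonzero, as one sees by evaluating at any $\delta$ having a norm (Remark \ref{rem:no-norm} together with the linear independence of characters on the $\H$-side). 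Absorbing $\Delta^{\spec}_{\phi,\pi_{\phi}}$ into the choice of intertwiner $I^{\theta}_{\pi_\phi}$ gives an identity of the form $\Phi_{\t\pi_{\phi}}=\sum\mr\Delta\,\Phi^{\H,\st}_{\phi}$ for some normalization.

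\textbf{Step 3: matching normalizations.} Arthur's identity has the same shape with the Whittaker-normalized intertwiner, so the two differ by a scalar $c$. Comparing, $c^{-1}\Phi^{\H,\st}_{\mathrm{Kal}}$ and $\Phi^{\H,\st}_{\Art}$ have the same transfer, which on the elliptic set determines the stable distribution. By linear independence of characters, $c^{-1}\sum_{\pi\in\Pi^{\H}_{\phi_\H}}\Theta_\pi=\sum_{\pi\in\Pi^{\H}_{\phi_\H,\Art}}\Theta_\pi$ with multiplicity one on both sides. Hence $c=1$ and the packets coincide. A subtlety is that the constant may differ on the two $\theta$-twisted components when $\H$ is even orthogonal, where Arthur works up to the outer automorphism; the coincidence should then be stated up to that action, which is the convention of \cite{Art13}.
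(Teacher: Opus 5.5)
Your treatment of the case $\mcS_{\phi_{\H}}=1$ (Steps 2 and 3) is essentially the paper's. It uses Lemma \ref{lem:Arthur=Kaletha}, Theorem \ref{thm:spec-well-def-GL}, $\pi=\pi_{\Art}$ by \cite{OT21}, the fact that an elliptic $\delta$ has at most one stable class of norms while every elliptic strongly $\G$-regular $\gamma$ is a norm, and elliptic orthogonality. Together these give $\pi_{\H}=\pi_{\H,\Art}$ and force the scalar to be $1$.

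The gap is in Step 1. Your claim that the stable character of $\Pi^{\H}_{\phi_{\H}}$ ``is the transfer from the simple-parameter case'' is wrong. The stable sum corresponds to $s=1$, whose endoscopic group is $\H$ itself, so it is never obtained from a smaller group by standard endoscopy. What standard endoscopy provides, for $s\neq1$ in $\mcS_{\phi_{\H}}$, is the \emph{unstable} sum $\sum_{\pi}\langle s,\pi\rangle\Theta_{\pi}$. It arises as the transfer of the stable character of a packet on a proper elliptic endoscopic group $\H'$, which is a product of smaller quasi-split special orthogonal and symplectic groups (not a Levi subgroup).

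The reduction that works, and that the paper uses, goes as follows.
Fix one $s\neq1$. The factorization $\phi_{\H'}$ through $\L\H'$ is again toral supercuspidal with a strictly smaller packet, so by induction Kaletha's and Arthur's $\H'$-packets agree.
By \cite[Theorem 6.3.4]{Kal19} and \cite[Theorem 2.2.1]{Art13}, the two $s$-signed sums on $H$ are transfers of the same stable character. Hence they coincide on the elliptic strongly regular set, and by density on the elliptic regular set.
Every coefficient $\langle s,\pi\rangle$ is $\pm1\neq0$, so elliptic orthogonality \cite[Theorem 3]{Clo91} forces the two packets to be equal as sets.
Comparing stable sums plays no role in this step.

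Your fallback does not close the gap. For reducible $\phi=\bigoplus_{i}\phi_{i}$, the $\G$-side of Arthur's characterization of $\Pi^{\H}_{\phi_{\H},\Art}$ is an irreducible parabolically induced representation of $\GL_{n}(F)$, not a toral supercuspidal packet. So Theorem \ref{thm:spec-well-def-GL} does not apply to it. Moreover, a twisted identity for $(\prod_{i}\GL_{n_{i}},\theta)$ relates that group to $\prod_{i}\H_{i}$, which is an endoscopic group of $\H$ rather than $\H$ itself. You would still need the standard-endoscopy step above to get back to $\H$.

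This step is not optional. For $\H=\Sp_{2m}$ with $m\geq1$ the relevant group is $\GL_{2m+1}$, so $\H$ cannot be realized with $n$ even as you assert. By Remark \ref{rem:Prasad} (via the local Langlands correspondence for $\GL_{2m+1}$), an odd-dimensional discrete orthogonal parameter is never irreducible when $p\neq2$. Hence $\mcS_{\phi_{\H}}$ is never trivial for symplectic groups, and that entire case rests on the reduction.
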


\begin{proof}
Recall that both $\Pi_{\phi_{\H}}^{\H}$ and $\Pi_{\phi_{\H},\Art}^{\H}$ are bijective to the set of irreducible characters of the ``$S$-group'' $\mcS_{\phi_{\H}}$ (\cite[Section 5.3]{Kal19} and \cite[Theorem 2.2.1]{Art13}).
In particular, these sets have the same cardinality.
We first note that we may assume $|\Pi_{\phi_{\H}}^{\H}|=|\Pi_{\phi_{\H},\Art}^{\H}|=1$ by a standard argument based on the theory of standard endoscopy.
Indeed, suppose that $|\Pi_{\phi_{\H}}^{\H}|=|\Pi_{\phi_{\H},\Art}^{\H}|>1$.
Then the $S$-group contains a nontrivial element, which means that the $L$-parameter $\phi_{\H}$ factors through the $L$-group of a nontrivial standard endoscopic group $\H'$ of $\H$.
Let $\phi_{\H'}$ be an $L$-parameter of $\H'$ such that its lift to $\H$ is $\phi_{\H}$.
By \cite[Theorem 6.3.4]{Kal19} and \cite[Theorem 2.2.1]{Art13}, both $\Pi_{\phi_{\H}}^{\H}$ and $\Pi_{\phi_{\H},\Art}^{\H}$ satisfy the standard endoscopic character relation with $\Pi_{\phi_{\H'}}^{\H'}$ and $\Pi_{\phi_{\H'},\Art}^{\H'}$ at any elliptic strongly regular semisimple element of $H$, respectively.
Therefore, if we can show that $\Pi_{\phi_{\H'}}^{\H'}=\Pi_{\phi_{\H'},\Art}^{\H'}$, then we see that the signed sum of the characters of members of $\Pi_{\phi_{\H}}^{\H}$ coincides with that of $\Pi_{\phi_{\H},\Art}^{\H}$ for any elliptic strongly regular semisimple element of $H$.
Since the strongly regular semisimple locus of $\H$ is Zariski dense in the regular semisimple locus of $\H$, we see that the signed sum of $\Pi_{\phi_{\H}}^{\H}$ and that of $\Pi_{\phi_{\H},\Art}^{\H}$ coincide on the elliptic regular semisimple locus of $H$.
Hence, by the orthogonality relation of the elliptic inner product (\cite[Theorem 3]{Clo91}), we get $\Pi_{\phi_{\H}}^{\H}=\Pi_{\phi_{\H},\Art}^{\H}$.
Since the order of $\Pi_{\phi_{\H'}}^{\H'}$ (or $\Pi_{\phi_{\H'},\Art}^{\H'}$) is smaller than that of $\Pi_{\phi_{\H}}^{\H}$, by repeating this argument inductively, we are reduced to the case where $|\Pi_{\phi_{\H}}^{\H}|=|\Pi_{\phi_{\H},\Art}^{\H}|=1$.
(Note that if $\H$ is quasi-split special orthogonal or symplectic, then so is $\H'$.)

Let us put $\phi\colonequals \h\xi\circ\phi_{\H}$.
When $|\Pi_{\phi_{\H}}^{\H}|=|\Pi_{\phi_{\H},\Art}^{\H}|=1$, or equivalently, $\mcS_{\phi_{\H}}$ is trivial, $\phi$ is a toral supercuspidal $L$-parameter of $\GL_{n}$ by Lemma \ref{lem:Arthur=Kaletha}, where $n$ is such that $\H$ is a twisted endoscopic group of $(\GL_n,\theta)$.
Thus we can apply Theorem \ref{thm:spec-well-def-GL}; $\Pi_{\phi}^{\G}$ and $\Pi_{\phi_{\H}}^{\H}$ satisfy the twisted endoscopic character relation, i.e., we have
\[
\Delta^{\spec}_{\phi,\pi}\Phi_{\t{\pi}}(\delta)
=
\sum_{\gamma\in H/{\st}} \mr{\Delta}(\gamma,\delta)
\Phi_{\pi_{\H}}(\gamma)
\]
for any elliptic strongly regular semisimple element $\delta\in\t{G}$, where $\pi$ and $\pi_{\H}$ are the unique members of $\Pi_{\phi}^{\G}$ and $\Pi_{\phi_{\H}}^{\H}$, respectively.
Similarly, we also have
\[
\Phi_{\t{\pi}_{\Art}}(\delta)
=
\sum_{\gamma\in H/{\st}} \mr{\Delta}(\gamma,\delta)
\Phi_{\pi_{\H,\Art}}(\gamma)
\]
for any elliptic strongly regular semisimple element $\delta\in\t{G}$, where $\pi_{\Art}$ and $\pi_{\H,\Art}$ be the unique members of $\Pi_{\phi,\Art}^{\G}$ and $\Pi_{\phi_{\H},\Art}^{\H}$, respectively.
We note that, for any elliptic strongly regular semisimple element $\delta\in\t{G}$, there exists an elliptic strongly $\G$-regular semisimple element $\gamma\in H$ satisfying $(\gamma,\delta)\in\mcD$ at most uniquely up to stable conjugacy.
In other words, the index set of the above sums can be thought of as a singleton at most.
Moreover, for any elliptic strongly $\G$-regular semisimple element $\gamma\in H$, there exists an elliptic strongly regular semisimple element $\delta\in\t{G}$.
(These facts follow from, e.g., an explicit parametrization of semisimple conjugacy classes of these groups; see \cite[Sections 1.3 and 1.9]{Wal10}.)
Since we have $\pi=\pi_{\Art}$ by \cite{OT21}, we get $\Phi_{\pi_{\H}}(\gamma)=\Delta_{\phi,\pi}^{\spec}\Phi_{\pi_{\H,\Art}}(\gamma)$ for any elliptic strongly $\G$-regular semisimple element $\gamma\in H$ (recall that $\mr{\Delta}(\gamma,\delta)\neq0$ whenever $(\gamma,\delta)\in\mcD$).
As the strongly $\G$-regular semisimple locus of $\H$ is Zariski dense in the regular semisimple locus of $\H$, we see that the identity $\Phi_{\pi_{\H}}(\gamma)=\Delta_{\phi,\pi}^{\spec}\Phi_{\pi_{\H,\Art}}(\gamma)$ holds for any elliptic regular semisimple element $\gamma\in H$.
Therefore, again by the orthogonality relation of the elliptic inner product, we conclude that $\pi_{\H}=\pi_{\H,\Art}$ (and also $\Delta_{\phi,\pi}^{\spec}=1$).
\end{proof}

\begin{rem}
  We note that Arthur's local Langlands correspondence is established only up to the action of outer automorphisms for quasi-split even special orthogonal groups.
  Therefore, when $\H$ is quasi-split even special orthogonal group, the identity $\Pi_{\phi_{\H}}^{\H}=\Pi_{\phi_{\H},\Art}^{\H}$ of the above theorem is only up to the action of outer automorphisms.
\end{rem}

\newpage
\input{symbols.ind}

\end{document}